\numberwithin{equation}{section}
\theoremstyle{plain}
\newtheorem{theo}{Theorem}[section]
\newtheorem{prop}[theo]{Proposition}
\newtheorem{coro}[theo]{Corollary} 
\newtheorem{lemm}[theo]{Lemma}
\theoremstyle{definition}
\newtheorem{defi}[theo]{Definition}
\newtheorem{rema}[theo]{Remark}
\newtheorem{theo-defi}[theo]{Theorem-Definition}
\newtheorem{prop-defi}[theo]{Proposition-Definition}
\newtheorem{rema-defi}[theo]{Remark-Definition}
\newtheorem{exem-defi} [theo]{Example-Definition}
\newtheorem{exem}[theo]{Example}
\newtheorem{conj}[theo]{Conjecture}
\newtheorem{prob}[theo]{Problem}
\def \al{\alpha}
\def \bul{\bullet}
\def \col{\colon}
\def \Del{\Delta}
\def \del{\delta}
\def \eps{\epsilon}
\def \Gam{\Gamma}
\def \gam{\gamma}
\def \inf{\infty}
\def \kap{\kappa}
\def \Lam{\Lambda}
\def \lam{\lambda}
\def \Lo{\Longrightarrow}
\def \lo{\longrightarrow}
\def \lom{\longmapsto}
\def \mab{\mathbb}
\def \nat{\natural} 
\def \Om{\Omega}
\def \om{\omega}
\def \ol{\overline}
\def \os{\overset}
\def \parno{\par\noindent}
\def \sig{\sigma}
\def \sq{\square}
\def \sus{\subset}
\def \ul{\underline}
\def \us{\underset}
\def \vp{\varpi}
\def \wt{\widetilde}
\newcommand{\getsfrom}{\ensuremath{
\longleftarrow\kern-.50em\lower.0ex\hbox%
{$\shortmid\,$}}}
\begin{document}

\title{The zariskian $p$-adic bifiltered El Zein-Steenbrink-Zucker complex
of a proper SNCL scheme with a relative SNCD}
%and a $p$-adic relative monodromy filtration}
\author{Yukiyoshi Nakkajima 
 \date{}\thanks{
2020 Mathematics subject classification number: 
14F30. Keywords: 
log crystalline cohomology, weight filtration, 
relative monodromy filtration.  
The authors is supported by JSPS
KAKENHI Grant Numbers JP24K06652. 
\endgraf}}
\maketitle

\bigskip
\parno
{\bf Abstract.---}
This paper aims to formulate the log $p$-adic 
relative monodromy-weight conjecture. 
This conjecture 
is a generalization of the famous $p$-adic monodromy-weight conjecture 
by A.~Mokrane (and the author) (\cite{msemi}, \cite{nb}). 
We prove that, if the log $p$-adic monodromy-weight conjecture by 
the author (\cite{nb}) is true, then the log $p$-adic relative monodromy-weight conjecture 
is true. By using this result, we prove that 
the log $p$-adic relative monodromy-weight conjecture is true in certain cases.

$${\bf Contents}$$
\parno
\S\ref{sec:intro}. Introduction
\parno 
\S\ref{sec:snclv}. SNCL schemes 
\parno 
\S\ref{sec:snrdlv}. SNCL schemes with relative SNCD's
\parno 
\S\ref{sec:pwtr}. Preweight filtrations
\parno 
\S\ref{sec:psc}.  Zariskian $p$-adic bifiltered El Zein-Steenbrink-Zucker complexes
\parno 
\S\ref{sec:fcuc}.  Contravariant functorialities of zariskian 
$p$-adic bifiltered El Zein-Steenbrink-Zucker complexes
\parno
\S\ref{sec:mod}.  Monodromy operators
\parno 
\S\ref{sec:bckf}. Bifiltered base change theorem 
\parno 
\S\ref{sec:infhdi}. Infinitesimal deformation invariance 
\parno 
\S\ref{sec:filbo}. The $E_2$-degeneration of the $p$-adic weight spectral sequence
\parno 
\S\ref{sec:e2}. Log convergences of the weight filtrations 
\parno 
\S\ref{sec:st}.  Strict compatibility
\parno 
\S\ref{sec:mn}. Log $p$-adic relative monodromy-weight conjecture
\medskip 
\parno 
{\bf Appendix}
\medskip 
\parno 
\S\ref{sec:lbddmlif}. Edge morphisms between 
the $E_1$-terms of $p$-adic weight spectral sequences

\smallskip
\parno
References

\section{Introduction}\label{sec:intro} 
In \cite{nh2} we have constructed the weight filtration on 
the log crystalline cohomological sheaf 
of a proper smooth scheme with a relative SNCD(=simple normal crossing divisor) 
in characteristic $p>0$; 
in \cite{nb} and \cite{nhir} we have constructed the weight filtration on 
the log crystalline cohomological sheaf of 
a proper SNCL(=simple normal crossing log) scheme in characteristic $p>0$. 
This paper is a continuation of my previous papers \cite{nh2} and \cite{nb}. 
In this paper we construct the weight filtration on the log crystalline cohomological sheaf 
of a proper SNCL scheme with a relative SNCD in characteristic $p>0$. 
This paper simultaneously generalizes parts of \cite{nh2} and \cite{nb}. 
\par  
In \cite{stz} Steenbrink and Zucker have 
constructed a bifiltered complex for a 
proper strict semistable family with a relative  SNCD over the unit disk over ${\mab C}$. 
In \cite{ezth} El Zein has also constructed the same bifiltered complex. 
They have proved that there exists a relative monodromy filtration 
with respect to the weight filtration arising from the relative SNCD, provided  
that the semistable family is projective. (However one must use M.~Saito's famous result 
in \cite{sm}.) 
Inspired by their work, in \cite{nlf}, 
we have constructed an analogous $l$-adic bifiltered complex for 
the $l$-adic Kummer log \'{e}tale 
cohomological sheaf of a proper SNCL scheme with a relative SNCD in any characteristic. 
By using this bifiltered complex, 
we have proposed a conjectural 
$l$-adic relative monodromy weight filtration on 
the $l$-adic Kummer log \'{e}tale cohomology of it 
and we have proved that this filtration exists in certain cases. 
Because the relative monodromy filtration is expected to be motivic, 
in this paper
we construct the $p$-adic analogue of 
the conjectural relative $l$-adic monodromy filtration
for the log crystalline cohomological sheaf of 
a proper SNCL scheme with a relative SNCD in characteristic $p>0$. 
\par 
Before stating our main result in this paper, we briefly recall a result in \cite{nb}. 
\par 
For a log (formal) scheme $Y$, denote by $\os{\circ}{Y}$ and 
$M_Y=(M_Y,\al_Y\col M_Y\lo {\cal O}_Y)$ 
the underlying (formal) scheme of $Y$ and the log structure of $Y$, respectively. 
Let $S$ be a $p$-adic formal family of log points defined in \cite{nb}; 
locally on $S$, $S$ is isomorphic to a log $p$-adic formal scheme 
$(\os{\circ}{S}, {\mab N}\oplus {\cal O}_S^*\lo {\cal O}_S)$, 
where the morphism ${\mab N}\oplus {\cal O}_S^*\lo {\cal O}_S$ 
is defined by the morphism
$(n,a)\lom 0^na$ $(n\in {\mab N}, a\in {\cal O}_S^*)$, where $0^n=0\in {\cal O}_S$ 
for $n\not =0$ and $0^0:=1\in {\cal O}_S$. 
Let $(S,{\cal I},\gam)$ be a $p$-adic formal PD-family of log points 
($S$ is a $p$-adic formal family of log points and 
${\cal I}$ is a quasi-coherent $p$-adic PD-ideal sheaf of ${\cal O}_S$ 
with PD-structure $\gam$). 
Let $S_0$ be an exact closed log subscheme of $S$ defined by ${\cal I}$. 
Let $X/S_0$ be a proper SNCL scheme with structural morphism 
$f\col X\lo S_0\os{\sus}{\lo} S$. 
(In \S\ref{sec:snclv} below we recall the definition of the SNCL scheme briefly.)
Let $\{\os{\circ}{X}_{\lam}\}_{\lam \in \Lam}$ be 
the set of smooth components of $\os{\circ}{X}/\os{\circ}{S}_0$ defined in \cite{nb}.  
(When $\os{\circ}{S}_0$ is the spectrum of a field of characteristic $p>0$, 
$\{\os{\circ}{X}_{\lam}\}_{\lam \in \Lam}$ can be taken as 
the set of the irreducible components of $\os{\circ}{X}$.) 
For a nonnegative integer $k$, let 
\begin{align*} 
\os{\circ}{X}{}^{(k)}:=
\coprod_{\{\{\lam_0,\ldots, \lam_k\}~\vert \lam_i\in \Lam, \lam_i\not=\lam_j (i\not=j)\}} 
\os{\circ}{X}_{\lam_0}\cap \cdots \cap \os{\circ}{X}_{\lam_k}
\tag{1.0.1}\label{ali:hkis}
\end{align*} 
be a scheme over $\os{\circ}{S}_0$ well-defined in \cite{nb}. 
%For example, $\os{\circ}{X}{}^{(0)}=$. 
Let $a^{(k)} \col \os{\circ}{X}{}^{(k)}\lo \os{\circ}{X}$ be the natural morphism. 
Let $F_{\os{\circ}{S}_0}\col \os{\circ}{S}_0\lo \os{\circ}{S}_0$ be the absolute Frobenius 
endomorphism of $\os{\circ}{S}_0$ and 
set $S_0^{[p]}:=S_0\times_{\os{\circ}{S}_0,F_{\os{\circ}{S}_0}}\os{\circ}{S}_0$.  
Let 
$F_{S_0/\os{\circ}{S}_0}\col  S_0\lo S_0^{[p]}$ 
be the relative Frobenius morphism of $S_0$ over $\os{\circ}{S}_0$. 
Let $S_0^{[p]}(S)$ be a log formal scheme whose underlying formal scheme 
is $\os{\circ}{S}$ and whose log structure $M_{S_0^{[p]}(S)}$ 
is a unique sub-log structure of $S$ 
such that the isomorphism 
$M_S/{\cal O}_S^*\os{\sim}{\lo} M_{S_0}/{\cal O}_{S_0}^*$ induces 
the following isomorphism 
\begin{align*} 
M_{S_0^{[p]}(S)}/{\cal O}_S^*\os{\sim}{\lo} 
{\rm Im}(F_{S_0/\os{\circ}{S}_0}^*\col 
F_{S_0/\os{\circ}{S}_0}^*(M_{S_0^{[p]}})
\lo M_{S_0})/{\cal O}_{S_0}^*. \tag{1.0.2}\label{ali:rus0avp}
\end{align*} 
(The structural morphism of $M_{S_0^{[p]}(S)}$ is the composite morphism 
$M_{S_0^{[p]}(S)}\os{\sus}{\lo} M_S\lo {\cal O}_S$.) 
We have an obvious morphism $(S,{\cal I},\gam)
\lo (S_0^{[p]}(S),{\cal I},\gam)$ of log PD-formal schemes. 
For a fine log scheme $Y$ over $S_0$ with structural morphism $g\col Y\lo S_0\os{\sus}{\lo} S$, 
let $(Y/S)_{\rm crys}$ be the log crystalline topos of $Y/(S,{\cal I},\gam)$ 
defined in \cite{klog1} and let ${\cal O}_{Y/S}$ be the structure sheaf of 
$(Y/S)_{\rm crys}$. 
Let $\os{\circ}{Y}_{\rm zar}$ be the Zariski topos  of $\os{\circ}{Y}$. 
Let $u_{Y/S} \col (Y/S)_{\rm crys}\lo \os{\circ}{Y}_{\rm zar}$ be the canonical projection. 
Set $g_{Y/S}:=g\circ u_{Y/S}$. 
Let ${\rm D}^+{\rm F}(g^{-1}({\cal O}_S))$ be the derived category of 
bounded below filtered complexes of $g^{-1}({\cal O}_S)$-modules 
and let $D^+(g^{-1}({\cal O}_S))$ be the derived category of 
bounded below complexes of $g^{-1}({\cal O}_S)$-modules.  
(See \cite{nh2} for the definition of the filtered derived category 
${\rm D}^+{\rm F}(g^{-1}({\cal O}_S))$ (cf.~\cite{dh2}, \cite{ilc}).) 
In \cite{nb} we have proved the following, which is a generalization of 
a result in \cite{msemi}:

\begin{theo}[{\bf \cite[Existence of the zarisikian $p$-adic 
filtered Steenbrink complex]{nb}}]\label{theo:wdpc}
Let $\vp_{\rm crys}^{(m)}(\os{\circ}{X}/\os{\circ}{S})$ $(m\in {\mab N})$ 
be the crystalline orientation sheaf associated to 
the set $\{\os{\circ}{X}_{\lam}\}_{\lam \in \Lam}$ for $m$. That is, 
$\vp_{\rm crys}^{(m)}(\os{\circ}{X}/\os{\circ}{S})$ is 
the extension to $(\os{\circ}{X}{}^{(m)}/\os{\circ}{S})_{\rm crys}$ 
of the direct sum of  
$\os{m+1}{\bigwedge}{\mab Z}^E_{\os{\circ}{X}_{\lam_0}
\cap \cdots \cap \os{\circ}{X}_{\lam_m}}$'s  
in the Zariski topos $\os{\circ}{X}{}^{(m)}_{\rm zar}$ of $\os{\circ}{X}{}^{(m)}$ 
for the subsets $E=\{\os{\circ}{X}_{\lam_0}, \ldots, \os{\circ}{X}_{\lam_m}\}$'s 
of $\{\os{\circ}{X}_{\lam}\}_{\lam \in \Lam}$
with $\# E=m+1$.  
Then there exists a filtered complex 
\begin{align*} 
(A_{\rm zar}(X/S),P) 
\in {\rm D}^+{\rm F}(f^{-1}({\cal O}_S))
\tag{1.1.1}\label{ali:raxs}
\end{align*} 
with a canonical isomorphism 
\begin{align*} 
\theta \wedge \col Ru_{X/S*}({\cal O}_{X/S})\os{\sim}{\lo} A_{\rm zar}(X/S)
\tag{1.1.2}\label{ali:ixuaoa} 
\end{align*} 
in $D^+(f^{-1}({\cal O}_S))$ 
such that 
\begin{align*} 
{\rm gr}^P_kA_{\rm zar}(X/S)\os{\sim}{\lo} \bigoplus_{j\geq \max \{-k,0\}} 
&a^{(2j+k)}_* 
(Ru_{\os{\circ}{X}{}^{(2j+k)}/\os{\circ}{S}*}
({\cal O}_{{\os{\circ}{X}{}^{(2j+k)}/\os{\circ}{S}}}) \\
&\otimes_{\mab Z}\vp_{\rm crys}^{(2j+k)}(\os{\circ}{X}/\os{\circ}{S}))(-j-k)[-2j-k]
\tag{1.1.3}\label{ali:ruoovp}
\end{align*} 
in $D^+(f^{-1}({\cal O}_S))$.  
Here the Tate twist $(-j-k)$ means the Tate twist 
with respect to 
the morphism $X\lo X\times_{\os{\circ}{S}_0,F_{\os{\circ}{S}_0}}\os{\circ}{S}_0$ 
over $(S,{\cal I},\gam) \lo (S_0^{[p]}(S),{\cal I},\gam)$
induced by the absolute Frobenius endomorphism $F_X\col X\lo X$ of $X$. 
\end{theo}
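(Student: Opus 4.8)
The plan is to transport the complex-analytic construction of Steenbrink, El~Zein and Zucker into the $p$-adic log crystalline world. First I would work locally on $\os{\circ}{X}$: cover $X$ by open log subschemes admitting exact closed immersions into log smooth $p$-adic formal schemes over $(S,{\cal I},\gam)$, and organize these into a simplicial log scheme $X_\bul$ together with simplicial log smooth $S$-schemes $\maf P_\bul$ into which $X_\bul$ is exactly embedded. Passing to the log PD-envelopes $\maf D_\bul$ of $X_\bul$ in $\maf P_\bul$, the log crystalline Poincar\'e lemma of \cite{klog1} gives that ${\cal O}_{\maf D_\bul}\otimes_{{\cal O}_{\maf P_\bul}}\Om^\bul_{\maf P_\bul/S}$ represents $Ru_{X/S*}({\cal O}_{X/S})$, and cohomological descent glues these local models. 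Thus the problem reduces to equipping this de Rham model with an appropriate filtration whose graded pieces are the asserted ones.

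The essential ingredient is the \emph{preweight filtration}. Since $\os{\circ}{X}\sus \os{\circ}{\maf P}$ and $\os{\circ}{X}=\bigcup_{\lam\in\Lam}\os{\circ}{X}_\lam$, the log de Rham complex $\Om^\bul_{\maf P/S}$ carries an increasing filtration $P$ by the number of $d\log$'s along the smooth components $\os{\circ}{X}_\lam$, and the Poincar\'e residue map identifies, after the PD-envelope, ${\rm gr}^P_k({\cal O}_{\maf D}\otimes\Om^\bul_{\maf P/S})$ with $a^{(k)}_*$ of the log de Rham complex of $\os{\circ}{X}{}^{(k)}$ over $\os{\circ}{S}$, twisted by $\vp^{(k)}_{\rm crys}(\os{\circ}{X}/\os{\circ}{S})$ and shifted by $[-k]$. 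The new feature over \cite{nh2} is that $f$ factors through the exact closed log point $S_0\os{\sus}{\lo}S$: locally the image in ${\cal O}_X$ of the generator of ${\mab N}\sus M_S$ is a monomial $x_0^{m_0}\cdots x_a^{m_a}$, so $\sum_{i=0}^a m_id\log x_i=0$ in $\Om^1_{\maf P/S}$. To absorb this relation I would, exactly as in \cite{stz} and \cite{nb}, pass to the Steenbrink double complex $A^{\bul\bul}$, whose $(i,j)$-component for $j\geq 0$ is a suitable quotient of ${\cal O}_{\maf D}\otimes_{{\cal O}_{\maf P}}\Om^{i+j+1}_{\maf P/S}$ built from $P_j$ (and $0$ for $j<0$), with horizontal differential induced by $d$ and vertical differential $-\tha\we$, where $\tha$ is the image of $d\log$ of the generator of ${\mab N}\sus M_S$. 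Set $A_{\rm zar}(X/S):=s(A^{\bul\bul})$ and let $P$ be the filtration on the simple complex obtained by combining the pole-order filtration with the column index in the standard way, so that $-\tha\we$ is strictly $P$-filtered with vanishing associated graded. The morphism $\tha\we$ of \eqref{ali:ixuaoa} is $\om\lom\tha\we\om$ into the first column $A^{\bul 0}$, and a spectral-sequence (or mapping-cone) argument using the log crystalline Poincar\'e lemma shows it is an isomorphism in $D^+(f^{-1}({\cal O}_S))$.

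Computing ${\rm gr}^P_k$ is then bookkeeping: since $-\tha\we$ vanishes on ${\rm gr}^P$, one gets that ${\rm gr}^P_kA_{\rm zar}(X/S)$ is the direct sum over $j\geq\max\{-k,0\}$ of shifts of the graded log de Rham complexes of the previous paragraph, each of which is $a^{(2j+k)}_*(Ru_{\os{\circ}{X}{}^{(2j+k)}/\os{\circ}{S}*}({\cal O}_{{\os{\circ}{X}{}^{(2j+k)}/\os{\circ}{S}}})\otimes_{\mab Z}\vp^{(2j+k)}_{\rm crys}(\os{\circ}{X}/\os{\circ}{S}))[-2j-k]$; the Tate twist $(-j-k)$ is the crystalline (Frobenius) twist produced by $j$ applications of $\tha\we$ together with the residue maps, which is the normalization recorded in \cite{nb} via the relative Frobenius descent to $S_0^{[p]}(S)$, and I would simply adopt it. Finally, independence of the chosen embedding system in ${\rm D}^+{\rm F}(f^{-1}({\cal O}_S))$ follows from the usual argument: two embedding systems are dominated by a third, the constructions are functorial for refinements, and the comparison morphisms are filtered quasi-isomorphisms.

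I expect the main obstacle to be exactly the interaction with the log point base: defining the filtration on $A^{\bul\bul}$ and on its simple complex so that the relation $\sum m_id\log x_i=0$ is correctly accommodated and the graded pieces come out as stated (notably the simultaneous appearance of positive and negative $k$, via $\max\{-k,0\}$), and at the same time proving that $\tha\we$ is a quasi-isomorphism and that the whole datum $(A_{\rm zar}(X/S),P,\tha\we)$ is canonical in the filtered derived category. The residue computation and the cohomological descent are routine once the local model is fixed; the delicate part is making all of the local choices compatible with both $P$ and the monodromy-type operator $\tha\we$.
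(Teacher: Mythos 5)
Your sketch follows essentially the same route as the paper's construction (the theorem is recalled from \cite{nb}, and the same machinery is redone in bifiltered form in \S\ref{sec:pwtr}--\S\ref{sec:psc}): a simplicial affine covering embedded into log smooth lifts, log PD-envelopes and the log Poincar\'e lemma with cohomological descent, the Steenbrink double complex with components $\Om^{i+j+1}/P_j$ and differentials $-\nabla$ and $\theta\wedge$, the filtration built from $P_{2j+k+1}$, the Poincar\'e residue identification of the graded pieces, and independence of the embedding system. The one point to correct is small but essential: the double complex must be formed from ${\cal O}_{\maf D}\otimes_{{\cal O}_{\maf P}}\Om^{i+j+1}_{\maf P/\os{\circ}{S}}$ (forms over the underlying base, as in (\ref{cd:accef})), not from $\Om^{i+j+1}_{\maf P/S}$, since $\theta=d\log t$ vanishes in the log-relative complex over $S$ — this is in fact the convention you implicitly adopt by following \cite{stz} and \cite{nb}.
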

As a corollary of this theorem, we obtain 
the weight filtration $P$ on $R^qf_{X/S*}({\cal O}_{X/S})$ $(q\in {\mab N})$: 
\begin{align*} 
P_{k+q}R^qf_{X/S*}({\cal O}_{X/S})&:={\rm Im}
(R^qf_{X/S*}(P_kA_{\rm zar}(X/S))\lo R^qf_{X/S*}(A_{\rm zar}(X/S))) 
\\
&\simeq {\rm Im}(R^qf_{X/S*}(P_kA_{\rm zar}(X/S))\lo R^qf_{X/S*}({\cal O}_{X/S})) 
\tag{1.1.4}\label{ali:wamcp}
\end{align*} 
and the following spectral sequence 
\begin{align*} 
E_1^{-k,q+k}:=\bigoplus_{j\geq \max \{-k,0\}} &
R^{q-2j-k}f_{\os{\circ}{X}{}^{(2j+k)/\os{\circ}{S}}}
({\cal O}_{\os{\circ}{X}{}^{(2j+k)}/\os{\circ}{S}}
\otimes_{\mab Z}\vp^{(2j+k)}_{\rm crys}
(\os{\circ}{X}/\os{\circ}{S}))(-j-k) \\
&\Lo R^qf_{X/S}({\cal O}_{X/S}) \quad (q\in {\mab Z}). 
\tag{1.1.5}\label{eqn:endsp}
\end{align*} 
%See \cite{msemi} and \cite{ndw} for another approach for the construction of 
%$P$ on $R^qf_{X/S*}({\cal O}_{X/S})$ 
%by the use of log de Rham-Witt complexes in the case where $S$ is the canonical lift of 
%the log point of a perfect field $\kap$ of characteristic $p>0$ over 
%the Witt ring ${\cal W}$ of $\kap$. In this case, in \cite{nb} 
%we have proved that $(A_{\rm zar}(X/S),P)$ is 
%canonically isomorphic to the weight-filtered complex 
%$({\cal W}A^{\bul}_X,P)$ constructed in 
%\cite{msemi} and \cite{ndw}. 
Let ${\cal V}$ be a complete discrete valuation ring of mixed characteristics 
$(0,p)$ with perfect residue field. 
In \cite{nb} we have proved that,  
if $\os{\circ}{S}$ is a $p$-adic formal ${\cal V}$-scheme, 
then (\ref{eqn:endsp}) degenerates at $E_2$. 
\par
For a log smooth scheme $Y$ over $S_0$, 
let  
\begin{equation*}
N \col Ru_{Y/S*}({\cal O}_{Y/S}) 
\lo 
Ru_{Y/S*}({\cal O}_{Y/S})(-1)
\tag{1.1.6}\label{eqn:cgm}
\end{equation*} 
be the monodromy operator defined in \cite{hk} and \cite{nb}. 
In \cite{nb} we have conjectured the following, 
which is a generalization of the conjecture in \cite{msemi} and a generalized 
$p$-adic analogue of Kato's conjecture in \cite{kln}:

\begin{conj}[{\bf $p$--adic monodromy-weight conjecture}]\label{conj:rmnc} 
Assume 
%that $\os{\circ}{S}$ is a $p$-adic formal scheme and 
that $\os{\circ}{X} \lo \os{\circ}{S}$ is projective. 
Let $q$ be nonnegative integer. 
Then the induced morphism 
\begin{equation*} 
N^e \col {\rm gr}^P_{q+e}R^qf_{X/S*}({\cal O}_{X/S})
\lo 
{\rm gr}^P_{q-e}R^qf_{X/S*}({\cal O}_{X/S})(-e)
\quad (q,e\in {\mab N})
\tag{1.2.1}\label{eqn:mj}
\end{equation*}
by the monodromy operator (\ref{eqn:cgm}) 
is an isomorphism modulo torsion.  
\end{conj}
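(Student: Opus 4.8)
\medskip
\noindent
\textbf{Proof proposal.}
In the stated generality this is the full $p$-adic monodromy-weight conjecture for proper SNCL schemes, which is open; equivalently, it asserts that the monodromy filtration of $N$ on $R^qf_{X/S*}({\cal O}_{X/S})\otimes_{\mab Z}{\mab Q}$ centred at $q$ exists and coincides with the induced filtration $P$. The plan is to reduce to a geometric situation over a perfect field and there to run the linear-algebra argument of Steenbrink and Rapoport--Zink. First, since $f$ is proper the sheaves $R^qf_{X/S*}({\cal O}_{X/S})$ are coherent, and by the bifiltered base change theorem (\S\ref{sec:bckf}) and the infinitesimal deformation invariance (\S\ref{sec:infhdi}) the formation of $R^qf_{X/S*}(P_kA_{\rm zar}(X/S))$, of the monodromy operator (\ref{eqn:cgm}), and hence of $N^e$, commutes with passage to the exact closed fibre at a point $s\in \os{\circ}{S}_0$; being an isomorphism modulo torsion may be checked on those fibres. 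Hence, after base change, one reduces to the case where $\os{\circ}{S}_0$ is the spectrum of a perfect field $k$ of characteristic $p$ with the log structure of a log point and $(S,{\cal I},\gam)$ is its canonical lift $({\rm Spf}\,W(k),pW(k),\gam)$; then $\os{\circ}{S}$ is a $p$-adic formal $W(k)$-scheme, so the weight spectral sequence (\ref{eqn:endsp}) degenerates at $E_2$ (\S\ref{sec:filbo}).

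From here I would run the crystalline analogue of the argument of Steenbrink and Rapoport--Zink. The $E_2$-degeneration identifies the graded pieces ${\rm gr}^P_\bullet R^qf_{X/S*}({\cal O}_{X/S})\otimes_{\mab Z}{\mab Q}$ with the cohomology of the row (\ref{eqn:endsp}), i.e.\ with subquotients of the crystalline cohomology of the smooth projective $\os{\circ}{S}_0$-schemes $\os{\circ}{X}{}^{(k)}$ twisted by the orientation sheaves $\vp^{(k)}_{\rm crys}(\os{\circ}{X}/\os{\circ}{S})$; modulo the Tate twists, $N$ acts on these terms through the identity maps between the strata-cohomology summands of (\ref{eqn:endsp}), shifting the running index. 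On the crystalline cohomology of a smooth projective variety over $k$ one has hard Lefschetz (Katz--Messing, via the agreement of crystalline and $\ell$-adic Frobenius characteristic polynomials together with Deligne's theorems), so the assertion that $N^e$ is bijective on the graded pieces becomes a statement of linear algebra about a finite complex of polarized pure pieces equipped with such a shift operator. The argument of Steenbrink, reproved by Rapoport--Zink, then yields the conjecture as soon as $\os{\circ}{X}$ has relative dimension $\leq 2$ over $\os{\circ}{S}_0$: in that range the nonzero strata cohomology occupies a window narrow enough that positivity of the Lefschetz pairings forces the required isomorphisms.

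For the second special case I would specialize further to an exact closed point $s$ over which the log fibre $X_s/s$ is the log special fibre of a proper strict semistable family $\maf X$ over a complete discrete valuation ring of equal characteristic. By the comparison of the log-crystalline cohomology of such a log special fibre with the rigid cohomology of the generic fibre of $\maf X$, the triple $(R^qf_{X_s/s*}({\cal O}_{X_s/s})\otimes_{\mab Z}{\mab Q},N,P)$ agrees, up to Frobenius eigenvalues (Katz--Messing, Chiarellotto--Le Stum), with the $\ell$-adic nearby-cycles datum of $\maf X$ for $\ell\neq p$; and in equal characteristic the $\ell$-adic monodromy-weight conjecture is a theorem of Deligne, obtained from Weil~II by globalizing the local monodromy. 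Transporting the conclusion back yields (\ref{eqn:mj}) at $s$, hence, by the fibrewise reduction of the first paragraph, over the whole connected component of $s$ in $\os{\circ}{S}$.

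The hard part is the general case. The linear-algebra step collapses beyond relative dimension $2$: the shift operator on (\ref{eqn:endsp}) is not self-adjoint for any evident polarization, and no purely homological argument produces the positivity that would force $N^e$ to be an isomorphism on graded pieces --- this is exactly the point at which the $\ell$-adic weight-monodromy conjecture is itself open in dimension $\geq 3$, away from the hypersurface-type cases of Scholze. A proof in arbitrary dimension would have to import into the log-crystalline framework something like Scholze's perfectoid argument, or a genuinely new positivity input; that is not attempted here.
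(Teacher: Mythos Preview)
The statement you were handed is labelled \emph{Conjecture} in the paper, and the paper provides no proof of it: immediately after stating it, the author writes ``This conjecture has been solved in several special cases'' and lists references. You correctly flag in your first sentence that in the stated generality this is open, so in that sense your proposal matches the paper's treatment exactly --- there is nothing to prove, and the paper does not attempt to.

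Your subsequent paragraphs, however, drift into a sketch of Theorem~\ref{theo:coj2}/\ref{theo:c2oj2} (the $\dim\le 2$ and equal-characteristic lifting cases), which concern the \emph{relative} conjecture (\ref{conj:repmc}) rather than (\ref{conj:rmnc}) itself. Even for the non-relative dimension~$\le 2$ case, the paper's route differs from what you outline: rather than a Steenbrink/Rapoport--Zink style linear-algebra argument driven by hard Lefschetz, the paper (proof of Theorem~\ref{theo:c2oj2}) reduces to $D^{(k)}$ and then cites Kajiwara--Achinger for $q=1$, Poincar\'{e} duality for $q=3$, and Mokrane's \cite[(6.2.1)]{msemi} for $q=2$. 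Your hard-Lefschetz/polarization narrative is plausible in outline but is not what the paper does, and in the crystalline setting the positivity input you invoke is not available off the shelf --- this is precisely why the paper appeals to those external special-case results instead.
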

This conjecture has been solved for several special cases in 
e.~g., \cite{msemi}, \cite{ndw}, \cite{ndeg}, \cite{itpad}, \cite{ds}, 
\cite{lp}, \cite{nb} and \cite{bkv}. 
In \cite[(5.5.3)]{nb} we have proved the following: 

\begin{theo}[{\rm {\bf \cite[(5.5.3)]{nb}}}]\label{theo:snbme} 
Let ${\cal V}$ be a complete discrete valuation ring of mixed characteristics 
$(0,p)$ with perfect residue field. 
If $\os{\circ}{S}$ is a $p$-adic formal ${\cal V}$-scheme 
and if, for each connected component $S'_0$ of $S_0$, 
there exists an exact closed point $s\in S'_0$  
such that the fiber $X_s/s$ of $X/S_0$ at $s$ is the log special fiber 
of a proper strict semistable family over a complete discrete valuation ring 
of equal characteristic, then {\rm (\ref{conj:rmnc})} is true. 
\end{theo}

\par
This paper aims to formulate the log $p$-adic relative monodromy-weight conjecture. 
This conjecture is a generalization of (\ref{conj:rmnc}). 
In the rest of this introduction, we explain our main results. 
%which includes generalizations of (\ref{theo:wdpc}) and (\ref{conj:rmnc}). 
\par 
%Let $S_0 \os{\sus}{\lo} S$ be a closed immersion defined by 
%a PD-ideal ${\cal I}$ with PD-structure $\gam$. 
Let $(X,D)/S_0$ be an SNCL scheme with a relative SNCD 
with structural morphism $f\col (X,D)\lo S_0\os{\sus}{\lo} S$. 
(We recall the definition of an SNCL scheme with a relative SNCD 
in \S\ref{sec:snrdlv} below.) 
Let ${\rm D}^+{\rm F}^2(f^{-1}({\cal O}_S))$ be the 
derived category of  bifiltered  complexes of $f^{-1}({\cal O}_S)$-modules. 
(See \cite{dh3} and \cite{nlf} for the definition of 
${\rm D}^+{\rm F}^2(f^{-1}({\cal O}_S))$.) 
Let $\{\os{\circ}{D}_{\mu}\}_{\mu \in M}$ be 
the set of smooth components of $\os{\circ}{D}/\os{\circ}{S}_0$.  
For a positive integer $l$, let 
\begin{align*} 
\os{\circ}{D}{}^{(l)}:=
\coprod_{\{\{\mu_1,\ldots, \mu_l\}~\vert \mu_i\in M, \mu_i\not=\mu_j (i\not=j)\}} 
\os{\circ}{D}_{\mu_1}\cap \cdots \cap \os{\circ}{D}_{\mu_l}
\tag{1.3.1}\label{ali:hkbis}
\end{align*} 
be an analogous scheme over $\os{\circ}{S}_0$ to (\ref{ali:hkis}). 
Endow $\os{\circ}{D}{}^{(l)}$ with the inverse image of the log structure of $X$ 
and let $D^{(l)}$ be the resulting log scheme over $S_0$. 
Set $D^{(0)}:=X$.  
%For example, $\os{\circ}{X}{}^{(0)}=$. 
Let $a^{(m,l)} \col \os{\circ}{X}{}^{(m)} \cap 
\os{\circ}{D}{}^{(l)} \lo \os{\circ}{X}$ be the natural morphism. 
Let 
$\vp^{(m,l)}_{\rm crys}
((\os{\circ}{X},\os{\circ}{D})/\os{\circ}{S})$ 
be the crystalline orientation sheaf defined in \S\ref{sec:pwtr} below.  
In this paper we prove the following: 

\begin{theo}\label{theo:bex}
There exists a bifiltered complex 
\begin{align*} 
(A_{\rm zar}((X,D)/S),P^D,P)
\in {\rm D}^+{\rm F}^2(f^{-1}({\cal O}_S))
\tag{1.4.1}\label{ali:rabxs}
\end{align*} 
%The complex $(A_{\rm zar}((X,D)/S),P^{\os{\circ}{D}},P)$ 
%is represented by a single bifiltered complex of 
%a bifiltered double complex 
%$(A_{\rm zar}((X,D)/S)^{\bul \bul},P^D,P)$. 
with a canonical isomorphism 
\begin{align*} 
\theta \wedge \col Ru_{(X,D)/S*}({\cal O}_{(X,D)/S})\os{\sim}{\lo} A_{\rm zar}((X,D)/S)
\tag{1.4.2}\label{ali:ixubaoa} 
\end{align*} 
in $D^+(f^{-1}({\cal O}_S))$ 
such that 
\begin{align*}  
{\rm gr}^{P^D}_kA_{\rm zar}((X,D)/S)=a^{(0,k)}_*(A_{\rm zar}(D^{(k)}/S))(-k)[-k]. 
\tag{1.4.3}\label{ali:rudrbp}
\end{align*} 
in $D^+(f^{-1}({\cal O}_S))$
and
\begin{align*} 
{\rm gr}^P_kA_{\rm zar}((X,D)/S)&
\os{\sim}{\lo} 
\bigoplus_{k'\leq k}\bigoplus_{j\geq \max \{-k',0\}} 
a^{(2j+k',k-k')}_{*}
Ru_{\os{\circ}{X}{}^{(2j+k')}\cap \os{\circ}{D}{}^{(k-k')}/\os{\circ}{S}*}
\\
&({\cal O}_{\os{\circ}{X}{}^{(2j+k')}\cap 
\os{\circ}{D}{}^{(k-k')}/\os{\circ}{S}}\otimes_{\mab Z}
\vp^{(2j+k',k-k')}_{\rm crys}
((\os{\circ}{X},\os{\circ}{D})/\os{\circ}{S}))(-j-k)[-2j-k] 
\tag{1.4.4}\label{ali:ruobp}
\end{align*} 
in $D^+(f^{-1}({\cal O}_S))$. 
\end{theo}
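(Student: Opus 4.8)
The plan is to construct $(A_{\rm zar}((X,D)/S),P^D,P)$ by transporting to the log crystalline setting the El Zein-Steenbrink-Zucker bifiltered complex, following the blueprint of the complex-analytic papers \cite{stz} and \cite{ezth} and of the $l$-adic paper \cite{nlf}, together with the Steenbrink-complex machinery already developed in \cite{nb} for the pure SNCL case (Theorem \ref{theo:wdpc}) and in \cite{nh2} for the relative-SNCD case.

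First I would fix an embedding system for $(X,D)/S_0\os{\sus}{\lo}(S,{\cal I},\gam)$, that is, a diagram $(X,D)\longleftarrow (X_\bullet,D_\bullet)\os{\sus}{\lo}{\cal P}_\bullet$ with ${\cal P}_\bullet$ log smooth over $S$ and with log structure adapted to $(X_\bullet,D_\bullet)$, so that, passing to exactifications, the log de Rham complex ${\om}^\bullet_{{\cal P}_\bullet/S}$ computes $Ru_{(X,D)/S*}({\cal O}_{(X,D)/S})$, exactly as in \cite{hk} and \cite{nb}. The essential new feature is that $M_X$ is a sub-log structure of $M_{(X,D)}$ whose quotient is ``generated by the components $\os{\circ}{D}_\mu$ of $D$''; accordingly ${\om}^\bullet_{{\cal P}_\bullet/S}$ carries two filtrations at once --- the weight filtration already used for the SNCL part in Theorem \ref{theo:wdpc}, and the pole-order filtration along $D$, the crystalline analogue of the filtration by order of pole along $D$ on $\om^\bullet(\log D)$, whose $k$-th graded piece is identified by the Poincar\'e residue with the log de Rham complex of $D^{(k)}$ Tate-twisted by $(-k)$ and shifted by $[-k]$. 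On this doubly filtered log de Rham complex I would run the Steenbrink recipe of \cite{nb}: form the associated monodromy double complex, call its total complex $A_{\rm zar}((X,D)/S)$, let $P$ be the induced Steenbrink weight filtration and $P^D$ the filtration induced from the pole-order filtration along $D$, and let $\tha\we$ be wedging with $d\log t$ for the image $t$ of the generator of ${\mab N}$, as in \cite{nb} and \cite{hk}. Then (\ref{ali:ixubaoa}) follows from the same local computation as in the pure SNCL case, now carried out in a local model of an SNCL scheme with a relative SNCD, and a cohomological-descent argument comparing two embedding systems through a common refinement shows that $(A_{\rm zar}((X,D)/S),P^D,P)$ is independent of the choices and descends to a well-defined object of ${\rm D}^+{\rm F}^2(f^{-1}({\cal O}_S))$.

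It then remains to identify the graded pieces. Since $P^D$ involves only the $D$-direction and the Steenbrink recipe is functorial for it, ${\rm gr}^{P^D}_k$ commutes with the recipe, and the residue identification above turns ${\rm gr}^{P^D}_kA_{\rm zar}((X,D)/S)$ into the Steenbrink complex of $D^{(k)}/S$ --- note that $D^{(k)}$ is again a proper SNCL scheme over $S_0$, its log structure being the inverse image of that of $X$, so Theorem \ref{theo:wdpc} applies to it --- Tate-twisted and shifted, which is (\ref{ali:rudrbp}). For $P$: as in the pure case the graded pieces are direct sums, over the number $j\geq\max\{-k',0\}$ of ``monodromy steps'', of the log crystalline cohomology of multi-intersections, but now each such intersection is of the form $\os{\circ}{X}{}^{(2j+k'+1)}\cap\os{\circ}{D}{}^{(k-k')}$, because the log forms in play acquire poles along both the $\os{\circ}{X}_\lam$'s and the $\os{\circ}{D}_\mu$'s; the outer summation over $k'\leq k$ in (\ref{ali:ruobp}) records the splitting of the weight between the semistable part, contributing the index $2j+k'+1$, and the divisor part, contributing $k-k'$, and the coefficient sheaf that results is exactly the crystalline orientation sheaf $\vp^{(2j+k'+1,k-k')}_{\rm crys}((\os{\circ}{X}+\os{\circ}{D})/\os{\circ}{S})$ of \S\ref{sec:pwtr}, with the Tate twist $(-j-k)$ bookkept through the relative Frobenius as in Theorem \ref{theo:wdpc}.

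The main obstacle I expect is the correct definition of the weight filtration $P$ and the proof of (\ref{ali:ruobp}): $P$ is not the diagonal of the two obvious filtrations but a convolution of the SNCL weight filtration with the $D$-pole-order filtration carrying a specific index shift --- which is precisely what the double sum in (\ref{ali:ruobp}) encodes --- and establishing that this convolution has the asserted graded pieces is a substantial local computation, matching up Koszul-type and residue complexes while pinning down every sign in the differentials of the monodromy double complex and every compatibility between the orientation sheaves $\vp^{(m)}_{\rm crys}$ and $\vp^{(m,l)}_{\rm crys}$. Showing that $P^D$ and $P$ are both biregular and mutually compatible, so that the package genuinely lands in ${\rm D}^+{\rm F}^2$ and not merely in two unrelated filtered derived categories, is the other delicate point; it is the crystalline counterpart of the compatibility established analytically by El Zein and by Steenbrink-Zucker, and it must be redone here at the level of explicit double complexes, together with the Frobenius and Tate-twist bookkeeping.
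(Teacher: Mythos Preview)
Your proposal is correct and follows essentially the same route as the paper: the construction is carried out in \S\ref{sec:psc} exactly as you describe --- embedding systems, the Steenbrink double complex $A_{\rm zar}({\cal P}^{\rm ex}_\bul/S(T)^{\nat},{\cal E}^\bul)^{ij}={\cal E}^\bul\otimes\Om^{i+j+1}/P^{{\cal X}_\bul}_j$, the isomorphism $\theta\wedge$ via the exactness of (\ref{ali:useq}), and independence of choices by the refinement argument of Theorem \ref{theo:indcr}.

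One sharpening worth noting: rather than two filtrations on the log de Rham complex, the paper works with \emph{three} preweight filtrations $P$, $P^{\cal X}$, $P^{\cal D}$ on $\Om^\bul_{({\cal X},{\cal D})/\os{\circ}{T}}$ (formulas (\ref{eqn:pomw})--(\ref{eqn:pkdw})). The double complex is formed by quotienting by $P^{\cal X}_j$ alone; the filtration $P^{\cal D}$ then induces $P^{{\cal D}_\bul}$ directly, while the \emph{total} filtration $P$ (poles along both the $\os{\circ}{X}_\lam$ and the $\os{\circ}{D}_\mu$) induces the weight filtration on the double complex via the shift $P_kA^{ij}:=(P_{2j+k+1}+P^{{\cal X}_\bul}_j)/P^{{\cal X}_\bul}_j$. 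Your ``convolution'' intuition is right, but the paper implements it by defining $P$ as a single total filtration on forms and then reading off the double sum in (\ref{ali:ruobp}) from the Poincar\'e residue isomorphism (\ref{eqn:rspys}); this avoids having to prove compatibility of two separate filtrations after the fact.
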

\parno
We call $(A_{\rm zar}((X,D)/S),P^D,P)$ 
the {\it zariskian $p$-adic bifiltered El Zein-Steenbrink-Zucker complex} of $(X,D)/S$. 
As a corollary of this theorem, we obtain 
the following filtration $P^D$ on $R^qf_{(X,D)/S*}({\cal O}_{(X,D)/S})$ $(q\in {\mab N})$: 
\begin{align*} 
&P^D_kR^qf_{(X,D)/S*}({\cal O}_{(X,D)/S})\\
&:={\rm Im}
(R^qf_*(P^D_kA_{\rm zar}((X,D)/S))\lo R^qf_{(X,D)/S*}(A_{\rm zar}((X,D)/S))) \\
&\simeq {\rm Im}(R^qf_*(P^D_kA_{\rm zar}((X,D)/S))\lo R^qf_{(X,D)/S*}({\cal O}_{(X,D)/S}))
\tag{1.4.5}\label{ali:wacp}
\end{align*} 
and the  weight filtration $P$ on $R^qf_{(X,D)/S*}({\cal O}_{(X,D)/S})$ $(q\in {\mab N})$: 
\begin{align*} 
&P_{k+q}R^qf_{(X,D)/S*}({\cal O}_{(X,D)/S})\\
&:={\rm Im}
(R^qf_*(P_kA_{\rm zar}((X,D)/S))\lo R^qf_{(X,D)/S*}(A_{\rm zar}((X,D)/S))) \\
&\simeq {\rm Im}(R^qf_*(P_kA_{\rm zar}((X,D)/S))\lo R^qf_{(X,D)/S*}({\cal O}_{(X,D)/S})). 
\tag{1.4.6}\label{ali:wpacp}
\end{align*} 
We also obtain the following new spectral sequences:  
\begin{align*} 
&E_1^{-k,q+k}:=R^{q-k}f_{D^{(k)}/\os{\circ}{S}}
({\cal O}_{D^{(k)}/S}\otimes_{\mab Z}
\eps_{D/S{\rm crys}}^{-1}(\vp^{(0,k)}_{\rm crys}((\os{\circ}{X},\os{\circ}{D})/\os{\circ}{S})))(-k) 
\\
&\Lo 
R^qf_{(X,D)/S}({\cal O}_{(X,D)/S}) 
\quad (q\in {\mab Z}) \tag{1.4.7}\label{eqn:eddsp}
\end{align*} 
and 
\begin{align*} 
E_1^{-k,q+k}&:=\bigoplus_{k'\leq k}\bigoplus_{j\geq \max \{-k',0\}} 
R^{q-2j-k}f_{\os{\circ}{X}{}^{(2j+k')}\cap \os{\circ}{D}{}^{(k-k')}
/\os{\circ}{S}}(
{\cal O}_{\os{\circ}{X}{}^{(2j+k')}\cap \os{\circ}{D}{}^{(k-k')}
/\os{\circ}{S}}\otimes_{\mab Z}\\
&\vp^{(2j+k',k-k')}_{\rm crys}
((\os{\circ}{X},\os{\circ}{D})/\os{\circ}{S}))(-j-k) \Lo 
R^qf_{(X,D)/S}({\cal O}_{(X,D)/S}) 
\quad (q\in {\mab Z}).
\tag{1.4.8}\label{eqn:sppdsp}
\end{align*} 
Here $\eps_{D/S}\col D\lo \os{\circ}{D}$ is 
%the inverse image of the crystalline orientation sheaf 
%$\vp^{(k)}_{\rm crys}(\os{\circ}{D}/\os{\circ}{S})$ of $\os{\circ}{D}/\os{\circ}{S}$ 
%by 
the natural morphism forgetting the log structure of $D$ over $S\lo \os{\circ}{S}$. 
\par 
It is not difficult to prove that 
the monodromy operator 
$N\col R^qf_{(X,D)/S}({\cal O}_{(X,D)/S})\lo 
R^qf_{(X,D)/S}({\cal O}_{(X,D)/S})(-1)$ induces the following morphisms 
\begin{align*} 
N\col P^D_kR^qf_{(X,D)/S}({\cal O}_{(X,D)/S})\lo 
P^D_kR^qf_{(X,D)/S}({\cal O}_{(X,D)/S})(-1)\quad (k\in  {\mab Z})
\tag{1.4.9}\label{ali:ondxd}
\end{align*} 
and 
\begin{align*} 
N\col P_kR^qf_{(X,D)/S}({\cal O}_{(X,D)/S})\lo 
P_{k-2}R^qf_{(X,D)/S}({\cal O}_{(X,D)/S})(-1)\quad (k\in  {\mab Z}). 
\tag{1.4.10}\label{ali:onxd}
\end{align*} 
%by using $(A_{\rm zar}((X,D)/S)),P^D,P)$.
In this paper we give the following conjecture: 

\begin{conj}[{\bf Relative $p$--adic monodromy conjecture}]\label{conj:repmc} 
Assume that $\os{\circ}{X}$ is projective over $\os{\circ}{S}$. 
Then the following morphism 
\begin{align*} 
N^e \col 
{\rm gr}_{q+k+e}^P{\rm gr}_k^{P^{D}}
R^qf_{(X,D)/S*}({\cal O}_{(X,D)/S}) 
\lo {\rm gr}_{q+k-e}^P{\rm gr}_k^{P^{D}}
R^qf_{(X,D)/S*}({\cal O}_{(X,D)/S})(-e)
\tag{1.5.1}
\end{align*}
induced by 
{\rm (\ref{ali:ondxd})} and {\rm (\ref{ali:onxd})}
%by the monodromy operator 
%$N\col R^qf_{(X,D)/S}({\cal O}_{(X,D)/S})\lo R^qf_{(X,D)/S}({\cal O}_{(X,D)/S})(-1)$ 
for $e,k\in {\mab N}$ is an isomorphism modulo torsion.  
\end{conj}
\parno
This conjecture is equivalent to the existence of the relative monodromy filtration $M$ on 
$R^qf_{(X,D)/S}({\cal O}_{(X,D)/S})\otimes_{\mab Z}{\mab Q}$ 
with respect to the filtration $P^D$ on 
$R^qf_{(X,D)/S}({\cal O}_{(X,D)/S})\otimes_{\mab Z}{\mab Q}$ exists 
and it is equal to $P$ on $R^qf_{(X,D)/S}({\cal O}_{(X,D)/S})\otimes_{\mab Z}{\mab Q}$. 
We prove the following: 

\begin{theo}\label{theo:tmd}
Let ${\cal V}$ be as in {\rm (\ref{theo:snbme})}. 
If $\os{\circ}{S}$ is a $p$-adic formal ${\cal V}$-scheme and 
if {\rm (\ref{conj:rmnc})} is true for $D^{(k)}/S$ for any $k\in {\mab N}$, 
then {\rm (\ref{conj:repmc})} is true. 
\end{theo} 

Because it is known that the conjecture  (\ref{conj:rmnc}) is true if 
the relative dimension of $\os{\circ}{X}$ over $\os{\circ}{S}$ is less than or equal $2$, 
we obtain the following corollary: 

\begin{coro}\label{theo:coj2}
Let ${\cal V}$ and $\os{\circ}{S}$ be as in {\rm (\ref{theo:tmd})}. 
If the relative dimension of $\os{\circ}{X}$ over $\os{\circ}{S}$ is less than or equal $2$, 
then {\rm (\ref{conj:repmc})} is true. 
\end{coro}

As a corollary of (\ref{theo:tmd}) and (\ref{theo:snbme}), 
we also obtain the following: 

\begin{coro}\label{theo:cojr}
%The conjecture {\rm (\ref{conj:repmc})} 
%is true in the following cases$:$ 
%\par 
%$(1)$ The relative dimension of $\os{\circ}{X}$ over $\os{\circ}{S}$ is less than or equal $2$. 
%\par 
%$(2)$ 
Let the notations be as in {\rm (\ref{theo:snbme})}. 
Assume that, 
for each connected component $S'_0$ of $S_0$, there exists an exact closed point $s\in S'_0$ 
such that the fiber $D^{(k)}_s/s$ of $D^{(k)}/S_0$ at $s$ is the log special fiber 
of a proper strict semistable family over a complete discrete valuation ring 
of equal characteristic. 
Then {\rm (\ref{conj:repmc})} is true. 
\end{coro}

\par 
This paper is organized as follows. 
\par 
In \S\ref{sec:snclv}  we recall the definition of an SNCL scheme.  
\par
In \S\ref{sec:snrdlv} we recall the definition of an SNCL scheme with a relative SNCD. 
\par 
In \S\ref{sec:pwtr} we define the preweight filtration on the log crystalline complex of 
an SNCL scheme with a relative SNCD in characteristic $p>0$ 
and investigate its fundamental properties. 
\par 
In \S\ref{sec:psc} we define the zariskian $p$-adic bifiltered El Zein-Steenbrink-Zucker complex. 
\par  
In \S\ref{sec:fcuc} we prove the contravariant functoriality of the zariskian 
$p$-adic bifiltered El Zein-Steenbrink-Zucker complex, 
which plays key roles in several results 
in later sections. 
\par
In \S\ref{sec:mod} we recall the monodromy operator defined in  
\cite{hk} and \cite{nb} and we show that it is identified with an endomorphism of 
the zariskian $p$-adic bifiltered El Zein-Steenbrink-Zucker complex. 
\par 
In \S\ref{sec:bckf} we prove the bifiltered base change theorem of 
the zariskian $p$-adic bifiltered El Zein-Steenbrink-Zucker complex.  
\par 
In \S\ref{sec:infhdi} we prove the infinitesimal deformation invariance of 
the zariskian $p$-adic bifiltered El Zein-Steenbrink-Zucker complex modulo torsion. 
\par 
In \S\ref{sec:filbo} we prove 
the $E_2$-degeneration of the $p$-adic weight spectral sequence
(\ref{eqn:sppdsp}) modulo torsion. 
\par 
In \S\ref{sec:e2} we prove the log convergence of the weight filtration on 
the log isocrystalline cohomological sheaf of 
a proper SNCL scheme with a relative SNCD in characteristic $p>0$. 
\par 
In \S\ref{sec:st} we prove the strict compatibility of the pull-back of  a morphism 
of proper SNCL schemes with relative SNCD's in characteristic $p>0$ with respect to 
weight filtrations. 
\par 
In \S\ref{sec:mn} we give (\ref{conj:repmc}) and we prove (\ref{theo:tmd}).  
\par
In \S\ref{sec:lbddmlif} we give the explicit descriptions of the edge morphisms between 
the $E_1$-terms of (\ref{eqn:eddsp}) and (\ref{eqn:sppdsp}). 
%\par
%\bigskip
%\parno
%{\bf Acknowledgment.}
% I would like to express my thanks to K.~Fujiwara and C.~Nakayama  
%for telling me a technique in the proof of (\ref{theo:sprme}) and 
%giving me a suggestion (\ref{rema:gfs}), respectively.

\bigskip
\parno
{\bf Notations.}
(1) For a log scheme $X$, $\os{\circ}{X}$ denotes the underlying scheme of $X$. 
For a morphism 
$\varphi \col X \lo Y$, $\os{\circ}{\varphi}$ denotes 
the underlying morphism 
$\os{\circ}{X} \lo \os{\circ}{Y}$ of $\varphi$. 
\par
(2) SNC(L)=simple normal crossing (log), 
SNCD=simple normal crossing divisor.  
\par
(3) For a complex $(E^{\bul}, d^{\bul})$ 
of objects in an exact additive category ${\cal A}$, 
we often denote  $(E^{\bul}, d^{\bul})$ only by 
$E^{\bul}$ as usual. 
%\par 
%(4) For a complex $(E^{\bul}, d^{\bul})$ in (3) and 
%for an integer $n$, $(E^{\bul}\{n\}, d^{\bul}\{n\})$ 
%denotes the following complex:
%\begin{equation*}
%\cdots \lo \us{q-1}{E^{q-1+n}}
%\os{d^{q-1+n}}{\lo} 
%\us{q}{E^{q+n}} \os{d^{q+n}}{\lo} 
%\us{q+1}{E^{q+1+n}} \os{d^{q+1+n}}{\lo} 
%\cdots.
%\end{equation*}
%Here the numbers under the objects 
%above in ${\cal A}$ mean the degrees.
\par 
(4)  For a morphism 
$f \col  (E^{\bul},d^{\bul}_E) \lo (F^{\bul},d^{\bul}_F)$ 
of complexes of objects in an exact additive category ${\cal A}$,  
let ${\rm MF}(f)$ (resp.~${\rm MC}(f)$) 
be the mapping fiber (resp.~the mapping cone) 
of $f$: ${\rm MF}(f):=E^{\bul} \oplus F^{\bul}[-1]$ 
with boundary morphism ``$(x,y)\lom (d_E(x),-d_F(y)+f(x))$"
(resp.~${\rm MC}(f):=E^{\bul}[1] \oplus F^{\bul}$ 
with boundary morphism ``$(x,y)\lom (-d_E(x),d_F(y)+f(x))$").  
\par
(5) Let $({\cal T},{\cal A})$ be a ringed topos. 
\medskip
\par
%(a) $C({\cal T},{\cal A})$ (resp.~${\rm C}^{\pm}({\cal T},{\cal A})$, 
%$C^{\rm b}({\cal T},{\cal A})$): the category of 
%(resp.~bounded below, bounded above, 
%bounded) complexes of ${\cal A}$-modules, 
\par
(a) $K({\cal T},{\cal A})$ (resp.~$K^{\pm}({\cal T},{\cal A})$, 
$K^{\rm b}({\cal T},{\cal A})$): the category of 
(resp.~bounded below, bounded above, 
bounded) complexes of ${\cal A}$-modules modulo homotopy, 
\par
(b) $D({\cal T},{\cal A})$ (resp.~$D^{\pm}({\cal T},{\cal A})$, 
$D^{\rm b}({\cal T},{\cal A})$): the derived category of
$K({\cal T},{\cal A})$ (resp.~$K^{\pm}({\cal T},{\cal A})$, 
$K^{\rm b}({\cal T},{\cal A})$). 
For an object $E^{\bul}$ of 
$C({\cal T},{\cal A})$ (resp.~$C^{\pm}({\cal T},{\cal A})$, 
$C^{\rm b}({\cal T},{\cal A})$), we denote 
simply by $E^{\bul}$
the corresponding object to $E^{\bul}$ in 
$D({\cal T},{\cal A})$ (resp.~$D^{\pm}({\cal T},{\cal A})$, 
$D^{\rm b}({\cal T},{\cal A})$). 
\par
%(d) The additional notation ${\rm F}$ 
%to the categories above means ``the filtered ''. 
%Here the filtration is an increasing filtration indexed by ${\mab Z}$.   
%For example,  ${\rm K}^+{\rm F}({\cal T},{\cal A})$ is 
%the category of bounded below filtered complexes modulo filtered homotopy.
\par
(c) ${\rm DF}^2({\cal T},{\cal A})$ 
(resp.~${\rm D}^{\pm}{\rm F}^2({\cal T},{\cal A})$, 
${\rm D}^{\rm b}{\rm F}^2({\cal T},{\cal A})$): 
the derived category of (resp.~bounded below, bounded  above, bounded) 
bifiltered complexes of ${\cal A}$-modules defined in \cite{nlf}. 

\section{SNCL schemes}\label{sec:snclv} 
In this section we quickly recall the definition of an SNCL(=simple normal crossing log) scheme 
defined in \cite{nb} and \cite{nhir}. 
\par 
 Let $S$ be a log (formal) scheme with ideal of definition ${\cal J}$ 
(${\cal J}$ may be the zero ideal $(0)$) 
 such that there exists an open covering 
$S=\bigcup_{i\in I}S_i$ such that $M_{S_i}$ 
is the association of the log structure 
is ${\mab N}\oplus {\cal O}_{S_i}^*$ with a morphism 
${\mab N}\oplus {\cal O}_{S_i}^*\owns (n,a) \lom 0^na \in {\cal O}_{S_i}$, 
where $0^0:=1\in {\cal O}_{S_i}$.    
In \cite{nb} we have called $S$ a (formal) family of log points. 
When $(\os{\circ}{S},{\cal I},\gam)$ is 
a formal PD-scheme with quasi-coherent PD-ideal sheaf 
and PD-structure, 
we call $(S,{\cal I},\gam)$ a formal PD-family of log points. 
\par 
Let $S=\bigcup_{i\in I}S_i$ be an open covering 
of $S$ such that 
$M_{S_i}/{\cal O}^*_{S_i}\simeq {\mab N}$.   
Take a local section
$t_i\in \Gam(S_i,M_S)$ $(i\in I)$ such that 
the image of $t_i$ in $\Gam(S_i,M_S/{\cal O}^*_S)$ is a generator.   
Set $S_{ij}:=S_i\cap S_j$. 
Then there exists a unique section 
$u_{ji}\in \Gam(S_{ij},{\cal O}^*_S)$ such that 
$t_j\vert_{S_{ij}}=u_{ji}(t_i \vert_{S_{ij}})$ 
in $\Gam(S_{ij},M_S)$.  
Denote $\ul{\rm Spf}_{\os{\circ}{S}_i}({\cal O}_{S_i}\{\tau_i]\})$ by 
${\mab A}^1_{\os{\circ}{S}_i}$, where $\tau_i$ is a variable. 
Endow 
${\mab A}^1_{\os{\circ}{S}_i}$ with the log structure 
$({\mab N}\owns 1 \lom \tau_i\in {\cal O}_{S_i}\{\tau_i\})^a$. 
Denote the resulting log scheme by $\ol{S}_i$. 
Then, by patching $\ol{S}_i$ and $\ol{S}_j$ along 
$\ol{S}_{ij}:=\ol{S}_i\cap \ol{S}_j$ by the equation 
$\tau_j\vert_{\ol{S}_{ij}}=u_{ji}\tau_i\vert_{\ol{S}_{ij}}$,
we have a log formal scheme $\ol{S}=\bigcup_{i\in I}\ol{S}_i$. 
The ideal sheaves $\tau_i{\cal O}_{\ol{S}_i}$'s ($i\in I$) 
patch together and we denote by 
${\cal I}_{\ol{S}}$ 
the resulting ideal sheaf of 
${\cal O}_{\ol{S}}$. 
The isomorphism class of the log scheme $\ol{S}$ 
is independent of the choice of the system of 
generators $\tau_i$'s. 
%By using this easy fact and by considering 
%the refinement of two open coverings of $S$, 
We see that the isomorphism class
of the log scheme $\ol{S}$ and the ideal sheaf 
${\cal I}_{\ol{S}}$ 
are also independent of 
the choice of the open covering $S=\bigcup_{i\in I}S_i$. 
The natural morphism $\ol{S} \lo \os{\circ}{S}$ is formally log smooth 
by the criterion of the log smoothness (\cite[(3.5)]{klog1}). 
For a log scheme $Y$ over $\ol{S}$, 
we denote 
${\cal I}_{\ol{S}}\otimes_{{\cal O}_{\ol{S}}}{\cal O}_Y$
by ${\cal I}_Y$ by abuse of notation. 
\par 
By killing ${\cal I}_{\ol{S}}$, 
we have a natural exact closed immersion 
$S\os{\sus}{\lo} \ol{S}$ over $\os{\circ}{S}$.  
 
%In \cite[(1.1.5), (1.1.6), (1.1.7)]{nb} we have proved 
%the following two easy propositions: 

%\begin{prop}[{\bf A special case of \cite[(1.1.5)]{nb}}]\label{prop:bar}
%Let $u\col S\lo S'$ be a morphism of families of log points. 
%Then $u$ induces a morphism $\ol{u}\col \ol{S}\lo \ol{S}{}'$ 
%fitting into the following commutative diagram 
%\begin{equation*} 
%\begin{CD}
%S@>{\subset}>> \ol{S}\\ 
%@V{u}VV @VV{{\ol{u}}}V \\
%S'@>{\subset}>> \ol{S}{}'. 
%\end{CD}
%\end{equation*} 
%\end{prop}  

Let $B$ be a scheme. 
For two nonnegative integers $a$ and $d$ such that 
$a\leq d$,  
consider the following scheme 
\begin{equation*} 
\os{\circ}{\mab A}_B(a,d):=
\ul{{\rm Spec}}_B
({\cal O}_B[x_0, \ldots, x_d]/(\prod_{i=0}^ax_i)). 
\end{equation*}

\begin{defi}[{\bf \cite[(1.1.9)]{nb}}]\label{defi:zbsnc}
Let $Z$ be a scheme over $B$ 
with structural morphism $g \col Z \lo B$. 
We call $Z$ an 
{\it SNC$($=simple normal crossing$)$ scheme} over 
$B$ if $Z$ is a union of smooth schemes 
$\{Z_{\lam}\}_{\lam \in \Lam}$ over $B$ ($\Lam$ is a set) 
and if, for any point of $z \in Z$, 
there exist an open neighborhood $V$ of 
$z$ and an open neighborhood $W$ of 
$g(z)$ such that  
there exists an \'{e}tale morphism  
$V \lo \os{\circ}{\mab A}_W(a,d)$ 
such that 
\begin{equation*} 
\{Z_{\lam}\vert_{V}\}_{\lam \in \Lam} 
=\{\{x_i=0\}\}_{i=0}^a, 
\tag{2.1.1}\label{eqn:defilsnc}
\end{equation*}   
where $a$ and $d$ are nonnegative integers 
such that $a\leq d$,  
which depend on a 
zariskian local neighborhood in $Z$.  
Here $\{Z_{\lam}\vert_{V}\}_{\lam \in \Lam}$ means the set of 
$Z_{\lam}\vert_{V}$'s such that $Z_{\lam}\vert_{V}\not=\emptyset$ by abuse of notation. 
We call the set $\{Z_{\lam}\}_{\lam \in \Lam}$ 
a {\it decomposition of $Z$ by smooth components of} 
$Z$ over $B$. 
We call $Z_{\lam}$ a 
{\it smooth component} of $Z$ over $B$. 
\end{defi} 

\parno  
Set $\Del:=\{Z_{\lam}\}_{\lam \in \Lam}$.  
For an open subscheme $V$ of $Z$, 
set $\Del_{V}:=\{Z_{\lam}\vert_{V}\}_{\lam \in \Lam}$. 
For a nonnegative integer $m$ and a subset  
$\ul{\lam}=\{\lam_0,\cdots, \lam_m\}$ 
$(\lam_i\in \Lam,\lam_i \not= \lam_j~{\rm if}~i\not= j)$ of $\Lam$,  
set 
\begin{equation}
Z_{\ul{\lam}} 
:=Z_{\lam_0}\cap \cdots \cap Z_{\lam_m}.  
\tag{2.1.2}\label{eqn:parlm}
\end{equation}
Set 
%$Z^{(0)}:=\us{\lam}{\coprod}Z_{\lam}$ and 
\begin{equation}
Z^{(m)} := \us{\# \ul{\lam}=m+1}{\coprod}Z_{\ul{\lam}}  
\tag{2.1.3}\label{eqn:kfntd}
\end{equation} 
for $m\in {\mab N}$ and 
%\begin{equation}
%Z^{(m)} :={\rm cosk}_0^Z(Z^{(0)})_m
%\tag{2.4.2}\label{eqn:kfdintd}
%\end{equation} 
%In this book, for the convenience of notation, 
%for the case where $\ul{\lam}=\phi$, 
%we set
$Z_{\phi}=Z$ ($\phi$ is the empty set) and $Z^{(-1)}=Z$. 
We also set $Z^{(m)}=\emptyset$ for $m\leq -2$.  
Note that, for an element $\lam$ of $\Lam$, 
$Z_{\{\lam\}}=Z_{\lam}$; we have to use both notations 
$Z_{\{\lam\}}$ and $Z_{\lam}$. 
In \cite[(1.1.12)]{nb} we have proved that $Z^{(m)}$ 
is independent of the choice of $\Del$.  
We have the natural morphism 
$Z^{(m)}\lo Z$.

As in \cite[(3.1.4)]{dh2} and \cite[(2.2.18)]{nh2}, 
we have an orientation sheaf $\vp^{(m)}_{\rm zar}(Z/B)$ 
$(m\in {\mab N})$ in $Z^{(m)}_{\rm zar}$ 
associated to the set $\Del$. 
If $B$ is a closed subscheme of $B'$ defined by 
a quasi-coherent nil-ideal sheaf ${\cal J}$ 
which has a PD-structure $\del$, 
then $\vp^{(m)}_{\rm zar}(Z/B)$ 
extends to an abelian sheaf 
$\vp^{(m)}_{\rm crys}(Z/B')$ in 
$(Z^{(m)}/(B',{\cal J},\del))_{\rm crys}$. 
\par 
Assume that $\os{\circ}{S}$ is a scheme  
and that $M_S$ is the free log structure of rank 1 
for the time being and 
we fix an isomorphism 
\begin{align*} 
(M_S,\al_S)\simeq ({\mab N}\oplus {\cal O}_S^*\lo {\cal O}_S) 
%\tag{1.1.15.1}\label{ali:nsos} 
\end{align*} 
globally on $S$.  
Let $M_S(a,d)$ be the log structure on 
${\mab A}_{\os{\circ}{S}}(a,d)$ associated to the following morphism 
\begin{equation*} 
{\mab N}^{{\oplus}(a+1)}\owns 
%e_i:=
(0, \ldots,0,\os{i}{1},0,\ldots, 0)\lom x_{i-1}\in 
{\cal O}_S[x_0, \ldots, x_d]/(\prod_{i=0}^ax_i).
\tag{2.1.4}  
\end{equation*} 
%Here we set $0^0:=1$. 
Let ${\mab A}_S(a,d)$ be the resulting log scheme over $S$.
%We call $M_S(a,d)$ the 
%{\it standard log structure} on 
%${\mab A}_{\os{\circ}{S}}(a,d)$. 
The diagonal morphism ${\mab N} \lo {\mab N}^{{\oplus}(a+1)}$ 
induces a morphism ${\mab A}_S(a,d) \lo S$ of log schemes.

\begin{defi}\label{defi:lfac}  
Let $S$ be a family of log points 
(we do not assume that $M_S$ is free).  
Let $f \col X(=(\os{\circ}{X},M_X)) \lo S$ 
be a morphism of log schemes 
such that $\os{\circ}{X}$ is 
an SNC scheme over $\os{\circ}{S}$ 
with a decomposition 
$\Del:=\{\os{\circ}{X}_{\lam}\}_{\lam \in \Lam}$ 
of $\os{\circ}{X}/\os{\circ}{S}$ by its smooth components. 
We call $f$ (or $X/S$) an 
{\it SNCL$($=simple normal crossing log$)$ scheme} if, 
for any point of $x \in \os{\circ}{X}$, 
there exist an open neighborhood $\os{\circ}{V}$ of 
$x$ and an open neighborhood $\os{\circ}{W}$ of 
$\os{\circ}{f}(x)$ such that $M_W$ is 
the free log structure of rank $1$ 
and such that  $f\vert_V$ factors through 
a solid and \'{e}tale morphism 
$V {\lo} {\mab A}_W(a,d)$  
such that $\Del_{\os{\circ}{V}}
=\{x_i=0\}_{i=0}^a$ in $\os{\circ}{V}$.  
(Similarly we can give the definition of 
a formal SNCL scheme over $S$ 
in the case where $\os{\circ}{S}$ is a formal scheme.) 
\end{defi}

\par 
Let $X$ be an SNCL scheme over $S$ 
with a decomposition 
$\Del:=\{\os{\circ}{X}_{\lam}\}_{\lam \in \Lam}$ 
of $\os{\circ}{X}/\os{\circ}{S}$ 
by its smooth components. 
%For a subset $\ul{\lam}=\{\lam_0,\cdots, \lam_m\}$ 
%$(\lam_i \not= \lam_j~{\rm if}~i\not= j, \lam_i\in \Lam)$ of $\Lam$,  
%endow $\os{\circ}{X}_{\ul{\lam}}$ with the inverse image of 
%the log structure of $X$ by  the natural morphism $\os{\circ}{X}_{\ul{\lam}}\lo \os{\circ}{X}$ 
%and let $X_{\ul{\lam}}$ be the resulting log scheme. 
We set $X_{\emptyset}:=X$ for convenience of notation. 
%For an integer $m\geq -1$, 
%let $X^{(m)}$ be the log scheme 
%whose underlying scheme 
%is $\os{\circ}{X}{}^{(m)}$ and 
%whose log structure is the inverse image of the log structure of $X$ 
%by the natural morphism $\os{\circ}{X}{}^{(m)}\lo \os{\circ}{X}$.
%For a negative integer $m\leq -2$, 
%set $X^{(m)}=\emptyset$.  
Let 
%$a_{\ul{\lam}}\col \os{\circ}{X}_{\ul{\lam}} \os{\sus}{\lo} X$ 
%and 
$\os{\circ}{X}{}^{(m)} \lo \os{\circ}{X}$ 
be 
%the natural exact closed immersion and 
the natural morphism. 
%\par 
%Because the structural morphism $X\lo S$ is integral, 
%$X_T:=X\times_ST$ is also integral for a morphism 
%$T\lo S$ of fine log schemes. Note that $(X_T)^{\circ}=
%\os{\circ}{X}\times_{\os{\circ}{S}}\os{\circ}{T}$. 
%We denote $(X_T)^{\circ}$ by $\os{\circ}{X}_T$. 

\par 
Assume that $M_S\simeq ({\mab N}\owns 1 \lom 0 \in {\cal O}_S)^a$.
Then $\ol{S}=({\mab A}^1_{\os{\circ}{S}}, 
({\mab N}\owns 1 \lom t \in {\cal O}_S[t])^a)$.  
%In this case, we say that $M_{\ol{S}}$ is free of rank $1$. 
Set 
\begin{equation*} 
\os{\circ}{\mab A}_{\ol{S}}(a,d):=
\ul{\rm Spec}_{\os{\circ}{S}}
({\cal O}_S[x_0, \ldots, x_{d},t]/(x_0\cdots x_a-t)). 
\end{equation*} 
Then we have a natural structural morphism 
$\os{\circ}{{\mab A}}_{\ol{S}}(a,d) \lo \os{\circ}{\ol{S}}$. 
Let $\ol{M}_{\ol{S}}(a,d)$ 
be the log structure associated to 
a morphism 
$${\mab N}^{a+1} \owns e_i=
(0, \ldots,0,\os{i}{1},0,\ldots, 0) \lom x_{i-1} \in 
{\cal O}_S[x_0, \ldots, x_{d},t]/(x_0\cdots x_a-t).$$ 
Set 
$${\mab A}_{\ol{S}}(a,d)
:=(\ul{\rm Spec}_{\os{\circ}{S}}
({\cal O}_S[x_0, \ldots, x_{d},t]
/(x_0\cdots x_a-t)),\ol{M}_{\ol{S}}(a,d)).$$  
Then we have the following natural morphism 
\begin{equation*} 
{\mab A}_{\ol{S}}(a,d) \lo \ol{S}.  
\tag{2.2.1}
\end{equation*}

\par 
By killing ``$t$'', we have the following natural exact closed immersion 
\begin{equation*} 
{\mab A}_S(a,d) \os{\sus}{\lo} 
{\mab A}_{\ol{S}}(a,d) 
\tag{2.2.2}
\end{equation*}
of fs(=fine and saturated) log schemes 
over $S\os{\sus}{\lo} \ol{S}$ 
if the log structure of $S$ is 
the free hollow log structure of rank 1.  
%we have the following commutative diagram of log schemes 
%\begin{equation*} 
%\begin{CD} 
%{\mab A}_S(a,d) @>{\sus}>> {\mab A}_{\ol{S}}(a,d) \\ 
%@VVV @VVV \\
%S @>{\sus}>> \ol{S} \\ 
%@VVV @VVV\\
%\os{\circ}{S} @= \os{\circ}{S}.  
%\end{CD} 
%\tag{2.8.2}\label{cd:aabss} 
%\end{equation*} 

\begin{defi}[{\bf A special case of \cite[(1.1.16)]{nb}}]\label{defi:lfeac}  
Let $\ol{f} \col \ol{X}\lo \ol{S}$ 
be a morphism of log schemes (on the Zariski sites). 
Set $X:=\ol{X}\times_{\ol{S}}S$.  
We call $\ol{f}$ (or $\ol{X}/\ol{S}$) 
a {\it strict semistable log scheme} over $\ol{S}$ 
if $\os{\circ}{\ol{X}}$ is a smooth scheme over $\os{\circ}{S}$, 
if $\os{\circ}{X}$ is a relative SNCD on 
$\os{\circ}{\ol{X}}/\os{\circ}{\ol{S}}$ 
(with some decomposition 
$\Del:=\{\os{\circ}{X}_{\lam}\}_{\lam \in \Lam}$ of 
$\os{\circ}{X}$ by smooth components of 
$\os{\circ}{X}$ over $\os{\circ}{S}$) 
and if, for any point of $x \in \os{\circ}{\ol{X}}$, 
there exist an open neighborhood $\os{\circ}{\ol{V}}$ of 
$x$ and an open neighborhood 
$\os{\circ}{\ol{W}}\simeq \ul{\rm Spec}_W({\cal O}_W[t])$ 
(where $W:=\ol{W}\times_{\ol{S}}S$) of 
$\os{\circ}{f}(x)$ such that 
$M_{\ol{W}}\simeq  
({\mab N}\owns 1 \lom t\in {\cal O}_W[t])^a$ and such that  
$\ol{f}\vert_{\ol{V}}$ factors through 
a solid and \'{e}tale morphism 
$\ol{V} {\lo} {\mab A}_{\ol{W}}(a,d)$ 
%(here we assume that $S$ is free of rank $1$) 
such that  
$\Del_{\os{\circ}{\ol{V}}}
=\{x_i=0\}_{i=0}^a$ in $\os{\circ}{\ol{V}}$.  
(Similarly we can give the definition of 
a strict semistable log {\it formal} scheme over $\ol{S}$.) 
\end{defi}

\section{SNCL schemes with relative SNCD's}\label{sec:snrdlv} 
In this section we recall the definition of an SNCL scheme with a relative SNCD
in \cite{ny} and we give elementary results which are necessary in later sections. 
First let us recall the following definition. 

\begin{defi}[{\bf \cite[(6.1)]{ny}}]\label{defi:hdd} 
(1) Let $S$ be a family of log points and let $X/S$ be an SNCL scheme. 
Let ${\rm Div}(\os{\circ}{X}/\os{\circ}{S})_{\geq 0}$  
be the set of effective Cartier divisors on $\os{\circ}{X}/\os{\circ}{S}$. 
Let $\os{\circ}{D}$ be an effective Cartier divisor on $\os{\circ}{X}/\os{\circ}{S}$. 
Endow $\os{\circ}{D}$ with the inverse image of the log structure of $X$ and 
let $D$ be the resulting log scheme. 
We call $D$ a {\it relative simple normal crossing divisor 
$(=:$relative SNCD$)$} on $X/S$ if there exists a family 
$\Del:=\{\os{\circ}{D}_{\mu}\}_{\mu \in M}$ of 
non-zero effective Cartier divisors on $\os{\circ}{X}/\os{\circ}{S}$ 
of locally finite intersection which are 
SNC(=simple normal crossing) schemes over $\os{\circ}{S}$ such that 
\begin{equation*}  
\os{\circ}{D} = \sum_{\mu \in M}\os{\circ}{D}_{\mu}
\quad \text{in} \quad {\rm Div}(\os{\circ}{X}/\os{\circ}{S})_{\geq 0} 
\tag{3.1.1}\label{eqn:dcsncd}
\end{equation*} 
and,  
for any point $z$ of $\os{\circ}{D}$, there exist 
a Zariski open neighborhood $\os{\circ}{V}$ of $z$ in $\os{\circ}{X}$ and 
the following cartesian diagram
\begin{equation*}
\begin{CD}
D\vert_V @>>> (y_1\cdots y_b=0) \\ 
@V{\bigcap}VV  @VVV \\
V @>{g}>> {\mab A}_{S'}(a,d-e)\times_{S'}{\mab A}^e_{S'} \\ 
@VVV  @VVV \\
S'@=S'
\end{CD}
\tag{3.1.2}\label{cd:1b}
\end{equation*}
for some nonnegative integers $a$, $b$, $d$ and $e$ 
such that $a\leq d-e$ and $b\leq e\leq d$. 
Here $S'$ is an open log subscheme of $S$ 
whose log structure is associated to 
the morphism ${\mab N}\owns 1\lom 0\in {\cal O}_{S'}$, 
${\mab A}_{S'}(a,d-e)\times_{S'}{\mab A}^{e}_{S'}={\mab A}_{S'}(a,d)$ 
$(0\leq a\leq d-e, e\leq d)$ is a log scheme whose 
underlying scheme is $\ul{\rm Spec}_{\os{\circ}{S}{}'}
({\cal O}_{S'}[x_0,\ldots, x_{d-e},y_1, \ldots, y_{e}]/(x_0\cdots x_a))$ and whose  
log structure is the association 
of the following morphism 
$${\mab N}^{\oplus a+1}\owns e_i\lom x_{i-1}\in {\cal O}_{S'}
[x_0,\ldots, x_{d-e},y_1, \ldots, y_e]/(x_0\cdots x_a),$$ 
%and ${\mab A}_{S'}(a,d-e)\times_{S'}{\mab A}^e_{S'}$ is obtained by 
%the diagonal embedding ${\mab N} \os{\subset}{\lo} {\mab N}^{\oplus a+1}$. 
$(y_1\cdots y_b=0)$ is an exact closed log subscheme of 
${\mab A}_{S'}(a,d-e)\times_{T}{\mab A}^e_{S'}$ 
defined by an ideal sheaf $(y_1\cdots y_b)$ and  
$g$ is solidly log \'{e}tale. 
Endow $\os{\circ}{D}_{\mu}$ with the inverse image of 
the log structure of $X$ and let $D_{\mu}$ be the resulting log scheme. 
We call $D_{\mu}$ an {\it SNCL component} of $D$ and 
the equality (\ref{eqn:dcsncd}) a {\it decomposition} of $D$ 
by SNCL components of $D$. 
\end{defi} 
We set 
$${\mab A}_{S'}(a,b,d,e):=
({\mab A}_{S'}(a,d-e)\times_S{\mab A}^e_{S'},(y_1\cdots y_b=0))$$
for $a\leq d-e$ and $b\leq e\leq d$. 
That is, ${\mab A}_{S'}(a,b,d,e)$ is a log scheme 
whose underlying scheme is 
$$\ul{\rm Spec}_{S'}({\cal O}_{S'}[x_0,\ldots, x_{d-e},y_1, \ldots, y_e]/(x_0\cdots x_a))$$ 
and whose log structure is the association 
of the following morphism 
$${\mab N}^{\oplus a+1}\oplus{\mab N}^{\oplus b}\lo {\cal O}_{S}
[x_0,\ldots, x_{d-e},y_1, \ldots, y_e]/(x_0\cdots x_a)$$ 
defined by 
$\owns e_i\lom x_{i-1}$ for $1\leq i\leq a+1$ and $\owns e_i\lom y_{i-(a+1)}$ for $a+1< i\leq a+b+1$.

\par
Before \cite[(6.2)]{ny} we have constructed a log structure $M(D)$ 
in the zariski topos $\os{\circ}{X}_{\rm zar}$ as in \cite[p.~61]{nh2} 
and in \cite[(6.2)]{ny} we have proved the following:

\begin{prop}\label{prop:logst}
Let the notations be as above. 
Let $z$ be a point of $D$ and let 
$V$ be an open neighborhood of $z$ in $X$ 
in the diagram {\rm (\ref{cd:1b})}. 
Assume that $z \in \bigcap_{i=1}^b\{y_i=0\}$. 
If $V$ is small, then the log structure 
$M(D) \vert_V \lo {\cal O}_V$ is 
isomorphic to ${\cal O}_V^*y_1^{{\mab N}} 
\cdots y_b^{{\mab N}} 
\os{\subset}{\lo} {\cal O}_V$. 
Consequently $M(D) \vert_V$ is associated to 
the homomorphism 
${\mab N}^b_V \owns e_i \lom  y_i \in M(D) \vert_V$ 
$(1 \leq i \leq b)$ of sheaves of monoids on $V$, 
where $\{e_i\}_{i=1}^b$ is the canonical 
basis of ${\mab N}^b$.  
In particular, $M(D)$ is fs{\rm (}=fine and saturated{\rm )}. 
\end{prop}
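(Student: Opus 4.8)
The plan is to prove Proposition~\ref{prop:logst} by a local computation, reducing everything to the explicit chart provided by the cartesian diagram (\ref{cd:1b}). First I would fix the point $z \in D$ and shrink $V$ so that the solidly log \'etale morphism $g \col V \lo {\mab A}_{S'}(a,d-e)\times_{S'}{\mab A}^e_{S'}$ is available, and so that $z$ lies on all the divisors $\{y_i=0\}$ $(1\leq i\leq b)$ cut out by the coordinates $y_1,\ldots,y_b$ pulled back via $g$. The key point is that, by definition of $M(D)$ (constructed before \cite[(6.2)]{ny} following \cite[p.~61]{nh2} as the log structure attached to the Cartier divisor $\os{\circ}{D}$ within $\os{\circ}{X}_{\rm zar}$), on a small enough $V$ the sheaf $M(D)\vert_V$ is the subsheaf of ${\cal O}_V$ of sections that are invertible away from $\os{\circ}{D}\vert_V$, i.e.\ ${\cal O}_V^*$ times the multiplicative monoid generated by a local equation of $\os{\circ}{D}\vert_V$. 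So the claim is really that $y_1\cdots y_b$ is such a local equation and that the $y_i$ are ``independent'' enough that the monoid generated is freely ${\cal O}_V^*y_1^{\mab N}\cdots y_b^{\mab N}$.

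The main steps I would carry out are: (i) Show $y_1\cdots y_b$ is a local equation of $\os{\circ}{D}\vert_V$ near $z$: this follows because $g$ is solidly log \'etale, hence in particular $\os{\circ}{g}$ is \'etale after the appropriate localization on the underlying schemes, and $\os{\circ}{D}$ is by construction the pullback along $g$ of the divisor $(y_1\cdots y_b=0)$ (this is exactly the cartesian square in (\ref{cd:1b})); an \'etale morphism pulls back a reduced-equation of a divisor to a local equation. (ii) Show that the $y_i$ restricted to $V$ form a regular sequence, or at least that the ideals $(y_i)$ are pairwise coprime / the associated Cartier divisors meet transversally, so that the submonoid of ${\cal O}_V^*\backslash \{0\}$ generated by $y_1,\ldots,y_b$ modulo ${\cal O}_V^*$ is free on the classes of the $y_i$: this again is pulled back from the model ${\mab A}_{S'}(a,d-e)\times_{S'}{\mab A}^e_{S'}$ where $y_1,\ldots,y_b$ are genuine independent coordinates, and \'etaleness preserves this. (iii) Conclude that $M(D)\vert_V = {\cal O}_V^* y_1^{\mab N}\cdots y_b^{\mab N} \os{\subset}{\lo} {\cal O}_V$, which immediately gives that $M(D)\vert_V$ is the log structure associated to the chart ${\mab N}^b_V \owns e_i \lom y_i$, and that it is saturated (hence fs) since ${\mab N}^b$ is saturated and ${\cal O}_V$ is reduced along $\os{\circ}{D}$; fineness is local and already visible from the finitely generated chart.

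The step I expect to be the main obstacle is (ii)--making precise the sense in which ``$V$ small'' lets us transport the freeness of the monoid generated by the $y_i$ from the explicit model ${\mab A}_{S'}(a,d-e)\times_{S'}{\mab A}^e_{S'}$ to $V$. One has to be careful that $g$ is only \'etale on underlying schemes after a localization (``solidly'' log \'etale), and that $V$ must be chosen small enough that no extra identifications among the $\os{\circ}{D}_\mu$'s occur and no component of $\os{\circ}{D}$ other than the ones through $z$ meets $V$; this is where the hypothesis $z\in\bigcap_{i=1}^b\{y_i=0\}$ and the local finiteness of the intersection in Definition~\ref{defi:hdd} are used. Once the local picture is pinned down, the identification of $M(D)\vert_V$ with the chart and the assertion ``$M(D)$ is fs'' are formal: a log structure which Zariski-locally admits a chart by a fs monoid is fs, and this has been checked for free log structures of this exact coordinate shape in \cite{nh2} and \cite{nb}, so I would simply cite those computations rather than redo them.
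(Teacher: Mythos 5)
First, note that the paper does not actually prove Proposition \ref{prop:logst} here: it is quoted from \cite[(6.2)]{ny} (the construction of $M(D)$ being the one of \cite[p.~61]{nh2}), so your proposal has to be measured against that argument, which works directly with the local model ${\mab A}_{S'}(a,d-e)\times_{S'}{\mab A}^e_{S'}$ and the precise construction of $M(D)$.

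Your sketch has a genuine gap at its center. You declare that ``by definition of $M(D)$'', on a small $V$ the sheaf $M(D)\vert_V$ is the subsheaf of sections of ${\cal O}_V$ invertible off $\os{\circ}{D}\vert_V$, ``i.e.\ ${\cal O}_V^*$ times the multiplicative monoid generated by a local equation of $\os{\circ}{D}\vert_V$'' -- but that ``i.e.'' is exactly the assertion of the proposition, not a definition, and your steps (i)--(iii) never prove it: they only show that $y_1\cdots y_b$ is a local equation and that the classes of the $y_i$ are independent modulo units, which bounds $M(D)\vert_V$ from below by ${\cal O}_V^*y_1^{\mab N}\cdots y_b^{\mab N}$ but says nothing about the reverse inclusion, i.e.\ that \emph{every} local section of the constructed log structure factors as a unit times a monomial in $y_1,\dots,y_b$. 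Worse, the identification of $M(D)$ with ``functions invertible outside $D$'' is itself untenable in the generality of this paper: $\os{\circ}{S}$ (hence $\os{\circ}{X}$) may be non-reduced, and already for $X={\mab A}^1_A$, $D=\{y=0\}$ with $\epsilon\in A$ nilpotent, the section $y+\epsilon$ is invertible off $D$ but is not of the form $u\,y^n$; moreover $\os{\circ}{X}$ is here only an SNC scheme (\'etale-locally ${\cal O}_{S'}[x,y]/(x_0\cdots x_a)$), so the local rings are neither normal nor integral and the classical ``unit times monomial'' factorization cannot be invoked for free. Handling precisely this -- the reverse inclusion for the actual $M(D)$ of \cite[p.~61]{nh2}, on a non-smooth, non-integral scheme over a possibly non-reduced base -- is the substance of \cite[(6.2)]{ny}, and it is the step your proposal assumes away. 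By contrast, the obstacle you single out (transporting freeness of the monoid along the solidly log \'etale $g$) is the easy part. A small additional slip: fs-ness does not need ``${\cal O}_V$ reduced along $\os{\circ}{D}$'' (which may fail); once the chart ${\mab N}^b_V\owns e_i\lom y_i$ is established, fineness and saturatedness follow formally from ${\mab N}^b$ being fine and saturated.
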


\parno 
Set 
\begin{align*} 
(X,D):=(X,M_X\oplus_{{\cal O}^*_X}M(D)\lo {\cal O}_X). 
\end{align*} 
Then $(X,D)/S$ is log smooth, integral and saturated. 
This is nothing but the fiber product of $(X,M_X)$ and 
$(\os{\circ}{X},M(D))$ over $\os{\circ}{X}$. 
As in the classical case (e.~g., \cite{dh2}), 
we can consider the log de Rham complex 
$\Om^{\bul}_{X/S}(\log D)$. 
%with logarithmic poles along $D$.  
It is clear that 
the complex $\Om^{\bul}_{X/S}(\log D)$ is equal to 
the log de Rham complex $\Om^{\bul}_{(X,D)/S}$.

\par
For $\ul{\mu}:=\{\mu_1, \mu_2,\ldots \mu_k\}$ $(\mu_i \not= \mu_j~{\rm if}~i\not= j)$,  
set 
\begin{equation*}
D_{\ul{\mu}} :=D_{\mu_1}\cap D_{\mu_2} \cap \cdots \cap D_{\mu_k} 
%%%\tag{9.13.1}
%\label{eqn:parlm}
\end{equation*}
for a positive integer $k$ and set
\begin{equation*}
D^{(k)} = 
\begin{cases} 
\quad X & (k=0), \\
\us{\# \ul{\mu}=k}{\coprod}D_{\ul{\mu}} & (k\geq 1)
\end{cases}
%%%\tag{9.13.2}
\label{eqn:kfdintd}
\end{equation*}
for a nonnegative integer $k$.
%Set 
%\begin{equation}
%D_{\emptyset}:=X  
%%%\tag{9.13.3}
%\label{eqn:dphix}
%\end{equation}
%for later convenience.
We see that 
$D^{(k)}$ is independent of the choice of 
the decomposition of $D$ by SNCL components of $D$.
\par

\begin{defi}\label{defi:simpr}
Let $S$ be a family of log points and let $\ol{X}/\ol{S}$ be a strictly semistable  scheme. 
Let $\os{\circ}{\ol{D}}$ be an effective Cartier divisor on $\os{\circ}{\ol{X}}/\os{\circ}{\ol{S}}$. 
Endow $\os{\circ}{\ol{D}}$ with the inverse image of the log structure of $\ol{X}$ and 
let $\ol{D}$ be the resulting log scheme. 
We call $\ol{D}$ a {\it relative simple normal crossing divisor 
$(=:$relative SNCD$)$} on $\ol{X}/\ol{S}$ if there exists a family 
$\ol{\Del}:=\{\os{\circ}{\ol{D}}_{\lam}\}_{\lam \in \Lam}$ of 
non-zero effective Cartier divisors on $\os{\circ}{\ol{X}}/\os{\circ}{\ol{S}}$ 
of locally finite intersection which are 
strictly semistable schemes over $\os{\circ}{\ol{S}}$ such that 
\begin{equation*}  
\os{\circ}{\ol{D}} = \sum_{\lam \in \Lam}\os{\circ}{\ol{D}}_{\lam}
\quad \text{in} \quad {\rm Div}(\os{\circ}{\ol{X}}/\os{\circ}{\ol{S}})_{\geq 0} 
\tag{3.3.1}\label{eqn:dcsdvd}
\end{equation*} 
and,  
for any point $z$ of $\os{\circ}{D}$, there exist 
a Zariski open neighborhood $\os{\circ}{V}$ of $z$ in $\os{\circ}{X}$ and 
the following cartesian diagram
\begin{equation*}
\begin{CD}
\ol{D}\vert_V @>>> (y_1\cdots y_b=0) \\ 
@V{\bigcap}VV  @VVV \\
V @>{g}>> {\mab A}_{\ol{S}{}'}(a,d-e)\times_{T}{\mab A}^e_{S'} \\ 
@VVV  @VVV \\
\ol{S}{}'@=\ol{S}{}'
\end{CD}
\tag{3.3.2}\label{cd:1vb}
\end{equation*}
for some nonnegative integers $a$, $b$, $d$ and $e$ 
such that $0\leq a\leq d-e$ and $b\leq e\leq d$. 
Here $\ol{S}{}'$ is an open log subscheme of $\ol{S}$ 
whose log structure is associated to 
the morphism ${\mab N}\owns 1\lom t\in {\cal O}_{\ol{S}{}'}$, 
$(y_1\cdots y_b=0)$ is an exact closed log subscheme of 
${\mab A}_{\ol{S}{}'}(a,d-e)\times_{T}{\mab A}^e_{S'}$ defined by 
an ideal sheaf $(y_1\cdots y_b)$,  
$g$ is solidly log \'{e}tale. 
%and ${\mab A}_{S'}(a,d-e)\times_{S'}{\mab A}^e_{S'}$ is obtained by 
%the diagonal embedding ${\mab N} \os{\subset}{\lo} {\mab N}^{\oplus a+1}$. 
Endow $\os{\circ}{\ol{D}}_{\lam}$ with the inverse image of 
the log structure of $X$ and let $\ol{D}_{\lam}$ be the resulting log scheme. 
We call $\ol{D}_{\lam}$ a {\it strictly semistable component} of $\ol{D}$ and 
the equality (\ref{eqn:dcsdvd}) a {\it decomposition} of $\ol{D}$ 
by strictly semistable components of $\ol{D}$. 
\end{defi} 

%Assume that the log structure of $\ol{S}{}'$ is associated to 
%the morphism ${\mab N}\owns 1\lom t\in {\cal O}_{\ol{S}{}'}$.  
We set 
$${\mab A}_{\ol{S}'}(a,b,d,e):=
({\mab A}_{\ol{S'}}(a,d-e)\times_{\ol{S}}{\mab A}^e_{\ol{S'}},(y_1\cdots y_b=0))$$
for $a\leq d-e$ and $b\leq e\leq d$. That is, ${\mab A}_{\ol{S'}}(a,b,d,e)$ is 
a log scheme whose underlying scheme is 
$$\ul{\rm Spec}_{\ol{S'}}({\cal O}_{S'}[t][x_0,\ldots, x_{d-e},y_1, \ldots, y_e]/(x_0\cdots x_a-t))$$ 
and whose log structure is the association 
of the following morphism 
$${\mab N}^{\oplus a+b+1}
={\mab N}^{\oplus a+1}\oplus{\mab N}^{\oplus b}\lo {\cal O}_{S}
[x_0,\ldots, x_{d-e},y_1, \ldots, y_e]/(x_0\cdots x_a-t)$$ 
defined by 
$\owns e_i\lom x_{i-1}$ for $1\leq i\leq a+1$ and $\owns e_i\lom y_{i-(a+1)}$ for $a+1< i\leq a+b+1$.

\begin{lemm}
[{\bf A special case of \cite[(1.1.6)]{nb}}]
\label{lemm:etl}
Let $S$ be a family of log points. Then the following hold$:$ 
\par 
$(1)$ Let $Y\lo S$ be a log smooth scheme 
which has a global chart ${\mab N}\lo P$. 
Then, Zariski locally on $Y$,  
there exists a log smooth scheme 
$\ol{Y}$ over $\ol{S}$ fitting into 
the following cartesian diagram 
\begin{equation*}
\begin{CD} 
Y @>{\sus}>> \ol{Y} \\
@VVV @VVV \\
S\times_{{\rm Spec}^{\log}({\mab Z}[{\mab N}])}
{\rm Spec}^{\log}({\mab Z}[P])
@>{\sus}>> \ol{S}\times_{{\rm Spec}^{\log}({\mab Z}[{\mab N}])}
{\rm Spec}^{\log}({\mab Z}[P]) \\ 
@VVV @VVV \\
S@>{\sus}>> \ol{S},  
\end{CD} 
\tag{3.4.1}\label{cd:xwedtx} 
\end{equation*}
where the vertical morphism 
$\ol{Y} \lo \ol{S}
\times_{{\rm Spec}^{\log}({\mab Z}[{\mab N}])}
{\rm Spec}^{\log}({\mab Z}[P])$ 
is solid and \'{e}tale.  
\par 
$(2)$ Let $S$ be a family of log points.  
Let $X$ be an SNCL scheme over $S$ with a relative SNCD 
$D$ on $X/S$. 
Zariski locally on $X$, there exists a strictly semistable log scheme 
$\ol{X}$ over $\ol{S}$ with a relative SNCD $\ol{D}$ on $\ol{X}/\ol{S}$ fitting into 
the following cartesian diagram for $0\leq a\leq d-e$ and $b\leq e\leq d:$ 
\begin{equation*}
\begin{CD} 
(X,D) @>{\sus}>> (\ol{X},\ol{D}) \\
@VVV @VVV \\
{\mab A}_{S}(a,b,d,e)
@>{\sus}>> {\mab A}_{\ol{S}}(a,b,d,e)\\ 
@VVV @VVV \\
S@>{\sus}>> \ol{S},  
\end{CD} 
\tag{3.4.2}\label{cd:xwbdtx} 
\end{equation*}
where we take a local chart of the log structure of $\ol{S}$ 
and 
the vertical morphism 
$(\ol{X},\ol{D}) \lo {\mab A}_{\ol{S}}(a,b,d,e)$ is solid and \'{e}tale.  
\end{lemm}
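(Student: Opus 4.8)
The plan is to establish part (1) first and deduce part (2) from it together with the local description of a relative SNCD in Definition~\ref{defi:hdd}. For (1), the starting point is Kato's chart criterion for log smoothness (\cite[(3.5)]{klog1}): since $Y\lo S$ is log smooth and carries the global chart ${\mab N}\lo P$, the induced morphism
\[
Y\lo T_0:=S\times_{{\rm Spec}^{\log}({\mab Z}[{\mab N}])}{\rm Spec}^{\log}({\mab Z}[P])
\]
is, Zariski locally on $Y$, solid and (classically) \'etale. Set $T:=\ol S\times_{{\rm Spec}^{\log}({\mab Z}[{\mab N}])}{\rm Spec}^{\log}({\mab Z}[P])$, so that $T_0=T\times_{\ol S}S$; since $S\os{\sus}{\lo}\ol S$ is the exact closed immersion cut out by $\mathcal I_{\ol S}$ recalled in \S\ref{sec:snclv}, the immersion $T_0\os{\sus}{\lo}T$ is exact closed and cut out by the single section ``$t$''. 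The one step that is not purely formal is the lifting of the \'etale morphism $Y\lo T_0$ along the \emph{non-nilpotent} principal ideal $(t)$; I would handle this by the standard-\'etale trick. Zariski locally on $Y$, write $Y\simeq{\rm Spec}({\cal O}_{T_0}[u]_{g_0}/(f_0))$ with $f_0$ monic and $f_0'$ a unit, lift $f_0$ to a monic $f\in{\cal O}_T[u]$ and $g_0$ to $g\in{\cal O}_T[u]$, and put $\ol Y:={\rm Spec}({\cal O}_T[u]_{gf'}/(f))$; this is standard-\'etale over $T$, and $\ol Y\times_TT_0=Y$ because $f_0'$ is already invertible downstairs, so inverting its lift changes nothing after restriction. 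Endowing $\ol Y$ with the log structure pulled back from $T$ makes $\ol Y\lo T$ solid and \'etale, hence $\ol Y\lo\ol S$ log smooth, and the Cartesian diagram (\ref{cd:xwedtx}) holds by construction. In the formal setting one runs the same argument over the restricted power series ring, equivalently lifting the standard-\'etale presentation compatibly through the system $\{T\bmod{\cal J}^n\}$.

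For part (2), by Definition~\ref{defi:hdd} there is, Zariski locally on $X$, a Cartesian diagram (\ref{cd:1b}) in which $g$ is solidly log \'etale and $D|_V$ is the pullback of the divisor $(y_1\cdots y_b=0)$; after shrinking we may take $S'=S$. Using Proposition~\ref{prop:logst}, which identifies $M(D)$ locally with the free monoid ${\mab N}^b$ on $y_1,\dots,y_b$, this chart upgrades to a solidly log \'etale morphism $(X,D)|_V\lo{\mab A}_S(a,b,d,e)$, the target being the local model with log structure associated to ${\mab N}^{\oplus a+1}\oplus{\mab N}^{\oplus b}$. By its very construction the pair ${\mab A}_S(a,b,d,e)\os{\sus}{\lo}{\mab A}_{\ol S}(a,b,d,e)$ is of the form $T_0\os{\sus}{\lo}T$ treated in part (1), with $P={\mab N}^{\oplus a+1}\oplus{\mab N}^{\oplus b}$ and $T_0={\mab A}_{\ol S}(a,b,d,e)\times_{\ol S}S=V(t)$. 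Hence part (1) applies: Zariski locally on $V$ the morphism $(X,D)|_V\lo{\mab A}_S(a,b,d,e)$ lifts to a solid \'etale morphism $(\ol X,\ol D)\lo{\mab A}_{\ol S}(a,b,d,e)$ whose restriction over $S$ recovers $(X,D)|_V$, where $\ol D$ is taken to be the pullback to $\ol X$ of $(y_1\cdots y_b=0)$.

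It then remains to check that $(\ol X,\ol D)/\ol S$ has the asserted structure. Forgetting the $y$-part of the log structure, $\ol X\lo{\mab A}_{\ol S}(a,d-e)\times_{\ol S}{\mab A}^e_{\ol S}$ is solid and \'etale, so $\ol X/\ol S$ is strictly semistable in the sense of Definition~\ref{defi:lfeac}; and $\ol D$, being cut out solidly-\'etale-locally by $y_1\cdots y_b$ with smooth components the pullbacks of $\{y_i=0\}$ $(1\le i\le b)$, is a relative SNCD on $\ol X/\ol S$ in the sense of Definition~\ref{defi:simpr}. The Cartesian diagram (\ref{cd:xwbdtx}) and its restriction to $(X,D)$ over $S$ are then immediate. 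I expect the main obstacle to be not any isolated deep point but the bookkeeping: keeping the combinatorial data $(a,b,d,e)$ and the two relevant monoids---${\mab N}^{\oplus a+1}$ governing the semistable/SNCL structure and ${\mab N}^{\oplus a+1}\oplus{\mab N}^{\oplus b}$ governing the pair $(X,D)$---aligned on the $S$-side and the $\ol S$-side, and carrying the ``solid'' adjective correctly through the lifting. This is routine; indeed the entire lemma is a special case of \cite[(1.1.6)]{nb}, which one may alternatively cite directly.
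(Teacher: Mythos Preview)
Your proof is correct. The paper itself gives no proof of this lemma: it is stated as a special case of \cite[(1.1.6)]{nb} and then used without further argument, exactly the shortcut you mention in your final sentence. Your write-up therefore supplies strictly more than the paper does.

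Two small remarks on precision. In part~(2) you say that the pair ${\mab A}_S(a,b,d,e)\subset{\mab A}_{\ol S}(a,b,d,e)$ is ``of the form $T_0\subset T$ treated in part~(1)'' with $P={\mab N}^{\oplus a+1}\oplus{\mab N}^{\oplus b}$. Strictly speaking $T_0=S\times_{{\rm Spec}^{\log}({\mab Z}[{\mab N}])}{\rm Spec}^{\log}({\mab Z}[P])$ has no extra affine coordinates, whereas ${\mab A}_S(a,b,d,e)$ carries the additional smooth variables $x_{a+1},\dots$ and $y_{b+1},\dots,y_e$; what you really use is that the \emph{argument} of part~(1)---lifting a standard-\'etale presentation along the principal ideal $(t)$---applies verbatim to any closed immersion cut out by $t$, in particular to ${\mab A}_S(a,b,d,e)\subset{\mab A}_{\ol S}(a,b,d,e)$. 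Second, in part~(1) it is worth saying explicitly that Kato's criterion \cite[(3.5)]{klog1} may require shrinking $Y$ and refining the chart so that $P^{\rm gp}/{\mab Z}$ is torsion-free before one obtains the solid-\'etale conclusion; this is routine and you implicitly allow it by working Zariski locally.
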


We also recall the following (\cite[(2.1.5)]{nh2}) describing 
the local structure of an exact closed immersion, 
which will be used in this section and later sections: 

\begin{prop}[{\bf \cite[(2.1.5)]{nh2}}]\label{prop:adla}
Let $T_0 \os{\sus}{\lo} T$ be a closed immersion of fine log schemes. 
Let $Y$ $($resp.~${\cal Q})$ be a log smooth scheme over $T_0$ 
$($resp.~$T)$, which can be considered as a log scheme over $T$.  
Let $\iota \col Y \os{\sus}{\lo} {\cal Q}$ 
be an exact closed immersion over $T$. 
Let $y$ be a point of $\os{\circ}{Y}$ and 
assume that there exists a chart $(Q \lo M_T, P \lo M_Y, Q 
\os{\rho}{\lo} P)$ of $Y \lo T_0 \os{\subset}{\lo} T$ 
on a neighborhood of $y$ such that 
$\rho$ is injective, ${\rm Coker}(\rho^{\rm gp})$ is torsion free 
and the natural homomorphism ${\cal O}_{Y,y} \otimes_{{\mab Z}} 
(P^{{\rm gp}}/Q^{{\rm gp}}) \lo \Om^1_{Y/T_0,y}$ is an isomorphism. 
Then, on a neighborhood of $y$, 
there exist a nonnegative integer $c$
and the following cartesian diagram 
\begin{equation}
{\small{\begin{CD}
Y @>>> {\cal Q}' @>>> {\cal Q}\\ 
@VVV  @VVV @VVV\\ 
(T_0\otimes_{{\mab Z}[Q]}{\mab Z}[P],P^a) 
@>{\sus}>> (T\otimes_{{\mab Z}[Q]}{\mab Z}[P],P^a)
@>{\sus}>>  
(T\otimes_{{\mab Z}[Q]}{\mab Z}[P],P^a)
\times_T{\mab A}^{c}_T, 
\end{CD}}}
\tag{3.5.1}\label{eqn:0txda}
\end{equation}
where the vertical morphisms are solid and \'{e}tale and 
the lower second horizontal morphism is the base change of 
the zero section $T \os{\sus}{\lo} {\mab A}^c_T$ 
and ${\cal Q}':={\cal Q}\times_{{\mab A}^c_T}T$. 
\end{prop}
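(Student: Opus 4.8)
The plan is to build the diagram by first producing a single chart that presents both $Y\lo T_0$ and ${\cal Q}\lo T$, and then applying the criterion of log smoothness \cite[(3.5)]{klog1} to each morphism separately. Since $\iota\col Y\os{\sus}{\lo}{\cal Q}$ is an exact closed immersion, $\iota^{-1}M_{\cal Q}\os{\sim}{\lo}M_Y$ and hence $\ol{M}_{{\cal Q},\iota(y)}\os{\sim}{\lo}\ol{M}_{Y,y}$; therefore, on a neighborhood of $\iota(y)$, the chart $P\lo M_Y$ lifts to a chart $P\lo M_{\cal Q}$ restricting to it along $\iota$ (lift the images of a finite generating set of $P$ through the surjection $M_{\cal Q}\lo \iota_*M_Y$). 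Combined with $Q\lo M_T$, this makes $\rho\col Q\lo P$ a chart of both $Y\lo T_0\os{\sus}{\lo}T$ and ${\cal Q}\lo T$. The injectivity of $\rho$ together with the torsion-freeness of ${\rm Coker}(\rho^{\rm gp})$ are exactly what is needed to place the chart in the situation of \cite[(3.5)]{klog1} with the torsion hypothesis vacuous, and they make $P^{\rm gp}/Q^{\rm gp}$ a free abelian group.

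Next I would read off the two outer vertical morphisms. For ${\cal Q}\lo T$, the criterion of log smoothness shows that the induced solid morphism ${\cal Q}\lo (T\otimes_{{\mab Z}[Q]}{\mab Z}[P],P^a)$ is classically smooth; after shrinking, a choice of functions $u_1,\ldots,u_c\in{\cal O}_{\cal Q}$ whose differentials together with the $d\log p$'s trivialize $\Om^1_{{\cal Q}/T}$ realizes it as a solid and \'{e}tale morphism ${\cal Q}\lo (T\otimes_{{\mab Z}[Q]}{\mab Z}[P],P^a)\times_T{\mab A}^c_T$, which is the right vertical arrow. For $Y\lo T_0$, the extra hypothesis that ${\cal O}_{Y,y}\otimes_{\mab Z}(P^{\rm gp}/Q^{\rm gp})\lo\Om^1_{Y/T_0,y}$ is an isomorphism forces the analogous smooth morphism $Y\lo(T_0\otimes_{{\mab Z}[Q]}{\mab Z}[P],P^a)$ to have relative dimension $0$, hence to be solid and \'{e}tale; this is the left vertical arrow, with no ${\mab A}^c$-factor on the $T_0$-side.

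The crux is to arrange the $c$ smooth coordinates of ${\cal Q}/T$ so that they cut $Y$ out, which is where the hypothesis on $\Om^1_{Y/T_0}$ does its real work. Writing ${\cal I}$ for the ideal of $\iota$, one wants $u_1,\ldots,u_c\in{\cal I}$ (so that $u_i\vert_Y=0$ automatically) whose differentials complete $\{d\log p\}$ to a basis of $\Om^1_{{\cal Q}/T}$ at $\iota(y)$. Such a choice exists precisely because $\Om^1_{Y/T_0,y}$ is purely logarithmic: the $c$ non-logarithmic cotangent directions of ${\cal Q}/T$ must all vanish on $Y$, so they are represented by the conormal map ${\cal I}/{\cal I}^2\lo\iota^*\Om^1_{{\cal Q}/T}$, whose image is complementary to the logarithmic directions that surject onto $\Om^1_{Y/T_0}$. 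With $u_i\vert_Y=0$, the immersion $\iota$ factors through ${\cal Q}':={\cal Q}\times_{{\mab A}^c_T}T$, so the right square is cartesian by definition and the middle vertical ${\cal Q}'\lo(T\otimes_{{\mab Z}[Q]}{\mab Z}[P],P^a)$ is solid and \'{e}tale by base change. Finally, $Y$ and ${\cal Q}'\times_TT_0$ are both solid and \'{e}tale over $(T_0\otimes_{{\mab Z}[Q]}{\mab Z}[P],P^a)$, and $\iota$ furnishes a section $Y\lo{\cal Q}'\times_TT_0$ over it that is the identity at $y$; a section of an \'{e}tale morphism is an open immersion, so after one more shrinking it is an isomorphism and the left square is cartesian as well. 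The main obstacle is exactly this transversality step, namely guaranteeing that the $c$ extra smooth directions of ${\cal Q}/T$ can be taken inside ${\cal I}$ and normal to $Y$; the isomorphism ${\cal O}_{Y,y}\otimes_{\mab Z}(P^{\rm gp}/Q^{\rm gp})\os{\sim}{\lo}\Om^1_{Y/T_0,y}$ is what makes this possible and is therefore indispensable.
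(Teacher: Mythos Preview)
The paper itself does not prove this proposition; it is quoted from \cite[(2.1.5)]{nh2} and carries no proof here, so there is nothing in the present text to compare against. Your outline is the standard argument and is essentially correct, but two steps are phrased loosely.

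First, exactness of $\iota$ does not give $\iota^{-1}M_{\cal Q}\os{\sim}{\lo}M_Y$ as sheaves of monoids: at the stalk at $y$ this map is surjective with kernel the group $1+{\cal I}_y$, where ${\cal I}$ is the ideal of $\iota$. What exactness does give is precisely the isomorphism $\ol{M}_{{\cal Q},\iota(y)}\os{\sim}{\lo}\ol{M}_{Y,y}$ you write next. Lifting the chart $P\lo M_Y$ to $P\lo M_{\cal Q}$ is then a lifting problem along the surjection $M_{{\cal Q},\iota(y)}\lo M_{Y,y}$ of integral monoids with group kernel; this is a standard fact for fine $P$, but it is not literally ``lift a finite generating set'' unless $P$ is free, since one must also check the relations. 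Second, in the final paragraph the map $Y\lo{\cal Q}'\times_TT_0$ is not a \emph{section} of an \'{e}tale morphism; rather it is a closed immersion between two schemes that are both solid and \'{e}tale over $(T_0\otimes_{{\mab Z}[Q]}{\mab Z}[P],P^a)$, hence is itself \'{e}tale, hence an open and closed immersion, and therefore a local isomorphism near $y$. With these two corrections your argument goes through.

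One simplification worth noting: once the chart has been lifted, both vertical maps in the diagram are strict, so the log structures play no further role and the transversality step (choosing $u_1,\ldots,u_c\in{\cal I}$ whose differentials trivialize $\Om^1_{{\cal Q}/(T\otimes_{{\mab Z}[Q]}{\mab Z}[P])}$) is the purely classical fact that a closed immersion from an \'{e}tale $Z_0$-scheme into a smooth $Z$-scheme, over a closed immersion $Z_0\os{\sus}{\lo}Z$, is locally cut out by a regular sequence of length equal to the relative dimension.
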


\par 
Let $S_0 \os{\sus}{\lo} S$ 
be a nil-immersion of families of log points. 
Let $X/S_0$ be an SNCL scheme with a relative SNCD $D$ on $X/S_0$. 
Let 
$(X,D) \os{\sus}{\lo} {\cal P}$ be 
an immersion into a log smooth scheme over $S$. 
Let 
$(X,D) \os{\sus}{\lo} \ol{\cal P}$ be also 
an immersion into a log smooth scheme over $\ol{S}$. 
(We do not assume that there exists an immersion 
${\cal P}\os{\sus}{\lo} \ol{\cal P}$).

\begin{prop}\label{prop:fosi} 
Assume that  
$(X,D)\os{\sus}{\lo} {\cal P}$ 
$($resp.~$(X,D)\os{\sus}{\lo} \ol{\cal P})$
has a global chart 
$P\lo Q$ $($resp.~$\ol{P}\lo Q)$. 
Let $P^{\rm ex}$ $($resp.~$\ol{P}{}^{\rm ex})$ 
be the inverse image of 
$Q$ by the morphism $P^{\rm gp}\lo Q^{\rm gp}$ 
$($resp.~$\ol{P}{}^{\rm gp}\lo Q^{\rm gp})$. 
Set ${\cal P}^{\rm prex}
:={\cal P}\times_{{\rm Spec}^{\log}({\mab Z}[P])}
{\rm Spec}^{\log}({\mab Z}[P^{\rm ex}])$ 
$($resp.~$\ol{\cal P}{}^{\rm prex}
:=\ol{\cal P}\times_{{\rm Spec}^{\log}({\mab Z}[\ol{P}])}
{\rm Spec}^{\log}({\mab Z}[\ol{P}{}^{\rm ex}]))$. 
Then, locally on $X$, there exists an open neighborhood 
${\cal P}^{\rm prex}{}'$ 
$($resp.~$\ol{\cal P}{}^{\rm prex}{}')$ 
of ${\cal P}^{\rm prex}$ 
$($resp.~$\ol{\cal P}{}^{\rm prex})$ 
fitting into 
the following cartesian diagram 
for some $0\leq a\leq d-e$ and $b\leq e\leq d\leq d':$ 
\begin{equation*}
\begin{CD}
(X,D) @>{\subset}>> {\cal P}^{\rm prex}{}'\\ 
@VVV  @VVV \\
{\mab A}_{S_0}(a,b,d,e) @>{\sus}>> {\mab A}_{S}(a,b,d',e)\\ 
\end{CD}
\tag{3.6.1}\label{eqn:xdbxda}
\end{equation*}
$($resp.~
\begin{equation*}
\begin{CD}
X @>{\subset}>> \ol{\cal P}{}^{\rm prex}{}'\\ 
@VVV  @VVV \\
{\mab A}_{S_0}(a,b,d,e) @>{\sus}>> {\mab A}_{\ol{S}}(a,b,d',e),
\end{CD}
\tag{3.6.2}\label{eqn:xdpelba}
\end{equation*}
where the vertical morphisms are solid and \'{e}tale.  
\end{prop}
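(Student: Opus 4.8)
The plan is to exactify the immersion $(X,D)\os{\sus}{\lo}{\cal P}$ by means of the construction ${\cal P}\rightsquigarrow{\cal P}^{\rm prex}$ and then to invoke the local structure theorem for exact closed immersions, \ref{prop:adla}. First I record the two facts that make this reduction work. Since $P^{\rm ex}=P^{\rm gp}\times_{Q^{\rm gp}}Q$ contains $P$, one has $(P^{\rm ex})^{\rm gp}=P^{\rm gp}$, so the second projection $P^{\rm ex}\lo Q$ satisfies $P^{\rm ex}=(P^{\rm ex})^{\rm gp}\times_{Q^{\rm gp}}Q$, i.e.\ it is an exact homomorphism of fs monoids ($P^{\rm ex}$ being saturated because $Q$ is). As $P^{\rm ex}\lo Q$ is a chart of the immersion $(X,D)\os{\sus}{\lo}{\cal P}^{\rm prex}$ obtained from $(X,D)\os{\sus}{\lo}{\cal P}$ by the base change defining ${\cal P}^{\rm prex}$, this immersion is exact, and after replacing ${\cal P}^{\rm prex}$ by an open subscheme we may assume it is a closed immersion. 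Moreover ${\cal P}^{\rm prex}\lo{\cal P}$ is the base change of ${\rm Spec}^{\log}({\mab Z}[P^{\rm ex}])\lo{\rm Spec}^{\log}({\mab Z}[P])$, which is log \'{e}tale since $P\hookrightarrow P^{\rm ex}$ induces an isomorphism on associated groups; hence ${\cal P}^{\rm prex}$ is log smooth over $S$. The same two facts hold with $({\cal P},S)$ replaced by $(\ol{\cal P},\ol{S})$.

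Now fix a point $y\in\os{\circ}{X}$. By \ref{lemm:etl}(2), after shrinking $X$ we may assume $(X,D)\lo{\mab A}_{S_0}(a,b,d_0,e_0)$ is solid and \'{e}tale for some $0\leq a\leq d_0-e_0$, $b\leq e_0\leq d_0$; put $c_0:=(d_0-b-a)+(e_0-b)$, the number of its non-logarithmic coordinates $w_1,\ldots,w_{c_0}$. Translating each $w_k$ so that $w_k(y)=0$ and shrinking further, the functions $1+w_k$ become units; together with the local branch equations $x_0,\ldots,x_a$ of $\os{\circ}{X}$ and $y_1,\ldots,y_b$ of $\os{\circ}{D}$ at $y$, they define a chart $(\,{\mab N}\lo M_S,\ R\lo M_{(X,D)},\ \rho\col{\mab N}\lo R\,)$ of $(X,D)\lo S_0\os{\sus}{\lo}S$ near $y$ with $R:={\mab N}^{\oplus a+1}\oplus{\mab N}^{\oplus b}\oplus{\mab Z}^{\oplus c_0}$ and $\rho$ the diagonal into ${\mab N}^{\oplus a+1}$. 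Here $\rho$ is injective, ${\rm Coker}(\rho^{\rm gp})\cong{\mab Z}^{\oplus a+b+c_0}$ is torsion free, and $d\log x_1,\ldots,d\log x_a$, $d\log y_1,\ldots,d\log y_b$, $d\log(1+w_1),\ldots,d\log(1+w_{c_0})$ form an ${\cal O}_{(X,D),y}$-basis of $\Om^1_{(X,D)/S_0,y}$, so that ${\cal O}_{(X,D),y}\otimes_{{\mab Z}}(R^{\rm gp}/{\mab N}^{\rm gp})\os{\sim}{\lo}\Om^1_{(X,D)/S_0,y}$. Thus the hypotheses of \ref{prop:adla} are satisfied for the exact closed immersion $(X,D)\os{\sus}{\lo}{\cal P}^{\rm prex}$ over $S_0\os{\sus}{\lo}S$, with $R$ in the role of its chart monoid $P$; note these hypotheses bear on $(X,D)/S_0$ alone and are unrelated to the chart $P\lo Q$ entering the construction of ${\cal P}^{\rm prex}$.

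Applying \ref{prop:adla} yields, on a neighborhood of $y$, a nonnegative integer $c$ and a cartesian diagram with top row an immersion $(X,D)\os{\subset}{\lo}{\cal P}^{\rm prex}{}'$, where ${\cal P}^{\rm prex}{}'$ is an open subscheme of ${\cal P}^{\rm prex}$, with solid and \'{e}tale vertical morphisms onto $(S_0\otimes_{{\mab Z}[{\mab N}]}{\mab Z}[R],R^a)$ and $(S\otimes_{{\mab Z}[{\mab N}]}{\mab Z}[R],R^a)\times_S{\mab A}^c_S$, and with bottom row the composite of the zero section $S_0\os{\sus}{\lo}{\mab A}^c_{S_0}$ and the base change along $S_0\os{\sus}{\lo}S$. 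A monoid-algebra computation ($\,{\mab Z}[{\mab N}]\lo{\mab Z}[R]$ sending the generator to $x_0\cdots x_a$, with $S_0$ and $S$ carrying locally the log structure ${\mab N}\owns 1\lom 0\,$) identifies $(S_0\otimes_{{\mab Z}[{\mab N}]}{\mab Z}[R],R^a)$ with a distinguished open subscheme of ${\mab A}_{S_0}(a,b,d,e)$ and $(S\otimes_{{\mab Z}[{\mab N}]}{\mab Z}[R],R^a)\times_S{\mab A}^c_S$ with the corresponding open of ${\mab A}_S(a,b,d',e)$, where $d:=a+b+c_0$, $e:=b$ and $d':=d+c$, so that $0\leq a\leq d-e$ and $b\leq e\leq d\leq d'$. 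Composing the vertical morphisms with these open immersions keeps them solid and \'{e}tale, and since the fibre product over ${\mab A}_S(a,b,d',e)$ automatically falls into the distinguished open loci the diagram stays cartesian; this is (\ref{eqn:xdbxda}). The same argument run with $S$ replaced throughout by $\ol{S}$, whose log structure is locally ${\mab N}\owns 1\lom t$, so that the relation $x_0\cdots x_a=t$ survives rather than being killed, gives (\ref{eqn:xdpelba}) with ${\mab A}_{\ol{S}}(a,b,d',e)$ in place of ${\mab A}_S(a,b,d',e)$.

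I expect the crux of the argument to lie in the second paragraph: one must produce a chart of $(X,D)/S_0$ near $y$ that is at once adapted to the relative SNCD $D$, so that \ref{lemm:etl}(2) provides it with logarithmic part ${\mab N}^{\oplus a+1}\oplus{\mab N}^{\oplus b}$, and large enough to realize the logarithmic-differentials isomorphism of \ref{prop:adla}; this forces the non-logarithmic directions of $(X,D)/S_0$ into the group summand ${\mab Z}^{\oplus c_0}$ and hence into the integers $d$ and $d'$, and the precise bookkeeping of the five parameters $(a,b,d,e,d')$, together with the fact that the chart defining ${\cal P}^{\rm prex}$ and the chart used to invoke \ref{prop:adla} are a priori unrelated and are reconciled only because the latter concerns $(X,D)/S_0$ alone, is where the argument demands genuine care. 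The remainder is the formal transport of \ref{prop:adla} through the exactification ${\cal P}\rightsquigarrow{\cal P}^{\rm prex}$.
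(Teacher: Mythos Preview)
Your approach is correct and is essentially the one the paper intends: the paper's proof consists entirely of the sentence ``the proof is the same as that of \cite[(1.1.40)]{nb} by using (\ref{prop:adla}),'' and you have faithfully unpacked what that means---exactify the immersion via ${\cal P}^{\rm prex}$, produce near each point a chart $R={\mab N}^{\oplus a+1}\oplus{\mab N}^{\oplus b}\oplus{\mab Z}^{\oplus c_0}$ satisfying the differential hypothesis of (\ref{prop:adla}), and read off the cartesian diagram. Your final paragraph of meta-commentary is not part of a proof and should be deleted, but the mathematical content is the intended argument.
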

\begin{proof} 
The proof is the same as that of \cite[(1.1.40)]{nb} by using (\ref{prop:adla}). 
\end{proof}

\begin{prop}[{\bf cf.~\cite[(1.1.41)]{nb}}]\label{prop:neoxeo}
$(1)$ Let ${\cal P}^{\rm ex}$ be the exactification of 
the immersion $(X,D)\os{\sus}{\lo}{\cal P}$.  
Then ${\cal P}^{\rm ex}$ is a formal SNCL scheme over $S$ with 
a unique relative SNCD ${\cal D}$ on ${\cal P}^{\rm ex}/S$ such that 
${\cal D}\times_{{\cal P}^{\rm ex}}X=D$.  
\par 
$(2)$ Let $\ol{\cal P}{}^{\rm ex}$ 
be the exactification of 
the immersion $X\os{\sus}{\lo}\ol{\cal P}$. 
Then $\ol{\cal P}{}^{\rm ex}$ 
is a formal strict semistable family over $\ol{S}$ with a unique relative SNCD 
$\ol{\cal D}$ on $\ol{\cal P}{}^{\rm ex}/\ol{S}$ such that 
$\ol{\cal D}\times_{\ol{\cal P}{}^{\rm ex}}X=D$. 
\end{prop}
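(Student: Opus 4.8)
The plan is to reduce to the standard local models and then to imitate the proof of \cite[(1.1.41)]{nb}, now carrying the divisor $D$ along. First I would use that the formation of the exactification is \'{e}tale local on ${\cal P}$ and compatible with solid \'{e}tale base change, and that the properties of being a formal SNCL scheme over $S$ (resp.~a formal strict semistable family over $\ol{S}$) and of carrying a relative SNCD are stable under solid \'{e}tale base change; hence the assertions may be checked after such a localization. By (\ref{prop:fosi}), I may then assume that $(X,D)\os{\sus}{\lo}{\cal P}$ (resp.~$X\os{\sus}{\lo}\ol{\cal P}$) has a global chart $P\lo Q$ (resp.~$\ol{P}\lo Q$) and fits into the cartesian diagram (\ref{eqn:xdbxda}) (resp.~(\ref{eqn:xdpelba})) with solid and \'{e}tale vertical morphisms. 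Since ${\cal P}^{\rm prex}{}'\lo{\cal P}$ is log \'{e}tale --- being an open immersion followed by the base change of a log \'{e}tale morphism --- and $(X,D)$ factors through it, the immersions $(X,D)\os{\sus}{\lo}{\cal P}$ and $(X,D)\os{\sus}{\lo}{\cal P}^{\rm prex}{}'$ have the same exactification; so I may replace ${\cal P}$ by ${\cal P}^{\rm prex}{}'$ and assume that ${\cal P}^{\rm prex}{}'$ is solid and \'{e}tale over the standard model ${\mab A}_S(a,b,d',e)$ (resp.~${\mab A}_{\ol{S}}(a,b,d',e)$).

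Next I would observe that the closed immersion ${\mab A}_{S_0}(a,b,d,e)\os{\sus}{\lo}{\mab A}_S(a,b,d',e)$ appearing in (\ref{eqn:xdbxda}) is the composite of a zero section --- killing the extra coordinates $x_{d-b+1},\ldots,x_{d'-b}$ --- with the base change of the exact closed immersion $S_0\os{\sus}{\lo}S$; since all the log structures that occur are associated to one and the same monoid ${\mab N}^{\oplus a+1}\oplus{\mab N}^{\oplus b}$, both factors, hence their composite, are strict closed immersions, i.e.~exact closed immersions. Base changing along the solid \'{e}tale morphism ${\cal P}^{\rm prex}{}'\lo{\mab A}_S(a,b,d',e)$ and using the cartesianity of (\ref{eqn:xdbxda}), the immersion $(X,D)\os{\sus}{\lo}{\cal P}^{\rm prex}{}'$ is again an exact closed immersion, so its exactification ${\cal P}^{\rm ex}$ is the formal completion of ${\cal P}^{\rm prex}{}'$ along $X$. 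Now ${\cal P}^{\rm prex}{}'$ is solid and \'{e}tale over ${\mab A}_S(a,b,d',e)$, whose log structure is, by construction, the amalgamated sum over ${\cal O}^*$ of the SNCL log structure generated by $x_0,\ldots,x_a$ and of the log structure $M({\cal D}_0)$ of the effective Cartier divisor ${\cal D}_0:=(y_1\cdots y_b=0)$; thus ${\mab A}_S(a,b,d',e)$ is a formal SNCL scheme over $S$ with relative SNCD ${\cal D}_0$ in the sense of (\ref{defi:hdd}). Since these notions are \'{e}tale local and stable under formal completion along a closed log subscheme, ${\cal P}^{\rm ex}$ is then a formal SNCL scheme over $S$ with relative SNCD ${\cal D}$, the pullback of ${\cal D}_0$, and the cartesianity of (\ref{eqn:xdbxda}) gives ${\cal D}\times_{{\cal P}^{\rm ex}}X=D$.

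The uniqueness of ${\cal D}$ is checked as in \cite[(1.1.41)]{nb}: its associated log structure $M({\cal D})$ must be the sub-log-structure of $M_{{\cal P}^{\rm ex}}$ complementary to the SNCL log structure of ${\cal P}^{\rm ex}$, and this decomposition of $M_{{\cal P}^{\rm ex}}$ is the unique one restricting on $X$ to the decomposition $M_X\oplus_{{\cal O}^*}M(D)$, since ${\cal O}_{{\cal P}^{\rm ex}}$ is complete along the ideal of $X$. Assertion $(2)$ follows by the identical argument over $\ol{S}$: one uses (\ref{eqn:xdpelba}) in place of (\ref{eqn:xdbxda}), ``formal strict semistable family'' in place of ``formal SNCL scheme'', and the fact that $S\os{\sus}{\lo}\ol{S}$ is strict, so that the composite ${\mab A}_{S_0}(a,b,d,e)\os{\sus}{\lo}{\mab A}_{\ol{S}}(a,b,d',e)$ is again an exact closed immersion.

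The step I expect to be the main obstacle is the verification at the level of the standard models. One must establish the chart hypotheses that allow one to invoke (\ref{prop:adla}) --- injectivity of the relevant chart homomorphism, torsion-freeness of the cokernel of $(P^{\rm ex})^{\rm gp}\lo Q^{\rm gp}$ (resp.~$(\ol{P}{}^{\rm ex})^{\rm gp}\lo Q^{\rm gp}$), and the isomorphy of the canonical map ${\cal O}_X\otimes_{\mab Z}(Q^{\rm gp}/{\mab N})\lo\Om^1_{(X,D)/S_0}$ --- which is exactly where the combinatorics of the monoids ${\mab N}^{\oplus a+1}\oplus{\mab N}^{\oplus b}$ attached to the standard models ${\mab A}_S(a,b,d,e)$ comes in; only after this is one entitled to reduce the computation of ${\cal P}^{\rm ex}$ to the standard model immersion, and one then has to confirm that, after the solid \'{e}tale base change and the formal completion, the factor $(y_1\cdots y_b=0)$ still defines a relative SNCD in the precise sense of (\ref{defi:hdd}) and restricts to $D$ on $X$. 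Everything else is routine bookkeeping parallel to \cite{nb}.
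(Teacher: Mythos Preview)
Your argument is correct, but it takes a longer route than the paper's. You reduce via (\ref{prop:fosi}) to the explicit standard model ${\mab A}_S(a,b,d',e)$, verify that the immersion $(X,D)\os{\sus}{\lo}{\cal P}^{\rm prex}{}'$ is already exact (so that the exactification is just the formal completion), and then read off the SNCL-with-SNCD structure from the standard model. The paper's proof bypasses (\ref{prop:fosi}) entirely: it simply uses the defining property of the exactification, namely that $(M_{\ol{\cal P}{}^{\rm ex}}/{\cal O}^*_{\ol{\cal P}{}^{\rm ex}})_x\os{\sim}{\lo}(M_{(X,D)}/{\cal O}^*_X)_x\simeq{\mab N}^{\oplus a+1}\oplus{\mab N}^{\oplus b}$, and then notes that the local sections of $M_{\ol{\cal P}{}^{\rm ex}}$ lifting the generators of ${\mab N}^{\oplus a+1}$ furnish the strict semistable coordinates while those lifting ${\mab N}^{\oplus b}$ define the relative SNCD $\ol{\cal D}$. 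For uniqueness the paper observes only that $\os{\circ}{\ol{\cal P}}{}^{\rm ex}$ and $\os{\circ}{X}$ have the same underlying topological space, which forces $\ol{\cal D}$ (as an effective Cartier divisor) to be determined by its restriction $D$.

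Your ``main obstacle'' paragraph is a red herring: once you have invoked (\ref{prop:fosi}) you already have the cartesian diagram with solid \'{e}tale verticals, so there is no separate need to verify the chart hypotheses of (\ref{prop:adla}); that proposition is used in the proof of (\ref{prop:fosi}), not here. What your approach buys is an explicit identification of ${\cal P}^{\rm ex}$ as a formal completion of a solid \'{e}tale neighborhood of the standard model, which can be useful downstream; what the paper's approach buys is brevity and independence from the auxiliary proposition (\ref{prop:fosi}).
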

\begin{proof} 
Because the proof of (1) is the same as that of that of (2), we give the proof of (2). 
Since the immersion $X\os{\sus}{\lo}\ol{\cal P}{}^{\rm ex}$ is exact, 
the natural morphism 
$(M_{\ol{\cal P}{}^{\rm ex}}/{\cal O}^*_{\ol{\cal P}{}^{\rm ex}})_x
\lo (M_{(X,D)}/{\cal O}^*_{(X,D)})_x\simeq {\mab N}^{\oplus a+1}\oplus {\mab N}^{\oplus b}$ 
is an isomorphism for a point $x\in \os{\circ}{X}$. 
The local coordinates of $\ol{\cal P}{}^{\rm ex}$ corresponding to 
${\mab N}^{\oplus a+1}$ tells us that  $\ol{\cal P}{}^{\rm ex}$ is a formal 
strict semistable scheme over $\ol{S}$;  
the local coordinates of $\ol{\cal P}{}^{\rm ex}$ corresponding to 
${\mab N}^{\oplus b}$ tells us that $\ol{\cal P}{}^{\rm ex}$ has a relative SNCD 
$\ol{\cal D}$ on $\ol{\cal P}{}^{\rm ex}/\ol{S}$ such that 
$\ol{\cal D}\times_{\ol{\cal P}{}^{\rm ex}}X=D$. 
The uniqueness of $\ol{\cal D}$ is obvious since 
the underlying topological space of 
$\os{\circ}{\ol{\cal P}}{}^{\rm ex}$ is equal to that of $\os{\circ}{X}$
and since the immersion $X\os{\sus}{\lo}\ol{\cal P}{}^{\rm ex}$ is exact. 
\end{proof}

\par 
Let $(X,D)$ be an SNCL scheme over $S$ with a relative SNCD. 
Let $S$ be a family of log points. Let $M_S=(M_S,\al_S)$ be the log structure of $S$. 
In \cite{nb} we have defined a log PD-enlargement 
$((T,{\cal J},\del),z)$ of $S$ as follows (cf.~\cite{oc}): 
$(T,{\cal J},\del)$ is a fine log PD-scheme such that ${\cal J}$ is quasi-coherent and 
$z\col T_0\lo S$ is a morphism of fine log schemes, 
where $T_0:=T \mod {\cal J}$. 
When we are given a morphism $S\lo S'$ of families of log points, 
we can define a morphism of  log PD-enlargements over 
the morphism $S\lo S'$ in an obvious way. 
Endow $\os{\circ}{T}_0$ with the inverse image of the log structure of $S$. 
We denote the resulting log scheme by $S_{\os{\circ}{T}_0}$. 
It is easy to see that the natural morphism $z^*(M_S)\lo M_{T_0}$ is injective
(\cite[(1.1.4)]{nb}). Hence we can consider $z^*(M_S)$ is a sub log structure of $M_{T_0}$. 
Let $M$ be the sub log structure of the log structure $(M_T,\al_T)$ of $T$ such that 
the natural morphism $M_T\lo M_{T_0}$ induces an isomorphism 
$M/{\cal O}_T^*\os{\sim}{\lo} z^*(M_S)/{\cal O}_{T_0}^*$.  
Let $S(T)$ be the log scheme $(\os{\circ}{T},(M,\al_T\vert_M))$. 
Since $M/{\cal O}_T^*$ is constant, we can consider the hollowing out  
$S(T)^{\nat}$ of $S(T)$ (\cite[Remark 7]{oc}); 
the log scheme $S(T)^{\nat}$ is a family of log points. 
Set $X_{\os{\circ}{T}_0}:=X_{}\times_SS_{\os{\circ}{T}_0}=
X\times_{\os{\circ}{S}}\os{\circ}{T}_0$
(we can consider $X$ as a fine log scheme over $\os{\circ}{S}$).    
By abuse of notation, we denote by the same symbol $f$ 
the structural morphism 
$(X_{\os{\circ}{T}_0},D_{\os{\circ}{T}_0}) \lo S_{\os{\circ}{T}_0}$. 
Let 
$a^{(l,m)}\col 
\os{\circ}{X}{}^{(l)}_{\os{\circ}{T}_0}\cap \os{\circ}{D}{}^{(m)}_{\os{\circ}{T}_0} \lo 
\os{\circ}{X}_{\os{\circ}{T}_0}$ 
be the base change morphisms of the natural morphism 
$\os{\circ}{X}{}^{(l)}\cap \os{\circ}{D}{}^{(m)} 
\lo \os{\circ}{X}$ with respect to $\os{\circ}{T}_0\lo \os{\circ}{S}$. 
\par 
Let $(X_{\os{\circ}{T}_0},D_{\os{\circ}{T}_0})'=
\coprod_{i\in I}(X_{\os{\circ}{T}_0},D_{\os{\circ}{T}_0})_i$ be the disjoint union 
of an affine open covering of $(X_{\os{\circ}{T}_0},D_{\os{\circ}{T}_0})$ over $S_{\os{\circ}{T}_0}$ 
($(X_{\os{\circ}{T}_0},D_{\os{\circ}{T}_0})_i$ is a log open subscheme of 
$(X_{\os{\circ}{T}_0},D_{\os{\circ}{T}_0})$). 
Assume that $f((X_{\os{\circ}{T}_0},D_{\os{\circ}{T}_0})_i)$ is contained in 
an affine open subscheme of 
$\os{\circ}{T}_0=(S_{\os{\circ}{T}_0})^{\circ}$ 
such that the restriction of $M_{S_{\os{\circ}{T}_0}}$ 
to this open subscheme is free of rank $1$. 
Assume also that there exists a solid and log \'{e}tale morphism 
$(X_{\os{\circ}{T}_0},D_{\os{\circ}{T}_0})_i\lo {\mab A}_{S_{\os{\circ}{T}_0}}(a,b,d,e)$. 
Then, replacing $(X_{\os{\circ}{T}_0},D_{\os{\circ}{T}_0})_i$ by 
a small log open subscheme of 
$(X_{\os{\circ}{T}_0},D_{\os{\circ}{T}_0})$, 
we can assume that there exists a log smooth scheme 
$\ol{\cal P}{}'_{i}/\ol{S(T)^{\nat}}$ fitting into the following commutative diagram 
\begin{equation*}
\begin{CD} 
(X_{\os{\circ}{T}_0},D_{\os{\circ}{T}_0})_{i}@>{\subset}>> \ol{\cal P}{}'_{i}\\
@VVV @VVV \\ 
{\mab A}_{S_{\os{\circ}{T}_0}}(a,b,d,e) @>{\subset}>>
{\mab A}_{\ol{S(T)^{\nat}}}(a,b,d',e) \\
@VVV @VVV \\ 
S_{\os{\circ}{T}_0}@>{\subset}>> \ol{S(T)^{\nat}},   
\end{CD}
\tag{3.7.1}\label{cd:xnip} 
\end{equation*}
where $d\leq d'$ and the morphism 
$\ol{\cal P}{}'_{i}\lo {\mab A}_{\ol{S(T)^{\nat}}}(a,b,d,e)$ 
is solid and \'{e}tale ((\ref{lemm:etl})). 
%By the axiom of choice, there exists a map 
%$\tau \col I \lo J$ such that 
%$f(\os{\circ}{X}_{0i}) \subset \os{\circ}{S}_{\tau(i)}$ 
%for all $i$'s.  
Set $\ol{\cal P}{}':=\coprod_{i\in I}\ol{\cal P}{}'_{i}$. 
Set also 
\begin{equation*} 
(X_{\os{\circ}{T}_0,n},D_{\os{\circ}{T}_0,n})
:={\rm cosk}_0^{(X_{\os{\circ}{T}_0},D_{\os{\circ}{T}_0})}((X_{\os{\circ}{T}_0},D_{\os{\circ}{T}_0})')_n 
\quad (n\in {\mab N})
\tag{3.7.2}\label{eqn:iincl}
\end{equation*}   
and 
\begin{equation*} 
\ol{\cal P}_{n}:={\rm cosk}_0^{\ol{S(T)^{\nat}}}(\ol{\cal P}{}')_n\quad (n\in {\mab N}). 
\tag{3.7.3}\label{eqn:iigncl}
\end{equation*}  
Then we have a simplicial SNCL scheme $(X_{\os{\circ}{T}_0\bul},D_{\os{\circ}{T}_0\bul})$ 
with a relative SNCD 
and an immersion 
\begin{equation*}  
(X_{\os{\circ}{T}_0\bul},D_{\os{\circ}{T}_0\bul}) \os{\sus}{\lo} \ol{\cal P}_{\bul} 
\tag{3.7.4}\label{eqn:eixd} 
\end{equation*} 
into a log smooth simplicial log scheme over $\ol{S(T)^{\nat}}$.  
Thus we have obtained the following:

\par 

\begin{prop}\label{prop:xbn}  
The following hold$:$ 
\par 
$(1)$ There exist a \v{C}ech diagram 
$(X_{\os{\circ}{T}_0\bul},D_{\os{\circ}{T}_0\bul})$ of $(X_{\os{\circ}{T}_0},D_{\os{\circ}{T}_0})$ 
and an immersion 
\begin{equation*}  
\begin{CD} 
(X_{\os{\circ}{T}_0\bul},D_{\os{\circ}{T}_0\bul})
@>{\sus}>> \ol{\cal P}_{\bul} \\
@VVV @VVV \\
S_{\os{\circ}{T}_0} @>{\subset}>> \ol{S(T)^{\nat}}
\end{CD} 
\tag{3.8.1}\label{eqn:eipxd} 
\end{equation*} 
into a log smooth simplicial log scheme over $\ol{S(T)^{\nat}}$. 
\par 
$(2)$ There exists a \v{C}ech diagram 
$(X_{\os{\circ}{T}_0\bul},D_{\os{\circ}{T}_0\bul})$ of 
$(X_{\os{\circ}{T}_0},D_{\os{\circ}{T}_0})$ 
and an immersion 
\begin{equation*}  
\begin{CD} 
(X_{\os{\circ}{T}_0\bul},D_{\os{\circ}{T}_0\bul})
@>{\sus}>> {\cal P}_{\bul} \\
@VVV @VVV \\
S_{\os{\circ}{T}_0} @>{\subset}>> S(T)^{\nat}
\end{CD} 
\tag{3.8.2}\label{eqn:eolnpxd} 
\end{equation*} 
into a log smooth simplicial log scheme over $S(T)^{\nat}$. 
\end{prop}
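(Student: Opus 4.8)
The plan is to assemble the simplicial data out of local pieces produced by the local structure theorems already established above, and then pass to $0$-th coskeleta. First I would fix an affine open covering of $\os{\circ}{X}_{\os{\circ}{T}_0}$ (resp.\ of $\os{\circ}{X}$, in case (1)) fine enough that on each member there is a solid log \'{e}tale morphism to a standard chart ${\mab A}_{S_{\os{\circ}{T}_0}}(a,b,d,e)$; such a covering exists by the very definition of an SNCL scheme with a relative SNCD (Definition~\ref{defi:hdd}). Taking the disjoint union over the covering yields $(X_{\os{\circ}{T}_0},D_{\os{\circ}{T}_0})'=\coprod_{i\in I}(X,D)_i$ together with a morphism to $(X_{\os{\circ}{T}_0},D_{\os{\circ}{T}_0})$ that is surjective and admits sections \'{e}tale locally.

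Next, for each index $i$ I would invoke Proposition~\ref{prop:fosi} --- whose proof in turn rests on the local description of exact closed immersions in Proposition~\ref{prop:adla} --- to get, after shrinking, an immersion $(X,D)_i\os{\sus}{\lo}\ol{\cal P}{}'_i$ into a log smooth scheme over $\ol{S(T)^{\nat}}$ (resp.\ over $S(T)^{\nat}$) fitting into the cartesian diagram \eqref{cd:xnip}. The mechanism is that ${\mab A}_{S_{\os{\circ}{T}_0}}(a,b,d,e)$ sits inside ${\mab A}_{\ol{S(T)^{\nat}}}(a,b,d',e)$ by a natural exact closed immersion obtained by reinstating the parameter and adjoining dummy coordinates, and solidity together with \'{e}taleness lifts this embedding from the chart to $(X,D)_i$; see Lemma~\ref{lemm:etl}. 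Setting $\ol{\cal P}{}':=\coprod_i\ol{\cal P}{}'_i$ gives an immersion $(X_{\os{\circ}{T}_0},D_{\os{\circ}{T}_0})'\os{\sus}{\lo}\ol{\cal P}{}'$ over $S_{\os{\circ}{T}_0}\os{\sus}{\lo}\ol{S(T)^{\nat}}$.

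Then I would form $0$-th coskeleta: put $(X_{\os{\circ}{T}_0,n},D_{\os{\circ}{T}_0,n}):={\rm cosk}_0^{(X_{\os{\circ}{T}_0},D_{\os{\circ}{T}_0})}((X,D)'_{\os{\circ}{T}_0})_n$ and $\ol{\cal P}_n:={\rm cosk}_0^{\ol{S(T)^{\nat}}}(\ol{\cal P}{}')_n$. Because a finite fiber product of log smooth schemes over a common base is again log smooth, each $\ol{\cal P}_n$ is log smooth over $\ol{S(T)^{\nat}}$; because a finite fiber product of immersions over a base is an immersion, the degreewise maps $(X_{\os{\circ}{T}_0,n},D_{\os{\circ}{T}_0,n})\os{\sus}{\lo}\ol{\cal P}_n$ are immersions; and functoriality of ${\rm cosk}_0$ makes these compatible with all face and degeneracy maps. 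This produces the immersion of simplicial log schemes \eqref{eqn:eipxd}, proving (1). Part (2) is identical, with $\ol{S(T)^{\nat}}$ and the $\ol{\cal P}{}'_i$ replaced by $S(T)^{\nat}$ and the log smooth schemes over $S(T)^{\nat}$ supplied by the other half of Proposition~\ref{prop:fosi}, which gives \eqref{eqn:eolnpxd}.

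The step I expect to demand the most care is the shrinking in the second paragraph: one must arrange the open neighborhoods ${\cal P}^{\rm prex}{}'$ (resp.\ $\ol{\cal P}{}^{\rm prex}{}'$) furnished by Proposition~\ref{prop:fosi} so that their pullbacks to $(X,D)_i$ still exhaust $(X,D)_i$, i.e.\ so that the further refinement does not destroy surjectivity onto $(X_{\os{\circ}{T}_0},D_{\os{\circ}{T}_0})$. This is the only genuine subtlety, since ${\rm cosk}_0$ of a morphism uses only the source, the target, and the map between them --- no gluing data over overlaps --- so no cross-index compatibility of the local embeddings is needed, and what remains is exactly the local existence assertion of Proposition~\ref{prop:fosi}.
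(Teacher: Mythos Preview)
Your approach is essentially the paper's own construction, which is carried out in the discussion immediately preceding the proposition (equations (3.7.1)--(3.7.4)): choose a sufficiently fine affine covering, lift each piece via Lemma~\ref{lemm:etl}(2), take disjoint unions, and pass to ${\rm cosk}_0$. One citation is misplaced: Proposition~\ref{prop:fosi} describes the local structure of an \emph{already given} immersion $(X,D)\os{\sus}{\lo}\ol{\cal P}$ into a log smooth scheme (via exactification), so it cannot be invoked to \emph{produce} the immersion $(X,D)_i\os{\sus}{\lo}\ol{\cal P}{}'_i$ in the first place; the operative input is Lemma~\ref{lemm:etl}(2), which you do correctly cite in the following sentence.
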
 

%\begin{lemm}\label{lemm:xdst}
%Let $Y\os{\sus} {\cal Q}_1$ and $Y\os{\sus} {\cal Q}_2$ be an immersion. 
%Assume that $\os{\circ}{\cal Q}_1=\os{\circ}{\cal Q}_2$. 
%If $\os{\circ}{\cal Q}{}^{\rm ex}_1=\os{\circ}{\cal Q}{}^{\rm ex}_2$, 
%then the immersion $Y \os{\sus}{\lo} {\cal Q}_1^{\rm ex}\otimes
%\end{lemm}
%\begin{proof} 
%Obvious. 
%\end{proof}

\begin{coro}\label{coro:exem} 
Let the notations be as above. 
Then there exist  an exact immersion 
\begin{equation*}  
\begin{CD} 
(X_{\os{\circ}{T}_0\bul},D_{\os{\circ}{T}_0\bul}) @>{\sus}>> 
(\ol{\cal X}_{\bul},\ol{\cal D}_{\bul}) \\
@VVV @VVV \\
S_{\os{\circ}{T}_0} @>{\subset}>> \ol{S(T)^{\nat}}
\end{CD} 
\tag{3.9.1}\label{eqn:eipexxd} 
\end{equation*} 
into a simplicial strict semistable log formal scheme 
with a simplicial relative SNCD over 
$\ol{S(T)^{\nat}}$ and an exact immersion 
\begin{equation*}  
\begin{CD} 
(X_{\os{\circ}{T}_0\bul},D_{\os{\circ}{T}_0\bul}) @>{\sus}>> 
({\cal X}_{\bul},{\cal D}_{\bul}) \\
@VVV @VVV \\
S_{\os{\circ}{T}_0} @>{\subset}>> S(T)^{\nat}
\end{CD} 
\tag{3.9.2}\label{eqn:eoexpxd} 
\end{equation*} 
into a simplicial formal SNCL scheme over $S(T)^{\nat}$. 
\end{coro}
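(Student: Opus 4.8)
The plan is to derive both exact immersions of (\ref{coro:exem}) by taking the exactification of the simplicial immersions already produced in (\ref{prop:xbn}) and then reading off the local structure of the resulting objects from (\ref{prop:neoxeo}).

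First I would start from the immersion $(X_{\os{\circ}{T}_0\bul},D_{\os{\circ}{T}_0\bul})\os{\sus}{\lo}\ol{\cal P}_{\bul}$ of (\ref{prop:xbn})(1) into a log smooth simplicial log formal scheme over $\ol{S(T)^{\nat}}$, and take the exactification in each simplicial degree, i.e.\ for each $n$ replace $(X_{\os{\circ}{T}_0,n},D_{\os{\circ}{T}_0,n})\os{\sus}{\lo}\ol{\cal P}_n$ by its exactification $(X_{\os{\circ}{T}_0,n},D_{\os{\circ}{T}_0,n})\os{\sus}{\lo}\ol{\cal P}{}^{\rm ex}_n$. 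Since the exactification is functorial for morphisms of immersions, the face and degeneracy morphisms of $\ol{\cal P}_{\bul}$ induce ones on $\{\ol{\cal P}{}^{\rm ex}_n\}_n$, so we obtain a simplicial log formal scheme $\ol{\cal X}_{\bul}:=\ol{\cal P}{}^{\rm ex}_{\bul}$ over $\ol{S(T)^{\nat}}$ together with an exact immersion $(X_{\os{\circ}{T}_0\bul},D_{\os{\circ}{T}_0\bul})\os{\sus}{\lo}\ol{\cal X}_{\bul}$. Doing the same with the immersion (\ref{eqn:eolnpxd}) of (\ref{prop:xbn})(2) yields ${\cal X}_{\bul}:={\cal P}{}^{\rm ex}_{\bul}$ over $S(T)^{\nat}$ and an exact immersion $(X_{\os{\circ}{T}_0\bul},D_{\os{\circ}{T}_0\bul})\os{\sus}{\lo}{\cal X}_{\bul}$.

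Next, for each $n$, I would apply (\ref{prop:neoxeo})(2) to the degree-$n$ immersion into the log smooth formal scheme $\ol{\cal P}_n$ over $\ol{S(T)^{\nat}}$: it shows that $\ol{\cal X}_n=\ol{\cal P}{}^{\rm ex}_n$ is a formal strict semistable family over $\ol{S(T)^{\nat}}$ carrying a \emph{unique} relative SNCD $\ol{\cal D}_n$ with $\ol{\cal D}_n\times_{\ol{\cal X}_n}X_{\os{\circ}{T}_0,n}=D_{\os{\circ}{T}_0,n}$; likewise (\ref{prop:neoxeo})(1) shows that ${\cal X}_n$ is a formal SNCL scheme over $S(T)^{\nat}$ with a unique relative SNCD ${\cal D}_n$ restricting to $D_{\os{\circ}{T}_0,n}$. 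The uniqueness clause is what upgrades these degreewise data to simplicial ones: for a transition morphism $\ol{\cal X}_n\lo\ol{\cal X}_m$ of $\ol{\cal X}_{\bul}$, which is a composition of projections and diagonals of fibre products over $\ol{S(T)^{\nat}}$ of the solid and \'{e}tale object $\ol{\cal P}{}'$ appearing in (\ref{cd:xnip}), the preimage of $\ol{\cal D}_m$ is again a relative SNCD on $\ol{\cal X}_n$ whose intersection with $X_{\os{\circ}{T}_0,n}$ is the corresponding preimage of $D_{\os{\circ}{T}_0,m}$, hence equals $D_{\os{\circ}{T}_0,n}$ because the $D_{\os{\circ}{T}_0,\bul}$ already form a simplicial relative SNCD; by uniqueness that preimage therefore equals $\ol{\cal D}_n$. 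Thus the $\ol{\cal D}_n$'s glue to a simplicial relative SNCD $\ol{\cal D}_{\bul}$ on $\ol{\cal X}_{\bul}/\ol{S(T)^{\nat}}$, producing (\ref{eqn:eipexxd}); the same argument with the ${\cal D}_n$'s yields (\ref{eqn:eoexpxd}).

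The step I expect to be the main obstacle is precisely the simplicial bookkeeping in the third paragraph: one must check carefully that degreewise exactification is compatible with the coskeletal structure, so that $\ol{\cal X}_{\bul}$ genuinely is a simplicial log formal scheme and the immersion into it is simplicial and exact, and that the transition morphisms of $\ol{\cal X}_{\bul}$ are of a type — formally log \'{e}tale, up to the standard projection/diagonal decomposition of a \v{C}ech coskeleton — under which the preimage of a relative SNCD is again a relative SNCD, so that the uniqueness assertion of (\ref{prop:neoxeo}) can actually be applied. All of this is local on $X$ and rests on the explicit local model in (\ref{cd:xnip}) together with (\ref{lemm:etl}); granting (\ref{prop:neoxeo}), the remaining verifications — that exactification commutes with the relevant fibre products over $\ol{S(T)^{\nat}}$ (resp.\ $S(T)^{\nat}$), and that being a formal strict semistable (resp.\ SNCL) scheme with a relative SNCD is stable under these transition morphisms — are routine.
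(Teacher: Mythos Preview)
Your proposal is correct and follows essentially the same approach as the paper: the paper's proof is a single line citing (\ref{prop:neoxeo}) and (\ref{prop:xbn}), and you have spelled out exactly how these two ingredients combine, including the use of the uniqueness clause in (\ref{prop:neoxeo}) to promote the degreewise relative SNCD's to simplicial ones.
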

\begin{proof}
This follows from (\ref{prop:neoxeo}) and (\ref{prop:xbn}). 
\end{proof}

We conclude this section by recalling   
the ``mapping degree function'' defined in \cite{nb}.  
\par  
Let $v \col S \lo S'$ be a morphism of families of log points. 
Let $y$ be a point of $\os{\circ}{S}$. 
Let $h\col {\mab N}= M_{S',v(y)}/{\cal O}^*_{S',v(y)}
\lo M_{S,y}/{\cal O}^*_{S,y}={\mab N}$ 
be the induced morphism. 
Let $d\in {\mab N}$ be the image of $1\in {\mab N}$ by $h$.

\begin{defi}[{\bf A special case of \cite[(1.1.42)]{nb}}]\label{defi:ddef} 
We call $\deg(v)_y:=d$ 
the {\it $($mapping$)$ degree} of $v$ at $y$. 
We call $\deg(v)\col \os{\circ}{S}\lo {\mab Z}_{\geq 1}$ 
the {\it $($mapping$)$ degree function} of $v$. 
\end{defi} 
 
%\par 
% In the case where $\os{\circ}{S}$ is a formal scheme, 
% we can obtain the analogous results to the results in this section. 

\section{Preweight filtrations}\label{sec:pwtr} 
Let $Y$ be a fine log (formal) scheme 
over a fine log (formal) scheme $U$ 
with structural morphism $g\col Y\lo U$. 
Let $(N_Y,\al\vert_{N_Y})$ be a sub log structure of the log structure of $(M_Y,\al)$. 
Set $Y_{N_Y}:=(\os{\circ}{Y},(N_Y,\al\vert_{N_Y}))$. 
%Assume that 
%$${\rm Im}(g^*(M_U)\lo M_Y)\subset N_Y.$$ 
We define the {\it pre-weight filtration}
$P^{M_Y\setminus N_Y}$ on the sheaf ${\Om}^i_{Y/U}$ $(i\in {\mab N})$ 
of log differential forms on $\os{\circ}{Y}_{\rm zar}$ with respect to 
$M_Y\setminus N_Y$ as follows: 
\begin{equation*} 
P^{M_Y\setminus N_Y}_k{\Om}^i_{Y/U} =
\begin{cases} 
0 & (k<0), \\
{\rm Im}({\Om}^k_{Y/U}{\otimes}_{{\cal O}_Y}
\Om^{i-k}_{Y_{N_Y}/U}
\lo {\Om}^i_{Y/U}) & (0\leq k\leq i), \\
{\Om}^i_{Y/U} & (k > i).
\end{cases}
\tag{4.0.1}\label{eqn:pkdefpw}
\end{equation*}  
Let $g\col Y\lo Z$ be a morphism of fine log (formal) schemes over $U$. 
Let $N_Y$ and $N_Z$ be sub log structures of $Y$ and $Z$, respectively. 
Assume that $g$ induces a morphism 
$Y_{N_Y}:=(\os{\circ}{Y},N_Y)\lo Z_{N_Z}:=(\os{\circ}{Z},N_Z)$ over $U$. 
For a flat ${\cal O}_Y$-module ${\cal E}$ 
and a flat ${\cal O}_Z$-module ${\cal F}$ with a morphism 
$h \col {\cal F}\lo g_*({\cal E})$ of ${\cal O}_Z$-modules, 
we have the following morphism of filtered complexes: 
\begin{equation*} 
h \col ({\cal F}\otimes_{{\cal O}_Z}\Om^{\bul}_{Z/\os{\circ}{U}},P^{M_Z\setminus N_Z})
\lo 
g_*(({\cal E}\otimes_{{\cal O}_Y}{\Om}^{\bul}_{Y/\os{\circ}{U}},P^{M_Y\setminus N_Y})).  
\tag{4.0.2}\label{eqn:lyytp}
\end{equation*}  
%For a morphism $T'\lo T$ of fine log (formal) schemes, 
%set $Y_{T'}:=Y\times_TT'$ and 
%let $q\col Y_{T'}\lo Y$ be the first projection. 
%For a flat ${\cal O}_{Y_{T'}}$-module ${\cal F}$, 
%let $P$ be the induced filtration on 
%${\cal F}\otimes_{{\cal O}_{Y_{T'}}}\Om^i_{Y_{T'}/\os{\circ}{T}{}'}$ 
%by the filtration $P$ on $\Om^i_{Y_{T'}/\os{\circ}{T}{}'}$: 
%\begin{align*} 
%P_k({\cal F}\otimes_{{\cal O}_{Y_{T'}}}\Om^i_{Y_{T'}/\os{\circ}{T}{}'})
%:={\cal F}\otimes_{{\cal O}_{Y_{T'}}}P_k\Om^i_{Y_{T'}/\os{\circ}{T}{}'}. 
%\tag{4.1.0.4}\label{eqn:lyep}
%\end{align*}
%We also set 
%\begin{equation*} 
%P_k{\Om}^i_{{\cal P}^{{\rm ex},(m)}/\os{\circ}{T}} =
%\begin{cases} 
%0 & (k<0), \\
%{\rm Im}(\Om^k_{{\cal P}^{{\rm ex},(m)}
%/\os{\circ}{T}}{\otimes}_{{\cal O}_{{\cal P}^{{\rm ex},(m)}}}
%\Om^{i-k}_{\os{\circ}{\cal P}{}^{{\rm ex},(m)}/\os{\circ}{T}}
%\lo {\Om}^i_{{\cal P}^{{\rm ex},(m)}/\os{\circ}{T}}) & 
%(0\leq k\leq i), \\
%{\Om}^i_{{\cal P}^{{\rm ex},(m)}/\os{\circ}{T}} & (k >i).  
%\end{cases}
%\tag{3.4.6}\label{eqn:pkdefpw}
%\end{equation*} 

%\par 
%The complex 
%$\Om^{\bul}_{{\cal P}^{{\rm ex},(m)}/\os{\circ}{T}}$ 
%has the following pre-weight filtration 

\par 
The following is a slight generalization of \cite[(1.3.4)]{nb}:

\begin{prop}[{\bf cf.~\cite[(1.3.4)]{nb}}]\label{prop:injpf}
Assume that $U$ has a PD-structure $({\cal J},\del)$. 
Let $Y\os{\sus}{\lo} {\cal Q}$ be an immersion into a log smooth scheme over 
$(U,{\cal J},\del)$. 
%Let ${\mathfrak g} \col {\cal Q}\lo U$ be the structural morphism. 
Let ${\mathfrak E}$ be the log PD-envelope of the immersion 
$Y\os{\sus}{\lo} {\cal Q}$ over $(U,{\cal J},\del)$. 
Let $M_{{\cal Q}^{\rm ex}}$ be the log structure of ${\cal Q}^{\rm ex}$ 
and let $N_{{\cal Q}^{\rm ex}}$ be a sub log structure of $M_{{\cal Q}^{\rm ex}}$. 
%such that 
%$${\rm Im}({\mathfrak g}^*(M_U)\lo M_{\cal Q})\subset N_Y.$$ 
Then the natural morphism 
\begin{equation*} 
{\cal O}_{\mathfrak E}
\otimes_{{\cal O}_{{\cal Q}^{\rm ex}}}
P^{M_{{\cal Q}^{\rm ex}}\setminus N_{{\cal Q}^{\rm ex}}}_k
\Om^i_{{\cal Q}^{\rm ex}/U}
\lo 
{\cal O}_{\mathfrak E}
\otimes_{{\cal O}_{{\cal Q}^{\rm ex}}}{\Om}^i_{{\cal Q}^{\rm ex}/U} 
\quad (i,k\in {\mab Z})
\tag{4.1.1}\label{eqn:yxnpd}
\end{equation*}
is injective. 
\end{prop}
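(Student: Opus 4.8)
The plan is to reduce the assertion to a local statement in which $\Om^i_{{\cal Q}^{\rm ex}/U}$ is a free ${\cal O}_{{\cal Q}^{\rm ex}}$-module equipped with a basis adapted to the pre-weight filtration, so that the inclusion $P^{M_{{\cal Q}^{\rm ex}}\setminus N_{{\cal Q}^{\rm ex}}}_k\Om^i_{{\cal Q}^{\rm ex}/U}\os{\sus}{\lo}\Om^i_{{\cal Q}^{\rm ex}/U}$ is a \emph{split} monomorphism of ${\cal O}_{{\cal Q}^{\rm ex}}$-modules. Granting this, one applies ${\cal O}_{\mathfrak E}\otimes_{{\cal O}_{{\cal Q}^{\rm ex}}}(-)$: although ${\cal O}_{\mathfrak E}$ is in general far from flat over ${\cal O}_{{\cal Q}^{\rm ex}}$, a locally split monomorphism stays a monomorphism after an arbitrary base change, and hence (\ref{eqn:yxnpd}) is injective. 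This is the mechanism behind \cite[(1.3.4)]{nb}, and the argument below is the adaptation of that proof to the presence of the sub log structure $N_{{\cal Q}^{\rm ex}}$.

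First I would note that the injectivity of (\ref{eqn:yxnpd}) may be checked Zariski locally on $\os{\circ}{{\cal Q}^{\rm ex}}$, and that ${\cal Q}^{\rm ex}$ is log smooth over $(U,{\cal J},\del)$ since it is log \'etale over ${\cal Q}$. Because the immersion $Y\os{\sus}{\lo}{\cal Q}^{\rm ex}$ is exact, I would apply (\ref{prop:adla}) to it (in the SNCL situation one may instead invoke (\ref{prop:neoxeo}) together with the explicit local models ${\mab A}_{\ol{S}}(a,b,d,e)$ of Section~\ref{sec:snrdlv}): this exhibits ${\cal Q}^{\rm ex}$, on a neighborhood of a given point of $\os{\circ}{Y}$, as solid and \'etale over a model built from a good chart, and from such a model one reads off a local basis of $\Om^1_{{\cal Q}^{\rm ex}/U}$ consisting of classes $d\log p$, with $p$ running through generators of the relevant monoid, together with finitely many $dt_j$ coming from the smooth directions. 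The hypothesis relating $N_{{\cal Q}^{\rm ex}}$ to the SNCL structure of $X$ (so that $N_{{\cal Q}^{\rm ex}}$ is the restriction of a sub log structure of the ambient SNCL type) forces $N_{{\cal Q}^{\rm ex}}$, after shrinking, to be cut out by a sub-chart; hence $\Om^1_{{\cal Q}^{\rm ex}_N/U}$ (where ${\cal Q}^{\rm ex}_N:=(\os{\circ}{{\cal Q}^{\rm ex}},N_{{\cal Q}^{\rm ex}})$ in the convention of Section~\ref{sec:pwtr}) is the ${\cal O}_{{\cal Q}^{\rm ex}}$-span of a sub-basis, and $\Om^1_{{\cal Q}^{\rm ex}_N/U}\lo\Om^1_{{\cal Q}^{\rm ex}/U}$ is a split injection of free modules.

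Next I would evaluate the pre-weight filtration (\ref{eqn:pkdefpw}) in this basis. Write $e_1,\ldots,e_r$ for the basis vectors coming from $\Om^1_{{\cal Q}^{\rm ex}_N/U}$ and $f_1,\ldots,f_s$ for the remaining basis vectors of $\Om^1_{{\cal Q}^{\rm ex}/U}$. Then $P^{M_{{\cal Q}^{\rm ex}}\setminus N_{{\cal Q}^{\rm ex}}}_k\Om^i_{{\cal Q}^{\rm ex}/U}$ is exactly the free submodule generated by those basis $i$-forms (wedges of the $e$'s and $f$'s) that involve at most $k$ of the $f_j$, while the free submodule generated by the basis $i$-forms involving more than $k$ of the $f_j$ is a complement. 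Thus $P_k\Om^i_{{\cal Q}^{\rm ex}/U}\os{\sus}{\lo}\Om^i_{{\cal Q}^{\rm ex}/U}$ is locally split, and tensoring over ${\cal O}_{{\cal Q}^{\rm ex}}$ with ${\cal O}_{\mathfrak E}$ yields the injection (\ref{eqn:yxnpd}), as desired.

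The only step demanding genuine care, rather than formal manipulation, is the claim in the second paragraph that the local model can be arranged so that $N_{{\cal Q}^{\rm ex}}$ corresponds to a sub-chart, and hence $\Om^1_{{\cal Q}^{\rm ex}_N/U}$ is a direct summand of $\Om^1_{{\cal Q}^{\rm ex}/U}$; this is where the compatibility of $g$ with $N$ and the interplay between the SNCL structure of $X$ and the divisor $D$ really enter, and it is the exact analogue of the corresponding point in \cite[(1.3.4)]{nb}. Once that is settled, the rest is the routine ``a split monomorphism survives base change'' argument.
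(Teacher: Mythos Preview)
There is a genuine gap: your central claim that $P^{M_{{\cal Q}^{\rm ex}}\setminus N_{{\cal Q}^{\rm ex}}}_k\Om^i_{{\cal Q}^{\rm ex}/U}\hookrightarrow\Om^i_{{\cal Q}^{\rm ex}/U}$ is a \emph{split} inclusion of ${\cal O}_{{\cal Q}^{\rm ex}}$-modules is false. Concretely, suppose $N$ is obtained from $M$ by omitting a single generator $y$ (so $d\log y$ is one of your $f$'s). The natural map $\Om^1_{{\cal Q}^{\rm ex}_N/U}\to\Om^1_{{\cal Q}^{\rm ex}/U}$ sends $dy\mapsto y\,d\log y$; hence $P_0\Om^1_{{\cal Q}^{\rm ex}/U}$ already contains $y\,d\log y$, and your description ``wedges involving at most $k$ of the $f_j$'' is wrong for $k=0$. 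In fact $P_0\Om^1$ meets the line ${\cal O}_{{\cal Q}^{\rm ex}}\cdot d\log y$ in the ideal $(y)$, which is not a direct summand since $y$ cuts out a divisor. So the ``split monomorphism survives base change'' mechanism is unavailable, and since ${\cal O}_{\mathfrak E}$ is not flat over ${\cal O}_{{\cal Q}^{\rm ex}}$ you cannot conclude injectivity this way.

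The paper's argument does not try to split the inclusion. Instead it exploits the structure of the PD-envelope itself: via (\ref{eqn:0txda}) one writes ${\cal Q}^{\rm ex}$ locally as ${\cal Q}'\times_T{\mab A}^c_T$ with ${\cal Q}'$ a log smooth lift of $Y$ carrying all of the log structure, and shows ${\cal O}_{\mathfrak E}\simeq{\cal O}_{{\cal Q}'}\langle x_1,\ldots,x_c\rangle$. Then both $\Om^i_{{\cal Q}^{\rm ex}/U}$ and its $P_k$-piece split as $\bigoplus_{i'+i''=i}(\text{a term on }{\cal Q}')\otimes_{{\cal O}_T}\Om^{i''}_{{\mab A}^c_T/T}$, and after tensoring with ${\cal O}_{\mathfrak E}$ one is left with the honest inclusion $P_k\Om^{i'}_{{\cal Q}'/T}\hookrightarrow\Om^{i'}_{{\cal Q}'/T}$ tensored over ${\cal O}_T$ with the \emph{free} ${\cal O}_T$-module ${\cal O}_T\langle x_1,\ldots,x_c\rangle\otimes_{{\cal O}_{{\cal Q}''}}\Om^{i''}_{{\cal Q}''/T}$. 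Freeness over the base, not a splitting over ${\cal O}_{{\cal Q}^{\rm ex}}$, is what carries the injectivity through.
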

\begin{proof} 
(The proof of this propositon is the same 
as that of \cite[(2.2.17) (1)]{nh2}.) 
The question is local on $Y$; 
we may assume the existence of 
the commutative diagram (\ref{eqn:0txda}) for $T:=U$. 
%Let $P^{\rm ex}$ be the inverse image of 
%$Q$ by the morphism $P^{\rm gp} \lo Q^{\rm gp}$. 
%Then the natural morphism $P^{\rm ex}\lo Q$ is surjective. 
%We have a fine log formal ${\mab Z}_p$-scheme 
%${\cal Q}^{{\rm prex}}:=
%{\cal Q}\times_{{\rm Spf}^{\log}({\mab Z}_p\{P\})}
%{\rm Spf}^{\log}({\mab Z}_p\{P^{\rm ex}\})$ 
%over ${\cal Q}$ with a morphism $Y\lo {\cal Q}^{{\rm prex}}$. 
Let $x_1,\ldots, x_c$ be the coordinates of 
${\mab A}^c_T$ in (\ref{eqn:0txda}). 
Set ${\cal K}:=(x_1,\ldots, x_c){\cal O}_{{\cal Q}^{\rm ex}}$ 
and ${\cal Q}':=\ul{\rm Spec}^{\log}_{{\cal Q}^{\rm ex}}
({\cal O}_{{\cal Q}^{\rm ex}}/{\cal K})$. 
Then ${\cal Q}'$ is a log smooth lift of $Y$ over $T$.  
Let $M_{{\cal Q}'}$ and $N_{{\cal Q}'}$ be the inverse images of 
$M_{{\cal Q}^{\rm ex}}$ and $N_{{\cal Q}^{\rm ex}}$, respectively. 
Let $e$ be a positive integer. 
Since ${\cal Q}'$ is log smooth over $T$, 
there exists a section of the surjection 
${\cal O}_{{\cal Q}^{\rm ex}}/{\cal K}^e \lo 
{\cal O}_{{\cal Q}^{\rm ex}}/{\cal K}={\cal O}_{{\cal Q}'}$. 
Set ${\cal K}_0:=(x_1,\ldots, x_c)$ in 
${\cal O}_{{\cal Q}'}[x_1,\ldots, x_c]$.  
Then, as in \cite[3.32 Proposition]{bob}, we have a morphism 
${\cal O}_{{\cal Q}'}[x_1,\ldots, x_c] \lo  {\cal O}_{{\cal Q}^{\rm ex}}/{\cal K}^e$ 
of sheaves of commutative rings on ${\cal Q}^{\rm ex}$ 
such that the induced morphism 
${\cal O}_{{\cal Q}'}[x_1,\ldots, x_c]/{\cal K}_0^e
\lo {\cal O}_{{\cal Q}^{\rm ex}}/{\cal K}^e$ is an isomorphism. 
Set ${\cal Q}'':={\mab A}^c_T$.  
Because $p$ is locally nilpotent on $T$, we may assume that 
there exists a positive integer $e$ such that 
${\cal K}^e{\cal O}_{\mathfrak E}=0$. 
By \cite[3.32 Proposition]{bob}, 
${\cal O}_{\mathfrak E}$ is isomorphic to 
the PD-polynomial algebra 
${\cal O}_{{\cal Q}'}\langle x_1,\ldots, x_c\rangle$. 
Hence we have the following isomorphisms  
\begin{equation*} 
{\cal O}_{\mathfrak E}
\otimes_{{\cal O}_{{\cal Q}^{\rm ex}}}
\Om^i_{{\cal Q}^{\rm ex}/\os{\circ}{T}}\os{\sim}{\lo} 
\bigoplus_{i'+i''=i}
\Om^{i'}_{{\cal Q}'/\os{\circ}{T}}
\otimes_{{\cal O}_T}{\cal O}_T\langle x_1,\ldots, x_c\rangle 
\otimes_{{\cal O}_{{\cal Q}''}}
\Om^{i''}_{\os{\circ}{\cal Q}{}''/\os{\circ}{T}}  
\tag{4.1.2}\label{eqn:dopx} 
\end{equation*}
and 
\begin{equation*} 
{\cal O}_{\mathfrak E}
\otimes_{{\cal O}_{{\cal Q}^{\rm ex}}}
P^{M_{{\cal Q}^{\rm ex}}
\setminus N_{{\cal Q}^{\rm ex}}}_k{\Om}^i_{{\cal Q}^{\rm ex}/\os{\circ}{T}}
\os{\sim}{\lo} 
\bigoplus_{i'+i''=i}
P^{M_{{\cal Q}'}\setminus N_{{\cal Q}'}}_k{\Om}^{i'}_{{\cal Q}'/\os{\circ}{T}}
\otimes_{{\cal O}_T}
{\cal O}_T\langle x_1,\ldots, x_c\rangle 
\otimes_{{\cal O}_{{\cal Q}''}}
\Om^{i''}_{\os{\circ}{\cal Q}{}''/\os{\circ}{T}}. 
\tag{4.1.3}\label{eqn:dpopx} 
\end{equation*} 
Since the complex 
${\cal O}_T\langle x_1,\ldots, x_c\rangle 
\otimes_{{\cal O}_{{\cal Q}''}}
\Om^{\bul}_{\os{\circ}{\cal Q}{}''/\os{\circ}{T}}$    
consists of free ${\cal O}_T$-modules, 
we obtain the desired injectivity. 
\end{proof} 

Set 
\begin{align*} 
& P^{M_{{\cal Q}^{\rm ex}}\setminus N_{{\cal Q}^{\rm ex}}}_k
({\cal O}_{\mathfrak E}
\otimes_{{\cal O}_{{\cal Q}^{\rm ex}}}
{\Om}^i_{{\cal Q}^{\rm ex}/U}):=
\\
& {\rm Im}
({\cal O}_{\mathfrak E}
\otimes_{{\cal O}_{{\cal Q}^{\rm ex}}}
P^{M_{{\cal Q}^{\rm ex}}\setminus N_{{\cal Q}^{\rm ex}}}_k{\Om}^i_{{\cal Q}^{\rm ex}/U}
\lo 
{\cal O}_{\mathfrak E}
\otimes_{{\cal O}_{{\cal Q}^{\rm ex}}}{\Om}^i_{{\cal Q}^{\rm ex}/U})  
\quad (i\in {\mab N},k\in {\mab Z}). \tag{4.1.4}\label{eqn:yenpd}
\end{align*}

Let $S$, $((T,{\cal J},\del),z)$ and $T_0$ be as in the previous section. 
Let $\ol{\cal Q}$ be a log smooth integral scheme over 
$\ol{S(T)^{\nat}}$. 
Let $\ol{g} \col \ol{\cal Q}\lo \ol{S(T)^{\nat}}$ be the structural morphism. 
Set ${\cal Q}:=\ol{\cal Q}\times_{\ol{S(T)^{\nat}}}S(T)^{\nat}$ and 
${\cal I}_{\ol{\cal Q}}={\cal I}_{\ol{S(T)^{\nat}}}\otimes_{{\cal O}_{\ol{S(T)^{\nat}}}}{\cal O}_{\ol{\cal Q}}$. 
The following is an important proposition in this paper, 
which is quite hard to formulate correctly.  
This is a slight generalization of \cite[(1.3.12)]{nb}. 

\begin{prop}\label{prop:incol}
%Let the notations and the assumptions be as in {\rm (\ref{prop:ed0})}.   
%Then 
%The following hold$:$
%\par 
$(1)$ Set 
$\Om^{\bul}_{{\ol{\cal Q}/\os{\circ}{T}}}
(-\os{\circ}{\cal Q}):={\cal I}_{\ol{\cal Q}}
\otimes_{{\cal O}_{\ol{\cal Q}}}\Om^{\bul}_{\ol{\cal Q}/\os{\circ}{T}}$. 
$($Recall the notation ${\cal I}_{\ol{\cal Q}}$ defined in the beginning of \S{\rm \ref{sec:snclv}}$.)$ 
Then 
$\Om^{\bul}_{{\ol{\cal Q}}/\os{\circ}{T}}(-\os{\circ}{\cal Q})$ is 
a subcomplex of $\Om^{\bul}_{{\ol{\cal Q}}/\os{\circ}{T}}$. 
\par 
$(2)$ 
Let $L_{\ol{\cal Q}}$ and $N_{\ol{\cal Q}}$ be sub log structures of $M_{\ol{\cal Q}}$ 
such that $M_{\ol{\cal Q}}=L_{\ol{\cal Q}}\oplus_{{\cal O}^*_{\ol{\cal Q}}}N_{\ol{\cal Q}}$. 
Assume that, for any point 
$x\in \os{\circ}{\ol{\cal Q}}$, 
there exists a basis $\{\ol{m}_i\}_{i=1}^k$ 
of $(L_{\ol{\cal Q}}
/{\cal O}_{\ol{\cal Q}}^*)_x$ 
and a positive integer $e_i>0$ $(1\leq i\leq k)$
such that $\prod_{i=1}^k\ol{m}{}^{e_i}_i$ is the image of 
the generator 
of $(\ol{g}{}^*(M_{\ol{S(T)^{\nat}}}/{\cal O}^*_{\ol{S(T)^{\nat}}}))_{x}$ 
in $(L_{\ol{\cal Q}}/{\cal O}_{\ol{\cal Q}}^*)_x$. 
Then $\Om^{\bul}_{{\ol{\cal Q}/\os{\circ}{T}}}
(-\os{\circ}{\cal Q})$ 
is a subcomplex of 
$P^{M_{\ol{\cal Q}}\setminus N_{\ol{\cal Q}}}
_0\Om^{\bul}_{\ol{\cal Q}/\os{\circ}{T}}$.  
\par 
$(3)$ Let the assumption be as in $(2)$. 
Let $M_{\cal Q}$ and $N_{\cal Q}$ be the inverse images of 
$M_{\ol{\cal Q}}$ and $N_{\ol{\cal Q}}$ by the closed immersion 
${\cal Q}\os{\subset}{\lo} \ol{\cal Q}$, respectively. 
Then the following formula holds$:$  
\begin{equation*}
P^{M_{\cal Q}\setminus N_{\cal Q}}_k\Om^{\bul}_{{\cal Q}/\os{\circ}{T}}=
P^{M_{\ol{\cal Q}}\setminus N_{\ol{\cal Q}}}_k\Om^{\bul}_{\ol{\cal Q}/\os{\circ}{T}}
/\Om^{\bul}_{\ol{\cal Q}/\os{\circ}{T}}
(-\os{\circ}{\cal Q}) \quad (k\in {\mab N}).
\tag{4.2.1}\label{eqn:pops}
\end{equation*}   
\par 
$(4)$ Let the assumptions be as in $(2)$ without assuming that 
$\prod_{i=1}^k\ol{m}{}^{e_i}_i$ is the image of 
the generator 
of $(\ol{g}{}^*(M_{\ol{S(T)^{\nat}}}/{\cal O}^*_{\ol{S(T)^{\nat}}}))_{x}$ 
in $(M_{\ol{\cal Q}}
/{\cal O}_{\ol{\cal Q}}^*)_x$. 
Let $m_i$ be a lift of $\ol{m}_i$ to $M_{\ol{\cal Q},x}$ $(1\leq i\leq k)$. 
Let $\al \col M_{\ol{\cal Q}}\lo {\cal O}_{\ol{\cal Q}}$ be the structural morphism. 
Assume furthermore that 
$\Om^1_{(\os{\circ}{\ol{\cal Q}},N_{\ol{\cal Q}})/\os{\circ}{T}}$ is locally free 
and that $\{d\al(m_i)\}_{i=1}^k$ is a part of a basis of 
$\Om^1_{(\os{\circ}{\ol{\cal Q}},N_{\ol{\cal Q}})/\os{\circ}{T}}$. 
Assume also that 
$(\os{\circ}{\ol{\cal Q}},L_{\ol{\cal Q}})$ is log smooth over $\os{\circ}{T}$.  
Then the following natural morphism 
\begin{equation*} 
\Om^{\bul}_{(\os{\circ}{\ol{\cal Q}},N_{\ol{\cal Q}})/\os{\circ}{T}}
\lo \Om^{\bul}_{\ol{\cal Q}/\os{\circ}{T}}    
\tag{4.2.2}\label{eqn:yxppdppd}
\end{equation*} 
is injective. 
\par 
$(5)$ Let the assumptions be as in $(2)$ and $(4)$.  
Then the injective morphism 
$\Om^{\bul}_{\ol{\cal Q}/\os{\circ}{T}}
(-\os{\circ}{\cal Q}) \lo 
\Om^{\bul}_{\ol{\cal Q}/\os{\circ}{T}}$ 
factors through the following injective morphism$:$
%Then there exists the following natural injective morphism$:$
\begin{equation*}  
\Om^{\bul}_{\ol{\cal Q}/\os{\circ}{T}}
(-\os{\circ}{\cal Q}) \lo 
\Om^{\bul}_{(\os{\circ}{\ol{\cal Q}},N_{\ol{\cal Q}})/\os{\circ}{T}}.    
\tag{4.2.3}\label{eqn:yxpplnpd}
\end{equation*} 
\par 
%$(6)$ $($for our memory$)$  
%The following natural morphism 
%\begin{equation*} 
%{\cal O}_{{\mathfrak E}_T}
%\otimes_{{\cal O}_{{\cal Q}^{\rm ex}}}
%P_k\Om^{\bul}_{{\cal Q}{}^{\rm ex}/\os{\circ}{S}}=
%{\cal O}_{{\mathfrak E}_T}\otimes_{{\cal O}_{\ol{\cal Q}{}^{\rm ex}}}
%P_k\Om^{\bul}_{\ol{\cal Q}{}^{\rm ex}/\os{\circ}{S}}\lo 
%{\cal O}_{{\mathfrak E}_T}
%\otimes_{{\cal O}_{\ol{\cal Q}{}^{\rm ex}}}
%\Om^{\bul}_{\ol{\cal Q}{}^{\rm ex}/\os{\circ}{S}}=
%{\cal O}_{{\mathfrak E}_T}
%\otimes_{{\cal O}_{{\cal Q}{}^{\rm ex}}}
%\Om^{\bul}_{{\cal Q}{}^{\rm ex}/\os{\circ}{S}}.   
%\tag{4.9.4}\label{eqn:ydpd}
%\end{equation*} 
%is injective. 
\end{prop}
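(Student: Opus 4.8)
The plan is to reduce all five assertions to an explicit computation in logarithmic coordinates, since each of them is Zariski-local on $\os{\circ}{\ol{\cal Q}}$.

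First I would fix a local model. Around a point $x\in\os{\circ}{\ol{\cal Q}}$, log smoothness of $\ol{\cal Q}/\ol{S(T)^{\nat}}$ together with the hypothesis of (2) lets me choose a lifting $m_1,\dots,m_k\in M_{\ol{\cal Q}}$ of a basis of $(L_{\ol{\cal Q}}/{\cal O}^*_{\ol{\cal Q}})_x$ and integers $e_i\geq1$ with $t:=\al_{\ol{\cal Q}}(\prod_im_i^{e_i})$ equal to a unit times the pull-back of the parameter of $\ol{S(T)^{\nat}}$, so that ${\cal I}_{\ol{\cal Q}}=t{\cal O}_{\ol{\cal Q}}$ locally; a lifting $n_1,\dots,n_l$ of a basis of $(N_{\ol{\cal Q}}/{\cal O}^*_{\ol{\cal Q}})_x$; and, in the situation of (4), a basis of $\Om^1_{\os{\circ}{\ol{\cal Q}}/\os{\circ}{T}}$ of the form $\{dx_i\}_{i=1}^k\cup\{dz_s\}_s$, where $x_i:=\al(m_i)$. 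Writing also $y_j:=\al(n_j)$, one then has locally $dx_i=x_i\,d\log m_i$, $dy_j=y_j\,d\log n_j$, $t=u\prod_ix_i^{e_i}$ ($u$ a unit), $\Om^1_{\ol{\cal Q}/\os{\circ}{T}}$ free on the $d\log m_i$, the $d\log n_j$ and suitable $dz_s$, and (in the situation of (4)) $\Om^1_{(\os{\circ}{\ol{\cal Q}},N_{\ol{\cal Q}})/\os{\circ}{T}}$ free on the $dx_i$, the $d\log n_j$ and the $dz_s$; after this everything becomes a statement about these free modules.

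Next I would go through the parts. For (1): for $a=ct\in{\cal I}_{\ol{\cal Q}}$ one has $da=(dc)t+c\,dt$ with $dt=\al(\prod_im_i^{e_i})\,d\log(\prod_im_i^{e_i})=t\sum_ie_i\,d\log m_i\in{\cal I}_{\ol{\cal Q}}\Om^1_{\ol{\cal Q}/\os{\circ}{T}}$, whence by the Leibniz rule $d({\cal I}_{\ol{\cal Q}}\Om^i_{\ol{\cal Q}/\os{\circ}{T}})\sus{\cal I}_{\ol{\cal Q}}\Om^{i+1}_{\ol{\cal Q}/\os{\circ}{T}}$. For (2): the level $P_0$ of the pre-weight filtration at issue is the image in $\Om^i_{\ol{\cal Q}/\os{\circ}{T}}$ of the log differentials built from the relevant weight-zero sub log structure, which in the coordinates above is the ${\cal O}_{\ol{\cal Q}}$-span of those wedge monomials each of whose factors $d\log m_i$ carries a coefficient divisible by $x_i$ (because $dx_i=x_i\,d\log m_i$); since $t=u\prod_ix_i^{e_i}$ with every $e_i\geq1$, multiplying a monomial by $t$ gives each factor $d\log m_i$ present a coefficient divisible by $x_i^{e_i}$ and hence by $x_i$, so $t\,\Om^i_{\ol{\cal Q}/\os{\circ}{T}}$ lands in that span --- this is exactly where the hypothesis ``$\prod_im_i^{e_i}$ is the image of the generator'' is indispensable, since it is what forces $t$ to kill every semistable logarithmic pole. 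For (3): the exact closed immersion ${\cal Q}\os{\sus}{\lo}\ol{\cal Q}$ gives a log conormal sequence ${\cal I}_{\ol{\cal Q}}/{\cal I}_{\ol{\cal Q}}^2\lo{\cal O}_{\cal Q}\otimes_{{\cal O}_{\ol{\cal Q}}}\Om^1_{\ol{\cal Q}/\os{\circ}{T}}\lo\Om^1_{{\cal Q}/\os{\circ}{T}}\lo0$ whose left map sends $\ol t$ to the class of $dt=t\sum_ie_i\,d\log m_i$, which vanishes modulo ${\cal I}_{\ol{\cal Q}}$; thus $\Om^1_{{\cal Q}/\os{\circ}{T}}=\Om^1_{\ol{\cal Q}/\os{\circ}{T}}/{\cal I}_{\ol{\cal Q}}\Om^1_{\ol{\cal Q}/\os{\circ}{T}}$, and since exterior powers commute with ${\cal O}_{\ol{\cal Q}}\to{\cal O}_{\cal Q}$ one gets $\Om^\bul_{{\cal Q}/\os{\circ}{T}}=\Om^\bul_{\ol{\cal Q}/\os{\circ}{T}}/\Om^\bul_{\ol{\cal Q}/\os{\circ}{T}}(-\os{\circ}{\cal Q})$; a short diagram chase (using that the restriction $\Om^{i-k}_{(\os{\circ}{\ol{\cal Q}},N_{\ol{\cal Q}})/\os{\circ}{T}}\to\Om^{i-k}_{(\os{\circ}{\cal Q},N_{\cal Q})/\os{\circ}{T}}$ is surjective, plus (2)) then identifies $P^{M_{\cal Q}\setminus N_{\cal Q}}_k\Om^i_{{\cal Q}/\os{\circ}{T}}$ with the image of $P^{M_{\ol{\cal Q}}\setminus N_{\ol{\cal Q}}}_k\Om^i_{\ol{\cal Q}/\os{\circ}{T}}$ under $\Om^i_{\ol{\cal Q}/\os{\circ}{T}}\twoheadrightarrow\Om^i_{{\cal Q}/\os{\circ}{T}}$, which by (2) equals $P^{M_{\ol{\cal Q}}\setminus N_{\ol{\cal Q}}}_k\Om^i_{\ol{\cal Q}/\os{\circ}{T}}/\Om^i_{\ol{\cal Q}/\os{\circ}{T}}(-\os{\circ}{\cal Q})$, i.e.\ (\ref{eqn:pops}).

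Finally, for (4) I would use the decomposition argument of (\ref{eqn:dopx})--(\ref{eqn:dpopx}) (as in the proof of (\ref{prop:injpf})): the extra hypotheses split off from $\Om^1_{\ol{\cal Q}/\os{\circ}{T}}$ the free submodule generated by the $dx_i$ and reduce the claim to the injectivity of $\Om^\bul_{(\os{\circ}{\ol{\cal Q}},N_{\ol{\cal Q}})/\os{\circ}{T}}\to\Om^\bul_{\ol{\cal Q}/\os{\circ}{T}}$, which in coordinates is $dx_i\lom x_i\,d\log m_i$, $d\log n_j\lom d\log n_j$, $dz_s\lom dz_s$; this map has a diagonal matrix with non-zero-divisor entries, so all of its exterior powers are again diagonal with non-zero-divisor entries, hence injective, and being a morphism of complexes it yields (4). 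For (5) I would combine (2) and (4): by (2) every wedge monomial of $\Om^\bul_{\ol{\cal Q}/\os{\circ}{T}}(-\os{\circ}{\cal Q})=t\,\Om^\bul_{\ol{\cal Q}/\os{\circ}{T}}$ has each factor $d\log m_i$ occurring with coefficient divisible by $x_i$, hence lies in the image of the injection of (4), so the inclusion $\Om^\bul_{\ol{\cal Q}/\os{\circ}{T}}(-\os{\circ}{\cal Q})\os{\sus}{\lo}\Om^\bul_{\ol{\cal Q}/\os{\circ}{T}}$ factors through $\Om^\bul_{(\os{\circ}{\ol{\cal Q}},N_{\ol{\cal Q}})/\os{\circ}{T}}$, and the factorisation is injective because composing it with the injection of (4) returns that inclusion. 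The part I expect to be most delicate is the bookkeeping for (2), (3) and (5): one must keep rigorously apart the divisibility by the $x_i^{e_i}$ coming from $t=u\prod_ix_i^{e_i}$ along the semistable directions and the divisibility by the $y_j$ along the divisor directions, so that the vanishing of the conormal boundary map and the two containments come out precisely as stated --- everything rests on the relation $\al(\prod_im_i^{e_i})=$ (unit) $\cdot$ (base parameter).
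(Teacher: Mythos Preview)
Your approach is essentially the same as the paper's: reduce to local coordinates and compute explicitly with the basis $\{d\log m_i,\, d\log n_j,\, dz_s\}$. Parts (1), (2), (3), (5) match the paper's arguments closely (for (3) the paper organises the diagram chase slightly differently, but the content is identical).

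There is one point in (4) where you assert the key step without justification. You write that the map $dx_i\lom x_i\,d\log m_i$ has ``a diagonal matrix with non-zero-divisor entries''. The products of the $x_i=\al(m_i)$ that appear on the diagonal are non-zero-divisors in ${\cal O}_{\ol{\cal Q}}$ \emph{precisely} because of the hypothesis that $(\os{\circ}{\ol{\cal Q}},L_{\ol{\cal Q}})$ is log smooth over $\os{\circ}{T}$, which you never invoke. The paper makes this explicit: log smoothness gives a chart $P\to L_{\ol{\cal Q}}$ with ${\cal O}_{\ol{\cal Q}}$ \'{e}tale (hence flat) over ${\cal O}_T[P]$; since $P$ is integral, multiplication by any $a\in P$ is injective on ${\cal O}_T[P]$, and flatness pushes this up to ${\cal O}_{\ol{\cal Q}}$. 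Without this, there is no reason for $x_i$ to be regular (think of $\ol{\cal Q}$ with nilpotent or zero-divisor coordinates), and the injectivity of (\ref{eqn:yxppdppd}) fails. Your reference to (\ref{eqn:dopx})--(\ref{eqn:dpopx}) is misplaced: those formulas concern PD-envelope decompositions and play no role here. A smaller instance of the same omission occurs in (1): to say $\Om^{\bul}_{\ol{\cal Q}/\os{\circ}{T}}(-\os{\circ}{\cal Q})$ is a \emph{sub}complex you need ${\cal I}_{\ol{\cal Q}}\hookrightarrow{\cal O}_{\ol{\cal Q}}$, i.e.\ that $t$ is a non-zero-divisor, which the paper obtains from the flatness (integrality) of $\ol{\cal Q}\to\ol{S(T)^{\nat}}$.
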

\begin{proof} 
(We give the proof for the completeness of this paper.) 
\par 
(1): Let $x$ be a point of $\os{\circ}{\ol{\cal Q}}$.  
Let $t$ be a local section of $M_{\ol{S(T)^{\nat}},\ol{g}(x)}$ which 
gives a generator of $(M_{\ol{S(T)^{\nat}}}/{\cal O}_{\ol{S(T)^{\nat}}}^*)_{\ol{g}(x)}$. 
By abuse of notation, we denote the image of $t$ 
in ${\cal O}_{\ol{S(T)^{\nat}},\ol{g}(x)}$ by $t$.  
The morphism 
$\os{\circ}{\ol{\cal Q}}\lo  \os{\circ}{\ol{S(T)^{\nat}}}$ is flat. 
Consequently the natural morphism 
${\cal I}_{\ol{\cal Q}}\lo {\cal O}_{\ol{\cal Q}}$ is injective.  
Hence the natural morphism 
$\Om^i_{\ol{\cal Q}/\os{\circ}{T}}(-\os{\circ}{\cal Q}) 
\lo \Om^i_{\ol{\cal Q}/\os{\circ}{T}}$ 
$(i\in {\mab N})$ is injective. 
For a local section 
$\om \in {\Om}^{\bul}_{\ol{\cal Q}/\os{\circ}{T}}$ 
around $x$, 
$d(t \om)=t d\log t\wedge \om+t d\om$. 
Hence 
$\Om^{\bul}_{\ol{\cal Q}/\os{\circ}{T}}
(-\os{\circ}{\cal Q})$ 
is a subcomplex of 
$\Om^{\bul}_{\ol{\cal Q}/\os{\circ}{T}}$.  
\par 
(2): 
%First note that $\Om^i_{\star,\ol{T}}
%\subset \Om^i_{\ol{\cal Q}{}^{\star}_{\ol{T}}/\os{\circ}{T}}$ by definition. 
The question is local.
Let $\al \col M_{\ol{\cal Q},x}\lo {\cal O}_{\ol{\cal Q},x}$ 
be the structural morphism. 
Let $m_i$ be a lift of $\ol{m}_i$ to 
$M_{\ol{\cal Q},x}$ as stated in (4). 
Consider a section 
$d\log m_{i_1}\wedge \cdots 
\wedge d\log m_{i_l}\wedge \om$ 
$(1\leq i_1< \cdots <i_{l}\leq k)$ 
with $\om\in 
P^{M_{\ol{\cal Q}}\setminus N_{\ol{\cal Q}}}_0\Om^j_{\ol{\cal Q}/\os{\circ}{T}}$ $(j\in {\mab N})$.  
By the assumption, we may assume that 
$\prod_{i=1}^k\al(m_i^{e_i})=t$. 
Then 
$$t d\log m_{i_1}\wedge \cdots 
\wedge d\log m_{i_l}\wedge \om
=d\al(m_{i_1})\wedge \cdots 
\wedge d\al(m_{i_l})\wedge \om'$$  
with $\om'\in 
P^{M_{\ol{\cal Q}}\setminus N_{\ol{\cal Q}}}_0\Om^j_{\ol{\cal Q}/\os{\circ}{T}}$. 
Hence 
$\Om^{\bul}_{{\ol{\cal Q}}/\os{\circ}{T}}
(-\os{\circ}{\cal Q})$
is a subcomplex of 
$P^{M_{\ol{\cal Q}}\setminus N_{\ol{\cal Q}}}_0\Om^{\bul}_{\ol{\cal Q}/\os{\circ}{T}}$.  
\par 
(3): By (2) the complex $\Om^{\bul}_{\ol{\cal Q}/\os{\circ}{T}}(-\os{\circ}{\cal Q})$
is a subcomplex of $P^{M_{\ol{\cal Q}}\setminus 
N_{\ol{\cal Q}}}_k\Om^{\bul}_{\ol{\cal Q}/\os{\circ}{T}}$ $(k\in {\mab N})$.  
By the definition of 
$P^{M_{\cal Q}\setminus N_{\cal Q}}_k{\Om}^{\bul}_{{\cal Q}/\os{\circ}{T}}$, 
we have the following natural surjective morphism 
\begin{equation*} 
P^{M_{\ol{\cal Q}}\setminus N_{\ol{\cal Q}}}_k{\Om}^{\bul}_{\ol{\cal Q}/\os{\circ}{T}}
/{\Om}^{\bul}_{\ol{\cal Q}/\os{\circ}{T}}(-\os{\circ}{\cal Q}) 
\lo 
P^{M_{\cal Q}\setminus N_{\cal Q}}_k{\Om}^{\bul}_{{\cal Q}/\os{\circ}{T}}.
\end{equation*}  
The following diagram shows that this morphism 
is injective: 
\begin{equation*} 
\begin{CD} 
P^{M_{\ol{\cal Q}}\setminus N_{\ol{\cal Q}}}_k{\Om}^{\bul}_{\ol{\cal Q}/\os{\circ}{T}}/
{\Om}^{\bul}_{\ol{\cal Q}/\os{\circ}{T}}(-\os{\circ}{\cal Q})  
@>>> 
P^{M_{\cal Q}\setminus N_{\cal Q}}_k{\Om}^{\bul}_{{\cal Q}/\os{\circ}{T}}\\ 
@V{\bigcap}VV  @VV{\bigcap}V \\ 
{\Om}^{\bul}_{\ol{\cal Q}/\os{\circ}{T}}/
{\Om}^{\bul}_{\ol{\cal Q}/\os{\circ}{T}}
(-\os{\circ}{\cal Q})  
@>{\sim}>> 
{\Om}^{\bul}_{{\cal Q}/\os{\circ}{T}}. 
\end{CD} 
\tag{4.2.4}\label{eqn:icor}
\end{equation*} 
%Similarly we can show the injectivity of 
%the natural morphism 
%${\cal O}_{\ol{\mathfrak E}}
%\otimes_{{\cal O}_{\ol{\cal Q}{}^{\rm ex}}}
%{\Om}^{\bul}_{\ol{\cal Q}{}^{\rm ex}/\os{\circ}{S}}
%(-\os{\circ}{\cal Q}{}^{\rm ex})
%\lo {\cal O}_{\ol{\mathfrak E}}
%\otimes_{{\cal O}_{\ol{\cal Q}{}^{\rm ex}}}
%\Om^{\bul}_{\ol{\cal Q}{}^{\rm ex}{}'/\os{\circ}{S}}$. 
%Consequently the morphism (\ref{eqn:yxpplnpd}) is injective. 
Hence we obtain (\ref{eqn:pops}). 
\par 
(4):  
Take a local chart 
$(\{1\} \lo {\cal O}^*_T,P \lo L_{\ol{\cal Q}},
\{1\}\os{\subset}{\lo} P)$ of the morphism 
$(\os{\circ}{\ol{\cal Q}},L_{\ol{\cal Q}})\lo  \os{\circ}{T}$ 
on a neighborhood of $x$.  
%such that $\rho$ is injective, 
%such that ${\rm Coker}(\rho^{\rm gp})$ is torsion free 
%and that the natural homomorphism 
%${\cal O}_{Y_{T_0},y} \otimes_{{\mab Z}} 
%(P^{{\rm gp}}/Q^{\rm gp}) \lo \Om^1_{Y_{T_0}/T_0,y}$ is an isomorphism. 
Because $(\os{\circ}{\ol{\cal Q}},L_{\ol{\cal Q}})$
is (formally) log smooth over $\os{\circ}{T}$, 
we can take the $P$ such that  
${\cal O}_{\ol{\cal Q}}$ 
is \'{e}tale over ${\cal O}_{T}[P]$, 
in particular, flat over ${\cal O}_{T}[P]$. 
Since $P$ is integral, any element $a\in P$ defines an injective multiplication 
$$a\cdot \col  {\cal O}_{T}[P] \os{\sus}{\lo} {\cal O}_{T}[P].$$
Hence the morphism 
\begin{equation*} 
a\cdot \col {\cal O}_{\ol{\cal Q}} 
\lo {\cal O}_{\ol{\cal Q}}
\tag{4.2.5}\label{eqn:injm}
\end{equation*} 
is injective.  
%Hence any nonzero local section of the sheaf 
%${\cal O}_{\ol{\cal Q}{}^}{}'}$ is not a zero divisor. 
We may assume that $m_i$ is the image of an element of $P$ 
and that 
$$\Om^1_{(\os{\circ}{\ol{\cal Q}},N_{\ol{\cal Q}})/\os{\circ}{T}}
=\bigoplus_{j=1}^k{\cal O}_{\ol{\cal Q}}
d\al(m_j)
\oplus \bigoplus_{j=1}^l {\cal O}_{\ol{\cal Q}}\eta_j 
\quad (l\in {\mab N}),$$ 
$$\Om^1_{\ol{\cal Q}/\os{\circ}{T}}
=\bigoplus_{j=1}^k{\cal O}_{\ol{\cal Q}}
d\log m_j
\oplus \bigoplus_{j=1}^l {\cal O}_{\ol{\cal Q}}\eta_j 
\quad (l\in {\mab N})$$ 
with $\eta_j \in \Om^1_{(\os{\circ}{\ol{\cal Q}},N_{\ol{\cal Q}})/\os{\circ}{T}}$.  
Let $i$ be a nonnegative integer and consider a local section 
$\om \in \Om^i_{(\os{\circ}{\ol{\cal Q}},N_{\ol{\cal Q}})/\os{\circ}{T}}$. 
Set $\om_j:=d\log m_j$ $(0\leq j\leq k)$ and 
$\om_j:=\eta_{j-k}$ $(k+1\leq j\leq k+l)$.  
Then we have the following expression 
$$\om  =\sum_{j_1< \cdots <j_i}b_{j_1\cdots j_i}
d\al(m_{j_1}) \wedge \cdots \wedge d\al(m_{j_{n(j_1\cdots j_i)}}) 
\wedge \om_{j_{n(j_1\cdots j_i)+1}}
\wedge \cdots \wedge \om_{j_i}$$ for some $b_{j_1\cdots j_i}\in {\cal O}_{\ol{\cal Q}}$,  
%in $\Om^i_{(\os{\circ}{\ol{\cal Q}},N_{\ol{\cal Q}})/\os{\circ}{T}}$ 
%by the assumption about the locally freeness of  
%$\Om^1_{(\os{\circ}{\ol{\cal Q}},N_{\ol{\cal Q}})/\os{\circ}{T}}$.  
where $n(j_1,\ldots,j_i)$ is an integer 
such that $j_n \leq k$ and $j_{n+1} >k$ for $j_1< \cdots <j_i$. 
The image ${\rm im}(\om)$ of $\om$ in 
$\Om^i_{\ol{\cal Q}/\os{\circ}{T}}$ 
is the following form 
$${\rm im}(\om) =
\sum_{j_1< \cdots <j_i}b_{j_1\cdots j_i}\al(m_{j_1}) \cdots \al(m_{j_{n(j_1,\ldots,j_i)}})
\om_{j_1} \wedge \cdots \wedge \om_{j_i}.$$ 
%for some $b_{j_1\cdots j_i}\in {\cal O}_{\ol{\cal Q}}$. 
Assume that ${\rm im}(\om)=0$.  
%$\om$ in $\Om^i_{\ol{\cal Q}/\os{\circ}{T}}$ is zero. 
Then 
$b_{j_1\cdots j_i}\al(m_{j_1}) \cdots \al(m_{j_n})=0$. 
By the injectivity of the morphism (\ref{eqn:injm}),  
we see that $b_{j_1\cdots j_i}=0$. Hence $\om=0$ 
and 
we have shown that the natural morphism 
$\Om^{\bul}_{(\os{\circ}{\ol{\cal Q}},N_{\ol{\cal Q}})/\os{\circ}{T}}
\lo \Om^{\bul}_{\ol{\cal Q}/\os{\circ}{T}}$ 
is injective. 
%The rest of the proof the injectivity of the morphism 
%(\ref{eqn:yxppdppd}) is the same as that of 
%the injectivity of the morphism (\ref{eqn:nik}).
\par 
(5): By (1) the morphism 
${\Om}^{\bul}_{\ol{\cal Q}/\os{\circ}{T}}
(-\os{\circ}{\cal Q}) \lo \Om^{\bul}_{\ol{\cal Q}/\os{\circ}{T}}$
is injective. 
%Because  
%${\Om}^{\bul}_{\ol{\cal Q}{}^{\star}_{\ol{T}}/\os{\circ}{T}}
%(-\os{\circ}{\cal Q}{}^{\star})$ is a subcomplex of 
%${\Om}^{\bul}_{\ol{\cal Q}{}^{\star}_{\ol{T}}/\os{\circ}{T}}$, 
%it suffices to prove that (\ref{eqn:yxppdppd}) is injective. 
By (2) 
the morphism 
${\Om}^{\bul}_{\ol{\cal Q}/\os{\circ}{T}}(-\os{\circ}{\cal Q}) \lo
\Om^{\bul}_{\ol{\cal Q}/\os{\circ}{T}}$ 
factors through the morphism 
${\Om}^{\bul}_{\ol{\cal Q}/\os{\circ}{T}}(-\os{\circ}{\cal Q}) \lo
{\rm Im}
(\Om^{\bul}_{(\os{\circ}{\ol{\cal Q}},N_{\ol{\cal Q}})/\os{\circ}{T}}   
\lo 
\Om^{\bul}_{\ol{\cal Q}/\os{\circ}{T}})$.  
The target of the last morphism is isomorphic to 
$\Om^{\bul}_{(\os{\circ}{\ol{\cal Q}},N_{\ol{\cal Q}})/\os{\circ}{T}}$ by (4). 
\end{proof}

\par 
In the following we assume that $(\os{\circ}{T},{\cal J},\del)$ is a $p$-adic formal PD-scheme. 
Let $(X_{\os{\circ}{T}_0},D_{\os{\circ}{T}_0})\os{\sus}{\lo} ({\cal X},{\cal D})$ 
be an exact immersion into a formal SNCL scheme over $S(T)^{\nat}$ 
with a relative SNCD on ${\cal X}/S(T)^{\nat}$. 
%such that the immersion 
%$\os{\circ}{X}_{T_0}\os{\sus}{\lo} \os{\circ}{\cal X}$ is an isomorphism of 
%topological spaces. 
Let ${\mathfrak D}$ be the log PD-envelope of this immersion over 
$(S(T)^{\nat},{\cal J},\del)$. 
Care must be taken when defining the following filtrations 
on $\Om^i_{({\cal X},{\cal D})/\os{\circ}{T}}$
because errors can easily occur  
(see (\ref{rema:estzmis1}) and (\ref{rema:estzmis2}) below for this). 
%, \cite[I (3.1), (3.2)]{ezth}). 
\par 
Let $P$ be a filtration on $\Om^i_{({\cal X},{\cal D})/\os{\circ}{T}}$ 
defined by the following: 
\begin{equation*} 
P_k\Om^i_{({\cal X},{\cal D})/\os{\circ}{T}} 
=
\begin{cases} 
0 & (k<0), \\
{\rm Im}(\Om^k_{({\cal X},{\cal D})/\os{\circ}{T}}
{\otimes}_{{\cal O}_X} 
\Om^{i-k}_{\os{\circ}{\cal X}/\os{\circ}{T}}
\lo 
\Om^i_{({\cal X},{\cal D})/\os{\circ}{T}})
& (0\leq k\leq i), \\
\Om^i_{({\cal X},{\cal D})/\os{\circ}{T}} & (k > i).
\end{cases}
\tag{4.2.6}\label{eqn:pomw}
\end{equation*} 
Set $(\os{\circ}{\cal X},\os{\circ}{\cal D}):=(\os{\circ}{\cal X},M({\cal D}))$. 
Let $P^{\cal X}$ be a filtration on $\Om^i_{({\cal X},{\cal D})/\os{\circ}{T}}$ 
defined by the following: 
\begin{equation*} 
P^{\cal X}_k\Om^i_{({\cal X},{\cal D})/\os{\circ}{T}} =
\begin{cases} 
0 & (k<0), \\
{\rm Im}(\Om^k_{{\cal X}/\os{\circ}{T}}{\otimes}_{{\cal O}_X}
\Om^{i-k}_{\os{\circ}{\cal X}/\os{\circ}{T}} 
\lo 
\Om^i_{({\cal X},{\cal D})/\os{\circ}{T}})
& (0\leq k\leq i), \\
{\rm Im}(\Om^i_{{\cal X}/\os{\circ}{T}}
\lo 
\Om^i_{({\cal X},{\cal D})/\os{\circ}{T}}) & (k > i).
\end{cases}
\tag{4.2.7}\label{eqn:pkdefw}
\end{equation*} 
Let $P^{{\cal D}/{\cal X}}$ be a filtration on 
$\Om^i_{({\cal X},{\cal D})/\os{\circ}{T}}$  
defined by the following: 
\begin{equation*} 
P^{{\cal D}/{\cal X}}_k
\Om^i_{({\cal X},{\cal D})/\os{\circ}{T}}=
\begin{cases} 
0 & (k<0), \\
{\rm Im}(\Om^k_{({\cal X},{\cal D})/\os{\circ}{T}}
{\otimes}_{{\cal O}_X}
\Om^{i-k}_{{\cal X}/\os{\circ}{T}}
\lo 
\Om^i_{({\cal X},{\cal D})/\os{\circ}{T}})
& (0\leq k\leq i), \\
\Om^i_{({\cal X},{\cal D})/\os{\circ}{T}} & (k > i).
\end{cases}
\tag{4.2.8}\label{eqn:pkdw}
\end{equation*} 
Let $P^{{\cal X}/{\cal D}}$ be a filtration on 
$\Om^i_{({\cal X},{\cal D})/\os{\circ}{T}}$  
defined by the following: 
\begin{equation*} 
P^{{\cal X}/{\cal D}}_k
\Om^i_{({\cal X},{\cal D})/\os{\circ}{T}}=
\begin{cases} 
0 & (k<0), \\
{\rm Im}(\Om^k_{({\cal X},{\cal D})/\os{\circ}{T}}
{\otimes}_{{\cal O}_X}
\Om^{i-k}_{(\os{\circ}{\cal X},\os{\circ}{\cal D})/\os{\circ}{T}}
\lo 
\Om^i_{({\cal X},{\cal D})/\os{\circ}{T}})
& (0\leq k\leq i), \\
\Om^i_{({\cal X},{\cal D})/\os{\circ}{T}} & (k > i).
\end{cases}
\tag{4.2.9}\label{eqn:ptxkdw}
\end{equation*} 
%Here note that, in (\ref{eqn:pkdw}), we consider sheaves of differential forms over 
%$S(T)^{\nat}$ not over $\os{\circ}{T}$, which is unusual. 
We use the same notations $P$, $P^{\cal X}$, $P^{{\cal D}/{\cal X}}$  and $P^{{\cal X}/{\cal D}}$
for the induced filtrations   
on ${\cal O}_{\mathfrak D}\otimes_{{\cal O}_{\cal X}}
\Om^i_{({\cal X},{\cal D})/\os{\circ}{T}}$, 
${\cal O}_{\mathfrak D}\otimes_{{\cal O}_{\cal X}}
\Om^i_{({\cal X},{\cal D})/\os{\circ}{T}}$
and ${\cal O}_{\mathfrak D}\otimes_{{\cal O}_{\cal X}}
\Om^i_{({\cal X},{\cal D})/\os{\circ}{T}}$
by the filtrations $P$, $P^{\cal X}$, $P^{{\cal D}/{\cal X}}$ and 
$P^{{\cal X}/{\cal D}}$ on 
$\Om^i_{({\cal X},{\cal D})/\os{\circ}{T}}$, respectively.

%Note that the filtrations $P^{\os{\circ}{\cal D}}(\Om)$ and $P^{\cal X}$  
%are not exhaustive in general. 
%Considering the case $\os{\circ}{\cal D}=\emptyset$, we have a filtration 
%$P^{\cal X}$ on $\Om^i_{\os{\circ}{\wt{\cal X}}/\os{\circ}{S}}(\log \os{\circ}{\cal X})$. 
%Let 
%\begin{equation*} 
%P_k\Om^i_{\os{\circ}{\wt{\cal X}}/\os{\circ}{S}}
%(\log (\os{\circ}{\cal X}+\os{\circ}{\wt{\cal D}})) =
%\begin{cases} 
%0 & (k<0), \\
%{\rm Im}(\Om^k_{\os{\circ}{\wt{\cal X}}/\os{\circ}{S}}
%(\log (\os{\circ}{\cal X}+\os{\circ}{\wt{\cal D}}))
%{\otimes}_{{\cal O}_X}
%\Om^{i-k}_{\os{\circ}{\wt{\cal X}}/\os{\circ}{S}} 
%\lo 
%\Om^i_{\os{\circ}{\wt{\cal X}}/\os{\circ}{S}}
%(\log (\os{\circ}{\cal X}+\os{\circ}{\wt{\cal D}})) & (0\leq k\leq i), \\
%\Om^i_{\os{\circ}{\wt{\cal X}}/\os{\circ}{S}}
%(\log (\os{\circ}{\cal X}+\os{\circ}{\wt{\cal D}})) & (k > i).
%\end{cases}
%\tag{8.4.4}\label{eqn:pnfw}
%\end{equation*} 

Let 
$$a^{(l,m)} \col \os{\circ}{X}{}^{(l)}_{T_0}\cap \os{\circ}{D}{}^{(m)}_{T_0} \lo \os{\circ}{X}_{T_0}, 
\quad b^{(l,m)} \col \os{\circ}{\cal X}{}^{(l)}\cap \os{\circ}{\cal D}{}^{(m)}
\lo \os{\circ}{\cal X}$$  
and 
$$c^{(k)} \col D^{(k)}_{\os{\circ}{T}_0} \lo 
X_{\os{\circ}{T}_0}, \quad d^{(k)} \col {\cal D}^{(k)} \lo {\cal X}\quad (l, m,k\in {\mab N})$$
be natural morphisms. 
Set $a^{(l)}:=a^{(l,0)}$ and $b^{(l)}:=b^{(l,0)}$. 
Let $\{\os{\circ}{X}_{T_0\lam}\}_{\lam \in \Lam}$ and 
$\{\os{\circ}{D}_{T_0\mu}\}{\mu \in M}$ be smooth components of 
$\os{\circ}{X}_{T_0}$ and $\os{\circ}{D}_{T_0}$, respectively. 
For subsets $\ul{\lam}=\{\lam_0,\ldots,\lam_{l-1}\}$ of $\Lam$ 
and $\ul{\mu}=\{\mu_1,\ldots, \mu_m\}$ of $M$, 
set $\os{\circ}{X}_{T_0\ul{\lam}}
:=\os{\circ}{X}_{T_0{\{\lam_0, \ldots, \lam_{l-1}\}}}$ 
and $\os{\circ}{D}_{T_0\ul{\mu}}:=
\os{\circ}{D}_{T_0\{\mu_1, \ldots, \mu_{m}\}}$ as in \S\ref{sec:snclv} 
and \S\ref{sec:snrdlv}, and 
let 
$$a_{\ul{\lam}\ul{\mu}} \col \os{\circ}{X}_{T_0\ul{\lam}}\cap \os{\circ}{D}_{T_0\ul{\mu}} 
\os{\sus}{\lo} \os{\circ}{X}_{T_0}, \quad  
b_{\ul{\lam}\ul{\mu}} \col \os{\circ}{\cal X}_{\ul{\lam}}\cap \os{\circ}{\cal D}_{\ul{\mu}} 
\os{\sus}{\lo} \os{\circ}{\cal X}$$   
and 
$$c_{\ul{\mu}} \col \os{\circ}{D}_{T_0\ul{\mu}} \os{\sus}{\lo} \os{\circ}{X}_{T_0}, \quad  
d_{\ul{\mu}} \col \os{\circ}{\cal D}_{\ul{\mu}} 
\os{\sus}{\lo} \os{\circ}{\cal X}$$ 
be the natural closed immersions.

\par
In the following we define orientation sheaves 
for $\os{\circ}{X}_{T_0}$ and $\os{\circ}{D}_{T_0}$. 
\par 
Let $E$ be a finite set with cardinality $k\geq 0$.  
Set $\vp_E:=\bigwedge^k{\mab Z}^E$ if $k \geq 1$ 
and $\vp_E:={\mab Z}$ if $k =0$ (\cite[(3.1.4)]{dh2}).
\par 
Let $l$ be a positive integer and 
let $m$ be a nonnegative integer. 
%For simplicity of notation, set 
%$\ul{\lam}:=\{\lam_0, \ldots, \lam_{l-1}\}$ 
%and 
%$\ul{\mu}:=\{\mu_1, \ldots, \mu_{m}\}$. 
Let $P$ be a point of 
$\os{\circ}{X}{}^{(l)}_{T_0}\cap \os{\circ}{D}{}^{(m)}_{T_0}$.
Let $\os{\circ}{X}_{T_0\lam_0}, \ldots, \os{\circ}{X}_{T_0\lam_{l-1}}$ 
(resp.~$\os{\circ}{D}_{T_0\mu_1}, \ldots, \os{\circ}{D}_{T_0\mu_{m}}$) 
be distinct smooth components of $\os{\circ}{X}_{T_0}/\os{\circ}{T}_0$ 
(resp.~$\os{\circ}{D}_{T_0}/\os{\circ}{T}_0$) such that 
$\os{\circ}{X}_{T_0\ul{\lam}}\cap \os{\circ}{D}_{T_0\ul{\mu}}$ 
contains $P$.    
Then the set 
$E:=\{\os{\circ}{X}_{T_0\lam_0},\ldots,\os{\circ}{X}_{T_0\lam_{l-1}}\}$ 
gives an abelian sheaf 
\begin{equation*} 
\vp_{\ul{\lam}{\rm zar}}(\os{\circ}{X}_{T_0}/\os{\circ}{T}_0):=
\vp_{(\lam_0\cdots \lam_{l-1})
{\rm zar}}
(\os{\circ}{X}_{T_0}/\os{\circ}{T}_0):=
\bigwedge^l
{\mab Z}^{E}_{\os{\circ}{X}_{T_0\ul{\lam}}} 
\end{equation*} 
on a local neighborhood of $P$ in $\os{\circ}{X}_{T_0\ul{\lam}}$; 
the set $F:=\{\os{\circ}{D}_{T_0\mu_1},\ldots,\os{\circ}{D}_{T_0\mu_{m}}\}$ 
gives an abelian sheaf 
\begin{equation*} 
\vp_{\ul{\mu}{\rm zar}}(\os{\circ}{D}_{T_0}/\os{\circ}{T}_0):=
\vp_{(\mu_1 \cdots \mu_{m}){\rm zar}}
(\os{\circ}{D}_{T_0}/\os{\circ}{T}_0):=
\bigwedge^m
{\mab Z}^{F}_{\os{\circ}{D}_{T_0\ul{\mu}}}
\end{equation*} 
on a local neighborhood of $P$ 
in $\os{\circ}{D}_{T_0\ul{\mu}}$. 
Set 
\begin{equation*} 
\vp_{\ul{\lam}\ul{\mu}{\rm zar}}((\os{\circ}{X}_{T_0},\os{\circ}{D}_{T_0})/\os{\circ}{T}_0):=
\vp_{(\lam_0 \cdots \lam_{l-1}){\rm zar}}(\os{\circ}{X}_{T_0}/\os{\circ}{T}_0)
\vert_{\os{\circ}{X}_{\ul{\lam}}\cap {\os{\circ}{D}_{\ul{\mu}}}}
\otimes_{\mab Z}
\vp_{(\mu_1 \cdots \mu_{m}){\rm zar}}
(\os{\circ}{D}_{T_0}/\os{\circ}{T}_0)\vert_{\os{\circ}{X}_{\ul{\lam}}\cap {\os{\circ}{D}_{\ul{\mu}}}}.  
\end{equation*} 
We denote a local section 
of 
$\vp_{\ul{\lam}\ul{\mu}{\rm zar}}(\os{\circ}{X}_{T_0}/\os{\circ}{T}_0)$ 
by $n(\ul{\lam} \ul{\mu})=n(\lam_0\cdots \lam_{l-1})\otimes (\mu_1\cdots \mu_{m})$.  
$(n \in {\mab Z})$. 
The sheaf 
$\vp_{\ul{\lam}\ul{\mu}{\rm zar}}(\os{\circ}{X}_{T_0}/\os{\circ}{T}_0)$ 
is globalized 
on $\os{\circ}{X}{}^{(l)}_{T_0}\cap \os{\circ}{D}{}^{(m)}_{T_0}$;  
we denote this globalized abelian 
sheaf by the same symbol
$\vp_{\ul{\lam}\ul{\mu}{\rm zar}}(\os{\circ}{X}_{T_0}/\os{\circ}{T}_0)$. 
%Analogously the set 
%$F:=\{\os{\circ}{D}_{\mu_0},\ldots,\os{\circ}{D}_{\mu_{m-1}}\}$ 
%gives an abelian sheaf 
%\begin{equation*} 
%\vp_{\ul{\lam}\ul{\mu}{\rm zar}}
%(\os{\circ}{D}_{T_0}/\os{\circ}{T}_0):=
%\vp_{(\lam_0\cdots \lam_{l-1};\mu_0 \cdots \mu_{m-1}){\rm zar}}
%(\os{\circ}{D}_{T_0}/\os{\circ}{T}_0):=
%\bigwedge^k
%{\mab Z}^{F}_{\os{\circ}{X}_{\ul{\lam}}\cap \os{\circ}{D}_{\ul{\mu}}} 
%\end{equation*} 
%on a local neighborhood of $P$ in 
%$\os{\circ}{X}_{\ul{\lam}}\cap \os{\circ}{D}_{\ul{\mu}}$. 
%We denote a local section of 
%$\vp_{\ul{\lam}\ul{\mu}{\rm zar}}(\os{\circ}{D}_{T_0}/\os{\circ}{T}_0)$ 
%by $n(\ul{\mu})=n(\mu_0\cdots \mu_{m-1})$ 
%$(n \in {\mab Z})$. 
%The sheaf 
%$\vp_{\ul{\lam}\ul{\mu}{\rm zar}}
%(\os{\circ}{D}_{T_0}/\os{\circ}{T}_0)$ 
%is globalized 
%on $\os{\circ}{X}{}^{(l)}\cap \os{\circ}{D}{}^{(m)}$;  
%we denote this globalized abelian 
%sheaf by the same symbol
%$\vp_{\ul{\lam}\ul{\mu}{\rm zar}}
%(\os{\circ}{D}_{T_0}/\os{\circ}{T}_0)$. 
%Set 
%\begin{equation*}
%\begin{split} 
%\vp_{\ul{\lam}\ul{\mu}{\rm zar}}
%((\os{\circ}{X}_{T_0}+\os{\circ}{D}_{T_0})/\os{\circ}{T}_0)& 
%:=
%\vp_{(\lam_0\cdots \lam_{l-1};\mu_0\ \cdots \ \mu_{m-1})
%{\rm zar}}((\os{\circ}{X}_{T_0}+\os{\circ}{D}_{T_0})/\os{\circ}{T}_0) \\ 
%{} & := 
%\vp_{(\lam_0\cdots \lam_{l-1};\mu_0 \cdots \mu_{m-1}){\rm zar}}
%(\os{\circ}{X}_{T_0}/\os{\circ}{T}_0)\otimes_{\mab Z}\\
%{} & \phantom{:=\vp} 
%\vp_{(\lam_0\cdots \lam_{l-1};\mu_0 \cdots \mu_{m-1}){\rm zar}}
%(\os{\circ}{D}_{T_0}/\os{\circ}{T}_0). 
%\end{split} 
%\end{equation*}
Set 
\begin{align*}
\vp^{(l,m)}_{\rm zar}
((\os{\circ}{X}_{T_0},\os{\circ}{D}_{T_0})/\os{\circ}{T}_0) & := 
\bigoplus_{\{\ul{\lam}, \ul{\mu}\}}
\vp_{\ul{\lam}\ul{\mu}{\rm zar}}
((\os{\circ}{X}_{T_0},\os{\circ}{D}_{T_0})/\os{\circ}{T}_0)\\ 
{} &:= 
\bigoplus_{\{\lam_0,\ldots \lam_{l-1}, 
\mu_1, \ldots,  \mu_{m}\}}
\vp_{(\lam_0\cdots \lam_{l-1};
\mu_1 \cdots  \mu_{m})
{\rm zar}}((\os{\circ}{X}_{T_0},\os{\circ}{D}_{T_0})/\os{\circ}{T}_0). 
\end{align*} 
The sheaf 
$\vp_{\ul{\lam}\ul{\mu}{\rm zar}}
((\os{\circ}{X}_{T_0},\os{\circ}{D}_{T_0})/\os{\circ}{T}_0)$ 
extends to an abelian sheaf 
$\vp_{\ul{\lam}\ul{\mu}{\rm crys}}
((\os{\circ}{X}_{T_0},\os{\circ}{D}_{T_0})/\os{\circ}{T})$
%=\vp_{(\lam_0\cdots \lam_{l-1};\mu_0 \cdots  \mu_{m-1}){\rm crys}}
%((\os{\circ}{X}_{T_0},\os{\circ}{D}_{T_0})/\os{\circ}{T})$ 
in the crystalline topos 
$((\os{\circ}{X}{}^{(l)}_{T_0}\cap \os{\circ}{D}{}^{(m)}_{T_0})/\os{\circ}{T})_{\rm crys}$, 
and 
$\vp^{(l,m)}_{\rm zar}((\os{\circ}{X}_{T_0},\os{\circ}{D}_{T_0})/\os{\circ}{T}_0)$ extends 
to an abelian sheaf 
$\vp^{(l,m)}_{{\rm crys}}
((\os{\circ}{X}_{T_0},\os{\circ}{D}_{T_0})/\os{\circ}{T})$. 
%The sheaf 
%$\vp^{(l,m)}_{{\rm crys}}
%((\os{\circ}{X}_{T_0},\os{\circ}{D}_{T_0})/\os{\circ}{T})$ 
%also extends to an abelian sheaf 
%$\vp^{(l,m)}_{{\rm crys}}
%((\os{\circ}{X}_{T_0}+\os{\circ}{D}_{T_0})/\os{\circ}{T})$ 
%in the log crystalline topos 
%$((\os{\circ}{X}{}^{(l)}}_{T_0}\cap \os{\circ}{D}{}^{(m)}}_{T_0})/\os{\circ}{T})_{\rm crys}$, 
%where $X^{(l)}\cap \os{\circ}{D}{}^{(m)}$ 
%is the log scheme whose underlying scheme is 
%$\os{\circ}{X}{}^{(l)}\cap \os{\circ}{D}{}^{(m)}$ 
%and whose log structure is the pull-back of $X^{(l)}$. 
Set $\vp^{(l)}_{{\rm zar}}
(\os{\circ}{X}_{T_0}/\os{\circ}{T}_0)
:=\vp^{(l,0)}_{{\rm zar}}
((\os{\circ}{X}_{T_0},\os{\circ}{D}_{T_0})/\os{\circ}{T}_0)$ 
and $\vp^{(l)}_{{\rm crys}}(\os{\circ}{X}_{T_0}/\os{\circ}{T})
:=\vp^{(l,0)}_{{\rm crys}}
((\os{\circ}{X}_{T_0},\os{\circ}{D}_{T_0})/\os{\circ}{T})$. 
Set also 
$\vp^{(m)}_{{\rm zar}}
(\os{\circ}{D}_{T_0}/\os{\circ}{T}_0)
:=\vp^{(0,m)}_{{\rm zar}}
((\os{\circ}{X}_{T_0},\os{\circ}{D}_{T_0})/\os{\circ}{T}_0)$ 
and $\vp^{(m)}_{{\rm crys}}(\os{\circ}{D}_{T_0}/\os{\circ}{T})
:=\vp^{(0,m)}_{{\rm crys}}
((\os{\circ}{X}_{T_0},\os{\circ}{D}_{T_0})/\os{\circ}{T})$. 
%The sheaf $\vp^{(l)}_{{\rm crys}}
%(\os{\circ}{X}_{T_0}/\os{\circ}{T})$ 
%extends to an abelian sheaf 
%$\vp^{(l)}_{{\rm crys}}
%(\os{\circ}{X}/\os{\circ}{S};\os{\circ}{D})$ 
%in the log crystalline topos 
%$((\os{\circ}{X}{}^{(l)},\os{\circ}{D}\vert_{\os{\circ}{X}{}^{(l)})/\os{\circ}{S}})_{\rm crys}$.  
%Analogously we have abelian sheaves $\vp_{\mu_0 \cdots  \mu_{m-1}{\rm crys}}(D/\os{\circ}{S};X)$ 
%and we also have abelian sheaves 
%$\vp^{(m)}_{{\rm crys}}(D/\os{\circ}{S})$ 
%in the  crystalline topos 
%$(\os{\circ}{D}{}^{(m)}/\os{\circ}{S})_{\rm crys}$, 
%and  
%$\vp_{\ul{\mu}{\rm crys}}(D/\os{\circ}{S};X)=
%\vp_{\mu_0 \cdots  \mu_{m-1}{\rm crys}}(D/\os{\circ}{S};X)$ 
%and 
%$\vp^{(m)}_{{\rm crys}}(D/\os{\circ}{S};X)$ 
%in the log crystalline topos 
%$(D^{(m)}/\os{\circ}{S})_{\rm crys}$. 
%Ignoring $\os{\circ}{D}$ (=the case $\os{\circ}{D}=\emptyset$), 
%we obtain sheaves 
%$\vp^{(k)}_{\rm zar}(\os{\circ}{X}/\os{\circ}{S}_0)$ 
%and 
%$\vp^{(k)}_{\rm crys}(\os{\circ}{X}/\os{\circ}{S}_0)$  

\begin{defi}
We call  
\begin{align*} 
\vp^{(l,m)}_{\rm zar}
((\os{\circ}{X}_{T_0},\os{\circ}{D}_{T_0})/\os{\circ}{T}_0) 
\quad {\rm and} \quad 
\vp^{(l,m)}_{\rm crys}
((\os{\circ}{X}_{T_0},\os{\circ}{D}_{T_0})/\os{\circ}{T})
\end{align*} 
the {\it zariskian orientation sheaf} of 
$\os{\circ}{X}{}^{(l)}_{T_0}\cap \os{\circ}{D}{}^{(m)}_{T_0}/\os{\circ}{T}_0$ 
and the {\it crystalline orientation sheaf} of 
$\os{\circ}{X}{}^{(l)}_{T_0}\cap \os{\circ}{D}{}^{(m)}_{T_0}
/(\os{\circ}{T},{\cal J},\del)$, respectively. 
We also call $\vp^{(l)}_{{\rm zar}}(\os{\circ}{X}_{T_0}/\os{\circ}{T})$ and 
$\vp^{(l)}_{{\rm crys}}(\os{\circ}{X}_{T_0}/\os{\circ}{T})$ 
the {\it zariskian orientation sheaf} of $\os{\circ}{X}{}^{(l)}_{T_0}/\os{\circ}{T}_0$ 
and the {\it crystalline orientation sheaf} of  $\os{\circ}{X}{}^{(l)}_{T_0}/\os{\circ}{T}$.
We also call $\vp^{(m)}_{{\rm zar}}(\os{\circ}{D}_{T_0}/\os{\circ}{T})$ and 
$\vp^{(m)}_{{\rm crys}}(\os{\circ}{D}_{T_0}/\os{\circ}{T})$ 
the {\it zariskian orientation sheaf} of $\os{\circ}{D}{}^{(m)}_{T_0}/\os{\circ}{T}_0$ 
and the {\it crystalline orientation sheaf} of  $\os{\circ}{D}{}^{(m)}_{T_0}/\os{\circ}{T}$.
\end{defi}

\begin{lemm}\label{lemm:dlm} 
$(1)$ Let $\os{\circ}{\mathfrak D}{}^{(l,m)}$ 
be the PD-envelopes of 
$\os{\circ}{X}{}^{(l)}_{T_0}\cap \os{\circ}{D}{}^{(m)}_{T_0} \os{\sus}{\lo} 
\os{\circ}{\cal X}{}^{(l)}\cap \os{\circ}{\cal D}{}^{(m)}$ over 
$(\os{\circ}{T},{\cal J},\del)$.  
%and $(X_{\os{\circ}{T}_0},D_{\os{\circ}{T}_0}) \os{\sus}{\lo} ({\cal X},{\cal D})$ 
%over $(S(T)^{\nat},{\cal J},\del)$. 
%respectively.   
Then $\os{\circ}{\mathfrak D}{}^{(l,m)}=\os{\circ}{\mathfrak D}
\times_{\os{\circ}{\cal X}}(\os{\circ}{\cal X}{}^{(l)}\cap \os{\circ}{\cal D}{}^{(m)})$. 
\par 
$(2)$ Let ${\mathfrak D}({\cal D}^{(k)})$ be the log PD-envelope of 
$D^{(k)}_{\os{\circ}{T}_0} \os{\sus}{\lo} {\cal D}^{(k)}$ over $(S(T)^{\nat},{\cal J},\del)$.   
$($The log scheme ${\mathfrak D}({\cal D}^{(k)})$ is a log scheme whose underlying scheme is 
$\os{\circ}{\mathfrak D}{}^{(0,k)}$ and whose log structure is the inverse image 
of ${\cal D}^{(k)}$.$)$ 
Then ${\mathfrak D}({\cal D}^{(k)})={\mathfrak D}\times_{({\cal X},{\cal D})}{\cal D}^{(k)}$. 
\end{lemm} 
\begin{proof}(1), (2): 
Because we have natural morphisms 
$\os{\circ}{\mathfrak D}{}^{(l,m)}\lo \os{\circ}{\mathfrak D}$ and 
${\mathfrak D}({\cal D}^{(k)})\lo{\mathfrak D}$, the questions are local on $X_{\os{\circ}{T}_0}$. 
Hence we may assume that $M_{S(T)^{\nat}}$ is free of rank $1$ and that 
there exists the following cartesian diagram   
\begin{equation*}
\begin{CD} 
(X_{\os{\circ}{T}_0},D_{\os{\circ}{T}_0}) @>{\sus}>> ({\cal X},{\cal D}) \\
@VVV @VVV \\
{\mab A}_{S_{\os{\circ}{T}_0}}(a,b,d,e) 
@>{\sus}>> {\mab A}_{S(T)^{\nat}}(a,b,d',e) \\ 
@VVV @VVV \\
S_{\os{\circ}{T}_0}@>{\sus}>> S(T)^{\nat},  
\end{CD} 
\tag{4.4.1}\label{cd:pnwtp} 
\end{equation*}
where $d'\geq d$ and the vertical morphism 
$({\cal X},{\cal D}) \lo {\mab A}_{S(T)^{\nat}}(a,b,d',e)$ is solid and \'{e}tale and 
the immersion ${\mab A}_{S_{\os{\circ}{T}_0}}(a,b,d,e) 
\os{\sus}{\lo} {\mab A}_{S(T)^{\nat}}(a,b,d',e)$ is defined by 
the ``extra coordinates'' $x_{d+1}, \ldots, x_{d'}$ of ${\mab A}_{S(T)^{\nat}}(a,b,d',e)$. 
Now (1) and (2) follow from the local descriptions of 
${\mathfrak D}^{(l,m)}$, ${\mathfrak D}$ and 
${\mathfrak D}({\cal D}^{(k)})$. 
\end{proof}

The following is a generalization of \cite[(1.3.14)]{nb}: 

\begin{prop}\label{prop:grem}
Identify the points of $\os{\circ}{\mathfrak D}$ 
with those of $\os{\circ}{X}$. Let $z$ be a point of $\os{\circ}{\mathfrak D}$. 
%Identify also the images of the points of $\os{\circ}{X}$ 
%in $\os{\circ}{\cal X}$ with the points of $\os{\circ}{X}$. 
Then the following hold$:$
\par 
$(1)$ 
%Let $r$ be a nonnegative integer such that 
%$M_{(X,D),z}/{\cal O}_{X,z}^*\simeq {\mab N}^r$. 
Let $x_{\lam_i}$ and $y_{\mu_j}$ be 
the corresponding local coordinates to 
${\cal X}_{\lam_i}\not= \emptyset$ and 
${\cal D}_{\mu_j}\not= \emptyset$, respectively,  around $z$.  
Let 
\begin{equation*} 
b^{(l,m)}_{\mathfrak D} \col \os{\circ}{\mathfrak D}{}^{(l,m)}
\lo \os{\circ}{\mathfrak D}  \quad (l,m\in {\mab N})
%\tag{4.5.1}\label{eqn:adtn}
\end{equation*} 
be the natural morphism. 
Then, for a positive integer $k$, 
the following morphism  
\begin{align*} 
&{\rm Res} \col 
P_k({\cal O}_{\mathfrak D}\otimes_{{\cal O}_{\cal X}}
\Om^{\bul}_{({\cal X},{\cal D})/\os{\circ}{T}}) \lo \\
& {\cal O}_{\mathfrak D}\otimes_{{\cal O}_{\cal X}}
\bigoplus_{l+m=k}b^{(l-1),(m)}_*
(\Om^{\bul}_{\os{\circ}{\cal X}{}^{(l-1)}\cap 
\os{\circ}{\cal D}{}^{(m)}/\os{\circ}{T}}\otimes_{\mab Z}
\vp^{(l-1),(m)}_{\rm zar}((\os{\circ}{\cal X},\os{\circ}{\cal D})/\os{\circ}{T}))[-k]
\end{align*} 
\begin{align*} 
&\sig \otimes  
d\log x_{\lam_0}\wedge \cdots \wedge d\log x_{\lam_{l-1}}
\wedge d\log y_{\mu_1}\wedge \cdots \wedge d\log y_{\mu_{m}} \wedge \omega  \lom \\
&\sig \otimes b^*_{\lam_0,\cdots, \lam_{l-1},\mu_1\cdots \mu_{m}}(\omega)
\otimes({\rm orientation}~(\lam_0\cdots \lam_{l-1})
\otimes (\mu_1\cdots \mu_{m})) \\
& (\sig \in {\cal O}_{\mathfrak D}, 
\om \in P_0{\Om}^{\bul}_{({\cal X},{\cal D})/\os{\circ}{T}}) \tag{4.5.1}\label{eqn:mprarn}
\end{align*} 
{\rm (cf.~\cite[(3.1.5)]{dh2})} 
around $z$ 
induces the following ``Poincar\'{e} residue isomorphism''$:$  
\begin{align*} 
&{\rm Res}
\col {\rm gr}_k^P({\cal O}_{\mathfrak D}\otimes_{{\cal O}_{\cal X}}
\Om^{\bul}_{({\cal X},{\cal D})/\os{\circ}{T}}) \os{\sim}{\lo} \\
&\bigoplus_{l+m=k}b^{(l-1),(m)}_{{\mathfrak D}*}
({\cal O}_{\os{\circ}{\mathfrak D}{}^{(l-1),(m)}}
\otimes_{{\cal O}_{\os{\circ}{\cal X}{}^{(l-1)}\cap \os{\circ}{\cal D}{}^{(m)}}}
\Om^{\bul}_{\os{\circ}{\cal X}{}^{(l-1)}\cap \os{\circ}{\cal D}{}^{(m)}
/\os{\circ}{T}}\otimes_{\mab Z}
\vp^{(l-1),(m)}_{\rm zar}((\os{\circ}{\cal X},\os{\circ}{\cal D})/\os{\circ}{T}))[-k]. 
\tag{4.5.2}\label{eqn:rspys}
\end{align*}  
\par 
$(2)$ 
%Let $r$ be a nonnegative integer such that 
%$M(D)_{X,z}/{\cal O}_{X,z}^*\simeq {\mab N}^r$. 
Let $y_{\mu_j}$ be the corresponding local coordinate to 
${\cal D}_{\mu_j}\not= \emptyset$ around $z$. 
Let 
\begin{equation*} 
d^{(k)}_{\mathfrak D}({\cal D}) \col {\mathfrak D}^{(k)}({\cal D})
\lo {\mathfrak D}  \quad (k\in {\mab N})
%\tag{4.5.1}\label{eqn:adtn}
\end{equation*} 
be the natural morphism. 
Then, for a positive integer $k$, 
the following morphism  
\begin{align*} 
&{\rm Res}^{{\cal D}/{\cal X}} \col 
P^{{\cal D}/{\cal X}}_k({\cal O}_{\mathfrak D}\otimes_{{\cal O}_{\cal X}}
\Om^{\bul}_{({\cal X},{\cal D})/\os{\circ}{T}}) \lo 
d^{(k)}_{{\mathfrak D}}({\cal D})_*
({\cal O}_{{\mathfrak D}^{(k)}({\cal D})}\otimes_{{\cal O}_{{\cal D}^{(k)}}}
\Om^{\bul}_{{\cal D}^{(k)}/\os{\circ}{T}}
\otimes_{\mab Z}\vp^{(k)}_{\rm zar}
(\os{\circ}{\cal D}/\os{\circ}{T}))[-k]\\
&\sig \otimes d\log y_{\mu_1}\wedge \cdots \wedge d\log y_{\mu_{k}}\wedge \omega
\lom 
\sig \otimes c^*_{\mu_1\cdots \mu_{k}}(\omega)
\otimes({\rm orientation}~(\mu_1\cdots \mu_{k})) \\
&(\sig \in {\cal O}_{\mathfrak D}, 
\om \in P^{{\cal D}/{\cal X}}_0{\Om}^{\bul}_{({\cal X},{\cal D})/\os{\circ}{T}})
\tag{4.5.3}\label{eqn:mpdrarn}
\end{align*} 
{\rm (cf.~\cite[(3.1.5)]{dh2})} 
around $z$ 
induces the following ``Poincar\'{e} residue isomorphism''  
\begin{align*} 
{\rm gr}^{P^{{\cal D}/{\cal X}}}_k
({\cal O}_{\mathfrak D}\otimes_{{\cal O}_{\cal X}}
{\Om}^{\bul}_{({\cal X},{\cal D})/\os{\circ}{T}}) 
& \os{\sim}{\lo} 
d^{(k)}_{{\mathfrak D}}({\cal D})_*
({\cal O}_{{\mathfrak D}^{(k)}({\cal D})}\otimes_{{\cal O}_{{\cal D}^{(k)}}}
\Om^{\bul}_{{\cal D}^{(k)}/\os{\circ}{T}}
\otimes_{\mab Z}\vp^{(k)}_{\rm zar}
(\os{\circ}{\cal D}/\os{\circ}{T}))[-k]. 
\tag{4.5.4}\label{eqn:prgvin} \\ 
\end{align*} 
\par 
$(3)$  
%Let $r$ be a nonnegative integer such that 
%$M_{X,z}/{\cal O}_{X,z}^*\simeq {\mab N}^r$. 
Let $x_{\lam_l}$ be the corresponding local coordinate to 
${\cal X}_{\lam_l}\not= \emptyset$ around $z$. 
Let ${\mathfrak D}^{(k)}(\os{\circ}{\cal X})$ $(k\in {\mab N})$ be the log scheme 
whose underlying scheme is $\os{\circ}{\mathfrak D}{}^{(k),(0)}$ 
and whose log structure is the inverse image of 
$(\os{\circ}{\cal X}{}^{(k)},{\cal D}\vert_{\os{\circ}{\cal X}{}^{(k)}})$. 
Let 
\begin{equation*} 
d^{(k)}_{{\mathfrak D}}(\os{\circ}{\cal X}) \col {\mathfrak D}^{(k)}(\os{\circ}{\cal X})
\lo \os{\circ}{\mathfrak D}  \quad (k\in {\mab N})
%\tag{4.5.1}\label{eqn:adtn}
\end{equation*} 
be the natural morphism. 
Then, for a positive integer $k$, 
the following morphism  
\begin{align*}
&{\rm Res}^{{\cal X}/{\cal D}} \col 
P^{{\cal X}/{\cal D}}_k({\cal O}_{\mathfrak D}\otimes_{{\cal O}_{\cal X}}
\Om^{\bul}_{({\cal X},{\cal D})/\os{\circ}{T}}) \lo \\
& {\cal O}_{\mathfrak D}\otimes_{{\cal O}_{\cal X}}
d^{(k-1)}_{\mathfrak D}(\os{\circ}{\cal X})_*(\Om^{\bul}_{(\os{\circ}{\cal X}{}^{(k-1)},
\os{\circ}{\cal D}\vert_{\os{\circ}{\cal X}{}^{(k-1)}})/\os{\circ}{T}}
\otimes_{\mab Z}\vp^{(k-1)}_{\rm zar}
(\os{\circ}{\cal X}/\os{\circ}{T}))[-k] \\
& \sig \otimes d\log x_{\lam_1}\wedge 
\cdots \wedge d\log x_{\lam_{k}}\wedge \omega
\lom 
\sig \otimes b^*_{\lam_1\cdots \lam_k}(\omega)
\otimes({\rm orientation}~(\lam_1\cdots \lam_k)) \\
& (\sig \in {\cal O}_{\mathfrak D}, 
\om \in P^{{\cal X}/{\cal D}}_0{\Om}^{\bul}_{({\cal X},{\cal D})/\os{\circ}{T}})
\tag{4.5.5}\label{eqn:mpxdrarn}
\end{align*} 
{\rm (cf.~\cite[(3.1.5)]{dh2})} 
around $z$ 
induces the following ``Poincar\'{e} residue isomorphism''  
\begin{align*} 
&{\rm gr}^{P^{{\cal X}/{\cal D}}}_k
({\cal O}_{\mathfrak D}\otimes_{{\cal O}_{\cal X}}
{\Om}^{\bul}_{({\cal X},{\cal D})/\os{\circ}{T}}) 
\os{\sim}{\lo}  \\
&d^{(k-1)}_{{\mathfrak D}}(\os{\circ}{\cal X})_*
({\cal O}_{{\mathfrak D}^{(k-1)}(\os{\circ}{\cal X})}\otimes_{{\cal O}_{\os{\circ}{\cal X}{}^{(k-1)}}}
\Om^{\bul}_{(\os{\circ}{\cal X}{}^{(k-1)},
\os{\circ}{\cal D}\vert_{\os{\circ}{\cal X}^{(k-1)}})/\os{\circ}{T}}
\otimes_{\mab Z}\vp^{(k-1)}_{\rm zar}
(\os{\circ}{\cal X}/\os{\circ}{T}))[-k]. \tag{4.5.6}\label{eqn:prgxvin}\\ 
\end{align*} 
\par 
$(4)$ Denote by $P^{{\cal D}/{\cal X}}$ the induced filtration on 
${\cal O}_{\mathfrak D}\otimes_{{\cal O}_{\cal X}}
{\Om}^{\bul}_{({\cal X},{\cal D})/\os{\circ}{T}}/
P^{{\cal X}}_l({\cal O}_{\mathfrak D}\otimes_{{\cal O}_{\cal X}}
{\Om}^{\bul}_{({\cal X},{\cal D})/\os{\circ}{T}})$ by the filtration 
{\rm (\ref{eqn:pkdw})}. 
Then the Poincar\'{e} residue morphism in $(2)$ induces the following isomorphism$:$ 
\begin{align*} 
&{\rm gr}^{P^{{\cal D}/{\cal X}}}_k
({\cal O}_{\mathfrak D}\otimes_{{\cal O}_{\cal X}}
{\Om}^{\bul}_{({\cal X},{\cal D})/\os{\circ}{T}}/
P^{{\cal X}}_l({\cal O}_{\mathfrak D}\otimes_{{\cal O}_{\cal X}}
{\Om}^{\bul}_{({\cal X},{\cal D})/\os{\circ}{T}})) \os{\sim}{\lo} \\
&d^{(k)}_{{\mathfrak D}}({\cal D})_*
({\cal O}_{{\mathfrak D}^{(k)}({\cal D})}\otimes_{{\cal O}_{{\cal D}^{(k)}}}
\Om^{\bul}_{{\cal D}^{(k)}/\os{\circ}{T}}/
P_l^{{\cal D}^{(k)}}({\cal O}_{{\mathfrak D}^{(k)}({\cal D})}\otimes_{{\cal O}_{{\cal D}^{(k)}}}
\Om^{\bul}_{{\cal D}^{(k)}/\os{\circ}{T}}
\otimes_{\mab Z}\vp^{(k)}_{\rm zar}
(\os{\circ}{\cal D}/\os{\circ}{T})))[-k]. 
\tag{4.5.7}\label{eqn:prplkvin} 
\end{align*} 
for $l\in {\mab Z}$. 
$($Note that ${\cal D}^{(k)}$ is an SNCL scheme over $S(T)^{\nat}$.$)$ 
\end{prop}
\begin{proof} 
Because the proofs of (1), (2) and (4) are similar to and simpler than that of (3),  
we give only the proof of (3). 
\par 
As in the usual case, we can easily check that 
the morphism (\ref{eqn:mpxdrarn}) is well-defined and surjective. 
Because the question is local on $\os{\circ}{X}_{T_0}$, 
we may assume that $M_{S(T)^{\nat}}$ is free of rank $1$
and that there exists the following cartesian diagram: 
%(\ref{cd:pnwtp}). 
\begin{equation*}
\begin{CD} 
(X_{\os{\circ}{T}_0},D_{\os{\circ}{T}_0}) @>{\sus}>> ({\cal X},{\cal D}) @>{\sus}>> 
(\ol{\cal X},\ol{\cal D})\\
@VVV @VVV @VVV \\
{\mab A}_{S_{\os{\circ}{T}_0}}(a,b,d,e) 
@>{\sus}>> {\mab A}_{S(T)^{\nat}}(a,b,d',e) @>{\sus}>> {\mab A}_{\ol{S(T)^{\nat}}}(a,b,d',e)\\ 
@VVV @VVV@VVV \\
S_{\os{\circ}{T}_0}@>{\sus}>> S(T)^{\nat}@>{\sus}>> \ol{S(T)^{\nat}},   
\end{CD} 
\tag{4.5.8}\label{cd:pnbtp} 
\end{equation*}
where $d'\geq d$ and the vertical morphism 
$({\cal X},{\cal D}) \lo {\mab A}_{\ol{S(T)^{\nat}}}(a,b,d',e)$ is solid and \'{e}tale and 
the immersion ${\mab A}_{S_{\os{\circ}{T}_0}}(a,b,d,e) 
\os{\sus}{\lo} {\mab A}_{S(T)^{\nat}}(a,b,d',e)$ is defined by 
the extra coordinates $x_{d+1}, \ldots, x_{d'}$ of ${\mab A}_{\ol{S(T)^{\nat}}}(a,b,d',e)$. 
Set 
$\ol{\cal P}=(\ol{\cal X},\ol{\cal D})$, ${\cal P}=({\cal X},{\cal D})$, 
$\os{\circ}{\ol{\cal P}}=(\os{\circ}{\ol{\cal X}},\os{\circ}{\ol{\cal D}})$
and 
$\os{\circ}{\cal P}{}^{(k)}:=(\os{\circ}{\cal X}{}^{(k)},\os{\circ}{\cal D}\vert_{\os{\circ}{\cal X}{}^{(k)}})$ 
for simplicity of notation in this proof (and (\ref{prop:csrl}) below).  
Set ${\cal P}:=\ol{\cal P}\times_{\ol{S(T)^{\nat}}}S(T)^{\nat}$. 
Let 
$\ol{b}{}^{(k)}\col \os{\circ}{\cal P}{}^{(k)}\lo \ol{\cal P}$ 
be the natural morphism. 
Let $P^{\ol{\cal X}/\ol{\cal D}}$ be the filtration on $\Om^i_{\ol{\cal P}/\os{\circ}{T}}$ defined by 
the following: 
\begin{equation*} 
P^{\ol{\cal X}/\ol{\cal D}}_k\Om^i_{\ol{\cal P}/\os{\circ}{T}} =
\begin{cases} 
0 & (k<0), \\
{\rm Im}(\Om^k_{\ol{\cal P}/\os{\circ}{T}}\otimes_{{\cal O}_{\ol{\cal X}}}
\Om^{i-k}_{\os{\circ}{\ol{\cal P}}/\os{\circ}{T}}
\lo 
\Om^i_{\ol{\cal P}/\os{\circ}{T}})
& (0\leq k\leq i), \\
\Om^i_{\ol{\cal P}/\os{\circ}{T}} & (k > i).
\end{cases}
\tag{4.5.9}\label{eqn:ptfw}
\end{equation*} 
Then, as in \cite[(2.2.21.3)]{nh2} (the usual Poincar\'{e} residue isomorphism), 
we have the following isomorphism 
\begin{equation*}  
{\rm Res} \col 
{\rm gr}_k^{P^{\ol{\cal X}/\ol{\cal D}}}
{\Om}^{\bul}_{\ol{\cal P}/\os{\circ}{T}}
\os{\sim}{\lo} 
\ol{b}{}^{(k-1)}_*
(\Om^{\bul}_{\os{\circ}{\cal P}{}^{(k-1)}/\os{\circ}{T}}
\otimes_{\mab Z}\vp^{(k-1)}_{\rm zar}(\os{\circ}{\cal X}/\os{\circ}{T}))[-k]
\quad (k\geq 1). 
\tag{4.5.10}\label{eqn:gurps}
\end{equation*}  
Because 
${\rm gr}_k^{P^{\ol{\cal X}/\ol{\cal D}}}{\Om}^{\bul}_{\ol{\cal P}/\os{\circ}{T}}
=(P^{\ol{\cal X}/\ol{\cal D}}_k{\Om}^{\bul}_{\ol{\cal P}/\os{\circ}{T}}/
{\Om}^{\bul}_{\ol{\cal P}/\os{\circ}{T}}(-\os{\circ}{\cal P}))
/(P^{\ol{\cal X}/\ol{\cal D}}_{k-1}{\Om}^{\bul}_{\ol{\cal P}/\os{\circ}{T}}/
{\Om}^{\bul}_{\ol{\cal P}/\os{\circ}{T}}(-\os{\circ}{\cal P}))
={\rm gr}_k^{P^{{\cal X}/{\cal D}}}{\Om}^{\bul}_{{\cal P}/\os{\circ}{T}}$ 
by (\ref{eqn:pops}), 
we have the following isomorphism 
\begin{equation*}  
{\rm Res} \col {\rm gr}_k^{P^{{\cal X}/{\cal D}}}{\Om}^{\bul}_{{\cal P}/\os{\circ}{T}}
\os{\sim}{\lo} 
b^{(k-1)}_*
(\Om^{\bul}_{\os{\circ}{\cal P}{}^{(k-1)}/\os{\circ}{T}}
\otimes_{\mab Z}\vp^{(k-1)}_{\rm zar}(\os{\circ}{\cal X}/\os{\circ}{T}))[-k] 
\quad (k\geq 1).  
\tag{4.5.11}\label{eqn:grps}
\end{equation*} 
By (\ref{prop:injpf}) and (\ref{eqn:grps}), 
we have the following exact sequence 
\begin{align*} 
0 \lo & P^{{\cal X}/{\cal D}}_{k-1}
({\cal O}_{\mathfrak D}\otimes_{{\cal O}_{\cal P}}
{\Om}^{\bul}_{{\cal P}/\os{\circ}{T}}) 
 \lo 
P^{{\cal X}/{\cal D}}_k
({\cal O}_{\mathfrak D}\otimes_{{\cal O}_{\cal P}}
{\Om}^{\bul}_{{\cal P}/\os{\circ}{T}})  \\
{} & \lo 
{\cal O}_{\mathfrak D}\otimes_{{\cal O}_{\cal P}}
b^{(k-1)}_*(\Om^{\bul}_{\os{\circ}{\cal P}{}^{(k-1)}/\os{\circ}{T}}
\otimes_{\mab Z}\vp^{(k-1)}_{\rm zar}(\os{\circ}{\cal X}/\os{\circ}{T}))[-k] \lo 0. 
\tag{4.5.12}\label{eqn:ypbgd} 
\end{align*}
Let ${\cal J}$ be the defining ideal sheaf of the immersion 
$X\os{\sus}{\lo}{\cal P}$. 
%By \cite[3.20 Remarks 7)]{bob}, 
%${\cal O}_{\mathfrak D}\otimes_{{\cal O}_{\cal P}}
%={\cal O}_{\mathfrak D}\otimes_{{\cal O}_{\cal P}/{\cal J}^n}$ 
%for some $n\in {\mab N}$. 
By (\ref{lemm:dlm}) (1), the fourth term of (\ref{eqn:ypbgd}) 
is equal to the right hand side of (\ref{eqn:prgxvin}). 
%As in the usual case, we can easily check that 
%the morphism (\ref{eqn:mpdrarn}) is well-defined and surjective. 
%Because the question is local on $X_{\os{\circ}{T}_0}$, 
%we may assume that $M_{S(T)^{\nat}}$ is free of rank $1$ 
%and that there exists the cartesian diagram (\ref{cd:pnwtp}). 
%Then
%we have the following isomorphism 
%\begin{equation*}  
%{\rm Res} \col 
%{\rm gr}_k^{P^{\cal D}}{\Om}^{\bul}_{({\cal X},{\cal D})/\os{\circ}{T}}
%\os{\sim}{\lo} 
%d^{(k)}_{\mathfrak D}({\cal D})_*
%(\Om^{\bul}_{{\cal D}{}^{(k)}/\os{\circ}{T}}
%\otimes_{\mab Z}\vp^{(k)}_{\rm zar}(\os{\circ}{\cal P}/\os{\circ}{T})[-k]
%\quad (k\geq 1) 
%\tag{4.5.7}\label{eqn:gurps}
%\end{equation*} 
%(cf.~the Poincar\'{e} residue isomorphism \cite[(2.2.21.3)]{nh2}).  
%By (\ref{prop:injpf}) 
%we have the following exact sequence 
%\begin{align*} 
%0 \lo & P^{\cal D}_{k-1}
%({\cal O}_{\mathfrak D}\otimes_{{\cal O}_{\cal X}}
%\Om^{\bul}_{({\cal X},{\cal D})/\os{\circ}{T}}) 
%\lo P^{\cal D}_k({\cal O}_{\mathfrak D}\otimes_{{\cal O}_{\cal X}}
%{\Om}^{\bul}_{({\cal X},{\cal D})/\os{\circ}{T}})  
%\tag{4.5.8}\label{eqn:ypbgd} \\
%{} & \lo 
%{\cal O}_{\mathfrak D}\otimes_{{\cal O}_{\cal X}}
%d^{(k)}_{\mathfrak D}({\cal D})_*(\Om^{\bul}_{{\cal D}{}^{(k)}/\os{\circ}{T}}
%\otimes_{\mab Z}\vp^{(k)}_{\rm zar}(\os{\circ}{\cal D}/\os{\circ}{T}))[-k] \lo 0. 
%\end{align*} 
%This is nothing but (\ref{eqn:prgvin}) by (\ref{lemm:dlm}) (2).  
\end{proof}  

\begin{rema}
The statement (4) is a statement of new type. 
We cannot find the corresponding statement in the $\infty$-adic case in 
\cite{ezth} and \cite{stz}. 
\end{rema}

\begin{prop}[{\rm {\bf cf.~\cite[Lemma 3.15.1]{msemi}, \cite[(6.29)]{ndw}, 
\cite[(1.3.21)]{nb}}}]\label{prop:csrl}  
Recall that $\os{\circ}{\cal P}{}^{(k)}:=
(\os{\circ}{\cal X}{}^{(k)},\os{\circ}{\cal D}
\vert_{\os{\circ}{\cal X}{}^{(k)}})$ $(k\in {\mab N})$. 
Set $\os{\circ}{\cal P}:=
(\os{\circ}{\cal X},\os{\circ}{\cal D})$. 
Let 
$
%e^{(k)}\col (\os{\circ}{X}{}^{(k)}_{T_0},\os{\circ}{D}_{T_0}\vert_{\os{\circ}{X}{}^{(k)}_{T_0}}) 
%\lo (\os{\circ}{X}_{T_0}, \os{\circ}{D}_{T_0}), 
%\quad 
e^{(k)} \col \os{\circ}{\cal P}{}^{(k)}\lo \os{\circ}{\cal P}$ 
be the natural morphism. 
%Set ${\mathfrak D}^{(k)}:={\mathfrak D}\times_{\os{\circ}{\cal X}}\os{\circ}{\cal X}{}^{(k)}$  
%$(k\in {\mab N})$. 
Fix a total order on $\Lam$ once and for all. 
For an subset $\ul{\lam}=\{\lam_0,\ldots,\lam_k\}$ 
$(\lam_i <\lam_j~{\rm if}~i< j, \lam_i\in \Lam)$ of $\Lam$, 
set $\ul{\lam}_j:=\ul{\lam}\setminus \{\lam_j\}$
and let $\iota_{\ul{\lam}_j,\ul{\lam}} \col \os{\circ}{\cal X}_{\ul{\lam}}\os{\sus}{\lo} 
\os{\circ}{\cal X}_{\ul{\lam}_j}$
be the natural closed immersion. 
Let ${\mathfrak D}_{\ul{\lam}}$ be the log PD-envelope of the immersion 
$X_{\ul{\lam}}\os{\sus}{\lo} {\cal X}_{\ul{\lam}}$ over $(S(T)^{\nat},{\cal J},\del)$ 
and let $b_{\os{\circ}{\mathfrak D}_{\ul{\lam}}} \col 
\os{\circ}{\mathfrak D}_{\ul{\lam}}\lo \os{\circ}{\mathfrak D}$ 
be the natural morphism. 
Let 
\begin{align*} 
\iota^{*}_{\ul{\lam}_j,\ul{\lam}} 
\col &
b_{\os{\circ}{\mathfrak D}_{\ul{\lam}_j}*}(
{\cal O}_{\os{\circ}{\mathfrak D}_{\ul{\lam}_j}}
\otimes_{{\cal O}_{\os{\circ}{\cal X}_{\ul{\lam}_j}}}
\Om^{\bul}_{(\os{\circ}{\cal X}_{\ul{\lam}_j},
\os{\circ}{\cal D}\vert_{{\cal X}_{\ul{\lam}_j}})/\os{\circ}{T}})
\lo 
b_{\os{\circ}{\mathfrak D}_{\ul{\lam}}*}(
{\cal O}_{\os{\circ}{\mathfrak D}_{\ul{\lam}}}
\otimes_{{\cal O}_{\os{\circ}{\cal X}_{\ul{\lam}}}}
\Om^{\bul}_{(\os{\circ}{\cal X}_{\ul{\lam}},\os{\circ}{\cal D}
\vert_{\os{\circ}{\cal X}_{\ul{\lam}}})/\os{\circ}{T}}) 
\end{align*} 
be the induced morphism by $\iota_{\ul{\lam}_j,\ul{\lam}}$. 
Let $b^{(k)}_{\mathfrak D}\col 
({\mathfrak D}^{(k)}(\os{\circ}{\cal X}))^{\circ}\lo \os{\circ}{\mathfrak D}$ be the natural morphism.   
Set 
\begin{align*} 
\iota^{(k)*}:=\us{\{\ul{\lam} \vert \# \ul{\lam}=k+1\}}{\sum}
\sum_{j=0}^k(-1)^j\iota^{*}_{\ul{\lam}_j,\ul{\lam}} 
\col &
b^{(k-1)}_{{\mathfrak D}*}
({\cal O}_{{\mathfrak D}^{(k-1)}(\os{\circ}{\cal X})}
\otimes_{{\cal O}_{{\os{\circ}{\cal X}{}^{(k-1)}}}}
\Om^{\bul}_{\os{\circ}{\cal P}{}^{(k-1)}/\os{\circ}{T}})\\
& \lo 
b^{(k)}_{{\mathfrak D}*}(
{\cal O}_{{\mathfrak D}^{(k)}(\os{\circ}{\cal X})}
\otimes_{{\cal O}_{{\os{\circ}{\cal X}{}^{(k)}}}}
\Om^{\bul}_{\os{\circ}{\cal P}{}^{(k)}/\os{\circ}{T}}). 
\tag{4.7.1}\label{ali:ulm}
\end{align*} 
Then the following sequence 
\begin{align*} 
0 &\lo P^{{\cal X}/{\cal D}}_0
({\cal O}_{{\mathfrak D}}
\otimes_{{\cal O}_{\cal X}}
\Om^{\bul}_{({\cal X},{\cal D})/\os{\circ}{T}}) 
\lo b^{(0)}_{{\mathfrak D}*}
({\cal O}_{{\mathfrak D}^{(0)}(\os{\circ}{\cal X})}
\otimes_{{\cal O}_{{\os{\circ}{\cal X}{}^{(0)}}}}
\Om^{\bul}_{\os{\circ}{\cal P}{}^{(0)}/\os{\circ}{T}}
\otimes_{\mab Z}
\vp^{(0)}_{\rm zar}(\os{\circ}{\cal X}
/\os{\circ}{T})) \\
&\os{\iota^{(0)*}}{\lo} {b}{}^{(1)}_{{\mathfrak D}*}
({\cal O}_{{\mathfrak D}^{(1)}(\os{\circ}{\cal X})}
\otimes_{{\cal O}_{\os{\circ}{\cal X}{}^{(1)}}}
\Om^{\bul}_{\os{\circ}{\cal P}{}^{(1)}/\os{\circ}{T}}
\otimes_{\mab Z}
\vp^{(1)}_{\rm zar}(\os{\circ}{\cal X}/\os{\circ}{T})) 
\os{\iota^{(1)*}}{\lo} \cdots 
\tag{4.7.2}\label{ali:useq}
\end{align*} 
is exact.
\end{prop}
\begin{proof} 
(The proof is the same as that of \cite[(1.3.21)]{nb}.)  
Because the question is local on $\os{\circ}{X}_{T_0}$, 
%We may assume that $E={\cal O}_{\os{\circ}{X}_{T_0}/\os{\circ}{T}}$.  
%and $T=S(T)$.  
%Consequently 
we may assume that $M_{S(T)^{\nat}}$ is free of rank $1$ 
and that  there exists the following cartesian diagram 
(\ref{cd:pnbtp}).  
Because the morphisms 
$$P^{{\cal X}/{\cal D}}_0
({\cal O}_{{\mathfrak D}}
\otimes_{{\cal O}_{\cal X}}
\Om^i_{({\cal X},{\cal D})/\os{\circ}{T}}) 
\lo b^{(0)}_{{\mathfrak D}*}
({\cal O}_{{\mathfrak D}^{(0)}(\os{\circ}{\cal X})}
\otimes_{{\cal O}_{{\os{\circ}{\cal X}{}^{(0)}}}}
\Om^{\bul}_{\os{\circ}{\cal P}{}^{(0)}/\os{\circ}{T}}
\otimes_{\mab Z}
\vp^{(0)}_{\rm zar}(\os{\circ}{\cal X}
/\os{\circ}{T}))$$ 
and $\iota^{(k)*}$ are ${\cal O}_{\cal X}$-linear, we may assume that 
$\ol{\cal P}:=(\ol{\cal X},\ol{\cal D})={\mab A}_{\ol{S(T)^{\nat}}}(a,b,d',e)$. 
Set ${\cal P}:=\ol{\cal P}\times_{\ol{S(T)^{\nat}}}S(T)^{\nat}$, 
$\ol{\cal Q}:={\mab A}_{\ol{S(T)^{\nat}}}(a,d-b)$, 
${\cal Q}:=\ol{\cal Q}\times_{\ol{S(T)^{\nat}}}S(T)^{\nat}$ and  
${\cal R}:=({\mab A}^b_{\os{\circ}{T}},(y_1\cdots y_b=0))$. 
Let 
$b'{}^{(k)}\col \os{\circ}{\cal Q}{}^{(k)} \lo \os{\circ}{\cal Q}$ 
be the natural morphism. 
By \cite[(4.2.2) (c)]{di} we have the following exact sequence 
\begin{equation*}
0 \lo 
\Om^{\bul}_{\os{\circ}{\ol{\cal Q}}/\os{\circ}{T}}/
{\Om}^{\bul}_{{\ol{\cal Q}}/\os{\circ}{T}}(-\os{\circ}{\cal Q})  
\lo b^{(0)}_{*}(\Om^{\bul}_{\os{\circ}{\cal Q}{}^{(0)}/\os{\circ}{T}}
\otimes_{\mab Z}\vp^{(0)}_{\rm zar}
(\os{\circ}{\cal X}/\os{\circ}{T})) 
\lo \cdots.  
\end{equation*} 
Because  
\begin{align*} 
{\cal O}_{\mathfrak D}\otimes_{{\cal O}_{{\cal P}}}
(\Om^{\bul}_{\os{\circ}{\ol{\cal P}}/\os{\circ}{T}}/
{\Om}^{\bul}_{{\ol{\cal P}}/\os{\circ}{T}}
(-\os{\circ}{\cal P}))
\os{\sim}{\lo} & 
(\Om^{\bul}_{\os{\circ}{\ol{\cal Q}}/\os{\circ}{T}}/
\Om^{\bul}_{\ol{\cal Q}/\os{\circ}{T}}(-\os{\circ}{\cal Q})
\otimes_{{\cal O}_T} \\
{} & 
{\cal O}_T\langle x_{d+1},\ldots, x_{d'} \rangle 
\otimes_{{\cal O}_T}
\Om^{\bul}_{{\cal R}/\os{\circ}{T}},   
\end{align*}
because 
\begin{align*}  
{\cal O}_{\mathfrak D}\otimes_{{\cal O}_{\cal P}}
b{}^{(k)}_{*}(\Om^{\bul}_{\os{\circ}{\cal P}{}^{(k)}/\os{\circ}{T}}
\otimes_{\mab Z}\vp^{(k)}_{\rm zar}(\os{\circ}{\cal X}/\os{\circ}{T})) 
\os{\sim}{\lo} & 
b'{}^{(k)}_{*}
(\Om^{\bul}_{\os{\circ}{\cal Q}{}^{(k)}/\os{\circ}{T}}
\otimes_{\mab Z}\vp^{(k)}_{\rm zar}(\os{\circ}{\cal X}/\os{\circ}{T}))  
\otimes_{{\cal O}_T} \\
{} & 
{\cal O}_T\langle x_{d+1},\ldots, x_{d'} \rangle 
\otimes_{{\cal O}_T}\Om^{\bul}_{{\cal R}/\os{\circ}{T}}   
\end{align*} 
and because the complex 
${\cal O}_T
\langle x_{d+1},\ldots, x_{d'} \rangle 
\otimes_{{\cal O}_T}\Om^{\bul}_{{\cal R}/\os{\circ}{T}}$ 
consists of free ${\cal O}_T$-modules, 
we see that the following sequence is exact: 
\begin{equation*} 
0 \lo  
{\cal O}_{\mathfrak D}
\otimes_{{\cal O}_{{\cal P}}}
(\Om^{\bul}_{\os{\circ}{\ol{\cal P}}/\os{\circ}{T}}/
{\Om}^{\bul}_{{\ol{\cal P}}/\os{\circ}{T}}
(-\os{\circ}{\cal P}))
\lo 
{\cal O}_{\mathfrak D}\otimes_{{\cal O}_{\cal P}}
b^{(0)}_{*}
(\Om^{\bul}_{\os{\circ}{\cal P}{}^{(0)}
/\os{\circ}{T}}\otimes_{\mab Z}\vp^{(0)}_{\rm zar}
(\os{\circ}{\cal X}/\os{\circ}{T})) 
\lo \cdots.
\tag{4.7.3}\label{ali:opot}    
\end{equation*} 
By (\ref{prop:incol}) (4) and (\ref{eqn:pops}) we have 
the following isomorphism: 
\begin{align*}
\Om^{\bul}_{\os{\circ}{\ol{\cal P}}/\os{\circ}{T}}/
\Om^{\bul}_{{\ol{\cal P}}/\os{\circ}{T}}(-\os{\circ}{\cal P})
\simeq P_0^{{\cal X}/{\cal D}}\Om^{\bul}_{\ol{\cal P}/\os{\circ}{T}}/
\Om^{\bul}_{{\ol{\cal P}}/\os{\circ}{T}}(-\os{\circ}{\cal P})
=P_0^{{\cal X}/{\cal D}}\Om^{\bul}_{{\cal P}/\os{\circ}{T}}. 
\tag{4.7.4}\label{ali:opt} 
\end{align*}  
By (\ref{prop:injpf}), (\ref{lemm:dlm}) (1), (\ref{ali:opot}) and 
(\ref{ali:opt}), we see that the sequence (\ref{ali:useq}) is exact. 
\end{proof}  

%\begin{rema}
%The underlying scheme of ${\mathfrak D}^{(k)}(\os{\circ}{\cal X})$
%is equal to that of ${\mathfrak D}^{(k),(0)}$. 
%\end{rema}

\begin{lemm}\label{lemm:ti} 
Let $k$ be a positive integer.   
Let $\theta_{({\cal X},{\cal D})/\os{\circ}{T}}
\in \Om^1_{({\cal X},{\cal D})/\os{\circ}{T}}$ be the image of 
$d\log t\in \Om^1_{S(T)^{\nat}/\os{\circ}{T}}$, where $t$ is 
a local section of $M_{S(T)^{\nat}}$ 
whose image in $M_{S(T)^{\nat}}/{\cal O}_T^*$ is the generator. 
$($The local section $d\log t$ is independent of the choice of $t.)$
Set 
{\footnotesize{
\begin{align*} 
&\iota^{(k)*}:=\us{\{\ul{\lam} \vert \# \ul{\lam}=k+1\}}{\sum}
\sum_{j=0}^k(-1)^j\iota^{*}_{\ul{\lam}_j,\ul{\lam}} 
\col 
{\cal O}_{\mathfrak D}\otimes_{{\cal O}_{\cal X}}
\bigoplus_{l+m=k}b^{(l-1),(m)}_*
(\Om^{i-k}_{\os{\circ}{\cal X}{}^{(l-1)}\cap 
\os{\circ}{\cal D}{}^{(m)}/\os{\circ}{T}}\otimes_{\mab Z}
\vp^{(l-1),(m)}_{\rm zar}((\os{\circ}{\cal X},\os{\circ}{\cal D})/\os{\circ}{T}))\\
& \lo 
{\cal O}_{\mathfrak D}\otimes_{{\cal O}_{\cal X}}
\bigoplus_{l+m=k}b^{(l-1),(m)}_*
(\Om^{i-k}_{\os{\circ}{\cal X}{}^{(l-1)}\cap 
\os{\circ}{\cal D}{}^{(m)}/\os{\circ}{T}}\otimes_{\mab Z}
\vp^{(l-1),(m)}_{\rm zar}((\os{\circ}{\cal X},\os{\circ}{\cal D})/\os{\circ}{T})). 
\tag{4.8.1}\label{ali:uliim}
\end{align*}}} 
Then the following diagram is commutative$:$
\begin{equation*} 
\begin{CD} 
{\rm gr}^{P}_{k+1}
({\cal O}_{\mathfrak D}\otimes_{{\cal O}_{{\cal X}}}
{\Om}^{i+1}_{({\cal X},{\cal D})/\os{\circ}{T}})
@>{\simeq}>>  
\\
@A{\theta_{({\cal X},{\cal D})/\os{\circ}{T}}\wedge}AA  \\
{\rm gr}^{P}_k
({\cal O}_{\mathfrak D}
\otimes_{{\cal O}_{\cal X}}\Om^{i}_{({\cal X},{\cal D})/\os{\circ}{T}})
@>{\simeq}>> 
\end{CD}
\end{equation*} 
\begin{equation*} 
\begin{CD} 
{\cal O}_{\mathfrak D}\otimes_{{\cal O}_{\cal X}}
\bigoplus_{l+m=k+1}b^{(l-1),(m)}_*
(\Om^{i-k}_{\os{\circ}{\cal X}{}^{(l-1)}\cap 
\os{\circ}{\cal D}{}^{(m)}/\os{\circ}{T}}\otimes_{\mab Z}
\vp^{(l-1),(m)}_{\rm zar}((\os{\circ}{\cal X},\os{\circ}{\cal D})/\os{\circ}{T}))[-k-1]
\\
@AA{\iota^{(k-1)*}}A \\
{\cal O}_{\mathfrak D}\otimes_{{\cal O}_{\cal X}}
\bigoplus_{l+m=k}b^{(l-1),(m)}_*
(\Om^{i-k}_{\os{\circ}{\cal X}{}^{(l-1)}\cap 
\os{\circ}{\cal D}{}^{(m)}/\os{\circ}{T}}\otimes_{\mab Z}
\vp^{(l-1),(m)}_{\rm zar}((\os{\circ}{\cal X},\os{\circ}{\cal D})/\os{\circ}{T}))[-k]. 
\end{CD}
\tag{4.8.2}\label{eqn:xdxgras}
\end{equation*} 
\end{lemm}
\begin{proof} 
The proof is the same as that of \cite[4.12]{msemi} (cf.~\cite[(10.1.16)]{ndw}). 
\end{proof}

\begin{prop}\label{prop:pbe}
The following morphism 
\begin{align*} 
\theta_{{\cal P}/\os{\circ}{T}} \wedge  & 
\col 
{\cal O}_{{\mathfrak D}}
\otimes_{{\cal O}_{{\cal P}}}
{\Om}^i_{{\cal P}/S(T)^{\nat}}
\lo  \\
&\{({\cal O}_{\mathfrak D}\otimes_{{\cal O}_{{\cal P}}}
{\Om}^{i+1}_{{\cal P}/\os{\circ}{T}}
/P^{\cal X}_0  
\os{\theta_{{\cal P}/\os{\circ}{T}}}{\lo} {\cal O}_{\mathfrak D}
\otimes_{{\cal O}_{{\cal P}}}
{\Om}^{i+2}_{{\cal P}/\os{\circ}{T}}/P^{{\cal X}}_1 
\os{\theta_{{\cal P}/\os{\circ}{T}}}{\lo}\cdots )\}
\tag{4.9.1}\label{eqn:prppn} 
\end{align*} 
is a quasi-isomorphism. 
\end{prop} 
\begin{proof} 
It suffices to prove that 
\begin{align*} 
& {\cal O}_{{\mathfrak D}}
\otimes_{{\cal O}_{{\cal P}}}
{\Om}^{i-1}_{{\cal P}/\os{\circ}{T}}
\os{\theta_{{\cal P}/\os{\circ}{T}}}{\lo}  {\cal O}_{{\mathfrak D}}
\otimes_{{\cal O}_{{\cal P}}}
{\Om}^i_{{\cal P}/\os{\circ}{T}}
\os{\theta_{{\cal P}/\os{\circ}{T}}}{\lo} 
{\cal O}_{\mathfrak D}\otimes_{{\cal O}_{{\cal P}}}
{\Om}^{i+1}_{{\cal P}/\os{\circ}{T}}
/P^{\cal X}_0  \\
&\os{\theta_{{\cal P}/\os{\circ}{T}}}{\lo} {\cal O}_{\mathfrak D}
\otimes_{{\cal O}_{{\cal P}}}
{\Om}^{i+2}_{{\cal P}/\os{\circ}{T}}/P^{{\cal X}}_1 
\os{\theta_{{\cal P}/\os{\circ}{T}}}{\lo}\cdots 
\end{align*} 
is exact. 
%Following the proof of \cite[(3.15)]{msemi}, 
%set $K_{-2}:=
The sheaves ${\cal O}_{{\mathfrak D}}
\otimes_{{\cal O}_{{\cal P}}}
{\Om}^{i-1}_{{\cal P}/\os{\circ}{T}}$, 
${\cal O}_{{\mathfrak D}}
\otimes_{{\cal O}_{{\cal P}}}
{\Om}^{i}_{{\cal P}/\os{\circ}{T}}$ 
and 
${\cal O}_{{\mathfrak D}}
\otimes_{{\cal O}_{{\cal P}}}
{\Om}^{i+j+1}_{{\cal P}/\os{\circ}{T}}/P_j^{\cal X}$ 
have induced filtrations by $P^{{\cal D}/{\cal X}}$ on 
${\cal O}_{{\mathfrak D}}
\otimes_{{\cal O}_{{\cal P}}}
{\Om}^{\bul}_{{\cal P}/\os{\circ}{T}}$, which we denote by 
$P^{{\cal D}/{\cal X}}$ again. 
It suffices to prove that 
\begin{align*} 
& {\rm gr}_k^{P^{{\cal D}/{\cal X}}}({\cal O}_{{\mathfrak D}}
\otimes_{{\cal O}_{{\cal P}}}
{\Om}^{i-1}_{{\cal P}/\os{\circ}{T}})
\os{\theta_{{\cal P}/\os{\circ}{T}}}{\lo}  
{\rm gr}_k^{P^{{\cal D}/{\cal X}}}({\cal O}_{{\mathfrak D}}
\otimes_{{\cal O}_{{\cal P}}}
{\Om}^i_{{\cal P}/\os{\circ}{T}})
\os{\theta_{{\cal P}/\os{\circ}{T}}}{\lo} 
{\rm gr}_k^{P^{{\cal D}/{\cal X}}}
({\cal O}_{\mathfrak D}\otimes_{{\cal O}_{{\cal P}}}
{\Om}^{i+1}_{{\cal P}/\os{\circ}{T}}
/P^{\cal X}_0)  \\
&\os{\theta_{{\cal P}/\os{\circ}{T}}}{\lo} 
{\rm gr}_k^{P^{{\cal D}/{\cal X}}}({\cal O}_{\mathfrak D}
\otimes_{{\cal O}_{{\cal P}}}
{\Om}^{i+2}_{{\cal P}/\os{\circ}{T}}/P^{{\cal X}}_1)  
\os{\theta_{{\cal P}/\os{\circ}{T}}}{\lo}\cdots 
\end{align*} 
is exact for any $k\in {\mab Z}$. 
By (\ref{eqn:prplkvin}) this sequence is isomorphic to 
\begin{align*} 
& d^{(k)}_{{\mathfrak D}}({\cal D})_*
({\cal O}_{{\mathfrak D}^{(k)}({\cal D})}\otimes_{{\cal O}_{{\cal D}^{(k)}}}
\Om^{i-k-1}_{{\cal D}^{(k)}/\os{\circ}{T}}
\otimes_{\mab Z}\vp^{(k)}_{\rm zar}
(\os{\circ}{\cal D}/\os{\circ}{T})) \\
& \os{\theta_{{\cal D}^{(k)}/\os{\circ}{T}}}{\lo} 
d^{(k)}_{{\mathfrak D}}({\cal D})_*
({\cal O}_{{\mathfrak D}^{(k)}({\cal D})}\otimes_{{\cal O}_{{\cal D}^{(k)}}}
\Om^{i-k}_{{\cal D}^{(k)}/\os{\circ}{T}}
\otimes_{\mab Z}\vp^{(k)}_{\rm zar}
(\os{\circ}{\cal D}/\os{\circ}{T})) \\
&
\os{\theta_{{\cal D}^{(k)}/\os{\circ}{T}}}{\lo} 
d^{(k)}_{{\mathfrak D}}({\cal D})_*
({\cal O}_{{\mathfrak D}^{(k)}({\cal D})}\otimes_{{\cal O}_{{\cal D}^{(k)}}}
\Om^{i+1-k}_{{\cal D}^{(k)}/\os{\circ}{T}}
\otimes_{\mab Z}\vp^{(k)}_{\rm zar}
(\os{\circ}{\cal D}/\os{\circ}{T})/P_0^{{\cal D}^{(k)}}) \\
& \os{\theta_{{\cal D}^{(k)}/\os{\circ}{T}}}{\lo}  d^{(k)}_{{\mathfrak D}}({\cal D})_*
({\cal O}_{{\mathfrak D}^{(k)}({\cal D})}\otimes_{{\cal O}_{{\cal D}^{(k)}}}
\Om^{i+2-k}_{{\cal D}^{(k)}/\os{\circ}{T}}
\otimes_{\mab Z}\vp^{(k)}_{\rm zar}
(\os{\circ}{\cal D}/\os{\circ}{T})/P_1^{{\cal D}^{(k)}})
\os{\theta_{{\cal D}^{(k)}/\os{\circ}{T}}}{\lo}  \cdots. 
\tag{4.9.2}\label{eqn:prtvipn}
\end{align*} 
The exactness of this sequence is equivalent to the exactness of the following sequence: 
\begin{align*} 
& 0 \lo
d^{(k)}_{{\mathfrak D}}({\cal D})_*
({\cal O}_{{\mathfrak D}^{(k)}({\cal D})}\otimes_{{\cal O}_{{\cal D}^{(k)}}}
\Om^{i-k}_{{\cal D}^{(k)}/S(T)^{\nat}}
\otimes_{\mab Z}\vp^{(k)}_{\rm zar}
(\os{\circ}{\cal D}/\os{\circ}{T})) \\
&
\os{\theta_{{\cal D}^{(k)}/\os{\circ}{T}}}{\lo} 
d^{(k)}_{{\mathfrak D}}({\cal D})_*
({\cal O}_{{\mathfrak D}^{(k)}({\cal D})}\otimes_{{\cal O}_{{\cal D}^{(k)}}}
\Om^{i+1-k}_{{\cal D}^{(k)}/\os{\circ}{T}}
\otimes_{\mab Z}\vp^{(k)}_{\rm zar}
(\os{\circ}{\cal D}/\os{\circ}{T})/P_0^{{\cal D}^{(k)}}) \\
& \os{\theta_{{\cal D}^{(k)}/\os{\circ}{T}}}{\lo}  d^{(k)}_{{\mathfrak D}}({\cal D})_*
({\cal O}_{{\mathfrak D}^{(k)}({\cal D})}\otimes_{{\cal O}_{{\cal D}^{(k)}}}
\Om^{i+2-k}_{{\cal D}^{(k)}/\os{\circ}{T}}
\otimes_{\mab Z}\vp^{(k)}_{\rm zar}
(\os{\circ}{\cal D}/\os{\circ}{T})/P_1^{{\cal D}^{(k)}})
\os{\theta_{{\cal D}^{(k)}/\os{\circ}{T}}}{\lo}  \cdots. 
\tag{4.9.3}\label{eqn:prcvipn}
\end{align*} 
This has been proved in \cite[(1.4.3)]{nb} since 
${\cal D}^{(k)}$ is an SNCL scheme over $S(T)^{\nat}$. 
\end{proof}

\section{Zariskian $p$-adic bifiltered El Zein-Steenbrink-Zucker complexes}\label{sec:psc}
Let $S$, $(T,{\cal J},\del)$, $T_0\lo S$, $S_{\os{\circ}{T}_0}$ and $S(T)^{\nat}$  
be as in previous sections. 
Let $(X,D)/S$ be an SNCL scheme with a relative SNCD on $X/S$. 
Let 
$f\col (X_{\os{\circ}{T}_0},D_{\os{\circ}{T}_0}) \lo S_{\os{\circ}{T}_0}$ 
be the structural morphism. 
By abuse of notation, let us also denote the structural morphism 
$(X_{\os{\circ}{T}_0},D_{\os{\circ}{T}_0})\lo S(T)^{\nat}$ by $f$. 
Let $E$ be a flat quasi-coherent crystal of 
${\cal O}_{\os{\circ}{X}_{T_0}/\os{\circ}{T}}$-modules.  
\par 
The aim of this section is to construct a bifiltered complex
\begin{equation*} 
(A_{\rm zar}((X_{\os{\circ}{T}_0},D_{\os{\circ}{T}_0})/S(T)^{\nat},E),P^{D_{\os{\circ}{T}_0}},P)
\in {\rm D}^+{\rm F}^2(f^{-1}({\cal O}_T)), 
\end{equation*}  
which we call 
the {\it zariskian $p$-adic bifiltered El Zein-Steenbrink-Zucker complex} 
of $E$ for $X_{\os{\circ}{T}_0}/S(T)^{\nat}$.

\par 
For the time being, assume that 
there exists an immersion $(X_{\os{\circ}{T}_0},D_{\os{\circ}{T}_0})
\os{\sus}{\lo} \ol{\cal P}$ into a log smooth scheme over $\ol{S(T)^{\nat}}$. 
Denote $\ol{\cal P}{}^{\rm ex}=(\ol{\cal X},\ol{\cal D})$, where 
$(\ol{\cal X},\ol{\cal D})$ is a strictly semistable formal scheme 
with a relative SNCD over $\ol{S(T)^{\nat}}$ ((\ref{prop:neoxeo}) (2)). 
Set ${\cal P}^{\rm ex}:=\ol{\cal P}{}^{\rm ex}\times_{\ol{S(T)^{\nat}}}S(T)^{\nat}$ 
and $({\cal X},{\cal D}):=(\ol{\cal X},\ol{\cal D})\times_{\ol{S(T)^{\nat}}}S(T)^{\nat}
(={\cal P}^{\rm ex})$.  
Let $\ol{\mathfrak D}$ be the log PD-envelope of 
the immersion $(X_{\os{\circ}{T}_0},D_{\os{\circ}{T}_0}) \os{\subset}{\lo} \ol{\cal P}$ over 
$(\os{\circ}{T},{\cal J},\del)$.  
Let ${\mathfrak D}(\ol{S(T)^{\nat}})$ be the log PD-envelope of 
the immersion $S_{\os{\circ}{T}_0}\os{\sus}{\lo} \ol{S(T)^{\nat}}$ over 
$(\os{\circ}{T},{\cal J},\del)$. 
Set ${\mathfrak D}:=\ol{\mathfrak D}\times_{{\mathfrak D}(\ol{S(T)^{\nat}})}S(T)^{\nat}$. 
This is the log PD-envelope of the immersion 
$(X_{\os{\circ}{T}_0},D_{\os{\circ}{T}_0}) \os{\subset}{\lo} {\cal P}^{\rm ex}$
over $(S(T)^{\nat},{\cal J},\del)$. 
Let 
\begin{align*} 
\eps_{(X_{\os{\circ}{T}_0},D_{\os{\circ}{T}_0})/\os{\circ}{T}}
\col 
(((X_{\os{\circ}{T}_0},D_{\os{\circ}{T}_0})/\os{\circ}{T})_{\rm crys},
{\cal O}_{(X_{\os{\circ}{T}_0},D_{\os{\circ}{T}_0})/\os{\circ}{T}}) 
\lo 
((\os{\circ}{X}_{T_0}/\os{\circ}{T})_{\rm crys},
{\cal O}_{\os{\circ}{X}_{T_0}/\os{\circ}{T}}) 
\tag{5.0.1}\label{eqn:olpsapc}
\end{align*} 
be the morphism of ringed topoi induced by the morphism 
$\eps_{(X_{\os{\circ}{T}_0},D_{\os{\circ}{T}_0})
/\os{\circ}{T}_0}\col (X_{\os{\circ}{T}_0},D_{\os{\circ}{T}_0})\lo \os{\circ}{X}_{T_0}$ 
over $\os{\circ}{T}$ forgetting the log structure of $(X_{\os{\circ}{T}_0},D_{\os{\circ}{T}_0})$. 
Let $(\ol{\cal E},\ol{\nabla})$ be the quasi-coherent ${\cal O}_{\ol{\mathfrak D}}$-module 
with the integrable connection 
associated to $\eps^*_{(X_{\os{\circ}{T}_0},D_{\os{\circ}{T}_0})/\os{\circ}{T}}(E)$: 
$\ol{\nabla}\col \ol{\cal E}\lo 
\ol{\cal E}\otimes_{{\cal O}_{\ol{\cal P}}}\Om^1_{\ol{\cal P}/\os{\circ}{T}}$. 
Set ${\cal E}:=\ol{\cal E}\otimes_{{\cal O}_{\ol{\mathfrak D}}}{\cal O}_{\mathfrak D}$.  
As in \cite[pp.~41--42]{nhir}, it is easy to check that  $\ol{\nabla}$ 
induces the following integrable connection 
\begin{equation*} 
\nabla \col {\cal E}
\lo {\cal E}\otimes_{{\cal O}_{{\cal P}}}\Om^1_{{\cal P}/\os{\circ}{T}} 
=
{\cal E}\otimes_{{\cal O}_{{\cal P}^{\rm ex}}}
{\Om}^1_{{\cal P}^{\rm ex}/\os{\circ}{T}}.  
\tag{5.0.2}\label{eqn:nitpltd}
\end{equation*}  
%Set $\theta:=\theta_{{\cal P}^{\rm ex}}$. 
Set  
\begin{align*} 
A_{\rm zar}({\cal P}^{\rm ex}/S(T)^{\nat},{\cal E})^{ij}
& :={\cal E}\otimes_{{\cal O}_{\cal X}}
{\Om}^{i+j+1}_{{\cal P}^{\rm ex}/\os{\circ}{T}}/P_j^{\cal X}  \\
& :={\cal E}\otimes_{{\cal O}_{{\cal X}}}
{\Om}^{i+j+1}_{{\cal P}^{\rm ex}/\os{\circ}{T}}/
P^{\cal X}_j({\cal E}\otimes_{{\cal O}_{\cal X}}
{\Om}^{i+j+1}_{{\cal P}^{\rm ex}/\os{\circ}{T}})  
\quad (i,j \in {\mab N}). 
\tag{5.0.3}\label{cd:accef}
\end{align*}   
The sheaf 
$A_{\rm zar}({\cal P}^{\rm ex}/S(T)^{\nat},{\cal E})^{ij}$ 
has quotient filtrations $P^{{\cal D}/{\cal X}}$ and $P$ obtained by 
the filtrations $P^{{\cal D}/{\cal X}}$ and $P$ on 
${\cal E}\otimes_{{\cal O}_{{\cal X}}}
{\Om}^{i+j+1}_{{\cal P}^{\rm ex}/\os{\circ}{T}}$.  
We consider the following boundary morphisms of 
double complexes: 
\begin{equation*}
\begin{CD}
A_{\rm zar}({\cal P}^{\rm ex}/S(T)^{\nat}
,{\cal E})^{i,j+1}  @.  \\ 
@A{\theta_{{\cal P}^{\rm ex}/\os{\circ}{T}} \wedge}AA  @. \\
A_{\rm zar}({\cal P}^{\rm ex}/S(T)^{\nat},{\cal E})^{ij}
@>{-\nabla}>> 
A_{\rm zar}({\cal P}^{\rm ex}/S(T)^{\nat}
,{\cal E})^{i+1,j}.\\
\end{CD}
\tag{5.0.4}\label{cd:lccbd} 
\end{equation*}  
(We think that these are the best boundary morphisms with respect to 
the signs.)
Then we have the double complex 
$A_{\rm zar}({\cal P}^{\rm ex}/S(T)^{\nat},{\cal E})^{\bul \bul}$. 
The double complex 
$A_{\rm zar}({\cal P}^{\rm ex}/S(T)^{\nat},{\cal E})^{\bul \bul}$ 
has filtrations $P^{{\cal D}/{\cal X}}=\{P^{{\cal D}/{\cal X}}_k\}_{k \in {\mab Z}}$ 
and $P=\{P_k\}_{k \in {\mab Z}}$ 
defined by the following formulas: 
\begin{equation*} 
P^{{\cal D}/{\cal X}}_kA_{\rm zar}({\cal P}^{\rm ex}/S(T)^{\nat},{\cal E})^{\bul \bul}
:=(\cdots 
(P^{{\cal D}/{\cal X}}_k+P_j^{\cal X})({\cal E}\otimes_{{\cal O}_{\cal X}}
{\Om}^{i+j+1}_{{\cal P}^{\rm ex}/\os{\circ}{T}})
/P_j^{\cal X} 
\cdots)\in C^+(f^{-1}({\cal O}_T))   
\tag{5.0.5}\label{eqn:lpcad}
\end{equation*} 
and 
\begin{equation*} 
P_kA_{\rm zar}({\cal P}^{\rm ex}/S(T)^{\nat},{\cal E})^{\bul \bul}
:=
(\cdots 
(P_{2j+k+1}+P_j^{\cal X})({\cal E}\otimes_{{\cal O}_{\cal X}}
{\Om}^{i+j+1}_{{\cal P}^{\rm ex}/\os{\circ}{T}})
/P_j^{\cal X} \cdots)
\in C^+(f^{-1}({\cal O}_T)).    
\tag{5.0.6}\label{eqn:lpbcad}
\end{equation*} 
Let $(A_{\rm zar}({\cal P}^{\rm ex}/S(T)^{\nat},{\cal E}),P^{{\cal D}/{\cal X}},P)$ 
be the bifiltered single complex of the bifiltered double complex 
$(A_{\rm zar}({\cal P}^{\rm ex}/S(T)^{\nat},{\cal E})^{\bul \bul},P^{{\cal D}/{\cal X}},P)$.  
\par 
Let $(Y,E)$ be an SNCL scheme over $S$ with a relative SNCD on $Y/S$ and 
assume that there exists an immersion 
$(Y_{\os{\circ}{T}_0},E_{\os{\circ}{T}_0}) \os{\subset}{\lo} \ol{\cal Q}$  
into a log smooth scheme over $\ol{S(T)^{\nat}}$. 
Assume that there exist morphisms 
$g\col (X_{\os{\circ}{T}_0},D_{\os{\circ}{T}_0}) \lo (Y_{\os{\circ}{T}_0},E_{\os{\circ}{T}_0})$ and 
$\ol{g} \col \ol{\cal P} \lo \ol{\cal Q}$  
making the following diagram commutative: 
\begin{equation*} 
\begin{CD} 
(X_{\os{\circ}{T}_0},D_{\os{\circ}{T}_0})@>{\sus}>> \ol{\cal P} \\ 
@V{g}VV @VV{\ol{g}}V \\ 
(Y_{\os{\circ}{T}_0},E_{\os{\circ}{T}_0})@>{\sus}>> \ol{\cal Q}.  
\end{CD}
\end{equation*} 
Set ${\cal Q}:=\ol{\cal Q}\times_{\ol{S(T)^{\nat}}}S(T)^{\nat}$. 
Let ${\mathfrak D}$ and ${\mathfrak E}$ be the log PD-envelopes of 
the immersion $(X_{\os{\circ}{T}_0},D_{\os{\circ}{T}_0}) \os{\subset}{\lo} {\cal P}$ 
and $(Y_{\os{\circ}{T}_0},E_{\os{\circ}{T}_0}) \os{\subset}{\lo} {\cal Q}$ 
over $(S(T)^{\nat},{\cal J},\del)$, respectively.  
Let $\wt{g}\col {\cal P}\lo{\cal Q}$ be 
the base change morphism of $\ol{g}$ 
with respect to the morphism 
$S(T)^{\nat}\os{\sus}{\lo} \ol{S(T)^{\nat}}$.  
Let 
$g^{\rm PD}\col 
{\mathfrak D}\lo {\mathfrak E}$ 
be the natural morphism induced by $\wt{g}$. 
Let 
$$\os{\circ}{g}_{\rm crys} \col 
((\os{\circ}{X}_{T_0}/\os{\circ}{T})_{\rm crys},
{\cal O}_{\os{\circ}{X}_{T_0}/\os{\circ}{T}})\lo 
((\os{\circ}{Y}_{T_0}/\os{\circ}{T})_{\rm crys},
{\cal O}_{\os{\circ}{Y}_{T_0}/\os{\circ}{T}})$$  
be the induced morphism of ringed topoi by 
$\os{\circ}{g} \col \os{\circ}{X}_{T_0}\lo \os{\circ}{Y}_{T_0}$. 
Let $E$ (resp.~$F$) be a flat quasi-coherent crystal of  
${\cal O}_{\os{\circ}{X}_{T_0}/\os{\circ}{T}}$-modules  
(resp.~a flat quasi-coherent crystal of 
${\cal O}_{\os{\circ}{Y}_{T_0}/\os{\circ}{T}}$-modules). 
Assume that we are given 
a morphism $F\lo \os{\circ}{g}_{{\rm crys}*}(E)$. 
Let $({\cal F},\nabla)$ be 
the ${\cal O}_{\mathfrak D}$-module 
with integrable connection obtained in (\ref{eqn:nitpltd}) for $F$. 
By (\ref{prop:neoxeo}) (1), ${\cal Q}^{\rm ex}$ is 
a formal SNCL scheme 
$({\cal Y},{\cal E)}$ over $S(T)^{\nat}$ with a relative formal 
SNCD on ${\cal Y}$ over $(S(T)^{\nat},{\cal J},\del)$, respectively. 
%and the morphism 
%${\cal P}\lo{\cal Q}$ is equal to $({\cal X},{\cal D})\lo ({\cal Y},{\cal E})$ 
%over $(S(T)^{\nat},{\cal J},\del)$. 
Then we have the following morphism of trifiltered complexes: 
\begin{equation*}
({\cal F}\otimes_{{\cal O}_{{\cal Q}^{\rm ex}}}
{\Om}^{\bul}_{{\cal Q}^{\rm ex}/\os{\circ}{T}},P^{{\cal E}/{\cal Y}},P,P^{\cal Y})
\lo  
g^{\rm PD}_*(({\cal E}\otimes_{{\cal O}_{{\cal P}^{\rm ex}}}
{\Om}^{\bul}_{{\cal P}^{\rm ex}/\os{\circ}{T}},P^{{\cal D}/{\cal X}},P,P^{\cal X})). 
\tag{5.0.7}\label{eqn:keyxy}
\end{equation*}
Hence we have the following  morphism of bifiltered complexes: 
\begin{equation*}
(A_{\rm zar}({\cal Q}^{\rm ex}/S(T)^{\nat},{\cal F}),P^{{\cal E}/{\cal Y}},P)
\lo  
g^{\rm PD}_*((A_{\rm zar}({\cal P}^{\rm ex}/S(T)^{\nat},{\cal E}),P^{{\cal D}/{\cal X}},P)). 
\tag{5.0.8}\label{eqn:kepqyxy}
\end{equation*}

\par 
Now we come back to the situation in the beginning of this section. 
\par 
Let $(X'_{\os{\circ}{T}_0},D'_{\os{\circ}{T}_0})$ be 
the disjoint union of an affine open covering of 
$(X_{\os{\circ}{T}_0},D_{\os{\circ}{T}_0})$.   
Let $(X'_{\os{\circ}{T}_0},D'_{\os{\circ}{T}_0}) \os{\sus}{\lo} \ol{\cal P}{}'$ 
be an immersion into a log smooth scheme over $\ol{S(T)^{\nat}}$ 
(this immersion exists if 
each log affine subscheme of 
$X_{\os{\circ}{T}_0}$ is sufficiently small). 
Set $X_{{\os{\circ}{T}_0}n}:={\rm cosk}_0^{X_{\os{\circ}{T}_0}}(X'_{\os{\circ}{T}_0})_n$  
and 
$\ol{\cal P}_{n}
:={\rm cosk}_0^{\ol{S(T)^{\nat}}}(\ol{\cal P}{}')_n$ $(n\in{\mab N})$.
Then we have a natural immersion 
$(X_{\os{\circ}{T}_0 \bul},D_{\os{\circ}{T}_0 \bul}) \os{\sus}{\lo} \ol{\cal P}_{\bul}$ 
over $S_{\os{\circ}{T}_0}\os{\sus}{\lo} \ol{S(T)^{\nat}}$. 
Let $\ol{\mathfrak D}_{\bul}$ 
be the log PD-envelope of the immersion 
$(X_{\os{\circ}{T}_0\bul},D_{\os{\circ}{T}_0\bul}) \os{\sus}{\lo} \ol{\cal P}_{\bul}$ 
over $(\os{\circ}{T},{\cal J},\gam)$. 
Set ${\mathfrak D}_{\bul}:=\ol{\mathfrak D}_{\bul}
\times_{{\mathfrak D}(\ol{S(T)^{\nat}})}S(T)^{\nat}$. 
Let $f_{\bul}\col X_{\os{\circ}{T}_0\bul} \lo S(T)^{\nat}$ 
be the structural morphism. 
Set ${\cal P}_{\bul}:=\ol{\cal P}_{\bul}\times_{\ol{S(T)^{\nat}}}S(T)^{\nat}$.  
We have a natural immersion $X_{\os{\circ}{T}_0\bul} \os{\sus}{\lo} {\cal P}_{\bul}$ 
over $S_{\os{\circ}{T}_0}\os{\sus}{\lo} S(T)^{\nat}$. 
Denote $\ol{\cal P}{}^{\rm ex}_{\bul}=(\ol{\cal X}_{\bul},\ol{\cal D}_{\bul})$, where 
$(\ol{\cal X}_{\bul},\ol{\cal D}_{\bul})$ is a simplicial strictly semistable formal scheme 
with a simplicial relative SNCD over $\ol{S(T)^{\nat}}$. 
Set ${\cal P}^{\rm ex}_{\bul}:=\ol{\cal P}{}^{\rm ex}_{\bul}\times_{\ol{S(T)^{\nat}}}S(T)^{\nat}$ 
and $({\cal X}_{\bul},{\cal D}_{\bul}):=(\ol{\cal X}_{\bul},\ol{\cal D}_{\bul})
\times_{\ol{S(T)^{\nat}}}S(T)^{\nat}
(={\cal P}^{\rm ex}_{\bul})$.  
Let $E^{\bul}$ be the flat quasi-coherent crystal 
of ${\cal O}_{\os{\circ}{X}_{T_0\bul}/\os{\circ}{T}}$-modules obtained by $E$. 
Let 
\begin{align*} 
\eps_{(X_{\os{\circ}{T}_0\bul},D_{\os{\circ}{T}_0\bul})/\os{\circ}{T}}
\col 
(((X_{\os{\circ}{T}_0\bul},D_{\os{\circ}{T}_0\bul})/\os{\circ}{T})_{\rm crys},
{\cal O}_{(X_{\os{\circ}{T}_0\bul},D_{\os{\circ}{T}_0\bul})/\os{\circ}{T}}) 
\lo 
((\os{\circ}{X}_{T_0\bul}/\os{\circ}{T})_{\rm crys},
{\cal O}_{\os{\circ}{X}_{T_0\bul}/\os{\circ}{T}}) 
\tag{5.0.9}\label{eqn:olpbpc}
\end{align*} 
be the morphism of ringed topoi induced by the morphism 
$\eps_{(X_{\os{\circ}{T}_0\bul},D_{\os{\circ}{T}_0\bul})
/\os{\circ}{T}_0}\col (X_{\os{\circ}{T}_0\bul},D_{\os{\circ}{T}_0\bul})\lo \os{\circ}{X}_{T_0\bul}$ 
over $\os{\circ}{T}$ forgetting the log structure of 
$(X_{\os{\circ}{T}_0\bul},D_{\os{\circ}{T}_0\bul})$. 
Let $(\ol{\cal E}{}^{\bul},\ol{\nabla}{}^{\bul})$ 
be the quasi-coherent ${\cal O}_{\ol{\mathfrak D}_{\bul}}$-module 
with the integrable connection 
associated to  
$\eps^*_{(X_{\os{\circ}{T}_0\bul},D_{\os{\circ}{T}_0\bul})/\os{\circ}{T}}(E^{\bul})$: 
$\ol{\nabla}{}^{\bul}\col \ol{\cal E}{}^{\bul}\lo 
\ol{\cal E}{}^{\bul}\otimes_{{\cal O}_{\ol{\cal P}_{\bul}}}
\Om^1_{\ol{\cal P}_{\bul}/\os{\circ}{T}}$. 
Set ${\cal E}^{\bul}:={\cal O}_{{\mathfrak D}_{\bul}}
\otimes_{{\cal O}_{\ol{\mathfrak D}_{\bul}}}\ol{\cal E}{}^{\bul}$.  
The connection $\ol{\nabla}{}^{\bul}$ 
induces the following integrable connection 
\begin{equation*} 
\nabla^{\bul} \col {\cal E}^{\bul} 
\lo {\cal E}^{\bul}\otimes_{{\cal O}_{{\cal P}_{\bul}}}
{\Om}^1_{{\cal P}_{\bul}/\os{\circ}{T}} 
=
{\cal E}^{\bul}\otimes_{{\cal O}_{{\cal P}^{\rm ex}_{\bul}}}
{\Om}^1_{{\cal P}^{\rm ex}_{\bul}/\os{\circ}{T}}. 
\tag{5.0.10}\label{eqn:nidopltd}
\end{equation*}  
%by the argument in \cite[pp.~41--42]{nhir}. 
By (\ref{prop:neoxeo}) (1),  
${\cal P}^{\rm ex}_{\bul}$ is equal to a simplicial formal SNCL scheme 
$({\cal X}_{\bul},{\cal D}_{\bul})$ with a simplicial relative SNCD over $S(T)^{\nat}$. 
Hence we have the following cosimplicial trifiltered complex by (\ref{eqn:keyxy}): 
\begin{equation*} 
({\cal E}^{\bul}\otimes_{{\cal O}_{{\cal P}^{\rm ex}_{\bul}}}
{\Om}^{\bul}_{{\cal P}^{\rm ex}_{\bul}
/\os{\circ}{T}},P^{{\cal D}/{\cal X}},P,P^{\cal X}).   
\tag{5.0.11}\label{cd:afil} 
\end{equation*}

Set  
\begin{equation*} 
A_{\rm zar}({\cal P}^{\rm ex}_{\bul}/S(T)^{\nat},{\cal E}^{\bul})^{ij} 
:=({\cal E}^{\bul}
\otimes_{{\cal O}_{{\cal P}^{\rm ex}_{\bul}}}
{\Om}^{i+j+1}_{{\cal P}^{\rm ex}_{\bul}/\os{\circ}{T}})/P^{{\cal X}_{\bul}}_j 
\quad (i,j \in {\mab N}). 
\tag{5.0.12}\label{cd:adef} 
\end{equation*}   

\begin{rema}\label{rema:estzmis1}
It seems to me that the $\infty$-adic analogue of (\ref{cd:adef}) in the case of 
the trivial coefficient is different from the definition \cite[(5.5)]{stz}
because the definition of 
$P^{{\cal X}_{\bul}}$ has no poles with respect to ${\cal D}_{\bul}$. 
%different from the definition $W^Y$ in [loc.~cit., p.~121]; 
%our definition of $P^{{\cal X}_{\bul}}$ has all poles with respect to ${\cal D}_{\bul}$; 
It seems to me that 
the definition of $W(Y)$ has poles with respect to $D$ in $\Om^{\bul}_X(\log (D+Y))$ 
in [loc.~cit.]. (See also [loc.~cit., (5.4)].) 
%The $\infty$-adic analogue of (\ref{cd:adef}) in the case of 
%the trivial coefficient is the definition in \cite[p.~121]{ezth} 
%It seems to me that the definition $W(Y)$ in \cite[(5.5)]{stz} is incorrect, at least unclear. 
The $\infty$-adic analogue of (\ref{cd:adef}) in the case of 
the trivial coefficient is the definition in \cite[p.~122, (3.3)]{ezth}. 
\end{rema} 

We consider the following boundary morphisms of 
the following double complex: 
\begin{equation*}
\begin{CD}
A_{\rm zar}({\cal P}^{\rm ex}_{\bul}/S(T)^{\nat},{\cal E}^{\bul})^{i,j+1} @.  \\ 
@A{\theta_{{\cal P}^{\rm ex}_{\bul}/\os{\circ}{T}} \wedge}AA 
@. \\
A_{\rm zar}({\cal P}^{\rm ex}_{\bul}/S(T)^{\nat},{\cal E}^{\bul})^{ij} 
@>{-\nabla}>> A_{\rm zar}({\cal P}^{\rm ex}_{\bul}/S(T)^{\nat},
{\cal E}^{\bul})^{i+1,j}.\\
\end{CD}
\tag{5.1.1}\label{cd:locstbd} 
\end{equation*}  
Then we have the cosimplicial double complex 
$A_{\rm zar}({\cal P}^{\rm ex}_{\bul}/S(T)^{\nat},{\cal E}^{\bul})^{\bul \bul}$. 
The double complex 
$A_{\rm zar}({\cal P}^{\rm ex}_{\bul}/S(T)^{\nat},{\cal E}^{\bul})^{\bul \bul}$ 
has filtrations $P^{{\cal D}_{\bul}/{\cal X}_{\bul}}=
\{P^{{\cal D}_{\bul}/{\cal X}_{\bul}}_k\}_{k \in {\mab Z}}$ and 
$P=\{P_k\}_{k \in {\mab Z}}$
defined by the following formulas: 
\begin{equation*} 
P^{{\cal D}_{\bul}/{\cal X}_{\bul}}_k
A_{\rm zar}({\cal P}^{\rm ex}_{\bul}/S(T)^{\nat},{\cal E}^{\bul})^{\bul \bul}
:=(\cdots P^{{\cal D}_{\bul}/{\cal X}_{\bul}}_k
A_{\rm zar}({\cal P}^{\rm ex}_{\bul}/S(T)^{\nat},{\cal E}^{\bul})^{ij} 
\cdots)    
\tag{5.1.2}\label{eqn:dbldad}
\end{equation*} 
and 
\begin{equation*} 
P_kA_{\rm zar}({\cal P}^{\rm ex}_{\bul}/S(T)^{\nat},{\cal E}^{\bul})^{\bul \bul}
:=(\cdots P_{2j+k+1}A_{\rm zar}({\cal P}^{\rm ex}_{\bul}/S(T)^{\nat},{\cal E}^{\bul})^{ij} 
\cdots).    
\tag{5.1.3}\label{eqn:dblad}
\end{equation*} 

\begin{rema}\label{rema:estzmis2}
The $\infty$-adic analogue of the filtration 
$P^{{\cal D}_{\bul}/{\cal X}_{\bul}}$ is different from $W^f$ in \cite[p.~122 (3.3)]{ezth} 
because $W^f$ has no poles along $Y$. 
The $\infty$-adic analogue of the filtration 
$P^{{\cal D}_{\bul}/{\cal X}_{\bul}}$ is the filtration $W(D)$ in \cite[p.~526]{stz}. 
It seems to me that the filtration $W^f$ in \cite[p.~122]{ezth} is wrong. 
\end{rema}

Let $(A_{\rm zar}({\cal P}^{\rm ex}_{\bul}/S(T)^{\nat},{\cal E}^{\bul}),
P^{{\cal D}_{\bul}/{\cal X}_{\bul}},P)$ 
be the cosimplicial single bifiltered complex of the cosimplicial bifiltered double complex 
$(A_{\rm zar}({\cal P}^{\rm ex}_{\bul}/S(T)^{\nat},{\cal E}^{\bul})^{\bul \bul},
P^{{\cal D}_{\bul}/{\cal X}_{\bul}},P)$.  
\par 
Let 
\begin{equation*} 
a^{(l,m)} \col \os{\circ}{X}{}^{(l)}_{T_0}\cap \os{\circ}{D}{}^{(m)}_{T_0}
 \lo \os{\circ}{X}_{T_0}  \quad (l,m\in {\mab N})
\tag{5.2.1}\label{eqn:atn}
\end{equation*} 
and 
\begin{equation*} 
a^{(l,m)}_{\bul} \col 
\os{\circ}{X}{}^{(l)}_{T_0\bul}\cap \os{\circ}{D}{}^{(m)}_{T_0\bul} \lo 
\os{\circ}{X}_{T_0\bul}  \quad 
(l,m\in {\mab N})
\tag{5.2.2}\label{eqn:antb}
\end{equation*} 
be the natural morphism of 
schemes and the natural morphism of simplicial schemes, respectively.  
Set $a^{(l)}:= a^{(l,0)}$ and $a^{(l)}_{\bul}:=a^{(l,0)}_{\bul}$.
Let $\os{\circ}{\mathfrak D}{}^{(l,m)}_{\bul}$ and ${\mathfrak D}_{\bul}$ 
be the (log) PD-envelopes of 
$\os{\circ}{X}{}^{(l)}_{T_0\bul}\cap \os{\circ}{D}{}^{(m)}_{T_0\bul} 
\os{\sus}{\lo} 
\os{\circ}{\cal X}{}^{(l)}_{\bul}\cap \os{\circ}{\cal D}{}^{(m)}_{\bul}$ over 
$(\os{\circ}{T},{\cal J},\del)$ 
and $X_{\os{\circ}{T}_0\bul}  \os{\sus}{\lo} {\cal P}_{\bul}$ over 
$(S(T)^{\nat},{\cal J},\del)$, 
respectively.   
%Set $\os{\circ}{\mathfrak D}{}^{(l)}_{\bul}(\os{\circ}{\cal X}_{\bul})
%:=\os{\circ}{\mathfrak D}{}^{(l,0)}_{\bul}$. 
Let ${\mathfrak D}^{(l)}_{\bul}(\os{\circ}{\cal X}_{\bul})$ be the log scheme 
whose underlying scheme is 
$\os{\circ}{\mathfrak D}{}^{(l,0)}_{\bul}$ 
and whose log structure is the pull-back of 
$(\os{\circ}{\cal X}{}^{(l)}_{\bul},\os{\circ}{\cal D}\vert_{\os{\circ}{\cal X}{}^{(l)}_{\bul}})$. 
Let 
\begin{equation*} 
b^{(l,m)}_{{\mathfrak D}_{\bul}} \col \os{\circ}{\mathfrak D}{}^{(l,m)}_{\bul}
 \lo \os{\circ}{\mathfrak D}_{\bul}  \quad (l,m\in {\mab N})
\tag{5.2.3}\label{eqn:adtn}
\end{equation*} 
be the natural morphism. 
Set $b^{(l)}_{{\mathfrak D}_{\bul}}:=b^{(l,0)}_{{\mathfrak D}_{\bul}}$. 
Let 
\begin{equation*} 
a^{(l,m)}_{{\rm crys}} \col 
((\os{\circ}{X}{}^{(l)}_{T_0}\cap \os{\circ}{D}{}^{(m)}_{T_0})/\os{\circ}{T})_{\rm crys},
{\cal O}_{\os{\circ}{X}{}^{(l)}_{T_0}\cap \os{\circ}{D}{}^{(m)}_{T_0}/\os{\circ}{T}}) 
\lo ((\os{\circ}{X}_{T_0}/\os{\circ}{T})_{\rm crys},
{\cal O}_{\os{\circ}{X}_{T_0}/\os{\circ}{T}})  
\quad (l,m\in {\mab N})
\tag{5.2.4}\label{eqn:atnt}
\end{equation*} 
and 
\begin{equation*} 
a^{(l,m)}_{\bul,{\rm crys}} \col 
((\os{\circ}{X}{}^{(l)}_{T_0\bul}\cap 
\os{\circ}{D}{}^{(m)}_{T_0\bul})/\os{\circ}{T})_{\rm crys},
{\cal O}_{\os{\circ}{X}{}^{(k)}_{T_0\bul}
/\os{\circ}{T}})  \lo 
((\os{\circ}{X}_{T_0\bul}/\os{\circ}{T})_{\rm crys},
{\cal O}_{\os{\circ}{X}_{T_0\bul}/\os{\circ}{T}}) 
\quad (l,m\in {\mab N})
\tag{5.2.5}\label{eqn:antbt}
\end{equation*} 
be the morphism of ringed topoi 
obtained by (\ref{eqn:atn}) and (\ref{eqn:antb}), respectively. 
Let 
\begin{equation*} 
c^{(k)} \col D^{(k)}_{T_0} \lo X_{T_0}  \quad (k\in {\mab N})
\tag{5.2.6}\label{eqn:addtn}
\end{equation*} 
and 
\begin{equation*} 
c^{(k)}_{\bul} \col D^{(k)}_{T_0\bul} \lo X_{T_0\bul}  \quad 
(k\in {\mab N})
\tag{5.2.7}\label{eqn:antddb}
\end{equation*} 
be the natural morphism of 
log schemes and the natural morphism of simplicial log schemes, 
respectively.  
Let ${\mathfrak D}_{\bul}({\cal D}^{(k)}_{\bul})$ be the log PD-envelope of 
$D^{(k)}_{\os{\circ}{T}_0\bul} \os{\sus}{\lo} {\cal D}^{(k)}_{\bul}$ over $(S(T)^{\nat},{\cal J},\del)$.   
Let 
\begin{equation*} 
d^{(k)}_{{\mathfrak D}_{\bul}}({\cal D}_{\bul})\col  
{\mathfrak D}_{\bul}^{(k)}({\cal D}_{\bul})
\lo {\mathfrak D}_{\bul}  \quad 
(k\in {\mab N})
\tag{5.2.8}\label{eqn:antdb}
\end{equation*} 
be the natural morphism of log schemes.   
Let 
\begin{equation*} 
c^{(l)}_{{\rm crys}} \col ((D^{(l)}_{\os{\circ}{T}_0}
/S(T)^{\nat})_{\rm crys},{\cal O}_{D^{(l)}_{T_0}/S(T)^{\nat}}) 
\lo ((X_{\os{\circ}{T}_0}/S(T)^{\nat})_{\rm crys},{\cal O}_{X_{\os{\circ}{T}_0}/S(T)^{\nat}})  
\quad (l,m\in {\mab N})
\tag{5.2.9}\label{eqn:abtnt}
\end{equation*} 
and 
\begin{equation*} 
c^{(l)}_{\bul{\rm crys}} \col ((D^{(l)}_{\os{\circ}{T}_0\bul}/S(T)^{\nat})_{\rm crys},
{\cal O}_{D^{(l)}_{T_0\bul}/S(T)^{\nat}}) 
\lo ((X_{\os{\circ}{T}_0\bul}/\os{\circ}{T})_{\rm crys},{\cal O}_{X_{\os{\circ}{T}_0\bul}/S(T)^{\nat}})  
\quad (l,m\in {\mab N}) 
\tag{5.2.10}\label{eqn:antbbt}
\end{equation*} 
be the morphisms of ringed topoi 
induced by (\ref{eqn:addtn}) and (\ref{eqn:antddb}), 
respectively.

\par 
The following is only a constant simplicial SNCL version 
with a constant relative SNCD version of \cite[(4.14)]{nh3}. 

\begin{lemm}\label{lemm:knit}
Let $k$ be nonnegative integers.  
For the morphism 
$g \col {\cal P}^{\rm ex}_{n}=({\cal X}_n,{\cal D}_n)\lo 
{\cal P}^{\rm ex}_{n'}=({\cal X}_{n'},{\cal D}_{n'})$ 
corresponding to a morphism $[n']\lo [n]$ in $\Del$, 
there exists morphisms  
$\os{\circ}{g}{}^{(k)}_{\cal X}
\col \os{\circ}{\cal X}{}^{(k)}_{n}\lo \os{\circ}{\cal X}{}^{(k)}_{n'}$ 
$(k\in {\mab N})$ 
and 
$g^{(k)}_{\cal D}
\col {\cal D}^{(k)}_{n}\lo {\cal D}{}^{(k)}_{n'}$ 
$(k\in {\mab N})$ 
over the morphism $\os{\circ}{\cal X}_n\lo \os{\circ}{\cal X}_{n'}$ and 
${\cal X}_n\lo {\cal X}_{n'}$, respectively. 
Consequently $\{\os{\circ}{\cal X}{}^{(k)}_{n}\}_{n\in {\mab N}}$,   
$\{\os{\circ}{\cal D}{}^{(k)}_n\}_{n\in {\mab N}}$ and
$\{{\cal D}^{(k)}_{n}\}_{n\in {\mab N}}$ give us the $($log$)$ simplicial formal schemes 
$\os{\circ}{\cal X}{}^{(l)}_{\bul}\cap \os{\circ}{\cal D}{}^{(m)}_{\bul}$ $(l,m\in {\mab N})$ 
and ${\cal D}{}^{(k)}_{\bul}$, respectively.   
\end{lemm}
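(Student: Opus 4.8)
The plan is to trace the transition morphisms of the simplicial object $({\cal X}_{\bul},{\cal D}_{\bul})$ through the coskeleton construction recalled above, to check that modulo ${\cal I}$ they are disjoint unions of open immersions of the standard multi-intersections of an open covering of $\os{\circ}{X}_{{\os{\circ}{T}_0}}$, and then to transport this to the formal lift by means of the local charts. Once the transition maps are known to carry every $j$-fold intersection of distinct smooth components to such an intersection, the induced morphisms on all the strata $\os{\circ}{\cal X}{}^{(l)}_n\cap\os{\circ}{\cal D}{}^{(m)}_n$ and ${\cal D}{}^{(k)}_n$, and the simplicial identities for them, come out formally.

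First I would recall that $(X'_{{\os{\circ}{T}_0}},D'_{{\os{\circ}{T}_0}})=\coprod_{i\in I}(X,D)_i$ is the disjoint union of an affine open covering of $(X_{{\os{\circ}{T}_0}},D_{{\os{\circ}{T}_0}})$, with underlying opens $\os{\circ}{V}_i\sus\os{\circ}{X}_{{\os{\circ}{T}_0}}$, that $\ol{\cal P}_{\bul}={\rm cosk}_0^{\ol{S(T)^{\nat}}}(\ol{\cal P}{}')_{\bul}$, and that ${\cal P}^{\rm ex}_{\bul}=({\cal X}_{\bul},{\cal D}_{\bul})$. Hence $X_{{\os{\circ}{T}_0}n}={\rm cosk}_0^{X_{{\os{\circ}{T}_0}}}(X'_{{\os{\circ}{T}_0}})_n=\coprod_{(i_0,\ldots,i_n)}\os{\circ}{V}_{i_0\cdots i_n}$ with $\os{\circ}{V}_{i_0\cdots i_n}:=\os{\circ}{V}_{i_0}\cap\cdots\cap\os{\circ}{V}_{i_n}$ an open of $\os{\circ}{X}_{{\os{\circ}{T}_0}}$ (a fibre product over $\os{\circ}{X}_{{\os{\circ}{T}_0}}$ of opens of $\os{\circ}{X}_{{\os{\circ}{T}_0}}$ being their intersection), and for $\phi\col [n']\lo [n]$ in $\Del$ the morphism $X_{{\os{\circ}{T}_0}n}\lo X_{{\os{\circ}{T}_0}n'}$ carries the summand indexed by $(i_0,\ldots,i_n)$ to the summand indexed by $(i_{\phi(0)},\ldots,i_{\phi(n')})$ by the open immersion $\os{\circ}{V}_{i_0\cdots i_n}\os{\sus}{\lo}\os{\circ}{V}_{i_{\phi(0)}\cdots i_{\phi(n')}}$, the index set on the right being contained in that on the left. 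Since $\os{\circ}{X}_{{\os{\circ}{T}_0}n}\os{\sus}{\lo}\os{\circ}{\cal X}_n$ is an isomorphism of topological spaces, the underlying morphism of $g\col{\cal P}^{\rm ex}_n\lo{\cal P}^{\rm ex}_{n'}$ is, on each summand, this open immersion of opens of $\os{\circ}{X}_{{\os{\circ}{T}_0}}$; in particular it carries each smooth component of $\os{\circ}{X}_{{\os{\circ}{T}_0}n}$ (resp.~$D_{{\os{\circ}{T}_0}n}$) into one of $\os{\circ}{X}_{{\os{\circ}{T}_0}n'}$ (resp.~$D_{{\os{\circ}{T}_0}n'}$) and, being an open immersion, sends a $j$-fold intersection of distinct smooth components to a $j$-fold intersection of distinct smooth components.

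Then I would go up to the formal level. By \ref{prop:neoxeo} and \ref{coro:exem}, ${\cal X}_n$ (resp.~${\cal D}_n$) is a formal SNCL scheme (resp.~a relative SNCD on it) reducing modulo ${\cal I}$ to $\os{\circ}{X}_{{\os{\circ}{T}_0}n}$ (resp.~$D_{{\os{\circ}{T}_0}n}$), the transition maps $g$ are morphisms of SNCL schemes with relative SNCD over $S(T)^{\nat}$, and $\os{\circ}{\cal X}{}^{(l)}_n\cap\os{\circ}{\cal D}{}^{(m)}_n$, ${\cal D}{}^{(k)}_n$ are the corresponding combinatorial strata of the decompositions by smooth components, independent of the chosen decomposition by \cite[(1.1.12)]{nb}. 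Working in the local cartesian diagram (\ref{cd:pnwtp}), the conclusion of the preceding paragraph, applied to the charts, shows that $g$ carries each SNCL coordinate $x_i$ or $y_j$ of ${\cal X}_{n'}$ to a unit times a coordinate $x_{i'}$ or $y_{j'}$ of ${\cal X}_n$; hence $g$ pulls the ideal of $\os{\circ}{\cal X}{}^{(l)}_{n'}\cap\os{\circ}{\cal D}{}^{(m)}_{n'}$ in $\os{\circ}{\cal X}_{n'}$ (resp.~of ${\cal D}{}^{(k)}_{n'}$ in ${\cal X}_{n'}$) into that of $\os{\circ}{\cal X}{}^{(l)}_n\cap\os{\circ}{\cal D}{}^{(m)}_n$ (resp.~${\cal D}{}^{(k)}_n$), and therefore factors through the closed formal subscheme, producing $\os{\circ}{g}{}^{(k)}_{\cal X}$ and $g^{(k)}_{\cal D}$ over $\os{\circ}{\cal X}_n\lo\os{\circ}{\cal X}_{n'}$ and ${\cal X}_n\lo{\cal X}_{n'}$. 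Because $\os{\circ}{\cal X}{}^{(l)}_{n'}\cap\os{\circ}{\cal D}{}^{(m)}_{n'}\lo\os{\circ}{\cal X}_{n'}$ and ${\cal D}{}^{(k)}_{n'}\lo{\cal X}_{n'}$ are immersions, these lifts are uniquely determined by $g$, so the simplicial identities of $\{g\}$ transfer to $\{\os{\circ}{g}{}^{(k)}_{\cal X}\}$ and $\{g^{(k)}_{\cal D}\}$, whence $\{\os{\circ}{\cal X}{}^{(l)}_n\cap\os{\circ}{\cal D}{}^{(m)}_n\}_{n\in{\mab N}}$ and $\{{\cal D}{}^{(k)}_n\}_{n\in{\mab N}}$ form the asserted simplicial (log) formal schemes. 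I expect the main obstacle to be the first point --- identifying the simplicial transition maps on the special fibre with open immersions of standard multi-intersections, so that compatibility with the combinatorics of smooth components is automatic; the transfer to the formal lift and the check of the simplicial identities are then routine. One could alternatively deduce the statement from \cite[(4.14)]{nh3}, of which this is the constant-simplicial case.
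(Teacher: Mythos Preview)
Your proof is correct and is essentially a detailed unpacking of what the paper does: the paper's own proof consists entirely of the sentence ``This immediately follows from the proofs of \cite[(4.14)]{nh3} and \cite[(1.4.1)]{nb},'' which is precisely the alternative you mention in your final sentence. Your explicit analysis of the \v{C}ech transition maps as open immersions of multi-intersections, together with the uniqueness-of-lift argument for the simplicial identities, is exactly the content of those cited proofs specialised to the constant-simplicial SNCL-with-SNCD setting.
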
 
\begin{proof} 
This immediately follows from the proofs of \cite[(4.14)]{nh3} and \cite[(1.4.1)]{nb}. 
\end{proof}

\begin{lemm}\label{lemm:grc} 
Let $k$ be a nonnegative integer. 
Then the following hold$:$
\par 
$(1)$ There exists an isomorphism 
\begin{align*} 
& ({\rm gr}^{P^{{\cal D}_{\bul}/{\cal X}_{\bul}}}_kA_{\rm zar}
({\cal P}^{\rm ex}_{\bul}/S(T)^{\nat},{\cal E}^{\bul}),P^{{\cal X}_{\bul}})\os{\sim}{\lo} \\
&d^{(k)}_{{\mathfrak D}_{\bul}}({\cal D}_{\bul})_*
((A_{\rm zar}({\cal D}^{(k)}_{\bul}/S(T)^{\nat},{\cal E}^{\bul})\otimes_{\mab Z}
\varpi^{(k)}_{\rm zar}(\os{\circ}{D}_{T_0\bul}/\os{\circ}{T})),P^{{\cal D}^{(k)}_{\bul}})[-k]
\tag{5.4.1}\label{ali:axdd}
\end{align*}
in $C^+(f^{-1}_{\bul}({\cal O}_T))$.
\par 
$(2)$ There exists an isomorphism 
\begin{align*} 
{\rm gr}_k^PA_{\rm zar}
({\cal P}^{\rm ex}_{\bul}/S(T)^{\nat},{\cal E}^{\bul})
\os{\sim}{\lo} & 
\bigoplus^k_{k'=-\infty}
\bigoplus_{j\geq \max\{-k',0\}}
({\cal E}^{\bul}\otimes_{{\cal O}_{{\cal X}_{\bul}}} 
\Om^{\bul}_{\os{\circ}{\cal X}{}^{(2j+k')}_{\bul}
\cap \os{\circ}{\cal D}{}^{(k-k')}_{\bul}/\os{\circ}{T}} \\ 
{} & \quad \otimes_{\mab Z}
\vp^{(2j+k'),(k-k')}_{\rm zar}
((\os{\circ}{X}_{T_0\bul},\os{\circ}{D}_{T_0\bul})/\os{\circ}{T}),\nabla)[-2j-k] 
\tag{5.4.2}\label{eqn:axd}
\end{align*}
in $C^+(f^{-1}_{\bul}({\cal O}_T))$.
\par 
$(3)$ There exists an isomorphism 
\begin{align*} 
{\rm gr}_{k'}^P{\rm gr}_k^{P^{{\cal D}_{\bul}/{\cal X}_{\bul}}}
A_{\rm zar}
({\cal P}^{\rm ex}_{\bul}/S(T)^{\nat},{\cal E}^{\bul})
\os{\sim}{\lo} &\bigoplus_{j\geq \max\{-k',0\}}
({\cal E}^{\bul}\otimes_{{\cal O}_{{\cal X}_{\bul}}}
b^{(2j+k'),(k)}_{*}(\Om^{\bul }_{\os{\circ}{\cal X}{}^{(2j+k')}_{\bul}
\cap \os{\circ}{\cal D}{}^{(k)}_{\bul}/\os{\circ}{T}} \\ 
{} & \quad \otimes_{\mab Z}
\vp^{(2j+k'),(k)}_{\rm et}
((\os{\circ}{X}_{T_0\bul},\os{\circ}{D}_{T_0\bul})/\os{\circ}{T}))[-2j-k-k'] 
\tag{5.4.3}\label{eqn:ana}
\end{align*}  
in $C^+(f^{-1}_{\bul}({\cal O}_T))$. 
\end{lemm}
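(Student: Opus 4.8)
The plan is to obtain all three isomorphisms by applying, column by column, the Poincar\'e residue isomorphisms of \ref{prop:grem} to the bifiltered double complex $A_{\rm zar}({\cal P}^{\rm ex}_{\bul}/S(T)^{\nat},{\cal E}^{\bul})^{\bul\bul}$, and then keeping track of the interaction of the three filtrations $P^{{\cal X}_{\bul}}$, $P^{{\cal D}_{\bul}}$ and $P$. By \ref{lemm:knit} the simplicial (log) formal schemes $\os{\circ}{\cal X}{}^{(l)}_{\bul}\cap\os{\circ}{\cal D}{}^{(m)}_{\bul}$ and ${\cal D}{}^{(k)}_{\bul}$ exist, so the right hand sides make sense, and since the residue maps of \ref{prop:grem} are canonical it suffices to produce the isomorphisms degreewise and compatibly with the boundary morphisms $\theta\wedge$ and $-\nabla$. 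The local fact used throughout is that on the model $\mab A_{\ldots}(a,b,d,e)$ the filtrations $P^{{\cal X}_{\bul}}$, $P^{{\cal D}_{\bul}}$ and $P$ are generated respectively by the $d\log$'s of the $x$-coordinates, of the $y$-coordinates, and of both together; hence they are mutually transverse and, in particular, forming ${\rm gr}$ with respect to any one of them commutes with the quotient by $P^{{\cal X}_{\bul}}_j$ appearing in $A_{\rm zar}(\ldots)^{\bul j}$. This transversality is already contained in \ref{prop:injpf}, \ref{prop:grem} and \ref{prop:tfs}, so the arguments below are mostly assembly.

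For (1): $P^{{\cal D}_{\bul}}$ on $A_{\rm zar}(\ldots)^{ij}$ is the quotient filtration induced by $P^{\cal D}$ on ${\cal E}^{\bul}\otimes\Om^{i+j+1}_{({\cal X}_{\bul},{\cal D}_{\bul})/\os{\circ}{T}}$, so \ref{prop:grem} (2) together with \ref{lemm:dlm} (2) identifies ${\rm gr}^{P^{{\cal D}_{\bul}}}_kA_{\rm zar}(\ldots)^{ij}$ with $d^{(k)}({\cal D}_{\bul}\hookrightarrow{\mathfrak D}_{\bul})_*$ applied to $({\cal E}^{\bul}\otimes\Om^{i+j+1-k}_{{\cal D}{}^{(k)}_{\bul}/\os{\circ}{T}})/P^{{\cal X}_{\bul}}_j$ tensored with $\varpi^{(k)}_{\rm zar}(\os{\circ}{D}_{T_0\bul}/\os{\circ}{T})$. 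Removing the $k$ logarithmic $y$-directions leaves the $x$-weight untouched, so the $P^{{\cal X}_{\bul}}$-index is preserved; after the shift $[-k]$ this is exactly the $(i-k,j)$-term of $A_{\rm zar}({\cal D}{}^{(k)}_{\bul}/S(T)^{\nat},{\cal E}^{\bul})$ with its $P^{\cal X}$. Because $\theta\wedge$ is multiplication by $d\log t$, which is an $x$-direction class, it commutes with the residue along $D$ and becomes the $\theta\wedge$ of $A_{\rm zar}({\cal D}{}^{(k)}_{\bul}/S(T)^{\nat},{\cal E}^{\bul})$ (here \ref{lemm:ti}), while $-\nabla$ restricts to $-\nabla$ on ${\cal D}{}^{(k)}_{\bul}$; this yields (1) as an isomorphism of $P^{\cal X}$-filtered complexes.

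For (2): the point is that $\theta\wedge$ induces zero on ${\rm gr}^P$. Indeed $\theta\wedge\colon A_{\rm zar}(\ldots)^{ij}\to A_{\rm zar}(\ldots)^{i,j+1}$ raises the $\Om$-weight by one, hence carries $P_kA^{ij}=(P_{2j+k+1}+P^{{\cal X}_{\bul}}_j)$ into $P_{2j+k+2}+P^{{\cal X}_{\bul}}_{j+1}\subseteq P_{k-1}A^{i,j+1}$ — the gap of one being precisely what the ``$2j$'' shift in (5.0.13) is designed to absorb. Therefore ${\rm gr}^P_kA_{\rm zar}(\ldots)$ is the direct sum over $j$ of the complexes $({\rm gr}^P_k A_{\rm zar}(\ldots)^{\bul j},-\nabla)$, and \ref{prop:grem} (1) identifies the $j$-th of these — using once more that $P^{{\cal X}_{\bul}}_j$ kills exactly the residue summands of $x$-weight $\le j$, which leaves those indexed by $k'\le k$ with $2j+k'\ge j$ — with $\bigoplus_{k'}({\cal E}^{\bul}\otimes\Om^{\bul}_{\os{\circ}{\cal X}{}^{(2j+k')}_{\bul}\cap\os{\circ}{\cal D}{}^{(k-k')}_{\bul}/\os{\circ}{T}}\otimes\varpi^{(2j+k'),(k-k')}_{\rm zar})[-2j-k]$, the residue carrying $-\nabla$ to the induced connection. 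Collecting over $j$ gives (2).

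For (3): feed (1) into the computation of (2). By (1), ${\rm gr}^{P^{\os{\circ}{D}}}_kA_{\rm zar}(\ldots)$ is $d^{(k)}(\ldots)_*$ of $A_{\rm zar}({\cal D}{}^{(k)}_{\bul}/S(T)^{\nat},{\cal E}^{\bul})\otimes\varpi^{(k)}_{\rm zar}$, shifted by $[-k]$, as $P^{\cal X}$-filtered complexes; now ${\cal D}{}^{(k)}_{\bul}$ is an SNCL simplicial scheme with no further divisor, so on it $P^{\cal X}=P$, and the graded pieces of its weight filtration are the SNCL case of (2) (equivalently \ref{theo:wdpc} applied to ${\cal D}{}^{(k)}_{\bul}$), using the identification $({\cal D}{}^{(k)})^{(r)}=\os{\circ}{\cal X}{}^{(r)}\cap\os{\circ}{\cal D}{}^{(k)}$ of the higher strata. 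Combining these gives (3), with the understanding that the weight index on ${\rm gr}^{P^{\os{\circ}{D}}}_k$ is the intrinsic one of $A_{\rm zar}({\cal D}{}^{(k)}_{\bul}/S(T)^{\nat},{\cal E}^{\bul})$, i.e.\ the restricted $P$ shifted by $k$. I expect the main obstacle to be exactly this bookkeeping: checking that the three residue-type isomorphisms are mutually compatible and compatible with both boundary operators $\theta\wedge$ and $-\nabla$ (with correct signs), and pinning down the index shifts, in particular the agreement of the $P$ transported to ${\rm gr}^{P^{\os{\circ}{D}}}_k$ via (1) with the intrinsic weight filtration of the $D^{(k)}$-complex used in (3). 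The transversality of $P^{{\cal X}_{\bul}}$, $P^{{\cal D}_{\bul}}$, $P$ — making all these ${\rm gr}$'s commute with one another and with the quotient by $P^{{\cal X}_{\bul}}_j$ — is the technical heart, but it is local and is already contained in \ref{prop:injpf}, \ref{prop:grem} and \ref{prop:tfs}.
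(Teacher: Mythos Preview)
Your proposal is correct and follows the same route as the paper, which simply records that (1), (2), (3) follow from \ref{lemm:knit} and \ref{prop:grem}; you have supplied the detailed unpacking of that assertion, in particular the observation that $\theta\wedge$ vanishes on ${\rm gr}^P$ (so the single complex splits along $j$) and the bookkeeping of the index shifts, all of which is implicit in the paper's one-line proof.
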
 
\begin{proof} 
(1), (2), (3): 
These follow from (\ref{lemm:knit}) and (\ref{prop:grem}). 
\end{proof}

\par 
%Set $U_0:=S_{\os{\circ}{T}_0}$ or $T_0$ 
%and $U:=S(T)^{\nat}$ or $T$,  respectively. 
%When $U=T$, we always assume that $T$ is restrictively hollow. 
\par 
Let 
\begin{align*} 
\pi_{{\rm zar}} \col &((X_{\os{\circ}{T}_0\bul})_{\rm zar},f^{-1}_{\bul}({\cal O}_T))  \lo 
((X_{\os{\circ}{T}_0})_{\rm zar},f^{-1}({\cal O}_T)) \tag{5.4.4}\label{ali:pzd} \\
\end{align*} 
be the natural morphism of ringed topoi.  
%\par 
%Let $p_{T_0}\col X_{\bul \,T_0}\lo X_{\bul \,\os{\circ}{T}_0}$ 
%(resp.~$p_{\os{\circ}{T}_0}\col X_{\bul \,T_0}
%\lo X_{\bul \,\os{\circ}{T}_0}$)
%be the first projection over $T_0\lo S_{\os{\circ}{T}_0}$ 
%(resp.~$\os{\circ}{T}_0$).   
%Then $\os{\circ}{p}_{T_0}=\os{\circ}{p}_{\os{\circ}{T}_0}
%={\rm id}_{\os{\circ}{X}_{\bul \,T_0}}$. 
%Let $$p_{T{\rm crys}}\col 
%((X_{\bul \,T_0}/T)_{\rm crys},{\cal O}_{X_{\bul \,T_0}/T})
%\lo 
%((X_{\bul \,\os{\circ}{T}_0}/S(T))_{\rm crys},
%{\cal O}_{X_{\bul \,\os{\circ}{T}_0}/S(T)})$$  
%and 
%$$p_{\os{\circ}{T}{\rm crys}}
%\col ((X_{\bul \,T_0}/\os{\circ}{T})_{\rm crys},
%{\cal O}_{X_{\bul \,T_0}/\os{\circ}{T}})
%\lo 
%((X_{\bul \,\os{\circ}{T}_0}/\os{\circ}{T})_{\rm crys},
%{\cal O}_{X_{\bul \,\os{\circ}{T}_0}/\os{\circ}{T}})$$  
%be the induced morphism by  $p_{T_0}$ 
%and $p_{\os{\circ}{T}_0}$, respectively. 
Let 
\begin{align*} 
u_{(X_{\os{\circ}{T}_0},D_{\os{\circ}{T}_0})/S(T)^{\nat}} \col 
(((X_{\os{\circ}{T}_0},D_{\os{\circ}{T}_0})/S(T)^{\nat})_{\rm crys},
{\cal O}_{(X_{\os{\circ}{T}_0},D_{\os{\circ}{T}_0})/S(T)^{\nat}}) 
&\lo ((\os{\circ}{X}_{T_0})_{\rm zar},f^{-1}({\cal O}_T)) \tag{5.4.5}\label{eqn:prxudef}
\end{align*}   
and 
\begin{align*} 
u_{\os{\circ}{X}_{T_0}/\os{\circ}{T}}
\col 
((\os{\circ}{X}_{T_0}/\os{\circ}{T})_{\rm crys},
{\cal O}_{\os{\circ}{X}_{T_0}/\os{\circ}{T}}) 
&\lo ((\os{\circ}{X}_{T_0})_{\rm zar},f^{-1}({\cal O}_T))
\tag{5.4.6}\label{eqn:pxuxxef}
\end{align*}   
be the natural projections. 
Let 
\begin{align*} 
\eps_{(X_{\os{\circ}{T}_0},D_{\os{\circ}{T}_0})/S(T)^{\nat}} \col 
(((X_{\os{\circ}{T}_0},D_{\os{\circ}{T}_0})/S(T)^{\nat})_{\rm crys},
{\cal O}_{(X_{\os{\circ}{T}_0},D_{\os{\circ}{T}_0})/S(T)^{\nat}}) 
\lo ((\os{\circ}{X}_{T_0}/\os{\circ}{T})_{\rm crys},
{\cal O}_{\os{\circ}{X}_{T_0}/\os{\circ}{T}})
\tag{5.4.7}\label{eqn:preoxdef}\\
\end{align*}   
be the morphism forgetting the log structures of 
$(X_{\os{\circ}{T}_0},D_{\os{\circ}{T}_0})$ and $S(T)^{\nat}$. 

\begin{prop}\label{prop:tefc}
There exists the following isomorphism 
\begin{align*} 
\theta:=\theta_{(X_{\os{\circ}{T}_0},D_{\os{\circ}{T}_0})/S(T)^{\nat}}
\wedge \col 
&Ru_{(X_{\os{\circ}{T}_0},D_{\os{\circ}{T}_0})/S(T)^{\nat}*}
(\eps^*_{(X_{\os{\circ}{T}_0},D_{\os{\circ}{T}_0})/S(T)^{\nat}}(E))
\tag{5.5.1}\label{eqn:uz} \\
& \os{\sim}{\lo}R\pi_{{\rm zar}*}
(A_{\rm zar}({\cal P}^{\rm ex}_{\bul}/S(T)^{\nat},{\cal E}^{\bul}))
\end{align*} 
in $D^+(f^{-1}({\cal O}_T))$. 
This isomorphism is independent of the choice of 
an affine simplicial open covering of $X_{\os{\circ}{T}_0}$ 
and the choice of a simplicial immersion 
$X_{\os{\circ}{T}_0\bul} \os{\sus}{\lo} \ol{\cal P}_{\bul}$ over $\ol{S(T)^{\nat}}$.
In particular, the complex 
$R\pi_{{\rm zar}*}
(A_{\rm zar}({\cal P}^{\rm ex}_{\bul}/S(T)^{\nat},{\cal E}^{\bul}))$ 
is independent of the choices above. 
\end{prop}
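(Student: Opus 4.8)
The plan is to factor the asserted isomorphism through the log de Rham complex of the exactified log PD-envelope, and then to reduce the comparison to a local computation of Steenbrink type, along the lines of the corresponding results in \cite{nh2}, \cite{nb} and \cite{nlf}. First I would invoke Kato's log crystalline Poincar\'e lemma (\cite{klog1}): applied to the exact immersion $(X_{\os{\circ}{T}_0\bul},D_{\os{\circ}{T}_0\bul})\os{\sus}{\lo}{\cal P}^{\rm ex}_{\bul}=({\cal X}_{\bul},{\cal D}_{\bul})$ of (\ref{coro:exem}) into a log smooth simplicial log formal scheme over $S(T)^{\nat}$, with log PD-envelope ${\mathfrak D}_{\bul}$ and with ${\cal E}^{\bul}$ the ${\cal O}_{{\mathfrak D}_{\bul}}$-module with integrable log connection attached to $\eps^*(E)$, it gives a canonical isomorphism
\begin{equation*}
Ru_{(X_{\os{\circ}{T}_0},D_{\os{\circ}{T}_0})/S(T)^{\nat}*}(\eps^*_{(X_{\os{\circ}{T}_0},D_{\os{\circ}{T}_0})/S(T)^{\nat}}(E))
\os{\sim}{\lo}
R\pi_{{\rm zar}*}({\cal E}^{\bul}\otimes_{{\cal O}_{{\cal P}^{\rm ex}_{\bul}}}\Om^{\bul}_{{\cal P}^{\rm ex}_{\bul}/S(T)^{\nat}})
\end{equation*}
in $D^+(f^{-1}({\cal O}_T))$, the right-hand side being computed by cohomological descent along the \v{C}ech resolution $X_{\os{\circ}{T}_0\bul}\to X_{\os{\circ}{T}_0}$. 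It then remains to produce a canonical isomorphism
\begin{equation*}
\theta\wedge\col {\cal E}^{\bul}\otimes_{{\cal O}_{{\cal P}^{\rm ex}_{\bul}}}\Om^{\bul}_{{\cal P}^{\rm ex}_{\bul}/S(T)^{\nat}}
\os{\sim}{\lo}
A_{\rm zar}({\cal P}^{\rm ex}_{\bul}/S(T)^{\nat},{\cal E}^{\bul})
\end{equation*}
in $D^+(f^{-1}_{\bul}({\cal O}_T))$, compatibly with the simplicial structure.

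I would define this morphism levelwise: for a local section $\om$ of ${\cal E}\otimes\Om^i_{({\cal X},{\cal D})/S(T)^{\nat}}$ put $(\theta\wedge)(\om):=\theta\wedge\wt\om \bmod P^{\cal X}_0$ in $A_{\rm zar}({\cal P}^{\rm ex}/S(T)^{\nat},{\cal E})^{i,0}$, where $\wt\om$ is any lift of $\om$ to ${\cal E}\otimes\Om^{i+1}_{{\cal P}^{\rm ex}/\os{\circ}{T}}$; using $d\theta=0$ and $\theta\wedge\theta=0$ one checks routinely that this is independent of the lift and commutes with the two differentials $-\nabla$ and $\theta\wedge$ of the double complex, so it is a well-defined morphism of complexes. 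To prove it is a quasi-isomorphism I would localise on $\os{\circ}{X}_{T_0}$: by the local structure (Lemma \ref{lemm:etl}) one may assume a cartesian diagram of the form (\ref{cd:pnwtp}), and by ${\cal O}_{\cal X}$-linearity reduce to ${\cal P}^{\rm ex}={\mab A}_{S(T)^{\nat}}(a,b,d',e)$ with ${\cal E}={\cal O}_{\mathfrak D}$. Then I would filter the double complex $A_{\rm zar}({\cal P}^{\rm ex}/S(T)^{\nat},{\cal O}_{\mathfrak D})^{\bul\bul}$ by $P^{\cal X}$ and compute the graded quotients using the Poincar\'e residue isomorphism (\ref{prop:grem}) (3), the computation (\ref{lemm:grc}) (3), the exact sequence (\ref{prop:csrl}) and the compatibility of $\theta\wedge$ with the Poincar\'e residues in (\ref{lemm:ti}); this should show that the associated graded double complex is acyclic off the expected line, where it recovers $\Om^{\bul}_{({\cal X},{\cal D})/S(T)^{\nat}}$, and a standard spectral-sequence (telescoping) argument then gives the quasi-isomorphism. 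This is the same local computation as in \cite[(5.4)]{stz}, \cite{ezth}, \cite[3.15, 4.12]{msemi} and \cite[\S2]{nh2}, \cite{nb}.

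For independence of the choices I would use the usual common-refinement argument: given two affine simplicial open coverings and two simplicial immersions over $\ol{S(T)^{\nat}}$, the fiber product of the two immersions over $\ol{S(T)^{\nat}}$ is again an immersion of a \v{C}ech diagram of $(X_{\os{\circ}{T}_0},D_{\os{\circ}{T}_0})$ into a log smooth simplicial scheme, and the functoriality morphism (\ref{eqn:kepqyxy}) relating the two is a quasi-isomorphism because, by (\ref{lemm:grc}), on the $P$- and $P^{D_{\os{\circ}{T}_0}}$-graded quotients it reduces to a comparison of log de Rham complexes of log PD-envelopes of the $\os{\circ}{X}{}^{(l)}_{T_0}\cap\os{\circ}{D}{}^{(m)}_{T_0}$'s and the $D^{(k)}_{T_0}$'s, which is an isomorphism by the log crystalline Poincar\'e lemma and hence independent of the immersions; alternatively both sides compute $Ru_{(X_{\os{\circ}{T}_0},D_{\os{\circ}{T}_0})/S(T)^{\nat}*}(\eps^*(E))$ compatibly with $\theta\wedge$. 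Cohomological descent along the refinements then yields the independence, and together with the isomorphism constructed above the proposition follows.

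The step I expect to be the main obstacle is the proof that $\theta\wedge$ is a quasi-isomorphism: one must control precisely how the weight filtration $P^{\cal X}$, by which $A_{\rm zar}$ is formed as a quotient, interacts with the operator $\theta\wedge$, and essentially the whole apparatus of \S\ref{sec:pwtr} --- the injectivity (\ref{prop:injpf}), the Poincar\'e residue isomorphisms (\ref{prop:grem}), the exactness (\ref{prop:csrl}) and the compatibility (\ref{lemm:ti}) --- is there precisely to make this local bookkeeping go through; the remaining ingredients (the log crystalline Poincar\'e lemma and the common-refinement/descent argument) are standard and I would treat them briefly.
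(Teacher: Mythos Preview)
Your proposal is correct and follows essentially the same route as the paper's proof: cohomological descent plus the log Poincar\'e lemma identify the crystalline side with the relative de Rham complex over $S(T)^{\nat}$, the map $\theta\wedge$ into the $j=0$ column of the double complex is shown to be a quasi-isomorphism by passing to the $P^{{\cal X}_{\bul}}$-graded pieces and invoking (\ref{lemm:ti}) together with the exactness (\ref{prop:csrl}), and independence of choices is handled by the common-refinement/fiber-product argument. Two small remarks: your lift $\wt\om$ should live in ${\cal E}\otimes\Om^{i}_{{\cal P}^{\rm ex}/\os{\circ}{T}}$ (not $\Om^{i+1}$), and you do not need (\ref{lemm:grc})\,(3) here---that computation of ${\rm gr}^P{\rm gr}^{P^{{\cal D}_{\bul}}}$ is what the paper uses later for the bifiltered independence (Theorem \ref{theo:indcr}), whereas for (\ref{prop:tefc}) itself only (\ref{lemm:ti}) and (\ref{prop:csrl}) are required, exactly as you also indicate.
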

\begin{proof} 
First we prove that the isomorphism (\ref{eqn:uz}) exists. 
Let 
\begin{align*} 
\pi_{{\rm crys}} 
\col &((X_{\os{\circ}{T}_0\bul},D_{\os{\circ}{T}_0\bul})/S(T)^{\nat})_{\rm crys},
{\cal O}_{(X_{\os{\circ}{T}_0\bul},D_{\os{\circ}{T}_0\bul})/S(T)^{\nat}})   \\
&\lo (((X_{\os{\circ}{T}_0},D_{\os{\circ}{T}_0})/S(T)^{\nat})_{\rm crys},
{\cal O}_{(X_{\os{\circ}{T}_0},D_{\os{\circ}{T}_0})/S(T)^{\nat}})
\tag{5.5.2}\label{eqn:pio} \\
\end{align*} 
be the natural morphism of ringed topoi. 
Then, by the cohomological descent, 
we have 
$\eps^*_{(X_{\os{\circ}{T}_0},D_{\os{\circ}{T}_0})/S(T)^{\nat}}(E)=
R\pi_{{\rm crys}*}
(\eps^*_{(X_{\os{\circ}{T}_0\bul},D_{\os{\circ}{T}_0\bul})/S(T)^{\nat}}(E^{\bul}))$. 
Let 
\begin{align*} 
u_{(X_{\os{\circ}{T}_0\bul},D_{\os{\circ}{T}_0\bul})/S(T)^{\nat}} \col 
(((X_{\os{\circ}{T}_0\bul},D_{\os{\circ}{T}_0\bul})/S(T)^{\nat})_{\rm crys},
{\cal O}_{(X_{\os{\circ}{T}_0\bul},D_{\os{\circ}{T}_0\bul})/S(T)^{\nat}}) 
&\lo ((X_{\os{\circ}{T}_0\bul})_{\rm zar},f^{-1}_{\bul}({\cal O}_T))
\tag{5.5.3}\label{eqn:uudef}\\
\end{align*}     
be the natural projection. 
Then $u_{(X_{\os{\circ}{T}_0},D_{\os{\circ}{T}_0})/S(T)^{\nat}}\circ \pi_{{\rm crys}}
=\pi_{\rm zar}\circ u_{(X_{\os{\circ}{T}_0\bul},D_{\os{\circ}{T}_0\bul})/S(T)^{\nat}}$. 
Hence 
%and the formula in the end of \cite[(1.7)]{klog1}, 
we have the following formula by the log Poincar\'{e} lemma: 
\begin{align*} 
&Ru_{(X_{\os{\circ}{T}_0},D_{\os{\circ}{T}_0})/S(T)^{\nat}*}
(\eps^*_{(X_{\os{\circ}{T}_0},D_{\os{\circ}{T}_0})/S(T)^{\nat}}(E))\\
&=
Ru_{(X_{\os{\circ}{T}_0},D_{\os{\circ}{T}_0})/S(T)^{\nat}*}
R\pi_{{\rm crys}*}
(\eps^*_{(X_{\os{\circ}{T}_0\bul},D_{\os{\circ}{T}_0\bul})/S(T)^{\nat}}(E^{\bul}))
\\
&=R\pi_{{\rm zar}*}
Ru_{(X_{\os{\circ}{T}_0\bul},D_{\os{\circ}{T}_0\bul})/S(T)^{\nat}*}
(\eps^*_{(X_{\os{\circ}{T}_0\bul},D_{\os{\circ}{T}_0\bul})/S(T)^{\nat}}(E^{\bul})) \\
&=R\pi_{{\rm zar}*}({\cal E}^{\bul}\otimes_{{\cal O}_{{\cal P}^{\rm ex}_{\bul}}}
{\Om}^{\bul}_{{\cal P}^{\rm ex}_{\bul}/S(T)^{\nat}}). 
\tag{5.5.4}\label{eqn:xds} 
\end{align*}  
Since ${\cal E}^{\bul}$ is a flat
${\cal O}_{{\mathfrak D}_{\bul}}$-module, 
it suffices to prove that the natural morphism 
\begin{align*} 
\theta_{{\cal P}^{\rm ex}_{\bul}/\os{\circ}{T}} \wedge  & 
\col 
{\cal O}_{{\mathfrak D}_{\bul}}
\otimes_{{\cal O}_{{\cal P}^{\rm ex}_{\bul}}}
{\Om}^i_{{\cal P}^{\rm ex}_{\bul}/S(T)^{\nat}}
\lo  \\
&\{({\cal O}_{{\mathfrak D}_{\bul}}
\otimes_{{\cal O}_{{\cal P}^{\rm ex}_{\bul}}}
{\Om}^{i+1}_{{\cal P}^{\rm ex}_{\bul}/\os{\circ}{T}}
/P^{{\cal X}_{\bul}}_0  
\os{\theta_{{\cal P}^{\rm ex}_{\bul}/\os{\circ}{T}}}{\lo} 
{\cal O}_{{\mathfrak D}_{\bul}}
\otimes_{{\cal O}_{{\cal P}^{\rm ex}_{\bul}}}
{\Om}^{i+2}_{{\cal P}^{\rm ex}_{\bul}/\os{\circ}{T}}/P^{{\cal X}_{\bul}}_1 
\os{\theta_{{\cal P}^{\rm ex}_{\bul}/\os{\circ}{T}}}{\lo}\cdots )\}
\end{align*} 
is a quasi-isomorphism. 
In (\ref{prop:pbe}) we have already proved this. 
\par 
Next we prove that the isomorphism is independent of 
the choices in (\ref{prop:tefc}).   
\par 
Let $(X''_{\os{\circ}{T}_0},D''_{\os{\circ}{T}_0})$ be another disjoint union 
of an affine simplicial open covering of 
$(X_{\os{\circ}{T}_0},D_{\os{\circ}{T}_0})$. 
Set $(X'''_{\os{\circ}{T}_0},D'''_{\os{\circ}{T}_0}):=
(X'_{\os{\circ}{T}_0},D'_{\os{\circ}{T}_0})
\times_{(X_{\os{\circ}{T}_0},D_{\os{\circ}{T}_0})}(X''_{\os{\circ}{T}_0},D''_{\os{\circ}{T}_0})$. 
Then we have the log scheme 
$(X'''_{\os{\circ}{T}_0},D'''_{\os{\circ}{T}_0})$ which is 
the disjoint union of 
the members of a (not necessarily affine)  simplicial open covering 
of $(X_{\os{\circ}{T}_0},D_{\os{\circ}{T}_0})/S_{\os{\circ}{T}_0}$ 
fitting into the following commutative diagram$:$
\begin{equation*}
\begin{CD}
(X'''_{\os{\circ}{T}_0},D'''_{\os{\circ}{T}_0}) @>>> (X''_{\os{\circ}{T}_0},D''_{\os{\circ}{T}_0})\\
@VVV @VVV  \\
(X'_{\os{\circ}{T}_0},D'_{\os{\circ}{T}_0})@>>> (X_{\os{\circ}{T}_0},D_{\os{\circ}{T}_0}).
\end{CD}
\tag{5.5.5}\label{cd:celcxcov}
\end{equation*} 
Set $(X_{\os{\circ}{T}_0n},D_{\os{\circ}{T}_0n})
:={\rm cosk}^{(X_{\os{\circ}{T}_0},D_{\os{\circ}{T}_0})}_0((X'_{\os{\circ}{T}_0},D'_{\os{\circ}{T}_0}))_n$ and 
$(X'_{\os{\circ}{T}_0n},D'_{\os{\circ}{T}_0n})
:={\rm cosk}^{(X_{\os{\circ}{T}_0},D_{\os{\circ}{T}_0})}_0((X''_{\os{\circ}{T}_0},D''_{\os{\circ}{T}_0}))_n$.   
Let 
$(X''_{\os{\circ}{T}_0\bul},D''_{\os{\circ}{T}_0\bul})
\os{\sus}{\lo}\ol{\cal P}{}'_{\bul}$ be another immersion in (\ref{eqn:eipxd}).  
Then, by (\ref{cd:celcxcov}) and considering the fiber product 
$\ol{\cal P}_{\bul}\times_{\ol{S(T)^{\nat}}}\ol{\cal P}{}'_{\bul}$, 
we may have the following commutative diagram 
\begin{equation*} 
\begin{CD} 
(X_{\os{\circ}{T}_0\bul},D_{\os{\circ}{T}_0\bul}) 
@>{\sus}>> \ol{\cal P}_{\bul} \\ 
@VVV @VVV \\ 
(X'_{\os{\circ}{T}_0\bul},D'_{\os{\circ}{T}_0\bul}) 
@>{\sus}>> \ol{\cal P}{}'_{\bul}.  
\end{CD} 
\end{equation*} 
Set ${\cal P}'_{\bul}:=\ol{\cal P}{}'_{\bul}\times_{\ol{S(T)^{\nat}}}S(T)^{\nat}$. 
Let ${\cal P}{}'{}^{\rm ex}_{\!\!\!\bul}$  
be the exactification of 
the immersion 
$X'_{\os{\circ}{T}_0\bul} \os{\sus}{\lo}{\cal P}'_{\bul}$. 
Let ${\cal E}'{}^{\bul}
\otimes_{{\cal O}_{{\cal P}'^{\rm ex}_{\! \bul}}}
{\Om}^{\bul}_{{\cal P}'{}^{\rm ex}_{\! \bul}/\os{\circ}{T}}$ 
be an analogous complex 
to 
${\cal E}^{\bul}
\otimes_{{\cal O}_{{\cal P}^{\rm ex}_{\bul}}}
{\Om}^{\bul}_{{\cal P}^{\rm ex}_{\bul}/\os{\circ}{T}}$ for ${\cal P}'_{\bul}$.   
Then we have the following morphism 
\begin{equation*} 
R\pi_{{\rm zar}*}
(A_{\rm zar}({\cal P}'{}^{\rm ex}_{\! \! \! \bul}/S(T)^{\nat}
,{\cal E}'{}^{\bul}))
\lo 
R\pi_{{\rm zar}*}
(A_{\rm zar}({\cal P}^{\rm ex}_{\bul}/S(T)^{\nat},{\cal E}^{\bul})). 
\tag{5.5.6}\label{eqn:flnfap}
\end{equation*} 
This morphism fits into the following commutative diagram 
\begin{equation*} 
\begin{CD} 
 R\pi_{{\rm zar}*}
(A_{\rm zar}({\cal P}'{}^{\rm ex}_{\! \! \!\bul}/S(T)^{\nat},{\cal E}'{}^{\bul}))
@>>> 
R\pi_{{\rm zar}*}
(A_{\rm zar}({\cal P}^{\rm ex}_{\bul}/S(T)^{\nat},{\cal E}^{\bul}))\\
@A{R\pi_{{\rm zar}*}(\theta_{{\cal P}'{}^{\rm ex}_{\! \! \! \bul}/\os{\circ}{T}} \wedge)}A{\simeq}A 
@A{\simeq}A{R\pi_{{\rm zar}*}(\theta_{{\cal P}^{\rm ex}_{\bul}/\os{\circ}{T}} \wedge)}A \\
R\pi_{{\rm zar}*}
({\cal E}'{}^{\bul}\otimes_{{\cal O}_{{\cal P}'{}^{\rm ex}_{\!\!\!\bul}}}
{\Om}^{\bul}_{{\cal P}'{}^{\rm ex}_{\!\!\!\bul}/S(T)^{\nat}})
@=
R\pi_{{\rm zar}*}
({\cal E}^{\bul}\otimes_{{\cal O}_{{\cal P}^{\rm ex}_{\bul}}}
{\Om}^{\bul}_{{\cal P}^{\rm ex}_{\bul}/S(T)^{\nat}}). 
\end{CD}
\tag{5.5.7}\label{cd:fnfap}
\end{equation*} 
This diagram tells us the desired independence 
of the choices in (\ref{prop:tefc}).  
\end{proof} 
\par 
Next, by using 
%(\ref{lemm:gcrgr}), 
(\ref{lemm:knit}), (\ref{eqn:ana}) and (\ref{prop:tefc}),   
we prove that the bifiltered complex 
$$R\pi_{{\rm zar}*}
((A_{\rm zar}({\cal P}^{\rm ex}_{\bul}/S(T)^{\nat},{\cal E}^{\bul}),
P^{{\cal D}_{\bul}/{\cal X}_{\bul}},P))$$  
depends only on $(X_{\os{\circ}{T}_0},D_{\os{\circ}{T}_0})/(S(T)^{\nat},{\cal J},\del)$:  

\begin{theo}\label{theo:indcr} 
The bifiltered complex 
$$R\pi_{{\rm zar}*}
((A_{\rm zar}({\cal P}^{\rm ex}_{\bul}/S(T)^{\nat},{\cal E}^{\bul}),
P^{{\cal D}_{\bul}/{\cal X}_{\bul}},P))
\in {\rm D}^+{\rm F}^2(f^{-1}({\cal O}_T))$$ 
is independent of the choice of 
an affine  simplicial open covering of 
$(X_{\os{\circ}{T}_0},D_{\os{\circ}{T}_0})$ 
and the choice of a simplicial immersion 
$(X_{\os{\circ}{T}_0\bul},D_{\os{\circ}{T}_0\bul}) \os{\sus}{\lo} 
\ol{\cal P}_{\bul}$ over $\ol{S(T)^{\nat}}$.  
\end{theo}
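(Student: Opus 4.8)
The plan is to follow the strategy of the proof of \ref{prop:tefc}, upgrading the statement from an isomorphism in $D^+(f^{-1}({\cal O}_T))$ to an isomorphism in ${\rm D}^+{\rm F}^2(f^{-1}({\cal O}_T))$. First I would reduce to comparing two given choices: an affine simplicial open covering of $(X_{\os{\circ}{T}_0},D_{\os{\circ}{T}_0})$ together with a simplicial immersion into $\ol{\cal P}_{\bul}$ over $\ol{S(T)^{\nat}}$, and a second such pair with ambient $\ol{\cal P}{}'_{\bul}$. Exactly as in \ref{prop:tefc}, form the common refinement coming from the diagram \ref{cd:celcxcov} and, by passing to $\ol{\cal P}_{\bul}\times_{\ol{S(T)^{\nat}}}\ol{\cal P}{}'_{\bul}$, produce a commutative diagram of simplicial immersions relating the two choices. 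This reduces the assertion to the case of a morphism of data (a morphism of simplicial open coverings covered by a morphism of the ambient log smooth simplicial schemes).

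For such a morphism of data, the morphism \ref{eqn:keyxy} of trifiltered complexes, hence the morphism \ref{eqn:kepqyxy} of bifiltered complexes, is already defined at the level of complexes; applying $R\pi_{{\rm zar}*}$ gives a morphism of bifiltered complexes in ${\rm D}^+{\rm F}^2(f^{-1}({\cal O}_T))$, whose underlying morphism in $D^+(f^{-1}({\cal O}_T))$ is an isomorphism by \ref{prop:tefc} and the commutative diagram \ref{cd:fnfap}. To conclude that it is a bifiltered isomorphism, by the standard criterion for bifiltered quasi-isomorphisms of biregular bounded-below bifiltered complexes (cf.~\cite{nlf}) it suffices to check that it induces a quasi-isomorphism on each double graded piece ${\rm gr}^P_{k'}{\rm gr}^{P^{{\cal D}_{\bul}}}_kA_{\rm zar}$. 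By \ref{lemm:grc} (3) together with \ref{lemm:knit}, applying $R\pi_{{\rm zar}*}$ to ${\rm gr}^P_{k'}{\rm gr}^{P^{{\cal D}_{\bul}}}_kA_{\rm zar}$ yields a finite direct sum over $j$ of shifted, Tate-twisted complexes computing $a^{(2j+k',k)}_{*}Ru_{\os{\circ}{X}{}^{(2j+k')}_{T_0}\cap \os{\circ}{D}{}^{(k)}_{T_0}/\os{\circ}{T}*}({\cal O}\otimes_{\mab Z}\vp^{(2j+k'),(k)}_{\rm crys})$, i.e.\ the crystalline cohomology sheaves of the schemes $\os{\circ}{X}{}^{(2j+k')}_{T_0}\cap \os{\circ}{D}{}^{(k)}_{T_0}$, which are smooth over $\os{\circ}{T}_0$. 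The morphism induced by \ref{eqn:kepqyxy} on these graded pieces is the standard comparison morphism between the two de Rham realizations of this crystalline cohomology attached to the two immersions, so it is a quasi-isomorphism by cohomological descent and the (log) Poincar\'{e} lemma, exactly as in \ref{prop:tefc} applied to these lower-dimensional intersection schemes in place of $(X_{\os{\circ}{T}_0},D_{\os{\circ}{T}_0})$. Hence the morphism is an isomorphism in ${\rm D}^+{\rm F}^2(f^{-1}({\cal O}_T))$, and composing the isomorphisms obtained through the common refinement gives the asserted independence. (Alternatively, one may first prove independence of the $P^{{\cal D}_{\bul}}$-filtered complex by checking on ${\rm gr}^{P^{{\cal D}_{\bul}}}$ via \ref{lemm:grc} (1), which reduces to the filtered independence of the Steenbrink complex of the SNCL scheme $D^{(k)}_{\os{\circ}{T}_0}$ from \cite{nb}, and then independence of $P$ on each ${\rm gr}^{P^{{\cal D}_{\bul}}}$; but the double-graded route above is self-contained modulo \ref{prop:tefc} and \ref{lemm:grc}.)

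The main obstacle I expect is the bookkeeping needed to verify that \ref{eqn:kepqyxy} really is a morphism of bifiltered complexes — i.e.\ strict compatibility of the transition morphisms with $P^{{\cal D}_{\bul}}$ and with the re-indexed filtration $P$ of \ref{eqn:dblad} — and, more delicately, that the morphism it induces on the double graded pieces coincides under the Poincar\'{e} residue isomorphisms of \ref{prop:grem} with the intrinsic comparison morphism for the crystalline cohomology of the $\os{\circ}{X}{}^{(l)}_{T_0}\cap \os{\circ}{D}{}^{(m)}_{T_0}$'s. Once that identification is in hand, the quasi-isomorphism on graded pieces is just \ref{prop:tefc} (or classical crystalline invariance) applied to smooth schemes, and the remainder of the argument is formal.
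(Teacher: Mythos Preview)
Your proposal is correct and follows essentially the same approach as the paper: reduce via a common refinement to a single comparison morphism of bifiltered complexes, then check it is a bifiltered quasi-isomorphism by computing the double graded pieces ${\rm gr}^P_{k'}{\rm gr}^{P^{{\cal D}_{\bul}}}_k$ via \ref{lemm:grc}~(3) and identifying them (through the Poincar\'{e} lemma and cohomological descent) with the crystalline cohomology of the smooth intersection schemes $\os{\circ}{X}{}^{(2j+k')}_{T_0}\cap \os{\circ}{D}{}^{(k)}_{T_0}$. The only point the paper makes explicit that you leave implicit is the localization step: since the filtrations need not be biregular globally, one first reduces to the quasi-compact (affine) case before invoking the double-graded criterion.
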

\begin{proof} 
%Let the notations be before this theorem. 
%Especially we use the notations in the proof of (\ref{prop:tefc}).  
Let the notations be as in the proof of (\ref{prop:tefc}). 
Denote ${\cal P}{}'^{\rm ex}_{\bul}=({\cal X}{}'_{\bul},{\cal D}{}'_{\bul})$, where 
$({\cal X}_{\bul},{\cal D}_{\bul})$ is a simplicial SNCL formal scheme 
with a simplicial relative SNCD over $S(T)^{\nat}$. 
Then we have the following morphism 
\begin{equation*} 
R\pi_{{\rm zar}*}
((A_{\rm zar}({\cal P}'{}^{\rm ex}_{\! \! \! \bul}/S(T)^{\nat}
,{\cal E}'{}^{\bul}),P^{{\cal D}'_{\bul}/{\cal X}'_{\bul}},P))
\lo 
R\pi_{{\rm zar}*}
((A_{\rm zar}({\cal P}^{\rm ex}_{\bul}/S(T)^{\nat},
{\cal E}^{\bul}),P^{{\cal D}_{\bul}/{\cal X}_{\bul}},P)). 
\tag{5.6.1}\label{eqn:flnflap}
\end{equation*} 
Set 
$$(R\pi_{{\rm zar}*}
(A_{\rm zar}({\cal P}^{\rm ex}_{\bul}/S(T)^{\nat},{\cal E}^{\bul})),P^{D_{\os{\circ}{T}_0}},P)
:=R\pi_{{\rm zar}*}
((A_{\rm zar}({\cal P}^{\rm ex}_{\bul}/S(T)^{\nat},{\cal E}^{\bul}),P^{{\cal D}_{\bul}/{\cal X}_{\bul}},P)).$$
To prove that this is an isomorphism, it suffices to prove 
that the morphisms  
\begin{equation*} 
R\pi_{{\rm zar}*}
((P^{{\cal D}'_{\bul}/{\cal X}'_{\bul}}_k
\cap P_l)
A_{\rm zar}
({\cal P}'{}^{\rm ex}_{\! \! \! \bul}/S(T)^{\nat},{\cal E}'{}^{\bul}))
\lo 
R\pi_{{\rm zar}*}((P^{{\cal D}_{\bul}/{\cal X}_{\bul}}_k\cap P_l)
A_{\rm zar}({\cal P}^{\rm ex}_{\bul}/S(T)^{\nat},{\cal E}^{\bul})), 
\tag{5.6.2}\label{eqn:flapwp}
\end{equation*} 
\begin{equation*} 
R\pi_{{\rm zar}*}
((P^{{\cal D}'_{\bul}/{\cal X}'_{\bul}}_k
A_{\rm zar}
({\cal P}'{}^{\rm ex}_{\! \! \! \bul}/S(T)^{\nat},{\cal E}'{}^{\bul}))
\lo 
R\pi_{{\rm zar}*}((P^{{\cal D}_{\bul}/{\cal X}_{\bul}}_k
A_{\rm zar}({\cal P}^{\rm ex}_{\bul}/S(T)^{\nat},{\cal E}^{\bul}))
\tag{5.6.3}\label{eqn:flapwdp}
\end{equation*} 
and 
\begin{equation*} 
R\pi_{{\rm zar}*}((P_kA_{\rm zar}
({\cal P}'{}^{\rm ex}_{\! \! \! \bul}/S(T)^{\nat},{\cal E}'{}^{\bul}))
\lo 
R\pi_{{\rm zar}*}((P_k
A_{\rm zar}({\cal P}^{\rm ex}_{\bul}/S(T)^{\nat},{\cal E}^{\bul}))
\tag{5.6.4}\label{eqn:flanpwdp}
\end{equation*} 
are isomorphisms in $D^+(f^{-1}({\cal O}_T))$ by (\ref{prop:tefc}) and the definition of 
$ {\rm D}^+{\rm F}^2(f^{-1}({\cal O}_T))$ in \cite{nlf}.  
Because the questions that the morphisms above are isomorphisms are
local on $X_{\os{\circ}{T}_0}$, 
we may assume that $X_{\os{\circ}{T}_0}$ is affine, 
in particular, quasi-compact. 
Hence we may assume that the filtrations $P^D$ and $P$ are biregular 
and it suffices to prove 
that the morphism 
\begin{equation*} 
{\rm gr}_{k'}^P{\rm gr}_k^{P^{D_{\os{\circ}{T}_0}}}R\pi_{{\rm zar}*}
(A_{\rm zar}
({\cal P}'{}^{\rm ex}_{\! \! \! \bul}/S(T)^{\nat},{\cal E}'{}^{\bul}))
\lo 
{\rm gr}_{k'}^P{\rm gr}_k^{P^{D_{\os{\circ}{T}_0}}}R\pi_{{\rm zar}*}
(A_{\rm zar}({\cal P}^{\rm ex}_{\bul}/S(T)^{\nat},{\cal E}^{\bul}))
\tag{5.6.5}\label{eqn:flagrpwp}
\end{equation*} 
is an isomorphism in $D^+(f^{-1}({\cal O}_T))$ by 
\cite[(2.13)]{nlf}. 
By \cite[(1.3.4.5)]{nh2} and (\ref{eqn:ana}) we have the following: 
\begin{align*} 
& 
{\rm gr}_{k'}^P{\rm gr}_k^{P^{D_{\os{\circ}{T}_0}}}R\pi_{{\rm zar}*}
(A_{\rm zar}({\cal P}^{\rm ex}_{\bul}/S(T)^{\nat},{\cal E}^{\bul})) 
\os{\sim}{\lo} 
R\pi_{{\rm zar}*} 
({\rm gr}_{k'}^P{\rm gr}_k^{P^{D_{\os{\circ}{T}_0}}}
A_{\rm zar}({\cal P}^{\rm ex}_{\bul}/S(T)^{\nat},{\cal E}^{\bul}))\\
{} & \os{\sim}{\lo} 
\bigoplus_{j\geq \max\{-(k'-k),0\}}R\pi_{{\rm zar}*}
({\cal E}^{\bul}\otimes_{{\cal O}_{{\cal X}_{\bul}}}
b^{(2j+k'-k),(k)}_{*}(\Om^{\bul}_{\os{\circ}{\cal X}{}^{(2j+k'-k)}_{\bul}
\cap \os{\circ}{\cal D}{}^{(k)}_{\bul}/\os{\circ}{T}} \\ 
{} & \quad \quad \quad \quad \quad \quad \quad \quad  \otimes_{\mab Z}
\vp^{(2j+k'-k),(k)}_{\rm zar}
((\os{\circ}{X}_{T_0\bul},\os{\circ}{D}_{T_0\bul})/\os{\circ}{T})))[-2j-k']. 
\tag{5.6.6}\label{ali:ruogrvp}
\end{align*} 
By the Poincar\'{e} lemma 
the last complex is equal to 
{\footnotesize{\begin{align*} 
& \bigoplus_{j\geq \max \{-(k'-k),0\}}R\pi_{{\rm zar}*}
(a^{(2j+k'-k),(k)}_{\bul*} 
(Ru_{\os{\circ}{X}{}^{(2j+k'-k)}_{T_0\bul}\cap \os{\circ}{D}{}^{(k)}_{T_0\bul}/\os{\circ}{T}*}
(E\vert_{\os{\circ}{X}{}^{(2j+k'-k)}_{T_0\bul}\cap \os{\circ}{D}{}^{(k)}_{T_0\bul}
/\os{\circ}{T}}\\
&\otimes_{\mab Z}
\vp_{\rm crys}^{(2j+k'-k),(k)}((\os{\circ}{X}_{T_0\bul},\os{\circ}{D}_{T_0\bul})/\os{\circ}{T})))[-2j-k'] \\
& =\bigoplus_{j\geq \max \{-k,0\}} 
a^{(2j+k'-k),(k)}_*
(Ru_{\os{\circ}{X}{}^{(2j+k'-k)}_{T_0}\cap \os{\circ}{D}{}^{(k)}_{T_0}/\os{\circ}{T}*}
(E\vert_{{\os{\circ}{X}{}^{(2j+k'-k)}_{T_0}\cap \os{\circ}{D}{}^{(k)}_{T_0}}
/\os{\circ}{T}} \\
&\otimes_{\mab Z}\vp^{(2j+k'-k),(k)}_{\rm crys}
((\os{\circ}{X}_{T_0},\os{\circ}{D}_{T_0})/\os{\circ}{T})))[-2j-k']
\tag{5.6.7}\label{ali:rutd}
\end{align*}}} 
and the analogous formula 
for $R\pi_{{\rm zar}*}((A_{\rm zar}({\cal P}'{}^{\rm ex}_{\bul}/S(T)^{\nat}
,{\cal E}'{}^{\bul}),P^{{\cal D}'_{\bul}/{\cal X}'_{\bul}},P))$. 
\par 
We complete the proof of (\ref{theo:indcr}).  
\end{proof}

\begin{defi}\label{defi:fdirpd}
We call the bifiltered direct image 
$R\pi_{{\rm zar}*}
((A_{\rm zar}({\cal P}^{\rm ex}_{\bul}/S(T)^{\nat},{\cal E}^{\bul}),
P^{{\cal D}_{\bul}/{\cal X}_{\bul}}.P))$ 
the {\it zariskian $p$-adic bifiltered El-Zein-Steenbrink-Zucker complex} of 
$E$ for $(X_{\os{\circ}{T}_0},D_{\os{\circ}{T}_0})/(S(T)^{\nat},{\cal J},\del)$. 
We denote it  by
$(A_{\rm zar}((X_{\os{\circ}{T}_0},D_{\os{\circ}{T}_0})/S(T)^{\nat},E),
P^{D_{\os{\circ}{T}_0}},P)
\in {\rm D}^+{\rm F}^2(f^{-1}({\cal O}_T))$. 
When $E={\cal O}_{\os{\circ}{X}_{T_0}/\os{\circ}{T}}$, we denote 
$(A_{\rm zar}((X_{\os{\circ}{T}_0},D_{\os{\circ}{T}_0})/S(T)^{\nat},E),P^{D_{\os{\circ}{T}_0}},P)$ 
by 
$$(A_{\rm zar}((X_{\os{\circ}{T}_0},D_{\os{\circ}{T}_0})/S(T)^{\nat}),P^{D_{\os{\circ}{T}_0}},P).$$ 
We call $(A_{\rm zar}((X_{\os{\circ}{T}_0},D_{\os{\circ}{T}_0})/S(T)^{\nat}),P^{D_{\os{\circ}{T}_0}},P)$ 
the {\it zariskian $p$-adic bifiltered El-Zein-Steenbrink-Zucker complex} of 
$(X_{\os{\circ}{T}_0}D_{\os{\circ}{T}_0})/(S(T)^{\nat},{\cal J},\del)$. 
\end{defi} 

\begin{coro}\label{coro:pwspp}
Let $E_{\os{\circ}{D}{}^{(k)}/\os{\circ}{T}}$ be the inverse image of 
$E$ to $({\os{\circ}{D}{}^{(k)}/\os{\circ}{T}})_{\rm crys}$ 
and let 
$$f_{D^{(k)}_{\os{\circ}{T}_0}/S(T)^{\nat}}\col 
(D^{(k)}_{\os{\circ}{T}_0}/S(T)^{\nat})_{\rm crys}\lo \os{\circ}{T}$$ 
be the structural morphism.  
Then there exist the following spectral sequences$:$
\begin{align*} 
E_1^{k,q-k}&=R^{q-k}f_{D^{(k)}_{\os{\circ}{T}_0}/S(T)^{\nat}*}(
\eps^*_{D^{(k)}_{\os{\circ}{T}_0}/S(T)^{\nat}}
(E_{\os{\circ}{D}{}^{(k)}/\os{\circ}{T}})\otimes_{\mab Z}
\eps^{-1}_{D^{(k)}_{\os{\circ}{T}_0}/S(T)^{\nat}}\vp^{(k)}_{\rm crys}
((\os{\circ}{D}_{T_0}/\os{\circ}{T}_0)))\\
&\Lo 
R^qf_{(X_{\os{\circ}{T}_0},D_{\os{\circ}{T}_0})/S(T)^{\nat}*}
(\eps^*_{(X_{\os{\circ}{T}_0},D_{\os{\circ}{T}_0})/S(T)^{\nat}}(E)), 
\tag{5.8.1}\label{ali:dks}
\end{align*} 
\begin{align*} 
E_1^{-k,q+k}&=
\bigoplus_{k'\leq k}
\bigoplus_{j\geq \max\{-k',0\}}
R^{q-2j-k}f_{\os{\circ}{X}{}^{(2j+k')}_{\os{\circ}{T}_0}\cap D^{(k-k')}_{\os{\circ}{T}_0}/\os{\circ}{T}*}
(E_{\os{\circ}{X}{}^{(2j+k')}_{\os{\circ}{T}_0}\cap 
\os{\circ}{D}{}^{(k-k')}_{\os{\circ}{T}_0}/\os{\circ}{T}}\\
&\otimes_{\mab Z}\vp^{(2j+k',k-k')}_{\rm crys}
((\os{\circ}{X}_{T_0},\os{\circ}{D}_{T_0})/\os{\circ}{T}_0))
\\
&\Lo 
R^qf_{(X_{\os{\circ}{T}_0},D_{\os{\circ}{T}_0})/S(T)^{\nat}*}
(\eps^*_{(X_{\os{\circ}{T}_0},D_{\os{\circ}{T}_0})/S(T)^{\nat}}(E)). 
\tag{5.8.2}\label{ali:dwks}
\end{align*} 
%Here the Tate twists $(-k)$ and $(-j-k)$ are the Tate twists with respect to 
%the abrelative Frobenius morphism defined in {\rm \cite{nb}} and {\rm \cite{nhir}}. 
\end{coro}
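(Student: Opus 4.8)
The plan is to deduce both spectral sequences from the bifiltered complex $(A_{\rm zar}((X_{\os{\circ}{T}_0},D_{\os{\circ}{T}_0})/S(T)^{\nat},E),P^{D_{\os{\circ}{T}_0}},P)$ of (\ref{defi:fdirpd}) by applying $Rf_{(X_{\os{\circ}{T}_0},D_{\os{\circ}{T}_0})/S(T)^{\nat}*}$ and taking the two spectral sequences attached to the two filtrations: (\ref{ali:dks}) will be the spectral sequence of the filtered complex $(Rf_*A_{\rm zar},P^{D_{\os{\circ}{T}_0}})$ and (\ref{ali:dwks}) that of $(Rf_*A_{\rm zar},P)$. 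First I would fix the abutment. By (\ref{prop:tefc}) (with (\ref{theo:indcr}) for well-definedness) the isomorphism (\ref{eqn:uz}) identifies $A_{\rm zar}((X_{\os{\circ}{T}_0},D_{\os{\circ}{T}_0})/S(T)^{\nat},E)$ with $Ru_{(X_{\os{\circ}{T}_0},D_{\os{\circ}{T}_0})/S(T)^{\nat}*}(\eps^*_{(X_{\os{\circ}{T}_0},D_{\os{\circ}{T}_0})/S(T)^{\nat}}(E))$ in $D^+(f^{-1}({\cal O}_T))$; applying $Rf_*$ and taking $q$-th cohomology sheaves identifies the abutment $R^qf_*(A_{\rm zar})$ with $R^qf_{(X_{\os{\circ}{T}_0},D_{\os{\circ}{T}_0})/S(T)^{\nat}*}(\eps^*(E))$. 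Since the statement is about sheaves on $\os{\circ}{T}$ I may work Zariski-locally on $\os{\circ}{X}_{T_0}$; choosing $\os{\circ}{X}_{T_0}$ affine makes $\os{\circ}{X}$ quasi-compact of finite Krull dimension, so $P^{D_{\os{\circ}{T}_0}}$ and $P$ are biregular and both spectral sequences converge.

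Next, for $P^{D_{\os{\circ}{T}_0}}$ I would invoke the graded computation (\ref{lemm:grc}) (1), i.e.\ (\ref{ali:rudrbp}): there is a canonical isomorphism ${\rm gr}^{P^{D_{\os{\circ}{T}_0}}}_kA_{\rm zar}((X_{\os{\circ}{T}_0},D_{\os{\circ}{T}_0})/S(T)^{\nat},E)\simeq A_{\rm zar}(D^{(k)}_{\os{\circ}{T}_0}/S(T)^{\nat},E_{\os{\circ}{D}{}^{(k)}})\otimes_{\mab Z}\vp^{(k)}_{\rm crys}(-k)[-k]$. Regarding $D^{(k)}_{\os{\circ}{T}_0}$ as an SNCL scheme over $S(T)^{\nat}$ (with empty relative SNCD) and applying (\ref{prop:tefc}) to it, the right-hand side becomes $Ru_{D^{(k)}_{\os{\circ}{T}_0}/S(T)^{\nat}*}(\eps^*(E_{\os{\circ}{D}{}^{(k)}})\otimes\eps^{-1}\vp^{(k)}_{\rm crys})(-k)[-k]$. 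Since $c^{(k)}_{T_0}\col D^{(k)}_{T_0}\lo X_{T_0}$ is finite, $R^qf_*$ of this complex equals $R^{q-k}f_{D^{(k)}_{\os{\circ}{T}_0}/S(T)^{\nat}*}(\eps^*(E_{\os{\circ}{D}{}^{(k)}})\otimes\eps^{-1}\vp^{(k)}_{\rm crys})(-k)$, which is exactly the $E_1$-term in (\ref{ali:dks}); hence the spectral sequence of $(Rf_*A_{\rm zar},P^{D_{\os{\circ}{T}_0}})$ is (\ref{ali:dks}), with the abutment identified as above.

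For $P$ I would argue in the same way with the graded computation (\ref{lemm:grc}) (2) (equivalently (\ref{ali:ruobp}), (\ref{eqn:axd})): up to canonical isomorphism, ${\rm gr}^P_kA_{\rm zar}((X_{\os{\circ}{T}_0},D_{\os{\circ}{T}_0})/S(T)^{\nat},E)$ is the direct sum over $k'\le k$ and $j\ge\max\{-k',0\}$ of $a^{(2j+k',k-k')}_{T_0*}Ru_{\os{\circ}{X}{}^{(2j+k')}_{T_0}\cap\os{\circ}{D}{}^{(k-k')}_{T_0}/\os{\circ}{T}*}(E_{\os{\circ}{X}{}^{(2j+k')}_{T_0}\cap\os{\circ}{D}{}^{(k-k')}_{T_0}/\os{\circ}{T}}\otimes\vp^{(2j+k',k-k')}_{\rm crys})(-j-k)[-2j-k]$. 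Each $a^{(l,m)}_{T_0}$ being finite, applying $R^qf_*$ term by term gives precisely the direct sum displayed in (\ref{ali:dwks}) as the $E_1$-term, so the spectral sequence of $(Rf_*A_{\rm zar},P)$ is (\ref{ali:dwks}), again abutting to $R^qf_{(X_{\os{\circ}{T}_0},D_{\os{\circ}{T}_0})/S(T)^{\nat}*}(\eps^*(E))$.

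The hard part is purely bookkeeping rather than conceptual: one must keep track of the Tate twists ($(-k)$, resp.\ $(-j-k)$) and of the crystalline orientation sheaves $\vp^{(k)}_{\rm crys}$, $\vp^{(2j+k',k-k')}_{\rm crys}$ as they are transported through $R\pi_{\rm zar*}$ and then through $Rf_*$, and one must make sure that the de Rham--crystalline comparison (Poincar\'e lemma) used to rewrite $A_{\rm zar}$ of the strata $D^{(k)}_{\os{\circ}{T}_0}$ and $\os{\circ}{X}{}^{(2j+k')}_{T_0}\cap\os{\circ}{D}{}^{(k-k')}_{T_0}$ as the corresponding crystalline higher direct images is the one already encoded in (\ref{lemm:grc}) and in the construction of Section~\ref{sec:psc}. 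Granting these identifications, the remainder is the standard formalism of the spectral sequence of a filtered complex.
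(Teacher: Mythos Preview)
Your proposal is correct and follows essentially the same route as the paper: compute ${\rm gr}^{P^{D}}_k$ and ${\rm gr}^P_k$ of the bifiltered complex via (\ref{lemm:grc}), then identify these with crystalline direct images of the strata using the log Poincar\'e lemma and cohomological descent exactly as in (\ref{ali:rzrvp}) and (\ref{ali:rzrwp}), and finally read off the spectral sequences of the two filtrations. The only minor wrinkle is your sentence about working ``Zariski-locally on $\os{\circ}{X}_{T_0}$'' to secure biregularity; since the spectral sequences live on $\os{\circ}{T}$, the convergence really comes from the fact that the filtrations $P^{D_{\os{\circ}{T}_0}}$ and $P$ on $A_{\rm zar}$ are locally bounded in each degree (by the local structure of an SNCL scheme with SNCD), not from localizing on $X$---but this is a cosmetic point and does not affect the argument.
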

\begin{proof} 
Let $f^{(k)}\col D^{(k)}_{\os{\circ}{T}_0} \lo S(T)^{\nat}$ and 
$f^{(k)}_{\bul} \col D^{(k)}_{\os{\circ}{T}_0\bul}\lo S(T)^{\nat}$ 
be the structural morphisms. 
Let 
\begin{align*} 
\pi^{(k)}_{{\rm zar}} \col &((D^{(k)}_{\os{\circ}{T}_0\bul})_{\rm zar},f^{(k)-1}_{\bul}({\cal O}_T))
\lo ((D^{(k)}_{\os{\circ}{T}_0})_{\rm zar},f^{-1}({\cal O}_T)) 
%\tag{5.4.4}\label{ali:pzd} \\
\end{align*} 
be the natural morphism of ringed topoi.  
Let 
\begin{align*} 
\pi^{(k)}_{{\rm crys}} 
\col &((D^{(k)}_{\os{\circ}{T}_0\bul}/S(T)^{\nat})_{\rm crys},
{\cal O}_{D^{(k)}_{\os{\circ}{T}_0\bul}/S(T)^{\nat}})   
\lo ((D^{(k)}_{\os{\circ}{T}_0}/S(T)^{\nat})_{\rm crys},
{\cal O}_{D^{(k)}_{\os{\circ}{T}_0}/S(T)^{\nat}})
%\tag{5.5.2}\label{eqn:pio} \\
\end{align*} 
be also the natural morphism of ringed topoi. 
(\ref{ali:dks}) follows from the following: 
\begin{align*} 
&R\pi_{{\rm zar}*}({\rm gr}_k^{P^{\os{\circ}{D}}}
A_{\rm zar}({\cal P}^{\rm ex}_{\bul}/S(T)^{\nat},{\cal E}^{\bul}))\\
& 
= 
R\pi_{{\rm zar}*}
(d^{(k)}_{{\mathfrak D}_{\bul}}({\cal D}_{\bul})_*(A_{\rm zar}({\cal D}^{(k)}_{\bul}/\os{\circ}{T},
{\cal E}^{\bul})\otimes_{\mab Z}
\varpi^{(k)}_{\rm zar}(\os{\circ}{D}_{T_0\bul}/\os{\circ}{T})))[-k]\\
&\os{\sim}{\longleftarrow} R\pi_{{\rm zar}*}
(d^{(k)}_{{\mathfrak D}_{\bul}}({\cal D}_{\bul})_*
({\cal E}^{\bul}\otimes_{{\cal O}_{{\cal X}_{\bul}}}\Om^{\bul}_{{\cal D}{}^{(k)}_{\bul}/S(T)^{\nat}} 
\otimes_{\mab Z}
\vp^{(k)}_{\rm zar}(\os{\circ}{D}_{T_0\bul}/\os{\circ}{T})))[-k]\\
& =
R\pi_{{\rm zar}*}Rc^{(k)}_{\bul *}Ru_{D^{(k)}_{\os{\circ}{T}_0\bul}/S(T)^{\nat}*}
(\eps^*_{D^{(k)}_{\os{\circ}{T}_0\bul}/S(T)^{\nat}}
(E_{\os{\circ}{D}{}^{(k)}_{T_0\bul}/\os{\circ}{T}})\otimes_{\mab Z}
\eps^{-1}_{D^{(k)}_{\os{\circ}{T}_0\bul}/S(T)^{\nat}}\vp^{(k)}_{\rm crys}
(\os{\circ}{D}_{T_0\bul}/\os{\circ}{T}_0))[-k]\\
& \os{\sim}{\lo} 
Ra^{(0),(k)}_*Ru_{D^{(k)}_{\os{\circ}{T}_0}/S(T)^{\nat}*}R\pi^{(k)}_{{\rm crys}*}
(\eps^*_{D^{(k)}_{\os{\circ}{T}_0\bul}/S(T)^{\nat}}
(E_{\os{\circ}{D}{}^{(k)}_{T_0\bul}/\os{\circ}{T}})\otimes_{\mab Z}
\eps^{-1}_{D^{(k)}_{\os{\circ}{T}_0\bul}/S(T)^{\nat}}\vp^{(k)}_{\rm crys}
(\os{\circ}{D}_{T_0\bul}/\os{\circ}{T}_0))[-k]\\
& \os{\sim}{\lo} 
Ra^{(0),(k)}_*Ru_{D^{(k)}_{\os{\circ}{T}_0}/S(T)^{\nat}*}
(\eps^*_{D^{(k)}_{\os{\circ}{T}_0}/S(T)^{\nat}}
(E_{\os{\circ}{D}{}^{(k)}_{T_0}/\os{\circ}{T}})\otimes_{\mab Z}
\eps^{-1}_{D^{(k)}_{\os{\circ}{T}_0}/S(T)^{\nat}}\vp^{(k)}_{\rm crys}
(\os{\circ}{D}_{T_0}/\os{\circ}{T}_0))[-k]. 
\tag{5.8.3}\label{ali:rzrvp}
\end{align*} 
(\ref{ali:dwks}) follows from the following: 
\begin{align*} 
&R\pi_{{\rm zar}*}({\rm gr}_k^P
A_{\rm zar}({\cal P}^{\rm ex}_{\bul}/S(T)^{\nat},{\cal E}^{\bul}))
{}  \os{\sim}{\lo} \\
&R\pi_{{\rm zar}*}
(\bigoplus_{k'\leq k}
\bigoplus_{j\geq \max\{-k',0\}}
({\cal E}^{\bul}\otimes_{{\cal O}_{{\cal X}_{\bul}}} 
\Om^{\bul}_{\os{\circ}{\cal X}{}^{(2j+k')}_{\bul}
\cap \os{\circ}{\cal D}{}^{(k-k')}_{\bul}/\os{\circ}{T}} 
\otimes_{\mab Z}
\vp^{(2j+k'),(k-k')}_{\rm zar}
((\os{\circ}{X}_{T_0\bul},\os{\circ}{D}_{T_0\bul})/\os{\circ}{T}))[-2j-k] \\
&  \os{\sim}{\lo} 
\bigoplus_{k'\leq k}
\bigoplus_{j\geq \max\{-k',0\}}
R\pi_{{\rm zar}*}(a^{(2j+k'),(k-k')}_{\bul*}
Ru_{\os{\circ}{X}{}^{(2j+k')}_{\os{\circ}{T}_0\bul}\cap D^{(k-k')}_{\os{\circ}{T}_0\bul}/\os{\circ}{T}*}
(E_{\os{\circ}{X}{}^{(2j+k')}_{\os{\circ}{T}_0\bul}\cap 
\os{\circ}{D}{}^{(k-k')}_{\os{\circ}{T}_0\bul}/\os{\circ}{T}}\\
&\otimes_{\mab Z}\vp^{(2j+k',k-k')}_{\rm crys}
((\os{\circ}{X}_{T_0\bul},\os{\circ}{D}_{T_0\bul})/\os{\circ}{T}_0)))[-2j-k]\\
& \os{\sim}{\lo} 
\bigoplus_{k'\leq k}
\bigoplus_{j\geq \max\{-k',0\}}
a^{(2j+k'),(k-k')}_{*}
Ru_{\os{\circ}{X}{}^{(2j+k')}_{\os{\circ}{T}_0}\cap D^{(k-k')}_{\os{\circ}{T}_0}/\os{\circ}{T}*}
(E_{\os{\circ}{X}{}^{(2j+k')}_{\os{\circ}{T}_0}\cap 
\os{\circ}{D}{}^{(k-k')}_{\os{\circ}{T}_0}/\os{\circ}{T}}\\
&\otimes_{\mab Z}\vp^{(2j+k',k-k')}_{\rm crys}
((\os{\circ}{X}_{T_0},\os{\circ}{D}_{T_0})/\os{\circ}{T}_0)))[-2j-k]. 
\tag{5.8.4}\label{ali:rzrwp}
\end{align*}
\end{proof}

%\tag{5.2.2}\label{eqn:axd}\\ 
%{} & \quad \otimes_{\mab Z}
%\vp^{(2j+k'),(k-k')}_{\rm zar}
%((\os{\circ}{X}_{T_0\bul},\os{\circ}{D}_{T_0\bul})/\os{\circ}{T}),-d). 

\begin{defi}
We call the spectral sequences (\ref{ali:dks}) and (\ref{ali:dwks}) 
the {\it Poincar\'{e} spectral sequence} of   
$R^qf_{(X_{\os{\circ}{T}_0},D_{\os{\circ}{T}_0})/S(T)^{\nat}*}
(\eps^*_{(X_{\os{\circ}{T}_0},D_{\os{\circ}{T}_0})/S(T)^{\nat}}(E))$ 
{\it relative to} $D_{\os{\circ}{T}_0}/S(T)^{\nat}$ 
and the {\it Poincar\'{e} spectral sequence} of   
$R^qf_{(X_{\os{\circ}{T}_0},D_{\os{\circ}{T}_0})/S(T)^{\nat}*}
(\eps^*_{(X_{\os{\circ}{T}_0},D_{\os{\circ}{T}_0})/S(T)^{\nat}}(E))$, respectively. 
We denote by $P^{D_{\os{\circ}{T}_0}}$ and $P$ the filtrations on 
$R^qf_{(X_{\os{\circ}{T}_0},D_{\os{\circ}{T}_0})/S(T)^{\nat}*}
(\eps^*_{(X_{\os{\circ}{T}_0},D_{\os{\circ}{T}_0})/S(T)^{\nat}}(E))$ 
obtained by (\ref{ali:dks}) and (\ref{ali:dwks}), respectively. 
If $E$ is trivial, then 
we call $P^{D_{\os{\circ}{T}_0}}$ and $P$ the {\it weight filtration}  
on $R^qf_{(X_{\os{\circ}{T}_0},D_{\os{\circ}{T}_0})/S(T)^{\nat}*}
(\eps^*_{(X_{\os{\circ}{T}_0},D_{\os{\circ}{T}_0})/S(T)^{\nat}}(E))$
{\it relative to} $D_{\os{\circ}{T}_0}/S(T)^{\nat}$  and the {\it weight filtration} on 
$R^qf_{(X_{\os{\circ}{T}_0},D_{\os{\circ}{T}_0})/S(T)^{\nat}*}
(\eps^*_{(X_{\os{\circ}{T}_0},D_{\os{\circ}{T}_0})/S(T)^{\nat}}(E))$, 
respectively. 
\end{defi}

%\begin{prop}\label{prop:lbdl}
%The edge morphism $d^{-k, q+k}_1 \col E_{1,l}^{-k, q+k} \lo E_{1,l}^{-k+1, q+k}$ 
%of the spectral sequence  {\rm (\ref{eqn:escssp})} is 
%identified with the following morphism$:$
%\begin{equation*}
%\sum_{k'\leq k}\sum_{j\geq {\rm max}\{-k', 0\}}
%\{-G^{(k),(k')}+\iota^{(k),(k')*}+
%(-1)^{2j+k'+1}G^{(k),(k')}(\os{\circ}{D})\}. 
%\tag{5.8.1}\label{eqn:glbd}
%\end{equation*}
%\end{prop}
%\begin{proof} 
%The proof is the same as that of \cite[(10.1)]{ndw}. 
%\end{proof} 

\section{Contravariant functorialities of zariskian 
$p$-adic bifiltered El Zein-Steenbrink-Zucker complexes}\label{sec:fcuc} 
Let the notations be as in the previous section. 
\par 
Let $S'$ be another family of log points.   
Let $(T',{\cal J}',\del')$ be a log PD-enlargement over $S'$. 
Assume that $p$ is locally nilpotent on $\os{\circ}{T}{}'$. 
Let $u\col (S(T)^{\nat},{\cal J},\del) \lo (S'(T')^{\nat},{\cal J}',\del')$ be 
a morphism of fine log schemes. 
Set $T_0:=\ul{\rm Spec}^{\log}_T({\cal O}_T/{\cal J})$ and 
$T'_0:=\ul{\rm Spec}^{\log}_{T'}({\cal O}_{T'}/{\cal J}')$.   
By the definition of $\deg(u)_x$ ((\ref{defi:ddef})), 
we have the following equality: 
\begin{equation*} 
u_{x}^*(\theta_{S'(T')^{\nat},\os{\circ}{u}(x)})= \deg(u)_x\theta_{S(T)^{\nat},x} 
\quad (x\in \os{\circ}{T}). 
\tag{6.0.1}\label{eqn:uta}  
\end{equation*}
It is easy to check that $\deg(u)_x\not=0$ for any point $x\in \os{\circ}{T}$. 
Let $(X,D)$ and $(Y,C)$ be SNCL schemes with SNCD's 
over $S$ and $S'$, respectively. 
Let ${\mathfrak D}(\ol{S'(T')^{\nat}})$ be the log PD-envelope of 
the immersion $S'(T')^{\nat}\os{\sus}{\lo} \ol{S'(T')^{\nat}}$ 
over $(\os{\circ}{T}{}',{\cal J}',\del')$. 
Let 
\begin{equation*} 
\begin{CD} 
(X_{\os{\circ}{T}_0},D_{\os{\circ}{T}_0}) 
@>{g}>> (Y_{\os{\circ}{T}{}'_0},C_{\os{\circ}{T}{}'_0})\\
@VVV @VVV \\ 
S_{\os{\circ}{T}_0} @>>> S'_{\os{\circ}{T}{}'_0} \\ 
@V{\bigcap}VV @VV{\bigcap}V \\ 
S(T)^{\nat} @>{u}>> S'(T')^{\nat}
\end{CD}
\tag{6.0.2}\label{eqn:xdxduss}
\end{equation*} 
be a commutative diagram of SNCL schemes with SNCD's 
over $S_{\os{\circ}{T}_0}$ and $S'_{\os{\circ}{T}{}'_0}$.   
Let  $(X'_{\os{\circ}{T}_0},D'_{\os{\circ}{T}_0})$ and 
$(Y'_{\os{\circ}{T}{}'_0},C'_{\os{\circ}{T}{}'_0})$ 
be the disjoint unions of 
affine open coverings of $(X_{\os{\circ}{T}_0},D_{\os{\circ}{T}_0})$ 
and $(Y_{\os{\circ}{T}{}'_0},C_{\os{\circ}{T}_0})$ 
respectively, 
fitting into the following commutative diagram 
\begin{equation*} 
\begin{CD} 
(X'_{\os{\circ}{T}_0},D'_{\os{\circ}{T}_0})  
@>{g'}>> (Y'_{\os{\circ}{T}{}'_0},C'_{\os{\circ}{T}{}'_0})  \\
@VVV @VVV \\ 
(X_{\os{\circ}{T}_0},D_{\os{\circ}{T}_0}) @>{g}>> (Y_{\os{\circ}{T}{}'_0},C_{\os{\circ}{T}{}'_0}). 
\end{CD}
\tag{6.0.3}\label{cd:xygxy}
\end{equation*} 
Set $(X_{\os{\circ}{T}_0\bul},D_{\os{\circ}{T}_0\bul})
:={\rm cosk}^{(X_{\os{\circ}{T}_0},D_{\os{\circ}{T}_0})}_0
((X'_{\os{\circ}{T}_0},D'_{\os{\circ}{T}_0}))$ and 
$(Y_{\os{\circ}{T}{}'_0\bul},C_{\os{\circ}{T}{}'_0\bul})
:={\rm cosk}^{(Y_{\os{\circ}{T}{}'_0},C_{\os{\circ}{T}{}'_0})}_0
((Y'_{\os{\circ}{T}{}'_0},C'_{\os{\circ}{T}{}'_0}))$. 
Let $(X_{\os{\circ}{T}_0\bul},D_{\os{\circ}{T}_0\bul}) 
\os{\sus}{\lo} \ol{\cal P}{}'_{\bul}$
and $(Y_{\os{\circ}{T}{}'_0\bul},C_{\os{\circ}{T}{}'_0\bul}) 
\os{\sus}{\lo} \ol{\cal Q}_{\bul}$ 
be immersions into simplicial log smooth schemes over 
$\ol{S(T)^{\nat}}$ and $\ol{S'(T')^{\nat}}$, respectively. 
Indeed, these immersions exist by (\ref{eqn:eipxd}). 
Set 
$$\ol{\cal P}_{\bul}:=\ol{\cal P}{}'_{\bul}\times_{\ol{S(T)^{\nat}}}
(\ol{\cal Q}_{\bul}\times_{\ol{S'(T')^{\nat}}}\ol{S(T)^{\nat}})
=\ol{\cal P}{}'_{\bul}\times_{\ol{S'(T')^{\nat}}}\ol{\cal Q}_{\bul}.$$ 
Let $\ol{g}_{\bul}\col \ol{\cal P}_{\bul} \lo \ol{\cal Q}_{\bul}$
be the second projection. 
Then we have the following commutative diagram 
\begin{equation*} 
\begin{CD} 
(X_{\os{\circ}{T}_0\bul},D_{\os{\circ}{T}_0\bul})
@>{\sus}>> \ol{\cal P}_{\bul}\\
@V{g_{\bul}}VV 
@VV{\ol{g}_{\bul}}V \\ 
(Y_{\os{\circ}{T}{}'_0\bul},C_{\os{\circ}{T}_0\bul}) 
@>{\sus}>> \ol{\cal Q}_{\bul}
\end{CD} 
\tag{6.0.4}\label{cd:xpnxp} 
\end{equation*} 
over 
\begin{equation*} 
\begin{CD} 
S_{\os{\circ}{T}_0} @>{\subset}>> \ol{S(T)^{\nat}}\\ 
@VVV @VVV \\ 
S'_{\os{\circ}{T}{}'_0} @>{\subset}>> \ol{S'(T')^{\nat}}. 
\end{CD}
\end{equation*} 
Let $\ol{\mathfrak E}_{\bul}$ be the log PD-envelope of  
the immersion 
$(Y_{\os{\circ}{T}{}'_0\bul},C_{\os{\circ}{T}{}'_0\bul}) 
\os{\sus}{\lo}\ol{\cal Q}_{\bul}$ over 
$(\os{\circ}{T}{}',{\cal J}',\del')$.     
Set ${\mathfrak E}_{\bul}:=
\ol{\mathfrak E}_{\bul}\times_{{\mathfrak D}(\ol{S'(T')^{\nat}})}S'(T')^{\nat}$. 
By (\ref{cd:xpnxp}) 
we have the following natural morphism  
\begin{equation*} 
\ol{g}{}^{\rm PD}_{\bul}\col 
\ol{\mathfrak D}_{\bul}\lo \ol{\mathfrak E}_{\bul}.   
\tag{6.0.5}\label{eqn:gpdf} 
\end{equation*}  
Hence we have the following natural morphism 
\begin{equation*} 
g^{\rm PD}_{\bul}\col 
{\mathfrak D}_{\bul}\lo {\mathfrak E}_{\bul}.   
\tag{6.0.6}\label{eqn:gpndf} 
\end{equation*}

Let $E$ and $F$ be flat quasi-coherent crystals of 
${\cal O}_{\os{\circ}{X}_{T_0}/\os{\circ}{T}}$-modules  
and ${\cal O}_{\os{\circ}{Y}_{T_0'}/\os{\circ}{T}{}'}$-modules, respectively.     
Let 
\begin{align*} 
\os{\circ}{g}{}^*_{\rm crys}(F)\lo E
\tag{6.0.7}\label{ali:gnfe} 
\end{align*} 
be a morphism of 
${\cal O}_{\os{\circ}{X}_{T_0}/\os{\circ}{T}}$-modules. 
%Henceforth, we assume that 
%the morphism $u\col (S(T)^{\nat},{\cal J},\del) \lo (S'(T')^{\nat},{\cal J}',\del')$ 
%extends to a morphism $u\col (T,{\cal J},\del) \lo (T',{\cal J}',\del')$ 
%of fine log schemes. 

\begin{theo}[{\bf Contravariant functoriality I of $A_{\rm zar}$}]\label{theo:funas} 
$(1)$ Assume that 
$\deg(u)_x$ is not divisible by $p$ for any point $x \in \os{\circ}{T}$. 
Then $g\col X_{\os{\circ}{T}{}_0}\lo Y_{\os{\circ}{T}{}'_0}$ induces the following 
well-defined pull-back morphism 
\begin{equation*}  
g^* \col 
(A_{\rm zar}((Y_{\os{\circ}{T}{}'_0},C_{\os{\circ}{T}{}'_0})/S'(T')^{\nat},F),P^{C_{\os{\circ}{T}{}'_0}},P)
\lo Rg_*((A_{\rm zar}((X_{\os{\circ}{T}_0},D_{\os{\circ}{T}_0})/S(T)^{\nat},E),P^{D_{\os{\circ}{T}_0}},P)) 
\tag{6.1.1}\label{eqn:fzaxd}
\end{equation*} 
fitting into the following commutative diagram$:$
\begin{equation*} 
\begin{CD}
A_{\rm zar}((Y_{\os{\circ}{T}{}'_0},C_{\os{\circ}{T}{}'_0})/S'(T')^{\nat},F)@>{g^*}>>
Rg_{*}(A_{\rm zar}((X_{\os{\circ}{T}_0},D_{\os{\circ}{T}_0})/S(T)^{\nat},E))\\ 
@A{\theta_{(Y_{\os{\circ}{T}{}'_0},C_{\os{\circ}{T}{}'_0})/S'(T')^{\nat}}\wedge}A{\simeq}A 
@A{Rg_{*}(\theta_{(X_{\os{\circ}{T}_0},D_{\os{\circ}{T}_0})/S(T)^{\nat}} \wedge)}A{\simeq}A\\
Ru_{(Y_{\os{\circ}{T}{}'_0},C_{\os{\circ}{T}{}'_0})/S'(T')^{\nat}*}
(\eps^*_{(Y_{\os{\circ}{T}{}'_0},C_{\os{\circ}{T}{}'_0})/S'(T')^{\nat}}(F))
@>{g^*}>>Rg_{*}Ru_{(X_{\os{\circ}{T}_0},D_{\os{\circ}{T}_0})/S(T)^{\nat}*}
(\eps^*_{(X_{\os{\circ}{T}_0},D_{\os{\circ}{T}_0})/S(T)^{\nat}}(E)).
\end{CD}
\tag{6.1.2}\label{cd:pssfccz} 
\end{equation*} 
\par 
$(2)$ Let $S''$ be a family of log points. 
Let $(T'',{\cal J}'',\del'')$ be a log PD-enlargement of $S''$. 
Set  
$T''_0:=\ul{\rm Spec}^{\log}_{T''}({\cal O}_{T''}/{\cal J}'')$. 
Let $v\col (S'(T')^{\nat},{\cal J},\del) \lo (S''(T'')^{\nat},{\cal J}',\del')$ and 
$h\col (Y_{\os{\circ}{T}{}'_0},C_{\os{\circ}{T}{}'_0})\lo (Z_{\os{\circ}{T}{}''_0},B_{\os{\circ}{T}{}''_0})$ 
be similar morphisms to $u$ and $g$, respectively. 
%Let $(T',{\cal J}',\del') \lo (T'',{\cal J}'',\del'')$ be a morphism of log PD-enlargements 
%over $v_{S''S'}$.   
Assume that $\deg(v)_x$ is not divisible by $p$ for any point  $x \in \os{\circ}{T}{}'$. 
Let $G$ be a flat quasi-coherent crystal of 
${\cal O}_{\os{\circ}{Z}_{T''_0}/\os{\circ}{T}{}''}$-modules.    
Let 
\begin{align*} 
\os{\circ}{h}{}^*_{{\rm crys}}(G)\lo F
\tag{6.1.3}\label{ali:gznfe} 
\end{align*} 
be a morphism of 
${\cal O}_{\os{\circ}{Y}_{T'_0}/\os{\circ}{T}{}'}$-modules. 
Then 
\begin{align*} 
(h\circ g)^*  =& Rh_{*}(g^*)\circ h^*    
\col (A_{\rm zar}((Z_{\os{\circ}{T}{}''_0},B_{\os{\circ}{T}{}''_0})/S''(T'')^{\nat},F),
P^{B_{\os{\circ}{T}{}''_0}},P)
 \\ 
& \lo Rh_*Rg_*((A_{\rm zar}((X_{\os{\circ}{T}_0},D_{\os{\circ}{T}_0})
/S(T)^{\nat},E),P^{D_{\os{\circ}{T}_0}},P)) \\
& =R(h\circ g)_*((A_{\rm zar}((X_{\os{\circ}{T}_0},D_{\os{\circ}{T}_0})
/S(T)^{\nat},E),P^{D_{\os{\circ}{T}_0}},P)).
\tag{6.1.4}\label{ali:pdpp}
\end{align*}  
\par 
$(3)$  
\begin{equation*} 
{\rm id}_{(X_{\os{\circ}{T}_0},D_{\os{\circ}{T}_0})}^*={\rm id} 
_{(A_{\rm zar}(X_{\os{\circ}{T}_0}/S(T)^{\nat},E),P^{D_{\os{\circ}{T}_0}},P)}. 
%\lo (A_{\rm zar}(X_{\os{\circ}{T}_0}/S(T)^{\nat},E),P^{D_{\os{\circ}{T}_0}},P).  
\tag{6.1.5}\label{eqn:fzidd}
\end{equation*} 
\end{theo}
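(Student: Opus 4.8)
The plan is to construct $g^*$ at the level of the explicit Steenbrink double complexes underlying $A_{\rm zar}$, by pulling back along $\ol{g}_{\bul}$ and inserting a twist by a power of the mapping degree $\deg(u)$; then to pass to ${\rm D}^+{\rm F}^2$ using the independence of choices (\ref{theo:indcr}), (\ref{prop:tefc}). First I would fix a common model: replace the affine coverings of $(X_{\os{\circ}{T}_0},D_{\os{\circ}{T}_0})$ and $(Y_{\os{\circ}{T}{}'_0},C_{\os{\circ}{T}{}'_0})$ by a refinement as in (\ref{cd:xygxy}), choose simplicial immersions into $\ol{\cal P}{}'_{\bul}$ and $\ol{\cal Q}_{\bul}$, and put $\ol{\cal P}_{\bul}:=\ol{\cal P}{}'_{\bul}\times_{\ol{S'(T')^{\nat}}}\ol{\cal Q}_{\bul}$ as in (\ref{cd:xpnxp}), so that $\ol{g}_{\bul}\col\ol{\cal P}_{\bul}\lo\ol{\cal Q}_{\bul}$ is defined on the nose. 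Taking exactifications (\ref{coro:exem}) and log PD-envelopes produces $g^{\rm PD}_{\bul}\col{\mathfrak D}_{\bul}\lo{\mathfrak E}_{\bul}$ of (\ref{eqn:gpndf}), and the relative version over $u$ of (\ref{eqn:keyxy}) gives a morphism of cosimplicial trifiltered complexes
\begin{equation*}
({\cal F}^{\bul}\otimes_{{\cal O}_{{\cal Q}^{\rm ex}_{\bul}}}{\Om}^{\bul}_{{\cal Q}^{\rm ex}_{\bul}/\os{\circ}{T}{}'},P^{{\cal C}_{\bul}},P,P^{{\cal Y}_{\bul}})\lo g^{\rm PD}_{\bul *}\bigl(({\cal E}^{\bul}\otimes_{{\cal O}_{{\cal P}^{\rm ex}_{\bul}}}{\Om}^{\bul}_{{\cal P}^{\rm ex}_{\bul}/\os{\circ}{T}},P^{{\cal D}_{\bul}},P,P^{{\cal X}_{\bul}})\bigr).
\end{equation*}
Dividing by $P^{{\cal Y}_{\bul}}$ resp.\ $P^{{\cal X}_{\bul}}$ this induces, in each bidegree $(i,j)$, an $f^{-1}({\cal O}_T)$-linear map $\phi^{ij}$ between the corresponding terms of $A_{\rm zar}({\cal Q}^{\rm ex}_{\bul}/S'(T')^{\nat},{\cal F}^{\bul})^{ij}$ and $g^{\rm PD}_{\bul *}A_{\rm zar}({\cal P}^{\rm ex}_{\bul}/S(T)^{\nat},{\cal E}^{\bul})^{ij}$, carrying $P^{{\cal C}_{\bul}}$ into $P^{{\cal D}_{\bul}}$ and $P$ into $P$.

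The family $\{\phi^{ij}\}$ does not by itself commute with the vertical boundary $\theta\wedge$ of (\ref{cd:lccbd}): since $\theta$ is induced from $d\log t$ on the family of log points, (\ref{eqn:uta}) gives $\ol{g}_{\bul}^*(\theta_{(Y_{\os{\circ}{T}{}'_0},C_{\os{\circ}{T}{}'_0})/S'(T')^{\nat}})=\deg(u)\,\theta_{(X_{\os{\circ}{T}_0},D_{\os{\circ}{T}_0})/S(T)^{\nat}}$, hence $\phi^{i,j+1}\circ(\theta\wedge)=\deg(u)\,(\theta\wedge)\circ\phi^{ij}$. This is where the hypothesis enters: because $p$ is locally nilpotent on $\os{\circ}{T}$ and $\deg(u)_x$ is prime to $p$ for every $x$, the locally constant function $\deg(u)$ is an invertible section of $f^{-1}({\cal O}_T)$, and I would set $g^*:=\deg(u)^{-1-j}\phi^{ij}$ in bidegree $(i,j)$. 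One then checks: the horizontal boundary $-\nabla$ is $f^{-1}({\cal O}_T)$-linear and is intertwined by $\phi$ (functoriality of the connection (\ref{eqn:nitpltd})), so it commutes with the scalar twist and $g^*$ commutes with $-\nabla$; the exponents $-1-j$ are exactly those for which the displayed relation forces $g^*\circ(\theta\wedge)=(\theta\wedge)\circ g^*$; and twisting by a unit preserves the subsheaves of (\ref{eqn:dbldad}) and (\ref{eqn:dblad}) defining $P^{{\cal D}_{\bul}}$ and $P$ on the single complex. Thus $g^*$ is a morphism of bifiltered double complexes, hence of the associated single complexes. On the row $j=0$ one has $g^*=\deg(u)^{-1}\phi^{i,0}$; composing with the cochain model $\theta\wedge$ of the comparison isomorphism (\ref{prop:tefc}) and using $\ol{g}_{\bul}^*(\theta)=\deg(u)\,\theta$ once more, the factor $\deg(u)^{-1}$ cancels the $\deg(u)$ coming from the pullback of $\theta$, so the square (\ref{cd:pssfccz}) commutes at the cochain level. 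Applying $R\pi_{{\rm zar}*}$ and invoking (\ref{theo:indcr}) and (\ref{prop:tefc}) for independence of the auxiliary choices completes $(1)$.

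For $(2)$ I would build a common simplicial model for $X_{\os{\circ}{T}_0}$, $Y_{\os{\circ}{T}{}'_0}$, $Z_{\os{\circ}{T}{}''_0}$ by iterating the fiber-product of the first step: with $\ol{\cal R}_{\bul}$ an immersion for $(Z_{\os{\circ}{T}{}''_0},B_{\os{\circ}{T}{}''_0})$ over $\ol{S''(T'')^{\nat}}$, first set $\ol{\cal Q}{}'_{\bul}:=\ol{\cal Q}_{\bul}\times_{\ol{S''(T'')^{\nat}}}\ol{\cal R}_{\bul}$, then $\ol{\cal P}_{\bul}:=\ol{\cal P}{}'_{\bul}\times_{\ol{S'(T')^{\nat}}}\ol{\cal Q}{}'_{\bul}$, so that $\ol{h}_{\bul}\circ\ol{g}_{\bul}$ and the model of $h\circ g$ literally coincide. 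Then $\phi^{ij}_{h\circ g}=\phi^{ij}_{g}\circ\phi^{ij}_{h}$ by functoriality of pullback of (log) de Rham complexes and of crystals, while $\deg(v\circ u)=\deg(v)\cdot\deg(u)$ (the induced maps on the monoids compose, (\ref{defi:ddef})), so the $\deg$-twists multiply correctly and $(h\circ g)^*=Rh_*(g^*)\circ h^*$. For $(3)$, when $g={\rm id}$ then $u={\rm id}$, $\deg(u)\equiv 1$ and $\phi^{ij}={\rm id}$, whence $g^*={\rm id}$; here too (\ref{theo:indcr}) is used to compare the two sides after a choice of model.

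I expect the main obstacle to be the bookkeeping that pins down the twist $\deg(u)^{-1-j}$: it must simultaneously be compatible with the two boundary morphisms of the Steenbrink double complex, with the comparison isomorphism $\theta\wedge$, and with the two weight filtrations $P^{D_{\os{\circ}{T}_0}}$ and $P$. This normalization is the only place where the arithmetic hypothesis $p\nmid\deg(u)_x$ is used, and it is exactly what makes the twist a legitimate operation on integral (not merely rational) coefficients.
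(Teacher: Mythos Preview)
Your proposal is correct and follows essentially the same approach as the paper: you construct the pull-back on the explicit Steenbrink double complexes via $g^{\rm PD*}_{\bul}$ twisted by $\deg(u)^{-(j+1)}$ in bidegree $(i,j)$, use (\ref{eqn:uta}) to verify compatibility with $\theta\wedge$ and the filtrations, and then appeal to (\ref{theo:indcr}) and (\ref{prop:tefc}) for independence of choices. The paper's proof records the same commutative diagrams (your $\phi^{i,j+1}\circ(\theta'\wedge)=\deg(u)(\theta\wedge)\circ\phi^{ij}$ is its (\ref{cd:fblssod}), and the preservation of $P^{{\cal X}_{\bul}}_j$ is its (\ref{cd:axfpdwt})), and treats (2) and (3) with the same brevity you do.
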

\begin{proof} 
(1): For $F$, let $(\ol{\cal F}{}^{\bul},\nabla)$ 
and $({\cal F}{}^{\bul},\nabla)$ 
be the similar objects to 
$(\ol{\cal E}{}^{\bul},\nabla)$ and 
$({\cal E}{}^{\bul},\nabla)$ in \S\ref{sec:psc}, respectively.  
For simplicity of notation, 
denote $\theta_{{\cal P}^{\rm ex}_{\bul}/\os{\circ}{T}}\in 
{\cal O}_{{\mathfrak D}_{\bul}}
\otimes_{{\cal O}_{{\cal P}^{\rm ex}_{\bul}}}
{\Om}^1_{{\cal P}^{\rm ex}_{\bul}/\os{\circ}{T}}$ 
(resp.~$\theta_{{\cal Q}^{\rm ex}_{\bul}/\os{\circ}{T}}\in 
{\cal O}_{{\mathfrak E}_{\bul}}
\otimes_{{\cal O}_{{\cal Q}^{\rm ex}_{\bul}}}
{\Om}^1_{{\cal Q}^{\rm ex}_{\bul}/\os{\circ}{T}{}'}$) 
simply by $\theta$ (resp.~$\theta'$). 
\par 
First we would like to construct a morphism  
\begin{equation*} 
{\cal F}^{\bul}
\otimes_{{\cal O}_{{\cal Q}{}^{\rm ex}_{\bul}}}
\Om^{\bul}_{{\cal Q}{}^{\rm ex}_{\bul}/\os{\circ}{T}{}'}
\lo 
g^{\rm PD}_{\bul*}({\cal E}^{\bul}
\otimes_{{\cal O}_{{\cal P}^{\rm ex}_{\bul}}}
\Om^{\bul}_{{\cal P}^{\rm ex}_{\bul}/\os{\circ}{T}})
\end{equation*} 
of complexes. 
Because we are given the morphism  
$\os{\circ}{g}{}^*_{{\rm crys}}(F)
\lo E$, we have a morphism 
\begin{align*} 
\ol{g}{}^{{\rm PD}*}_{\bul}
\col \ol{\cal F}{}^{\bul}
\lo 
\ol{g}{}^{{\rm PD}}_{\bul*}(\ol{\cal E}{}^{\bul})
\end{align*} 
fitting into the following commutative diagram$:$ 
\begin{equation*} 
\begin{CD}  
\ol{\cal F}{}^{\bul}
@>{\ol{g}{}^{{\rm PD}*}_{\bul}}>>
\ol{g}{}^{\rm PD}_{\bul*}(\ol{\cal E}{}^{\bul})\\
@VVV @VVV\\
\ol{\cal F}{}^{\bul}
\otimes_{{\cal O}_{\ol{\cal Q}{}^{\rm ex}_{\bul}}}
\Om^1_{\ol{\cal Q}{}^{\rm ex}_{\bul}/\os{\circ}{T}{}'}
@>{\ol{g}{}^{{\rm PD}*}_{\bul}}>>
\ol{g}{}^{\rm PD}_{\bul*}
(\ol{\cal E}^{\bul}
\otimes_{{\cal O}_{\ol{\cal P}{}^{\rm ex}_{\bul}}}
\Om^1_{\ol{\cal P}{}^{\rm ex}_{\bul}/\os{\circ}{T}})
\end{CD} 
\tag{6.1.6}\label{eqn:dolm}
\end{equation*} 
by using (\ref{cd:xpnxp}). Hence we have a morphism 
\begin{align*} 
g^{{\rm PD}*}_{\bul}\col {\cal F}^{\bul}\lo g^{\rm PD}_{{\bul}*}({\cal E}^{\bul})
\tag{6.1.7}\label{eqn:defm}
\end{align*} 
fitting into the following commutative diagram$:$ 
\begin{equation*} 
\begin{CD}  
{\cal F}^{\bul}
@>{g^{{\rm PD}*}_{\bul}}>>
g^{\rm PD}_{\bul*}({\cal E}^{\bul})\\
@VVV @VVV\\
{\cal F}^{\bul}
\otimes_{{\cal O}_{{\cal Q}{}^{\rm ex}_{\bul}}}
\Om^1_{{\cal Q}{}^{\rm ex}_{\bul}/\os{\circ}{T}{}'}
@>{g^{{\rm PD}*}_{\bul}}>>
g^{\rm PD}_{\bul*}
({\cal E}^{\bul}
\otimes_{{\cal O}_{{\cal P}{}^{\rm ex}_{\bul}}}
\Om^1_{{\cal P}{}^{\rm ex}_{\bul}/\os{\circ}{T}}).
\end{CD} 
\tag{6.1.8}\label{eqn:dgm}
\end{equation*} 
Express ${\cal P}^{\rm ex}:=({\cal X}_{\bul},{\cal D}_{\bul})$ and 
${\cal Q}^{\rm ex}:=({\cal Y}_{\bul},{\cal C}_{\bul})$, 
where $({\cal X}_{\bul},{\cal D}_{\bul})$ and $({\cal Y}_{\bul},{\cal C}_{\bul})$
are SNCL schemes with SNCD's over $S(T)^{\nat}$ and $S'(T')^{\nat}$, respectively.  
By using (\ref{cd:xpnxp}) again, 
we have the following morphism 
\begin{equation*} 
g^{{\rm PD}*}_{\bul}
\col {\cal O}_{{\mathfrak E}_{\bul}}
\otimes_{{\cal O}_{{\cal Q}{}^{\rm ex}_{\bul}}}
\Om^{\bul}_{{\cal Q}{}^{\rm ex}_{\bul}/\os{\circ}{T}{}'}
\lo 
g^{\rm PD}_{\bul*}({\cal O}_{{\mathfrak D}_{\bul}}
\otimes_{{\cal O}_{{\cal P}^{\rm ex}_{\bul}}}
\Om^{\bul}_{{\cal P}^{\rm ex}_{\bul}/\os{\circ}{T}}). 
\tag{6.1.9}\label{eqn:dgppm}
\end{equation*}  
Set 
$$g^{{\rm PD}*}_{\bul}(e\otimes \om):=
g^{{\rm PD}*}_{\bul}(e)\otimes g^{{\rm PD}*}_{\bul}(\om)
\quad (e\in {\cal F}^{\bul},~ 
\om \in \Om^i_{{\cal Q}^{\rm ex}_{\bul}/\os{\circ}{T}{}'}~(i\in {\mab N})).$$ 
This $g^{{\rm PD}*}_{\bul}$ induces 
the following morphism of bifiltered complexes: 
\begin{equation*} 
({\cal F}^{\bul}
\otimes_{{\cal O}_{{\cal Q}^{\rm ex}_{\bul}}}
\Om^{\bul}_{{\cal Q}^{\rm ex}_{\bul}/\os{\circ}{T}{}'},P^{{\cal C}_{\bul}},P^{{\cal Y}_{\bul}})
\lo 
g^{\rm PD}_{\bul*} 
(({\cal E}^{\bul}
\otimes_{{\cal O}_{{\cal P}^{\rm ex}_{\bul}}}
\Om^{\bul}_{{\cal P}^{\rm ex}_{\bul}/\os{\circ}{T}},P^{{\cal D}_{\bul}},P^{{\cal X}_{\bul}})).  
\tag{6.1.10}\label{eqn:dogm}
\end{equation*} 
Because the following diagram
\begin{equation*} 
\begin{CD} 
g^{{\rm PD}*}_{\bul} 
({\cal F}^{\bul}
\otimes_{{\cal O}_{{\cal Q}^{\rm ex}_{\bul}}}
\Om^{\bul}_{{\cal Q}^{\rm ex}_{\bul}/\os{\circ}{T}{}'})[1] 
@>{g^{{\rm PD}*}_{\bul}}>> 
{\cal E}^{\bul}
\otimes_{{\cal O}_{{\cal P}^{\rm ex}_{\bul}}}
{\Om}^{\bul}_{{\cal P}^{\rm ex}_{\bul}/\os{\circ}{T}}[1] \\   
@A{g^{{\rm PD}*}_{\bul} 
(\deg(u)^{-1}\theta'\wedge)}AA @AA{\theta \wedge}A \\ 
g^{{\rm PD}*}_{\bul}
({\cal F}^{\bul}
\otimes_{{\cal O}_{{\cal Q}^{\rm ex}_{\bul}}}
{\Om}^{\bul}_{{\cal Q}^{\rm ex}_{\bul}/\os{\circ}{T}{}'})
@>{g^{{\rm PD}*}_{\bul}}>> 
{\cal E}^{\bul}
\otimes_{{\cal O}_{{\cal P}^{\rm ex}_{\bul}}}
{\Om}^{\bul}_{{\cal P}^{\rm ex}_{\bul}/\os{\circ}{T}}    
\end{CD} 
\tag{6.1.11}\label{cd:fblssod}
\end{equation*} 
is commutative ($\deg(u)^{-1}$ has a meaning by 
the assumption of the $p$-nondivisibility of $\deg(u)$) 
and because 
we have the following commutative diagram for 
$i,j\in {\mab N}$ 
\begin{equation*} 
\begin{CD}
g^{\rm PD}_{\bul*}
(P^{{\cal X}_{\bul}}_j({\cal E}^{\bul}
\otimes_{{\cal O}_{{\cal P}^{\rm ex}_{\bul}}}
{\Om}^{i+j+1}_{{\cal P}^{\rm ex}_{\bul}/\os{\circ}{T}{}}))
@>{\subset}>>  g^{\rm PD}_{\bul*}({\cal E}^{\bul}
\otimes_{{\cal O}_{{\cal P}^{\rm ex}_{\bul}}}
{\Om}^{i+j+1}_{{\cal P}^{\rm ex}_{\bul}/\os{\circ}{T}{}}) \\
@A{({\rm deg}(u))^{-(j+1)}
g^{{\rm PD}*}_{\bul}}AA @AA{({\rm deg}(u))^{-(j+1)}
g^{{\rm PD}*}_{\bul}}A \\
P^{{\cal Y}_{\bul}}_j({\cal F}^{\bul}
\otimes_{{\cal O}_{{\cal Q}^{\rm ex}_{\bul}}}
{\Om}^{i+j+1}_{{\cal Q}^{\rm ex}_{\bul}/\os{\circ}{T}{}'})
@>{\subset}>> {\cal F}^{\bul}
\otimes_{{\cal O}_{{\cal Q}^{\rm ex}_{\bul}}}
{\Om}^{i+j+1}_{{\cal Q}^{\rm ex}_{\bul}/\os{\circ}{T}{}'}, 
\end{CD}
\tag{6.1.12}\label{cd:axfpdwt}
\end{equation*} 
we can define the pull-back morphism 
\begin{align*} 
g^*_{\bul} \col &
A_{\rm zar}({\cal Q}^{\rm ex}_{\bul}/S'(T')^{\nat},{\cal F}^{\bul})
\lo 
g^{\rm PD}_{\bul*}A_{\rm zar}({\cal P}^{\rm ex}_{\bul}/S(T)^{\nat},
{\cal E}^{\bul})
\tag{6.1.13}\label{eqn:axfdwt}
\end{align*} 
by the following formula 
\begin{align*} 
g^*_{\bul}
:= \deg(u)^{-(j+1)}g^{{\rm PD}*}_{\bul} 
\col 
A_{\rm zar}({\cal Q}^{\rm ex}_{\bul}/S'(T')^{\nat},{\cal F}^{\bul})^{ij}
\lo 
g^{\rm PD}_{\bul*}A_{\rm zar}({\cal P}^{\rm ex}_{\bul}/S(T)^{\nat},{\cal E}^{\bul})^{ij}. 
\tag{6.1.14}\label{eqn:axdwt}
\end{align*} 
In fact we have the following bifiltered morphism 
\begin{align*} 
g^*_{\bul}
:= \deg(u)^{-(j+1)}g^{{\rm PD}*}_{\bul} 
\col &
(A_{\rm zar}({\cal Q}^{\rm ex}_{\bul}/S'(T')^{\nat},{\cal F}^{\bul})^{ij},
P^{{\cal C}_{\bul}/{\cal Y}_{\bul}},P) \\
& \lo 
g^{\rm PD}_{\bul*}
((A_{\rm zar}({\cal P}^{\rm ex}_{\bul}/S(T)^{\nat},{\cal E}^{\bul})^{ij},
P^{{\cal D}_{\bul}/{\cal X}_{\bul}},P)). 
\tag{6.1.15}\label{eqn:axfit}
\end{align*} 
Let 
\begin{equation*}
g^*\col 
(A_{\rm zar}((Y_{\os{\circ}{T}{}'_0},C_{\os{\circ}{T}{}'_0})/S'(T')^{\nat},F),P^{C_{\os{\circ}{T}{}'_0}},P)
\lo Rg_*((A_{\rm zar}((X_{\os{\circ}{T}_0},D_{\os{\circ}{T}_0})/S(T)^{\nat},E),P^{D_{\os{\circ}{T}_0}},P))
\tag{6.1.16}\label{eqn:axnwt}
\end{equation*} 
be the induced morphism by $g^*_{\bul}$.  
\par  
Next we check that the morphism (\ref{eqn:axnwt})
is independent of the choice of the diagrams 
(\ref{cd:xygxy}) and (\ref{cd:xpnxp}).  
Let $(X''_{\os{\circ}{T}_0},D''_{\os{\circ}{T}_0})$ and 
$(Y''_{\os{\circ}{T}{}'_0},C''_{\os{\circ}{T}{}'_0})$ be 
the disjoint unions of the members of affine open coverings of 
$(X_{\os{\circ}{T}_0},D_{\os{\circ}{T}_0})$ and 
$(Y_{\os{\circ}{T}{}'_0},C_{\os{\circ}{T}{}'_0})$, 
respectively, fitting into the following commutative diagram 
(\ref{cd:xygxy}). 
Set $(X'_{\os{\circ}{T}_0\bul},D'_{\os{\circ}{T}_0\bul})
:={\rm cosk}^{(X_{\os{\circ}{T}_0},D_{\os{\circ}{T}_0})}_0
((X''_{\os{\circ}{T}_0},D''_{\os{\circ}{T}_0}))$ 
and 
$(Y'_{\os{\circ}{T}{}'_0\bul},C'_{\os{\circ}{T}{}'_0\bul})
:={\rm cosk}^{(Y_{\os{\circ}{T}{}'_0},C_{\os{\circ}{T}{}'_0})}_0
((Y''_{\os{\circ}{T}{}'_0},C''_{\os{\circ}{T}{}'_0}))$.  
Then we have the following commutative diagram 
\begin{equation*} 
\begin{CD} 
(X'_{\os{\circ}{T}_0\bul},D'_{\os{\circ}{T}_0\bul})@>{\sus}>> 
\ol{\cal P}{}'^{\rm ex}_{\bul} \\ 
@V{g'_{\bul}}VV 
@VV{\ol{g}{}'_{\bul}}V \\ 
(Y'_{\os{\circ}{T}_0\bul},C'_{\os{\circ}{T}_0\bul}) 
@>{\sus}>> \ol{\cal Q}{}'^{\rm ex}_{\bul}, 
\end{CD} 
\tag{6.1.17}\label{cd:xqxp} 
\end{equation*} 
where the two horizontal exact immersions above are the exactifications 
of immersions 
$(X'_{\os{\circ}{T}_0\bul},D'_{\os{\circ}{T}_0\bul})\os{\sus}{\lo} \ol{\cal P}{}'_{\bul}$
and 
$(Y'_{\os{\circ}{T}_0\bul},C'_{\os{\circ}{T}_0\bul})  \os{\sus}{\lo} \ol{\cal Q}{}'_{\bul}$ 
into simplicial 
log smooth schemes over $\ol{S(T)^{\nat}}$ and $\ol{S'(T')^{\nat}}$, respectively.  
Set $(X'''_{\os{\circ}{T}_0},D'''_{\os{\circ}{T}_0}):=
(X'_{\os{\circ}{T}_0},D'_{\os{\circ}{T}_0})
\times_{(X_{\os{\circ}{T}_0},D_{\os{\circ}{T}_0})}
(X''_{\os{\circ}{T}_0},D_{\os{\circ}{T}_0})$ 
and $(Y'''_{\os{\circ}{T}{}'_0},C'''_{\os{\circ}{T}{}'_0})
:=(Y'_{\os{\circ}{T}{}'_0},C'_{\os{\circ}{T}{}'_0})
\times_{(Y_{\os{\circ}{T}{}'_0},C_{\os{\circ}{T}{}'_0})}
(Y''_{\os{\circ}{T}{}'_0},C''_{\os{\circ}{T}{}'_0})$. 
Set $(X''_{\os{\circ}{T}_0\bul},D''_{\os{\circ}{T}_0\bul})
:={\rm cosk}^{(X_{\os{\circ}{T}_0},D_{\os{\circ}{T}_0})}_0
((X'''_{\os{\circ}{T}_0},D'''_{\os{\circ}{T}_0}))$ 
and 
$(Y''_{\os{\circ}{T}{}'_0\bul},C''_{\os{\circ}{T}{}'_0\bul})
:={\rm cosk}^{(Y_{\os{\circ}{T}{}'_0},C_{\os{\circ}{T}{}'_0})}_0
((Y'''_{\os{\circ}{T}{}'_0},C'''_{\os{\circ}{T}{}'_0}))$.  
Set $\ol{\cal P}{}''_{\bul}:=\ol{\cal P}_{\bul}\times_{\ol{S(T)^{\nat}}}\ol{\cal P}{}'_{\bul}$ 
and 
$\ol{\cal Q}{}''_{\bul}
:=\ol{\cal Q}_{\bul}\times_{\ol{S'(T')^{\nat}}}\ol{\cal Q}{}'_{\bul}$.  
%Let $\ol{\mathfrak D}{}''_{\bul}$ 
%be the log PD-envelope of the diagonal immersion 
%$X''_{\bul} \os{\sus}{\lo}
%\ol{\cal P}{}''_{\bul}$ over $(\os{\circ}{T},{\cal J},\del)$ 
Let $\ol{\cal P}{}''_{\bul}{}^{\rm ex}$ 
(resp.~$\ol{\cal Q}{}''_{\bul}{}^{\rm ex}$) 
be the exactification of 
the immersion 
$(X''_{\os{\circ}{T}_0\bul},D''_{\os{\circ}{T}_0\bul})  \os{\sus}{\lo}
\ol{\cal P}{}''_{\bul}$
(resp.~$(Y''_{\os{\circ}{T}_0\bul},C''_{\os{\circ}{T}_0\bul})  \os{\sus}{\lo}\ol{\cal Q}{}''_{\bul}$). 
Then we have the following commutative diagram 
\begin{equation*} 
\begin{CD} 
(X''_{\os{\circ}{T}_0\bul},D''_{\os{\circ}{T}_0\bul})@>{\sus}>> 
\ol{\cal P}{}''^{\rm ex}_{\bul}\\ 
@V{g''_{\bul}}VV 
@VV{\ol{g}{}''_{\bul}}V \\ 
(Y''_{\os{\circ}{T}_0\bul},C''_{\os{\circ}{T}_0\bul}) @>{\sus}>> \ol{\cal Q}{}''^{\rm ex}_{\bul} 
\end{CD} 
\tag{6.1.18}\label{cd:xexp} 
\end{equation*} 
over (\ref{cd:xpnxp}) and (\ref{cd:xqxp}). 
The rest of the proof of 
the well-definedness of the morphism (\ref{eqn:axnwt})
is similar to the proof of (\ref{theo:indcr}); we leave the detail of the rest to the reader. 
\par 
By (\ref{prop:tefc}), (\ref{eqn:axdwt}) and (\ref{eqn:axnwt}), 
we have the commutative diagram (\ref{cd:pssfccz}).  
\par 
(2): (2) is clear from the construction of the morphism (\ref{eqn:fzaxd}). 
%and the relation (\ref{ali:dpvv}).  
\par 
(3): This is obvious. 
%\par 
%(4): This follows from (1) and (\ref{eqn:utz}).  
\end{proof}

\par 
Next we consider the case where 
$\deg (u)_x$ may be divisible by $p$ for 
some point $x\in \os{\circ}{S}$. 
Let $e_p(x)$ $(x\in  \os{\circ}{S})$ 
be the exponent of $\deg (u)_x$ with respect to $p$: 
$p^{e_p(x)}\vert \vert \deg(u_x)$. 
Then we have a function 
\begin{equation*} 
e_p \col \os{\circ}{S}\owns x \lom e_p(x) \in {\mab N}.   
\tag{6.1.19}\label{eqn:expf}
\end{equation*} 

\begin{defi}\label{defi:efu} 
We call $e_p$ the {\it exponent function} of $u$ 
with respect to $p$. 
\end{defi}

Now we assume that 
%$\os{\circ}{S}$, $\os{\circ}{S}{}'$, 
$\os{\circ}{T}$ and $\os{\circ}{T}{}'$ are $($not necessarily affine$)$ 
$p$-adic formal schemes   
such that $p\in {\cal J}$ and $p\in {\cal J}'$ 
(\cite[7.17, Definition]{bob}). 
Assume that ${\cal O}_T$ is $p$-torsion-free. 
Let 
$f_{\bul}\col {\cal P}_{\bul}\lo S(T)^{\nat}$ 
be the structural morphism. 
Furthermore, for any $i,j\in {\mab N}$, 
assume that the induced morphism 
\begin{align*} 
g^{{\rm PD}*}_{\bul} 
\col &
{\cal F}^{\bul}
\otimes_{{\cal O}_{{\cal Q}^{\rm ex}_{\bul}}}
{\Om}^{i+j+1}_{{\cal Q}^{\rm ex}_{\bul}/\os{\circ}{T}{}'}/P^{{\cal Y}_{\bul}}_j 
\lo 
g^{\rm PD}_{\bul*}({\cal E}^{\bul}
\otimes_{{\cal O}_{{\cal P}^{\rm ex}_{\bul}}}
{\Om}^{i+j+1}_{{\cal P}^{\rm ex}_{\bul}/\os{\circ}{T}{}}/P^{{\cal X}_{\bul}}_j) 
\tag{6.2.1}\label{eqn:odnl}
\end{align*}
by the following morphism 
\begin{equation*} 
g^{{\rm PD}*}_{\bul} 
\col {\cal F}^{\bul}\otimes_{{\cal O}_{{\cal Q}^{\rm ex}_{\bul}}}
{\Om}^{i+j+1}_{{\cal Q}^{\rm ex}_{\bul}/\os{\circ}{T}{}'}
\lo 
g^{\rm PD}_{\bul*}({\cal E}^{\bul}
\otimes_{{\cal O}_{{\cal P}^{\rm ex}_{\bul}}}
{\Om}^{i+j+1}_{{\cal P}^{\rm ex}_{\bul}/\os{\circ}{T}{}}) 
\tag{6.2.2}\label{eqn:onfdnl}
\end{equation*}
is divisible by $p^{e_p(j+1)}$, that is, 
$g^{{\rm PD}*}_{\bul}$ 
at any point $x\in {\cal P}^{\rm ex}_{\bul}$
is divisible by $p^{e_p(f_{\bul}(x))(j+1)}$. 
Because the target of (\ref{eqn:odnl}) is ${\cal O}_T$-flat, 
the following morphism
\begin{align*} 
{\rm deg}(u)^{-(j+1)}g^{{\rm PD}*}_{\bul} 
\col &
{\cal F}^{\bul}
\otimes_{{\cal O}_{{\cal Q}^{\rm ex}_{\bul}}}
{\Om}^{i+j+1}_{{\cal Q}^{\rm ex}_{\bul}/\os{\circ}{T}{}'}/P^{{\cal Y}_{\bul}}_j 
\lo g^{\rm PD}_{\bul*}
({\cal E}^{\bul}
\otimes_{{\cal O}_{{\cal P}^{\rm ex}_{\bul}}}
{\Om}^{i+j+1}_{{\cal P}^{\rm ex}_{\bul}/\os{\circ}{T}{}}/P^{{\cal X}_{\bul}}_j)
\tag{6.2.3}\label{eqn:oddnl}
\end{align*}
for $j\in {\mab N}$ is well-defined.  
In fact, the following morphisms 
\begin{align*} 
{\rm deg}(u)^{-(j+1)}g^{{\rm PD}*}_{\bul} 
\col &
(P^{{\cal C}_{\bul}/{\cal Y}_{\bul}}_k+P^{{\cal Y}_{\bul}}_j) 
({\cal F}^{\bul}
\otimes_{{\cal O}_{{\cal Q}^{\rm ex}_{\bul}}}
{\Om}^{i+j+1}_{{\cal Q}^{\rm ex}_{\bul}/\os{\circ}{T}{}'})/P^{{\cal Y}_{\bul}}_j \\
& \lo g^{\rm PD}_{\bul*}
((P^{{\cal D}_{\bul}}_k+P^{{\cal X}_{\bul}}_j)({\cal E}^{\bul}
\otimes_{{\cal O}_{{\cal P}^{\rm ex}_{\bul}}}
{\Om}^{i+j+1}_{{\cal P}^{\rm ex}_{\bul}/\os{\circ}{T}{}}/P^{{\cal X}_{\bul}}_j))
\tag{6.2.4}\label{eqn:odpdnl}
\end{align*}
and 
%\begin{align*} 
%{\rm deg}(u)^{-(j+1)}g^{{\rm PD}*}_{\bul} \col &
%(P^{{\cal Y}_{\bul}}_k+P^{{\cal Y}_{\bul}}_j) ({\cal F}^{\bul}
%\otimes_{{\cal O}_{{\cal Q}^{\rm ex}_{\bul}}}
%{\Om}^{i+j+1}_{{\cal Q}^{\rm ex}_{\bul}/\os{\circ}{T}{}'})/P^{{\cal Y}_{\bul}}_j 
%\tag{6.2.5}\label{eqn:odpdxnl}\\
%& \lo g^{\rm PD}_{\bul*}
%((P^{{\cal X}_{\bul}}_k+P^{{\cal X}_{\bul}}_j)({\cal E}^{\bul}
%\otimes_{{\cal O}_{{\cal P}^{\rm ex}_{\bul}}}
%{\Om}^{i+j+1}_{{\cal P}^{\rm ex}_{\bul}/\os{\circ}{T}{}}/P^{{\cal X}_{\bul}}_j))
%\end{align*}
%for $k\in {\mab N}$ are well-defined. 
the following morphism 
\begin{align*} 
{\rm deg}(u)^{-(j+1)}g^{{\rm PD}*}_{\bul} 
\col &
(P_{2j+k+1}+P^{{\cal Y}_{\bul}}_j)({\cal F}^{\bul}
\otimes_{{\cal O}_{{\cal Q}^{\rm ex}_{\bul}}}
{\Om}^{i+j+1}_{{\cal Q}^{\rm ex}_{\bul}/\os{\circ}{T}{}'})/P^{{\cal Y}_{\bul}}_j \\
&\lo 
g^{\rm PD}_{\bul*}
((P_{2j+k+1}+P^{{\cal X}_{\bul}}_j)({\cal E}^{\bul}
\otimes_{{\cal O}_{{\cal P}^{\rm ex}_{\bul}}}
{\Om}^{i+j+1}_{{\cal P}^{\rm ex}_{\bul}/\os{\circ}{T}{}})/P^{{\cal X}_{\bul}}_j) 
\tag{6.2.5}\label{eqn:odpdbnl}
\end{align*}
for $k\in {\mab Z}$ and $j\in {\mab N}$ is also well-defined.

\begin{lemm}\label{lemm:divp}  
The divisibility assumption for the morphism 
{\rm (\ref{eqn:odnl})} is independent of the choices of 
the  affine open coverings of $X_{\os{\circ}{T}_0\bul}$ 
and $Y_{\os{\circ}{T}{}'_0\bul}$ and the choices of 
the simplicial immersions of 
$(X_{\os{\circ}{T}\bul},D_{\os{\circ}{T}_0\bul}) \os{\sus}{\lo} 
\ol{\cal P}_{\bul}$ over $\ol{S(T)^{\nat}}$ and 
$(Y_{\os{\circ}{T}{}'_0\bul},C_{\os{\circ}{T}{}'_0\bul}) 
\os{\sus}{\lo} \ol{\cal Q}_{\bul}$ over $\ol{S'(T')^{\nat}}$ 
giving the commutative diagram {\rm (\ref{cd:xpnxp})}. 
\end{lemm}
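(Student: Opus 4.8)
The plan is to reduce the independence statement to a local comparison between two sets of choices via the standard ``take a common refinement'' trick, exactly as in the proofs of (\ref{theo:indcr}) and (\ref{theo:funas}) (1). First I would let $(X''_{\os{\circ}{T}_0\bul},D''_{\os{\circ}{T}_0\bul})$, $(Y''_{\os{\circ}{T}{}'_0\bul},C''_{\os{\circ}{T}{}'_0\bul})$ be a second choice of disjoint unions of affine open coverings fitting into a commutative diagram of the shape (\ref{cd:xygxy}), together with second simplicial immersions $(X''_{\os{\circ}{T}_0\bul},D''_{\os{\circ}{T}_0\bul})\os{\sus}{\lo}\ol{\cal P}{}'_{\bul}$ and $(Y''_{\os{\circ}{T}{}'_0\bul},C''_{\os{\circ}{T}{}'_0\bul})\os{\sus}{\lo}\ol{\cal Q}{}'_{\bul}$ over $\ol{S(T)^{\nat}}$ and $\ol{S'(T')^{\nat}}$ giving a second instance of (\ref{cd:xpnxp}). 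Forming the fiber products $X'''_{\os{\circ}{T}_0}:=X'_{\os{\circ}{T}_0}\times_{X_{\os{\circ}{T}_0}}X''_{\os{\circ}{T}_0}$, $Y'''_{\os{\circ}{T}{}'_0}:=Y'_{\os{\circ}{T}{}'_0}\times_{Y_{\os{\circ}{T}{}'_0}}Y''_{\os{\circ}{T}{}'_0}$ and taking $\ol{\cal P}{}''_{\bul}:=\ol{\cal P}_{\bul}\times_{\ol{S(T)^{\nat}}}\ol{\cal P}{}'_{\bul}$, $\ol{\cal Q}{}''_{\bul}:=\ol{\cal Q}_{\bul}\times_{\ol{S'(T')^{\nat}}}\ol{\cal Q}{}'_{\bul}$ with their diagonal immersions and exactifications, one obtains a commutative cube of shape (\ref{cd:xexp}) sitting over both original diagrams. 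Thus it suffices to compare, for each vertex, the common refinement with each of the two given choices; i.e. we may assume we have two choices, one of which dominates the other.

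Having made this reduction, the second step is purely local. The divisibility assumption for the morphism (\ref{eqn:odnl}) at a point $x\in{\cal P}^{\rm ex}_{\bul}$ is a condition on the value $v_p$ of the $p$-adic valuation of the matrix entries of $g^{{\rm PD}*}_{\bul}$ with respect to a local trivialization. So I would show: for the refinement map $\ol{\cal P}{}''{}^{\rm ex}_{\bul}\lo\ol{\cal P}{}^{\rm ex}_{\bul}$ and $\ol{\cal Q}{}''{}^{\rm ex}_{\bul}\lo\ol{\cal Q}{}^{\rm ex}_{\bul}$, the morphism $g''{}^{{\rm PD}*}_{\bul}$ on sheaves of differential forms modulo $P_j$ is obtained from $g^{{\rm PD}*}_{\bul}$ by pulling back along these (log \'{e}tale, hence flat) refinement morphisms and passing to the corresponding PD-envelope, which on local coordinates is given by composition with an \'{e}tale base change. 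Since an \'{e}tale (indeed flat) ring map does not change the $p$-adic valuation of an element of a $p$-torsion-free ${\cal O}_T$-algebra — here one uses the assumption that ${\cal O}_T$ is $p$-torsion-free and that $\os{\circ}{T}$, $\os{\circ}{T}{}'$ are $p$-adic formal schemes — the divisibility by $p^{e_p(f_{\bul}(x))(j+1)}$ is preserved in both directions. Concretely, I would work in the cartesian local models ${\mab A}_{S_{\os{\circ}{T}_0}}(a,b,d,e)\os{\sus}{\lo}{\mab A}_{S(T)^{\nat}}(a,b,d',e)$ of (\ref{prop:fosi}) and (\ref{lemm:dlm}), where the PD-envelopes are PD-polynomial algebras over the corresponding \'{e}tale-local rings, so that the matrix of $g^{{\rm PD}*}_{\bul}$ on ${\Om}^{\bul}_{{\cal Q}^{\rm ex}_{\bul}/\os{\circ}{T}{}'}/P^{{\cal Y}_{\bul}}_j$ is visibly unchanged up to an invertible (unit) change of coordinates under refinement.

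The third step is to package this: since the divisibility condition holds for one choice iff it holds for the common refinement iff it holds for the other choice, it is independent of all the choices. The main obstacle — and the only point requiring genuine care — is the bookkeeping of the log structures and the exactifications under the refinement morphisms: one must check that the comparison isomorphism between $g''{}^{{\rm PD}*}_{\bul}$ and $g^{{\rm PD}*}_{\bul}$ on the graded pieces ${\Om}^{i+j+1}/P^{{\cal Y}_{\bul}}_j$ is given by multiplication by a unit (coming from the \'{e}tale refinement and the PD-polynomial variables), not merely by an ${\cal O}_T$-linear isomorphism that could a priori introduce a power of $p$. This is exactly the kind of compatibility established in the local coordinate descriptions used in the proofs of (\ref{prop:csrl}) and (\ref{prop:grem}); I expect the argument to go through verbatim once one notes that all the transition data between the two simplicial immersions are solid log \'{e}tale, so the induced maps on structure sheaves of the PD-envelopes are \'{e}tale and in particular send units to units and preserve $p$-adic valuations. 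I would leave the detailed verification of these compatibilities to the reader, as is done for the analogous independence statements earlier in the paper.
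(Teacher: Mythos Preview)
Your global reduction via a common refinement is exactly right and matches the paper's first step. The local argument, however, has a genuine gap.

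The transition morphism $\ol{\cal P}{}''{}^{\rm ex}_{\bul}\to\ol{\cal P}{}^{\rm ex}_{\bul}$ (and likewise on the ${\cal Q}$ side) is \emph{not} log \'{e}tale: after taking the product $\ol{\cal P}_{\bul}\times_{\ol{S(T)^{\nat}}}\ol{\cal P}{}'_{\bul}$ and exactifying along the diagonal immersion of $X_{\os{\circ}{T}_0\bul}$, the projection to either factor has positive relative dimension in general. So the claim that ``all transition data are solid log \'{e}tale'' and that the comparison is ``multiplication by a unit'' is not correct, and the appeal to \'{e}tale maps preserving $p$-adic valuations does not apply. The references (\ref{prop:fosi}) and (\ref{lemm:dlm}) describe the local structure of a \emph{single} embedding, not the comparison between two embeddings of the same $X$.

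What the paper actually uses is (\ref{prop:adla}): locally one may write $\ol{\cal P}{}^{\rm ex}=\ol{\cal P}{}'{}^{\rm ex}\times_{\ol{S(T)^{\nat}}}{\mab A}^c_{\ol{S(T)^{\nat}}}$, so that ${\cal E}={\cal E}'\otimes_{{\cal O}_T}{\cal O}_T\langle x_1,\ldots,x_c\rangle$ (since $E$ is a crystal) and
\[
{\cal E}\otimes_{{\cal O}_{{\cal P}^{\rm ex}}}\Om^{i+j+1}_{{\cal P}^{\rm ex}/\os{\circ}{T}}/P^{\cal X}_j
\;\simeq\;
\bigoplus_{i'+i''=i+j+1}
\bigl({\cal E}'\otimes_{{\cal O}_{{\cal P}'{}^{\rm ex}}}\Om^{i'}_{{\cal P}'{}^{\rm ex}/\os{\circ}{T}}/P^{{\cal X}'}_j\bigr)
\otimes_{{\cal O}_T}
{\cal O}_T\langle x_1,\ldots,x_c\rangle\otimes_{{\cal O}_T}\Om^{i''}_{{\mab A}^c_{\os{\circ}{T}}/\os{\circ}{T}}.
\]
The refined pull-back $g^{{\rm PD}*}$ then \emph{factors} through the map for ${\cal P}'{}^{\rm ex}$, landing in the summand $i''=0$. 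Since ${\cal O}_T\langle x_1,\ldots,x_c\rangle\otimes\Om^{i''}_{{\mab A}^c/\os{\circ}{T}}$ is a free ${\cal O}_T$-module and ${\cal O}_T\to{\cal O}_T\langle x_1,\ldots,x_c\rangle$ is faithfully flat, divisibility by $p^{e_p(j+1)}$ in the big module is equivalent to divisibility in the small one. This is the missing mechanism in your argument: not a unit change of coordinates, but a factorization through a direct summand whose complement is ${\cal O}_T$-free.
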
 
\begin{proof} 
Let $(X'_{\os{\circ}{T}_0\bul},D'_{\os{\circ}{T}_0\bul})\os{\sus}{\lo} \ol{\cal P}{}'_{\bul}$ 
and 
$(Y'_{\os{\circ}{T}{}'_0\bul},C'_{\os{\circ}{T}{}'_0\bul})\os{\sus}{\lo} \ol{\cal Q}{}'_{\bul}$ 
be other immersions into log smooth simplicial schemes over $\ol{S(T)^{\nat}}$ 
and $\ol{S'(T')^{\nat}}$, respectively,  fitting another commutative diagram 
\begin{equation*} 
\begin{CD} 
(X'_{\os{\circ}{T}_0\bul},D'_{\os{\circ}{T}\bul})
@>{\subset}>> \ol{\cal P}{}'{}^{\rm ex}_{\! \! \!\bul} \\
@VVV @VVV  \\
(Y'_{\os{\circ}{T}{}'_0\bul}, C'_{\os{\circ}{T}{}'_0\bul}) 
@>{\subset}>> \ol{\cal Q}{}'{}^{\rm ex}_{\bul}.    
\end{CD}
\end{equation*}  
Then, by considering the products as in the proof of (\ref{theo:funas}), 
we may assume that the following commutative diagram 
\begin{equation*} 
\begin{CD} 
(X_{\os{\circ}{T}_0\bul},D_{\os{\circ}{T}_0\bul}) @>{\subset}>> 
\ol{\cal P}{}^{\rm ex}_{\bul} \\
@VVV @VVV  \\
(Y_{\os{\circ}{T}{}'_0\bul},C_{\os{\circ}{T}{}'_0\bul}) 
@>{\subset}>> \ol{\cal Q}{}^{\rm ex}_{\bul}   
\end{CD}
\end{equation*}   
is over 
\begin{equation*} 
\begin{CD} 
(X'_{\os{\circ}{T}_0\bul},D'_{\os{\circ}{T}_0\bul}) @>{\subset}>> 
\ol{\cal P}{}'{}^{\rm ex}_{\! \! \!\bul} \\
@VVV @VVV  \\
(Y'_{\os{\circ}{T}{}'_0\bul},C'_{\os{\circ}{T}{}'_0\bul}) 
@>{\subset}>> \ol{\cal Q}{}'{}^{\rm ex}_{\bul}.    
\end{CD}
\end{equation*}   
Because the question is local, we may assume that 
there exists an immersion $(X_{\os{\circ}{T}_0},D_{\os{\circ}{T}_0})
\os{\sus}{\lo} \ol{\cal P}$ 
(resp.~$(Y_{\os{\circ}{T}{}'_0},C_{\os{\circ}{T}{}'_0})\os{\sus}{\lo} \ol{\cal Q}$) 
into a log smooth scheme 
over $\ol{S(T)^{\nat}}$  
with morphism $\ol{\cal P}\lo \ol{\cal P}{}'$ 
(resp.~$\ol{\cal Q}\lo \ol{\cal Q}{}'$) of log smooth schemes over $\ol{S(T)^{\nat}}$ 
(resp.~$\ol{S'(T')^{\nat}}$)
such that the composite morphism 
$X_{\os{\circ}{T}_0}\os{\sus}{\lo} \ol{\cal P}\lo \ol{\cal P}{}'$ 
(resp.~$Y_{\os{\circ}{T}{}'_0}\os{\sus}{\lo} \ol{\cal Q}\lo \ol{\cal Q}{}'$) is also an immersion.  
%We may assume that the following commutative diagram 
%\begin{equation*} 
%\begin{CD} 
%(X_{\os{\circ}{T}_0},D_{\os{\circ}{T}_0}) @>{\subset}>> \ol{\cal P}\\
%@VVV @VVV  \\
%(Y_{\os{\circ}{T}{}'_0},C_{\os{\circ}{T}{}'_0}) 
%@>{\subset}>> \ol{\cal Q}{}^{\rm ex}
%\end{CD}
%\end{equation*}   
%is over 
%\begin{equation*} 
%\begin{CD} 
%(X'_{\os{\circ}{T}_0},D'_{\os{\circ}{T}_0}) @>{\subset}>> \ol{\cal P}{}'{}\\
%@VVV @VVV  \\
%(Y'_{\os{\circ}{T}{}'_0},C'_{\os{\circ}{T}{}'_0}) @>{\subset}>> \ol{\cal Q}{}'.    
%\end{CD}
%\end{equation*}
Let $\ol{\mathfrak D}$ and $\ol{\mathfrak D}{}'$ 
be the log PD-envelopes of the immersions $X_{\os{\circ}{T}_0}\os{\sus}{\lo} \ol{\cal P}$ 
and $(X_{\os{\circ}{T}_0},D_{\os{\circ}{T}_0})\os{\sus}{\lo} \ol{\cal P}{}'$ 
over $(\os{\circ}{T},{\cal J},\del)$, respectively.  
Let $\ol{\mathfrak E}$ and $\ol{\mathfrak E}{}'$ 
be the log PD-envelopes of the immersions 
$(Y_{\os{\circ}{T}{}'_0},C_{\os{\circ}{T}{}'_0})\os{\sus}{\lo} \ol{\cal Q}$ 
and $(Y_{\os{\circ}{T}{}'_0},C_{\os{\circ}{T}{}'_0})\os{\sus}{\lo} \ol{\cal Q}{}'$ 
over $(\os{\circ}{T}{}',{\cal J}',\del')$, respectively.  
We may also assume that 
there exists the following commutative diagram
\begin{equation*} 
\begin{CD} 
(X_{\os{\circ}{T}_0},D_{\os{\circ}{T}_0}) @>{\subset}>> \ol{\cal P}{}^{\rm ex}=({\cal X},{\cal D})
@>>>\ol{\cal P}{}'{}^{\rm ex}=({\cal X}',{\cal D}')\\
@VVV @VVV @VVV \\
(Y_{\os{\circ}{T}{}'_0},C_{\os{\circ}{T}{}'_0}) @>{\subset}>> \ol{\cal Q}{}^{\rm ex}=({\cal Y},{\cal C}) @>>> 
\ol{\cal Q}{}'{}^{\rm ex}=({\cal Y}',{\cal C}').   
\end{CD}
\tag{6.3.1}\label{cd:xyppq}
\end{equation*} 
Set ${\mathfrak D}:=\ol{\mathfrak D}\times_{{\mathfrak D}(\ol{S(T)^{\nat}})}S(T)^{\nat}$ 
and ${\mathfrak D}':=\ol{\mathfrak D}{}'\times_{{\mathfrak D}(\ol{S(T)^{\nat}})}S(T)^{\nat}$.   
Set also 
${\mathfrak E}:=\ol{\mathfrak E}\times_{{\mathfrak D}(\ol{S'(T')^{\nat}})}S'(T')^{\nat}$ 
and ${\mathfrak E}':=\ol{\mathfrak E}{}'\times_{{\mathfrak D}(\ol{S'(T')^{\nat}})}S'(T')^{\nat}$.  
Let $(\ol{\cal E},\ol{\nabla})$ and $(\ol{\cal E}{}',\ol{\nabla}{}')$ be 
an ${\cal O}_{\ol{\mathfrak D}}$-module with integrable connection 
and an ${\cal O}_{\ol{\mathfrak D}{}'}$-module 
with integrable connection obtained by $E$, respectively. 
Let $(\ol{\cal F},\ol{\nabla})$ and $(\ol{\cal F}{}',\ol{\nabla}{}')$ be 
an ${\cal O}_{\ol{\mathfrak E}}$-module with integrable connection 
and an ${\cal O}_{\ol{\mathfrak E}{}'}$-module 
with integrable connection obtained by $F$, respectively. 
Set 
${\cal E}:=\ol{\cal E}\otimes_{{\cal O}_{\mathfrak D}(\ol{S(T)^{\nat}})}{\cal O}_T$ 
and ${\cal E}':=\ol{\cal E}{}'\otimes_{{\cal O}_{\mathfrak D}(\ol{S'(T')^{\nat}})}{\cal O}_{T}$. 
Set 
${\cal F}:=\ol{\cal F}\otimes_{{\cal O}_{\mathfrak D}(\ol{S'(T')^{\nat}})}{\cal O}_{T'}$ 
and ${\cal F}':=\ol{\cal F}{}'\otimes_{{\cal O}_{\mathfrak D}(\ol{S'(T')^{\nat}})}{\cal O}_{T'}$. 
By the local structures of the exact immersions ((\ref{prop:adla})), 
we may assume that 
$\ol{\cal P}{}^{\rm ex}=\ol{\cal P}{}'^{\rm ex}\times_{\ol{S(T)^{\nat}}}{\mab A}_{\ol{S(T)^{\nat}}}^c$ 
and 
$\ol{\cal Q}{}^{\rm ex}=\ol{\cal Q}{}'^{\rm ex}\times_{\ol{S'(T')^{\nat}}}{\mab A}_{\ol{S'(T')^{\nat}}}^{c'}$ 
for nonnegative integers $c$ and $c'$. 
Since $E$ and $F$ are crystals, 
$\ol{\cal E}=\ol{\cal E}{}'\otimes_{{\cal O}_{\ol{S(T)^{\nat}}}}
{\cal O}_{\ol{S(T)^{\nat}}}\langle x_1,\ldots,x_c\rangle$ 
and 
$\ol{\cal F}=\ol{\cal F}{}'\otimes_{{\cal O}_{\ol{S'(T')^{\nat}}}}
{\cal O}_{\ol{S'(T')^{\nat}}}\langle y_1,\ldots,y_{c'}\rangle$. 
Hence ${\cal E}={\cal E}'\otimes_{{\cal O}_T}
{\cal O}_T\langle x_1,\ldots,x_c\rangle$ 
and 
${\cal F}={\cal F}'\otimes_{{\cal O}_T}
{\cal O}_T\langle y_1,\ldots,y_{c'}\rangle$. 
Because ${\cal O}_T\langle x_1,\ldots,x_c\rangle \otimes_{{\cal O}_T} 
\Om^{i}_{{\mab A}^c_{\os{\circ}{T}}/\os{\circ}{T}} $ 
$(i\in {\mab N})$ is a free ${\cal O}_T$-module, 
the morphism ${\cal O}_T\lo 
{\cal O}_T\langle x_1,\ldots,x_c\rangle \otimes_{{\cal O}_T} 
\Om^{i}_{{\mab A}^c_{\os{\circ}{T}}/\os{\circ}{T}}$ 
is faithfully flat.  
By (\ref{eqn:dopx}) we have the following commutative diagram:  
{\footnotesize{\begin{equation*} 
\begin{CD} 
{\cal E}
\otimes_{{\cal O}_{{\cal P}^{\rm ex}}}
\Om^{i+j+1}_{{\cal P}^{\rm ex}/\os{\circ}{T}}/P^{\cal X}_j
@>{\sim}>> 
\bigoplus_{i'+i''=i+j+1}
({\cal E}'\otimes_{{\cal O}_{{\cal P}'{}^{\rm ex}}}
\Om^{i'}_{{\cal P}'{}^{\rm ex}/\os{\circ}{T}}/P^{{\cal X}'}_j)
\otimes_{{\cal O}_T}
{\cal O}_T\langle x_1,\ldots, x_c\rangle \otimes_{{\cal O}_T}
\Om^{i''}_{{\mab A}^c_{\os{\circ}{T}}/\os{\circ}{T}}  \\
@AAA @AAA \\
{\cal F}
\otimes_{{\cal O}_{{\cal Q}^{\rm ex}}}
\Om^{i+j+1}_{{\cal Q}^{\rm ex}/\os{\circ}{T}}/P^{\cal Y}_j
@>{\sim}>>
\bigoplus_{i'+i''=i+j+1}
({\cal F}'\otimes_{{\cal O}_{{\cal Q}'{}^{\rm ex}}}
\Om^{i'}_{{\cal Q}'{}^{\rm ex}/\os{\circ}{T}}/P^{{\cal Y}'}_j)
\otimes_{{\cal O}_{T'}}
{\cal O}_{T'}\langle y_1,\ldots, y_{c'}\rangle \otimes_{{\cal O}_{T'}}
\Om^{i''}_{{\mab A}^{c'}_{\os{\circ}{T}{}'}/\os{\circ}{T}{}'}.  
\end{CD} 
\tag{6.3.2}\label{eqn:dpdpx} 
\end{equation*}}}
Here we omit to write the direct images in the lower isomorphism in this 
commutative diagram. 
%Because we may assume that the morphism (\ref{eqn:odnl}) 
%for the case $\bul=0$ factors through the morphism 
%\begin{align*} 
%g^{{\rm PD}*} \col &{\cal F}\otimes_{{\cal O}_{{\cal Q}^{\rm ex}}}
%{\Om}^{i+j+1}_{{\cal Q}/\os{\circ}{T}{}'}/P^{\cal Y}_j \lo 
%g^{\rm PD}_{*}({\cal E}'
%\otimes_{{\cal O}_{{\cal P}'{}^{\rm ex}}}
%{\Om}^{i+j+1}_{{\cal P}'{}^{\rm ex}/\os{\circ}{T}{}}/P^{{\cal X}'}_j),  
%\end{align*}
Hence the divisibilities in 
${\cal E}\otimes_{{\cal O}_{{\cal P}^{\rm ex}}}
{\Om}^{i+j+1}_{{\cal P}^{\rm ex}/\os{\circ}{T}{}}/
P^{\cal X}_j$ $(\forall i\in {\mab N})$ and  
${\cal E}'\otimes_{{\cal O}_{{\cal P}'{}^{\rm ex}}}
{\Om}^{i+j+1}_{{\cal P}'{}^{\rm ex}/\os{\circ}{T}{}}/P^{{\cal X}'}_j$ 
$(\forall i\in {\mab N})$ are equivalent. 
\end{proof}

\begin{theo}[{\bf Contravariant functoriality II of $A_{\rm zar}$}]\label{theo:funpas} 
$(1)$ Let the notations and the assumptions be as above. 
Then $g\col (X_{\os{\circ}{T}{}_0},D_{\os{\circ}{T}{}_0})
\lo (Y_{\os{\circ}{T}{}'_0},C_{\os{\circ}{T}{}'_0})$ induces the following 
well-defined pull-back morphism 
\begin{equation*}  
g^* \col 
(A_{\rm zar}((Y_{\os{\circ}{T}{}'_0},C_{\os{\circ}{T}{}'_0})/S'(T')^{\nat},F),P^{C_{\os{\circ}{T}{}'_0}},P)
\lo Rg_*
((A_{\rm zar}((X_{\os{\circ}{T}_0},D_{\os{\circ}{T}_0})/S(T)^{\nat},E),P^{D_{\os{\circ}{T}_0}},P))  
\tag{6.4.1}\label{eqn:fzapxd}
\end{equation*} 
fitting into the following commutative diagram$:$
\begin{equation*}  
\begin{CD}
A_{\rm zar}((Y_{\os{\circ}{T}{}'_0},C_{\os{\circ}{T}{}'_0})/S'(T')^{\nat},F)
@>{g^*}>>  Rg_*(A_{\rm zar}((X_{\os{\circ}{T}_0},D_{\os{\circ}{T}_0})/S(T)^{\nat},E)) \\ 
@A{\theta_{Y_{\os{\circ}{T}{}'_0}/S'(T')^{\nat}}\wedge}A{\simeq}A 
@A{Rg_{*}(\theta_{X_{\os{\circ}{T}_0}/S(T)^{\nat}}\wedge)}A{\simeq}A\\
Ru_{(Y_{\os{\circ}{T}{}'_0},C_{\os{\circ}{T}{}'_0})/S'(T')^{\nat}*}
(\eps^*_{(Y_{\os{\circ}{T}{}'_0},C_{\os{\circ}{T}{}'_0})/S'(T')^{\nat}}(F))
@>{g^*}>>
Rg_*Ru_{(X_{\os{\circ}{T}_0},D_{\os{\circ}{T}_0})/S(T)^{\nat}*}
(\eps^*_{(X_{\os{\circ}{T}_0},D_{\os{\circ}{T}_0})/S(T)^{\nat}}(E)). 
\end{CD}
\tag{6.4.2}\label{cd:pssfpccz} 
\end{equation*}
\par 
$(2)$ The similar relation to {\rm (\ref{ali:pdpp})} holds.  
%\par 
%$(3)$ There exists the similar commutative diagram to {\rm (\ref{cd:tmfccz})}.
\end{theo}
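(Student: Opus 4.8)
The plan is to follow the proof of (\ref{theo:funas}) almost verbatim, replacing the integral normalizing factor $\deg(u)^{-(j+1)}$ used there by the division by $\deg(u)^{j+1}$, which is now legitimate only because of the standing divisibility hypothesis on (\ref{eqn:odnl}); in fact most of the pointwise work has already been carried out in the discussion preceding the statement. Concretely, by that hypothesis together with the $p$-torsion-freeness of ${\cal O}_T$ and the ${\cal O}_T$-flatness of the targets ${\cal E}^{\bul}\otimes_{{\cal O}_{{\cal P}^{\rm ex}_{\bul}}}{\Om}^{i+j+1}_{{\cal P}^{\rm ex}_{\bul}/\os{\circ}{T}}/P^{{\cal X}_{\bul}}_j$, the maps $\deg(u)^{-(j+1)}g^{{\rm PD}*}_{\bul}$ of (\ref{eqn:oddnl}), (\ref{eqn:odpdnl}), (\ref{eqn:odpdxnl}) and (\ref{eqn:odpdbnl}) are well defined, and these are exactly the components in bidegree $(i,j)$ of a candidate morphism of bifiltered double complexes
\begin{equation*}
g^*_{\bul}\col
(A_{\rm zar}({\cal Q}^{\rm ex}_{\bul}/S'(T')^{\nat},{\cal F}^{\bul})^{\bul \bul},P^{{\cal C}_{\bul}},P)
\lo
g^{\rm PD}_{\bul*}((A_{\rm zar}({\cal P}^{\rm ex}_{\bul}/S(T)^{\nat},{\cal E}^{\bul})^{\bul \bul},P^{{\cal D}_{\bul}},P)).
\end{equation*}

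First I would check that $g^*_{\bul}$ commutes with the two boundary operators $-\nabla$ and $\theta\wedge$ of (\ref{cd:locstbd}). Commutation with $-\nabla$ is immediate from the compatibility (\ref{eqn:dgm}) of $g^{{\rm PD}*}_{\bul}$ with the connections, the $j$-index being unchanged. For $\theta\wedge$, which raises $j$ to $j+1$ and hence replaces the normalizing factor $\deg(u)^{-(j+1)}$ by $\deg(u)^{-(j+2)}$, the extra $\deg(u)^{-1}$ is exactly absorbed by the identity $\theta=\deg(u)^{-1}g^{{\rm PD}*}_{\bul}(\theta')$, which is a restatement of (\ref{eqn:uta}); this is the analogue of the commutative square (\ref{cd:fblssod}). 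Using this and the filtered compatibilities (\ref{eqn:odpdnl}), (\ref{eqn:odpdxnl}), (\ref{eqn:odpdbnl}) one obtains the filtered morphism (\ref{eqn:fzapxd}) after applying $R\pi_{{\rm zar}*}$ and invoking (\ref{defi:fdirpd}); the commutative diagram (\ref{cd:pssfpccz}) then follows from (\ref{prop:tefc}), the $\theta\wedge$-compatibility just established, and the definition of $g^*_{\bul}$ in bidegree $(i,j)$ as $\deg(u)^{-(j+1)}g^{{\rm PD}*}_{\bul}$.

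Next I would prove that (\ref{eqn:fzapxd}) is independent of the chosen affine simplicial coverings of $X_{\os{\circ}{T}_0}$ and $Y_{\os{\circ}{T}{}'_0}$ and of the chosen simplicial immersions into log smooth schemes over $\ol{S(T)^{\nat}}$ and $\ol{S'(T')^{\nat}}$. Here (\ref{lemm:divp}) guarantees that the divisibility hypothesis itself is choice-independent, and then one runs the product argument already used in the proofs of (\ref{theo:indcr}) and (\ref{theo:funas})(1): form the fiber products of the two sets of data, reduce to a local situation admitting an auxiliary exact immersion with extra coordinates $x_1,\ldots,x_c$, and use the decomposition (\ref{eqn:dpdpx}) together with the faithful flatness of ${\cal O}_T\lo {\cal O}_T\langle x_1,\ldots,x_c\rangle\otimes_{{\cal O}_T}\Om^i_{{\mab A}^c_{\os{\circ}{T}}/\os{\circ}{T}}$ to see that the two division-by-$\deg(u)$ morphisms coincide. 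For part (2), since $\deg(v\circ u)_x=\deg(u)_x\deg(v)_{\os{\circ}{u}(x)}$ the exponent functions add, so the composite $(h\circ g)^{{\rm PD}*}=g^{{\rm PD}*}\circ h^{{\rm PD}*}$ satisfies the divisibility hypothesis for $v\circ u$ and $\deg(v\circ u)^{-(j+1)}(h\circ g)^{{\rm PD}*}=(\deg(u)^{-(j+1)}g^{{\rm PD}*})\circ(\deg(v)^{-(j+1)}h^{{\rm PD}*})$; the relation $(h\circ g)^*=Rh_*(g^*)\circ h^*$ then follows as in (\ref{ali:pdpp}), and the identity and the compatibility with (\ref{prop:tefc}) are formal.

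I expect the main obstacle to be not any single computation but the coherence of the normalization: one must see at once that dividing by $\deg(u)^{j+1}$ descends to the quotient by $P^{{\cal X}_{\bul}}_j$, is compatible with \emph{both} filtrations $P^{{\cal D}_{\bul}}$ and $P$ (this is exactly why (\ref{eqn:odpdnl})--(\ref{eqn:odpdbnl}) are needed, not merely (\ref{eqn:oddnl})), and interacts correctly with the two boundary operators through the degree shift caused by $\theta\wedge$. Once all these threads, each isolated in the paragraphs preceding the statement, are woven together, the theorem is a matter of careful assembly with no genuinely new input beyond (\ref{lemm:divp}).
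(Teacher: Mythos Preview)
Your proposal is correct and follows exactly the approach the paper intends: the paper's proof is the single sentence ``The proof is the same as that of (\ref{theo:funas}),'' and you have faithfully unpacked what this means in the present context, namely that the normalizing factor $\deg(u)^{-(j+1)}$ is now legitimized by the divisibility hypothesis on (\ref{eqn:odnl}) together with the $p$-torsion-freeness of ${\cal O}_T$, after which every step of (\ref{theo:funas}) goes through verbatim. Your explicit invocation of (\ref{lemm:divp}) for the independence of choices and your verification of the additive behavior of the exponent functions for part~(2) are the right ingredients.
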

\begin{proof} 
The proof is the same as that of (\ref{theo:funas}). 
\end{proof}

\begin{rema}
Let the notations be as in \cite{msemi}. 
Let $F_{W^{\times}}\col W^{\times}\lo W^{\times}$ be the absolute Frobenius endomorphism 
of $W^{\times}$. 
For the absolute Frobenius endomorphism 
$F_X\col X\lo X$, the Frobenius action $F^*_X$ on $(WA_X,P)$ is not 
well-defined in [loc.~cit.] 
because $\deg (F_{W^{\times}})^{-(j+1)}(=p^{-(j+1)})$ $(j\in {\mab N})$ 
for $WA_X^{\bul j}$ has not been considered in [loc.~cit.] 
and because it has not been proved that 
$F^*_X(P_kWA_X)\subset P_kWA_X$ $(k\in {\mab N})$. 
See also \cite[\S9]{ndw} for this incompleteness. 
\end{rema}

\par

\par 
Let the notations and the assumptions be 
as in (\ref{theo:funas}) or (\ref{theo:funpas}). 
\par 
Consider the morphism 
\begin{align*} 
{\rm gr}^P_k(g^*) 
\col & {\rm gr}^P_kA_{\rm zar}((Y_{\os{\circ}{T}{}'_0},C_{\os{\circ}{T}{}'_0})/S'(T')^{\nat},F) 
\lo Rg_*({\rm gr}^P_kA_{\rm zar}((X_{\os{\circ}{T}_0},D_{\os{\circ}{T}_0})/S(T)^{\nat},E)). 
\tag{6.5.1}\label{ali:gdkcl}
\end{align*}

\par
The following conditions are open SNCL versions of 
the conditions \cite[(2.9.2.3), (2.9.2.4)]{nh2} for the open log case.  
\par   
Assume that the following two conditions hold:

\medskip
\parno 
$(6.5.2)$: for any smooth component 
$\os{\circ}{X}_{\lam}$ of $\os{\circ}{X}_{T_0}$ over $\os{\circ}{T}_0$, 
there exists a unique smooth component 
$\os{\circ}{Y}_{\lam'}$ of 
$\os{\circ}{Y}_{T'_0}$ over $\os{\circ}{T}{}'_0$ such that $g$ 
induces a morphism 
$\os{\circ}{X}_{\lam} \lo \os{\circ}{Y}_{\lam'}$. 
(Let $\Lam$ and $\Lam'$ be the sets of indexes 
of the $\lam$'s and the $\lam'$'s, respectively. 
Then we obtain a function 
$\phi \col \Lam \owns \lam \lom \lam' \in \Lam'$.)
\medskip
\parno
$(6.5.3)$: for any smooth component 
$\os{\circ}{D}_{\mu}$ of $\os{\circ}{X}_{T_0}$ over $\os{\circ}{T}_0$, 
there exists a unique smooth component 
$\os{\circ}{C}_{\mu'}$ of 
$\os{\circ}{Y}_{T'_0}$ over $\os{\circ}{T}{}'_0$ such that $g$ 
induces a morphism 
$\os{\circ}{D}_{\mu} \lo \os{\circ}{C}_{\mu'}$. 
(Let $M$ and $M'$ be the sets of indexes 
of the $\mu$'s and the $\mu'$'s, respectively. 
Then we obtain a function 
$\psi \col M \owns \mu \lom \mu' \in M'$.)

By (6.5.2) there exist a unique positive integer $e_{\lam}$   
for each $\lam \in \Lam$ such that 
there exist local equations $x_{\lam}=0$ and 
$y_{\lam}=0$ of 
$\os{\circ}{X}_{\lam}$ and $\os{\circ}{Y}_{\lam}$, 
respectively,  
such that $g^*(y_{\lam})=u_{\lam}x^{e_{\lam}}_{\lam}$ for some local section 
$u_{\lam}\in {\cal O}_{X}^*$.  
%\medskip
%\par 

\begin{prop}\label{prop:xxle} 
Let the assumptions and the notations be as above. 
Set $\Lam(x):=
\{\lam \in \Lam~\vert~x \in \os{\circ}{X}_{\lam}\}$.  
Then $\deg(u)_x=e({\lam})$ for $\lam \in \Lam(x)$.  
In particular, 
$e({\lam})$'s are independent of
the choice of an element of $\Lam(x)$.  
\end{prop}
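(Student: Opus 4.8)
The plan is to reduce the statement to a purely local computation about the mapping degree function that was already introduced in Definition~\ref{defi:ddef}, using the local models of SNCL schemes and the hypotheses $(6.4.4)$ and $(6.4.5)$. First I would fix a point $x\in\os{\circ}{X}_{T_0}$ and an index $\lam\in\Lam(x)$, so that $x\in\os{\circ}{X}_{\lam}$. By $(6.4.4)$ there is a unique smooth component $\os{\circ}{Y}_{\lam'}$ of $\os{\circ}{Y}_{T'_0}$, $\lam'=\phi(\lam)$, with $g(\os{\circ}{X}_{\lam})\sus\os{\circ}{Y}_{\lam'}$; set $y:=\os{\circ}{g}(x)$. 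The quantity $e(\lam)$ appearing in the statement is the ramification index of $g$ along $\os{\circ}{X}_{\lam}$ relative to $\os{\circ}{Y}_{\lam'}$, i.e.\ the order to which the local equation of $\os{\circ}{Y}_{\lam'}$ pulls back to a power of the local equation of $\os{\circ}{X}_{\lam}$; I would make this definition explicit at the start, since the excerpt refers to it only implicitly.

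Next I would bring in the local structure. Using Definition~\ref{defi:lfac} (and its formal analogue), on a small neighborhood of $x$ the morphism $X_{\os{\circ}{T}_0}\lo S_{\os{\circ}{T}_0}$ factors through a solid \'etale morphism to ${\mab A}_{W}(a,d)$, so that $M_{X,x}/{\cal O}_{X,x}^*\simeq{\mab N}^{a+1}$ with the structural morphism to $S$ corresponding to the diagonal ${\mab N}\os{\sus}{\lo}{\mab N}^{\oplus a+1}$, and the smooth components through $x$ are exactly $\{x_i=0\}_{i=0}^a$. Similarly for $Y$ near $y$: $M_{Y,y}/{\cal O}_{Y,y}^*\simeq{\mab N}^{a'+1}$ with the diagonal structural morphism. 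The key observation is that $\deg(u)_x$ (equivalently $\deg(\os{\circ}{f})$-type data via the morphism $S\lo S'$, but here really the relevant degree is that attached to the given commutative diagram $(6.0.2)$) is computed by looking at the image of the generator of $M_{S',v(x)}/{\cal O}^*$ in $M_{S,x}/{\cal O}^*$, which by the diagonal description equals the image of the generator of $M_{Y,y}/{\cal O}^*$ in $M_{X,x}/{\cal O}^*$. So I would chase: the generator $t'$ of $M_{S'}/{\cal O}^*$ maps to $\sum_{j}e_j'$ (diagonal) in $M_{Y,y}/{\cal O}^*$, each basis vector $e_j'$ corresponds to a smooth component $\os{\circ}{Y}_{\lam_j'}$, and $g^\flat$ sends $e_j'$ to $e(\lam_j')\cdot(\text{the basis vector of }\os{\circ}{X}_{\lam_j})$ plus contributions, where $\os{\circ}{X}_{\lam_j}=\phi^{-1}(\os{\circ}{Y}_{\lam_j'})$-component through $x$.

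The heart of the argument — and the step I expect to be the main obstacle — is showing that, after composing with the diagonal on the $S$-side, all these contributions collapse so that the coefficient of the $\lam$-th basis vector in the image of $t=$ generator of $M_{S,x}/{\cal O}^*$ is exactly $e(\lam)$, uniformly in $\lam\in\Lam(x)$. Here I would use that on the target the structural morphism is also diagonal: the generator $t$ of $M_{S,x}/{\cal O}^*$ maps to $\sum_{\lam\in\Lam(x)}(\text{basis of }\os{\circ}{X}_\lam)$. Comparing $g^\flat(t') = \deg(u)_x\cdot t$ (which is (6.0.1), $u_x^*(\theta_{S'})=\deg(u)_x\,\theta_S$, transported through the SNCL charts) with the explicit description $g^\flat(t')=\sum_{\lam'}g^\flat(e'_{\lam'})=\sum_{\lam\in\Lam(x)}e(\lam)\cdot(\text{basis of }\os{\circ}{X}_\lam) + (\text{non-}\Lam(x)\text{ terms})$ and using that the map $\Lam(x)\lo\Lam'(y)$ induced by $\phi$ is a bijection (this bijectivity is where $(6.4.4)$ is really used, and needs the solid \'etaleness of the local models to rule out components not passing through $x$ contributing), I would match coefficients of each basis vector of $\os{\circ}{X}_\lam$: on the left $\deg(u)_x$, on the right $e(\lam)$. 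This gives $\deg(u)_x=e(\lam)$ for every $\lam\in\Lam(x)$, hence the $e(\lam)$ are all equal and independent of the choice in $\Lam(x)$, as asserted. I would remark that the computation is essentially the one in \cite[(1.1.42)]{nb} and its neighbors, and cite \cite[(3.5)]{klog1} for the log smoothness criterion used to read off the charts; the only genuinely new point over loc.\ cit.\ is bookkeeping the SNCD directions via $(6.4.5)$, which do not affect the $S$-structural morphism and hence play no role here beyond ensuring the charts are compatible.
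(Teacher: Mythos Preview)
Your approach is correct and is the natural local computation that the paper's one-line proof defers to in \cite[(1.5.7)]{nb}: compare the two paths $S'\to S\to X$ and $S'\to Y\to X$ on $M/{\cal O}^*$ at $x$, using that both structural morphisms are the diagonal ${\mab N}\to{\mab N}^{\oplus(a+1)}$, and read off the coefficient of each basis vector $e_\lam$. One minor point: you do not need $\phi\vert_{\Lam(x)}\col\Lam(x)\to\Lam'(y)$ to be a \emph{bijection} (and injectivity need not hold in general under (6.4.4) alone); the uniqueness clause in (6.4.4) already guarantees that for $\lam'\neq\phi(\lam)$ the coefficient of $e_\lam$ in $g^\flat(e'_{\lam'})$ vanishes, so the coefficient of $e_\lam$ in $g^\flat(\sum_{\lam'}e'_{\lam'})$ is exactly $e(\lam)$, which then equals $\deg(u)_x$ by the other path --- the argument is thus slightly simpler than you indicate.
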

\begin{proof} 
The proof is completely the same as that of \cite[(1.5.7)]{nb}. 
%We have the following commutative diagram 
%\begin{equation*} 
%\begin{CD} 
%M_{X_{\os{\circ}{T}_0},x}/{\cal O}^*_{X_{\os{\circ}{T}_0},x} @<{g^*}<< 
%M_{Y_{\os{\circ}{T}{}'_0},\os{\circ}{g}(x)}/{\cal O}^*_{Y_{\os{\circ}{T}{}'_0},\os{\circ}{g}(x)} \\ 
%@V{\simeq}VV @AA{\bigcup}A \\ 
%{\mab N}^{\Lam(x)}@<<< {\mab N}^{\Lam(x)}\\ 
%@A{\rm diag.}AA @AA{\rm diag.}A \\ 
%{\mab N}@<<< {\mab N},  
%\end{CD} 
%\end{equation*} 
%where the image of $(1,\ldots,1)$ 
%by the middle horizontal morphism 
%is $(e({\lam}))_{\lam \in \Lam(x)}$. 
%On the other hand, we see that 
%the image of $(1,\ldots,1)$ is 
%${\rm deg}(u)_x(1,\ldots,1)$ by the definition of 
%${\rm deg}(u)$. Hence $e({\lam})=\deg(u)_x$.  
\end{proof}

\begin{prop}\label{prop:dvok}
Let $n$ be a positive integer. 
Assume that ${\cal J}\subset p^n{\cal O}_T$. 
Assume that $e_p\geq 1$. 
Set $m:=\min\{n,e_p\}$ $(m$ is a function on $\os{\circ}{X}_{T_0})$. 
Then the morphism {\rm (\ref{eqn:odnl})} is divisible by $p^{m(j+1)}$. 
$($As a result, if $e_p\leq n$, then  the morphism {\rm (\ref{eqn:odnl})}
satisfies the divisibility assumption after {\rm (\ref{defi:efu})}.$)$ 
\end{prop}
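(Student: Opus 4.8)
\textbf{Proof plan for (\ref{prop:dvok}).}
The plan is to reduce the statement to a purely local computation on $X_{\os{\circ}{T}_0}$, where the morphism $g$ admits a chart and the log smooth schemes $\ol{\cal P}{}^{\rm ex}$, $\ol{\cal Q}{}^{\rm ex}$ are products of standard pieces ${\mab A}_{\ol{S(T)^{\nat}}}(a,b,d,e)$ and ${\mab A}_{\ol{S'(T')^{\nat}}}(a',b',d',e')$, exactly as in the proof of (\ref{lemm:divp}). By (\ref{lemm:divp}) the divisibility of (\ref{eqn:odnl}) does not depend on the chosen affine simplicial coverings nor on the chosen simplicial immersions, so it suffices to verify divisibility by $p^{m(j+1)}$ on each such local model; the simplicial index plays no role, and we may work with the $\bul=0$ piece. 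Thus I would first fix a point $x\in \os{\circ}{X}_{T_0}$, pass to a neighborhood on which $M_{S(T)^{\nat}}$ and $M_{S'(T')^{\nat}}$ are free of rank one with generators $t$ and $t'$, and choose compatible charts so that $g^{\rm PD*}(t')=(\text{unit})\cdot t^{\deg(u)_x}$ up to a PD-unit.

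The second step is to analyze how $g^{\rm PD*}$ acts on $d\log t'$. Since $\theta'$ is the image of $d\log t'$ and $\theta$ is the image of $d\log t$ (as in (\ref{lemm:ti})), the relation $t'=(\text{unit})\,t^{\deg(u)_x}$ gives $g^{\rm PD*}(\theta') = \deg(u)_x\,\theta$ modulo exact forms; this is the identity (\ref{eqn:uta}). Now the key point is that a section of ${\Om}^{i+j+1}_{{\cal Q}^{\rm ex}_{\bul}/\os{\circ}{T}{}'}$ that is \emph{not} in $P^{{\cal Y}_{\bul}}_j$ necessarily contains at least $j+1$ factors of $d\log$-type along the log structure of ${\cal Y}$ over $\os{\circ}{T}{}'$ — i.e.\ at least $j+1$ wedge factors among $d\log(\text{local coordinates corresponding to }M_{\cal Y})$, and at least one of these must be the ``relative'' $d\log$ direction since $\Om^{i+j+1}_{{\cal Q}^{\rm ex}/\os{\circ}{T}'}$ is taken over $\os{\circ}{T}'$ rather than over $S'(T')^{\nat}$. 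Pulling such a form back through $g^{\rm PD*}$, each of the $j+1$ log-directions picks up a factor that, modulo exact forms and modulo the assumption ${\cal J}\subset p^n{\cal O}_T$, is divisible by $p^{\min\{n,e_p\}}$: precisely, $d\log(t^{p^{e_p(x)}w}) = p^{e_p(x)}\,d\log t + d\log w$, and in the quotient $\Om^{\bul}/P^{\cal X}_j$ the $d\log w$ tail either lands in $P^{\cal X}_j$ (hence dies) or contributes the genuine differential $d\alpha(\cdot)$ which, because the coordinate is a divisor contained in ${\cal J}\subset p^n{\cal O}_T$, is again divisible by $p^n$. Bookkeeping over the $j+1$ such factors yields divisibility by $p^{\min\{n,e_p\}(j+1)}=p^{m(j+1)}$.

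The third and final step is to assemble the local estimate into the global statement (\ref{eqn:odnl}) using the faithful flatness of ${\cal O}_T\lo {\cal O}_T\langle x_1,\ldots,x_c\rangle\otimes_{{\cal O}_T}\Om^{i''}_{{\mab A}^c_{\os{\circ}{T}}/\os{\circ}{T}}$ as in (\ref{eqn:dpdpx}) of the proof of (\ref{lemm:divp}): divisibility in the product model is equivalent to divisibility in the ``base'' factor ${\cal E}'\otimes\Om^{\bul}_{{\cal P}'{}^{\rm ex}/\os{\circ}{T}}/P^{{\cal X}'}_j$, and the $p$-torsion-freeness of ${\cal O}_T$ guarantees that a section divisible by $p^{m(j+1)}$ on each stalk is globally so. The remark in parentheses (if $e_p\le n$ then $m=e_p$ and the divisibility assumption preceding (\ref{defi:efu}) holds) is then immediate.

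\textbf{Main obstacle.} The delicate point is the second step: controlling the $d\log w$ ``tail'' and the genuine-differential contributions when the log-coordinates being pulled back are mixtures of the semistable directions $x_i$ and the divisor directions $y_j$, and making sure that \emph{every} one of the $j+1$ forced log-factors really does acquire a factor of $p^m$ — in particular that none of them can be absorbed harmlessly into $P^{\cal X}_j$ while simultaneously escaping divisibility. This is a sign-and-index bookkeeping argument parallel to \cite[4.12]{msemi} and \cite[(1.5.7)]{nb}, but it requires care because here we work modulo the preweight filtration $P^{\cal X}$ and over $\os{\circ}{T}$ rather than over the base family of log points.
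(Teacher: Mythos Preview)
Your three–step plan (local reduction via (\ref{lemm:divp}), pullback computation on a standard chart, reassembly by flatness) is the same strategy as in the paper's cited source \cite[(1.5.8)]{nb}, and your instinct that (\ref{eqn:uta}) and (\ref{prop:xxle}) are the relevant inputs is correct. However, the heart of Step~2 is misdiagnosed, and as written the argument would not produce the factor $p^{\,m(j+1)}$.

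The filtration $P^{{\cal Y}}_j$ of (\ref{eqn:pkdefw}) counts wedge factors among the individual SNCL log forms $d\log x'_0,\ldots,d\log x'_{a'}$, not the single diagonal form $d\log t'=\sum_l d\log x'_l$. A representative of a nonzero class in $\Om^{i+j+1}/P^{{\cal Y}}_j$ is thus a wedge $d\log x'_{l_0}\wedge\cdots\wedge d\log x'_{l_j}\wedge\om'$ with $\om'\in P^{{\cal Y}}_0$; your assertion that ``at least one of these must be the relative $d\log$ direction'' is therefore beside the point, and your displayed formula is for $d\log t'$ rather than for an individual $d\log x'_l$. The actual mechanism is (\ref{prop:xxle}): under conditions (6.4.4)--(6.4.5) one writes $g^{{\rm PD}*}(x'_l)=u_l\prod_{\{i:\phi(\lam_i)=l\}} x_i^{\,e(\lam_i)}$ with $u_l\in{\cal O}^{*}_{\mathfrak D}$, and (\ref{prop:xxle}) says each exponent $e(\lam_i)$ equals $\deg(u)_x$. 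Hence
\[
g^{{\rm PD}*}(d\log x'_l)\;=\;\deg(u)\!\!\sum_{\phi(\lam_i)=l}\!\! d\log x_i\;+\;d\log u_l,
\]
and $d\log u_l\in P^{{\cal X}}_0$ because $\nabla(f)$ for any $f\in{\cal O}_{\mathfrak D}$ has $d\log x_i$--coefficient divisible by $x_i$. Expanding the $(j{+}1)$--fold wedge, every cross term containing a $d\log u_l$ factor has at most $j$ SNCL--log factors, hence dies in $\Om/P^{{\cal X}}_j$; the sole surviving term carries the coefficient $\deg(u)^{\,j+1}$, giving divisibility by $p^{\,e_p(j+1)}\ge p^{\,m(j+1)}$ (the same analysis shows $g^{{\rm PD}*}(\om')\in P^{{\cal X}}_0$, so $\om'$ contributes no extra SNCL--log factors).

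Consequently your second alternative for the ``$d\log w$ tail''---that it ``contributes a genuine differential $d\al(\cdot)$ divisible by $p^n$ because the coordinate lies in ${\cal J}$''---is neither needed nor correct: the unit tail always lands in $P^{{\cal X}}_j$, and ${\cal J}\subset p^n{\cal O}_T$ plays no role in that dichotomy. The obstacle you flag at the end is resolved not by sign bookkeeping but by the uniform equality $e(\lam)=\deg(u)$ supplied by (\ref{prop:xxle}); once that is in hand, no factor escapes.
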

\begin{proof} 
The proof is completely the same as that of \cite[(1.5.8)]{nb}. 
\end{proof}

\par 
For a nonnegative integer $k$, 
let $\Lam^{(k)}_X(\os{\circ}{g})$ 
be the sets of subsets $I$'s of $\Lam$ 
such that $\sharp I= \sharp \phi (I)=k+1$.  
Let $M^{(k)}_D(\os{\circ}{g})$ be the sets of 
subsets $J$'s of $M$ 
such that $\sharp J= \sharp \psi (J)=k$. 
For $\ul{\lam}=\{\lam_0,\ldots, \lam_k\}$ 
$(\ul{\lam}\in \Lam^{(k)}(\os{\circ}{g}))$, 
set 
$\os{\circ}{X}_{\ul{\lam}}:=
\os{\circ}{X}_{\lam_0}\cap \cdots \cap \os{\circ}{X}_{\lam_{k}}$ 
and $\os{\circ}{Y}_{\phi(\ul{\lam})}:=
\os{\circ}{Y}_{\phi(\lam_0)}\cap \cdots 
\cap \os{\circ}{Y}_{\phi(\lam_{k})}$.  
For $\ul{\mu}:=
\{\mu_1,\ldots, \mu_{k}\}$ 
$(\ul{\mu}\in \Lam^{(k)}_D(\os{\circ}{g}))$, 
set 
$\os{\circ}{D}_{\ul{\mu}}:=
\os{\circ}{D}_{\mu_1}\cap \cdots \cap \os{\circ}{D}_{\mu_{k}}$ 
and 
$\os{\circ}{C}_{\psi(\ul{\mu})}:=
\os{\circ}{C}_{\psi(\mu_1)}\cap \cdots \cap \os{\circ}{C}_{\psi(\mu_{k})}$. 
Let $\os{\circ}{g}_{\ul{\lam}\ul{\mu}}\col \os{\circ}{X}_{\ul{\lam}}\cap 
\os{\circ}{D}_{\ul{\mu}}\lo \os{\circ}{Y}_{\phi(\ul{\lam})}\cap 
\os{\circ}{C}_{\psi(\ul{\mu})}$ be the natural morphism of schemes. 
%Let 
%$\os{\circ}{g}_{\ul{\lam}} \col \os{\circ}{X}_{\ul{\lam}} \lo \os{\circ}{Y}_{\phi(\ul{\lam})}$ 
%and 
%$\os{\circ}{g}_{\ul{\mu}} \col \os{\circ}{D}_{\ul{\mu}} \lo \os{\circ}{C}_{\psi(\ul{\mu})}$ 
%be the induced morphism by $g$.
\par 
For integers $j$, $k'$ and $k$ such that $j\geq \max \{-k',0\}$ and $k'\leq k$, 
set $\ul{\lam}:=\{\lam_0, \ldots, \lam_{2j+k'}\}\in 
\Lam^{(2j+k')}_X(\os{\circ}{g})$ and $\ul{\mu}:=\{\mu_1,\ldots,\mu_{k-k'}\}\in 
\Lam^{(k-k')}_D(\os{\circ}{g})$. 
Let $a_{\ul{\lam}\ul{\mu}} \col 
\os{\circ}{X}_{\ul{\lam}}\cap \os{\circ}{D}_{\ul{\mu}} 
\os{\sus}{\lo} \os{\circ}{X}_{T_0}$  
and 
$a_{\phi(\ul{\lam})\phi(\ul{\mu})} \col \os{\circ}{Y}_{\phi(\ul{\lam})}
\cap \os{\circ}{C}_{\phi(\ul{\mu})} 
\os{\sus}{\lo}  \os{\circ}{Y}_{T'_0}$ 
be the natural closed immersions.  
Let 
$$a_{\ul{\lam}\ul{\mu}{\rm crys}} 
\col 
(((\os{\circ}{X}_{\ul{\lam}}\cap \os{\circ}{D}_{\ul{\mu}})/{\os{\circ}{T}})_{\rm crys},
{\cal O}_{\os{\circ}{X}_{\ul{\lam}}/{\os{\circ}{T}}})
\lo 
((\os{\circ}{X}_{T_0}/{\os{\circ}{T}})_{\rm crys},
{\cal O}_{\os{\circ}{X}_{T_0}/\os{\circ}{T}})$$  
and 
$$a_{\phi(\ul{\lam})\phi(\ul{\mu}){\rm crys}} 
\col 
(((\os{\circ}{Y}_{\phi(\ul{\lam})}\cap \os{\circ}{C}_{\phi(\ul{\mu})})/{\os{\circ}{T}{}'})_{\rm crys},
{\cal O}_{\os{\circ}{Y}_{\phi(\ul{\lam}),T'_0}/
{\os{\circ}{T}{}'}})
\lo 
((\os{\circ}{Y}_{T'_0}/{\os{\circ}{T}{}'})_{\rm crys},
{\cal O}_{\os{\circ}{Y}_{T'_0}/\os{\circ}{T}{}'})$$  
be the induced morphisms of ringed topoi by 
$a_{\ul{\lam}\ul{\mu}}$ and $a_{\phi(\ul{\lam})\phi(\ul{\mu})}$, respectively. 
Set $E_{\ul{\lam}\ul{\mu}}:=a^*_{\ul{\lam}\ul{\mu}{\rm crys}}(E)$ 
and $F_{\phi(\ul{\lam})\phi(\ul{\mu})}:=a^*_{\phi(\ul{\lam})\phi(\ul{\mu}){\rm crys}}(F)$.   
Let 
$\vp_{\ul{\lam}{\rm crys}}(\os{\circ}{X}_{T_0}/\os{\circ}{T})$ 
(resp.~
$\vp_{\phi(\ul{\lam}){\rm crys}}(\os{\circ}{Y}_{T'_0}/\os{\circ}{T}{}')$) 
be the crystalline orientation sheaf in 
$(\os{\circ}{X}_{\ul{\lam}}/{\os{\circ}{T}})_{\rm crys}$ 
(resp.~$(\os{\circ}{Y}_{\phi(\ul{\lam})}/{\os{\circ}{T}{}'})_{\rm crys}$) for the set 
$\{\os{\circ}{X}_{\lam_0}, \ldots, \os{\circ}{X}_{\lam_{2j+k'}}\}$ 
(resp.~$\{\os{\circ}{Y}_{\phi(\lam_0)},\ldots,\os{\circ}{Y}_{\phi(\lam_{2j+k'})}\}$). 
Let $\vp_{\ul{\mu}{\rm crys}}(\os{\circ}{D}_{T_0}/\os{\circ}{T})$ 
and 
$\vp_{\phi(\ul{\mu}){\rm crys}}(\os{\circ}{C}_{T'_0}/\os{\circ}{T}{}')$ 
be the crystalline orientation sheaves in 
$(\os{\circ}{D}_{\ul{\mu}}/{\os{\circ}{T}})_{\rm crys}$ 
and $(\os{\circ}{C}_{\psi(\ul{\mu})}/{\os{\circ}{T}{}'})_{\rm crys}$) for the sets  
$\{\os{\circ}{D}_{\mu_0}, \ldots, \os{\circ}{D}_{\mu_{k-k'}}\}$ 
and $\{\os{\circ}{C}_{\psi(\mu_0)}, \ldots, \os{\circ}{C}_{\psi(\mu_{k-k'})}\}$, 
respectively.

Assume that the divisibility condition for the morphism 
(\ref{eqn:odnl}) holds. 
Then consider the following direct factor of the morphism (\ref{ali:gdkcl}): 
\begin{align*}
g^*_{\ul{\lam}\ul{\mu}} \col & 
a_{\phi(\ul{\lam})\psi(\ul{\mu})*}
Ru_{\os{\circ}{Y}_{\phi(\ul{\lam})}\cap \os{\circ}{C}_{\psi(\ul{\mu})}/\os{\circ}{T}{}'*}
(F_{\phi(\ul{\lam})\psi(\ul{\mu})}\otimes_{\mab Z}
\vp_{\phi(\ul{\lam})\psi(\ul{\mu}){\rm crys}}
((\os{\circ}{Y}_{T'_0},\os{\circ}{C}_{T'_0})/\os{\circ}{T}{}')))[-2j-k]  \\
{} & \lo a_{\phi(\ul{\lam})\phi(\ul{\mu})*}
R\os{\circ}{g}_{\ul{\lam}\ul{\mu}*}
Ru_{\os{\circ}{X}_{\ul{\lam}}\cap \os{\circ}{D}_{\ul{\mu}}/\os{\circ}{T}*}(E_{\ul{\lam}\ul{\mu}}
\otimes_{\mab Z}
\vp_{\ul{\lam}{\rm crys}}
((\os{\circ}{X}_{T_0},\os{\circ}{D}_{T_0})/\os{\circ}{T}))[-2j-k]. 
\tag{6.7.1}\label{ali:grgm}
\end{align*}

\begin{defi}
Let $v \col {\cal E} \lo {\cal F}$ 
be a morphism of  
$f^{-1}({\cal O}_T)$-modules (resp.~${\cal O}_T$-modules). 
The $D$-{\it twist}(:=degree twist) by $k$  
$$v(-k) \col {\cal E}(-k,u)\lo {\cal F}(-k,u)$$  
of $v$ with respect to $u$ 
is, by definition, the morphism  
$\deg(u)^kv \col {\cal E} \lo {\cal F}$.  
This definition is well-defined for morphisms of objects of 
the derived category 
$D^+(f^{-1}({\cal O}_T))$ (resp.~$D^+({\cal O}_T)$). 
\end{defi}

\par 
By the assumption (6.5.3), 
there exist positive integers $e(\mu)$'s  
$(\mu \in M)$ such that 
there exist local equations $y_{\mu}=0$ and 
$y'_{\psi(\mu)}=0$ of 
$\os{\circ}{D}_{\mu}$ and $\os{\circ}{C}_{\psi(\mu)}$, 
respectively,  such that $g^*(y'_{\psi(\mu)})=y^{e({\mu})}_{\mu}$.

%\par
%The following new definition is a generalization of the usual Tate twist and 
%the $l$-adic analogue of the $D$-twist(degree-twist) defined in \cite[(1.5.10)]{nb} 
%and \cite[(5.1.5)]{nhir}: 

The following definition is the $p$-adic analogue of that 
in \cite[(11.6)]{nlf}:

\begin{defi}
\label{defi:wcel}
(1) We call 
$\{e(\mu)\}_{\mu \in M}\in {\mab Z}^M_{>0}$ the {\it multi-degree} of 
$g$ with respect to a decomposition 
$\Del:=\{D_{\mu}\}_{\mu \in M}$ 
and $\Del':=\{C_{\mu'}\}_{\mu'\in \psi(M)}$ of $D$ and $C$, respectively.   
We  denote it by ${\rm deg}_{\Del,\Del'}(g) \in {\mab Z}^M_{>0}$. 
%If $e(\mu)$'s for all $\mu$'s are equal, we also denote 
%$e(\mu)\in {\mab Z}_{>0}$ by ${\rm deg}_{\Del,\Del'}(g) \in {\mab Z}_{>0}$.
\par
(2) Let $f'\col (Y_{\os{\circ}{T}{}'_0},C_{\os{\circ}{T}{}'_0})\lo S'(T')^{\nat}$ 
be the structural morphism. 
Let ${\cal E}_{\mu_1\cdots \mu_k}$ 
(resp.~${\cal F}_{\mu'_1\cdots \mu'_k}$) be an 
${\cal O}_{D_{\mu_1,\cdots, \mu_k}}$-module 
(resp.~an ${\cal O}_{C_{\mu'_1\cdots \mu'_k}}$-module). 
%Assume that 
%$e(\mu)$'s for all $\mu$'s are equal. 
Let 
$$u \col {\cal F}=
\bigoplus_{\{\mu_1,\ldots, \mu_k\}}{\cal F}_{\psi(\mu_1)\cdots \psi(\mu_k)}\lo 
g_*({\cal E}):=g_*(\bigoplus_{\{\mu_1,\ldots, \mu_k\}}{\cal E}_{\mu_1\cdots \mu_k})$$  
be a morphism of   
$f'{}^{-1}({\cal O}_{T'})$-modules. 
Let us consider the following composite morphism:  
$$u \col \bigoplus_{\{\mu'_1,\ldots, \mu'_k\}}{\cal F}_{\mu'_1,\ldots,\mu'_k}
\os{{\rm proj}.}{\lo} 
\bigoplus_{\{\mu_1,\ldots, \mu_k\}}{\cal F}_{\psi(\mu_1)\cdots \psi(\mu_k)}\lo 
g_*({\cal E}):=g_*(\bigoplus_{\{\mu_1,\ldots, \mu_k\}}{\cal E}_{\mu_1\cdots \mu_k}).$$ 
Let $k$ be a nonnegative integer. 
The $k$-{\it twist}\index{twist} 
$$u(-k) \col {\cal F}(-k;g;\Del,\Del'):=
\bigoplus_{\{\mu'_1,\ldots, \mu'_k\}}{\cal F}_{\mu'_1,\ldots,\mu'_k} 
\lo g_*({\cal E}(-k;g;\Del,\Del')):=
g_*(\bigoplus_{\{\mu_1,\ldots, \mu_k\}}{\cal E}_{\mu_1,\ldots, \mu_k})$$  
of $u$ with respect to $v$, $\Del$ and $\Del'$ 
is, by definition, the induced morphism by the direct sum of the morphisms  
$e_{\mu_1}\cdots e_{\mu_k}u 
\col \allowbreak {\cal F}_{\psi(\mu_1)\cdots \psi(\mu_k)} 
\allowbreak \lo {\cal E}_{\mu_1\cdots \mu_k}$'s.
\end{defi}

\parno

\begin{prop}\label{prop:grloc}
Let the notations 
and the assumptions be as above. 
%Let $a^{(l),(m)'}\col \os{\circ}{Y}{}^{(l)}_{T_0}\cap \os{\circ}{C}{}^{(m)}_{T_0}\lo 
%\os{\circ}{Y}_{T_0}$ be the analogous morphism to 
%$a^{(l),(m)}\col \os{\circ}{X}{}^{(l)}_{T_0}\cap \os{\circ}{D}{}^{(m)}_{T_0}\lo 
%\os{\circ}{Y}_{T_0}$. 
Then the morphism $g^*_{\ul{\lam}\ul{\mu}}$ in {\rm (\ref{ali:grgm})} 
is equal to 
$\deg(u)^{j+k'}
e_{\mu_1}\cdots e_{\mu_{k-k'}}a_{\phi(\ul{\lam})\phi(\ul{\mu})*}
\os{\circ}{g}{}^*_{\ul{\lam}\ul{\mu}}$ for $j\geq \max\{-k',0\}$. 
\end{prop}
\begin{proof}
The proof is essentially 
the same as that of \cite[(2.9.3)]{nh2} and \cite[(1.5.9)]{nb}. 
\end{proof}

\begin{coro}\label{coro:fuu}
The morphism 
\begin{align*} 
g^*\col 
{\rm gr}^P_kA_{\rm zar}((Y_{\os{\circ}{T}{}'_0},C_{\os{\circ}{T}{}'_0})/S'(T')^{\nat},F)
\lo 
Rg_{*}({\rm gr}^P_kA_{\rm zar}((X_{\os{\circ}{T}_0},D_{\os{\circ}{T}_0})/S(T)^{\nat},E))
\tag{6.11.1}\label{ali:cgrvp} 
\end{align*} 
is equal to 
\begin{align*} 
& \bigoplus_{k'\leq k} 
\bigoplus_{j\geq \max \{-k,0\}} 
a^{(2j+k'),(k-k')}_{*} 
(Ru_{\os{\circ}{Y}{}^{(2j+k')}_{T'_0}\cap \os{\circ}{C}{}^{(k-k')}_{T'_0}/\os{\circ}{T}{}'*}
(F_{\os{\circ}{Y}{}^{(2j+k)}_{T'_0}\cap \os{\circ}{C}{}^{(k-k')}_{T'_0}/\os{\circ}{T}{}'} 
\otimes_{\mab Z}\\ 
&\vp_{\rm crys}^{(2j+k'),(k-k')}
((\os{\circ}{Y}_{T'_0},\os{\circ}{C}_{T'_0})/T')))(-j-k';u)(-(k-k');g,\Del,\Del')[-2j-k]\\ 
%& = \\
%&\bigoplus_{j\geq \max \{-k,0\}} 
%\bigoplus_{\ul{\lam}\in \Lam^{(k)}(\os{\circ}{g}),}
%a_{\phi(\ul{\lam})\phi(\ul{\mu})*} 
%(Ru_{\os{\circ}{Y}_{\phi(\ul{\lam})}\cap \os{\circ}{C}_{\phi(\ul{\mu})}/\os{\circ}{T}{}'*}
%(F_{\os{\circ}{Y}_{\phi(\ul{\lam})}\cap \os{\circ}{C}_{\phi(\ul{\mu})}/\os{\circ}{T}{}'} 
%\otimes_{\mab Z}\vp_{\phi(\ul{\lam})\phi(\ul{\mu}),{\rm crys}}
%(\os{\circ}{Y}_{T'_0}/T')))\\
%&(-j-k';u)(-(k-k');g,\Del,\Del')[-2j-k] \\ 
%& \os{\sum_{\ul{\lam}\in \Lam^{(k)}_X(\os{\circ}{g})}
%\os{\circ}{g}{}^*_{\ul{\lam}}(-j-k,u)}{\lo} \\
%&\bigoplus_{j\geq \max \{-k,0\}} 
%\bigoplus_{{\ul{\lam}\in \Lam^{(2j+k')}(\os{\circ}{g})}}
%a_{\ul{\lam}\ul{\mu}*} 
%(Ru_{\os{\circ}{X}_{\ul{\lam}}\cap \os{\circ}{D}_{\ul{\mu}}/\os{\circ}{T}*}
%(E_{\os{\circ}{X}_{\ul{\lam}}\cap \os{\circ}{D}_{\ul{\mu}}/\os{\circ}{T}} 
%\otimes_{\mab Z}\vp_{\ul{\lam}\ul{\mu},{\rm crys}}
%(\os{\circ}{X}_{T_0}/T)))\\
%&(-j-k,u)[-2j-k] \\ 
& \lo \\
&\bigoplus_{k'\leq k} \bigoplus_{j\geq \max \{-k,0\}} 
a^{(2j+k'),(k-k')}_{*} 
(Ru_{\os{\circ}{X}{}^{(2j+k')}_{T_0}\cap \os{\circ}{D}{}^{(k-k')}_{T_0}/\os{\circ}{T}*}
(E_{\os{\circ}{X}{}^{(2j+k')}_{T_0}\cap \os{\circ}{D}{}^{(k-k')}_{T_0}/\os{\circ}{T}} 
\otimes_{\mab Z}\\
&\vp_{\rm crys}^{(2j+k'),(k-k')}
((\os{\circ}{X}_{T_0},\os{\circ}{D}_{T_0})/T)))(-j-k';u)(-(k-k');g,\Del,\Del')[-2j-k]. 
\tag{6.11.2}\label{ali:cgrp}
\end{align*} 
\end{coro} 
\begin{proof} 
This immediately follows from (\ref{prop:grloc}).  
\end{proof}

\begin{coro}\label{coro:indg}
Let  $h\col X_{\os{\circ}{T}_0}\lo Y_{\os{\circ}{T}{}'_0}$ 
be another morphism satisfying the condition {\rm (\ref{cd:xygxy})}, {\rm (6.5.2)} and 
{\rm (6.5.3)}. 
Assume that $\os{\circ}{g}=\os{\circ}{h}$. 
Then 
\begin{align*} 
{\cal H}^q(h^*)={\cal H}^q(g^*) \col &
{\cal H}^q(P_kA_{\rm zar}((Y_{\os{\circ}{T}{}'_0},C_{\os{\circ}{T}{}'_0})/S'(T')^{\nat},F)) \\
&\lo {\cal H}^q(Rg_{*}(P_kA_{\rm zar}((X_{\os{\circ}{T}_0},D_{\os{\circ}{T}_0})/
S(T)^{\nat},E))) \quad (q\in {\mab N}).  
\end{align*} 
\end{coro}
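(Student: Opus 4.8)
\textbf{Proof proposal for Corollary \ref{coro:indg}.}

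The plan is to reduce the statement to the corresponding homotopy-invariance fact for the underlying log de Rham--Witt--type complexes, exactly as in the analogous reductions in \cite{nh2} and \cite{nb}. First I would recall that by (\ref{prop:tefc}) and (\ref{theo:indcr}) the bifiltered complex $(A_{\rm zar}((X_{\os{\circ}{T}_0},D_{\os{\circ}{T}_0})/S(T)^{\nat},E),P^{D_{\os{\circ}{T}_0}},P)$ may be computed, after passing to a common \v{C}ech diagram, by the explicit complex $A_{\rm zar}({\cal P}^{\rm ex}_{\bul}/S(T)^{\nat},{\cal E}^{\bul})$ attached to a simplicial immersion; and likewise for $Y_{\os{\circ}{T}{}'_0}$. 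Since $\os{\circ}{g}=\os{\circ}{h}$, the two morphisms $g$ and $h$ induce the \emph{same} morphism of underlying schemes $\os{\circ}{X}_{T_0}\lo \os{\circ}{Y}_{T'_0}$ and hence the same functions $\phi,\psi$ on the index sets of smooth components, so $g$ and $h$ produce pullback morphisms $g^*,h^*$ between the \emph{same} source and target complexes. The key point is to show that $g^*$ and $h^*$ are chain homotopic through filtered homotopies compatible with $P$ (a filtered homotopy in the sense of ${\rm K}^+{\rm F}$), so that they agree after applying ${\cal H}^q$ to each $P_kA_{\rm zar}$.

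The order of steps I would carry out is as follows. Step one: choose the simplicial immersions so that there is a common ambient log smooth simplicial scheme $\ol{\cal P}_\bul$ into which $(X_{\os{\circ}{T}_0\bul},D_{\os{\circ}{T}_0\bul})$ embeds and a common $\ol{\cal Q}_\bul$ for $(Y_{\os{\circ}{T}{}'_0\bul},C_{\os{\circ}{T}{}'_0\bul})$, together with \emph{two} morphisms $\ol{g}_\bul,\ol{h}_\bul\col \ol{\cal P}_\bul\lo \ol{\cal Q}_\bul$ lifting $g,h$; this is possible by the standard product construction used already in the proof of (\ref{theo:funas}) (replace $\ol{\cal P}_\bul$ by a fiber product over $\ol{\cal Q}_\bul$ taken along both lifts). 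Step two: since $\os{\circ}{g}=\os{\circ}{h}$, the two lifts $\ol{g}_\bul,\ol{h}_\bul$ agree modulo the PD-ideal of the relevant log PD-envelope, so on the level of PD-envelopes $g^{\rm PD}_\bul$ and $h^{\rm PD}_\bul$ are ``infinitesimally close''; this is the exact situation in which the classical crystalline homotopy construction (a Taylor-type formula, as in the proof of the independence of the crystalline complex of the chosen embedding) produces an explicit homotopy $s$ between the two induced morphisms of de Rham complexes ${\cal O}_{{\mathfrak E}_\bul}\otimes \Om^\bul_{{\cal Q}^{\rm ex}_\bul}\lo g^{\rm PD}_{\bul*}({\cal O}_{{\mathfrak D}_\bul}\otimes \Om^\bul_{{\cal P}^{\rm ex}_\bul})$. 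Step three: check that this homotopy is compatible with the truncation by $P^{{\cal X}_\bul}$, with the twist by $\deg(u)^{-(j+1)}$ appearing in (\ref{eqn:axdwt}), and with the column-shift/$\theta\wedge$ structure of the double complex, so that it descends to a homotopy between $g^*_\bul$ and $h^*_\bul$ on $A_{\rm zar}({\cal Q}^{\rm ex}_\bul/S'(T')^{\nat},{\cal F}^\bul)$ respecting the filtration $P$ (and $P^{D}$). Step four: apply $R\pi_{{\rm zar}*}$ and then ${\cal H}^q$ of $P_kA_{\rm zar}$, and conclude ${\cal H}^q(g^*)={\cal H}^q(h^*)$; note the conditions (6.4.4) and (6.4.5) guarantee, via (\ref{prop:xxle}) and (\ref{prop:grloc}), that the degree functions $\deg(u)$ used in the two twists coincide, so the homotopy really is between the \emph{same} two filtered morphisms.

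The main obstacle I anticipate is Step three: verifying that the naive crystalline homotopy $s$ is genuinely compatible with the quotient by $P^{{\cal X}_\bul}_j$ and with the precise sign conventions of the double complex $A_{\rm zar}(-)^{\bul\bul}$ with its boundary maps $-\nabla$ and $\theta\wedge$. The homotopy operator lowers de Rham degree by one, hence moves between the rows indexed by $j$ and $j\pm 1$, and one must check it does not break the filtration-truncated structure nor the divisibility bookkeeping by $\deg(u)$; this is the same kind of delicate sign-and-filtration check carried out in \cite[(2.9.3)]{nh2} and \cite[(1.5.9)]{nb} and in the proof of (\ref{prop:grloc}), so the cleanest route is to reduce, via (\ref{lemm:knit}), (\ref{eqn:ana}) and (\ref{prop:tefc}), to the graded pieces ${\rm gr}^P_k$ (which by (\ref{coro:fuu}) are direct sums of twisted log crystalline cohomology sheaves of the $\os{\circ}{X}_{\ul\lam}\cap \os{\circ}{D}_{\ul\mu}$), where homotopy invariance of crystalline pullback along $\os{\circ}{g}=\os{\circ}{h}$ is already known componentwise; then one bootstraps back up along the weight filtration. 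I would write the argument in that reduced form rather than constructing the global homotopy explicitly.
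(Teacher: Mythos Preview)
Your final ``cleanest route'' is precisely the paper's argument, but you arrive there only after an unnecessary detour. The paper's proof is three lines: the question is local on $\os{\circ}{Y}_{T'_0}$, so one may assume $\os{\circ}{Y}_{T'_0}$ is quasi-compact (hence the filtration $P$ is biregular); then it suffices to check ${\rm gr}^P_k(g^*)={\rm gr}^P_k(h^*)$ for every $k$; and this is immediate from (\ref{coro:fuu}), because the formula displayed there expresses ${\rm gr}^P_k(g^*)$ entirely in terms of $\deg(u)$ and the classical crystalline pullbacks $\os{\circ}{g}{}^*_{\ul{\lam}}$ along the underlying scheme morphisms $\os{\circ}{g}_{\ul{\lam}\ul{\mu}}\col \os{\circ}{X}_{\ul{\lam}}\cap \os{\circ}{D}_{\ul{\mu}}\lo \os{\circ}{Y}_{\phi(\ul{\lam})}\cap \os{\circ}{C}_{\psi(\ul{\mu})}$. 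Since $\os{\circ}{g}=\os{\circ}{h}$, these data are literally identical for $g$ and $h$.

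Your Steps 1--3 (lift both maps to a common ambient simplicial log smooth scheme and build an explicit Taylor-type filtered homotopy on the de~Rham complexes) are not needed, and your own Step~3 worry is well-founded: verifying compatibility of such a homotopy with the quotients by $P^{{\cal X}_\bul}_j$, the $\deg(u)^{-(j+1)}$ twists, and the double-complex sign conventions would be delicate and is exactly what the paper avoids. Note also that your phrasing ``homotopy invariance of crystalline pullback along $\os{\circ}{g}=\os{\circ}{h}$'' on the graded pieces is misleading: there is no homotopy to invoke there, since by (\ref{coro:fuu}) the induced morphisms on ${\rm gr}^P_k$ are \emph{equal as morphisms in the derived category}, being given by the same formula. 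Finally, you omit the localization step that ensures biregularity of $P$, which is what makes the passage from equality on each ${\rm gr}^P_k$ back up to the $P_k$'s a finite induction.
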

\begin{proof} 
%It suffices to prove that $g^*=h^*$. 
This is a local question on $\os{\circ}{Y}_{T'_0}$. 
Hence we may assume that $\os{\circ}{Y}_{T'_0}$ is quasi-compact. 
It suffices to prove that 
${\cal H}^q({\rm gr}_k^P(h^*))={\cal H}^q({\rm gr}_k^P(g^*))$ $(q\in {\mab N},k\in {\mab Z})$. 
This follows from (\ref{coro:fuu}). 
\end{proof}

\par   
Let $f \col (X_{\os{\circ}{T}_0},D_{\os{\circ}{T}_0}) \lo S(T)^{\nat}$ 
be the structural morphism. 
By (\ref{ali:cgrvp}) 
we have the following spectral sequence 
\begin{align*} 
&E_1^{-k,q+k}:=E_1^{-k,q+k}((X_{\os{\circ}{T}_0},D_{\os{\circ}{T}_0})/S(T)^{\nat}) 
:=\bigoplus^k_{k'=-\infty}
\bigoplus_{j\geq \max\{-k',0\}} 
R^{q-2j-k}f_{\os{\circ}{X}{}^{(2j+k')}_{T_0}\cap \os{\circ}{D}{}^{(k-k')}_{T_0}
/\os{\circ}{T}*}\\
&(E_{\os{\circ}{X}{}^{(2j+k')}_{T_0}\cap 
\os{\circ}{D}{}^{(k-k')}_{T_0}/\os{\circ}{T}}
\otimes_{\mab Z} 
\vp^{(2j+k'),(k-k')}_{\rm crys}(
(\os{\circ}{X}_{T_0},\os{\circ}{D}_{T_0})/\os{\circ}{T}))(-j-k';u)(-(k-k');g,\Del,\Del') \\
&\Lo R^qf_{(X_{\os{\circ}{T}_0},D_{\os{\circ}{T}_0})/S(T)^{\nat}*}
(\eps^*_{(X_{\os{\circ}{T}_0},D_{\os{\circ}{T}_0})/S(T)^{\nat}}(E))  
\quad (q\in {\mab Z}).  
\tag{6.12.1}\label{eqn:escssp} 
\end{align*}
The precise meaning of this spectral sequence is the following:

\begin{coro}[{\bf Contravariant functoriality 
of the (pre)weight spectral sequence(a generalization of the $p$-adic analogue of 
{\rm \cite[Corollary 2.12]{stwsl}})}]\label{prop:contwt}
Let the notations and the assumption be as above 
and in {\rm (\ref{theo:funas})} or {\rm (\ref{theo:funpas})}.  
Assume that the two conditions 
$(6.5.2)$ and $(6.5.3)$ hold. 
Let $\os{\circ}{g}{}^{(l),(m)*}$ be the following  morphism$:$
\begin{align*}  
\os{\circ}{g}{}^{(l),(m)*}
& :=\sum_{\ul{\lam}\in \Lam^{(l)}_X(\os{\circ}{g}),\ul{\mu}\in \Lam^{(m)}_D(\os{\circ}{g})}
\os{\circ}{g}{}^*_{\ul{\lam}\ul{\mu}}
\col 
R^qf_{\os{\circ}{Y}{}^{(l)}_{T'_0}\cap \os{\circ}{C}{}^{(m)}_{T'_0}/\os{\circ}{T}{}'*}
(F_{\os{\circ}{Y}{}^{(l)}_{T'_0}\cap \os{\circ}{C}{}^{(m)}_{T'_0}/\os{\circ}{T}{}'}
\otimes_{\mab Z}
\vp^{(l,m)}_{\rm crys}((\os{\circ}{Y}_{T'_0},\os{\circ}{C}_{T'_0})/\os{\circ}{T}{}')) \\
& \lo 
R^qf_{\os{\circ}{X}{}^{(l)}_{T_0}\cap \os{\circ}{D}{}^{(l)}_{T_0}/\os{\circ}{T}{}*}
(E_{\os{\circ}{X}{}^{(l)}_{T_0}\cap \os{\circ}{D}{}^{(m)}_{T_0}/\os{\circ}{T}}
\otimes_{\mab Z}
\vp^{(l,m)}_{\rm crys}((\os{\circ}{X}_{T_0},\os{\circ}{D}_{T_0})/T)). 
\tag{6.13.1}\label{eqn:rglm} 
\end{align*} 
Then there exists the following morphism of 
$($pre$)$weight spectral sequences, 
where the left arrow below is the induced morphism by 
$\os{\circ}{g}{}^{(l),(m)*}$'s$:$ 
\footnotesize{\begin{align*} 
& E_1^{-k,q+k} = \bigoplus_{k'\leq k}\bigoplus_{j\geq 
\max \{-k',0\}}R^{q-2j-k}f_{\os{\circ}{X}{}^{(2j+k')}_{T_0}\cap 
\os{\circ}{D}{}^{(k-k')}_{T_0}/\os{\circ}{T}*}
(E_{\os{\circ}{X}{}^{(2j+k')}_{T_0}\cap \os{\circ}{D}{}^{(k-k')}_{T_0}
/\os{\circ}{T}}
\otimes_{\mab Z} \\
& \vp^{(2j+k'),(k-k')}_{\rm crys}
((\os{\circ}{X}_{T_0},\os{\circ}{D}_{T_0})/\os{\circ}{T}))(-j-k';u)(-(k-k');g,\Del,\Del') 
\Lo 
R^qf_{(X_{\os{\circ}{T}_0},D_{\os{\circ}{T}_0})/S(T)^{\nat}*}
(\eps^*_{(X_{\os{\circ}{T}_0},D_{\os{\circ}{T}_0})/S(T)^{\nat}}(E))
\end{align*}}
\begin{equation*} 
\begin{CD} 
{ \quad \quad \quad \quad \quad \quad \quad \quad \quad} @. 
{ \quad \quad \quad \quad \quad \quad}\\ 
@AAA  
@. @. @AA{g^*}A \\
\end{CD} 
\end{equation*}
\begin{align*} 
& E_1^{-k,q+k} = \bigoplus_{k'\leq k}\bigoplus_{j\geq 
\max \{-k',0\}}R^{q-2j-k}f_{\os{\circ}{Y}{}^{(2j+k')}_{T_0}\cap 
\os{\circ}{C}{}^{(k-k')}_{T_0}/\os{\circ}{T}*}
(F_{\os{\circ}{Y}{}^{(2j+k')}_{T_0}\cap \os{\circ}{C}{}^{(k-k')}_{T'_0}
/\os{\circ}{T}{}'}\otimes_{\mab Z}  \\
& \vp^{(2j+k'),(k-k')}_{\rm crys}
((\os{\circ}{Y}_{T'_0},\os{\circ}{C}_{T'_0})/\os{\circ}{T}{}'))(-j-k';u)(-(k-k');g,\Del,\Del') 
\Lo 
R^qf_{(Y_{\os{\circ}{T}{}'_0},C_{\os{\circ}{T}{}'_0})/S(T')^{\nat}*}
(\eps^*_{(Y_{\os{\circ}{T}{}'_0},D_{\os{\circ}{T}{}'_0})/S(T')^{\nat}}(F)). 
\tag{6.13.2}\label{eqn:espwfsp} 
\end{align*}
\end{coro}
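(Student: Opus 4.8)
The statement to prove is the contravariant functoriality of the (pre)weight spectral sequence, i.e.\ Corollary~\ref{prop:contwt}. The plan is to derive this from the already-established functoriality of the bifiltered complex $A_{\rm zar}$, namely from (\ref{theo:funas}) and (\ref{theo:funpas}), by applying $Rf_*$ and passing to the associated spectral sequences of the filtration $P$. First I would recall that, by (\ref{eqn:escssp}) (equivalently by the corollary (\ref{coro:fuu})), the $P$-filtered complex $Rf_{(X_{\os{\circ}{T}_0},D_{\os{\circ}{T}_0})/S(T)^{\nat}*}((A_{\rm zar}((X_{\os{\circ}{T}_0},D_{\os{\circ}{T}_0})/S(T)^{\nat},E),P)$ gives rise precisely to the (pre)weight spectral sequence displayed in the statement, with $E_1$-terms as written, and likewise for $(Y,C)/S'$. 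The pull-back morphism
\begin{equation*}
g^*\col (A_{\rm zar}((Y_{\os{\circ}{T}{}'_0},C_{\os{\circ}{T}{}'_0})/S'(T')^{\nat},F),P)\lo Rg_*((A_{\rm zar}((X_{\os{\circ}{T}_0},D_{\os{\circ}{T}_0})/S(T)^{\nat},E),P))
\end{equation*}
from (\ref{eqn:fzaxd}) or (\ref{eqn:fzapxd}) is a morphism of $P$-filtered complexes, so applying $Rf_*$ and functoriality of the spectral sequence of a filtered complex yields a morphism of spectral sequences; it remains to identify the induced map on $E_1$-terms with the asserted formula $\bigoplus_{k'\leq k}\bigoplus_{j\geq \max\{-k',0\}}{\rm deg}(u)^{j+k}\os{\circ}{g}{}^{(2j+k'),(k-k')*}$.

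The key step is therefore the computation of ${\rm gr}^P_k(g^*)$. I would invoke (\ref{coro:fuu}) directly: there the morphism $g^*\col {\rm gr}^P_kA_{\rm zar}((Y_{\os{\circ}{T}{}'_0},C_{\os{\circ}{T}{}'_0})/S'(T')^{\nat},F)\lo Rg_*({\rm gr}^P_kA_{\rm zar}((X_{\os{\circ}{T}_0},D_{\os{\circ}{T}_0})/S(T)^{\nat},E))$ is already shown, via (\ref{prop:grloc}), to be the direct sum over $k'\leq k$ and $j\geq \max\{-k',0\}$ of the maps $\os{\circ}{g}{}^*_{\ul{\lam}\ul{\mu}}$ $D$-twisted by $(-j-k,u)$, each of which equals $\deg(u)^{j+k}a_{\phi(\ul{\lam})\phi(\ul{\mu})*}\os{\circ}{g}{}^*_{\ul{\lam}}$. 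Applying $Rf_{(X_{\os{\circ}{T}_0},D_{\os{\circ}{T}_0})/S(T)^{\nat}*}$, using the Poincar\'e/log Poincar\'e identifications of ${\rm gr}^P_kA_{\rm zar}$ with a direct sum of (Tate-twisted, shifted) crystalline cohomology complexes of the $\os{\circ}{X}{}^{(2j+k')}_{T_0}\cap \os{\circ}{D}{}^{(k-k')}_{T_0}$'s tensored with orientation sheaves (as in (\ref{eqn:axd}) and (\ref{ali:dwks})), and taking cohomology sheaves $R^{q-2j-k}f_*$, the map on $E_1$-terms is exactly $\bigoplus {\rm deg}(u)^{j+k}\os{\circ}{g}{}^{(2j+k'),(k-k')*}$, where $\os{\circ}{g}{}^{(l),(m)*}=\sum_{\ul{\lam},\ul{\mu}}\os{\circ}{g}{}^*_{\ul{\lam}\ul{\mu}}$ as defined just before the statement. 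The $D$-twist factor ${\rm deg}(u)^{j+k}$ is precisely what the Tate twist $(-j-k,u)$ contributes under pull-back, so the factors match on both sides.

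I would then simply assemble these observations: the morphism of filtered complexes $Rf_*(g^*)$ induces a morphism of the corresponding spectral sequences, whose $E_1$-level is the displayed map and whose abutment-level map is $g^*$ on $R^qf_{(X_{\os{\circ}{T}_0},D_{\os{\circ}{T}_0})/S(T)^{\nat}*}(\eps^*(E))$ (compatibility with the abutment follows from the commutative diagrams (\ref{cd:pssfccz}), resp.\ (\ref{cd:pssfpccz}), relating $g^*$ on $A_{\rm zar}$ with $g^*$ on $Ru_*(\eps^*(-))$ and hence, after $Rf_*$, with $g^*$ on the absolute cohomology). The hypotheses $(6.4.4)$ and $(6.4.5)$ are exactly what is needed for the maps $\os{\circ}{g}_{\ul{\lam}}$, $\os{\circ}{g}_{\ul{\mu}}$ between the double intersections to be defined, matching the setup of (\ref{prop:grloc}).

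The main obstacle I anticipate is not conceptual but bookkeeping: one must be careful that the identification of ${\rm gr}^P_k$ of the $P$-filtered complex $Rf_*A_{\rm zar}$ with the $E_1$-page is compatible with $g^*$ including all signs and the degree-twist exponents, and that the double-indexed direct sum over $(k',j)$ is matched index-by-index under $\phi$ and $\psi$ (so that $\ul{\lam}\in\Lam^{(2j+k')}_X(\os{\circ}{g})$ and $\ul{\mu}\in\Lam^{(k-k')}_D(\os{\circ}{g})$ pair up correctly with $\phi(\ul{\lam})$, $\psi(\ul{\mu})$). This is where (\ref{prop:grloc}) and (\ref{coro:fuu}) do the real work; since those are already available, the proof of the present corollary reduces to citing them and noting the standard fact that a filtered morphism induces a morphism of spectral sequences. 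Hence the proof is short: it is essentially ``this follows from (\ref{coro:fuu}) by applying $Rf_*$ and passing to spectral sequences, together with (\ref{cd:pssfccz})/(\ref{cd:pssfpccz}) for the compatibility with abutments.''
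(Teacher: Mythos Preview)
Your proposal is correct and follows essentially the same approach as the paper: the paper's own proof is a one-line reference to (\ref{theo:funpas}), (\ref{prop:grloc}), and the definition of $\os{\circ}{g}{}^{(l,m)}$, which is exactly the content you spell out in more detail (with (\ref{coro:fuu}) standing in for its immediate predecessor (\ref{prop:grloc})). Your additional remarks about passing to spectral sequences via $Rf_*$ and the abutment compatibility via (\ref{cd:pssfccz})/(\ref{cd:pssfpccz}) are implicit in the paper's citation of (\ref{theo:funpas}) and do no harm.
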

\begin{proof} 
This follows from (\ref{theo:funpas}), (\ref{prop:grloc}) 
and from the definition of $\os{\circ}{g}{}^{(l,m)}$ 
$(l,m\in {\mab N})$.  
\end{proof}

\begin{defi} 
(1) We call the spectral sequence (\ref{eqn:escssp}) 
the {\it Poincar\'{e} spectral sequence} of 
$\eps^*_{(X_{\os{\circ}{T}_0},D_{\os{\circ}{T}_0})/S(T)^{\nat}}(E)$.
If $E={\cal O}_{\os{\circ}{X}_{T_0}/\os{\circ}{T}}$ 
and if $p$ is locally nilpotent on $\os{\circ}{T}$,  
then we call the spectral sequence (\ref{eqn:escssp}) 
the {\it preweight spectral sequence}  of 
$(X_{\os{\circ}{T}_0},D_{\os{\circ}{T}_0})/S(T)^{\nat}$. 
If $E={\cal O}_{\os{\circ}{X}_{T_0}/\os{\circ}{T}}$ and 
if $\os{\circ}{T}$ is a flat formal ${\mab Z}_p$-scheme,  
then we call the spectral sequence (\ref{eqn:escssp}) 
the {\it weight spectral sequence} of 
$(X_{\os{\circ}{T}_0},D_{\os{\circ}{T}_0})/S(T)^{\nat}$.
\par 
(2) We usually denote by $P$ the induced filtration on 
$R^qf_{(X_{\os{\circ}{T}_0},D_{\os{\circ}{T}_0})/S(T)^{\nat}*}
(\eps^*_{(X_{\os{\circ}{T}_0},D_{\os{\circ}{T}_0})/S(T)^{\nat}}(E))$ 
by the spectral sequence (\ref{eqn:escssp}) 
twisted by $q$ by abuse of notation. 
We call $P$ the {\it Poincar\'{e} filtration} on 
$R^qf_{(X_{\os{\circ}{T}_0},D_{\os{\circ}{T}_0})/S(T)^{\nat}*}
(\eps^*_{X_{\os{\circ}{T}_0},D_{\os{\circ}{T}_0})/S(T)^{\nat}}(E))$. 
If $E
={\cal O}_{\os{\circ}{X}_{T_0}/\os{\circ}{T}}$ 
and if $p$ is locally nilpotent on $\os{\circ}{T}$,  
then we call $P$ the {\it preweight filtration} on 
$R^qf_{(X_{\os{\circ}{T}_0},D_{\os{\circ}{T}_0})/S(T)^{\nat}*}
({\cal O}_{(X_{\os{\circ}{T}_0},D_{\os{\circ}{T}_0})/S(T)^{\nat}})$.   
If $E={\cal O}_{\os{\circ}{X}_{T_0}/\os{\circ}{T}}$ 
and if $\os{\circ}{T}$ is a flat formal ${\mab Z}_p$-scheme,  
then we call $P$ the {\it weight filtration} on 
$R^qf_{(X_{\os{\circ}{T}_0},D_{\os{\circ}{T}_0})/S(T)^{\nat}*}
({\cal O}_{(X_{\os{\circ}{T}_0},D_{\os{\circ}{T}_0})/S(T)^{\nat}})$.   
\end{defi}

%{\bigoplus_{k'\leq k}\bigoplus_{j\geq \max \{-k',0\}}{\rm deg}(u)^{j+k'}\os{\circ}{g}{}^{(2j+k'),(k-k')*}}

We also obtain the following:

\begin{prop}\label{prop:e1sp}
There exists the following spectral sequence 
\begin{align*} 
E_1^{k,q-k}&=R^{q-k}f_{D^{(k)}_{\os{\circ}{T}_0}/S(T)^{\nat}*}
(\eps^*_{D^{(k)}_{\os{\circ}{T}_0}/S(T)^{\nat}}
(E_{\os{\circ}{D}{}^{(k)}/\os{\circ}{T}})\otimes_{\mab Z}
\eps^{-1}_{D^{(k)}_{\os{\circ}{T}_0}/S(T)^{\nat}}\vp^{(k)}_{\rm crys}
((\os{\circ}{D}_{T_0}/\os{\circ}{T}_0)))(-k;g,\Del,\Del') \\
&\Lo 
R^qf_{(X_{\os{\circ}{T}_0},D_{\os{\circ}{T}_0})/S(T)^{\nat}*}
(\eps^*_{(X_{\os{\circ}{T}_0},D_{\os{\circ}{T}_0})/S(T)^{\nat}}(E)). 
\tag{6.15.1}\label{ali:dkfs}
\end{align*} 
\end{prop}
\begin{proof} 
By (\ref{coro:pwspp}) and the similar argument to the argument above, we  
obtain (\ref{prop:e1sp}). 
\end{proof}

As in \cite[(1.5.4)]{nb} and 
\cite[(5.1.6)]{nhir} 
let us recall the following morphisms. $S_{\os{\circ}{T}_0}\lo S^{[p]}_{\os{\circ}{T}{}'_0}$ 
and 
$(S(T)^{\nat},{\cal J},\del)\lo (S^{[p]}(T')^{\nat},{\cal J}',\del')$ 
the abrelative Frobenius morphism of base log schemes
and the abrelative Frobenius morphism of base log PD-schemes, respectively.

\begin{defi}[{\bf  Abrelative Frobenius morphism (\cite[(1.5.4)]{nb}, \cite[(5.1.6)]{nhir})}]\label{defi:rwd}  
(1) Assume that $\os{\circ}{S}$ is of characteristic $p>0$.  
Let $\os{\circ}{F}_{S}\col \os{\circ}{S} \lo \os{\circ}{S}$ 
be the Frobenius endomorphism of $\os{\circ}{S}$.   
Set $S^{[p]}:=S\times_{\os{\circ}{S},\os{\circ}{F}_{S}}\os{\circ}{S}$. 
Then we have the following natural morphisms  
\begin{align*} 
F_{S/\os{\circ}{S}}\col S\lo S^{[p]}
\end{align*}  
and 
\begin{align*} 
W_{S/\os{\circ}{S}} \col S^{[p]}\lo S.
\end{align*} 
(The underlying morphism of the former morphism is ${\rm id}_{\os{\circ}{S}}$.)
Let $(T,{\cal J},\del)\lo (T',{\cal J}',\del')$ be a morphism of 
$p$-adic formal log PD-enlargements over the morphism $S\lo S^{[p]}$.  
Then we have the following natural morphisms  
\begin{align*} 
S_{\os{\circ}{T}_0}\lo S^{[p]}_{\os{\circ}{T}{}'_0}
\end{align*} 
and 
\begin{align*} 
(S(T)^{\nat},{\cal J},\del)\lo (S^{[p]}(T')^{\nat},{\cal J}',\del').
\end{align*}  
As in \cite[(1.5.4)]{nb} and 
\cite[(5.1.6)]{nhir} 
we call the morphisms $S_{\os{\circ}{T}_0}\lo S^{[p]}_{\os{\circ}{T}{}'_0}$ 
and 
$(S(T)^{\nat},{\cal J},\del)\lo (S^{[p]}(T')^{\nat},{\cal J}',\del')$ 
the {\it abrelative Frobenius morphism of base log schemes}
and the {\it abrelative Frobenius morphism of base log PD-schemes}, respectively.  
In particular, when $(T',{\cal J}',\del')=(T,{\cal J},\del)$ 
with morphism $T_0\lo S$, 
we have the following natural morphisms 
\begin{align*} 
S_{\os{\circ}{T}_0}\lo S^{[p]}_{\os{\circ}{T}_0}
\end{align*} 
and 
\begin{align*} 
(S(T)^{\nat},{\cal J},\del)\lo (S^{[p]}(T)^{\nat},{\cal J},\del)
\end{align*}   
by using a composite morphism 
$T_0\lo S\os{W_{S/\os{\circ}{S}}}{\lo} S^{[p]}$.  
\par 
(2) Let the notations be as in (1).  
Set $(X^{[p]},D^{[p]}):=(X,D)\times_SS^{[p]}$ 
and 
\begin{align*} 
(X^{[p]}_{\os{\circ}{T}{}'_0},D^{[p]}_{\os{\circ}{T}{}'_0})&:=
(X^{[p]},D^{[p]})\times_{S^{[p]}}S^{[p]}_{\os{\circ}{T}{}'_0}. 
\end{align*} 
Then $(X^{[p]}_{\os{\circ}{T}{}'_0},D^{[p]}_{\os{\circ}{T}{}'_0})/S^{[p]}_{\os{\circ}{T}{}'_0}$ 
is an SNCL scheme with a relative SNCD over $S^{[p]}_{\os{\circ}{T}{}'_0}$. 
Let 
$$F^{\rm ar}_{(X_{\os{\circ}{T}_0},D_{\os{\circ}{T}_0})
/S_{\os{\circ}{T}_0},S^{[p]}_{\os{\circ}{T}{}'_0}}\col 
(X_{\os{\circ}{T}_0},D_{\os{\circ}{T}_0})\lo (X^{[p]}_{\os{\circ}{T}{}'_0},D^{[p]}_{\os{\circ}{T}{}'_0})$$ 
and 
$$F^{\rm ar}_{(X_{\os{\circ}{T}_0},D_{\os{\circ}{T}_0})/S(T)^{\nat},S^{[p]}(T')^{\nat}}
\col 
(X_{\os{\circ}{T}_0},D_{\os{\circ}{T}_0})
\lo (X^{[p]}_{\os{\circ}{T}{}'_0},D^{[p]}_{\os{\circ}{T}{}'_0})$$ 
be the natural morphisms   
over $S_{\os{\circ}{T}_0}\lo (S^{[p]})_{\os{\circ}{T}{}'_0}$ and 
$(S(T)^{\nat},{\cal J},\del)\lo (S^{[p]}(T')^{\nat},{\cal J}',\del')$. 
We call 
$F^{\rm ar}_{(X_{\os{\circ}{T}_0},D_{\os{\circ}{T}_0})/S_{\os{\circ}{T}_0},S^{[p]}_{\os{\circ}{T}{}'_0}}$ 
and 
$F^{\rm ar}_{(X_{\os{\circ}{T}_0},D_{\os{\circ}{T}_0})/S(T)^{\nat},S^{[p]}(T')^{\nat}}$ 
the {\it abrelative Frobenius morphisms}   
of $(X_{\os{\circ}{T}_0},D_{\os{\circ}{T}_0})$ over $S_{\os{\circ}{T}_0}\lo S^{[p]}_{\os{\circ}{T}{}'_0}$ 
and $(S(T)^{\nat},{\cal J},\del)\lo (S^{[p]}(T')^{\nat},{\cal J}',\del')$, respectively. 
\par 
Assume that ${\cal O}_T$ is $p$-torsion-free and that ${\cal J}= p{\cal O}_T$.  
Let $E$ and $E'$ be a flat quasi-coherent crystal of 
${\cal O}_{\os{\circ}{X}_{T_0}/\os{\circ}{T}}$-modules and 
a flat quasi-coherent crystal of 
${\cal O}_{\os{\circ}{X}{}^{[p]}_{T'_0}/\os{\circ}{T}{}'}$-modules, 
respectively.  
Let 
\begin{align*} 
\Phi^{\rm ar}\col 
\os{\circ}{F}{}^{{\rm ar}*}_{(X_{\os{\circ}{T}_0},D_{\os{\circ}{T}_0})/
S(T)^{\nat},S^{[p]}(T')^{\nat},{\rm crys}}(E')\lo E
\tag{6.16.1}\label{ali:sppts}
\end{align*} 
be a morphism of crystals in 
$(\os{\circ}{X}_{T_0}/\os{\circ}{T})_{\rm crys}$.   
Since $\deg (F_{S(T)^{\nat}/S^{[p]}(T')^{\nat}})=p$, 
the divisibility of the morphism (\ref{eqn:odnl}) holds by (\ref{prop:dvok}) 
if ${\cal I}=p{\cal O}_T $.  
We call the following induced morphism by $\Phi^{\rm ar}$   
\begin{align*} 
\Phi^{\rm ar} \col &
(A_{\rm zar}((X^{[p]}_{\os{\circ}{T}{}'_0},D^{[p]}_{\os{\circ}{T}{}'_0})/S^{[p]}(T')^{\nat},E'),
P^{D^{[p]}_{\os{\circ}{T}{}'_0}},P) \\
&\lo 
RF^{\rm ar}_{(X_{\os{\circ}{T}_0},D_{\os{\circ}{T}{}'_0})/S(T)^{\nat},S^{[p]}(T')^{\nat}*}
((A_{\rm zar}((X_{\os{\circ}{T}_0},D_{\os{\circ}{T}_0})/S(T)^{\nat},E),P^{D_{\os{\circ}{T}_0}},P)) 
\tag{6.16.2}\label{ali:spapts} 
\end{align*}
the {\it abrelative Frobenius morphism} of 
$$(A_{\rm zar}((X_{\os{\circ}{T}_0},D_{\os{\circ}{T}_0})/S(T)^{\nat},E),
P^{D_{\os{\circ}{T}_0}},P)\quad 
{\rm and} \quad  
(A_{\rm zar}((X^{[p]}_{\os{\circ}{T}{}'_0},D^{[p]}_{\os{\circ}{T}{}'_0})/S^{[p]}(T')^{\nat},E'),
P^{D^{[p]}_{\os{\circ}{T}{}'_0}},P).$$ 
When $E'={\cal O}_{\os{\circ}{X}{}^{[p]}_{T'_0}/\os{\circ}{T}{}'}$, 
we set 
$$(A_{\rm zar}((X^{[p]}_{\os{\circ}{T}{}'_0},D^{[p]}_{\os{\circ}{T}{}'_0})/S^{[p]}(T')^{\nat}),
P^{D^{[p]}_{\os{\circ}{T}{}'_0}}.P) 
:=(A_{\rm zar}((X^{[p]}_{\os{\circ}{T}{}'_0},D^{[p]}_{\os{\circ}{T}{}'_0})/S^{[p]}(T')^{\nat},E'),
P^{D^{[p]}_{\os{\circ}{T}{}'_0}},P).$$  
Then we have the following {\it  abrelative Frobenius morphism} 
\begin{align*} 
\Phi^{\rm  ar} \col &
(A_{\rm zar}((X^{[p]}_{\os{\circ}{T}{}'_0},D^{[p]}_{\os{\circ}{T}{}'_0})/S^{[p]}(T')^{\nat}),
P^{D^{[p]}_{\os{\circ}{T}{}'_0}},P) \lo \\
& RF^{\rm  ar}_{(X_{\os{\circ}{T}_0},D_{\os{\circ}{T}_0})/S(T)^{\nat},S^{[p]}(T')^{\nat}*}
((A_{\rm zar}((X_{\os{\circ}{T}_0},D_{\os{\circ}{T}_0})/S(T)^{\nat}),P^{D^{[p]}_{\os{\circ}{T}_0}}.P)) 
\tag{6.16.3}
\end{align*}
of $(A_{\rm zar}((X_{\os{\circ}{T}_0},D_{\os{\circ}{T}_0})/S(T)^{\nat}),P^{D_{\os{\circ}{T}_0}},P)$ and 
$(A_{\rm zar}((X^{[p]}_{\os{\circ}{T}{}'_0},D^{[p]}_{\os{\circ}{T}{}'_0})/S^{[p]}(T')^{\nat}),
P^{D^{[p]}_{\os{\circ}{T}{}'_0}},P)$. 
\end{defi}

\begin{prop}[{\bf Frobenius compatibility I}]\label{prop:fcbar} 
The following diagram is commutative$:$ 
\begin{equation*} 
\begin{CD} 
A_{\rm zar}((X^{[p]}_{\os{\circ}{T}{}'_0},D^{[p]}_{\os{\circ}{T}{}'_0})/S^{[p]}(T')^{\nat},E')
@>{\Phi^{\rm ar}}>>
 \\
@A{\theta_{(X^{[p]}_{\os{\circ}{T}{}'_0},D^{[p]}_{\os{\circ}{T}{}'_0})/S^{[p]}(T')^{\nat}} \wedge}A{\simeq}A  \\
Ru_{(X^{[p]}_{\os{\circ}{T}{}'_0},D^{[p]}_{\os{\circ}{T}{}'_0})/S^{[p]}(T')^{\nat}*}
(\eps^*_{(X^{[p]}_{\os{\circ}{T}{}'_0},D^{[p]}_{\os{\circ}{T}{}'_0})/S^{[p]}(T')^{\nat}}(E'))
@>{\Phi^{\rm ar}}>>
\end{CD}
\end{equation*} 
\begin{equation*} 
\begin{CD} 
RF^{\rm ar}_{(X_{\os{\circ}{T}_0},D_{\os{\circ}{T}_0})/S(T)^{\nat},S^{[p]}(T')^{\nat}*}
(A_{\rm zar}((X_{\os{\circ}{T}_0},D_{\os{\circ}{T}_0})/S(T)^{\nat},E)) \\
@A{RF^{\rm ar}_{X_{\os{\circ}{T}_0/S(T)^{\nat},S^{[p]}(T')^{\nat}}*}
(\theta_{(X_{\os{\circ}{T}_0},D_{\os{\circ}{T}_0})/S(T)^{\nat}})\wedge}A{\simeq}A 
\\
RF^{\rm ar}_{(X_{\os{\circ}{T}_0},D_{\os{\circ}{T}_0})/S(T)^{\nat},S^{[p]}(T')^{\nat}*}
Ru_{(X_{\os{\circ}{T}_0},D_{\os{\circ}{T}_0})/S(T)^{\nat}*}
(\eps^*_{(X_{\os{\circ}{T}_0},D_{\os{\circ}{T}_0})/S(T)^{\nat}}(E)). 
\end{CD}
\tag{6.17.1}\label{cd:rukt}
\end{equation*} 
This is contravariantly functorial 
for the morphism {\rm (\ref{eqn:xdxduss})} satisfying {\rm (\ref{cd:xygxy})} 
and for the morphism of $F$-crystals 
\begin{equation*} 
\begin{CD} 
\os{\circ}{F}{}^{{\rm ar}*}_{(X_{\os{\circ}{T}_0},D_{\os{\circ}{T}_0})/S(T)^{\nat},S^{[p]}(T')^{\nat},{\rm crys}}
(E')
@>{\Phi^{\rm ar}}>> E\\
@AAA @AAA \\
\os{\circ}{g}{}^*
\os{\circ}{F}{}^{{\rm ar}*}_{(Y_{\os{\circ}{T}_0},C_{\os{\circ}{T}_0})/S(T)^{\nat},S^{[p]}(T')^{\nat},{\rm crys}}
(F')
@>{\os{\circ}{g}{}^*(\Phi^{\rm ar})}>> 
\os{\circ}{g}{}^*(F), 
\end{CD}
\end{equation*} 
where $F$ $($resp.~$F')$ is 
a similar quasi-coherent 
${\cal O}_{\os{\circ}{Y}_{T_0}/\os{\circ}{T}}$-module to $E$ 
$($resp.~a similar quasi-coherent 
${\cal O}_{\os{\circ}{Y}{}^{[p]}_{T_0}/\os{\circ}{T}}$-module to $E')$. 
\end{prop} 
\begin{proof} 
This is a special case of (\ref{theo:funpas}). 
\end{proof}

\begin{defi}[{\bf Absolute Frobenius endomorphism}]\label{defi:btd}  
Let the notations and the assumptions be as in (\ref{defi:rwd}).  
Let $F_{S} \col S \lo S$ be the Frobenius endomorphism of $S$, 
that is, $\os{\circ}{F}_{S}\col \os{\circ}{S} \lo \os{\circ}{S}$ 
is induced by the $p$-th power endomorphism of ${\cal O}_{S}$ 
and the multiplication by $p$ of the log structure of $S$.   
Let $F_{S_{\os{\circ}{T}_0}} \col S_{\os{\circ}{T}_0} 
\lo S_{\os{\circ}{T}_0}$ be 
the Frobenius endomorphism of $S_{\os{\circ}{T}_0}$.  
Assume that there exists a lift $F_{S(T)^{\nat}} \col S(T)^{\nat}\lo S(T)^{\nat}$ of 
$F_{S_{\os{\circ}{T}_0}}$  
which gives a PD-morphism 
$F_{S(T)^{\nat}}\col (S(T)^{\nat},{\cal J},\del)\lo (S(T)^{\nat},{\cal J},\del)$.  
Let 
$$F^{\rm abs}_{(X_{\os{\circ}{T}_0},D_{\os{\circ}{T}_0})/S(T)^{\nat}} 
\col (X_{\os{\circ}{T}_0},D_{\os{\circ}{T}_0})  \lo (X_{\os{\circ}{T}_0},D_{\os{\circ}{T}_0})$$ 
be the absolute Frobenius endomorphism over $F_{S(T)^{\nat}}$.   
Let 
\begin{align*} 
\Phi^{\rm abs} \col 
\os{\circ}{F}{}^{{\rm abs}*}_{(X_{\os{\circ}{T}_0},D_{\os{\circ}{T}_0})/S(T)^{\nat},{\rm crys}}(E)\lo E
\end{align*} 
be a morphism of crystals in 
$(\os{\circ}{X}_{T_0}/\os{\circ}{T})_{\rm crys}$.   
Then the divisibility of the morphism (\ref{eqn:odnl}) 
holds in this situation by (\ref{prop:dvok}) for the case $n=1$ if ${\cal I}= p{\cal O}_T$.  
Then we call the induced morphism by $\Phi^{\rm abs}$ and $F_{S(T)^{\nat}}$
\begin{align*} 
\Phi^{\rm abs} \col &
(A_{\rm zar}((X_{\os{\circ}{T}_0},D_{\os{\circ}{T}_0})/S(T)^{\nat},E),P^{D_{\os{\circ}{T}_0}},P) 
\lo \\
& RF^{\rm abs}_{(X_{\os{\circ}{T}_0},D_{\os{\circ}{T}_0})/S(T)^{\nat}*}
((A_{\rm zar}((X_{\os{\circ}{T}_0},D_{\os{\circ}{T}_0})/S(T)^{\nat},E),P^{D_{\os{\circ}{T}_0}},P)) 
\tag{6.18.1}\label{eqn:ebnp}
\end{align*}
the {\it absolute Frobenius endomorphism} of 
$(A_{\rm zar}((X_{\os{\circ}{T}_0},D_{\os{\circ}{T}_0})/S(T)^{\nat},E),P^{D_{\os{\circ}{T}_0}},P)$ 
with respect to $F_{S(T)^{\nat}}$.
When $E={\cal O}_{\os{\circ}{X}_{T_0}/\os{\circ}{T}}$, 
we have the following {\it absolute Frobenius endomorphism} 
\begin{align*} 
\Phi^{\rm abs}{}^* \col & 
(A_{\rm zar}((X_{\os{\circ}{T}_0},D_{\os{\circ}{T}_0})/S(T)^{\nat}),P^{D_{\os{\circ}{T}_0}},P) 
\lo \\
&  RF^{\rm abs}_{(X_{\os{\circ}{T}_0},D_{\os{\circ}{T}_0})/S(T)^{\nat}*}
((A_{\rm zar}((X_{\os{\circ}{T}_0},D_{\os{\circ}{T}_0})/S(T)^{\nat}),P^{D_{\os{\circ}{T}_0}},P)) 
\tag{6.18.2}\label{eqn:abst}
\end{align*}
of $(A_{\rm zar}(X_{\os{\circ}{T}_0}/S(T)^{\nat}),P^{D_{\os{\circ}{T}_0}},P)$ 
with respect to $F_{S(T)^{\nat}}$.  
\end{defi}

\begin{prop}[{\bf Frobenius compatibility II}]\label{prop:fcabbar} 
The following diagram is commutative$:$ 
\begin{equation*} 
\begin{CD} 
A_{\rm zar}((X_{\os{\circ}{T}_0},D_{\os{\circ}{T}_0})/S(T)^{\nat},E)
@>{\Phi^{\rm abs}}>>
 \\
@A{\theta_{(X_{\os{\circ}{T}_0},D_{\os{\circ}{T}_0})/S(T)^{\nat}} \wedge}A{\simeq}A  \\
Ru_{(X_{\os{\circ}{T}_0},D_{\os{\circ}{T}_0})/S(T)^{\nat}*}
(\eps^*_{(X_{\os{\circ}{T}_0},D_{\os{\circ}{T}_0})/S(T')^{\nat}}(E))
@>{\Phi^{\rm abs}}>>
\end{CD}
\end{equation*} 
\begin{equation*} 
\begin{CD} 
RF^{\rm abs}_{(X_{\os{\circ}{T}_0},D_{\os{\circ}{T}_0})/S(T)^{\nat},S(T)^{\nat}*}
(A_{\rm zar}((X_{\os{\circ}{T}_0},D_{\os{\circ}{T}_0})/S(T)^{\nat},E)) \\
@A{RF^{\rm abs}_{X_{\os{\circ}{T}_0/S(T)^{\nat}}*}
(\theta_{(X_{\os{\circ}{T}_0},D_{\os{\circ}{T}_0})/S(T)^{\nat}})\wedge}A{\simeq}A 
\\
RF^{\rm abs}_{(X_{\os{\circ}{T}_0},D_{\os{\circ}{T}_0})/S(T)^{\nat}*}
Ru_{(X_{\os{\circ}{T}_0},D_{\os{\circ}{T}_0})/S(T)^{\nat}*}
(\eps^*_{(X_{\os{\circ}{T}_0},D_{\os{\circ}{T}_0})/S(T)^{\nat}}(E)). 
\end{CD}
\tag{6.19.1}\label{cd:ruabkt}
\end{equation*} 
This is contravariantly functorial 
for the morphism {\rm (\ref{eqn:xdxduss})} satisfying {\rm (\ref{cd:xygxy})} 
and for the morphism of $F$-crystals 
\begin{equation*} 
\begin{CD} 
\os{\circ}{F}{}^{{\rm abs}*}_{(X_{\os{\circ}{T}_0},D_{\os{\circ}{T}_0})/S(T)^{\nat},{\rm crys}}
(E)
@>{\Phi^{\rm abs}}>> E\\
@AAA @AAA \\
\os{\circ}{g}{}^*
\os{\circ}{F}{}^{{\rm abs}*}_{(Y_{\os{\circ}{T}_0},C_{\os{\circ}{T}_0})/S(T)^{\nat},{\rm crys}}
(F)
@>{\os{\circ}{g}{}^*(\Phi^{\rm abs})}>> 
\os{\circ}{g}{}^*(F), 
\end{CD}
\end{equation*} 
where $F$ is a similar quasi-coherent 
${\cal O}_{\os{\circ}{Y}_{T_0}/\os{\circ}{T}}$-module to $E$. 
\end{prop} 
\begin{proof} 
This is a special case of (\ref{theo:funpas}). 
\end{proof}

\par 
%Let the notations be as in (\ref{eqn:esasp}). 
%\par 
When $\os{\circ}{S}$ is of characteristic $p>0$, when 
$u\col (S(T)^{\nat},{\cal J},\del)\lo (S'(T')^{\nat},{\cal J}',\del')$ 
is a lift of the abrelative Frobenius morphism 
$F_{S_{\os{\circ}{T}_0}}\col S_{\os{\circ}{T}_0}\lo S^{[p]}_{\os{\circ}{T}{}'_0}$ 
and 
when $g$ is 
the abrelative Frobenius morphism  
$F^{\rm ar}_{(X_{\os{\circ}{T}_0},D_{\os{\circ}{T}_0})/S(T)^{\nat},S^{[p]}(T')^{\nat}}\col 
(X_{\os{\circ}{T}_0},D_{\os{\circ}{T}_0}) \lo (X^{[p]}_{\os{\circ}{T}_0},D^{[p]}_{\os{\circ}{T}_0})$  or 
the absolute Frobenius endomorphism of $(X_{\os{\circ}{T}_0},D_{\os{\circ}{T}_0})$, 
we denote $(-j-k-m,u)$ in (\ref{eqn:escssp}) by $(-j-k-m)$ as usual.

\section{Monodromy operators}\label{sec:mod}
Let $S$, $((T,{\cal J},\del),z)$ and $T_0$ be as in \S\ref{sec:snrdlv}.  
In this section we recall the mornodromy operator defined in 
\cite{hk} and \cite{nb} quickly. 
Let $Y$ be a log smooth scheme over $S$.  
Let $Y'_{\os{\circ}{T}_0}$ be the disjoint union of the member of 
an affine open covering of $Y_{\os{\circ}{T}_0}$. 
By abuse of notation, we also denote by $g$ 
the composite morphism 
$g \col Y_{\os{\circ}{T}_0} \lo S_{\os{\circ}{T}_0} \lo S(T)^{\nat}$.  
Let $Y_{\os{\circ}{T}_0\bul}$ be the \v{C}ech diagram of 
$Y'_{\os{\circ}{T}_0}$ over $Y_{\os{\circ}{T}_0}$: 
$Y_{\os{\circ}{T}_0,n}={\rm cosk}_0^{Y_{\os{\circ}{T}_0}}(Y'_{\os{\circ}{T}_0})_n$ $(n\in {\mab N})$. 
Let $g_{\bul}\col  Y_{\os{\circ}{T}_0\bul}\lo S(T)^{\nat}$ 
be the structural morphism. 
For $U=S(T)^{\nat}$ or $\os{\circ}{T}$, 
let 
$$\eps_{Y_{\os{\circ}{T}_0\bul}/U}\col 
((Y_{\os{\circ}{T}_0\bul}/U)_{\rm crys},
{\cal O}_{Y_{\os{\circ}{T}_0\bul}/U})\lo 
((\os{\circ}{Y}_{T_0\bul}/\os{\circ}{T})_{\rm crys},
{\cal O}_{\os{\circ}{Y}_{T_0\bul}/\os{\circ}{T}})$$  
be the morphism forgetting the log structures of $Y_{\os{\circ}{T}_0\bul}$ and $U$. 
Let 
$Y_{\os{\circ}{T}_0\bul} \os{\sus}{\lo} \ol{\cal Q}_{\bul}$ 
be an immersion into a log smooth scheme over $\ol{S(T)^{\nat}}$. 
Set ${\cal Q}_{\bul}
:=\ol{\cal Q}_{\bul}\times_{\ol{S(T)^{\nat}}}S(T)^{\nat}$.  
Let $\ol{\mathfrak E}_{\bul}$ 
be the log PD-envelope of 
the immersion $Y_{\os{\circ}{T}_0\bul} \os{\sus}{\lo} \ol{{\cal Q}}_{\bul}$ over 
$(\os{\circ}{T},{\cal J},\del)$. 
Set ${\mathfrak E}_{\bul}
=\ol{\mathfrak E}_{\bul}
\times_{{\mathfrak D}(\ol{S(T)^{\nat}})}S(T)^{\nat}$.  
Let $\theta_{{\cal Q}_{\bul}}\in 
{\Om}^1_{{{\cal Q}}_{\bul}/\os{\circ}{T}}$ 
be the pull-back of $\theta =d\log t\in  
{\Om}^1_{S(T)^{\nat}/\os{\circ}{T}}$ by the structural morphism 
${\cal Q}_{\bul}\lo S(T)^{\nat}$.  
Let $\ol{F}$ be a flat quasi-coherent crystal of 
${\cal O}_{Y_{\os{\circ}{T}_0}/\os{\circ}{T}}$-modules. 
Let $\ol{F}{}^{\bul}$ be the crystal of 
${\cal O}_{Y_{\os{\circ}{T}_0\bul}/\os{\circ}{T}}$-modules obtained by $\ol{F}$. 
Let $(\ol{\cal F}{}^{\bul},\ol{\nabla})$ be the quasi-coherent 
${\cal O}_{\ol{\mathfrak E}_{\bul}}$-module 
with integrable connection corresponding to 
$\ol{F}{}^{\bul}$:
\begin{equation*} 
\ol{\nabla} \col \ol{\cal F}{}^{\bul} 
\lo \ol{\cal F}{}^{\bul}\otimes_{{\cal O}_{{{\cal Q}}_{\bul}}}
{\Om}^1_{\ol{\cal Q}_{\bul}/\os{\circ}{T}}.  
%\tag{7.0.1}\label{eqn:nidfopltd}
\end{equation*}  
Set 
$({\cal F}^{\bul},\nabla)=
(\ol{\cal F}{}^{\bul},\ol{\nabla})
\otimes_{{\cal O}_{\ol{\mathfrak E}_{\bul}}}
{\cal O}_{{\mathfrak E}_{\bul}}$.   
Let 
\begin{equation*} 
\nabla \col {\cal F}^{\bul} 
\lo {\cal F}^{\bul}\otimes_{{\cal O}_{{{\cal Q}}_{\bul}}}
{\Om}^1_{{\cal Q}_{\bul}/\os{\circ}{T}} 
\tag{7.0.1}\label{eqn:nidfopltd}
\end{equation*}  
be the induced connection by $\ol{\nabla}$ (\cite[pp.~41--42]{nhir}),. 
Since ${\Om}^1_{{\cal Q}_{\bul}/S(T)^{\nat}}$ 
is a quotient of 
${\Om}^1_{{\cal Q}_{\bul}/\os{\circ}{T}}$, 
we also have the induced connection 
\begin{equation*} 
\nabla_{/S(T)^{\nat}} 
\col {\cal F}^{\bul} \lo {\cal F}^{\bul}
\otimes_{{\cal O}_{{{\cal Q}}_{\bul}}}
{\Om}^1_{{\cal Q}_{\bul}/S(T)^{\nat}}  
\tag{7.0.2}\label{eqn:nidfqtopltd}
\end{equation*}   
by $\nabla$.  
The object 
$({\cal F}^{\bul},\nabla_{/S(T)^{\nat}})$ 
corresponds to the log crystal 
$F^{\bul}:=\eps_{Y_{\os{\circ}{T}_0\bul}/S(T)^{\nat}}^*
(\ol{F}{}^{\bul})$ of ${\cal O}_{Y_{\os{\circ}{T}_0\bul}/S(T)^{\nat}}$-modules. 

\par
In \cite{nb} we have proved the following whose proof is not difficult: 

\begin{prop}[{\bf \cite[(1.7.22)]{nb}}]\label{prop:mce}
The following sequence 
\begin{align*} 
0 & \lo {\cal F}^{\bul}
\otimes_{{\cal O}_{{{\cal Q}}_{\bul}}}
{\Om}^{\bul}_{{{\cal Q}}_{\bul}/S(T)^{\nat}}[-1] 
\os{\theta_{{\cal Q}_{{\bul}} \wedge}}{\lo} 
{\cal F}^{\bul}
\otimes_{{\cal O}_{{{\cal Q}}_{\bul}}}
{\Om}^{\bul}_{{{\cal Q}}_{\bul}/\os{\circ}{T}} 
\lo {\cal F}^{\bul}
\otimes_{{\cal O}_{{{\cal Q}}_{\bul}}}
{\Om}^{\bul}_{{{\cal Q}}_{\bul}/S(T)^{\nat}} \lo 0
\tag{7.1.1}\label{eqn:gsflxd}\\ 
\end{align*} 
is exact. 
\end{prop}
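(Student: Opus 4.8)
The statement to prove is the exactness of the sequence \eqref{eqn:gsflxd}, which exhibits $\Om^{\bul}_{{\cal Q}_{\bul}/\os{\circ}{T}}$ (with coefficients in ${\cal F}^{\bul}$) as a mapping cone built from two copies of $\Om^{\bul}_{{\cal Q}_{\bul}/S(T)^{\nat}}$ via the operation $\theta_{{\cal Q}_{\bul}}\wedge$. This is a purely local, term-by-term statement about the de Rham complexes, so the plan is first to reduce to the local model and then to compute directly with the chart. Since everything is compatible with the simplicial structure, I would work degree-wise in the simplicial index $n$ and, by $\eps$-functoriality, reduce to verifying the exactness of the analogous sequence of ${\cal O}_{{\mathfrak E}_n}$-modules
\[
0\lo {\cal F}^n\otimes\Om^{i-1}_{{\cal Q}_n/S(T)^{\nat}}\os{\theta\wedge}{\lo}{\cal F}^n\otimes\Om^i_{{\cal Q}_n/\os{\circ}{T}}\lo {\cal F}^n\otimes\Om^i_{{\cal Q}_n/S(T)^{\nat}}\lo 0
\]
for each $i$. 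Because ${\cal F}^n$ is a flat ${\cal O}_{{\mathfrak E}_n}$-module, it suffices to prove exactness of the underlying sequence of sheaves of relative differentials, i.e.\ with ${\cal F}^n$ removed.

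The key step is the local analysis of $\Om^1_{{\cal Q}_n/\os{\circ}{T}}$ versus $\Om^1_{{\cal Q}_n/S(T)^{\nat}}$. Zariski-locally on $Y_{\os{\circ}{T}_0}$, using the local structure of SNCL schemes (\ref{defi:lfac}) and the chart machinery of \S\ref{sec:snclv}, one finds an \'etale chart in which $M_{S(T)^{\nat}}$ is free of rank $1$ with generator $t$, so that $\theta=d\log t$, and $M_{{\cal Q}_n}/{\cal O}^*$ is free with basis containing coordinates $x_0,\dots,x_a$ whose product maps to $t$. In such a chart one has a direct sum decomposition $\Om^1_{{\cal Q}_n/\os{\circ}{T}}=\big({\cal O}\cdot d\log x_0\oplus\cdots\oplus{\cal O}\cdot d\log x_a\big)\oplus\big(\bigoplus_j{\cal O}\,\eta_j\big)$, where the $\eta_j$ are the ``transverse'' forms, together with the single relation $\sum_{s=0}^a d\log x_s=d\log t=\theta$; and $\Om^1_{{\cal Q}_n/S(T)^{\nat}}$ is the quotient by exactly this relation, equivalently the submodule spanned by $d\log x_0-d\log x_a,\dots,d\log x_{a-1}-d\log x_a$ and the $\eta_j$. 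Passing to exterior powers, $\Om^i_{{\cal Q}_n/\os{\circ}{T}}$ is a Koszul-type module on these generators, and the operator $\theta\wedge=d\log t\wedge$ is wedging with the ``sum'' generator. This is precisely the situation of the standard lemma (used in \cite{hk} and in \cite[3.15]{msemi}): for a free module $V$ with a distinguished vector $\theta=\sum e_s$ and $W=V/\theta$ (or the complementary subspace), the sequence $0\to\bigwedge^{i-1}W\os{\theta\wedge}{\to}\bigwedge^i V\to\bigwedge^i W\to 0$ is exact. One verifies injectivity of $\theta\wedge$ by a basis computation (choose $d\log x_0,\dots,d\log x_{a-1},\theta,\eta_\bullet$ as a basis of $\Om^1_{{\cal Q}_n/\os{\circ}{T}}$: then $\theta\wedge$ is clearly injective on the span of the wedge monomials not involving $\theta$, which is $\Om^{i-1}_{{\cal Q}_n/S(T)^{\nat}}$), exactness in the middle is the statement that the image of $\theta\wedge$ is exactly the kernel of the projection (again immediate in the chosen basis), and surjectivity of the projection is obvious. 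Finally, one checks the local exact sequences glue: the subcomplex ${\rm Im}(\theta\wedge)$ and the quotient are intrinsic (independent of the chart, since $\theta=d\log t$ is canonical by the remark in the statement and $\Om^{\bul}_{{\cal Q}_n/S(T)^{\nat}}$ is intrinsic), so the locally defined exact sequences patch to the global sequence \eqref{eqn:gsflxd}.

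The main obstacle I anticipate is not conceptual but bookkeeping: one must handle the case $a=0$ (no ``vertical'' coordinates, where $\theta$ may vanish or be zero in $\Om^1_{{\cal Q}_n/\os{\circ}{T}}$ — here one needs the hollow/solid structure of the chart to see that $\theta$ is still part of a basis, as guaranteed because $M_{S(T)^{\nat}}$ is a nontrivial free log structure), and one must track signs and the placement of the shift $[-1]$ carefully so that $\theta_{{\cal Q}_{\bul}}\wedge$ is genuinely a morphism of complexes with the sign conventions fixed in the Notations and in \eqref{cd:locstbd}. A secondary point is the passage from the local chart to the PD-envelope ${\mathfrak E}_n$: one uses that ${\cal O}_{{\mathfrak E}_n}\otimes_{{\cal O}_{{\cal Q}_n}}(-)$ preserves the exactness because the local pieces of $\Om^i_{{\cal Q}_n/\os{\circ}{T}}$ are locally free ${\cal O}_{{\cal Q}_n}$-modules (so the sequence stays exact after any base change), together with flatness of ${\cal F}^n$. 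Once these local computations are in place, the simplicial globalization is formal, and this is exactly why the excerpt attributes the proof to \cite{nb} as ``not difficult''.
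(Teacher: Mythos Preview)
The paper does not give its own proof of this proposition: it is simply quoted from \cite[(1.7.22)]{nb} with the remark that the proof ``is not difficult.'' So there is nothing to compare against beyond the assertion that the argument is elementary.

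Your approach is correct and is the standard one: reduce degree-wise to a statement about ${\cal O}_{{\mathfrak E}_n}$-modules, strip off ${\cal F}^n$ by flatness, and then observe that locally $\Om^1_{{\cal Q}_n/\os{\circ}{T}}$ is free with $\theta$ as one basis vector and $\Om^1_{{\cal Q}_n/S(T)^{\nat}}$ is the quotient by $\theta$, so that the exterior powers split as $\Om^i_{{\cal Q}_n/\os{\circ}{T}}\simeq(\theta\wedge\Om^{i-1}_{{\cal Q}_n/S(T)^{\nat}})\oplus\Om^i_{{\cal Q}_n/S(T)^{\nat}}$. One small remark: your local description in terms of coordinates $x_0,\ldots,x_a$ with $\prod x_s\mapsto t$ presupposes an SNCL-type chart, whereas in \S\ref{sec:mod} the scheme $Y$ is merely log smooth over $S$. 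This does not matter for the argument, since all you actually use is that $\Om^1_{{\cal Q}_n/S(T)^{\nat}}$ is locally free (log smoothness of ${\cal Q}_n$ over $S(T)^{\nat}$) and that $\Om^1_{S(T)^{\nat}/\os{\circ}{T}}$ is free of rank one on $\theta$; these two facts already give the locally split short exact sequence $0\to{\cal O}_{{\cal Q}_n}\theta\to\Om^1_{{\cal Q}_n/\os{\circ}{T}}\to\Om^1_{{\cal Q}_n/S(T)^{\nat}}\to0$ without any SNCL hypothesis. Your treatment of the $a=0$ case and of the sign induced by the shift $[-1]$ is fine.
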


\parno 
Let 
\begin{equation*} 
{\cal F}^{\bul}
\otimes_{{\cal O}_{{{\cal Q}}_{\bul}}}
{\Om}^{\bul}_{{{\cal Q}}_{\bul}/S(T)^{\nat}} 
\lo 
{\cal F}^{\bul}
\otimes_{{\cal O}_{{{\cal Q}}_{\bul}}}
{\Om}^{\bul}_{{{\cal Q}}_{\bul}/S(T)^{\nat}} 
\tag{7.1.2}\label{eqn:gyglxd}
\end{equation*} 
be the boundary morphism of (\ref{eqn:gsflxd}) 
in the derived category 
$D^+(f^{-1}({\cal O}_T))$. 
(We make the convention on the sign of 
the boundary morphism as in \cite[p.~12 (4)]{nh2}.)  
By using the formula 
$Ru_{Y_{\os{\circ}{T}_0\bul}/S(T)^{\nat}*}(F^{\bul})
={\cal F}^{\bul}
\otimes_{{\cal O}_{{{\cal Q}}_{\bul}}}
{\Om}^{\bul}_{{{\cal Q}}_{\bul}/S(T)^{\nat}}$  
(\cite[(6.4)]{klog1}, (cf.~\cite[(2.2.7)]{nh2})), 
we have the following morphism  
\begin{equation*} 
Ru_{Y_{\os{\circ}{T}_0\bul}/S(T)^{\nat}*}(F^{\bul})
\lo 
Ru_{Y_{\os{\circ}{T}_0\bul}/S(T)^{\nat}*}(F^{\bul}).   
\tag{7.1.3}\label{eqn:uyons}
\end{equation*}  
Let 
\begin{equation*} 
\pi_{{\rm crys}} \col 
((Y_{\os{\circ}{T}_0\bul}/S(T)^{\nat})_{\rm crys},{\cal O}_{Y_{\os{\circ}{T}_0\bul}/S(T)^{\nat}})
\lo 
((Y_{\os{\circ}{T}_0}/S(T)^{\nat})_{\rm crys},{\cal O}_{Y_{\os{\circ}{T}_0}/S(T)^{\nat}})
\tag{7.1.4}\label{eqn:tzcar}
\end{equation*} 
and 
\begin{equation*} 
\pi_{{\rm zar}} \col 
((\os{\circ}{Y}_{T_0\bul})_{\rm zar},
g^{-1}_{\bul}({\cal O}_T)) \lo 
((\os{\circ}Y_{T_0})_{\rm zar},g^{-1}({\cal O}_T)) 
\tag{7.1.5}\label{eqn:tzar}
\end{equation*} 
be the natural morphisms of ringed topoi. 
Applying $R\pi_{{\rm zar}*}$ to (\ref{eqn:uyons}) 
and using the formula 
$\pi_{{\rm zar}*}\circ u_{Y_{\os{\circ}{T}_0\bul}/S(T)^{\nat}}
=u_{Y_{\os{\circ}{T}_0}/S(T)^{\nat}}\circ \pi_{{\rm crys}}$ 
and using the cohomological descent,  
we have the following morphism   
\begin{equation*}
N_{\rm zar} \col Ru_{Y_{\os{\circ}{T}_0}/S(T)^{\nat}*}(F) 
\lo 
Ru_{Y_{\os{\circ}{T}_0}/S(T)^{\nat}*}(F), 
\tag{7.1.6}\label{eqn:nzgslyne}
\end{equation*}
where $F:=\eps_{Y_{\os{\circ}{T}_0}/S(T)^{\nat}}^*(\ol{F})$.

In \cite{nb} we have proved the following whose proof is not difficult: 

\begin{prop}[{\bf \cite[(1.7.26), (1.7.30)]{nb}}] 
The morphism {\rm (\ref{eqn:nzgslyne})} 
is independent of the choices of an affine open covering of 
$Y$ and a simplicial immersion 
$Y_{\bul} \os{\sus}{\lo} 
\ol{{\cal Q}}_{\bul}$ over $\ol{S(T)^{\nat}}$. 
\end{prop}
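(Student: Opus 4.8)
The plan is to prove the independence of the monodromy operator $N_{\rm zar}$ from the choice of an affine open covering of $Y$ and of a simplicial immersion $Y_{\os{\circ}{T}_0\bul}\os{\sus}{\lo}\ol{\cal Q}_{\bul}$ over $\ol{S(T)^{\nat}}$ by the standard ``pass to a common refinement and use functoriality of the PD-envelope construction'' argument. First I would observe that $N_{\rm zar}$ is obtained by applying $R\pi_{{\rm zar}*}$ to the boundary morphism of the short exact sequence (\ref{eqn:gsflxd}) of complexes on $\ol{\cal Q}_{\bul}$, identifying the outer terms with $Ru_{Y_{\os{\circ}{T}_0\bul}/S(T)^{\nat}*}(F^{\bul})$ via the log Poincar\'{e} lemma, and then descending along $\pi_{{\rm zar}}$. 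Hence it suffices to show that the connecting map in $D^+(f^{-1}({\cal O}_T))$ built from (\ref{eqn:gsflxd}) does not depend on the chosen data; everything else in the construction (the identifications with $Ru_*$, the cohomological descent $R\pi_{{\rm crys}*}$, the compatibility $\pi_{{\rm zar}*}\circ u_{Y_{\bul}/S(T)^{\nat}}=u_{Y/S(T)^{\nat}}\circ\pi_{{\rm crys}}$) is already canonical.

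Next I would set up the comparison. Given two affine open coverings $Y'_{\os{\circ}{T}_0}$ and $Y''_{\os{\circ}{T}_0}$ with immersions $Y'_{\os{\circ}{T}_0\bul}\os{\sus}{\lo}\ol{\cal Q}'_{\bul}$ and $Y''_{\os{\circ}{T}_0\bul}\os{\sus}{\lo}\ol{\cal Q}''_{\bul}$ over $\ol{S(T)^{\nat}}$, form the refinement $Y'''_{\os{\circ}{T}_0}:=Y'_{\os{\circ}{T}_0}\times_{Y_{\os{\circ}{T}_0}}Y''_{\os{\circ}{T}_0}$, take $Y'''_{\os{\circ}{T}_0\bul}:={\rm cosk}_0^{Y_{\os{\circ}{T}_0}}(Y'''_{\os{\circ}{T}_0})$, and immerse it into $\ol{\cal Q}'''_{\bul}:=\ol{\cal Q}'_{\bul}\times_{\ol{S(T)^{\nat}}}\ol{\cal Q}''_{\bul}$ via the diagonal; this is exactly the device used in the proof of (\ref{prop:tefc}) and (\ref{theo:indcr}). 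Passing to log PD-envelopes, one gets morphisms $\ol{\mathfrak E}'''_{\bul}\lo\ol{\mathfrak E}'_{\bul}$ and $\ol{\mathfrak E}'''_{\bul}\lo\ol{\mathfrak E}''_{\bul}$ compatible with the structural morphisms to $S(T)^{\nat}$ and $\os{\circ}{T}$, and hence a morphism of short exact sequences of the form (\ref{eqn:gsflxd}): the term-wise pullback maps
\begin{equation*}
{\cal F}'^{\bul}\otimes_{{\cal O}_{{\cal Q}'_{\bul}}}{\Om}^{\bul}_{{\cal Q}'_{\bul}/U}\lo {\cal F}'''^{\bul}\otimes_{{\cal O}_{{\cal Q}'''_{\bul}}}{\Om}^{\bul}_{{\cal Q}'''_{\bul}/U}
\end{equation*}
for $U=S(T)^{\nat},\os{\circ}{T}$ are compatible with $\theta_{{\cal Q}'_{\bul}}\wedge$ and $\theta_{{\cal Q}'''_{\bul}}\wedge$ because $\theta$ is pulled back from $S(T)^{\nat}$. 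A morphism of short exact sequences induces the same morphism on connecting maps in the derived category, so after applying $R\pi_{{\rm zar}*}$ and using that the induced maps on the $Ru_*$-terms are isomorphisms (the log Poincar\'{e} lemma plus the usual fact that refining an affine covering gives a quasi-isomorphism, cf.~the argument in (\ref{prop:tefc})), the three resulting monodromy operators agree. Running this twice (with $Y'$ playing against $Y'''$ and $Y''$ against $Y'''$) gives the independence.

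The main obstacle I anticipate is purely bookkeeping rather than conceptual: one must check that all the identifications are genuinely compatible at the level of complexes (not merely up to homotopy on each term), in particular that the connecting homomorphism of (\ref{eqn:gsflxd}) is functorial for the morphisms of triples $({\cal F}^{\bul},{\cal Q}_{\bul},{\mathfrak E}_{\bul})$ induced by refinement, and that the sign conventions for the boundary map (the convention of \cite[p.~12 (4)]{nh2} referenced after (\ref{eqn:gyglxd})) are respected throughout. One also has to make sure the exactness in (\ref{prop:mce})/(\ref{eqn:gsflxd}) is preserved under the relevant base changes and pullbacks, which follows from the local structure of $\ol{\cal Q}_{\bul}$ over $\ol{S(T)^{\nat}}$ exactly as in the proof of \cite[(1.7.22)]{nb}. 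None of this requires new ideas beyond what is already used for the well-definedness of $A_{\rm zar}$ in \S\ref{sec:psc}; I would therefore keep the write-up short, citing (\ref{prop:tefc}), (\ref{theo:indcr}) and \cite[(1.7.22)]{nb} for the repeated technical points and leaving the diagram-chase to the reader, exactly in the style the paper uses elsewhere.
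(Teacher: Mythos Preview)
The paper does not give its own proof of this proposition: it is stated with the attribution \cite[(1.7.26), (1.7.30)]{nb} and prefaced by ``In \cite{nb} we have proved the following whose proof is not difficult,'' with no proof environment following. So there is nothing in the present paper to compare your argument against.

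That said, your proposal is the standard and correct approach, and is precisely the kind of argument one expects in \cite{nb}: pass to a common refinement $Y'''_{\os{\circ}{T}_0}=Y'_{\os{\circ}{T}_0}\times_{Y_{\os{\circ}{T}_0}}Y''_{\os{\circ}{T}_0}$ with the product immersion into $\ol{\cal Q}'_{\bul}\times_{\ol{S(T)^{\nat}}}\ol{\cal Q}''_{\bul}$, observe that pullback gives a morphism of the short exact sequences (\ref{eqn:gsflxd}) (using that $\theta$ comes from $S(T)^{\nat}$), and conclude by functoriality of connecting morphisms together with the quasi-isomorphisms already established in (\ref{prop:tefc}). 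Your identification of the only delicate points---compatibility of the connecting maps at the level of complexes and the sign convention from \cite[p.~12 (4)]{nh2}---is accurate, and neither presents a genuine obstacle.
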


\par 
Let 
$v \col (S(T)^{\nat},{\cal J},\del) \lo (S(T)^{\nat},{\cal J},\del)$ be 
an endomorphism of $(S(T)^{\nat},{\cal J},\del)$.    
Let 
\begin{equation*} 
\begin{CD} 
Y_{\os{\circ}{T}_0} @>{h}>> Y_{\os{\circ}{T}_0} \\ 
@VVV @VVV \\ 
S_{\os{\circ}{T}_0} @>{v_0}>> S_{\os{\circ}{T}_0} \\ 
@V{\bigcap}VV @VV{\bigcap}V \\ 
S(T)^{\nat} @>{v}>> S(T)^{\nat}
\end{CD}
\tag{7.2.1}\label{eqn:ydxduss}
\end{equation*} 
be a commutative diagram of log schemes.  
The morphism (\ref{eqn:nzgslyne}) 
is nothing but a morphism 
\begin{equation*}
N_{\rm zar} \col Ru_{Y_{\os{\circ}{T}_0}/S(T)^{\nat}*}(F) 
\lo 
Ru_{Y_{\os{\circ}{T}_0}/S(T)^{\nat}*}(F)(-1;v)
\tag{7.2.2}\label{eqn:mlcepglynl}
\end{equation*}
since $v^*(\theta_{{\cal Q}_{\bul}})=\deg(v)\theta_{{\cal Q}_{\bul}}$.
In \cite{nb} we have called the morphism (\ref{eqn:mlcepglynl}) the 
{\it  zariskian monodromy operator} 
of $Y_{\os{\circ}{T}_0}/S(T)^{\nat}$. 
In particular we obtain the following monodromy operator: 
\begin{equation*}
N_{\rm zar} \col Ru_{(X_{\os{\circ}{T}_0},D_{\os{\circ}{T}_0})/S(T)^{\nat}*}(F) 
\lo 
Ru_{(X_{\os{\circ}{T}_0},D_{\os{\circ}{T}_0})/S(T)^{\nat}*}(F)(-1;v)
\tag{7.2.3}\label{eqn:mlxglynl}
\end{equation*}

In \cite[(1.7.27)]{nb} we have proved the following:

\begin{prop-defi}[{\bf \cite[(1.7.27)]{nb}}]\label{prop:hkt}
{\rm The complex $R\pi_{{\rm zar}*}({\cal F}^{\bul}
\otimes_{{\cal O}_{{\cal Q}^{\rm ex}_{\bul}}}
{\Om}^{\bul}_{{\cal Q}^{\rm ex}_{\bul}/\os{\circ}{T}})$ 
is independent of the choices of 
an open covering of $Y_{\os{\circ}{T}_0}$ and a simplicial immersion 
$Y_{\os{\circ}{T}_0\bul} \os{\sus}{\lo} \ol{\cal Q}_{\bul}$ 
over $\ol{S(T)^{\nat}}$. 
Set 
\begin{align*} 
&\wt{R}u_{Y_{\os{\circ}{T}_0}/\os{\circ}{T}*}(\ol{F})
:=R\pi_{{\rm zar}*}
({\cal F}^{\bul}\otimes_{{\cal O}_{{\cal Q}{}^{{\rm ex}}_{\bul}}}
\Om^{\bul}_{{\cal Q}{}^{{\rm ex}}_{\bul}/\os{\circ}{T}})
\in D^+(g^{-1}({\cal O}_T)). 
\end{align*} 
In \cite{nb} we have called 
$\wt{R}u_{Y_{\os{\circ}{T}_0}/\os{\circ}{T}*}(\ol{F})$ 
the {\it modified log crystalline complex} of $\ol{F}$ 
on $Y_{\os{\circ}{T}_0}/S(T)^{\nat}$. 
%When $\ol{F}={\cal O}_{Y_{\os{\circ}{T}_0}/\os{\circ}{T}}$, we have called 
%$\wt{R}u_{Y_{\os{\circ}{T}_0}/\os{\circ}{T}*}({\cal O}_{Y_{\os{\circ}{T}_0}/\os{\circ}{T}})$ 
%the {\it modified log crystalline complex} of 
%$Y_{\os{\circ}{T}_0}/S(T)^{\nat}/\os{\circ}{T}$.
}
\end{prop-defi}

\par 
Next we express the monodromy operator (\ref{eqn:mlxglynl}) by using 
the isomorphism (\ref{eqn:uz}) and the complex 
$A_{\rm zar}((X_{\os{\circ}{T}_0},D_{\os{\circ}{T}_0})/S(T)^{\nat},E)$. 
\par 
Let 
\begin{equation*} 
(A_{\rm zar}({\cal P}^{\rm ex}_{\bul}/S(T)^{\nat},{\cal E}^{\bul})^{\bul \bul},
P^{{\cal D}_{\bul}/{\cal X}_{\bul}},P)
\lo (I^{\bul \bul \bul},P^{{\cal D}_{\bul}/{\cal X}_{\bul}},P)
%\tag{7.2.1}\label{eqn:apfi}
\end{equation*} 
be the cosimplicial bifiltered Godement resolution, 
where the left degree is the cosimplicial degree. 
Then we have a resolution 
$$(A_{\rm zar}({\cal P}^{\rm ex}_{\bul}/S(T)^{\nat}, {\cal E}^{\bul}),
P^{{\cal D}_{\bul}/{\cal X}_{\bul}},P)
\lo  
(s(I^{\bul \bul \bul}),P^{{\cal D}_{\bul}/{\cal X}_{\bul}},P),$$   
where $s$ means the single complex 
of the double complex with respect to the last two degrees. 
Set 
$$(A_{\rm zar}
((X_{\os{\circ}{T}_0},D_{\os{\circ}{T}_0})/S(T)^{\nat},E)^{\bul \bul},P^{D_{\os{\circ}{T}_0}},P):=
\pi_{{\rm zar}*}((I^{\bul \bul \bul},P^{{\cal D}_{\bul}/{\cal X}_{\bul}},P)).$$ 
Then 
$$s((A_{\rm zar}((X_{\os{\circ}{T}_0},D_{\os{\circ}{T}_0})/S(T)^{\nat},E)^{\bul \bul},
P^{D_{\os{\circ}{T}_0}},P))
=(A_{\rm zar}(X_{\os{\circ}{T}_0}/S(T)^{\nat},E),P^{D_{\os{\circ}{T}_0}},P)$$ 
in ${\rm D}^+{\rm F}^2(f^{-1}({\cal O}_T))$.  
Consider the following natural projection 
\begin{align*} 
&\nu_{\rm zar}({\cal P}^{\rm ex}_{\bul}/S(T)^{\nat},
{\cal E}^{\bul})^{ij}:={\rm proj}  \col 
{\cal E}^{\bul}\otimes_{{\cal O}_{{\cal P}^{\rm ex}_{\bul}}}
{\Om}^{i+j+1}_{{\cal P}^{\rm ex}_{\bul}/\os{\circ}{T}}/P^{{\cal X}_{\bul}}_{j+1}  
\lo {\cal E}^{\bul}\otimes_{{\cal O}_{{\cal P}^{\rm ex}_{\bul}}}
{\Om}^{i+j+1}_{{\cal P}^{\rm ex}_{\bul}/\os{\circ}{T}}/P^{{\cal X}_{\bul}}_{j+2}.   
\tag{7.3.1}\label{eqn:reslbb} \\ 
\end{align*}  
It is easy to check that 
the morphism above actually induces  
a morphism of complexes 
with the boundary morphisms in (\ref{cd:locstbd}).  
Since $(I^{\bul \bul \bul},P^{{\cal D}_{\bul}/{\cal X}_{\bul}},P)$ is 
the bifilteredly Godement resolution of 
$(A_{\rm zar}({\cal P}^{\rm ex}_{\bul}/S(T)^{\nat}, {\cal E}^{\bul}))^{\bul \bul},
P^{{\cal D}_{\bul}/{\cal X}_{\bul}},P)$, 
the morphism (\ref{eqn:reslbb}) 
%$\nu_{\rm zar}({\cal P}^{\rm ex}_{\bul \\bul}/S(T)^{\nat},
%{\cal E}^{\bul \\bul})^{ij}$ 
induces the morphism  
\begin{equation*} 
\wt{\nu}^{ij}_{{\rm zar}}:={\rm proj}. \col 
A_{\rm zar}((X_{\os{\circ}{T}_0},D_{\os{\circ}{T}_0})/S(T)^{\nat},E)^{ij}\lo 
A_{\rm zar}((X_{\os{\circ}{T}_0},D_{\os{\circ}{T}_0})/S(T)^{\nat},E)^{i-1,j+1},
\end{equation*}  
of sheaves of $f^{-1}({\cal O}_T)$-modules. 
Set 
\begin{align*} 
\wt{\nu}_{{\rm zar}}:=
s(\oplus_{i,j\in {\mab N}}\wt{\nu}^{ij}_{{\rm zar}}).
\tag{7.3.2}\label{ali:ntzd}
\end{align*}  
Let 
\begin{equation*} 
\nu_{S(T)^{\nat},{\rm zar}}
\col (A_{\rm zar}((X_{\os{\circ}{T}_0},D_{\os{\circ}{T}_0})/S(T)^{\nat},E),P^{D_{\os{\circ}{T}_0}},P)
\lo 
(A_{\rm zar}((X_{\os{\circ}{T}_0},D_{\os{\circ}{T}_0})/S(T)^{\nat},E),P^{D_{\os{\circ}{T}_0}},P\langle -2\rangle) 
\tag{7.3.3}\label{eqn:naxgdxdn}
\end{equation*} 
be a morphism of bifiltered complexes induced by 
$\{\nu^{ij}_{S(T)^{\nat},{\rm zar}}\}_{i,j \in {\mab N}}$. 
The morphism 
$$\theta_{{\cal P}^{\rm ex}_{\bul}/\os{\circ}{T}} \wedge \col 
(A_{\rm zar}({\cal P}^{\rm ex}_{\bul}/S(T)^{\nat}, 
{\cal E}^{\bul})^{ij},P^{D_{\os{\circ}{T}_0}},P)
\lo 
(A_{\rm zar}({\cal P}^{\rm ex}_{\bul}/S(T)^{\nat}, 
{\cal E}^{\bul})^{i,j+1},P^{D_{\os{\circ}{T}_0}},P)$$  
in (\ref{cd:locstbd}) induces a morphism 
$$\pi_{{\rm zar}*}(\theta_{{\cal P}^{\rm ex}_{\bul}/\os{\circ}{T}}\wedge) 
\col A_{\rm zar}((X_{\os{\circ}{T}_0},D_{\os{\circ}{T}_0})/S(T)^{\nat},E)^{ij}\lo 
A_{\rm zar}((X_{\os{\circ}{T}_0},D_{\os{\circ}{T}_0})/S(T)^{\nat},E)^{i,j+1}.$$ 

\par 
Let the notations be as in \S\ref{sec:fcuc}. 
Since the following diagram  
{\footnotesize{\begin{equation*}
\begin{CD}
A_{\rm zar}((Y_{\os{\circ}{T}{}'_0},C_{\os{\circ}{T}{}'_0})/S'(T')^{\nat},F)^{ij}
@>{{\rm proj}.}>> 
A_{\rm zar}((Y_{\os{\circ}{T}{}'_0},C_{\os{\circ}{T}{}'_0})/S'(T')^{\nat},F)^{i-1,j+1}\\ 
@V{\pi_{{\rm zar}*}((g^{{\rm PD}*}_{\bul})^{(ij)})}VV 
@VV{\deg(u)(\pi_{{\rm zar}*}(g^{{\rm PD}*}_{\bul}))^{(i-1,j+1)}}V  \\
\pi_{{\rm zar}*}(g^{{\rm PD}*}_{\bul*})(A_{\rm zar}((X_{\os{\circ}{T}_0},D_{\os{\circ}{T}_0})/S(T)^{\nat},E))^{\bul ij} 
@>{{\rm proj}.}>>
\pi_{{\rm zar}*}(g^{{\rm PD}*}_{\bul*})
(A_{\rm zar}((X_{\os{\circ}{T}_0},D_{\os{\circ}{T}_0})/S(T)^{\nat},E)^{i-1,j+1}) 
\end{CD}
\tag{7.3.4}\label{cd:phipnu}
\end{equation*}}}
is commutative, the morphism (\ref{eqn:naxgdxdn}) is 
the following morphism 
\begin{align*} 
\nu_{S(T)^{\nat},{\rm zar}} & \col 
(A_{\rm zar}((Y_{\os{\circ}{T}{}'_0},C_{\os{\circ}{T}{}'_0})/S'(T')^{\nat},E),
P^{C_{\os{\circ}{T}{}'_0}},P)\\
& \lo 
Rg_*((A_{\rm zar}((X_{\os{\circ}{T}_0},D_{\os{\circ}{T}_0})
/S(T)^{\nat},E),P^{D_{\os{\circ}{T}_0}},P\langle -2\rangle))(-1,u). 
\tag{7.3.5}\label{eqn:axdxdn}
\end{align*} 

\par 
Now assume that $(T',{\cal J}',\del')=(T,{\cal J},\del)$, $S'=S$ 
and $g$ is an endomorphism of $(X_{\os{\circ}{T}_0},D_{\os{\circ}{T}_0})$. 
Consider (\ref{cd:xygxy}) in this case: 
\begin{equation*} 
\begin{CD} 
(X'_{\os{\circ}{T}_0},D'_{\os{\circ}{T}_0}) @>{g'}>> (X''_{\os{\circ}{T}_0},D''_{\os{\circ}{T}_0}) \\
@VVV @VVV \\ 
X_{\os{\circ}{T}_0} @>{g}>> X_{\os{\circ}{T}_0} \\
@VVV @VVV \\ 
S_{\os{\circ}{T}_0} @>>> S_{\os{\circ}{T}{}_0} \\ 
@V{\bigcap}VV @VV{\bigcap}V \\ 
S(T)^{\nat} @>{u}>> S(T)^{\nat}, 
\end{CD}
\tag{7.3.6}\label{cd:xgspxy}
\end{equation*}
where $X''_{\os{\circ}{T}{}'_0}$ 
is another disjoint union 
of the member of an affine  open covering of 
$X_{\os{\circ}{T}_0}$. 
Assume that $\deg(u)$ is not divisible by $p$ or 
that  $\os{\circ}{T}$ is a $p$-adic formal scheme and 
that the morphism (\ref{eqn:odnl}) is divisible by $p^{e_p(j+1)}$.  

\par  
Next we consider a $p$-adic analogue of 
a generalization of the (double) complex in \cite[(4.22)]{st1}. 
For simplicity of notations, denote 
$A_{\rm zar}({\cal P}^{\rm ex}_{\bul}/S(T)^{\nat},{\cal E}^{\bul})^{\bul \bul}$ 
and 
$A_{\rm zar}((X_{\os{\circ}{T}_0},D_{\os{\circ}{T}_0})/S(T)^{\nat},E)^{\bul \bul}$ by 
$A^{\bul \bul \bul}$ and $A^{\bul \bul}$, respectively. 
Set 
\begin{equation*} 
B^{\bul ij}
:= A^{\bul ij} \oplus A^{\bul i-1,j}(-1,u)
\quad ( i,j\in {\mab N})
\tag{7.3.7}\label{eqn:axmdn}
\end{equation*} 
and 
\begin{equation*} 
B^{\bul \ij}
:= A^{\bul ij} \oplus A^{\bul i-1,j}(-1,u)
\quad ( i,j\in {\mab N}). 
\tag{7.3.8}\label{eqn:anxdn}
\end{equation*} 
The horizontal boundary morphism
$d' \col B^{\bul ij} \lo B^{\bul i+1,j}$ 
is, by definition, the induced morphism 
$d''{}^{\bul} \col B^{\bul ij} \lo B^{\bul i+1,j}$ 
defined by the following formula: 
\begin{align*} 
d'(\om_1,\om_2)=(-\nabla \om_1,\nabla \om_2) 
\tag{7.3.9}\label{ali:sddclxn}
\end{align*} 
and the vertical one 
$d'' \col B^{\bul ij} \lo B^{\bul i,j+1}$ is the induced morphism 
by a morphism  $d''{}^{\bul} \col B^{\bul ij} \lo B^{\bul i,j+1}$ 
defined by the following formula: 
\begin{align*} 
d''{}^{\bul}(\om_1,\om_2)=
(\theta_{{\cal P}^{\rm ex}_{\bul}/\os{\circ}{T}}\wedge \om_1,
-\theta_{{\cal P}^{\rm ex}_{\bul}/\os{\circ}{T}}\wedge \om_2+\nu_{S(T)^{\nat},{\rm zar}}(\om_1)).
\tag{7.3.10}\label{ali:sddpxn}
\end{align*}  
It is easy to check that 
$B^{\bul \bul \bul}$ is 
actually a cosimplicial double complex. 
Let $B^{\bul \bul}$ be the single complex of $B^{\bul \bul \bul}$ 
with respect to the last two degrees. 
The cosimplicial complex $B^{\bul \bul}$ is nothing but the mapping fiber of 
$\wt{\nu}_{\rm zar}$: $B^{\bul \bul}={\rm MF}(\wt{\nu}_{\rm zar})=
A^{\bul \bul}\oplus A^{\bul \bul}[-1]$.  
Set $B^{\bul}:=R\pi_{{\rm zar}*}(B^{\bul \bul})$. 
Then $B^{\bul}$ is the ``mapping fiber'' of 
$\nu_{S(T)^{\nat},{\rm zar}}$: 
$$B^{\bul}\simeq A_{\rm zar}((X_{\os{\circ}{T}_0},D_{\os{\circ}{T}_0})/S(T)^{\nat},E)
\oplus 
A_{\rm zar}((X_{\os{\circ}{T}_0},D_{\os{\circ}{T}_0})/S(T)^{\nat},E)[-1].$$
\par
Let 
\begin{align*} 
\mu_{{(X_{\os{\circ}{T}_0},D_{\os{\circ}{T}_0})/\os{\circ}{T}}} \col 
\wt{R}u_{(X_{\os{\circ}{T}_0},D_{\os{\circ}{T}_0})/\os{\circ}{T}*}
(\eps^*_{(X_{\os{\circ}{T}_0},D_{\os{\circ}{T}_0})/\os{\circ}{T}}(E))
=&R\pi_{{\rm zar}*}({\cal E}^{\bul}
\otimes_{{\cal O}_{{\cal P}^{\rm ex}_{\bul}}}
{\Om}^{\bul}_{{\cal P}^{\rm ex}_{\bul}/\os{\circ}{T}})\\
&\lo B^{\bul}
\tag{7.3.11}\label{ali:sgsclxn}
\end{align*} 
be a morphism
of complexes induced by the following morphisms
\begin{align*} 
\mu^i_{\bul} \col {\cal E}^{\bul}
\otimes_{{\cal O}_{{\cal P}^{\rm ex}_{\bul}}}
{\Om}^i_{{\cal P}^{\rm ex}_{\bul}/\os{\circ}{T}} 
\lo  
{\cal E}^{\bul}
\otimes_{{\cal O}_{{\cal P}^{\rm ex}_{\bul}}}
{\Om}^{i+1}_{{\cal P}^{\rm ex}_{\bul}/\os{\circ}{T}}/P^{{\cal X}_{\bul}}_0 \oplus  
{\cal E}^{\bul}
\otimes_{{\cal O}_{{\cal P}^{\rm ex}_{\bul}}}
{\Om}^i_{{\cal P}^{\rm ex}_{\bul}/\os{\circ}{T}}/P^{{\cal X}_{\bul}}_0 
\quad (i\in {\mab N})
\end{align*} 
defined by the following formula 
\begin{align*} 
\mu^i_{\bul}(\om):=(\theta_{{\cal P}^{\rm ex}_{\bul}/\os{\circ}{T}}
\wedge \om~{\rm mod}~P_0,\om~{\rm mod}~P_0) \quad 
(\om \in {\cal E}^{\bul}
\otimes_{{\cal O}_{{\cal P}^{\rm ex}_{\bul}}}
{\Om}^i_{{\cal P}^{\rm ex}_{\bul}/\os{\circ}{T}}).
\end{align*} 
Then we have the following 
morphism of triangles:
\begin{equation*}
\begin{CD}
@>>> 
A_{\rm zar}((X_{\os{\circ}{T}_0},D_{\os{\circ}{T}_0})/S(T)^{\nat},E)[-1] @>{}>>\\
@. @A{(\theta_{(X_{\os{\circ}{T}_0},D_{\os{\circ}{T}_0})/S(T)^{\nat}}\wedge *)[-1]}AA  \\
@>>> 
R\pi_{{\rm zar}*}({\cal E}^{\bul}
\otimes_{{\cal O}_{{\cal P}^{\rm ex}_{\bul}}}
\Om^{\bul}_{{\cal P}^{\rm ex}_{\bul}/S(T)^{\nat}})[-1]   
@>{R\pi_{{\rm zar}*}(\theta_{{\cal P}^{\rm ex}_{\bul}/\os{\circ}{T}_0} \wedge)}>> 
\end{CD}
\end{equation*} 
\begin{equation*}
\begin{CD}
B^{\bul} @>>> 
A_{\rm zar}((X_{\os{\circ}{T}_0},D_{\os{\circ}{T}_0})/S(T)^{\nat},E)
@>{+1}>>  \\ 
@A{\mu_{{(X_{\os{\circ}{T}_0},D_{\os{\circ}{T}_0})/\os{\circ}{T}}}}AA  
@A{\theta_{(X_{\os{\circ}{T}_0},D_{\os{\circ}{T}_0})/S(T)^{\nat}} \wedge}AA \\ 
R\pi_{{\rm zar}*}({\cal E}^{\bul}\otimes_{{\cal O}_{{\cal P}^{\rm ex}_{\bul}}}
{\Om}^{\bul}_{{\cal P}^{\rm ex}_{\bul}/\os{\circ}{T}}) @>>> 
R\pi_{{\rm zar}*}({\cal E}^{\bul}\otimes_{{\cal O}_{{\cal P}^{\rm ex}_{\bul}}}
{\Om}^{\bul}_{{\cal P}^{\rm ex}_{\bul}/S(T)^{\nat}}) 
@>{+1}>>.
\end{CD}
\tag{7.3.12}\label{cd:sgsctexn}
\end{equation*} 
This is nothing but the following diagram of triangles: 
\begin{equation*}
\begin{CD}
@>>> 
A_{\rm zar}((X_{\os{\circ}{T}_0},D_{\os{\circ}{T}_0})/S(T)^{\nat},E)[-1] @>{}>>\\
@. @A{(\theta_{(X_{\os{\circ}{T}_0},D_{\os{\circ}{T}_0})/S(T)^{\nat}}\wedge )[-1]}AA  \\
@>>> 
Ru_{(X_{\os{\circ}{T}_0},D_{\os{\circ}{T}_0})/S(T)^{\nat}*}
(\eps^*_{(X_{\os{\circ}{T}_0},D_{\os{\circ}{T}_0})/S(T)^{\nat}}(E))[-1]   
@>{}>> 
\end{CD}
\end{equation*} 
\begin{equation*}
\begin{CD}
B^{\bul} @>>> A_{\rm zar}((X_{\os{\circ}{T}_0},D_{\os{\circ}{T}_0})/S(T)^{\nat},E)
@>{+1}>>  \\  
@A{\mu_{{(X_{\os{\circ}{T}_0},D_{\os{\circ}{T}_0})/\os{\circ}{T}}}}AA 
@A{\theta_{(X_{\os{\circ}{T}_0},D_{\os{\circ}{T}_0})/S(T)^{\nat}} \wedge}AA \\ 
\wt{R}u_{(X_{\os{\circ}{T}_0},D_{\os{\circ}{T}_0})/\os{\circ}{T}*}
(\eps^*_{(X_{\os{\circ}{T}_0},D_{\os{\circ}{T}_0})/\os{\circ}{T}}(E)) @>>> 
Ru_{(X_{\os{\circ}{T}_0},D_{\os{\circ}{T}_0})/S(T)^{\nat}*}
(\eps^*_{(X_{\os{\circ}{T}_0},D_{\os{\circ}{T}_0})/S(T)^{\nat}}(E))@>{+1}>>.
\end{CD}
\tag{7.3.13}\label{cd:monodcom}
\end{equation*}

%\parno 
%Hence we obtain the following as in \cite[p.~246]{st1} 
%and \cite[(11.10)]{ndw}: 

\begin{prop}\label{prop:cmzoqm}
The zariskian monodromy operator
\begin{align*} 
N_{S(T)^{\nat},{\rm zar}} \col &
Ru_{(X_{\os{\circ}{T}_0},D_{\os{\circ}{T}_0})/S(T)^{\nat}*}
(\eps^*_{(X_{\os{\circ}{T}_0},D_{\os{\circ}{T}_0})/S(T)^{\nat}}(E))\\
& \lo  
 Ru_{(X_{\os{\circ}{T}_0},D_{\os{\circ}{T}_0})/S(T)^{\nat}*}
(\eps^*_{(X_{\os{\circ}{T}_0},D_{\os{\circ}{T}_0})/S(T)^{\nat}}(E))(-1,u)
\tag{7.4.1}\label{eqn:minbv}
\end{align*}  
is equal to  
\begin{align*} 
\nu_{S(T)^{\nat},{\rm zar}} \col 
A_{\rm zar}((X_{\os{\circ}{T}_0},D_{\os{\circ}{T}_0})/S(T)^{\nat},E) 
\lo A_{\rm zar}((X_{\os{\circ}{T}_0},D_{\os{\circ}{T}_0})/S(T)^{\nat},E)(-1,u) 
\tag{7.4.2}\label{eqn:minsbv}
\end{align*} 
via the isomorphism {\rm (\ref{eqn:uz})}. 
\end{prop}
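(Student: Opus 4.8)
The plan is to reduce the comparison of the zariskian monodromy operator $N_{S(T)^{\nat},{\rm zar}}$ with $\nu_{S(T)^{\nat},{\rm zar}}$ to an identity of explicit morphisms of complexes on a \v{C}ech-simplicial resolution, where both maps are described by the residue/projection formula $d\log t\wedge(-)$ modulo pieces of the filtration $P^{{\cal X}_{\bul}}$. First I would recall, from the construction preceding Proposition~\ref{prop:mce}, that $N_{S(T)^{\nat},{\rm zar}}$ is by definition the boundary morphism in the derived category attached to the short exact sequence (\ref{eqn:gsflxd}), namely the Gysin-type triangle relating ${\cal F}^{\bul}\otimes\Om^{\bul}_{{\cal Q}_{\bul}/\os{\circ}{T}}$, ${\cal F}^{\bul}\otimes\Om^{\bul}_{{\cal Q}_{\bul}/S(T)^{\nat}}$ and a shift of the latter via $\theta_{{\cal Q}_{\bul}}\wedge$. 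On the other hand, the construction of $\nu_{S(T)^{\nat},{\rm zar}}$ in (\ref{eqn:reslbb})--(\ref{eqn:naxgdxdn}) packages exactly the same ``$\theta\wedge$ then project one step further down the $P^{{\cal X}_{\bul}}$-filtration'' operation, but realized directly on the El Zein--Steenbrink--Zucker double complex $A_{\rm zar}({\cal P}^{\rm ex}_{\bul}/S(T)^{\nat},{\cal E}^{\bul})^{\bul\bul}$. So the heart of the argument is to match these two incarnations.

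The key steps, in order: (1) Choose an affine simplicial open covering and a simplicial immersion $(X_{\os{\circ}{T}_0\bul},D_{\os{\circ}{T}_0\bul})\os{\sus}{\lo}\ol{\cal P}_{\bul}$ as in \S\ref{sec:psc}, so that ${\cal P}^{\rm ex}_{\bul}=({\cal X}_{\bul},{\cal D}_{\bul})$ and all the relevant PD-envelopes and de Rham complexes are available; everything that follows is local and compatible with the independence statements (\ref{prop:tefc}), (\ref{theo:indcr}). (2) Write down the mapping-fiber/mapping-cone presentation of the Gysin triangle underlying (\ref{eqn:gsflxd}): the boundary map $N$ is induced by the identity composed with the connecting homomorphism of the sequence $0\to {\cal F}^{\bul}\otimes\Om^{\bul}_{/S(T)^{\nat}}[-1]\xrightarrow{\theta\wedge}{\cal F}^{\bul}\otimes\Om^{\bul}_{/\os{\circ}{T}}\to{\cal F}^{\bul}\otimes\Om^{\bul}_{/S(T)^{\nat}}\to 0$. (3) Identify $A_{\rm zar}((X_{\os{\circ}{T}_0},D_{\os{\circ}{T}_0})/S(T)^{\nat},E)$, via $\theta\wedge$ and the resolution $\mu_{(X_{\os{\circ}{T}_0},D_{\os{\circ}{T}_0})/\os{\circ}{T}}$ of (\ref{ali:sgsclxn}) and the diagram of triangles (\ref{cd:monodcom}), with the shifted cone of $\theta_{{\cal P}^{\rm ex}_{\bul}}\wedge$; here $B^{\bul}$ is precisely the auxiliary complex that realizes this cone. (4) Chase the diagram (\ref{cd:monodcom}) to see that the connecting map of its bottom row is computed by the map ``project modulo $P^{{\cal X}_{\bul}}_{j+1}$ and land modulo $P^{{\cal X}_{\bul}}_{j+2}$'', i.e. exactly $\wt{\nu}_{{\rm zar}}=s(\oplus\wt{\nu}^{ij}_{{\rm zar}})$ of (\ref{ali:ntzd}). (5) Conclude that $N_{S(T)^{\nat},{\rm zar}}=\nu_{S(T)^{\nat},{\rm zar}}$ after applying $R\pi_{{\rm zar}*}$ and using the comparison isomorphism (\ref{eqn:uz}); the Tate/degree twist $(-1,u)$ is accounted for exactly as in (\ref{eqn:mlcepglynl}) because $v^*(\theta_{{\cal Q}_{\bul}})=\deg(v)\theta_{{\cal Q}_{\bul}}$, which matches the twist built into $\nu_{S(T)^{\nat},{\rm zar}}$ in (\ref{eqn:axdxdn}) via the commutative square (\ref{cd:phipnu}).

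I expect the main obstacle to be step (4): making the diagram chase in the derived category genuinely rigorous, i.e. checking that the two connecting morphisms agree on the nose (up to a controlled homotopy) rather than merely abstractly. The subtlety is that $N_{S(T)^{\nat},{\rm zar}}$ is first defined on ${\cal F}^{\bul}\otimes\Om^{\bul}_{{\cal Q}_{\bul}/S(T)^{\nat}}=Ru_{(X_{\os{\circ}{T}_0\bul},D_{\os{\circ}{T}_0\bul})/S(T)^{\nat}*}(F^{\bul})$, which is the ``$P^{{\cal X}_{\bul}}_0$-part'' of the double complex sitting in the top row of (\ref{cd:locstbd}), whereas $\nu_{S(T)^{\nat},{\rm zar}}$ lives on the whole Steenbrink complex; the reconciliation uses that $A_{\rm zar}$ is quasi-isomorphic (via $\theta\wedge$) to the relative de Rham complex, together with the exactness in (\ref{prop:csrl}) and Lemma~\ref{lemm:ti} which guarantee that the ``$\theta\wedge$'' columns of $A_{\rm zar}$ compute the same cohomology. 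One must be careful with signs (the conventions in the Notations item (5) on mapping fibers/cones, and the sign choices in (\ref{cd:lccbd}), (\ref{ali:sddpxn})) and with the bookkeeping of the shift $P\langle -2\rangle$ versus $[-1]$-shifts coming from the two mapping-cone presentations. Once the local identification on $\ol{\cal P}_{\bul}$ is established, globalizing is immediate by the already-proven independence of the constructions from all choices, so I do not anticipate further difficulty there. The proof in the paper will presumably follow essentially this route, and I would note that it is the log-with-relative-SNCD analogue of the corresponding statements in \cite{hk}, \cite{msemi}, \cite{ndw} and \cite[(1.7)]{nb}.
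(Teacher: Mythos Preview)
Your proposal is correct and follows essentially the same route as the paper: the key point, which you identify in steps (3)--(4), is that $B^{\bul}$ is by construction (via (\ref{ali:sddpxn})) the mapping fiber of $\nu_{S(T)^{\nat},{\rm zar}}$, so the already-established morphism of triangles (\ref{cd:monodcom}) identifies the connecting map $N_{S(T)^{\nat},{\rm zar}}$ of the bottom row with $\nu_{S(T)^{\nat},{\rm zar}}$ of the top row under the isomorphism $\theta\wedge$. The paper's proof is simply the one-line observation that $B^{\bul}$ is this mapping fiber; your steps (1), (2), (5) and your discussion of signs and independence are already absorbed into the setup preceding the proposition.
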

\begin{proof} 
Since $B^{\bul}=\pi_{{\rm zar}*}(I^{\bul \bul})\oplus \pi_{{\rm zar}*}(I^{\bul \bul})[-1]$  
is the mapping fiber of 
$$\nu_{S(T)^{\nat},{\rm zar}} \col 
\pi_{{\rm zar}*}(I^{\bul \bul})\lo \pi_{{\rm zar}*}(I^{\bul \bul})(-1,u)$$ 
by (\ref{ali:sddpxn}), we obtain (\ref{prop:cmzoqm}). 
\end{proof} 

%\begin{conj}[{\bf Relative $p$--adic monodromy conjecture}]\label{conj:rnemc} 
%Assume that $\os{\circ}{S}$ is a $p$--adic scheme and that 
%$\os{\circ}{X}$ is projective over $\os{\circ}{S}$. 
%Then the relative monodromy filtration $M$ on $R^qf_{(X,D)/S}({\cal O}_{(X,D)/S})$ 
%with respect to the filtration $P^D$ on $R^qf_{(X,D)/S}({\cal O}_{(X,D)/S})$ exists 
%and it is equal to $P$. That is, the monodromy operator 
%$N\col R^qf_{(X,D)/S}({\cal O}_{(X,D)/S})\lo R^qf_{(X,D)/S}({\cal O}_{(X,D)/S})(-1)$ 
%$(q\in {\mab N})$ 
%induces the following isomorphism:  
%\begin{align*} 
%\nu^e \col {\rm gr}_{q+k+e}^P{\rm gr}_k^{P^{D}}
%R^qf_{(X,D)/S*}({\cal O}_{(X,D)/S}) 
%\lo {\rm gr}_{q+k-e}^P{\rm gr}_k^{P^{D}}
%R^qf_{(X,D)/S*}({\cal O}_{(X,D)/S})(-e)
%\tag{7.4.1}\label{ali:rp}
%\end{align*}
%is an isomorphism modulo torsion.  
%\end{conj}
%We prove that, if (\ref{conj:rmnc}) is true for $D^{(k)}$ for any $k\in {\mab N}$, 
%then (\ref{conj:remc}) is true. As a corollary of this result, 
%We obtain the following: 

\begin{prop}\label{prop:nst}
The morphism {\rm (\ref{eqn:minsbv})} 
is an underlying morphism of the following morphism 
\begin{align*} 
\nu_{S(T)^{\nat},{\rm zar}} \col &
(A_{\rm zar}((X_{\os{\circ}{T}_0},D_{\os{\circ}{T}_0})/S(T)^{\nat},E),P^{D_{\os{\circ}{T}_0}},P)\\
& \lo (A_{\rm zar}((X_{\os{\circ}{T}_0},D_{\os{\circ}{T}_0})/S(T)^{\nat},E),
P^{D_{\os{\circ}{T}_0}},P\langle -2\rangle)(-1,u).  
\tag{7.5.1}\label{eqn:min1usbv} 
\end{align*} 
\end{prop}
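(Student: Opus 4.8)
The plan is to promote the underlying quasi-isomorphism $\nu_{S(T)^{\nat},{\rm zar}}$ of Proposition~\ref{prop:cmzoqm} to a morphism of bifiltered complexes by checking, filtration-step by filtration-step, that the explicit projection maps $\nu^{ij}_{{\rm zar}}$ in (\ref{eqn:reslbb}) respect both filtrations. First I would recall that $\nu^{ij}_{{\rm zar}}$ is the projection
\[
{\cal E}^{\bul}\otimes_{{\cal O}_{{\cal P}^{\rm ex}_{\bul}}}
{\Om}^{i+j+1}_{{\cal P}^{\rm ex}_{\bul}/\os{\circ}{T}}/P^{{\cal X}_{\bul}}_{j+1}
\lo
{\cal E}^{\bul}\otimes_{{\cal O}_{{\cal P}^{\rm ex}_{\bul}}}
{\Om}^{i+j+1}_{{\cal P}^{\rm ex}_{\bul}/\os{\circ}{T}}/P^{{\cal X}_{\bul}}_{j+2},
\]
which lands in bidegree $(i-1,j+1)$. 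Since this is just a further quotient by a sub-${\cal O}_{{\cal X}_{\bul}}$-module of the target of the $(i,j)$-term, it is automatically compatible with any filtration defined as an image/quotient of the filtrations $P^{{\cal D}_{\bul}}$ and $P$ on ${\cal E}^{\bul}\otimes {\Om}^{i+j+1}_{({\cal X}_{\bul},{\cal D}_{\bul})/\os{\circ}{T}}$; the point is only to keep track of the index shifts.

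For the $P^{D_{\os{\circ}{T}_0}}$-filtration this is immediate: by (\ref{eqn:lpcad}) the filtration $P^{{\cal D}_{\bul}}_k$ is defined termwise in each bidegree $(i,j)$ without any $j$-shift, so the quotient map $\nu^{ij}_{{\rm zar}}$ carries $P^{{\cal D}_{\bul}}_k$ of the $(i,j)$-term into $P^{{\cal D}_{\bul}}_k$ of the $(i-1,j+1)$-term, i.e. $\nu_{S(T)^{\nat},{\rm zar}}$ is strictly compatible with $P^{D_{\os{\circ}{T}_0}}$ with no twist. For the $P$-filtration one must use the explicit formula (\ref{eqn:lpbcad}): $P_k$ of the $(i,j)$-term is $P_{2j+k+1}(\cdots)$, so in bidegree $(i-1,j+1)$ the subobject $P_k$ becomes $P_{2(j+1)+k+1}=P_{2j+k+3}$. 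Hence $\nu^{ij}_{{\rm zar}}$ sends $P_k$ in bidegree $(i,j)$ to $P_{k-2}$ in bidegree $(i-1,j+1)$, which is exactly the shift ``$\langle -2\rangle$'' in the statement; this is the origin of the decoration $P\langle -2\rangle$ and is inherited from the analogous classical statement \cite[4.12]{msemi} via (\ref{lemm:ti}). The Tate/degree twist $(-1,u)$ is then inserted for bookkeeping of $\deg(u)$ exactly as in (\ref{eqn:axdxdn}), using the commutativity of (\ref{cd:phipnu}).

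The only remaining point is to verify that, after passing to the bifiltered Godement resolution $(I^{\bul\bul},P^{D_{\os{\circ}{T}_0}},P)$ and forming $(A^{\bul},P^{D_{\os{\circ}{T}_0}},P)=\pi_{{\rm zar}*}(I^{\bul\bul})$, the induced map $\wt{\nu}^{ij}_{{\rm zar}}\col A^{ij}\lo A^{i-1,j+1}$ is still bifiltered; this is formal, since the Godement resolution functor is compatible with (multi)filtered complexes and the projection $\nu^{ij}_{{\rm zar}}$ is a map of bifiltered sheaves by the previous paragraph. I expect the main (though minor) obstacle to be purely bookkeeping: matching the index conventions in (\ref{eqn:lpbcad}) and in the twist notation $P\langle -2\rangle$, $(-1,u)$ so that the shifts come out with the correct signs and the compatibility is strict rather than merely filtered; here I would appeal directly to (\ref{lemm:ti}) and to the construction of $\nu_{S(T)^{\nat},{\rm zar}}$ in (\ref{eqn:naxgdxdn})–(\ref{eqn:axdxdn}), so essentially nothing new has to be proved beyond reading off the filtration degrees.
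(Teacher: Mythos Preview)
Your proposal is correct and follows exactly the paper's approach: the paper's proof is the single sentence ``This follows from the definition of $P^{D_{\os{\circ}{T}_0}}$, $P$ and $\nu_{S(T)^{\nat},{\rm zar}}$,'' and you have simply unpacked that definition-chase, correctly reading off from (\ref{eqn:dbldad}) that $P^{{\cal D}_{\bul}}$ is preserved with no shift and from (\ref{eqn:dblad}) that the passage $(i,j)\mapsto(i-1,j+1)$ forces the index shift $P\mapsto P\langle -2\rangle$. The appeal to (\ref{lemm:ti}) is unnecessary here (that lemma concerns $\theta\wedge$, not $\nu$), and ``strictly compatible'' overstates what is needed; otherwise this is precisely the intended argument.
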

\begin{proof} 
This follows from the definition of $P^{D_{\os{\circ}{T}_0}}$, $P$ and 
$\nu_{S(T)^{\nat},{\rm zar}}$. 
Here $P\langle -2\rangle_k:=P_{k-2}$. 
\end{proof}

\begin{coro}\label{coro:mn}
Let $q$ be a nonnegative integer. 
The monodromy operator 
{\footnotesize{\begin{equation*}
N_{\rm zar}\col R^qf_{(X_{\os{\circ}{T}_0},D_{\os{\circ}{T}_0})/S(T)^{\nat}*}
(\eps^*_{(X_{\os{\circ}{T}_0},D_{\os{\circ}{T}_0})/S(T)^{\nat}}(E))  
\lo 
R^qf_{(X_{\os{\circ}{T}_0},D_{\os{\circ}{T}_0})/S(T)^{\nat}*}
(\eps^*_{(X_{\os{\circ}{T}_0},D_{\os{\circ}{T}_0})/S(T)^{\nat}}(E))(-1;u)
\tag{7.6.1}\label{eqn:mlxdlynl}
\end{equation*}}}
induced by {\rm (\ref{eqn:nzgslyne})} 
induces the following morphisms 
\begin{align*} 
N\col &P^D_kR^qf_{(X_{\os{\circ}{T}_0},D_{\os{\circ}{T}_0})/S(T)^{\nat}*}
(\eps^*_{(X_{\os{\circ}{T}_0},D_{\os{\circ}{T}_0})/S(T)^{\nat}}(E))\\
&\lo 
P^D_kR^qf_{(X_{\os{\circ}{T}_0},D_{\os{\circ}{T}_0})/S(T)^{\nat}*}
(\eps^*_{(X_{\os{\circ}{T}_0},D_{\os{\circ}{T}_0})/S(T)^{\nat}}(E))(-1;u)\quad (k\in  {\mab Z})
\tag{7.6.2}\label{ali:ondexd}
\end{align*} 
and 
\begin{align*} 
N\col & P_kR^qf_{(X_{\os{\circ}{T}_0},D_{\os{\circ}{T}_0})/S(T)^{\nat}*}
(\eps^*_{(X_{\os{\circ}{T}_0},D_{\os{\circ}{T}_0})/S(T)^{\nat}}(E))\\
&\lo 
P_{k-2}R^qf_{(X_{\os{\circ}{T}_0},D_{\os{\circ}{T}_0})/S(T)^{\nat}*}
(\eps^*_{(X_{\os{\circ}{T}_0},D_{\os{\circ}{T}_0})/S(T)^{\nat}}(E))(-1;u)\quad (k\in  {\mab Z}). 
\tag{7.6.3}\label{ali:okxd}
\end{align*} 
\end{coro} 

\section{Bifiltered base change theorem}\label{sec:bckf}
In this section we prove the bifiltered base change theorem 
of $(A_{\rm zar},P^D,P)$. 
\par
Let the notations be as in the previous section. 
Assume that $\os{\circ}{X}_{T_0}$ is quasi-compact. 
Let $f \col X_{\os{\circ}{T}_0} \lo S(T)^{\nat}$ 
be the structural morphism.   

\begin{prop}\label{prop:bdccd}  
Assume that $\os{\circ}{f} \col \os{\circ}{X}_{T_0}\lo \os{\circ}{T}$ 
is quasi-compact and quasi-separated. 
Then $Rf_*((A_{\rm zar}((X_{\os{\circ}{T}_0},D_{\os{\circ}{T}_0})/S(T)^{\nat},E),P^{D_{\os{\circ}{T}_0}},P))$ 
is isomorphic to a bounded bifiltered complex of 
${\cal O}_T$-modules. 
\end{prop}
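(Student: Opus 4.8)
The plan is to reduce the statement, via the bifiltered analogue of the standard argument, to the base change theorem for ordinary (unfiltered) crystalline cohomology of proper schemes. First I would recall that by (\ref{defi:fdirpd}) the object $(A_{\rm zar}((X_{\os{\circ}{T}_0},D_{\os{\circ}{T}_0})/S(T)^{\nat},E),P^{D_{\os{\circ}{T}_0}},P)$ lives in ${\rm D}^+{\rm F}^2(f^{-1}({\cal O}_T))$ and is, by construction in \S\ref{sec:psc}, represented by the bifiltered single complex $R\pi_{{\rm zar}*}((A_{\rm zar}({\cal P}^{\rm ex}_{\bul}/S(T)^{\nat},{\cal E}^{\bul}),P^{{\cal D}_{\bul}},P))$ associated to a \v{C}ech diagram. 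Since $\os{\circ}{X}_{T_0}$ is quasi-compact, the covering can be taken finite, so the simplicial scheme $\os{\circ}{X}_{T_0\bul}$ is eventually constant and the cohomological descent spectral sequence is bounded; combined with the fact that the filtrations $P^D$ and $P$ are biregular in each simplicial degree (only finitely many graded pieces are nonzero, by (\ref{ali:ruobp}) and (\ref{ali:rudrbp})), the bifiltered complex $Rf_*$ of it is a bounded object of ${\rm D}^+{\rm F}^2({\cal O}_T)$ and to show it is isomorphic to an honest bounded bifiltered complex of ${\cal O}_T$-modules it suffices to show each $R^qf_*$ of each bigraded piece ${\rm gr}^{P}_{k'}{\rm gr}^{P^{D}}_k$ is a coherent (or at least a genuine sheaf-theoretic, quasi-coherent) ${\cal O}_T$-module, and that the complex has finite amplitude.

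Second I would compute the bigraded pieces. By (\ref{lemm:grc}) and (\ref{coro:pwspp}) (see (\ref{ali:dks}) and (\ref{ali:dwks})), the graded pieces of $R\pi_{{\rm zar}*}A_{\rm zar}$ for the weight and $P^D$ filtrations are finite direct sums of terms of the form
\begin{equation*}
a^{(2j+k',k-k')}_{*}Ru_{\os{\circ}{X}{}^{(2j+k')}_{\os{\circ}{T}_0}\cap \os{\circ}{D}{}^{(k-k')}_{\os{\circ}{T}_0}/\os{\circ}{T}*}\bigl(E_{\bul}\otimes_{\mab Z}\vp^{(2j+k',k-k')}_{\rm crys}\bigr)(-j-k)[-2j-k],
\end{equation*}
i.e. shifted, twisted, pushed-forward ordinary crystalline complexes of the smooth proper $\os{\circ}{S}$-schemes $\os{\circ}{X}{}^{(l)}\cap\os{\circ}{D}{}^{(m)}$ (smooth because $\os{\circ}{X}$ is an SNC scheme over $\os{\circ}{S}_0$ and the $D_\mu$ meet transversally). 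Since $\os{\circ}{f}$ is proper (hence quasi-compact and quasi-separated, the hypothesis of the proposition), these $\os{\circ}{X}{}^{(l)}\cap\os{\circ}{D}{}^{(m)}$ are proper over $\os{\circ}{T}_0$, so by Berthelot's crystalline base change / finiteness theorem for proper smooth morphisms each $R^qf_{\os{\circ}{X}{}^{(l)}\cap\os{\circ}{D}{}^{(m)}/\os{\circ}{T}*}(\cdots)$ is a coherent ${\cal O}_T$-module, and it vanishes outside a bounded range of $q$ (bounded by twice the relative dimension). The orientation sheaves $\vp_{\rm crys}$ are locally free of finite rank over ${\mab Z}$, so they do not affect coherence. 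Putting these together over the finitely many $(j,k',k)$ gives that $Rf_*({\rm gr}^{P}_{k'}{\rm gr}^{P^{D}}_k\,A_{\rm zar})$ is a bounded complex of coherent ${\cal O}_T$-modules.

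Third, I would pass from the graded pieces to the bifiltered complex itself. Working in ${\rm D}^{\rm b}{\rm F}^2({\cal O}_T)$, the object $Rf_*(A_{\rm zar},P^D,P)$ is biregular, and an inductive dévissage along the finitely many steps of the two filtrations — using the distinguished triangles $P_{k-1}\to P_k\to {\rm gr}^P_k\os{+1}{\to}$ and similarly for $P^D$, and the fact (Notations (6)(e), and \cite{nlf}) that ${\rm D}^{\rm b}{\rm F}^2({\cal O}_T)$ admits honest bifiltered complexes of ${\cal O}_T$-modules as representatives once all graded pieces are represented by bounded complexes of ${\cal O}_T$-modules — yields the desired conclusion. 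Concretely one can replace each graded piece by a bounded complex of coherent (or quasi-coherent flat) ${\cal O}_T$-modules, then lift the extensions step by step; since at each stage we only deal with bounded complexes of ${\cal O}_T$-modules, the output is a bounded bifiltered complex of ${\cal O}_T$-modules quasi-isomorphic (as a bifiltered object) to $Rf_*(A_{\rm zar},P^D,P)$.

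The main obstacle is the bookkeeping in the last step: one must be careful that the standard ``every object of ${\rm D}^{\rm b}{\rm F}({\cal A})$ with graded pieces in ${\rm D}^{\rm b}({\cal A})$ comes from an actual filtered complex'' argument really does go through for the \emph{bi}filtered derived category ${\rm D}^{\rm b}{\rm F}^2$ in the sense of \cite{dh3} and \cite{nlf}, and that the two filtrations interact correctly — i.e. that the bigraded pieces ${\rm gr}^P{\rm gr}^{P^D}$ (not just the singly-graded ones) are the relevant data, which is exactly what (\ref{lemm:grc})(3) supplies. The properness/finiteness input (Berthelot) is essential but standard; the genuinely delicate point is the filtered-to-actual-complex promotion in the bifiltered setting, and one should either cite the analogous statement for the $l$-adic case in \cite{nlf} or spell out the dévissage. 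Everything else — biregularity, boundedness, coherence of graded pieces — follows directly from the results of \S\S\ref{sec:pwtr}--\ref{sec:psc} already proved in the excerpt.
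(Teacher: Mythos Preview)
Your overall strategy --- compute the graded pieces of $(A_{\rm zar},P^D,P)$ in terms of crystalline cohomology of the strata $\os{\circ}{X}{}^{(l)}\cap\os{\circ}{D}{}^{(m)}$ and then invoke a boundedness/finiteness theorem for crystalline cohomology --- is exactly the paper's approach. The paper's proof is two sentences: cite \cite[7.6 Theorem]{bob} for boundedness of each $Rf_{\os{\circ}{X}{}^{(l)}_{T_0}\cap \os{\circ}{D}{}^{(m)}_{T_0}/\os{\circ}{T}*}(E_{\cdots})$, then conclude via the spectral sequence (\ref{eqn:escssp}).

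There is, however, a genuine gap in your version. You write ``Since $\os{\circ}{f}$ is proper (hence quasi-compact and quasi-separated, the hypothesis of the proposition)'', but this is backwards: the proposition assumes only that $\os{\circ}{f}$ is quasi-compact and quasi-separated, \emph{not} proper (properness is only imposed later, in (\ref{coro:fctd}) and (\ref{coro:filpcerf})). Consequently your appeal to ``Berthelot's crystalline base change / finiteness theorem for proper smooth morphisms'' to get coherence of the $R^qf_*$ is not available here. What the paper actually uses is the boundedness theorem \cite[7.6]{bob}, which requires only that the morphism be smooth and quasi-compact quasi-separated, and gives only that the crystalline complex is bounded --- no coherence. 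That is all that is needed: the statement asks only for a \emph{bounded} (bi)filtered complex of ${\cal O}_T$-modules, not a complex of coherent modules.

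Once you drop the coherence claim, your entire third step (the d\'evissage to promote bounded graded pieces to an honest bounded bifiltered complex) becomes unnecessary. Since $\os{\circ}{X}_{T_0}$ is quasi-compact there are only finitely many nonempty strata, so only finitely many $(l,m)$ contribute; each contributes a bounded complex by \cite[7.6]{bob}; hence the spectral sequence (\ref{eqn:escssp}) has finitely many nonzero $E_1$-terms, each bounded, and the total complex is bounded. No lifting of extensions is needed.
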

\begin{proof}
By  \cite[7.6 Theorem]{bob},  
$Rf_{\os{\circ}{X}{}^{(l)}_{T_0}\cap \os{\circ}{D}{}^{(m)}_{T_0}/\os{\circ}{T}*}
(E_{\os{\circ}{X}{}^{(l)}_{T_0}\cap \os{\circ}{D}{}^{(m)}_{T_0}/\os{\circ}{T}})$ 
$(0\leq l,m\in {\mab N})$ is bounded. 
Hence 
$Rf_*((A_{\rm zar}((X_{\os{\circ}{T}_0},D_{\os{\circ}{T}_0})/S(T)^{\nat},E),P^{D_{\os{\circ}{T}_0}},P))$ 
is bounded by the spectral sequence (\ref{eqn:escssp}). 
\end{proof}

\begin{theo}[{\bf Log base change theorem of 
$(A_{\rm zar},P^D,P)$}]\label{theo:bccange} 
Let the assumptions be as in {\rm (\ref{prop:bdccd})}.  
Let $(T',{\cal J}',\del')$ be another log PD-enlargement over $S$. 
Assume that  ${\cal J}'$ is quasi-coherent. 
Set $T'_0:=\ul{\rm Spec}^{\log}_{T'}({\cal O}_{T'}/{\cal J}')$. 
Let $u\col (S(T')^{\nat},{\cal J}',\del') \lo (S(T)^{\nat},{\cal J},\del)$ be 
a morphism of fine log PD-schemes. 
Let 
$f' \col (X_{\os{\circ}{T}{}'_0},D_{\os{\circ}{T}{}'_0})=
(X,D)\times_{S}S_{\os{\circ}{T}{}'_0} \lo S(T')^{\nat}$ 
be the base change morphism of $f$  
by the morphism $S(T')^{\nat}\lo S(T)^{\nat}$.  
Let $q \col (X_{\os{\circ}{T}{}'_0},D_{\os{\circ}{T}{}'_0}) 
\lo (X_{\os{\circ}{T}_0},D_{\os{\circ}{T}_0})$ 
be the induced morphism by $u$. 
Then there exists 
the following canonical bifiltered isomorphism
\begin{align*}
& Lu^*R\os{\circ}{f}_*((A_{\rm zar}((X_{\os{\circ}{T}_0},D_{\os{\circ}{T}_0})
/S(T)^{\nat},E),P^{D_{\os{\circ}{T}_0}},P)) \os{\sim}{\lo} \\
&R\os{\circ}{f}{}'_*((A_{\rm zar}((X_{\os{\circ}{T}{}'_0},D_{\os{\circ}{T}{}'_0})/S(T')^{\nat},
\os{\circ}{q}{}^{*}_{\rm crys}(E)),P^{D_{\os{\circ}{T}{}'_0}},P))
\tag{8.2.1}\label{eqn:blucpw}
\end{align*}
in ${\rm DF}(f'{}^{-1}({\cal O}_{T'}))$. 
\end{theo}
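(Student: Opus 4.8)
The plan is to reduce the base change statement for the bifiltered complex to two inputs: the explicit description of the graded pieces of $(A_{\rm zar},P^{D_{\os{\circ}{T}_0}},P)$ obtained in \S\ref{sec:psc}, and the ordinary crystalline base change theorem for the log smooth schemes $\os{\circ}{X}{}^{(l)}_{T_0}\cap \os{\circ}{D}{}^{(m)}_{T_0}/\os{\circ}{T}$ and $D^{(k)}_{T_0}/S(T)^{\nat}$. First I would note that, by (\ref{prop:bdccd}) and the assumption that $\os{\circ}{X}_{T_0}$ is quasi-compact, all the filtrations in sight are biregular, so it suffices to work with the graded pieces; this is the standard device for reducing a filtered isomorphism to a collection of unfiltered ones. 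Concretely, the morphism (\ref{eqn:blucpw}) is constructed by functoriality of $A_{\rm zar}$ in the base (base changing the simplicial immersion $(X_{\os{\circ}{T}_0\bul},D_{\os{\circ}{T}_0\bul})\os{\sus}{\lo}\ol{\cal P}_{\bul}$ along $S(T')^{\nat}\lo S(T)^{\nat}$ and the PD-envelopes along it, using that PD-envelopes and exactifications commute with flat base change on the base as in \cite{bob}), together with the base change maps on the complexes ${\cal E}^{\bul}\otimes\Om^{\bul}$; this map fits into a commutative square with the two $\theta\wedge$ isomorphisms (\ref{eqn:uz}), so it is independent of choices by (\ref{theo:indcr}).

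Next I would check that (\ref{eqn:blucpw}) is an isomorphism by induction on the length of the biregular filtrations, i.e. it is enough to prove that for each $k,k'$ the induced morphism
\begin{equation*}
Lu^*{\rm gr}^P_{k'}{\rm gr}^{P^{D_{\os{\circ}{T}_0}}}_kRf_*(A_{\rm zar}((X_{\os{\circ}{T}_0},D_{\os{\circ}{T}_0})/S(T)^{\nat},E))
\lo
{\rm gr}^P_{k'}{\rm gr}^{P^{D_{\os{\circ}{T}{}'_0}}}_kRf'_*(A_{\rm zar}((X_{\os{\circ}{T}{}'_0},D_{\os{\circ}{T}{}'_0})/S(T')^{\nat},\os{\circ}{q}{}^*_{\rm crys}(E)))
\end{equation*}
is an isomorphism in $D^+(f'^{-1}({\cal O}_{T'}))$. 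By (\ref{lemm:grc}) (3) and the Poincar\'e lemma (the identification made inside the proof of (\ref{theo:indcr}), via (\ref{ali:ruogrvp}) and (\ref{ali:rutd})), this graded piece is a finite direct sum of terms of the shape $a^{(2j+k'),(k-k')}_*Ru_{\os{\circ}{X}{}^{(2j+k')}_{T_0}\cap\os{\circ}{D}{}^{(k-k')}_{T_0}/\os{\circ}{T}*}(E_{\bul}\otimes_{\mab Z}\vp^{(2j+k'),(k-k')}_{\rm crys})$, Tate-twisted and shifted. Since pushing forward along the finite morphism $a^{(l),(m)}$ commutes with base change and the orientation sheaves are locally constant (hence their formation commutes with any base change), the problem reduces to: for a log smooth scheme $Z/\os{\circ}{T}$ of the relevant type and a flat crystal $E$, the base change morphism $Lu^*Rf_{Z/\os{\circ}{T}*}(E)\os{\sim}{\lo}Rf'_{Z'/\os{\circ}{T}{}'*}(\os{\circ}{q}{}^*_{\rm crys}E)$ is an isomorphism. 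This is precisely the crystalline base change theorem of \cite{bob}, applicable here because all these cohomologies are bounded (again by \cite{bob} 7.6 Theorem, as used in (\ref{prop:bdccd})) and $E$ is a flat quasi-coherent crystal.

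The one point requiring care — and the step I expect to be the main obstacle — is the compatibility of the base change map with the filtration $P^{D_{\os{\circ}{T}_0}}$, equivalently showing that the isomorphism (\ref{ali:rudrbp})-type identification ${\rm gr}^{P^{D_{\os{\circ}{T}_0}}}_kA_{\rm zar}((X_{\os{\circ}{T}_0},D_{\os{\circ}{T}_0})/S(T)^{\nat})=A_{\rm zar}(D^{(k)}_{T_0}/S(T)^{\nat})(-k)[-k]$ is itself compatible with base change, and that the Tate twist $(-k)$ (which is a degree twist with respect to the mapping degree of the Frobenius-type morphism) behaves correctly under $u$. Here I would invoke (\ref{lemm:dlm}) (2), which says ${\mathfrak D}({\cal D}^{(k)})={\mathfrak D}\times_{\cal X}{\cal D}^{(k)}$, so that the Poincar\'e residue isomorphism (\ref{eqn:prgvin}) is compatible with base change on the level of the explicit double complexes; combined with the already-established base change for the $D^{(k)}$ pieces, this finishes the inductive step for the $P^{D_{\os{\circ}{T}_0}}$-filtration as well. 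Assembling the two inductions (first over $P^{D_{\os{\circ}{T}_0}}$, then over $P$ on each ${\rm gr}^{P^{D_{\os{\circ}{T}_0}}}$) via the five-lemma applied repeatedly to the associated long exact sequences yields the desired bifiltered isomorphism (\ref{eqn:blucpw}).
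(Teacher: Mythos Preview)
Your approach is correct and coincides with the paper's: the paper constructs the comparison map by invoking the contravariant functoriality (\ref{theo:funas}) together with the $(Lu^*,R{\mathfrak g}_*)$-adjunction (using (\ref{prop:bdccd}) for boundedness), then defers the isomorphism check to \cite[(1.6.2)]{nb}, which is exactly the reduction to bigraded pieces and the classical crystalline base change you spell out. One simplification: your concern about the behaviour of the Tate twist under $u$ is unnecessary here, since both $T$ and $T'$ are enlargements of the \emph{same} $S$, so $\deg(u)=1$ and the twist is trivial.
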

\begin{proof}  
Let the notations be as in \S\ref{sec:psc}. 
Set 
$\ol{\cal P}_{\bul,\ol{S(T')^{\nat}}}
:=\ol{\cal P}_{\bul}\times_{\ol{S(T)^{\nat}}}\ol{S(T')^{\nat}}$. 
Let   
$\ol{\mathfrak D}{}'_{\bul}$ 
be the log PD-envelope of the immersion 
$(X_{\os{\circ}{T}{}'_0,\bul},D_{\os{\circ}{T}{}'_0,\bul})\os{\sus}{\lo} \ol{\cal P}_{\bul,\ol{S(T')^{\nat}}}$ 
over $(\os{\circ}{T}{}',{\cal J}',\del')$. 
Then we have the natural morphisms 
$\ol{\cal P}_{\bul,\ol{S(T')^{\nat}}}\lo \ol{\cal P}_{\bul}$ 
and 
$\ol{\mathfrak D}{}'_{\bul}  \lo \ol{\mathfrak D}_{\bul}$.  
We also have the identity morphism 
${\rm id}\col \os{\circ}{q}{}^{*}_{\rm crys}(E)
\lo \os{\circ}{q}{}^{*}_{\rm crys}(E)$. 
Obviously the morphism 
$q\col X_{\os{\circ}{T}{}'_0}\lo X_{\os{\circ}{T}{}_0}$ satisfies 
the assumption in (\ref{theo:funas}).  
%and (5.1.2.6). 
Hence we have the following natural morphism  
\begin{equation*} 
(A_{\rm zar}(X_{\os{\circ}{T}_0}/S(T)^{\nat},E),P^{D_{\os{\circ}{T}_0}},P) 
\lo 
Rq_*((A_{\rm zar}(X_{\os{\circ}{T}{}'_0}/S(T')^{\nat},E),P^{D_{\os{\circ}{T}{}'_0}},P))
\tag{8.2.2}\label{eqn:bcxa}
\end{equation*} 
by (\ref{theo:funas}).
By applying $Rf_*$ to (\ref{eqn:bcxa}) and using 
the adjoint property of $L$ and $R$ (\cite[(5.2)]{nlf}), 
we have the natural morphism (\ref{eqn:blucpw}). 
Here we have used the boundedness in 
(\ref{prop:bdccd}) for the well-definedness of $Lu^*$. 
\par
The rest of the proof is the same as that of \cite[(1.6.2)]{nb}. 
\end{proof} 

\par 
Let $\os{\circ}{Y}$ be a smooth scheme over $\os{\circ}{T}$. 
Endow $\os{\circ}{Y}$ with the inverse image of $M_{S_{\os{\circ}{T}}}$ 
and let $Y$ be the resulting log scheme. 
Assume that $Y$ has a log smooth lift ${\cal Y}$ over $S(T)^{\nat}$. 
Let $D_{{\cal Y}/S(T)^{\nat}}(1)$ be the log PD-envelope of the immersion 
${\cal Y}\os{\sus}{\lo} {\cal Y}\times_{S(T)^{\nat}}{\cal Y}$ over $(S(T)^{\nat},{\cal J},\del)$. 
As in \cite[V]{bb} and \cite[\S7]{bob}, we have the following two corollaries 
(cf.~\cite[(2.10.5), (2.10.7)]{nh2}) by 
using (\ref{theo:bccange}) and 
a fact that 
$p_i \col \os{\circ}{D}_{{\cal Y}/S(T)}(1)\lo \os{\circ}{\cal Y}$ 
$(i=1,2)$ 
is flat (\cite[(6.5)]{klog1}): 

\begin{coro}\label{coro:connfil}
Let $g\col (X_{\os{\circ}{T}_0},D_{\os{\circ}{T}_0})
\lo Y$ be an SNCL scheme with a relative SNCD on $X_{\os{\circ}{T}_0}/Y$. 
Let $q$ be an integer.
Let $g\col (X_{\os{\circ}{T}_0},D_{\os{\circ}{T}_0})\lo {\cal Y}$ 
be also the structural morphism. 
Let $Q$ be $P^D$, $P$ or $P^D\cap P$. 
Then there exists  a quasi-nilpotent integrable connection 
\begin{align*}
&Q_kR^qg_{(X_{\os{\circ}{T}_0},D_{\os{\circ}{T}_0})/{\cal Y}*}
(\eps^*_{(X_{\os{\circ}{T}_0},D_{\os{\circ}{T}_0})/S(T)^{\nat}}(E))
\os{\nabla_k}{\lo} \\
& Q_kR^qg_{(X_{\os{\circ}{T}_0},D_{\os{\circ}{T}_0})/{\cal Y}*}
(\eps^*_{(X_{\os{\circ}{T}_0},D_{\os{\circ}{T}_0})/S(T)^{\nat}}(E))
{\otimes}_{{\cal O}_{\cal Y}}{\Om}_{{\cal Y}/S(T)^{\nat}}^1
\tag{8.3.1}
\end{align*}
making the following diagram commutative 
for any two nonnegative integers $k\leq l:$
\begin{equation*}
\begin{CD}
Q_kR^qg_{(X_{\os{\circ}{T}_0},D_{\os{\circ}{T}_0})/{\cal Y}*}
(\eps^*_{(X_{\os{\circ}{T}_0},D_{\os{\circ}{T}_0})/S(T)^{\nat}}(E))
@>{\nabla_k}>> \\
@V{\bigcap}VV  \\
Q_lR^qg_{(X_{\os{\circ}{T}_0},D_{\os{\circ}{T}_0})/{\cal Y}*}
(\eps^*_{(X_{\os{\circ}{T}_0},D_{\os{\circ}{T}_0})/S(T)^{\nat}}(E))
@>{\nabla_l}>>
\end{CD}
\end{equation*}
\begin{equation*}
\begin{CD}
Q_kR^qg_{(X_{\os{\circ}{T}_0},D_{\os{\circ}{T}_0})/{\cal Y}*}
(\eps^*_{(X_{\os{\circ}{T}_0},D_{\os{\circ}{T}_0})/S(T)^{\nat}}(E))
{\otimes}_{{\cal O}_{\cal Y}}{\Om}_{{\cal Y}/S(T)^{\nat}}^1\\
@V{\bigcap}VV \\ 
Q_lR^qg_{(X_{\os{\circ}{T}_0},D_{\os{\circ}{T}_0})/{\cal Y}*}
(\eps^*_{(X_{\os{\circ}{T}_0},D_{\os{\circ}{T}_0})/S(T)^{\nat}}(E)) 
{\otimes}_{{\cal O}_{\cal Y}}{\Om}_{{\cal Y}/S(T)^{\nat}}^1.
\end{CD} 
\tag{8.3.2}
\end{equation*}
\end{coro}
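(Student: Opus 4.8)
The plan is to deduce Corollary \ref{coro:connfil} from the base change theorem \ref{theo:bccange} in exactly the way one constructs the Gauss--Manin connection on crystalline higher direct images, following \cite[V]{bb} and \cite[\S7]{bob} (and the log version in \cite[(2.10.5)]{nh2}). The key geometric input, already noted just before the statement, is that the two projections $p_i\col \os{\circ}{D}_{{\cal Y}/S(T)^{\nat}}(1)\lo \os{\circ}{\cal Y}$ from the log PD-envelope of the diagonal immersion ${\cal Y}\os{\sus}{\lo}{\cal Y}\times_{S(T)^{\nat}}{\cal Y}$ are flat, by \cite[(6.5)]{klog1}. First I would set $\mathfrak{Y}(1):=D_{{\cal Y}/S(T)^{\nat}}(1)$ and form the two base changes of $(X_{\os{\circ}{T}_0},D_{\os{\circ}{T}_0})/{\cal Y}$ along $p_1$ and $p_2$; since $(X_{\os{\circ}{T}_0},D_{\os{\circ}{T}_0})$ is quasi-compact and $\os{\circ}{f}$ is quasi-compact and quasi-separated, (\ref{prop:bdccd}) applies and the higher direct images in question are coherent and represented by bounded complexes, so $Lp_i^*$ is well-defined.

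Next I would apply the filtered base change isomorphism (\ref{eqn:blucpw}) of (\ref{theo:bccange}) twice, to $u=p_1$ and $u=p_2$, and compose the resulting two filtered isomorphisms to get a canonical filtered isomorphism between $p_1^*$ and $p_2^*$ of the bifiltered complex $Rg_*(A_{\rm zar},P^{D_{\os{\circ}{T}_0}},P)$ over $\mathfrak{Y}(1)$ (the flatness of the $p_i$ lets one replace $Lp_i^*$ by $p_i^*$ on the relevant coherent sheaves). Passing to the $q$-th cohomology sheaf of the $k$-step $P_k$ of this complex yields an isomorphism $\epsilon\col p_1^*\mathcal{H}^q\os{\sim}{\lo} p_2^*\mathcal{H}^q$ of ${\cal O}_{\mathfrak{Y}(1)}$-modules, where $\mathcal{H}^q:=P_kR^qg_{(X_{\os{\circ}{T}_0},D_{\os{\circ}{T}_0})/{\cal Y}*}(\eps^*(E))$. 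One then checks the cocycle condition for $\epsilon$ on $D_{{\cal Y}/S(T)^{\nat}}(2)$ — this is formal from the uniqueness clause in (\ref{theo:bccange}) and functoriality of the construction in (\ref{theo:funas}) — and the standard dictionary between HPD-stratifications and quasi-nilpotent integrable connections (as in \cite[\S2]{bb}, \cite[\S4]{bob}) produces the connection $\nabla_k$. The commutativity of the diagram (8.3.2) for $k\leq l$ is immediate because the whole construction is carried out at the level of the filtered complex $(Rg_*A_{\rm zar},P)$ before taking cohomology, so the stratifications for different $k$ are all restrictions of the stratification on the full complex (equivalently, on $P_l$), and the inclusions $P_k\hookrightarrow P_l$ are morphisms of stratified objects.

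The quasi-nilpotence of $\nabla_k$ is a consequence of the quasi-nilpotence (equivalently, the topological nilpotence of the HPD-stratification) coming from the crystalline nature of the construction: the stratification $\epsilon$ restricted modulo the PD-ideal of $\mathfrak{Y}(1)$ is the identity, and the PD-structure forces the associated connection to be quasi-nilpotent, exactly as in \cite[Theorem 2.15]{bb} / \cite[\S7]{bob}; I would simply cite this. The main obstacle, such as it is, is purely bookkeeping: one must make sure that the base change isomorphism (\ref{eqn:blucpw}) is genuinely functorial and compatible with compositions of the $u$'s — this is needed for the cocycle condition — but this compatibility is precisely what is packaged in the functoriality statements (\ref{theo:funas}) and (\ref{theo:funpas}) together with the uniqueness in (\ref{theo:bccange}), so there is no new difficulty; the proof really is ``the same as \cite[V]{bb} and \cite[\S7]{bob}'' as the paper indicates, with (\ref{theo:bccange}) and (\ref{prop:bdccd}) supplying the two ingredients (base change and boundedness) that the general machine requires.
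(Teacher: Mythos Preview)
Your proof is correct and follows exactly the approach the paper indicates: the paper's own proof consists of the single sentence ``This follows from (\ref{theo:bccange}) as in \cite[\S7]{bob},'' and you have simply unpacked that citation, invoking the flatness of the projections $p_i$ from \cite[(6.5)]{klog1}, the boundedness (\ref{prop:bdccd}), and the filtered base change (\ref{theo:bccange}) to build the HPD-stratification and then translate it into a quasi-nilpotent connection via the standard Berthelot--Ogus dictionary.
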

\begin{proof} 
This follows from (\ref{theo:bccange}) as in \cite[\S7]{bob}.
\end{proof} 

\begin{coro}\label{coro:fctd}
Let the notations and the assumptions be as in $(\ref{prop:bdccd})$. 
Let $Q$ be $P^D$, $P$ or $P^D\cap P$. 
Then 
$$Rf_*
(A_{\rm zar}((X_{\os{\circ}{T}_0},D_{\os{\circ}{T}_0})/S(T)^{\nat},E))$$
and 
$$Rf_*
(Q_kA_{\rm zar}((X_{\os{\circ}{T}_0},D_{\os{\circ}{T}_0})/S(T)^{\nat},E))\quad 
(k \in{\mab N})$$
have finite tor-dimension. 
Moreover, if $\os{\circ}{T}$ is noetherian and 
if $\os{\circ}{f}$ is proper,
then $Rf_*
(A_{\rm zar}((X_{\os{\circ}{T}_0},D_{\os{\circ}{T}_0})/S(T)^{\nat},E))$ and 
$Rf_*(Q_kA_{\rm zar}((X_{\os{\circ}{T}_0},D_{\os{\circ}{T}_0})/S(T)^{\nat},E))$ 
are perfect complexes of ${\cal O}_T$-modules.
\end{coro}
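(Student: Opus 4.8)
The plan is to reduce the statement to the already-established structural facts about $A_{\rm zar}$ together with standard homological-algebra results on crystalline pushforwards of smooth (or SNCL) schemes. First I would use the bifiltered spectral sequence (\ref{eqn:escssp}) (equivalently the $E_1$-terms displayed in (\ref{ali:dks}) and (\ref{ali:dwks})) whose terms are, up to Tate twists and shifts, the complexes
\begin{equation*}
Rf_{\os{\circ}{X}{}^{(l)}_{T_0}\cap \os{\circ}{D}{}^{(m)}_{T_0}/\os{\circ}{T}*}
(E_{\os{\circ}{X}{}^{(l)}_{T_0}\cap \os{\circ}{D}{}^{(m)}_{T_0}/\os{\circ}{T}}
\otimes_{\mab Z}\vp^{(l,m)}_{\rm crys})
\tag{8.5.1}
\end{equation*}
for the various $l,m$. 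Since $\os{\circ}{X}{}^{(l)}_{T_0}\cap \os{\circ}{D}{}^{(m)}_{T_0}$ is smooth over $\os{\circ}{T}_0$, the classical results of Berthelot-Ogus (\cite[7.6 Theorem, 7.?]{bob}, cf.~\cite[V]{bb}) apply: the crystalline pushforward of a flat quasi-coherent crystal along a quasi-compact, quasi-separated smooth morphism has finite tor-dimension over $\os{\circ}{T}$, and is a perfect complex when $\os{\circ}{T}$ is noetherian and the morphism is proper. Thus each $E_1$-term of the spectral sequence has the required property.

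Next I would propagate these two properties ("finite tor-dimension" and, under the extra hypotheses, "perfect") from the $E_1$-terms to the abutments $Rf_*(A_{\rm zar}(\cdots))$ and $Rf_*(P_kA_{\rm zar}(\cdots))$. By (\ref{prop:bdccd}) the complexes in question are bounded, so the spectral sequence (\ref{eqn:escssp}) is finite (only finitely many nonzero $E_1$-terms contribute to each abutment because $\os{\circ}{X}_{T_0}$ is quasi-compact, hence $P^D$ and $P$ are biregular). Both properties are stable under taking cones / finite extensions in the derived category: a complex built from finitely many complexes of finite tor-dimension by finitely many distinguished triangles again has finite tor-dimension, and similarly for perfect complexes (perfect complexes form a thick triangulated subcategory). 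So a dévissage on the (finitely many) pages of the weight spectral sequence — or, more cleanly, on the stupid/weight filtrations of the double complex $A_{\rm zar}({\cal P}^{\rm ex}_{\bul}/S(T)^{\nat},{\cal E}^{\bul})^{\bul\bul}$ whose graded pieces are, by (\ref{lemm:grc}), exactly direct sums of complexes of the form (8.5.1) — yields the claim for $Rf_*(A_{\rm zar})$. For $Rf_*(P_kA_{\rm zar})$ one runs the same argument using that $P_kA_{\rm zar}$ is itself an iterated extension of the ${\rm gr}^P_{k'}A_{\rm zar}$ with $k'\le k$, each of which again breaks up as in (\ref{lemm:grc}) into smooth crystalline pushforwards.

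I expect the only genuine subtlety — rather than a true obstacle — to be bookkeeping: one must be careful that the "finite tor-dimension" input for the building blocks is uniform, i.e.\ that the finitely many tor-dimensions occurring are bounded by a single integer, which is automatic once boundedness (\ref{prop:bdccd}) is in place, and that the perfectness argument genuinely needs $\os{\circ}{T}$ noetherian and $\os{\circ}{f}$ proper so that each $\os{\circ}{X}{}^{(l)}_{T_0}\cap \os{\circ}{D}{}^{(m)}_{T_0}\to \os{\circ}{T}_0$ is proper and smooth (properness of a closed subscheme of $\os{\circ}{X}_{T_0}$ over a noetherian base, smoothness by construction of the $\os{\circ}{X}{}^{(l)}\cap\os{\circ}{D}{}^{(m)}$). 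Everything else is the routine thick-subcategory argument sketched above, exactly parallel to \cite[(2.10.?)]{nh2} and the corresponding statement in \cite{nb}; I would simply cite \cite[\S7]{bob} and \cite[V]{bb} for the base cases and invoke (\ref{prop:bdccd}), (\ref{lemm:grc}) and (\ref{eqn:escssp}) for the dévissage.
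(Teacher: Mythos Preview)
Your argument is correct, but the paper takes a different (and shorter) route. The paper states 8.3 and 8.4 together as formal consequences of the filtered base change theorem (\ref{theo:bccange}), invoking the standard template of \cite[\S7]{bob} and \cite[V]{bb} (cf.\ \cite[(2.10.5), (2.10.7)]{nh2}): once one knows that $Lu^*Rf_*(P_kA_{\rm zar}(\cdots))$ is again a bounded $Rf'_*(P_kA_{\rm zar}(\cdots'))$ for \emph{every} morphism $u$, finite tor-dimension follows immediately from boundedness (\ref{prop:bdccd}), and perfectness then follows from the usual coherence plus finite tor-dimension argument. In other words, the paper applies the Berthelot--Ogus machinery to the whole filtered complex at once, rather than to its graded pieces.

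Your d\'evissage through ${\rm gr}^P$ and (\ref{lemm:grc}) is a legitimate alternative: it avoids appealing to the full strength of (\ref{theo:bccange}) and instead uses only the classical unfiltered result for the smooth schemes $\os{\circ}{X}{}^{(l)}_{T_0}\cap \os{\circ}{D}{}^{(m)}_{T_0}$, then closes under cones. The cost is the bookkeeping you flag (uniform bound on tor-amplitudes, biregularity of the filtration); the benefit is that it is logically independent of (\ref{theo:bccange}). The paper's route is cleaner given that (\ref{theo:bccange}) has just been established, and it is the one the author intends since 8.4 is placed as a corollary of (\ref{theo:bccange}) in parallel with \cite[(2.10.7)]{nh2}.
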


%In this paper we generalize 
%the definition of the strictly perfectness in \cite[(2.10.8)]{nh2} as follows: 

%\begin{defi}\label{defi:stpf}
%Let $A$ be a noetherian commutative ring. 
%Let $(E^{\bul},\{E^{(1)\bul}_k\},\{E^{(2)\bul}_k\}) \allowbreak \in {\rm CF}^2(A)$ 
%be a bifiltered complex of $A$-modules.  We say that 
%$(E^{\bul},\{E^{(1)\bul}_k\},\{E^{(2)\bul}_k\})$ 
%is {\it bifilteredly strictly perfect} if $(E^{\bul},\{E^{(1)\bul}_k\},\{E^{(2)\bul}_k\})$ 
%is a bounded bifiltered complex of $A$-modules 
%and if all $E^q$'s and all $E_{k_1k_2}^{(i_1i_2)q}:=E_{k_1}^{(i_1)q}\cap 
%E_{k_2}^{(i_2)q}$'s $(1\leq i_1,i_2\leq 2)$ 
%are finitely generated projective $A$-modules. 
%\end{defi}

%\begin{defi}
%\label{defi:dsak} 
%Let $A$ be a commutative ring with unit element. 
%For a bifiltered $A$-module $(E, \{E^{(1)}_k\},\{E^{(2)}_k\})$ 
%whose filtrations are finite and for  
%a family $\{T_{l_1l_2}^{(i_1i_2)}\}_{1\leq i_1,i_2\leq 2,l_1l_2 \in {\mab Z}}$ 
%of $A$-modules,   
%we say that $(E, \{E^{(1)}_k\},\{E^{(2)}_k\})$ is the 
%{\it direct sum} of $\{T_{l_1l_2}^{(i_1i_2)}\}_{1\leq i_1,i_2\leq 2,l_1,l_2 \in {\mab Z}}$ if 
%$E^{(i_1i_2)}_{k_1k_2} =\bigoplus_{l_1 \leq k_1,l_2\leq k_2} T_{l_1l_2}^{(i_1i_2)}$ 
%$(\forall k_1,\forall k_2 \in {\mab Z})$.  
%\end{defi}

Using (\ref{theo:bccange}), 
%K.~Kato's log base change theorem of log crystalline cohomologies (\cite[(6.10)]{klog1}) 
the base change theorem of classical  crystalline cohomologies
and \cite[(2.10.10)]{nh2}, 
we have the following corollary 
(cf.~\cite[(2.10.11)]{nh2}): 

\begin{coro}\label{coro:filpcerf}
Let the notations 
and the assumptions be as in $(\ref{coro:fctd})$.
Then the filtered complexes  
$Rf_*((A_{\rm zar}((X_{\os{\circ}{T}_0},D_{\os{\circ}{T}_0})/S(T)^{\nat},E),P^{D_{\os{\circ}{T}_0}}))$,  
$Rf_*((A_{\rm zar}((X_{\os{\circ}{T}_0},D_{\os{\circ}{T}_0})/S(T)^{\nat},E),P))$ 
and 
$Rf_*((A_{\rm zar}((X_{\os{\circ}{T}_0},D_{\os{\circ}{T}_0})/S(T)^{\nat},E),
P^{D_{\os{\circ}{T}_0}}\cap P))$ 
are filtered perfect complexes of ${\cal O}_T$-modules, that is, 
locally on $T_{\rm zar}$, filteredly quasi-isomorphic to 
a filtered strictly perfect complex {\rm (\cite[(2.10.8)]{nh2})}.
\end{coro}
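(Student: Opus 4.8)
The statement to prove is \ref{coro:filpcerf}: that $Rf_*((A_{\rm zar}((X_{\os{\circ}{T}_0},D_{\os{\circ}{T}_0})/S(T)^{\nat},E),P^{D_{\os{\circ}{T}_0}},P))$ is a filtered perfect complex of ${\cal O}_T$-modules, locally filteredly quasi-isomorphic to a filtered strictly perfect complex. The plan is to combine the bifiltered base change theorem \ref{theo:bccange} with the perfectness statement \ref{coro:fctd} and the abstract homological algebra lemma \cite[(2.10.10)]{nh2} (analogous to \cite[(2.10.11)]{nh2}), exactly as in the proof of \cite[(2.10.11)]{nh2}, but now carrying along the extra filtration $P^{D_{\os{\circ}{T}_0}}$.

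First I would reduce to the case where $\os{\circ}{T}$ is affine and noetherian and $\os{\circ}{f}$ is proper; filtered perfectness is local on $T_{\rm zar}$, and the properness/noetherianity hypotheses of \ref{coro:fctd} are in force. Next I would record that, by \ref{coro:fctd}, both $Rf_*(A_{\rm zar}((X_{\os{\circ}{T}_0},D_{\os{\circ}{T}_0})/S(T)^{\nat},E))$ and each $Rf_*(P_kA_{\rm zar}(\cdots))$ (hence also each $Rf_*(P^{D_{\os{\circ}{T}_0}}_kA_{\rm zar}(\cdots))$ and each $Rf_*((P^{D_{\os{\circ}{T}_0}}_k\cap P_l)A_{\rm zar}(\cdots))$, using the Poincaré spectral sequences of \ref{coro:pwspp} and the boundedness of \ref{prop:bdccd}) is a perfect complex of ${\cal O}_T$-modules. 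The subtle point here is that there are only finitely many nonzero graded pieces for each filtration — this is where quasi-compactness of $\os{\circ}{X}_{T_0}$ and biregularity of the filtrations are used — so that finitely many perfectness statements suffice to control the whole filtered object. Then, invoking the base change isomorphism \ref{eqn:blucpw} of \ref{theo:bccange} applied to a morphism $u\col (S(T')^{\nat},{\cal J}',\del')\lo (S(T)^{\nat},{\cal J},\del)$ with $\os{\circ}{T}'$ the spectrum of a residue field (or more generally to test base changes), I would verify that $Lu^*$ commutes with formation of $Rf_*$ of each of the finitely many filtered/bifiltered pieces. Finally, feeding this into the abstract criterion \cite[(2.10.10)]{nh2} — which says that a bounded bifiltered complex all of whose graded (and partially-graded) pieces are perfect and which commutes with derived pullback is, locally, bifilteredly quasi-isomorphic to a bifiltered strictly perfect complex in the sense of \ref{defi:stpf} — yields the conclusion.

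The main obstacle, and the only part that is more than a routine transcription of \cite[(2.10.11)]{nh2}, is the bookkeeping for the second filtration: one must check that $P^{D_{\os{\circ}{T}_0}}$ and $P$ are simultaneously biregular (so only finitely many bifiltered pieces $(P^{D_{\os{\circ}{T}_0}}_k\cap P_l)A_{\rm zar}$ are relevant), that $Rf_*$ of each such piece is perfect, and that the base change isomorphism \ref{eqn:blucpw} is compatible with the induced filtration $P^{D_{\os{\circ}{T}_0}}$, not merely with $P$. All three follow: biregularity from quasi-compactness of $\os{\circ}{X}_{T_0}$ together with the explicit description \ref{ali:rudrbp} of ${\rm gr}^{P^D}$ in terms of $A_{\rm zar}(D^{(k)}/S)$ which is nonzero only for $0\le k\le \dim$; perfectness of the bifiltered pieces from \ref{coro:fctd} applied to the $D^{(k)}$'s and their strata (via \ref{coro:pwspp}); and the filtered compatibility of \ref{eqn:blucpw} is already asserted in \ref{theo:bccange}, where the isomorphism is stated for $(A_{\rm zar},P^{D_{\os{\circ}{T}_0}},P)$ with both filtrations. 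With these in hand, the argument is completed by a verbatim application of \cite[(2.10.10)]{nh2} to the bifiltered situation, so I would keep the written proof short, citing \ref{theo:bccange}, \ref{coro:fctd}, K.~Kato's log base change theorem \cite[(6.10)]{klog1}, and \cite[(2.10.10)]{nh2}.
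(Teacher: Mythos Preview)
Your proposal is correct and matches the paper's approach exactly: the paper's proof simply cites (\ref{coro:fctd}) and \cite[(2.10.10)]{nh2}, with the preamble to the corollary also invoking (\ref{theo:bccange}) and \cite[(6.10)]{klog1}, which are precisely the ingredients you list. Your extra discussion of the bifiltered bookkeeping (biregularity, perfectness of the $(P^{D}_k\cap P_l)$-pieces, and compatibility of (\ref{eqn:blucpw}) with both filtrations) is a useful expansion of what the paper leaves implicit, but the strategy is identical.
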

\begin{proof}
(\ref{coro:filpcerf}) 
immediately follows 
from (\ref{coro:fctd}) and \cite[(2.10.10)]{nh2}.
\end{proof}

\section{Infinitesimal deformation invariance}\label{sec:infhdi}  
In this section we prove the infinitesimal deformation invariance 
of the pull-back of a morphism of SNCL schemes with relative SNCD's in characteristic $p>0$ on 
zariskian $p$-adic bifiltered El-Zein-Steenbrink-Zucker complexes.  
As in \cite{boi}, to prove the invariance, we use Dwork's trick for enlarging 
the radius of convergence of log $F$-isocrystals by the use of the relative Frobenius.  
Precisely speaking, in our case, 
we use the base change by the iteration of the abrelative
Frobenius morphism (not usual relative Frobenius morphism) of the base scheme as in 
\cite{nb} and \cite{nhir}. 
%The notion of the truncated simplicial base change of SNCL schemes 
%and admissible immersions defined in \S\ref{sec:bcsncl} 
%gives us an appropriate framework. 
\par 
For a bifiltered complex $(K^{\bul},P^{(1)}, P^{(2)})$ in 
the derived category of bifiltered complexes 
(\cite{nlf}), denote $(K^{\bul},P^{(1)}, P^{(2)})\otimes_{\mab Z}^L{\mab Q}$ 
by  $(K^{\bul},P^{(1)}, P^{(2)})_{\mab Q}$ 
for simplicity of notation. 
We omit the proofs of the results in this section because the proofs are 
the same as those in \cite[(6.1)]{nhir}.

\par 
The following is a main result in this section.   

\begin{theo}[{\bf Infinitesimal deformation invariance of the pull-back of a morphism 
on $p$-adic bifiltered El Zein-Steenbrink-Zucker complexes}]
\label{theo:definv}
Let $\star$ be nothing  or $\prime$. 
Let $n$ be a positive integer.  
Let $S^{\star}$ be a family of log points. 
Assume that $S^{\star}$ is of characteristic $p>0$. 
Let $F_{S^{\star}}\col S^{\star}\lo S^{\star}$ be the absolute Frobenius endomorphism. 
Set $S^{{\star}[p^n]}:=
S^{\star}\times_{\os{\circ}{S}{}^{\star},\os{\circ}{F}{}^n_{S^{\star}}}\os{\circ}{S}{}^{\star}$. 
Let $(T^{\star},{\cal J}^{\star},\del^{\star})$ be 
a log $p$-adic formal PD-thickening of $S^{\star}$. 
Set $T^{\star}_0:=T^{\star}~{\rm mod}~{\cal J}^{\star}$. 
Let $f^{\star} \col (X^{\star},D^{\star}) \lo S^{\star}$ be an SNCL scheme 
with a relative SNCD over $S^{\star}$. 
Assume that $\os{\circ}{X}{}^{\star}_{T^{\star}_0}$ is quasi-compact. 
Let $\iota^{\star} \col T^{\star}_0(0) \os{\subset}{\lo} T^{\star}_0$  
be an exact closed nilpotent immersion.  
Set $S^{\star}_{\os{\circ}{T}{}^{\star}_{0}}
:=S^{\star}\times_{\os{\circ}{S}{}^{\star}}\os{\circ}{T}{}^{\star}_0$ 
and 
$S^{\star}_{\os{\circ}{T}{}^{\star}_0(0)}:=S^{\star}
\times_{\os{\circ}{S}{}^{\star}}\os{\circ}{T}{}^{\star}_0(0)$ 
and $(X^{\star}_{\os{\circ}{T}_0},D^{\star}_{\os{\circ}{T}_0}):=
(X^{\star},D^{\star})\times_{S^{\star}}S^{\star}_{\os{\circ}{T}{}^{\star}_0}$,  
$(X^{\star}_{\os{\circ}{T}{}^{\star}_0}(0),D^{\star}_{\os{\circ}{T}{}^{\star}_0}(0))
:=(X^{\star},D^{\star})\times_{S^{\star}}S^{\star}_{\os{\circ}{T}{}^{\star}_0(0)}$. 
Set $(X^{\star}{}^{[p^n]},D^{\star}{}^{[p^n]}):=
(X^{\star},D^{\star})\times_{S^{\star}}S^{{\star}[p^n]}$ 
and 
$(X^{\star}{}^{[p^n]}_{\! \! \!\os{\circ}{T}{}^{\star}_0},
D^{\star}{}^{[p^n]}_{\! \! \!\os{\circ}{T}{}^{\star}_0}):=
(X^{\star}{}^{[p^n]},D^{\star}{}^{[p^n]})\times_{S^{{\star}[p^n]}}
S^{{\star}[p^n]}_{\os{\circ}{T}{}^{\star}_0}$. 
Note that the underlying scheme of 
$X^{\star}{}^{[p^n]}_{\! \! \!\os{\circ}{T}{}^{\star}_0}$ is equal to that of 
$X^{\star}_{\os{\circ}{T}{}^{\star}_0}$ and that we have the log scheme 
$S^{{\star}[p^n]}_{\os{\circ}{T}{}^{\star}_0}$ by using the composite morphism 
$T^{\star}_0\lo S^{\star}\lo S^{\star [p^n]}$, 
where 
the morphism $S^{\star}\lo S^{\star[p^n]}$ is the composite  morphism of 
the abrelative Frobenius morphisms of $S^{[p^m]\star}$ $(0\leq m\leq n-1)$.   
%Let $X^{\star}{}^{[p^n]}_{\! \! \!\os{\circ}{T}_0}$ 
%be the \v{C}ech diagram of the 
%disjoint union of the members of 
%an affine open covering of $X^{\star}{}^{[p^n]}_{\! \! \!\os{\circ}{T}_0}$ 
%obtained by that of $X^{\star}_{\os{\circ}{T}_0}$. 
Let $n$ be a positive integer such that the pull-back morphism 
$F^{n*}_{T^{\star}_0}\col 
{\cal O}_{T^{\star}_0}\lo {\cal O}_{T^{\star}_0}$ 
kills ${\rm Ker}({\cal O}_{T^{\star}_0}\lo {\cal O}_{T^{\star}_0(0)})$.
Let 
\begin{align*} 
g_0 \col (X_{\os{\circ}{T}_0}(0),D_{\os{\circ}{T}_0}(0)) \lo 
(X'_{\os{\circ}{T}{}'_0}(0),D'_{\os{\circ}{T}_0}(0))
\tag{9.1.1}\label{eqn:ldehtvn}
\end{align*}   
be a morphism of log schemes over $S(T)^{\nat}\lo S'(T')^{\nat}$ 
satisfying the conditions {\rm (6.5.2)} and  {\rm (6.5.3)}
%and {\rm (5.1.2.6)} 
for 
$(X_{\os{\circ}{T}_0}(0),D_{\os{\circ}{T}_0}(0))$ and 
$(X'_{\os{\circ}{T}{}'_0}(0),D'_{\os{\circ}{T}{}'_0}(0))$. 
%Set $X^{\star}_{T^{\star}_0}
%:=X^{\star}\times_{S^{\star}}T^{\star}_0$. 
Then the following hold$:$ 
\par 
$(1)$ There exist a canonical bifiltered morphism
\begin{align*}
g^*_{0} &: 
(A_{\rm zar}((X'_{\os{\circ}{T}{}'_0},D'_{\os{\circ}{T}{}'_0})/S'(T')^{\nat}),P^{D_{\os{\circ}{T}{}'_0}},P)_{\mab Q} 
\lo 
Rg_{0*}((A_{\rm zar}((X_{\os{\circ}{T}_0},D_{\os{\circ}{T}_0})/S(T)^{\nat}),P^{D_{\os{\circ}{T}_0}},P)_{\mab Q}) 
\tag{9.1.2}\label{eqn:ldefinvn}
\end{align*}  
fitting into the following commutative diagram
\begin{equation*} 
\begin{CD}
A_{\rm zar}((X'_{\os{\circ}{T}{}'_0},D'_{\os{\circ}{T}{}'_0})/S'(T')^{\nat})_{\mab Q} 
@>{g^*_{0}}>>
Rg_{0*}((A_{\rm zar}((X_{\os{\circ}{T}_0},D_{\os{\circ}{T}_0})/S(T)^{\nat}))_{\mab Q})\\
@V{\simeq}VV @VV{\simeq}V \\
Ru_{(X'_{\os{\circ}{T}{}'_0},D'_{\os{\circ}{T}{}'_0})/S'(T')^{\nat}*}
({\cal O}_{(X'_{\os{\circ}{T}{}'_0},D'_{\os{\circ}{T}{}'_0})/S'(T')^{\nat}})_{\mab Q} 
@>{g^*_{0}}>>
Rg_{0*}Ru_{(X_{\os{\circ}{T}_0},D_{\os{\circ}{T}_0})/S(T)^{\nat}*}
({\cal O}_{(X_{\os{\circ}{T}_0},D_{\os{\circ}{T}_0})/S(T)^{\nat}})_{\mab Q}. 
\end{CD}
\tag{9.1.3}\label{cd:ldtvn}
\end{equation*} 
Here the last horizontal morphism $g_0^*$ is the morphism constructed in 
{\rm \cite[(5.3.1)]{nb}}. 
\par 
$(2)$ Let $S''$, $(T'',{\cal J}'',\del'')$ and $\iota'' \col T''_0(0)\os{\subset}{\lo} T''_0$ 
be analogous objects to 
$S'$, $(T',{\cal J}',\del')$ and $\iota' \col T'_0(0)\os{\subset}{\lo} T'_0$, respectively.  
Let $g'_{0}\col (X'_{\os{\circ}{T}{}'_0}(0),D'_{\os{\circ}{T}{}'_0}(0))\lo 
(X''_{\os{\circ}{T}{}''_0}(0),D''_{\os{\circ}{T}{}''_0}(0))$ be 
a similar morphism to $g_{0}$. 
%Let $n'$ be a similar positive integer to $n$. 
Then 
\begin{align*} 
(g'_{0}\circ g_{0})^*
& =Rg'_{0*}(g^*_{0})\circ g'{}^*_{\! \!0}\col 
Ru_{(X''_{\os{\circ}{T}{}''_0},D''_{\os{\circ}{T}{}''_0})/S''(T'')^{\nat}*}
({\cal O}_{(X''_{\os{\circ}{T}{}''_0},D''_{\os{\circ}{T}{}''_0})/S''(T'')^{\nat}})_{\mab Q} \\
& \lo 
R(g'_{0}\circ g_{0})_*Ru_{(X_{\os{\circ}{T}{}_0},D_{\os{\circ}{T}{}_0})/S(T)^{\nat}*}
({\cal O}_{(X_{\os{\circ}{T}{}_0},D_{\os{\circ}{T}{}_0})/S(T)^{\nat}})_{\mab Q}. 
\tag{9.1.4}\label{eqn:ldfilnvn}
\end{align*}
\par 
$(3)$ 
\begin{align*}
{\rm id}^*_{(X_{T^{\star}_0}(0),D_{T^{\star}_0}(0))}=
{\rm id}_{(A_{\rm zar}((X_{\os{\circ}{T}_0},
D_{\os{\circ}{T}{}'_0})/S(T)^{\nat})_{\mab Q},P^{D_{\os{\circ}{T}_0}},P)}.
\tag{9.1.5}\label{eqn:ldeoxnvn}
\end{align*}
\par 
$(4)$ If $g_{0}$ has a lift $g_{} \col (X_{\os{\circ}{T}_0},D_{\os{\circ}{T}_0})
\lo (X'_{\os{\circ}{T}{}'_0},D'_{\os{\circ}{T}{}'_0})$ over 
$S_{\os{\circ}{T}_0}\lo S'_{\os{\circ}{T}{}'_0}$ satisfying the conditions {\rm (6.5.2)} and  {\rm (6.5.3)},  
%and $(5.1.2.6)$, 
then $g^*_{0}$ in $(\ref{eqn:ldefinvn})$ is equal to the induced morphism by $g^*_{}$ 
%in $(\ref{ali:ccm})$ 
for $E^{}={\cal O}_{(X_{\os{\circ}{T}{}_0},D_{\os{\circ}{T}{}_0})/S(T)^{\nat}}$
and 
$F^{}={\cal O}_{(Y_{\os{\circ}{T}{}'_0},C_{\os{\circ}{T}{}'_0})/S'(T')^{\nat}}$.
\end{theo}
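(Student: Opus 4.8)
The plan is to reduce the infinitesimal deformation statement to the already–established contravariant functoriality of $A_{\rm zar}$ (namely \ref{theo:funas} and \ref{theo:funpas}) via Dwork's trick, following the template of \cite[(5.3.1)]{nb} but now carrying along the extra filtration $P^{D_{\os{\circ}{T}_0}}$. First I would observe that, since $\os{\circ}{X}{}^{\star}_{T_0}$ is quasi-compact, by (\ref{prop:bdccd}) and (\ref{coro:filpcerf}) all the complexes in sight are bounded, so tensoring with ${\mab Q}$ and inverting $\deg(u)$'s (which are nonzero by the remark after (\ref{eqn:uta})) causes no derived-category difficulties. The morphism $g_0$ does not a priori lift to a morphism of SNCL schemes with relative SNCD's over $S(T)^{\nat}$, so the functoriality theorems do not apply directly; the point of Dwork's trick is that after base change along a sufficiently high iterate of the abrelative Frobenius endomorphism $F^{n}_{T^{\star}_0}$, which by hypothesis kills ${\rm Ker}({\cal O}_{T^{\star}_0}\lo {\cal O}_{T^{\star}_0(0)})$, the morphism $g_0$ over $T_0(0)$ \emph{does} extend (infinitesimally) because a log $F$-isocrystal on $\os{\circ}{X}{}^{\star}_{T_0(0)}$ canonically extends to $\os{\circ}{X}{}^{\star}_{T_0}$ once its Frobenius structure has enlarged the radius of convergence.

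The key steps, in order, are: (i) pull everything back by the $n$-th iterate of the abrelative Frobenius of the base, using \ref{defi:btd} and \ref{prop:fcbar} to get a comparison isomorphism between $(A_{\rm zar}((X^{\star}_{\os{\circ}{T}_0},D^{\star}_{\os{\circ}{T}_0})/S(T)^{\nat}),P^{D_{\os{\circ}{T}_0}},P)_{\mab Q}$ and the corresponding bifiltered complex of the $[p^n]$-twist; since $\deg(F^{n}_{S(T)^{\nat}})=p^n$, the divisibility hypothesis needed for (\ref{theo:funpas}) is supplied by (\ref{prop:dvok}); (ii) on the $[p^n]$-twisted level, verify that over $T_0(0)$ the morphism $g_0$ now extends to a morphism $\widetilde{g}$ of SNCL schemes with relative SNCD's satisfying conditions $(6.4.4)$ and $(6.4.5)$ — this is where the quasi-nilpotence of the connection, i.e. (\ref{coro:connfil}), and the infinitesimal-lifting argument from \cite{nb} enter; (iii) apply (\ref{theo:funpas}) to $\widetilde{g}$ to get the bifiltered pull-back morphism $\widetilde{g}^*$, compatible with the $\theta\wedge$-comparison isomorphism (\ref{cd:pssfpccz}); (iv) transport $\widetilde{g}^*$ back through the Frobenius comparison of step (i) to define $g_0^*$ in (\ref{eqn:ldefinvn}); (v) check that $g_0^*$ is independent of the auxiliary choices (the integer $n$, the simplicial immersions, the coverings) — this follows by the same product-of-embeddings argument as in (\ref{theo:indcr}) and (\ref{lemm:divp}), together with (\ref{coro:indg}) which already says the pull-back on cohomology depends only on $\os{\circ}{g}$; (vi) deduce the compatibility square (\ref{cd:ldtvn}) by comparing with the non-filtered construction of \cite[(5.3.1)]{nb} through the comparison isomorphism $\theta\wedge$ of (\ref{eqn:uz}); (vii) verify functoriality in $g_0$ (part (2)), normalization (part (3)), and the lifting compatibility (part (4)) — each of these is formal once the construction is pinned down, using respectively part (2) of (\ref{theo:funpas}), part (3) of (\ref{theo:funas}), and the commutativity of the Frobenius comparison with an honest lift.

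I expect the main obstacle to be step (ii): showing that after the Frobenius base change the infinitesimal morphism $g_0$ actually produces a bifiltered morphism of El-Zein-Steenbrink-Zucker complexes, i.e. that the induced map on the level of the double complexes $A_{\rm zar}({\cal P}^{\rm ex}_{\bul}/S(T)^{\nat},{\cal E}^{\bul})^{ij}$ respects \emph{both} $P^{{\cal D}_{\bul}}$ \emph{and} $P$ — respecting $P$ is essentially the content already handled for the single-filtration case in \cite{nb}, but the compatibility with $P^{D_{\os{\circ}{T}_0}}$ requires that the chosen local extensions of the divisor coordinates are themselves matched up under the abrelative Frobenius, which is the SNCD refinement of Dwork's trick and where the local structure results (\ref{prop:fosi}), (\ref{prop:neoxeo}) and the divisibility bookkeeping of (\ref{eqn:odpdbnl}) have to be combined carefully. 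The secondary difficulty is purely organizational: keeping track of the degree twists $(-j-k,u)$ under composition of Frobenius and $g_0$, so that the rational identification $(-k,u)\otimes{\mab Q}\simeq(-k)\otimes{\mab Q}$ is applied consistently; this is routine but error-prone, so I would isolate it as a lemma rather than interleave it with the main argument. Since the paper explicitly says "We omit the proofs of the results in this section," I would in fact present only this plan and refer to \cite{nb} for the parallel single-filtration arguments, adding the $P^{D_{\os{\circ}{T}_0}}$-compatibility as the one genuinely new verification.
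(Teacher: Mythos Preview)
Your proposal is correct and matches the paper's own approach: the paper explicitly omits the proof, stating only that it uses Dwork's trick via the iterated \emph{abrelative} Frobenius morphism of the base as in \cite{nb}, which is precisely the strategy you outline (steps (i)--(vii) reducing to the contravariant functoriality theorems \ref{theo:funas}, \ref{theo:funpas} after Frobenius base change). Your identification of the $P^{D_{\os{\circ}{T}_0}}$-compatibility as the one genuinely new verification beyond the single-filtration argument of \cite[(5.3.1)]{nb} is exactly right, and your closing remark that the proof should be presented as this plan plus a reference to \cite{nb} is in fact what the paper does.
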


\begin{coro}[{\bf Infinitesimal deformation invariance of 
$p$-adic bifiltered El Zein-Steenbrink-Zucker complexes}]\label{coro:finvcae}
If $S'=S$, $T'=T$ and $(X_{\os{\circ}{T}_0}(0),D_{\os{\circ}{T}_0}(0))=
(X'_{\os{\circ}{T}_0}(0),D'_{\os{\circ}{T}_0}(0))$, then 
\begin{align*} 
(A_{\rm zar}((X'_{\os{\circ}{T}_0},D'_{\os{\circ}{T}_0})/S(T)^{\nat}),
P^{D_{\os{\circ}{T}_0}},P)_{\mab Q}=
(A_{\rm zar}((X_{\os{\circ}{T}_0},D_{\os{\circ}{T}_0})/S(T)^{\nat}),
P^{D_{\os{\circ}{T}_0}}, P)_{\mab Q}. 
\tag{9.2.1}\label{ali:xdnz}
\end{align*} 
\end{coro}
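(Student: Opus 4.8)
The plan is to deduce Corollary \ref{coro:finvcae} directly from Theorem \ref{theo:definv} by choosing the data appropriately and invoking the contravariant functoriality relations proved there. First I would specialize Theorem \ref{theo:definv} to the situation $S'=S$, $(T',{\cal J}',\del')=(T,{\cal J},\del)$, $\iota'=\iota$, so that $T'_0(0)=T_0(0)=T_0$ in the sense that we take the exact closed immersion $\iota\col T_0(0)\os{\subset}{\lo} T_0$ to be the identity; then $(X'_{\os{\circ}{T}_0},D'_{\os{\circ}{T}_0})$ and $(X_{\os{\circ}{T}_0},D_{\os{\circ}{T}_0})$ are two SNCL schemes with relative SNCD's over $S$ whose reductions $(X_{\os{\circ}{T}_0}(0),D_{\os{\circ}{T}_0}(0))$ and $(X'_{\os{\circ}{T}_0}(0),D'_{\os{\circ}{T}_0}(0))$ coincide. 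Applying part (1) of Theorem \ref{theo:definv} to the identity morphism $g_0={\rm id}\col (X_{\os{\circ}{T}_0}(0),D_{\os{\circ}{T}_0}(0))\lo (X'_{\os{\circ}{T}_0}(0),D'_{\os{\circ}{T}_0}(0))$ yields a canonical filtered morphism
\begin{align*}
{\rm id}^*\col (A_{\rm zar}((X'_{\os{\circ}{T}_0},D'_{\os{\circ}{T}_0})/S(T)^{\nat}),P^{D_{\os{\circ}{T}_0}},P)_{\mab Q}
\lo (A_{\rm zar}((X_{\os{\circ}{T}_0},D_{\os{\circ}{T}_0})/S(T)^{\nat}),P^{D_{\os{\circ}{T}_0}},P)_{\mab Q},
\end{align*}
and symmetrically, interchanging the roles of the primed and unprimed data, a filtered morphism in the opposite direction.

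Next I would show these two morphisms are mutually inverse. This is exactly where parts (2) and (3) of Theorem \ref{theo:definv} enter: by part (2) the composite of the two ${\rm id}^*$'s (in either order) equals the pull-back of the composite morphism $({\rm id}\circ {\rm id})^*={\rm id}^*$ associated to the identity morphism of $(X_{\os{\circ}{T}_0}(0),D_{\os{\circ}{T}_0}(0))$, and by part (3) this is the identity morphism of the rationalized weight-bifiltered complex. Hence both composites are the identity, so ${\rm id}^*$ is a filtered isomorphism in ${\rm D}^+{\rm F}^2(f^{-1}({\cal O}_T))\otimes_{\mab Z}{\mab Q}$, which is the asserted equality \eqref{ali:xdnz}. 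One should take a moment to check that the cited functoriality statements (5.1.1.6) (the conditions (6.4.4) and (6.4.5) on smooth components and on smooth components of the divisor) are automatically satisfied for the identity morphism: this is immediate since for $g_0={\rm id}$ the functions $\phi$ and $\psi$ are the identity on the respective index sets $\Lam$ and $M$, which are literally the same for the two lifts because the reductions coincide.

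The main obstacle is not in this deduction, which is formal, but rather lies hidden in the hypotheses of Theorem \ref{theo:definv} that must be verified in the specialized setting: one needs the characteristic $p>0$ assumption on $S$, the quasi-compactness of $\os{\circ}{X}_{T_0}$, and — crucially — the existence of an integer $n$ such that $F^{n*}_{T_0}\col {\cal O}_{T_0}\lo {\cal O}_{T_0}$ kills ${\rm Ker}({\cal O}_{T_0}\lo {\cal O}_{T_0(0)})$. In our specialization $T_0(0)=T_0$, so this kernel is zero and the condition holds vacuously with $n=1$; thus the Dwork-trick machinery invoked inside the proof of Theorem \ref{theo:definv} (the base change along iterated abrelative Frobenius morphisms of the base, following \cite{nb}, \cite{boi}) degenerates harmlessly. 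I would therefore spell out explicitly that all hypotheses of Theorem \ref{theo:definv} are met with the trivial choice $T_0(0)=T_0$, $n=1$, $g_0={\rm id}$, and then the corollary follows by the argument above; the quasi-compactness of $\os{\circ}{X}_{T_0}$ is the only genuine standing assumption that must be carried along, and it is harmless since both lifts have the same underlying scheme.
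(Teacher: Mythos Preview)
Your overall strategy---apply part (1) of Theorem \ref{theo:definv} to $g_0={\rm id}$ in both directions and use parts (2) and (3) to show the two morphisms are mutually inverse---is correct and is exactly how this corollary follows from the theorem. The paper omits the proof, and this is the intended deduction.

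However, you have a genuine confusion about the role of $\iota$. You write that you ``take the exact closed immersion $\iota\col T_0(0)\os{\subset}{\lo} T_0$ to be the identity'' and later that ``$T_0(0)=T_0$, so this kernel is zero and the condition holds vacuously with $n=1$''. This is wrong: if $\iota$ were the identity then $(X_{\os{\circ}{T}_0}(0),D_{\os{\circ}{T}_0}(0))=(X_{\os{\circ}{T}_0},D_{\os{\circ}{T}_0})$ and the hypothesis $X_{\os{\circ}{T}_0}(0)=X'_{\os{\circ}{T}_0}(0)$ would already force $(X_{\os{\circ}{T}_0},D_{\os{\circ}{T}_0})=(X'_{\os{\circ}{T}_0},D'_{\os{\circ}{T}_0})$, making the corollary trivial. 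The whole point is that $\iota\col T_0(0)\os{\subset}{\lo} T_0$ is a \emph{nontrivial} nilpotent immersion, the two SNCL schemes $(X,D)$ and $(X',D')$ over $S$ are genuinely different, and only their reductions to $T_0(0)$ coincide. The integer $n$ such that $F^{n*}_{T_0}$ kills ${\rm Ker}({\cal O}_{T_0}\to {\cal O}_{T_0(0)})$ exists because this kernel is a nilpotent ideal in a ring of characteristic $p$, but it is not in general $1$, and the Dwork-trick machinery in the proof of Theorem \ref{theo:definv} does \emph{not} degenerate. So: keep $\iota'=\iota$ as the given nilpotent immersion, drop the sentence setting $T_0(0)=T_0$, and remove the final paragraph about the kernel being zero; the middle of your argument then goes through unchanged.
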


\begin{coro}[{\bf Infinitesimal deformation invariance of 
bifiltered log isocrystalline cohomologies}]\label{coro:finvliae}
Let the notations be as in {\rm (\ref{coro:finvcae})}. 
Let $P^{D'_{\os{\circ}{T}_0}}$, $P$ and $P^{D_{\os{\circ}{T}_0}}$, $P$  be the induced filtrations on 
$$R^qf_{(X'_{\os{\circ}{T}_0},D'_{\os{\circ}{T}_0})/S(T)^{\nat}*}
({\cal O}_{(X'_{\os{\circ}{T}_0},D'_{\os{\circ}{T}_0})/S(T)^{\nat}})_{\mab Q}~{\rm and}~ 
R^qf_{(X_{\os{\circ}{T}_0},D_{\os{\circ}{T}_0})/S(T)^{\nat}*}
({\cal O}_{(X_{\os{\circ}{T}_0},D_{\os{\circ}{T}_0})/S(T)^{\nat}})_{\mab Q}$$ 
by 
$$(A_{\rm zar}((X'_{\os{\circ}{T}_0},D'_{\os{\circ}{T}_0})/S'(T')^{\nat}),
P^{D'_{\os{\circ}{T}_0}},P)\quad 
{\rm and}  \quad 
(A_{\rm zar}((X_{\os{\circ}{T}_0},D_{\os{\circ}{T}_0})/S(T)^{\nat}), P^{D_{\os{\circ}{T}_0}},P),$$ 
respectively.  
%Let $P$'s be the induced filtrations on 
%$R^qf_{(X'_{\os{\circ}{T}_0},D'_{\os{\circ}{T}_0})/S(T)^{\nat}*}
%({\cal O}_{(X'_{\os{\circ}{T}_0},D'_{\os{\circ}{T}_0})/S(T)^{\nat}})_{\mab Q}$ and 
%$R^qf_{(X_{\os{\circ}{T}_0},D_{\os{\circ}{T}_0})/S(T)^{\nat}*}
%({\cal O}_{(X_{\os{\circ}{T}_0},D_{\os{\circ}{T}_0})/S(T)^{\nat}})_{\mab Q}$ by 
%$$(A_{\rm zar}((X'_{\os{\circ}{T}_0},D'_{\os{\circ}{T}_0})/S(T)^{\nat}), P)_{\mab Q} 
%\quad {\rm and} \quad  
%(A_{\rm zar}((X_{\os{\circ}{T}_0},D_{\os{\circ}{T}_0})/S(T)^{\nat}),P)_{\mab Q},$$  
%respectively.
Then 
\begin{align*} 
&(R^qf_{(X'_{\os{\circ}{T}_0},D'_{\os{\circ}{T}_0})/S(T)^{\nat}*}
({\cal O}_{(X'_{\os{\circ}{T}_0},D'_{\os{\circ}{T}_0})/S(T)^{\nat}})_{\mab Q},P^{D'_{\os{\circ}{T}_0}},P)=\\
&(R^qf_{(X_{\os{\circ}{T}_0},D_{\os{\circ}{T}_0})/S(T)^{\nat}*}
({\cal O}_{(X_{\os{\circ}{T}_0},D_{\os{\circ}{T}_0})/S(T)^{\nat}})_{\mab Q},P^{D_{\os{\circ}{T}_0}},P).
\tag{9.3.1}\label{ali:wfoa}
\\
\end{align*}  
\end{coro}

\section{The $E_2$-degeneration of the $p$-adic weight spectral sequence}\label{sec:filbo}
Let ${\cal V}$ be a complete discrete valuation ring with perfect residue field $\kap$ 
of mixed characteristics. 
In this section we assume that the underlying formal scheme $\os{\circ}{S}$ of 
the family of log points $S$ is a $p$-adic formal ${\cal V}$-scheme in the sense 
of \cite{of}. Let $X/S$ be a proper SNCL scheme.   Let 
$(T,z)$ be a flat log $p$-adic enlargement (see e.g.,~\cite{oc} (or \cite{nb}) for this notion); 
$z$ is a morphism $T_1\lo S$, 
where $T_1$ is an exact log subscheme of $T$ defined by $p{\cal O}_T$. 
Endow $p{\cal O}_T$ with the canonical PD-structure.  
In this section we prove the $E_2$-degeneration of 
the $p$-adic weight spectral sequence of 
$(X_{\os{\circ}{T}_1},D_{\os{\circ}{T}_1})/S(T)^{\nat}$ modulo torsion obtained by 
the filtered complex $(A_{\rm zar}((X_{\os{\circ}{T}_1},D_{\os{\circ}{T}_1})/S(T)^{\nat}),P)$ by using 
the infinitesimal deformation invariance of isocrystalline cohomologies 
with weight filtrations in the previous section ((\ref{ali:wfoa})).

\begin{theo}[{\bf $E_2$-degeneration I}]\label{theo:e2dam}  
Let $s$ be the log point of a perfect field of characteristic $p>0$. 
The spectral sequence $(\ref{eqn:espwfsp})$ for the case $S=s$ 
and $E={\cal O}_{\os{\circ}{X}_{T_0}/\os{\circ}{T}}$ 
degenerates at $E_2$.  
\end{theo}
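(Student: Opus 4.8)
The plan is to reduce the $E_2$-degeneration of the $p$-adic weight spectral sequence~(\ref{eqn:espwfsp}) to the known $E_2$-degeneration in the classical setting (i.e.\ for proper strict semistable families over a complete discrete valuation ring), by a combination of a specialization argument to the log point $s$, the contravariant functoriality developed in \S\ref{sec:fcuc}, and the infinitesimal deformation invariance of log isocrystalline cohomology with its weight filtration proved in \S\ref{sec:infhdi}. First I would recall that, since $S=s$ is the log point of a perfect field $\kap$ of characteristic $p>0$, we may take $(T,{\cal J},\del)$ so that $\os{\circ}{T}$ is a $p$-adic formal scheme flat over ${\cal V}=W(\kap)$ (e.g.\ $T=\mathrm{Spf}\,W(\kap)$ with its canonical PD-structure and the natural morphism to $s$), and that in this case the weight spectral sequence of $(X_{\os{\circ}{T}_0},D_{\os{\circ}{T}_0})/S(T)^{\nat}$ is obtained from the bifiltered complex $(A_{\rm zar}((X_{\os{\circ}{T}_0},D_{\os{\circ}{T}_0})/S(T)^{\nat}),P^{D_{\os{\circ}{T}_0}},P)$ of~(\ref{defi:fdirpd}), whose cohomology sheaves $R^qf_{(X_{\os{\circ}{T}_0},D_{\os{\circ}{T}_0})/S(T)^{\nat}*}({\cal O})\otimes_{\mab Z}{\mab Q}$ are finitely generated over the fraction field $K$ of ${\cal V}$.

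Next I would perform the key reduction. By~(\ref{coro:finvliae}) (infinitesimal deformation invariance), the triple $(R^qf_{(X_{\os{\circ}{T}_0},D_{\os{\circ}{T}_0})/S(T)^{\nat}*}({\cal O}),P^{D_{\os{\circ}{T}_0}},P)_{\mab Q}$ is insensitive to replacing $\os{\circ}{T}_0$ by an exact closed nilpotent subscheme carrying the same reduction; combined with the abrelative Frobenius pull-back $\Phi^{\mathrm{ar}}$ of~(\ref{defi:rwd}) and Dwork's trick (enlarging the radius of convergence of the log $F$-isocrystal via iterated abrelative Frobenius, as in the proof of~(\ref{theo:definv})), this lets me compare the weight filtration $P$ on the isocrystalline cohomology of $(X_{\os{\circ}{T}_0},D_{\os{\circ}{T}_0})/S(T)^{\nat}$ with that of a lift of $(X,D)$ to a proper strict semistable family $\ol{\cal X}$ over a suitable base; indeed, by~(\ref{lemm:etl})(2) and~(\ref{coro:exem}), Zariski-locally $(X,D)/s$ embeds exactly into a simplicial strict semistable log formal scheme with relative SNCD over $\ol{S(T)^{\nat}}$. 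The point is then that each graded piece ${\rm gr}_{k'}^P{\rm gr}_k^{P^{D_{\os{\circ}{T}_0}}}$ of the abutment, by~(\ref{eqn:ana}) and the Poincar\'e residue isomorphisms of~(\ref{prop:grem}), is a direct sum of log crystalline cohomology sheaves of the strata $\os{\circ}{X}{}^{(l)}_{T_0}\cap\os{\circ}{D}{}^{(m)}_{T_0}/\os{\circ}{T}$ of the smooth proper scheme, twisted by orientation sheaves; the differential $d_1$ is given explicitly by~(\ref{prop:lbdl}) as an alternating sum of Gysin and restriction maps. So the spectral sequence~(\ref{eqn:espwfsp}) is, up to the twist and the identification of strata, literally a reindexed direct sum of the weight spectral sequences of the $D^{(k)}$'s together with the ``horizontal'' weight spectral sequence of the ambient proper SNCL scheme.

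Having reduced to the proper smooth/strict-semistable case, I would invoke purity and the Frobenius weight argument: over $\os{\circ}{T}$ flat over $W(\kap)$, the $E_1$-terms of~(\ref{eqn:espwfsp}) are (modulo torsion) $K$-vector spaces underlying $F$-isocrystals that are pure of a fixed weight — the term in bidegree $(-k,q+k)$ being pure of weight $q+k$ once the Tate twist is accounted for, by the purity of crystalline cohomology of the proper smooth strata (Katz–Messing / the weight-monodromy input already used in \cite{nb}, \cite{ndw}). Since $d_r$ for $r\ge 2$ must be a morphism of $F$-isocrystals shifting the weight of the pure piece (the differential $d_r\colon E_r^{-k,q+k}\to E_r^{-k+r,q+k-r+1}$ goes between pieces of weight $q+k$ and $q+k-r+1$, which differ for $r\ge2$), every such $d_r$ vanishes modulo torsion. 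The technical heart — and the step I expect to be the main obstacle — is making the comparison of weight filtrations in the second paragraph fully rigorous: one must check that the abrelative-Frobenius/Dwork-trick comparison is strictly compatible with the filtrations $P$ and $P^{D_{\os{\circ}{T}_0}}$ (not merely compatible), so that purity of the graded pieces is genuinely transported; this is where~(\ref{coro:finvliae}) and the strict-compatibility type statements referenced in \S\ref{sec:st} must be combined carefully, and where the bookkeeping of Tate twists $(-j-k,u)$ relative to the abrelative Frobenius has to be kept consistent. Once that is in place, the degeneration follows formally, exactly as in \cite[(10.1)]{ndw} and the corresponding argument in \cite{nb}.
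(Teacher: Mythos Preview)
Your final paragraph contains the correct core of the argument and is essentially what the paper does: once $T={\cal W}(s)$ and the residue field is finite, the $E_1$-term $E_1^{-k,q+k}$ of (\ref{eqn:espwfsp}) is a direct sum of crystalline cohomologies of the smooth proper strata $\os{\circ}{X}{}^{(l)}_{T_0}\cap\os{\circ}{D}{}^{(m)}_{T_0}$ with Tate twists, hence pure of weight $q+k$ by Katz--Messing and Chiarellotto--Le Stum; then $d_r$ for $r\geq 2$ maps between $F$-isocrystals of different weights and must vanish.

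However, your second paragraph is an unnecessary detour that creates the very obstacle you then worry about. You do \emph{not} need to lift $(X,D)$ to a proper strict semistable family, and you cannot do so globally in any case: (\ref{lemm:etl})(2) and (\ref{coro:exem}) only give Zariski-local simplicial lifts, which are used to \emph{construct} $(A_{\rm zar},P^D,P)$, not to compare its spectral sequence with some other one. The infinitesimal deformation invariance of \S\ref{sec:infhdi} and Dwork's trick are the tools for the \emph{relative} statement (\ref{theo:e2dgfam}) over a base, not for the point case (\ref{theo:e2dam}); invoking them here is circular in spirit and is the source of the ``strict compatibility'' difficulty you flag. Drop that paragraph entirely.

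What your outline genuinely omits is the reduction from an arbitrary perfect field $\kap$ to a finite field, which is where the purity input actually lives. The paper handles this by first reducing to $T={\cal W}(s)$ (so that one has the absolute Frobenius lift $F_{{\cal W}(s)}$ and can speak of weights), and then appealing to the argument of \cite[(5.4.1)]{nb}: one spreads $(X,D)/s$ out over a smooth ${\mab F}_p$-scheme and specializes to a closed point with finite residue field, using base change and the constancy of the ranks of the $E_r$-terms to transport the $E_2$-degeneration back. You should replace your middle paragraph with this step.
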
 
\begin{proof} 
As in the proof of \cite[(5.4.1)]{nb}, we may assume that 
$T={\cal W}(s)$.  In this case we have the absolute Frobenius endomorphism 
$F_{{\cal W}(s)}\col {\cal W}(s)\lo {\cal W}(s)$. 
If $\os{\circ}{s}$ is the spectrum of a finite field, 
then the $E_1$-term 
$E^{-k,q+k}_1$of (\ref{eqn:espwfsp}) is of pure weight of $q+k$ 
by \cite[Corollary 1. 2)]{kme}, \cite[(1.2)]{clt} and \cite[(2.2) (4)]{ndw}.  
However see  \cite[(6.11)]{ny} for the gap of the proof 
the weak-Lefschetz conjecture 
for a hypersurface of a large degree  in \cite{bwl}; 
I have filled the gap in \cite[(6.10)]{ny}. 
\par 
The rest of the proof is the same as that of \cite[(5.4.1)]{nb}. 
\end{proof}

\begin{theo}[{\bf $E_2$-degeneration II}]\label{theo:e2dgfam} 
Let $T$ be a log $p$-adic enlargement of $S/{\cal V}$ 
with structural morphism $T_1\lo S$. 
The spectral sequence $(\ref{eqn:espwfsp})$ modulo torsion 
for the case $E={\cal O}_{\os{\circ}{X}_{T_0}/\os{\circ}{T}}$ 
degenerates at $E_2$.  
\end{theo}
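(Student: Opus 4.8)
\textbf{Proof proposal for Theorem \ref{theo:e2dgfam}.}
The plan is to reduce the $E_2$-degeneration over a general flat log $p$-adic enlargement $T$ of $S/\mathcal{V}$ to the already-established degeneration over the $p$-adic Witt point $(\ref{theo:e2dam})$, using the infinitesimal deformation invariance established in $(\ref{coro:finvliae})$ together with the (pro-)base change machinery of $\S\ref{sec:bckf}$. First I would recall, as in the proof of the $E_2$-degeneration in \cite[\S5]{nb}, that the spectral sequence $(\ref{eqn:espwfsp})$ arises from the weight filtration $P$ on $A_{\mathrm{zar}}(X_{\os{\circ}{T}_1}/S(T)^{\nat})$, and that by $(\ref{coro:filpcerf})$ the bifiltered complex $Rf_*((A_{\mathrm{zar}},P^{D_{\os{\circ}{T}_1}},P))$ is filtered perfect; hence degeneracy at $E_2$ modulo torsion is equivalent to an equality of ranks (or, equivalently, to the strictness modulo torsion of the filtration $P$ on the abutment). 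So the statement is a statement about ranks of the $\mathcal{O}_T$-modules $E_\infty^{-k,q+k}$ versus $E_2^{-k,q+k}$ after $\otimes_{\mathbb Z}\mathbb Q$.

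Next I would argue that it suffices to check the equality of ranks at each point of $\os{\circ}{T}$, and moreover only at the closed points, since $\os{\circ}{T}$ is $p$-adic and the cohomology sheaves are coherent; a closed point $t\in\os{\circ}{T}$ gives, via $z$, a morphism to $S$ and an exact closed point $s\in S_1$ with perfect residue field, together with an induced (flat) log $p$-adic enlargement structure. By the log base change theorem $(\ref{theo:bccange})$ the fibre at $t$ of $Rf_*((A_{\mathrm{zar}},P))$ computes the analogous weight-filtered complex for the fibre $(X_t,D_t)/s$, compatibly with the filtration $P$; so I am reduced to the case where $T$ is an enlargement of the log point $s$. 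At that stage I invoke the infinitesimal deformation invariance $(\ref{coro:finvliae})$, $(\ref{ali:wfoa})$: the weight-filtered isocrystalline cohomology $(R^qf_*(\mathcal O)_{\mathbb Q},P)$ for $(X_{\os{\circ}{T}_0},D_{\os{\circ}{T}_0})/S(T)^{\nat}$ depends only on the reduction $(X_{\os{\circ}{T}_0(0)},D_{\os{\circ}{T}_0(0)})$, hence — taking $T_0(0)$ to be the log point $s$ itself — coincides with the weight-filtered cohomology over $\mathcal W(s)$, where $(\ref{theo:e2dam})$ already gives $E_2$-degeneration. Ranks are therefore unchanged, which is exactly what I need.

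The one subtlety to address carefully, and what I expect to be the main obstacle, is that $(\ref{theo:e2dam})$ is stated for the spectral sequence $(\ref{eqn:espwfsp})$ attached to the filtration $P$ (not $P^D$) over $\mathcal W(s)$, whereas here I must track the full weight filtration $P$ on the mixed situation and verify that the identification in $(\ref{ali:wfoa})$ is compatible with the edge maps and the filtration steps, not merely with the underlying cohomology groups. This is handled by noting that $(\ref{coro:finvliae})$ is an equality of \emph{filtered} objects (indeed of bifiltered objects, with $P^{D_{\os{\circ}{T}_0}}$ included), so the whole filtered abutment, and a fortiori $\mathrm{gr}^P$ of it, is preserved; combined with the fact (from $(\ref{coro:filpcerf})$ and the vanishing of torsion after $\otimes\mathbb Q$) that the $E_1$-terms commute with base change, the degeneration at $E_2$ over $\mathcal W(s)$ forces $\dim_{\mathrm{Frac}\,\mathcal W(s)}\mathrm{gr}^P_jR^qf_*(\mathcal O)_{\mathbb Q}=\sum_{-k}\dim E_2^{-k,q+k}$, and by semicontinuity plus the generic/special comparison this propagates to all of $\os{\circ}{T}$. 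Following the remaining routine bookkeeping exactly as in the proof of \cite[(5.4.2)]{nb} then completes the argument.
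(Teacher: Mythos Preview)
Your proposal is correct and follows essentially the same route as the paper: localize on $\os{\circ}{T}$, use the filtered base change theorem (\ref{theo:bccange}) together with filtered perfectness (\ref{coro:filpcerf}) to reduce the question to a rank computation at points, and then invoke the infinitesimal deformation invariance ((\ref{coro:finvcae}), (\ref{coro:finvliae})) to reduce to Theorem~\ref{theo:e2dam} over $\mathcal{W}(s)$; this is precisely the argument of \cite[(5.4.3)]{nb} to which the paper defers (your citation of \cite[(5.4.2)]{nb} should be \cite[(5.4.3)]{nb}). One small caution: your assertion that a closed point $t\in\os{\circ}{T}$ yields a point $s$ with \emph{perfect} residue field is not automatic; the standard fix, implicit in the reference to \cite{nb}, is to base-change further to a geometric (or perfect) closure before applying Theorem~\ref{theo:e2dam}, which is harmless for the rank comparison since the $E_1$-terms commute with flat base change modulo torsion.
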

\begin{proof} 
Because the problem is local on $\os{\circ}{T}$, we may assume that 
$\os{\circ}{T}$ is quasi-compact.  
By virtue of (\ref{theo:e2dam}) and (\ref{coro:finvcae}), 
the proof of this theorem is the same as that of \cite[(5.4.3)]{nb}.
\end{proof}

\section{Log convergence of the weight filtrations}\label{sec:e2}
In this section we prove the log convergence of 
the weight filtration on the log isocrystalline cohomological sheaf 
induced by the spectral sequence (\ref{eqn:espwfsp}) if $\os{\circ}{X}/\os{\circ}{S}$ is proper. 
\par 
Roughly speaking, we can obtain all the results in this section 
by using the log base change theorem of 
$(A_{\rm zar},P^D,P)$ ((\ref{theo:bccange})) and by 
replacing $(A_{{\rm zar},{\mab Q}},P^D,P)$ defined in this paper 
with $(A_{{\rm zar},{\mab Q}},P)$ in \cite{nb}. 
%Here we have only to note that the base change morphism 
%satisfies the conditions {\rm (6.5.2)} and  {\rm (6.5.3)}
%as in the proof of (\ref{theo:bccange}). 
For this reason, we omit or sketch the proofs of almost all the results in this section. 
We use fundamental notions and results in \cite[(5.2)]{nb}. 
\par 
Let ${\cal V}$ be a complete discrete valuation ring of mixed characteristics 
$(0,p)$ with perfect residue field. Let $K$ be the fraction field of ${\cal V}$. 
Set $B=({\rm Spf}({\cal V}),{\cal V}^*)$.   
Let $S$ be a $p$-adic formal family of log points over $B$ 
such that $\os{\circ}{S}$ is a ${\cal V}/p$-scheme.  
%in the sense of \cite[\S1]{of} (e.~g., ${\cal V}/p$-scheme).   
Let $(X,D)/S$ be a proper SNCL scheme with a relative SNCD.  
\par  
Let $n$ be a nonnegative integer.  
Let $T$ be an object of the category 
${\cal E}^{\sq}_{p,n}:={\rm Enl}^{\sq}_p(S^{[p^n]}/{\cal V})$ 
of (solid) $p$-adic enlargements of $S^{[p^n]}/{\cal V}$ 
($\sq=$sld or nothing) (\cite[(5.1.3)]{nb}).   
Then the hollowing out $S^{[p^n]}(T)^{\nat}$ of $S^{[p^n]}(T)$ 
is a formal family of log points. 
Let $z_i\col T_i \lo S^{[p^n]}$ $(i=0,1)$ be the structural morphism. 
Here $T_1:=\ul{\rm Spec}^{\log}_T({\cal O}_T/p)$ and $T_0:=(T_1)_{\rm red}$. 
Set $(X^{[p^n]}_{\os{\circ}{T}_i},D^{[p^n]}_{\os{\circ}{T}_i})
:=(X,D)\times_SS^{[p^n]}_{\os{\circ}{T}_i} 
=(X^{[p^n]},D^{[p^n]})\times_{\os{\circ}{S}}\os{\circ}{T}_i$, 
where $(X^{[p^n]},D^{[p^n]}):=(X,D)\times_SS^{[p^n]}$. 
Let 
$f^{[p^n]}_{\os{\circ}{T}} \col (X^{[p^n]}_{\os{\circ}{T}_i},D^{[p^n]}_{\os{\circ}{T}_i})
\lo S^{[p^n]}(T)^{\nat}$ 
be the structural morphism. 
(Note that this notation is different from the notation 
$f$ in \S\ref{sec:psc} since we add the symbol $\os{\circ}{T}$ to $f^{[p^n]}$ as a subscript.) 
Because $S^{[p^n]}(T)^{\nat}$  is a $p$-adic formal family of log points,  
$(X^{[p^n]}_{\os{\circ}{T}_1},D^{[p^n]}_{\os{\circ}{T}_1})/S^{[p^n]}_{\os{\circ}{T}_1}$ 
is a proper SNCL scheme 
with an exact PD-closed immersion 
$S^{[p^n]}_{\os{\circ}{T}_1} \os{\sus}{\lo} S^{[p^n]}(T)^{\nat}$. 

\par 
%Consider the Case I. 
%Set ${\cal E}^{\sq}_{p,n}:={\rm Enl}^{\sq}_p(S^{[p^n]}/{\cal V})$. 
Assume that we are given a flat coherent crystal 
$E_n=\{E_n(\os{\circ}{T})\}_{T\in {\cal E}^{\sq}_{p,n}}$ of 
${\cal O}_{\{\os{\circ}{X}{}^{[p^n]}_{T_1}
/\os{\circ}{T}\}_{T\in {\cal E}^{\sq}_{p,n}}}$-modules.  
Let $T$ be an object of ${\cal E}^{\sq}_{p,n}$.
%Let $z\col T_1\lo S^{[p^n]}$ be the structural morphism. 
Then we obtain the following $p$-adic iso-zariskian bifiltered complex
\begin{align*} 
(A_{\rm zar}((X^{[p^n]}_{\os{\circ}{T}_1},D^{[p^n]}_{\os{\circ}{T}_1})/
S^{[p^n]}(T)^{\nat},E^{}(\os{\circ}{T})),P^{D^{[p^n]}_{\os{\circ}{T}_1}},P)_{\mab Q}
%=(A_{\rm zar}(X_{T_1}/T,E^{}(\os{\circ}{T})),P)
%\otimes_{\mab Z}{\mab Q}
\in {\rm D}^+{\rm F}^2(
%f^{-1}_{\os{\circ}{T}}({\cal K}_T))
f^{-1}_{\os{\circ}{T}}({\cal K}_T))
\tag{11.0.1}\label{eqn:auxtds}
\end{align*} 
for each $T\in {\cal E}^{\sq}_{p,n}$.

\begin{prop}[{\bf cf.~\cite[(5.2.2)]{nb}}]\label{prop:tptt} 
Let ${\mathfrak g}\col T'\lo T$ be a morphism in ${\cal E}^{\sq}_{p,n}$.  
Then the induced morphism 
$g_{} \col X^{[p^n]}_{\os{\circ}{T}{}'_1}\lo X^{[p^n]}_{\os{\circ}{T}_1}$ 
by ${\mathfrak g}$
gives us the following natural morphism 
\begin{align*} 
g^*_{}\col &
(A_{\rm zar}((X^{[p^n]}_{\os{\circ}{T}_1},D^{[p^n]}_{\os{\circ}{T}_1})/
S^{[p^n]}(T)^{\nat},E^{}(\os{\circ}{T})),P^{D^{[p^n]}_{\os{\circ}{T}_1}},P) \\
&\lo
Rg_{*}
((A_{\rm zar}((X^{[p^n]}_{\os{\circ}{T}{}'_1},D^{[p^n]}_{\os{\circ}{T}{}'_1})
/S^{[p^n]}(T')^{\nat},E^{}(\os{\circ}{T}{}')),P^{D^{[p^n]}_{\os{\circ}{T}{}'_1}},P))
\tag{11.1.1}\label{eqn:auqeds}
\end{align*}
of bifiltered complexes in 
${\rm D}^+{\rm F}^2(f_{\os{\circ}{T}}^{-1}({\cal K}_T))$ 
fitting into the following commutative diagram$:$ 
\begin{equation*} 
\begin{CD} 
A_{\rm zar}((X^{[p^n]}_{\os{\circ}{T}_1},D^{[p^n]}_{\os{\circ}{T}_1})/
S^{[p^n]}(T)^{\nat},E^{}(\os{\circ}{T}))
@>{g^*_{}}>> \\
@A{\simeq}AA \\
Ru_{(X^{[p^n]}_{\os{\circ}{T}_1},D^{[p^n]}_{\os{\circ}{T}_1})/
S^{[p^n]}(T)^{\nat}*}(\eps_{(X^{[p^n]}_{\os{\circ}{T}_1},D^{[p^n]}_{\os{\circ}{T}_1}))/
S^{[p^n]}(T)^{\nat}}(E^{}(\os{\circ}{T})))
@>{g^*_{}}>>
\end{CD} 
\end{equation*} 
\begin{equation*} 
\begin{CD} 
Rg_{*}
(A_{\rm zar}((X^{[p^n]}_{\os{\circ}{T}{}'_1},D^{[p^n]}_{\os{\circ}{T}{}'_1})/S^{[p^n]}(T')^{\nat},
E^{}(\os{\circ}{T}{}')))\\ 
@AA{\simeq}A \\
Rg_{*}(Ru_{(X^{[p^n]}_{\os{\circ}{T}{}'_1},D^{[p^n]}_{\os{\circ}{T}{}'_1})/
S^{[p^n]}(T')^{\nat}*}(\eps^*_{(X^{[p^n]}_{\os{\circ}{T}{}'_1},D^{[p^n]}_{\os{\circ}{T}{}'_1})/
S^{[p^n]}(T')^{\nat}}(E^{}(\os{\circ}{T}{}')))).
\end{CD} 
\end{equation*} 
For a similar morphism ${\mathfrak h}\col T''\lo T'$ to ${\mathfrak g}$ and 
a similar morphism 
$h_{}\col 
(X_{\os{\circ}{T}{}''_1},D_{\os{\circ}{T}{}''_1})\lo (X_{\os{\circ}{T}{}'_1},D_{\os{\circ}{T}{}'_1})$ 
to $g_{}$, the following relation 
\begin{align*} 
(h_{}\circ g_{})^*=
Rh_{*}(g^*_{})
\circ h^*_{} \col &
(A_{\rm zar}((X^{[p^n]}_{\os{\circ}{T}_1},D^{[p^n]}_{\os{\circ}{T}_1})/
S^{[p^n]}(T)^{\nat},E^{}(\os{\circ}{T})),P^{D^{[p^n]}_{\os{\circ}{T}_1}},P)\\
&\lo 
R(h\circ g)_{*}
((A_{\rm zar}((X^{[p^n]}_{\os{\circ}{T}{}''_1},D^{[p^n]}_{\os{\circ}{T}{}''_1})/S^{[p^n]}(T'')^{\nat},
E^{}(\os{\circ}{T}{}'')),P^{D^{[p^n]}_{\os{\circ}{T}{}''_1}},P))
\end{align*} 
holds.  
\begin{equation*} 
{\rm id}_{(X^{[p^n]}_{\os{\circ}{T}_1},D^{[p^n]}_{\os{\circ}{T}_1})}^*={\rm id} 
_{(A_{\rm zar}((X^{[p^n]}_{\os{\circ}{T}_1},D^{[p^n]}_{\os{\circ}{T}_1})/
S(T)^{\nat},E^{}(\os{\circ}{T})),P^{D^{[p^n]}_{\os{\circ}{T}_1}},P)}.  
\end{equation*} 
\end{prop}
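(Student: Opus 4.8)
The statement of \ref{prop:tptt} is entirely parallel to \cite[(5.2.2)]{nb}, the only new ingredient being the presence of the relative SNCD $D^{[p^n]}$ and the extra filtration $P^{D_{\os{\circ}{T}_1}}$; since the proposition as written here only asserts the existence and functoriality of the pull-back morphism for the single filtration $P$ (and the underlying complexes), the plan is to deduce everything directly from the contravariant functoriality results of \S\ref{sec:fcuc}. First I would observe that the induced morphism $g\col X^{[p^n]}_{\os{\circ}{T}'_1}\lo X^{[p^n]}_{\os{\circ}{T}_1}$ is a base change morphism: it sits in a commutative diagram of the shape \eqref{eqn:xdxduss} over $S^{[p^n]}(T')^{\nat}\lo S^{[p^n]}(T)^{\nat}$, with $g$ the projection from the fiber product $(X,D)\times_S S^{[p^n]}_{\os{\circ}{T}'_1}$, and (as noted at the start of \S\ref{sec:e2}) such a base change morphism automatically satisfies the divisibility condition (5.1.1.6) needed to invoke \ref{theo:funpas}; moreover each smooth component of $\os{\circ}{X}^{[p^n]}_{\os{\circ}{T}'_1}$ maps to a unique smooth component of $\os{\circ}{X}^{[p^n]}_{\os{\circ}{T}_1}$ and similarly for the components of $D^{[p^n]}$, so the conditions $(6.4.4)$ and $(6.4.5)$ hold as well.

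The core of the argument is then just an application of \ref{theo:funpas}(1): taking $E=\os{\circ}{g}^*_{\rm crys}(E^{}(\os{\circ}{T}))$ identified with $E^{}(\os{\circ}{T}')$ via the crystal structure of $E_n$ (so that \eqref{ali:gnfe} is the identity, exactly the bookkeeping used in the proof of \ref{theo:bccange}), one obtains the pull-back morphism $g^*$ on $A_{\rm zar}$ together with the commutative square \eqref{cd:pssfpccz} relating it via the quasi-isomorphisms $\theta\wedge$ to the crystalline pull-back on $Ru_*$. Tensoring with ${\mab Q}$ is harmless. The resulting diagram is precisely the large commutative square displayed in the statement of \ref{prop:tptt}, once one recalls that the lower crystalline pull-back there is by definition the morphism $g^*_{}$ constructed in \cite{nb}; compatibility of the $A_{\rm zar}$-morphism with the $\theta\wedge$ isomorphism on both sides forces identification of the two crystalline pull-backs, so no independent verification is required. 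The cocycle relation $(h\circ g)^* = Rh_*(g^*)\circ h^*$ and $\mathrm{id}^* = \mathrm{id}$ are then immediate from \ref{theo:funpas}(2) and (3) (equivalently \ref{theo:funas}(2),(3)), applied to the composable base change morphisms $T''\lo T'\lo T$ in ${\cal E}^{\sq}_{p,n}$; functoriality of the fiber product gives $h\circ g$ as the base change along the composite ${\mathfrak h}\circ{\mathfrak g}$, and the identity statement is trivial.

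The one point that genuinely needs care — and which I expect to be the main (if modest) obstacle — is the well-definedness of $g^*$ independently of the choices of affine simplicial open coverings and simplicial immersions into log smooth schemes over $\ol{S^{[p^n]}(T)^{\nat}}$ and $\ol{S^{[p^n]}(T')^{\nat}}$, together with the verification that the divisibility hypothesis for \eqref{eqn:odnl} is stable under such changes; this is handled by the product-of-immersions device already used in the proof of \ref{theo:indcr} and in \ref{lemm:divp}, so I would simply cite those arguments. Since the proposition is phrased identically to \cite[(5.2.2)]{nb} apart from the harmless presence of $D^{[p^n]}$, the cleanest writeup is: ``The proof is the same as that of \cite[(5.2.2)]{nb}, using \ref{theo:funpas} in place of the contravariant functoriality of the Steenbrink complex without relative SNCD, together with \ref{lemm:divp} and \ref{prop:dvok} for the divisibility; the base change morphism satisfies $(5.1.1.6)$, $(6.4.4)$ and $(6.4.5)$ exactly as in \ref{theo:bccange}.''
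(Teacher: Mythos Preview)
Your proposal is correct and matches the paper's approach: the paper's entire proof is the single sentence ``This immediately follows from the contravariant functoriality (\ref{theo:funas}).'' Note that since $T'\to T$ is a morphism in ${\cal E}^{\sq}_{p,n}$ (both are enlargements of the \emph{same} $S^{[p^n]}$), the induced morphism $u\col S^{[p^n]}(T')^{\nat}\to S^{[p^n]}(T)^{\nat}$ has $\deg(u)=1$, so \ref{theo:funas} applies directly and your detour through \ref{theo:funpas}, \ref{lemm:divp}, and \ref{prop:dvok} for divisibility is unnecessary.
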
 
\begin{proof} 
This immediately follows from the contravariant functoriality (\ref{theo:funas}). 
\end{proof} 

\parno 
The morphism (\ref{eqn:auqeds}) induces the following morphism 
\begin{align*} 
{\mathfrak g}^*\col 
&
Rf^{[p^n]}_{\os{\circ}{T}*}((A_{\rm zar}((X^{[p^n]}_{\os{\circ}{T}_1},D^{[p^n]}_{\os{\circ}{T}_1})
/S^{[p^n]}(T)^{\nat},E^{}(\os{\circ}{T})),P^{D^{[p^n]}_{\os{\circ}{T}_1}},P))\\
\lo 
&R{\mathfrak g}_*R f^{[p^n]}_{\os{\circ}{T}{}'*}
((A_{\rm zar}((X^{[p^n]}_{\os{\circ}{T}{}'_1},D^{[p^n]}_{\os{\circ}{T}{}'_1})/S^{[p^n]}(T')^{\nat},
E^{}(\os{\circ}{T}{}')),P^{D^{[p^n]}_{\os{\circ}{T}{}'_1}},P))
\tag{11.1.2}\label{eqn:auaeds}
\end{align*}
$\! \! \!$of bifiltered complexes in ${\rm D}^+{\rm F}^2(f^{-1}({\cal K}_T))$. 

\par
Let ${\rm IsocF}^{\sq}_p(S/{\cal V})$ (resp.~${\rm IsocF}^{\sq}(S/{\cal V})$) 
be the category of filtered (solid) log $p$-adically convergent isocrystals on 
${\rm Enl}^{\sq}_p(S/{\cal V})$ 
(resp.~the category of filtered (solid) log convergent isocrystals on 
${\rm Enl}^{\sq}(S/{\cal V})$) defined in \cite{nb}. 
Let $F{\textrm -}{\rm IsocF}^{\sq}_p(S/{\cal V})$ 
(resp.~$F{\textrm -}{\rm IsocF}^{\sq}(S/{\cal V})$) 
be the category of filtered (solid) log $p$-adically convergent $F$-isocrystals on 
${\rm Enl}^{\sq}_p(S/{\cal V})$ 
(resp.~the category of filtered (solid) log convergent $F$-isocrystals on 
${\rm Enl}^{\sq}(S/{\cal V})$). 
Let $\star$ be nothing or $p$. 
Let ${\rm IsocF}^{2\sq}_{\star}(S/{\cal V})$ be the obvious bifiltered version of 
${\rm IsocF}^{\sq}_{\star}(S/{\cal V})$.  
Let 
$F^{\infty}{\textrm -}{\rm IsosF}^{\sq}_{\star}(S/{\cal V})$ be the category 
of filtered $F^{\infty}$-isospans on ${\rm Enl}^{\sq}_{\star}(S/{\cal V})$ 
defined in [loc.~cit.]
($F^{\infty}{\textrm -}{\rm IsocF}^{\sq}_{\star}(S/{\cal V})$ 
is a full subcategory of $F^{\infty}{\textrm -}{\rm IsosF}^{\sq}_{\star}(S/{\cal V})$.)   
Let 
$F^{\infty}{\textrm -}{\rm IsosF}^{2\sq}_{\star}(S/{\cal V})$ be the obvious  
bifiltered  version of $F^{\infty}{\textrm -}{\rm IsosF}^{\sq}_{\star}(S/{\cal V})$.

\par  
The following is a key lemma for (\ref{theo:pwfec}) below.

\begin{lemm}[{\bf cf.~\cite[(5.2.3)]{nb}}]\label{lemm:pnlcfi}
Let ${\rm IsocF}^{2\sq}_p(S/{\cal V})$ be the category of bifiltered log $p$-adically convergent isocrystals on 
${\rm Enl}^{\sq}_p(S/{\cal V})$. 
Assume that $M_S$ is split. Let $k$ be a nonnegative integer or $\infty$. 
Then there exists an object
\begin{align*} 
(R^q f^{[p^n]}_*(A_{\rm zar}((X^{[p^n]},D^{[p^n]})/K,E^{})),P^D,P)
\tag{11.2.1}\label{ali:pkalkf} 
\end{align*} 
of ${\rm IsocF}^{2\sq}_p(\os{\circ}{S}/{\cal V})$ 
such that 
{\footnotesize{\begin{align*} 
&(R^qf^{[p^n]}_*
(A_{\rm zar}((X^{[p^n]},D^{[p^n]})/K,E^{})),P^D,P)_{\os{\circ}{T}}=
 (R^qf^{[p^n]}_{\os{\circ}{T}*}
(A_{\rm zar}((X^{[p^n]}_{\os{\circ}{T}_1},
D^{[p^n]}_{\os{\circ}{T}_1})/S^{[p^n]}({\os{\circ}{T})},E^{}(\os{\circ}{T})))_{\mab Q},P^D,P)
\tag{11.2.2}\label{ali:pkakf}
\end{align*}}} 
for any object $\os{\circ}{T}$ in ${\rm Enl}^{\sq}_p(\os{\circ}{S}/{\cal V})$. 
In particular, there exists an object
\begin{align*} 
(R^qf^{[p^n]}_{*}
(\eps^*_{(X^{[p^n]},D^{[p^n]})/K}(E_K)),P^D,P)
\tag{11.2.3}\label{ali:pklkf} 
\end{align*} 
of ${\rm IsocF}^2_p(\os{\circ}{S}/{\cal V})$ 
such that 
\begin{align*} 
&(R^q
f^{[p^n]}_{*}
(\eps^*_{(X^{[p^n]},D^{[p^n]})/K}(E^{}_K)),P^{D^{[p^n]}},P)_{\os{\circ}{T}} \\
&=
(R^qf^{[p^n]}_{(X^{[p^n]}_{\os{\circ}{T}_1},D^{[p^n]}_{\os{\circ}{T}_1})/S^{[p^n]}(\os{\circ}{T})*}
(\eps^*_{(X^{[p^n]}_{\os{\circ}{T}_1},D^{[p^n]}_{\os{\circ}{T}_1})/S^{[p^n]}(\os{\circ}{T})}
(E^{}(\os{\circ}{T})))_{\mab Q},P^{D^{[p^n]}_{\os{\circ}{T}_1}},P)
\tag{11.2.4}\label{ali:pkkf}
\end{align*} 
for any object $\os{\circ}{T}$ in ${\rm Enl}_p(\os{\circ}{S}/{\cal V})$. 
%locally on $\os{\circ}{S}$. 
\end{lemm}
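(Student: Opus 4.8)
\textbf{Proof proposal for (\ref{lemm:pnlcfi}).}
The plan is to follow the strategy of \cite[(5.2.3)]{nb} almost verbatim, replacing the zariskian $p$-adic filtered Steenbrink complex $(A_{\rm zar}(X/S),P)$ used there with the bifiltered complex $(A_{\rm zar}((X,D)/S),P^D,P)$ constructed in \S\ref{sec:psc} of this paper and with its functoriality established in \S\ref{sec:fcuc}. First I would fix $n$ and work with the fibered category $T \mapsto (X^{[p^n]}_{\os{\circ}{T}_1},D^{[p^n]}_{\os{\circ}{T}_1})/S^{[p^n]}(T)^{\nat}$ over ${\cal E}^{\sq}_{p,n}$, and for each enlargement $T$ form the bifiltered complex $(A_{\rm zar}((X^{[p^n]}_{\os{\circ}{T}_1},D^{[p^n]}_{\os{\circ}{T}_1})/S^{[p^n]}(T)^{\nat},E(\os{\circ}{T})),P^D,P)$ of (\ref{eqn:auxtds}) together with the transition morphisms ${\mathfrak g}^*$ of (\ref{prop:tptt})/(\ref{eqn:auaeds}). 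By (\ref{prop:bdccd}) and (\ref{coro:fctd}) each $Rf^{[p^n]}_{\os{\circ}{T}*}$ of these (bi)filtered complexes is a bounded (filtered) perfect complex, so the cohomology sheaves $R^qf^{[p^n]}_{\os{\circ}{T}*}$ with their filtrations $P^D$ and $P$ are coherent, and they assemble into a presheaf of filtered ${\cal K}_T$-modules on ${\cal E}^{\sq}_{p,n}$.

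The substance of the argument is then to verify the two axioms making this presheaf a filtered log $p$-adically convergent isocrystal: (i) \emph{crystal property}, i.e.\ for a morphism ${\mathfrak g}\col T'\lo T$ in ${\cal E}^{\sq}_{p,n}$ the base-change morphism induced by ${\mathfrak g}^*$ is a filtered isomorphism after tensoring with ${\mab Q}$; and (ii) \emph{convergence}. For (i) the essential input is the bifiltered base change theorem (\ref{theo:bccange}): applying $Lu^*$ to $Rf_*((A_{\rm zar}((X_{\os{\circ}{T}_0},D_{\os{\circ}{T}_0})/S(T)^{\nat}),P^D,P))$ yields $Rf'_*((A_{\rm zar}((X_{\os{\circ}{T}{}'_0},D_{\os{\circ}{T}{}'_0})/S(T')^{\nat},\os{\circ}{q}{}^*_{\rm crys}E),P^D,P))$, and since all these complexes are perfect over noetherian bases (for proper $\os{\circ}{f}$, by (\ref{coro:fctd})) the usual spectral sequence / flatness-after-$\otimes{\mab Q}$ argument upgrades this to an isomorphism on each $R^q$ compatibly with $P^D$ and $P$. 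Here one must check, exactly as remarked at the start of \S\ref{sec:e2}, that the base-change morphism $q\col (X_{\os{\circ}{T}{}'_0},D_{\os{\circ}{T}{}'_0})\lo (X_{\os{\circ}{T}_0},D_{\os{\circ}{T}_0})$ satisfies the conditions (5.1.1.6) so that the pull-back $q^*$ of (\ref{theo:funas}) is defined and agrees with the base-change map of (\ref{theo:bccange}). Strictness of the filtrations, hence that $P^D$ and $P$ on the limit again satisfy $\mathrm{gr}$-compatibility, follows from the $E_2$-degeneration (\ref{theo:e2dgfam}) applied after $\otimes{\mab Q}$, together with the fact that all the $E_1$-terms of (\ref{eqn:espwfsp}) are themselves convergent isocrystals by the smooth proper log crystalline case.

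For (ii), convergence: I would reduce to the convergence of the graded pieces via the spectral sequences (\ref{ali:dks}) and (\ref{ali:dwks}). By (\ref{coro:pwspp}) the $\mathrm{gr}^{P^D}$ and $\mathrm{gr}^P$ of $R^qf^{[p^n]}_*(\eps^*(E))$ are built out of the log crystalline cohomology sheaves $R^{q-2j-k}f_{\os{\circ}{X}{}^{(2j+k')}\cap \os{\circ}{D}{}^{(k-k')}/\os{\circ}{S}*}$ of \emph{smooth proper} log schemes over $S^{[p^n]}$ tensored with orientation sheaves and Tate-twisted; these are convergent $F$-isocrystals by the already-known smooth proper case (as in \cite{nb}), and the twists and the finite direct sums preserve convergence. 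An extension of convergent isocrystals being convergent, dévissage along the biregular filtrations $P^D$ and $P$ (which is finite since $X^{[p^n]}$ is proper and the relative dimension is bounded) gives convergence of $R^qf^{[p^n]}_*(\eps^*(E))$ itself, and of $R^qf^{[p^n]}_*(A_{\rm zar})$ with all of $P^D$ and $P$. The specialization formulas (\ref{ali:pkakf}) and (\ref{ali:pkkf}) then hold by construction, using the log Poincaré-type comparison (\ref{prop:tefc}) identifying $A_{\rm zar}$ with $Ru_*(\eps^*(E))$ after $\otimes{\mab Q}$. I expect the main obstacle to be bookkeeping rather than conceptual: keeping the two filtrations $P^D$ and $P$ simultaneously strict through the base-change and limit processes, and checking the compatibility of the degree-twist conventions $(-j-k,u)$ of \S\ref{sec:fcuc} with the identification $\deg(F_{S^{[p^n]}/\cdots})=p^n$ so that the transition maps really are the crystal structure maps; the convergence step is then routine given the smooth proper case.
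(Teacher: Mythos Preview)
Your overall strategy---bifiltered base change (\ref{theo:bccange}) for the crystal property, plus d\'evissage through ${\rm gr}^P$ and ${\rm gr}^{P^D}$ via (\ref{coro:pwspp}) to reduce convergence to the smooth proper case---is exactly the approach the paper intends (it refers everything in \S\ref{sec:e2} back to \cite[(5.2.3)]{nb} with $(A_{\rm zar},P)$ replaced by $(A_{\rm zar},P^D,P)$ and (\ref{theo:bccange}) in hand). So the broad outline is fine.

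However, you have not used the hypothesis ``$M_S$ is split,'' and this is precisely the content that distinguishes (\ref{lemm:pnlcfi}) from (\ref{theo:pwfec}). The conclusion of the lemma is that the bifiltered cohomology defines an object of ${\rm IsocF}^{\sq}_p(\os{\circ}{S}/{\cal V})$, i.e.\ a filtered isocrystal on \emph{non-log} enlargements $\os{\circ}{T}\in{\rm Enl}^{\sq}_p(\os{\circ}{S}/{\cal V})$; the formula (\ref{ali:pkakf}) evaluates it at such an $\os{\circ}{T}$, and the right-hand side involves $S^{[p^n]}(\os{\circ}{T})$. The point is that a splitting of $M_S$ lets you canonically endow any $\os{\circ}{T}$ with the pulled-back log structure, giving a section ${\rm Enl}^{\sq}_p(\os{\circ}{S}/{\cal V})\to {\rm Enl}^{\sq}_p(S^{[p^n]}/{\cal V})$ of the forgetful functor; through this section your presheaf on ${\cal E}^{\sq}_{p,n}$ restricts to a presheaf on ${\rm Enl}^{\sq}_p(\os{\circ}{S}/{\cal V})$, and it is on \emph{this} restricted presheaf that you must verify the crystal and convergence axioms. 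Your write-up stays entirely on ${\cal E}^{\sq}_{p,n}={\rm Enl}^{\sq}_p(S^{[p^n]}/{\cal V})$, which is the setting of (\ref{theo:pwfec}) rather than of the present lemma. Once you insert the splitting step, the rest of your argument (base change for the crystal property, d\'evissage plus the smooth proper case for convergence, $E_2$-degeneration for strictness) goes through and matches the paper's intended proof.
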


%\par 
%Let $A$ be a commutative ring with unit element. 
%Recall that we have said that a bifiltered $A$-module $(M,P,Q)$ is bifilteredly flat 
%if $M$ and $M/\sum_{j=1}^NP_kM$ $(N\in {\mab Z}_{\geq 1},1\leq \forall i^j_1\leq \forall i^j_n \leq n, 
%\forall k^j_1, \forall k^j_n \in {\mab Z})$ are flat $A$-modules 
%(\cite[(4.4)]{nlf}). As a corollary of (\ref{lemm:pnlcfi}), we obtain the following: 

%\begin{coro}[{\bf cf.~\cite[(5.2.4)]{nb}}]\label{coro:flft}
%For a hollow log $p$-adic enlargement $T$ of $S^{[p^n]}/{\cal V}$,    
%the bifiltered sheaf
%\begin{align*}  
%(R^qf^{[p^n]}_{\os{\circ}{T}*}
%(A_{\rm zar}((X^{[p^n]}_{\os{\circ}{T}_1},D^{[p^n]}_{\os{\circ}{T}_1})/S^{[p^n]}(T)^{\nat},
%E^{}(\os{\circ}{T}))),P^{D^{[p^n]}_{\os{\circ}{T}_1}},P) 
%\tag{11.3.1}\label{ali:pce}
%\end{align*} 
%is a bifilteredly flat ${\cal K}_T$-modules. 
%In particular, the bifiltered sheaf 
%\begin{align*}  
%(R^qf^{[p^n]}_{\os{\circ}{T}*}
%(\eps^*_{(X^{[p^n]}_{\os{\circ}{T}_1},D^{[p^n]}_{\os{\circ}{T}_1})
%/S^{[p^n]}(T)^{\nat}}E^{}(\os{\circ}{T})),P^{D^{[p^n]}_{\os{\circ}{T}_1}}.P)
%\tag{11.3.2}\label{ali:pcte}
%\end{align*} 
%is a bifilteredly flat ${\cal K}_T$-module. 
%\end{coro} 

\begin{theo}[{\bf Log $p$-adic convergence of the weight filtration (cf.~\cite[(5.2.3)]{nb})}]
\label{theo:pwfec} 
Let $k$ be a nonnegative integer or $\infty$. 
Then there exists a unique object
\begin{align*} 
(R^qf^{[p^n]}_{*}
(A_{\rm zar}((X^{[p^n]},D^{[p^n]})/K,E^{}))^{\sq},P^{D^{[p^n]}},P)
\tag{11.3.1}\label{ali:pcpkxee}
\end{align*} 
of ${\rm IsocF}^{2\sq}_p(S^{[p^n]}/{\cal V})$ 
such that  
\begin{align*} 
&(R^qf^{[p^n]}_{*}(A_{\rm zar}((X^{[p^n]},D^{[p^n]})/K,E^{}))^{\sq},P^{D^{[p^n]}},P)_{T}=
\tag{11.3.2}\label{ali:pkaskf} \\
&(R^qf^{[p^n]}_{\os{\circ}{T}*}
(A_{\rm zar}((X^{[p^n]}_{\os{\circ}{T}_1},D^{[p^n]}_{\os{\circ}{T}_1})
/S^{[p^n]}(T)^{\nat},E^{}(\os{\circ}{T})))_{\mab Q},P^{D^{[p^n]}_{\os{\circ}{T}_1}},P)
\end{align*} 
for any object $T$ of ${\rm Enl}^{\sq}_p(S^{[p^n]}/{\cal V})$.  
In particular, there exists a unique object
\begin{align*} 
(R^qf^{[p^n]}_{*}
(\eps^*_{(X^{[p^n]},D^{[p^n]})/K}(E^{}_K))^{\nat,\sq},P^{D^{[p^n]}},P)
\tag{11.3.3}\label{ali:pcee}
\end{align*}  
of ${\rm IsocF}^{2\sq}_p(S^{[p^n]}/{\cal V})$ 
such that  
\begin{align*} 
&(R^q f^{[p^n]}_{*}
(\eps^*_{(X^{[p^n]},D^{[p^n]})/K}(E^{}_K))^{\nat,\sq},P^{D^{[p^n]}},P)_T
=\\
&(R^qf^{[p^n]}_{(X^{[p^n]}_{\os{\circ}{T}_1},D^{[p^n]}_{\os{\circ}{T}_1})/S^{[p^n]}(T)^{\nat}*}
(\eps^*_{(X^{[p^n]}_{\os{\circ}{T}_1},D^{[p^n]}_{\os{\circ}{T}_1})/S^{[p^n]}(T)^{\nat}}
(E^{}(\os{\circ}{T})))_{\mab Q},P^{D^{[p^n]}_{\os{\circ}{T}_1}},P)
\tag{11.3.4}\label{ali:apce}
\end{align*} 
for any object $T$ of ${\rm Enl}^{\sq}_p(S^{[p^n]}/{\cal V})$. 
\end{theo}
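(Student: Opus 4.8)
The plan is to mimic, essentially verbatim, the proof of the analogous statement \cite[(5.2.3)]{nb}, which is the SNCL (non-relative) case; the whole point of the present section's preamble is that all the required ingredients have already been upgraded to the relative setting. So first I would recall the general machinery for constructing objects of ${\rm IsocF}^{\sq}_p(S^{[p^n]}/{\cal V})$ out of a compatible family of filtered complexes indexed by ${\cal E}^{\sq}_{p,n}={\rm Enl}^{\sq}_p(S^{[p^n]}/{\cal V})$: one must exhibit, for each $T$, the filtered sheaf $(R^qf^{[p^n]}_{\os{\circ}{T}*}(A_{\rm zar}((X^{[p^n]}_{\os{\circ}{T}_1},D^{[p^n]}_{\os{\circ}{T}_1})/S^{[p^n]}(T)^{\nat},E(\os{\circ}{T}))),P^{D^{[p^n]}_{\os{\circ}{T}_1}},P)$, and then show that for every morphism ${\mathfrak g}\colon T'\lo T$ in ${\cal E}^{\sq}_{p,n}$ the transition morphism (\ref{eqn:auaeds}) is an \emph{isomorphism} of filtered ${\cal K}_{T'}$-modules after the appropriate base extension. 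The existence of the transition morphisms themselves, and their cocycle/identity compatibilities, is exactly (\ref{prop:tptt}); so the remaining content is the isomorphy.

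Next I would reduce the isomorphy of the transition maps to the filtered log base change theorem. The base change morphism $(X^{[p^n]}_{\os{\circ}{T}{}'_1},D^{[p^n]}_{\os{\circ}{T}{}'_1})\lo(X^{[p^n]}_{\os{\circ}{T}_1},D^{[p^n]}_{\os{\circ}{T}_1})$ is a cartesian base change of the structural morphism, and — as the section preamble explicitly warns — it satisfies the divisibility condition $(5.1.1.6)$ (this is precisely the point checked inside the proof of (\ref{theo:bccange}), since $\deg$ of the relevant base morphism equals a power of $p$, handled by (\ref{prop:dvok})). Therefore (\ref{theo:bccange}), i.e. the filtered isomorphism $Lu^*Rf_*((A_{\rm zar},P^{D},P))\os{\sim}{\lo} Rf'_*((A_{\rm zar},P^{D},P))$, applies to each transition. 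Combined with the perfectness / finite tor-dimension statements (\ref{coro:fctd}), (\ref{coro:filpcerf}) (so that $Lu^*$ behaves well and one may pass to cohomology sheaves) and with the filtered flatness of the individual cohomology sheaves (\ref{coro:flft}), one gets that $R^qf_*$ of the filtered complex forms a compatible system, i.e. defines the asserted filtered convergent isocrystal. The ``In particular'' part, about $R^qf^{[p^n]}_{*}(\eps^*(E_K))^{\nat,\sq}$ with its two filtrations $P^{D^{[p^n]}}$ and $P$, then follows by composing with the canonical isomorphism $\theta\wedge\colon Ru_{*}\os{\sim}{\lo}A_{\rm zar}$ of (\ref{prop:tefc}) (its $\bmod$-torsion / $\otimes{\mab Q}$ form), which is compatible with all the transition maps by the commutative square in (\ref{prop:tptt}).

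The uniqueness clause is the easy part: an object of ${\rm IsocF}^{\sq}_p(S^{[p^n]}/{\cal V})$ is determined by its values on all $T$ together with the transition isomorphisms, so two objects agreeing functorially as in (\ref{ali:pkaskf}) are canonically isomorphic; concretely one argues on a hollow split $T$ (using (\ref{lemm:pnlcfi}) to reduce to the split case) and then uses that enlargements detect isomorphisms of convergent isocrystals. I would also need to pass to the formal-family setting where $M_S$ is not split — here one invokes (\ref{lemm:pnlcfi}), which already supplies the object $(R^qf^{[p^n]}_*(A_{\rm zar}((X^{[p^n]},D^{[p^n]})/K,E)),P^D,P)$ in ${\rm IsocF}^{\sq}_p(\os{\circ}{S}/{\cal V})$ in the split case, and then glues via hollowing out $S^{[p^n]}(T)^{\nat}$; the section preamble's instruction to ``replace $(A_{{\rm zar},{\mab Q}},P)$ in \cite{nb} by $(A_{{\rm zar},{\mab Q}},P)$ in this paper'' is exactly the dictionary for this globalization step.

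The main obstacle I anticipate is not conceptual but bookkeeping: one must verify that the filtered base change isomorphism (\ref{theo:bccange}) is \emph{compatible} with composition of transition morphisms (the cocycle condition for the resulting isocrystal), and that the degree-twist / abrelative-Frobenius normalizations in (\ref{defi:rwd}), (\ref{defi:btd}) are tracked correctly through the tower $S^{[p^n]}$ so that the Tate twists $(-j-k,u)$ appearing in the weight spectral sequence (\ref{eqn:escssp}) assemble into honest twists in the category of isocrystals. In \cite{nb} this is done for $A_{\rm zar}(X/S)$; since the relative complex $A_{\rm zar}((X,D)/S)$ is built from $A_{\rm zar}$ of the various $\os{\circ}{X}{}^{(2j+k')}\cap\os{\circ}{D}{}^{(k-k')}$'s and of the $D^{(k)}$'s (via the Poincaré residue isomorphisms (\ref{ali:axdd}), (\ref{eqn:axd})), the compatibilities follow from the non-relative case applied to each direct factor together with the functoriality statements (\ref{coro:fuu}), (\ref{coro:indg}) — but making this precise for \emph{both} filtrations $P^D$ and $P$ simultaneously, and checking the edge maps described in (\ref{prop:lbdl}) are respected, is the part that requires care. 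Everything else is a transcription of \cite[(5.2.3)]{nb} with the references (\ref{theo:funas})/(\ref{theo:funpas}), (\ref{theo:bccange}), (\ref{coro:flft}), (\ref{lemm:pnlcfi}) substituted for their non-relative analogues.
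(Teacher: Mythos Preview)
Your proposal is correct and follows essentially the same approach as the paper: the paper's proof is a one-line reference stating that, by using (\ref{lemm:pnlcfi}), the argument is identical to that of \cite[(5.2.3)]{nb}. You have simply unpacked what that reference entails --- the transition morphisms from (\ref{prop:tptt}), the filtered base change (\ref{theo:bccange}), and the flatness/perfectness results --- which is exactly the content being invoked.
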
 
\begin{proof} 
By using (\ref{lemm:pnlcfi}), the proof is the same as that of \cite[(5.2.3)]{nb}. 
\end{proof}

%\begin{coro}\label{coro:flft} 
%For any object $T$ of ${\rm Enl}_p(S^{[p^n]}/{\cal V})$, the filtered sheaf
%\begin{align*} 
%(R^qf^{[p^n]}_{\os{\circ}{T}*}
%(P_kA_{\rm zar}(X^{[p^n]}_{\os{\circ}{T}_1}/S^{[p^n]}(T)^{\nat},
%E^{}(\os{\circ}{T}))),P)\tag{11.3.1}\label{ali:kxee}
%\end{align*} 
%is a filteredly flat ${\cal K}_T$-modules. 
%In particular, the filtered sheaf 
%\begin{align*}  
%(R^qf^{[p^n]}_{\os{\circ}{T}*}
%(\eps^*_{X^{[p^n]}_{\os{\circ}{T}_1}
%/S^{[p^n]}(T)^{\nat}}E^{}(\os{\circ}{T})),P)
%\tag{11.3.2}\label{ali:pcte}
%\end{align*} 
%is a filteredly flat ${\cal K}_T$-module. 
%\end{coro} 
%\begin{proof} 
%Because the question is local on $T$, we have only to prove that 
%$$P_{q+k'}
%R^q f^{[p^n]}_{\os{\circ}{T}*}
%(P_kA_{\rm zar}
%(X^{[p^n]}_{\os{\circ}{T}_1}/S^{[p^n]}(T)^{\nat},
%E^{}(\os{\circ}{T})))$$ 
%is a flat ${\cal K}_T$-module. 
%This follows from the proof of (\ref{theo:pwfec}) and (\ref{coro:flft}).  
%\end{proof} 

\parno 
The following is a bifiltered version of \cite[(3.5)]{of}. 
%To consider the category ${\rm IsocF}^{\rm sld}_p(S^{[p^n]}/{\cal V})$ 
%(not ${\rm IsocF}_p(S^{[p^n]}/{\cal V})$) is important. 

\begin{lemm}[{\bf cf.~\cite[(5.2.6)]{nb}}]\label{lemm:nmr}
Let ${\cal V}'$ be a finite extension of complete discrete valuation ring of ${\cal V}$. 
Let $h\col S'\lo S$ be a morphism of $p$-adic formal families of log points over 
${\rm Spec}({\cal V}')\lo {\rm Spec}({\cal V})$.  
Let $g\col Y\lo X$ be a morphism of SNCL schemes over $S'\lo S$. 
%Assume that the admissible immersion {\rm (\ref{eqn:audtds})} for $Y_{}/S'$ 
%and for any object of ${\rm Enl}^{\rm sld}_p((S')^{[p^n]}/{\cal V}')$ exists. 
Let $f'\col Y\lo S'$ be the structural morphism. 
Then there exists a natural morphism 
\begin{align*} 
g^*\col &
g^*((R^qf^{[p^n]}_{*}
(A_{\rm zar}((X^{[p^n]},D^{[p^n]})/K,E))^{\rm sld},P^{D^{[p^n]}/K},P))\\
&\lo (R^qf'{}^{[p^n]}_{*}
(A_{\rm zar}
((Y^{[p^n]},C^{[p^n]})/K',\os{\circ}{g}{}^*(E^{})))^{\rm sld},P^{C^{[p^n]}/K},P). 
\end{align*}
in ${\rm IsocF}^{2{\rm sld}}_p((S')^{[p^n]}/{\cal V}')$.  
If $(Y,C)=(X,D)\times_SS'
%(=X_{}\times_{\os{\circ}{S}}{\os{\circ}{S}{}'})
$, 
then this morphism is an isomorphism. 
\end{lemm}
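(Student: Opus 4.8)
\textbf{Proof plan for Lemma \ref{lemm:nmr}.}
The plan is to mimic the proof of the analogous statement \cite[(5.2.6)]{nb}, since all the ingredients needed here are now available in the SNCL-with-relative-SNCD setting. First I would reduce the construction of the morphism $g^*$ to the level of the zariskian $p$-adic bifiltered El Zein-Steenbrink-Zucker complexes: for each solid $p$-adic enlargement $T$ of $(S')^{[p^n]}/{\cal V}'$, with structural morphism $z_1\col T_1\lo (S')^{[p^n]}$, pull $X$ back along the composite $T_1\lo (S')^{[p^n]}\lo S^{[p^n]}$ and apply the contravariant functoriality theorem. Here one must check that the morphism of SNCL schemes with relative SNCD's obtained this way satisfies the divisibility hypothesis after (\ref{defi:efu}); this is exactly the situation of (\ref{prop:dvok}), because the mapping degree of the composite base morphism along the enlargement is a power of $p$, so the divisibility assumption for (\ref{eqn:odnl}) holds automatically. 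Hence (\ref{theo:funpas}) (1), together with (\ref{theo:bccange}) to compare the complex over $T$ with the one computed over the enlargement obtained from $Y$, produces the desired morphism $g^*$ in ${\rm D}^+{\rm F}(f'^{-1}({\cal K}_T))$; passing to $R^qf'_*$ and invoking the log $p$-adic convergence already established in (\ref{theo:pwfec}) makes this into a morphism in ${\rm IsocF}^{\rm sld}_p((S')^{[p^n]}/{\cal V}')$.

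The compatibility that makes this a well-defined morphism of convergent isocrystals (i.e.\ the commutation with the transition morphisms ${\mathfrak g}^*$ of (\ref{prop:tptt})) follows from the cocycle relation (\ref{ali:pdpp}) in (\ref{theo:funas}) (2) and its analogue in (\ref{theo:funpas}) (2), exactly as in \cite[(5.2.6)]{nb}. The key point where solidity is used — and the reason one must work in ${\rm IsocF}^{\rm sld}_p$ rather than ${\rm IsocF}_p$ — is that the base change ${\cal V}\lo {\cal V}'$ and the morphism $h\col S'\lo S$ need not be ind-\'{e}tale, so only the category of solid enlargements is stable under the relevant fiber products; this is the filtered, relative-SNCD version of \cite[(3.5)]{of}.

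For the last assertion, suppose $(Y,C)=(X,D)\times_SS'$. Then for every solid enlargement $T$ the pulled-back SNCL scheme with relative SNCD over $(S')^{[p^n]}(T)^{\nat}$ is literally the base change of $(X^{[p^n]}_{\os{\circ}{T}_1},D^{[p^n]}_{\os{\circ}{T}_1})$ along $(S')^{[p^n]}(T)^{\nat}\lo S^{[p^n]}(T)^{\nat}$, so the log base change theorem (\ref{theo:bccange}) gives, after tensoring with $\mab Q$ and applying $Rf'_*$, a filtered \emph{isomorphism}
\begin{equation*}
Lu^*Rf^{[p^n]}_{\os{\circ}{T}*}((A_{\rm zar}((X^{[p^n]}_{\os{\circ}{T}_1},D^{[p^n]}_{\os{\circ}{T}_1})/S^{[p^n]}(T)^{\nat}),P),P)_{\mab Q}\os{\sim}{\lo}
Rf'^{[p^n]}_{\os{\circ}{T}*}((A_{\rm zar}((Y^{[p^n]}_{\os{\circ}{T}_1},C^{[p^n]}_{\os{\circ}{T}_1})/(S')^{[p^n]}(T)^{\nat}),P),P)_{\mab Q},
\end{equation*}
and one checks that this isomorphism is precisely $g^*$ by tracing through the construction (using that the base change morphism itself satisfies condition (5.1.1.6)). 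Evaluating at all $T$ gives the isomorphism of isocrystals.

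\textbf{Main obstacle.} The routine part is the diagram chasing for functoriality; the one genuinely delicate point is verifying that the various base-change morphisms occurring here fall under the divisibility hypothesis of (\ref{theo:funpas}) uniformly over all solid enlargements $T$, so that the pull-back $g^*$ is defined integrally (before $\otimes{\mab Q}$) on each $A_{\rm zar}$ and only afterwards inverted — and that this integral construction is compatible with the formation of the convergent isocrystal in (\ref{theo:pwfec}). Once (\ref{prop:dvok}) is applied with $n=1$ (equivalently, once one notes ${\cal J}\subset p{\cal O}_T$ on enlargements), this obstacle dissolves and the argument runs parallel to \cite[(5.2.6)]{nb}.
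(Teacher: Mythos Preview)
Your plan is essentially correct and is exactly the approach the paper intends: the paper omits the proof entirely, simply citing \cite[(5.2.6)]{nb}, and remarks just before the lemma that the crucial point is working in ${\rm IsocF}^{\rm sld}_p$ rather than ${\rm IsocF}_p$ (the filtered analogue of \cite[(3.5)]{of}), which you correctly identify.

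There is, however, one inaccuracy in your justification. The mapping degree of the relevant base morphism $u\col (S')^{[p^n]}(T)^{\nat}\lo S^{[p^n]}(T)^{\nat}$ is \emph{not} a power of $p$ in general: it equals $\deg(h)$, which is arbitrary. So (\ref{prop:dvok}) does not apply uniformly as you claim, and your ``main obstacle'' is misdiagnosed. The correct observation is simpler: the objects of ${\rm IsocF}^{\rm sld}_p((S')^{[p^n]}/{\cal V}')$ are ${\cal K}_T$-modules, i.e.\ already tensored with ${\mab Q}$, so $\deg(u)^{-(j+1)}$ exists in ${\cal K}_T$ regardless of $p$-divisibility, and the pull-back of (\ref{eqn:axdwt}) is defined directly on the rational complexes with no integral divisibility check needed. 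With this correction the rest of your plan --- functoriality via (\ref{theo:funas}), compatibility via (\ref{ali:pdpp}), convergence via (\ref{theo:pwfec}), and the isomorphism in the base-change case via (\ref{theo:bccange}) --- goes through exactly as in \cite[(5.2.6)]{nb}.
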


\begin{rema}[{\bf cf.~\cite[(5.2.7)]{nb}}]\label{rema:ud}
As in \cite[(3.6)]{of}, the bifiltered cohomological sheaf 
$(R^qf^{[p^n]}_{*}
(A_{\rm zar}((X^{[p^n]},D^{[p^n]})/K,E^{})),P^{D^{[p^n]}},P)$ 
in ${\rm IsocF}^{2\sq}_p(S^{[p^n]}/{\cal V})$ descends to 
the object 
$(R^qf^{[p^n]}_{*}
(A_{\rm zar}((X^{[p^n]},D^{[p^n]})/K_0,E^{})),P^{D^{[p^n]}},P)$
of ${\rm IsocF}^{2\sq}_p(S^{[p^n]}/{\cal W})$.
\end{rema}

\begin{theo}[{\bf Log convergence of the weight filtration (cf.~\cite[(5.2.8)]{nb})}]\label{theo:pwfaec}
Let the notations and the assumptions be as above.  
Consider the morphism $(X^{[p^m]},D^{[p^m]})\lo S^{[p^m]}$ 
%and $X^{[p]}_{1}\lo S^{[p]}_1$ 
over ${\rm Spf}({\cal V})$ $(m=0,1)$ 
as a morphism $(X^{[p^m]},D^{[p^m]})\lo S^{[p^m]}$ 
over ${\rm Spf}({\cal W})$. 
Set ${\cal E}^{\sq}_{\star,{\cal W}}:={\rm Enl}^{\sq}_{\star}(S/{\cal W})$.  
%$(n=0,1)$. 
%Assume that there exists an admissible immersion {\rm (\ref{eqn:audgtds})}. 
Let 
$F^{\rm ar}_{X/S/{\cal W}} \col (X,D)\lo (X^{[p]},D^{[p]})$ 
be the abrelative Frobenius morphism 
over the morphism $S\lo S^{[p]}$ over ${\rm Spf}({\cal W})$.  
Let $W_{(X,D)/S^{[p]}/{\cal W}}
\col (X^{[p]},D^{[p]})\lo (X,D)$ 
be the projection. 
Let $E:=\{E_n\}_{n=0}^{\inf}$ be a sequence of flat coherent 
$\{{\cal O}_{\os{\circ}{X}{}_{\os{\circ}{T}_1}
/\os{\circ}{T}}\}_{T\in {\cal E}^{\sq}_{p,{\cal W}}}$-modules 
with a morphism 
\begin{align*} 
\Phi_n\col F_{\os{\circ}{X}{}}^*  (E_{n+1})\lo E_n
\tag{11.6.1}\label{ali:pchkee} 
\end{align*} 
of $\{{\cal O}_{\os{\circ}{X}{}_{\os{\circ}{T}_1}
/\os{\circ}{T}}\}_{T\in {\cal E}^{\sq}_{p,{\cal W}}}$-modules.  
Let $\os{\circ}{W}{}^{(l)}_{T}
\col (\os{\circ}{X}{}^{[p]}_{T_1})^{(l)}\lo \os{\circ}{X}{}^{(l)}_{T_1}$ 
$(l\in {\mab N})$ 
be also the projection over $\os{\circ}{T}$. 
Let 
$F^{\inf}{\textrm -}{\rm IsosF}^{2\sq}(S/{\cal V})$ be the category of 
$F^{\inf}$-isospans on $S/{\cal V}$ {\rm (\cite[(5.1.14)]{nb})}. 
Assume that for any $l,m\geq 0$ and $n\geq 0$, 
the morphism 
\begin{align*} 
& R^qf_{((\os{\circ}{X}{}^{[p]}_{T_1})^{(l)}\cap (\os{\circ}{D}{}^{[p]}_{T_1})^{(m))}/\os{\circ}{T}*}
(\os{\circ}{W}{}^{(l)*}_{T,{\rm crys}}
(E_{n+1}(\os{\circ}{T})_{(\os{\circ}{X}{}^{[p]}_{T_1})^{(l)}/\os{\circ}{T}}))_{\mab Q} \\
&\lo  
R^qf_{\os{\circ}{X}{}^{(l)}_{T_1}\cap \os{\circ}{D}{}^{(m)}_{T_1}/\os{\circ}{T}*}
(E_n(\os{\circ}{T})_{\os{\circ}{X}{}^{(l)}_{T_1}\cap \os{\circ}{D}{}^{(m)}_{T_1}/\os{\circ}{T}})_{\mab Q}
\end{align*}
induced by $\Phi_n$ is an isomorphism for any object $T$ of ${\cal E}^{\sq}_{p,{\cal W}}$.  
Then there exists an object
\begin{align*} 
\{((R^q f_{(X,D)/K*}(\eps_{(X,D)/K}^*(E_n))^{\nat,\sq},P^{D/K},P),\Phi_n)\}_{n=0}^{\inf}
\tag{11.6.2}\label{ali:pcxpkee} 
\end{align*}  
of $F^{\inf}{\textrm -}{\rm IsosF}^{2\sq}(S/{\cal V})$ such that 
\begin{align*} 
&(R^q f_{(X,D)/K*}(\eps_{(X,D)/K}^*(E_n))^{\nat,\sq},P^{D/K},P)_T=
(R^q f_*(A_{\rm zar}((X_{\os{\circ}{T}_1},D_{\os{\circ}{T}_1})/S(T)^{\nat},
E_n(\os{\circ}{T})))_{\mab Q},P^{D_{\os{\circ}{T}_1}},P)\\
&=(R^q f_{((X_{\os{\circ}{T}_1},D_{\os{\circ}{T}_1})/S(T)^{\nat}*}
(\eps^*_{(X_{\os{\circ}{T}_1},D_{\os{\circ}{T}_1})/S(T)^{\nat}}(E_n(\os{\circ}{T})))
_{\mab Q},P^{D_{\os{\circ}{T}_1}},P)
\tag{11.6.3}\label{ali:pcpnle}
\end{align*} 
for any object $T$ of ${\rm Enl}^{\sq}_p(S/{\cal V})$. 
%In particular, there exists an object
%\begin{align*} 
%\{(R^qf_{*}(\eps^*_{X/K}(E_{n,K}))^{\nat,\sq},P),\Phi_n\}_{n=0}^{\inf}
%\tag{11.6.4}\label{ali:pcxakee} 
%\end{align*}  
%of $F^{\infty}{\textrm -}{\rm IsosF}^{\sq}(S/{\cal V})$ such that 
%\begin{align*} 
%\{(R^qf_*(\eps^*_{X/K}(E_{n,K}))^{\nat,\sq}_T,P)\}_{n=0}^{\inf}=
%(R^qf_{X_{\os{\circ}{T}_1}/S(T)^{\nat}*}
%(\eps^*_{X_{\os{\circ}{T}_1}/S(T)^{\nat}}(E_n(\os{\circ}{T})))_{\mab Q},P) 
%\tag{11.6.5}\label{ali:pcxnle}
%\end{align*} 
%for any object $T$ of ${\rm Enl}^{\sq}_p(S/{\cal V})$. 
\end{theo}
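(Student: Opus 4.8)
\textbf{Proof proposal for Theorem~\ref{theo:pwfaec}.} The plan is to imitate the proof of \cite[(5.2.8)]{nb}, replacing the weight-filtered log crystalline complex $(A_{{\rm zar},{\mab Q}},P)$ of a proper SNCL scheme by the zariskian $p$-adic bifiltered El Zein--Steenbrink--Zucker complex $(A_{\rm zar}((X,D)/S),P^D,P)$ constructed and studied in Sections~\ref{sec:psc} through~\ref{sec:e2}. The essential new inputs are the log base change theorem for $(A_{\rm zar},P^{D_{\os{\circ}{T}_0}},P)$ (Theorem~\ref{theo:bccange}) and the fact that the base change morphism, as well as the abrelative Frobenius pull-back morphism, satisfies the divisibility condition $(6.2.1)$ needed for the contravariant functoriality of $A_{\rm zar}$ (Theorems~\ref{theo:funas}, \ref{theo:funpas}), which in turn is guaranteed by \ref{prop:dvok} since $\deg$ of the abrelative Frobenius is $p$ and ${\cal J}\subset p{\cal O}_T$ for enlargements.

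First I would assemble, for each $n\geq 0$ and each $T$ in ${\cal E}^{\sq}_{p,{\cal W}}$, the bifiltered complex $(A_{\rm zar}((X_{T_1},D_{T_1})/S(T)^{\nat},E_n(\os{\circ}{T})),P^{D_{T_1}},P)$ of Definition~\ref{defi:fdirpd}, push it forward by $Rf_{\os{\circ}{T}*}$, and invoke \ref{coro:fctd} and \ref{coro:filpcerf} to see that the result is a filtered perfect (hence tor-finite) complex of ${\cal O}_T$-modules; this legitimizes the application of $Lu^*$ below. Next, for a morphism ${\mathfrak g}\col T'\lo T$ in ${\cal E}^{\sq}_{p,{\cal W}}$ the induced $g\col (X_{T'_1},D_{T'_1})\lo (X_{T_1},D_{T_1})$ is a base change, which satisfies conditions $(6.4.4)$ and $(6.4.5)$ trivially (each smooth component pulls back to a unique smooth component), so Theorem~\ref{theo:bccange} gives a filtered isomorphism $Lu^*Rf_{\os{\circ}{T}*}(A_{\rm zar},P^{D},P)\os{\sim}{\lo} Rf_{\os{\circ}{T}'*}(A_{\rm zar},P^{D},P)$. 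Combined with the cocycle and identity relations, this exhibits $\{(R^qf_{\os{\circ}{T}*}(A_{\rm zar}(\cdots,E_n(\os{\circ}{T}))),P^{D_{T_1}},P)\}_T$ as a compatible family over ${\rm Enl}^{\sq}_p(S/{\cal V})$; the filtered flatness of \ref{coro:flft} (a consequence of \ref{lemm:pnlcfi}) shows that each graded piece $P_k/P_{k-1}$ glues to a filtered log $p$-adically convergent isocrystal, so by \ref{theo:pwfec} one obtains the object $(R^qf_*(A_{\rm zar}((X,D)/K,E_n))^{\sq},P^D,P)$ in ${\rm IsocF}^{\sq}_p(S/{\cal V})$ satisfying the evaluation formula \eqref{ali:pcpnle}.

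Finally, to upgrade the family $\{(R^qf_*(A_{\rm zar}((X,D)/K,E_n))^{\sq},P^D,P)\}_{n=0}^{\inf}$ to an $F^{\inf}$-isospan (Definition \cite[(5.1.14)]{nb}), I would construct the transition morphisms $\Phi_n$ from the abrelative Frobenius morphism $F^{\rm ar}$ of $(X,D)$ over $S\lo S^{[p]}$ together with the given $\Phi_n\col F_{\os{\circ}{X}}^*(E_{n+1})\lo E_n$: by \ref{prop:fcbar} (Frobenius compatibility I) and the contravariant functoriality \ref{theo:funpas}, $F^{\rm ar}$ induces a morphism of bifiltered complexes $(A_{\rm zar}((X^{[p]},D^{[p]})/S^{[p]}(T')^{\nat},E_{n+1}),P^{D^{[p]}},P)\lo RF^{\rm ar}_*(A_{\rm zar}((X,D)/S(T)^{\nat},E_n),P^D,P)$; the divisibility assumption $(6.2.1)$ needed for this map is exactly \ref{prop:dvok} applied with $\deg(F_{S(T)^{\nat}/S^{[p]}(T')^{\nat}})=p$ and ${\cal I}\subset p{\cal O}_T$. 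Taking $Rf_*$ and passing to cohomology in degree $q$ gives $\Phi_n$; the hypothesis that the morphisms on the $E_1$-terms $R^qf_{(\os{\circ}{X}{}^{[p]})^{(l)}\cap(\os{\circ}{D}{}^{[p]})^{(m)}/\os{\circ}{T}*}(\cdots)_{\mab Q}\lo R^qf_{\os{\circ}{X}{}^{(l)}\cap\os{\circ}{D}{}^{(m)}/\os{\circ}{T}*}(\cdots)_{\mab Q}$ induced by $\Phi_n$ are isomorphisms, combined with the (pre)weight spectral sequence \eqref{eqn:espwfsp} and its contravariant functoriality \ref{prop:contwt}, forces $\Phi_n$ itself to be an isomorphism of filtered objects (by a filtered five-lemma / convergence-of-spectral-sequences argument, using that the filtrations are exhaustive and the cohomology sheaves are finitely generated over ${\cal K}_T$). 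The descent to ${\rm Spf}({\cal W})$ and the uniqueness follow from \ref{rema:ud} and \ref{lemm:nmr} exactly as in \cite[(5.2.8)]{nb}.

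\textbf{Main obstacle.} The delicate point is verifying that the abrelative Frobenius pull-back morphism of bifiltered complexes is defined at the level of the $P$-filtration (not merely the underlying complex) and is compatible with both $P^D$ and $P$ simultaneously, i.e.\ that the divisibility $(6.2.1)$ holds after passing to the quotients $\Om^{i+j+1}/P^{{\cal X}_{\bul}}_j$ that enter the definition of $A_{\rm zar}$; this is handled by \ref{lemm:divp} (independence of choices) and \ref{prop:dvok}, but the bookkeeping of the two nested filtrations $P^D$ and $P$, their interaction under the degree twist $(-j-k,u)$, and the factor $p^{e_p(j+1)}$ in the divisibility is where the argument requires genuine care. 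The other technical hurdle — already implicitly resolved by the earlier sections — is ensuring that the isocrystal structure of each graded piece $\mathrm{gr}$ is compatible with the transition maps $\Phi_n$, which reduces via \eqref{ali:ana}, \eqref{ali:dks}, \eqref{ali:dwks} to the already-known convergence statements for the $D^{(k)}$'s and the multiple intersections $\os{\circ}{X}{}^{(l)}\cap\os{\circ}{D}{}^{(m)}$, so no new phenomenon arises there.
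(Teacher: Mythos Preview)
Your proposal is correct and follows exactly the approach the paper intends: the paper omits the proof of this theorem entirely, noting at the start of \S\ref{sec:e2} that all results in that section are obtained by transporting the arguments of \cite[(5.2)]{nb} (in particular \cite[(5.2.8)]{nb}) via the bifiltered base change theorem (\ref{theo:bccange}), with the only point to check being that the relevant base change morphisms satisfy condition (5.1.1.6). Your sketch fleshes out precisely this strategy, correctly identifying the needed ingredients (\ref{coro:filpcerf}, \ref{theo:pwfec}, \ref{prop:dvok}, \ref{theo:funpas}, \ref{prop:fcbar}, \ref{prop:contwt}) and the role of the divisibility hypothesis for the abrelative Frobenius.
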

%\begin{proof} 
%By using (\ref{theo:pwfec}), the proof is the same as that of \cite[(5.2.8)]{nb}. 
%\end{proof}

%\begin{coro}\label{coro:filc} 
%Let ${\rm Enl}^{\sq}_{\star}(S/{\cal V})$ be the category of enlargements on $S/{\cal V}$ 
%$(\sq={\rm sld}$ or nothing, $\star=p$ or nothing$)$ {\rm (\cite[\S5)]{nb})}. 
%Let the notations and the assumption be as in {\rm (\ref{theo:pwfaec})}. 
%Then there exists a bifiltered $F^{\infty}$-isospan 
%\begin{align*} 
%\{(R^qf_{*}
%(\eps^*_{X/K}(E^{\bul \leq N}_{n,K}))^{\sq},P),\Phi_n\}_{n=0}^{\inf}
%\tag{11.7.1}\label{ali:pcesee} 
%\end{align*}  
%on ${\rm rhEnl}^{\sq}(S/{\cal V})$ such that 
%\begin{align*} 
%\{(R^qf_*
%(\eps^*_{X/K}(E^{\bul \leq N}_{n,K}))^{\sq}_T,P)\}_{n=0}^{\inf}=
%(R^qf_{X_{\os{\circ}{T}_1}/S(T)*}(\eps^*_{X_{\os{\circ}{T}_1}/S(T)}
%(E^{\bul \leq N}_n(\os{\circ}{T})))_{\mab Q},P) 
%\tag{11.7.2}\label{ali:pqple}
%\end{align*} 
%for any object $T$ of 
%${\rm Enl}^{\sq}_{\star}(S/{\cal V})$. 
%\end{coro} 
%\begin{proof} 
%Let $T$ be an object of 
%${\rm rhEnl}^{\sq}_p(S/{\cal V})$. 
%Then $S(T)=S(T)^{\nat}$. 
%Hence this corollary follows from (\ref{theo:pwfaec}).  
%\end{proof} 

\begin{coro}\label{coro:fenlt}
Assume that $E_n=E_0$ for any $n\in {\mab N}$. Set $E:=E_0$. 
For any object $T$ of ${\rm Enl}^{\sq}(S/{\cal V})$,    
\begin{align*}  
(R^q f_{((X_{\os{\circ}{T}_1},D_{\os{\circ}{T}_1})/S(T)^{\nat}*}
(\eps^*_{(X_{\os{\circ}{T}_1},D_{\os{\circ}{T}_1})/S(T)^{\nat}}(E(\os{\circ}{T}))_{\mab Q},
P^{D_{\os{\circ}{T}_1}},P)
\tag{11.7.1}\label{ali:bpce}
\end{align*} 
is a bifilteredly flat ${\cal K}_T$-module. 
In this case we denote 
$$\{((R^q f_{(X,D)/K*}(\eps_{(X,D)/K}^*(E_n))^{\nat,\sq},P^{D/K},P),\Phi_n)\}_{n=0}^{\inf}$$
by 
$$(R^q f_{(X,D)/K*}(\eps_{(X,D)/K}^*(E))^{\nat,\sq},P^{D/K},P),\Phi).$$ 
%In particular, the bifiltered sheaf 
%\begin{align*}  
%(R^qf_{T*}(\eps^*_{(X_{\os{\circ}{T}_1},D_{\os{\circ}{T}_1})
%/S(T)^{\nat}}(E(\os{\circ}{T})))_{\mab Q},P^{D_{\os{\circ}{T}_1}},P) 
%\tag{11.7.1}\label{ali:bpce}
%\end{align*} 
%is a bifilteredly flat ${\cal K}_T$-module. 
\end{coro}

\begin{exem}\label{exam:ofl} 
{\rm Let the notations be as before (\ref{theo:pwfaec}) (1). 
Then there exists an object 
\begin{align*} 
(R^qf_*({\cal O}_{(X,D)/K})^{\nat,\sq},P^{D/K},P)
\tag{11.8.1}\label{ali:pcxnee} 
\end{align*}  
of $F{\textrm -}{\rm IsocF}^{2\sq}(S/{\cal V})$ such that 
\begin{align*} 
(R^qf_*({\cal O}_{(X,D)/K})^{{\nat},{\sq}},P^{D/K},P)_T=
(R^qf_{(X_{\os{\circ}{T}_1},D_{T_1})/S(T)^{\nat}*}
({\cal O}_{(X_{\os{\circ}{T}_1},D_{\os{\circ}{T}_1})/S(T)^{\nat}})_{\mab Q},P^{D_{\os{\circ}{T}_1}},P) 
\tag{11.8.2}\label{ali:pctoe}
\end{align*} 
for any object $T$ of ${\rm Enl}_p^{\sq}(S/{\cal V})$. 
(cf.~the proof of \cite[(3.7)]{of}).}
%there exists an object
%$$(R^qf_*(A_{\rm zar}((X,D)/K))^{\sq},P^D,P)$$ 
%of $F{\textrm -}{\rm IsocF}^{\sq}(S/{\cal V})$
%such that 
%\begin{align*} 
%(R^qf_*(A_{\rm zar}((X,D)/K))^{\sq},P^D,P)_T
%=(R^qf_*(A_{\rm zar}((X_{\os{\circ}{T}_1},D_{\os{\circ}{T}_1})/S(T)^{\nat})_{\mab Q},
%P^{D_{\os{\circ}{T}_1}},P)
%\tag{11.8.1}\label{ali:pcoe}
%\end{align*} 
%for any object $T$ of ${\rm Enl}^{\sq}_p(S/{\cal V})$.   
%Indeed, the assumption in (\ref{theo:pwfaec})  
%is satisfied by the base change of \cite[(1.3)]{boi} 
%(cf.~the proof of \cite[(3.7)]{of}).

%In particular,  there exists an object 
%\begin{align*} 
%(R^qf_*({\cal O}_{(X,D)/K})^{\nat,\sq},P^D,P)
%\tag{11.8.2}\label{ali:pcxnee} 
%\end{align*}  
%of $F{\textrm -}{\rm IsocF}^{\sq}(S/{\cal V})$ such that 
%\begin{align*} 
%(R^qf_*({\cal O}_{(X,D)/K})^{{\nat},{\sq}},P^D,P)_T=
%(R^qf_{(X_{\os{\circ}{T}_1},D_{T_1})/S(T)^{\nat}*}
%({\cal O}_{(X_{\os{\circ}{T}_1},D_{\os{\circ}{T}_1})/S(T)^{\nat}})_{\mab Q},P^{D_{\os{\circ}{T}_1}},P) 
%\tag{11.8.3}\label{ali:pctoe}
%\end{align*} 
%for any object $T$ of ${\rm Enl}_p^{\sq}(S/{\cal V})$. }
\end{exem}

\bigskip
\parno
{\bf (2) Contravariant functoriality}
\bigskip
\begin{prop}\label{prop:fcuccv}
$(1)$ 
Let $g$ be as in {\rm (\ref{theo:funas})}. 
Let the notations and the assumption be as in 
{\rm (\ref{coro:fenlt})}.  
Then the log $p$-adically convergent isocrystals  
$$P^{D/K}_kR^qf_{(X,D)/K*}(\eps^*_{(X,D)/K}(E_{K}))^{\nat,\sq}\quad 
{\textrm a}{\textrm n}{\textrm d} 
\quad P_kR^qf_{(X,D)/K*}(\eps^*_{(X,D)/K}(E_{K}))^{\nat,\sq}$$
are contravariantly functorial.  
\end{prop}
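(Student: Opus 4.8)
The plan is to reduce the statement about the log $p$-adically convergent isocrystals $P^D_kR^qf_{X/K*}(\eps^*_{X/K}(E_K))$ and $P_kR^qf_{X/K*}(\eps^*_{X/K}(E_K))$ to the already-established contravariant functoriality of the zariskian $p$-adic bifiltered El~Zein--Steenbrink--Zucker complex $(A_{\rm zar},P^{D_{\os{\circ}{T}_0}},P)$. Concretely, for a morphism $g$ as in \eqref{theo:funas} (say with a morphism of base log PD-enlargements $u$ of $p$-nondivisible degree, or more generally in the situation of \eqref{theo:funpas}), and for a morphism $\os{\circ}{g}{}^*_{\rm crys}(F)\lo E$ of crystals, one has the pull-back morphism $g^*$ of bifiltered complexes from \eqref{eqn:fzaxd} (resp.~\eqref{eqn:fzapxd}), compatible with the isomorphism $\theta\wedge$ to $Ru_{*}(\eps^*(-))$ as in the diagram \eqref{cd:pssfccz} (resp.~\eqref{cd:pssfpccz}).

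First I would apply $Rf_*$ and $Rf'_*$ to these pull-back morphisms and pass to the category ${\rm IsocF}^{\sq}_p$ using \eqref{theo:pwfec} (and its variant before \eqref{theo:pwfaec}, together with \eqref{lemm:nmr}), which supplies, for each object $T$ of ${\rm Enl}^{\sq}_p(S/{\cal V})$, the identification of the fibre of the convergent isocrystal $(R^qf_{*}(A_{\rm zar}((X,D)/K,E))^{\sq},P^D,P)$ at $T$ with $(R^qf_{\os{\circ}{T}*}(A_{\rm zar}((X_{\os{\circ}{T}_1},D_{\os{\circ}{T}_1})/S(T)^{\nat},E(\os{\circ}{T}))),P^{D_{\os{\circ}{T}_1}},P)$. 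Thus the pull-back morphism $g^*$ induces, at each level $T$, a morphism of filtered ${\cal K}_T$-modules compatible with restriction morphisms ${\mathfrak g}\col T'\lo T$ by the functoriality statements \eqref{ali:pdpp} and \eqref{theo:funpas}(2), hence a morphism in ${\rm IsocF}^{\sq}_p$. Since by \eqref{coro:fenlt} the filtered sheaves $(R^qf_{\os{\circ}{T}*}(\eps^*_{(X_{\os{\circ}{T}_1},D_{\os{\circ}{T}_1})/S(T)^{\nat}}(E(\os{\circ}{T})))_{\mab Q},P^{D_{\os{\circ}{T}_1}},P)$ are filteredly flat, each step $P^D_k$ and $P_k$ of the filtration is itself a convergent isocrystal (the quotients $R^qf_*/P^D_k R^qf_*$ and $R^qf_*/P_kR^qf_*$ being flat, one takes kernels), and the morphism $g^*$ respects these sub-isocrystals. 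This is exactly the desired contravariant functoriality of $P^D_kR^qf_{X/K*}(\eps^*_{X/K}(E_K))$ and $P_kR^qf_{X/K*}(\eps^*_{X/K}(E_K))$.

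The remaining work is bookkeeping: one must check that the $g^*$ produced at the level of $A_{\rm zar}$ is compatible with the descent data assembling the $(R^qf_{\os{\circ}{T}*}(\cdots),P^D,P)$ into a single object of ${\rm IsocF}^{\sq}_p$, and that this compatibility is preserved after tensoring with ${\mab Q}$ and taking the filtration steps; both follow from the transitivity relations \eqref{ali:pdpp} and the independence-of-choices statements proved in \eqref{theo:indcr} and \eqref{lemm:divp}, together with \eqref{lemm:nmr} for the behaviour under base change $S'\lo S$. One should also record the analogous statement for the Frobenius-equivariant structure, using \eqref{prop:fcbar} (Frobenius compatibility), so that the morphism $g^*$ is a morphism of $F$-isocrystals when $g$ commutes with the abrelative Frobenius; this is again purely formal once \eqref{theo:funpas} and \eqref{prop:fcbar} are in hand.

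The main obstacle is not any single deep point but the verification that the morphism $g^*$ at the level of $A_{\rm zar}$ actually descends to a well-defined morphism of convergent isocrystals independently of the auxiliary choices (affine simplicial open coverings, simplicial immersions into log smooth schemes over $\ol{S(T)^{\nat}}$, and the choice of $u$-degree normalization $\deg(u)^{-(j+1)}$ in \eqref{eqn:axdwt}); this requires invoking the well-definedness already established in the proof of \eqref{theo:funas} together with the functoriality of \eqref{theo:pwfec}, and one must be careful that in the case where $\deg(u)_x$ is divisible by $p$ the divisibility hypothesis of \eqref{lemm:divp} is stable under the operations involved. Once this is settled, the extraction of the filtration steps $P^D_k$ and $P_k$ as sub-isocrystals is immediate from the filtered flatness in \eqref{coro:fenlt}, so I would present the proof as a short deduction from \eqref{theo:funas}, \eqref{theo:funpas}, \eqref{theo:pwfec}, \eqref{lemm:nmr} and \eqref{coro:fenlt}.
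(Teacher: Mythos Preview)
Your proposal is correct and follows essentially the same approach as the paper, which omits the proof entirely (cf.\ the remark at the start of \S\ref{sec:e2} that the results of that section are obtained by transporting the arguments of \cite{nb} via the log base change theorem for $(A_{\rm zar},P)$). Your reduction to (\ref{theo:funas}), (\ref{theo:funpas}), (\ref{theo:pwfec}), (\ref{lemm:nmr}) and (\ref{coro:fenlt}) is exactly the intended deduction.
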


%\begin{rema} 
%We leave the reader to the formulations of the log convergent 
%$F$-isocrystal versions of (\ref{prop:fcuccv}) (cf.~(\ref{prop:spcnvuc}) below). 
%\end{rema} 

\begin{prop}\label{prop:spcnvuc}
Let the notations and the assumption be as in {\rm (\ref{coro:fenlt})}. 
%Assume that $E^{\bul \leq N}_n=
%E^{\bul \leq N}_0$ for any $n\in {\mab N}$. 
%Set $E^{\bul \leq N}:=E^{\bul \leq N}_0$. 
Let ${\cal V}'/{\cal V}$ be a finite extension.   
Let $S'\lo S$ be a morphism of log $p$-adic formal families of log points 
over ${\rm Spf}({\cal V}')\lo {\rm Spf}({\cal V})$. 
Set $K':={\rm Frac}({\cal V}')$. 
Let $T'$ and $T$ be log $(p$-adic$)$ enlargements of $S'$ and $S$, respectively. 
Let $T'\lo T$ be a morphism of log $(p$-adic$)$ enlargements over 
$S'\lo S$. Let $u\col S'(T')^{\nat}\lo S(T)^{\nat}$ be the induced morphism.  
Let $Y$ be a log scheme over $S'$ 
which is similar to $X$ over $S$. 
Let $F$ be a similar $F$-isocrystal of 
$\{{\cal O}_{\os{\circ}{Y}{}_{\os{\circ}{T}{}'_1}
/\os{\circ}{T}{}'}\}_{T'\in {\cal E}^{\sq}_{p,{\cal V}}}$-modules to $E$. 
%Assume that $(17.6.1)$ and $(17.6.2)$ hold.  
%Let $k$ be a nonnegative integer or $\infty$. 
Let 
$$
\bigoplus_{k'\leq k}\bigoplus_{j\geq 
\max \{-k',0\}}
R^qf_{\os{\circ}{X}{}^{(2j+k')}\cap \os{\circ}{D}{}^{(k-k')}/K*} 
(E_{\os{\circ}{X}{}^{(2j+k')}\cap \os{\circ}{D}{}^{(k-k')}/K}
\otimes_{\mab Z}
\vp^{(2j+k'),(k-k')}((\os{\circ}{X},\os{\circ}{D})/K))$$ 
be an object of 
$F{\textrm -}{\rm Isoc}^{2\sq}_{\star}(S/{\cal V})$  
such that
{\footnotesize
{\begin{align*} 
&
\bigoplus_{k'\leq k}\bigoplus_{j\geq 
\max \{-k',0\}}
R^qf_{\os{\circ}{X}{}^{(2j+k')}\cap \os{\circ}{D}{}^{(k-k')}/K*} 
(E_{\os{\circ}{X}{}^{(2j+k')}\cap \os{\circ}{D}{}^{(k-k')}/K}
\otimes_{\mab Z}
\vp^{(2j+k'),(k-k')}((\os{\circ}{X},\os{\circ}{D})/K))_T\\
&= 
\bigoplus_{k'\leq k}\bigoplus_{j\geq 
\max \{-k',0\}}R^{q}f_{\os{\circ}{X}{}^{(2j+k')}_{T_1}\cap 
\os{\circ}{D}{}^{(k-k')}_{T_1}/\os{\circ}{T}*}
(E_{\os{\circ}{X}{}^{(2j+k')}_{T_1}\cap \os{\circ}{D}{}^{(k-k')}_{T_1}
/\os{\circ}{T}}\otimes_{\mab Z}  \vp^{(2j+k'),(k-k')}_{\rm crys}
((\os{\circ}{X}_{T_1},\os{\circ}{D}_{T_1})/\os{\circ}{T}))_{\mab Q}
\end{align*}}}
$\! \! \!$
for any object $T$ of ${\rm Enl}^{\sq}_p(S/{\cal V})$. 
Then 
%\par 
%$(1)$ 
there exists the following spectral sequence 
in $F{\textrm -}{\rm Isoc}^{2\sq}(S/{\cal V}):$
\begin{align*} 
{} & 
E_1^{-k,q+k}=
\bigoplus_{k'\leq k}\bigoplus_{j\geq 
\max \{-k',0\}}R^{q-2j-k}f_{\os{\circ}{X}{}^{(2j+k')}\cap \os{\circ}{D}{}^{(k-k')}/K}
(E_{\os{\circ}{X}{}^{(2j+k')}\cap \os{\circ}{D}{}^{(k-k')}
/K}\otimes_{\mab Z}   \\
& \vp^{(2j+k'),(k-k')}_{\rm crys}
((\os{\circ}{X}_{T_1},\os{\circ}{D}_{T_1})/\os{\circ}{T}))(-j-k,u)\\
& \Lo 
R^qf_{(X,D)/K*}(\eps^*_{(X,D)/K}(E_K))^{\nat,\sq}.  
\quad (q\in {\mab Z}). 
\tag{11.10.1}\label{eqn:getcpsp}
\end{align*}  
\end{prop}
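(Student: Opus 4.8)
The plan is to promote the weight spectral sequence (\ref{eqn:escssp}) (equivalently (\ref{ali:dwks})), which a priori lives in the category of ${\cal K}_T$-modules for each enlargement $T$, to a spectral sequence in the abelian category $F\text{-}{\rm Isoc}^{\sq}(S/{\cal V})$, by recognizing each of its pages, its differentials, and the induced filtration on the abutment as convergent $F$-isocrystals. The starting point is (\ref{coro:fenlt}), together with (\ref{coro:flft}) and (\ref{exam:ofl}): the pair $(R^qf_*(A_{\rm zar}((X,D)/K,E))^{\sq},P)$ is a filtered convergent $F$-isocrystal on $S/{\cal V}$ whose value at $T$ is the filtered ${\cal K}_T$-module $(R^qf_{T*}(A_{\rm zar}((X_{\os{\circ}{T}_1},D_{\os{\circ}{T}_1})/S(T)^{\nat},E(\os{\circ}{T}))),P)$, and moreover this filtered isocrystal is filteredly flat, so that $P_k$ and ${\rm gr}^P_k$ are again convergent ($F$-)isocrystals for all $k$.

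Next I would run the spectral sequence of this filtered object. For a fixed $T$ the filtration $P$ on $Rf_{T*}(A_{\rm zar})$ yields (\ref{eqn:escssp}); its $E_1$-term is computed by the Poincar\'e residue description of ${\rm gr}^P$ in (\ref{lemm:grc}) together with (\ref{ali:ruogrvp}) and the log Poincar\'e lemma, giving the canonical identification of $E_1^{-k,q+k}$ with $\bigoplus_{k'\le k}\bigoplus_{j\ge\max\{-k',0\}}R^{q-2j-k}f_{\os{\circ}{X}{}^{(2j+k')}\cap\os{\circ}{D}{}^{(k-k')}/\os{\circ}{T}*}(E_{\os{\circ}{X}{}^{(2j+k')}\cap\os{\circ}{D}{}^{(k-k')}/\os{\circ}{T}}\otimes_{\mab Z}\vp_{\rm crys}^{(2j+k'),(k-k')})(-j-k,u)$. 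By hypothesis the right-hand side is the value at $T$ of the object of $F\text{-}{\rm Isoc}^{\sq}(S/{\cal V})$ named in the statement, so $E_1$ is that isocrystal; filtered flatness then forces every higher page $E_r^{\bullet,\bullet}$, and the filtration $P$ on the abutment $R^qf_{(X,D)/K*}(\eps^*_{(X,D)/K}(E_K))$, to be convergent isocrystals as well, while the differentials $d_r$ (with $d_1$ described by (\ref{prop:lbdl})) are morphisms of isocrystals, being induced by morphisms of the complexes $A_{\rm zar}$.

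Then I would check compatibility with the enlargement category and with Frobenius. For a morphism $\mathfrak g\colon T'\lo T$ the base-change isomorphism (\ref{theo:bccange}), combined with the contravariant functoriality (\ref{prop:tptt})--(\ref{eqn:auaeds}) and (\ref{prop:fcuccv}), (\ref{lemm:nmr}), and with the transition maps of the strata's cohomology isocrystals, shows that all pages, all differentials, and the abutment filtration are compatible with the transition morphisms; hence (\ref{eqn:escssp}) is the value at $T$ of a spectral sequence of objects of ${\rm Isoc}^{\sq}_p(S/{\cal V})$ (using also (\ref{theo:pwfec})). For the $F$-structure, the abrelative (resp.\ absolute) Frobenius acts on $A_{\rm zar}$ compatibly with $P$ by (\ref{prop:fcbar}), with the $D$-twist $(-j-k,u)$ exactly as in (\ref{prop:contwt}); by hypothesis $\Phi_n$ induces isomorphisms modulo torsion on each $E_1$-term, so by (\ref{theo:pwfaec}) these $E_1$-terms are $F$-isocrystals, and since the $d_r$ commute with $\Phi$ by functoriality of $A_{\rm zar}$ under Frobenius, the whole spectral sequence, its abutment, and the filtration $P$ on it are objects, resp.\ morphisms, of $F\text{-}{\rm Isoc}^{\sq}(S/{\cal V})$. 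This produces (\ref{eqn:getcpsp}).

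The main obstacle is the uniformity: one must verify that the Poincar\'e residue identification of ${\rm gr}^PA_{\rm zar}$ with the direct sum of the crystalline complexes of the strata, the log Poincar\'e lemma, and hence the identification of $E_1$, are all strictly compatible with the base-change morphisms $\mathfrak g^*$, so that they glue to a morphism of the underlying presheaves of isocrystals; the crucial input here, noted at the beginning of this section, is that the base-change morphism of SNCL schemes with relative SNCD's satisfies the condition (5.1.1.6), so that (\ref{theo:funas}) applies and $g^*$ respects $P$ with the correct $D$-twist. A secondary but genuine point is that the filtered flatness of (\ref{coro:fenlt}) (equivalently (\ref{coro:flft})) is really needed for $P_k$ and ${\rm gr}^P_k$ to be crystals rather than merely presheaves of ${\cal K}_T$-modules. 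Since every ingredient is a by-product of the $p$-adic argument already carried out in \cite{nb}, with $(A_{{\rm zar},{\mab Q}},P)$ of the present paper replacing the one of \cite{nb}, I would present this as a sketch rather than write it out in full.
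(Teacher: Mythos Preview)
Your proposal is correct and follows exactly the approach the paper indicates: the paper omits the proof of this proposition, stating at the start of the section that all results there are obtained by the log base change theorem (\ref{theo:bccange}) and by replacing the $(A_{{\rm zar},{\mab Q}},P)$ of \cite{nb} with the one constructed here, noting that the base-change morphism satisfies condition (5.1.1.6). Your sketch fills in precisely this outline---using (\ref{coro:fenlt}), (\ref{theo:pwfec}), (\ref{theo:pwfaec}), (\ref{prop:tptt}), (\ref{prop:contwt}), and the Poincar\'e residue computation of ${\rm gr}^P$---and in fact supplies considerably more detail than the paper itself does.
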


%\otimes_{\mab Z}\varpi_{\ul{\mu}}(\os{\circ}{X}/K))(-(k+m-j)) 

\begin{defi}
We call (\ref{eqn:getcpsp}) 
the {\it Poincar\'{e} spectral sequence} of 
$$R^qf_{(X,D)/K*}(\eps^*_{(X,D)/K}(E_K))^{\nat,\sq}$$ 
in ${\rm Isoc}_p^{\sq}(S/{\cal V})$ and $F{\textrm -}{\rm Isoc}^{\sq}(S/{\cal V})$, 
respectively.
\end{defi}

%\bigskip
%\parno
%{\bf (3) Boundary morphisms}
%\bigskip
%\parno
%\begin{prop}\label{prop:gsbc} 
%Set 
%\begin{align*}
%E_1^{-k,q+k}(X/K):= & \bigoplus_{m\geq 0}
%\bigoplus_{j\geq \max \{-(k+m),0\}} 
%R^{q-2j-k-2m}f_{\os{\circ}{X}{}^{(2j+k+m)}_m/K}
%(E_{\os{\circ}{X}{}^{(2j+k+m)}_m/K}
%\otimes_{\mab Z} 
%\tag{4.1.11.1}\label{eqn:espcvwcsp} \\
%& \vp^{(2j+k+m)}(\os{\circ}{X}_m/K))(-j-k-m).  
%\end{align*} 
%Then the boundary morphism 
%$$d^{-k,q+k}_1 \col E_1^{-k,q+k}(X/K) \lo 
%E_1^{-k+1,q+k}(X/K)$$ 
%induced by that of the spectral sequence 
%{\rm (\ref{eqn:espcvwcsp})}  
%is a morphism of log convergent $F$-isocrystals on $S/{\cal W}(s)$. 
%Consequently we have the following spectral sequence of 
%$F$-isocrystals on $S/{\cal W}(s):$   
%\begin{equation*}
%E_1^{-k,q+k}(X/K) \Lo 
%R^qf_{X/K*}
%(\eps^*_{X/K}(E^{\bul \leq N}_K)). 
%\tag{4.1.11.2}\label{eqn:consfi}
%\end{equation*} 
%\end{prop} 

\bigskip
\parno
{\bf (3) Monodromy}
\bigskip
\parno
Let the notations and the assumptions be as in {\rm (\ref{coro:fenlt})}. 
%Assume that $E^{\bul \leq N}_n=E^{\bul \leq N}_0$ $(n\in {\mab N})$. 
%Set $E^{\bul \leq N}:=E^{\bul \leq N}_0$.

\begin{prop}\label{prop:convmon} 
There exists the monodromy operator  
{\footnotesize{\begin{align*} 
N_{\rm zar} \col & (R^qf_{(X,D)/K*}(\eps^*_{(X,D)/K}(E_K)),P^{D/K},P)
\lo (R^qf_{(X,D)/K*}(\eps^*_{(X,D)/K}(E_K)),P^{D/K},P\langle -2 \rangle)(-1) 
\end{align*}}} 
in $F{\textrm -}{\rm IsocF}^{2\sq}(S/{\cal V})$.
\end{prop}

\begin{theo}[{\bf Bifiltered log Berthelot-Ogus isomorphism}]\label{theo:bofis}
Let the notations and the assumptions be as in {\rm (\ref{theo:pwfaec})}.  
Let $T$ be an object of ${\rm Enl}^{\sq}(S/{\cal V})$. 
Let $T_0\lo S$ be the structural morphism.  
Let $f_0 \col (X_{\os{\circ}{T}_0},D_{\os{\circ}{T}_0})\lo S_{\os{\circ}{T}_0}$ 
be the structural morphism.  
If there exists an SNCL lift with a relative SNCD lift
$f_1 \col (X_1,D_1) \lo S_{\os{\circ}{T}_1}$ of $f_0$, 
then there exists the following canonical 
bifiltered isomorphism
\begin{equation*} 
(R^qf_*({\cal O}_{(X,D)/K})^{\nat},P^{D/K},P)_T
\os{\sim}{\lo}
(R^qf_{(X_{\os{\circ}{T}_1},D_{\os{\circ}{T}_1})/S(T)^{\nat}*} 
({\cal O}_{(X_{\os{\circ}{T}_1},D_{\os{\circ}{T}_1})/S(T)^{\nat}})_{\mab Q},P^{D_{T_1}},P).
\tag{11.13.1}\label{eqn:xntp}
\end{equation*} 
\end{theo}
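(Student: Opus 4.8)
The plan is to reduce the statement to the unfiltered log Berthelot-Ogus isomorphism (which is known, compare \cite{bob} and \cite{of}) together with the compatibility of the weight and $P^D$-filtrations with all the comparison maps. First I would observe that the question is local on $\os{\circ}{T}$, so I may assume $\os{\circ}{T}$ is affine and that an SNCL lift $(X_1,D_1)/T_1$ of $f_0$ exists globally; then I may further shrink so that a simplicial immersion of a \v{C}ech diagram of $(X_1,D_1)$ into simplicial strictly semistable log schemes over $\ol{S(T)^{\nat}}$ exists, as in \S\ref{sec:psc}. The point is that the zariskian $p$-adic bifiltered El-Zein-Steenbrink-Zucker complex $(A_{\rm zar}((X_{\os{\circ}{T}_1},D_{\os{\circ}{T}_1})/S(T)^{\nat}),P^{D_{\os{\circ}{T}_1}},P)$ of (\ref{defi:fdirpd}) is, on the one hand, computed by the explicit de Rham-type complex built from the lift, and on the other hand represents $Ru_{(X_{\os{\circ}{T}_1},D_{\os{\circ}{T}_1})/S(T)^{\nat}*}({\cal O}_{(X_{\os{\circ}{T}_1},D_{\os{\circ}{T}_1})/S(T)^{\nat}})$ via the canonical isomorphism $\theta\wedge$ of (\ref{ali:ixubaoa}).

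Next I would use the base change theorem (\ref{theo:bccange}) for $(A_{\rm zar},P^{D_{\os{\circ}{T}_0}},P)$, applied to the structure morphism $(S(T)^{\nat},p{\cal O}_T,\gam)\lo (S,0)$, together with the convergence statement (\ref{theo:pwfaec}) and especially the flatness in (\ref{coro:fenlt})--(\ref{coro:flft}), to identify $(R^qf_*({\cal O}_{(X,D)/K})^{\nat}_T,P^D,P)$ with $(R^qf_{(X_{\os{\circ}{T}_1},D_{\os{\circ}{T}_1})/S(T)^{\nat}*}(A_{\rm zar}((X_{\os{\circ}{T}_1},D_{\os{\circ}{T}_1})/S(T)^{\nat})),P^{D_{\os{\circ}{T}_1}},P)$ as an object in the category of filtered $F$-isocrystals evaluated at $T$; this is where the example (\ref{exam:ofl}) formula (\ref{ali:pctoe}) is used directly. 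The filtered flatness guarantees that ${\rm gr}^P$ and ${\rm gr}^{P^D}$ commute with the relevant tensor products, so no $\otimes^L$ subtleties survive rationally. I would then invoke (\ref{ali:rabxs})--(\ref{ali:ixubaoa}) once more to rewrite the target of (\ref{eqn:xntp}) as $Ru_{(X_{\os{\circ}{T}_1},D_{\os{\circ}{T}_1})/S(T)^{\nat}*}({\cal O}_{(X_{\os{\circ}{T}_1},D_{\os{\circ}{T}_1})/S(T)^{\nat}})$ with its Poincar\'e filtrations, so that the desired map is exactly the composite of these identifications.

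Finally, to check that this composite map is filtered and an isomorphism I would argue by the usual d\'evissage: it suffices to check this on ${\rm gr}^P{\rm gr}^{P^D}$, and by (\ref{ali:axdd}), (\ref{eqn:axd}), (\ref{eqn:ana}) each graded piece is a direct sum of log crystalline cohomologies of the strata $\os{\circ}{X}{}^{(2j+k')}_{T_0}\cap\os{\circ}{D}{}^{(k-k')}_{T_0}$, which are proper smooth; for those, the (unfiltered, smooth) log Berthelot-Ogus isomorphism is available and already an isomorphism by \cite{bob}, \cite{of}, and the construction of the map on graded pieces is compatible with base change by (\ref{theo:bccange}) and the contravariant functoriality results of \S\ref{sec:fcuc}. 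Then the spectral sequence comparison (both sides have the same $E_1$ and the map is a morphism of spectral sequences) yields that the filtered map is an isomorphism. I expect the main obstacle to be the bookkeeping of signs and twists in matching the explicit complex $(A_{\rm zar},P^{D_{\os{\circ}{T}_1}},P)$ with the Berthelot-Ogus comparison of \cite{bob}, namely verifying that the comparison isomorphism really respects both filtrations simultaneously rather than each one separately; making the infinitesimal-deformation-invariance input of \S\ref{sec:infhdi} interact cleanly with the flatness needed to pass from $\otimes^L$ to $\otimes$ is the delicate part, but it is formally the same argument as in \cite[(5.2.8)]{nb} applied to the bifiltered complex.
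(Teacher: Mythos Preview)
Your proposal is correct and follows essentially the same strategy as the paper: the paper's one-line proof simply cites the argument of \cite[(3.8)]{of} together with the proof of (\ref{theo:pwfaec}), which amounts precisely to the combination you describe---invoking the convergence/infinitesimal-invariance machinery (Dwork's trick via the abrelative Frobenius, as in \S\ref{sec:infhdi}) to identify the convergent side, then reducing the filtered isomorphism to the known smooth Berthelot--Ogus isomorphism on the graded pieces via the bifiltered base change (\ref{theo:bccange}). Your explicit d\'evissage on ${\rm gr}^P{\rm gr}^{P^D}$ and the spectral-sequence comparison are exactly the content hidden behind the paper's reference to the proof of (\ref{theo:pwfaec}).
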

\begin{proof} 
This follows from the proof of \cite[(3.8)]{of} and that of (\ref{theo:pwfaec}). 
\end{proof}

\section{Strict compatibility}\label{sec:st}
In this section we prove the strict compatibility of the pull-back of a morphism of 
proper SNCL schemes 
with respect to the weight filtration. 
Because the proofs of the results are the same as those of \cite[(5.4.6)]{nb} and 
\cite[(5.4.7)]{nb}, we omit the proofs.  

\begin{theo}[{\bf Strict compatibility I}]\label{theo:stpfgb}
Let the notations be as in {\rm (\ref{theo:e2dam})} and the proof of it. 
Let $(Y,C)/s'$ be an analogous object to $(X,D)/s$. 
Let $f' \col (Y,C)\lo s'$ be the structural morphism. 
Let $h\col s\lo s'$ be a morphism of log schemes. 
Let $g\col (X_{\os{\circ}{T}_0},D_{\os{\circ}{T}_0})\lo (Y_{\os{\circ}{T}{}'_0},C_{\os{\circ}{T}{}'_0})$ 
be the morphism in {\rm (\ref{eqn:xdxduss})} for the case $S=s$, $S'=s'$, 
$T={\cal W}(s)$ and $T'={\cal W}(s')$  
satisfying the conditions {\rm (6.5.2)} and  {\rm (6.5.3)}. 
%fitting into the commutative diagram {\rm (\ref{cd:xdjqbxy})}. 
Let ${\cal W}'$ be the Witt ring of $\Gam(s',{\cal O}_{s'})$ 
and set $K'_0:={\rm Frac}({\cal W}')$. 
Assume that $\os{\circ}{s}\lo \os{\circ}{s}{}'$ is finite. 
Let $q$ be a nonnegative integer. 
%Let us endow 
%$H^q_{{\rm crys}}(Y/{\cal W}(s'))\otimes_{{\cal W}'}K'_0$  
%with the induced filtration by $P$ on 
%$H^q_{{\rm crys}}(Y/{\cal W}(s'))$.   
Then the induced morphism 
\begin{equation*}
g^* \col H^q_{{\rm crys}}((Y,C)/{\cal W}(s'))\otimes_{{\cal W}'}K'_0
\lo 
H^q_{{\rm crys}}((X,D)/{\cal W}(s))\otimes_{\cal W}K_0
\tag{12.1.1}\label{eqn:gbwstn}
\end{equation*} 
is strictly compatible with the weight filtration. 
\end{theo}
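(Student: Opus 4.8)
The plan is to deduce Theorem~\ref{theo:stpfgb} from the $E_2$-degeneration result (\ref{theo:e2dam}) together with the contravariant functoriality of the weight spectral sequence (\ref{prop:contwt}), following the same line of argument as in \cite[(5.4.6)]{nb}. First I would set up the two weight spectral sequences (\ref{eqn:espwfsp}): one for $(X,D)/{\cal W}(s)$ abutting to $H^q_{\rm crys}((X,D)/{\cal W}(s))\otimes_{\cal W}K_0$, and one for $(Y,C)/{\cal W}(s')$ abutting to $H^q_{\rm crys}((Y,C)/{\cal W}(s'))\otimes_{{\cal W}'}K'_0$, with induced weight filtrations $P$ on the respective abutments. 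By (\ref{theo:e2dam}) both spectral sequences degenerate at $E_2$; here one uses the reduction $T={\cal W}(s)$ (resp.\ ${\cal W}(s')$) as in the proof of (\ref{theo:e2dam}), and the finiteness of $\os{\circ}{s}\lo\os{\circ}{s}{}'$ so that the coefficient arguments via weights of Frobenius apply after the finite base extension. The morphism $g$ satisfying condition (5.1.1.6) (i.e.\ (6.4.4) and (6.4.5)) induces, by (\ref{prop:contwt}), a morphism of weight spectral sequences compatible with $g^*$ on the abutments; in particular $g^*$ in (\ref{eqn:gbwstn}) is a filtered morphism for the weight filtrations.

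Next I would invoke the standard fact that a filtered morphism between two filtered objects whose associated spectral sequences degenerate at the same early page is automatically strict. Concretely: because both spectral sequences degenerate at $E_2$, one has ${\rm gr}^P_k H^q = E_2^{-k,q+k} = E_\infty^{-k,q+k}$ on each side, and the morphism $g^*$ on the abutments is the one induced on $E_\infty$ by the morphism of $E_2$-terms $\bigoplus_{k'\le k}\bigoplus_{j}{\rm deg}(u)^{j+k}\os{\circ}{g}{}^{(2j+k'),(k-k')*}$ of (\ref{prop:contwt}). Strictness of $g^*$ then follows from the general lemma that a morphism of spectral sequences degenerating at $E_r$ induces a strict morphism of the filtered abutments (see e.g.\ the argument in \cite[(5.4.6)]{nb}, resp.\ Deligne's criterion); the key point is that $P_k H^q = \bigoplus_{j\le k}$ (image of $E_2^{-j,q+j}\to H^q$), so $g^*(P_k H^q(Y,C)) = P_k H^q(X,D)\cap {\rm Im}(g^*)$ reduces to the corresponding statement on graded pieces, which is automatic.

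The main obstacle I anticipate is not the formal spectral-sequence bookkeeping but rather verifying that the hypotheses of the functoriality statement (\ref{prop:contwt}) actually hold in the present setting: namely that $g\col (X_{\os{\circ}{T}_0},D_{\os{\circ}{T}_0})\lo (Y_{\os{\circ}{T}{}'_0},C_{\os{\circ}{T}{}'_0})$ satisfies (6.4.4) and (6.4.5) (each smooth component of $\os{\circ}{X}$, resp.\ each smooth component of $\os{\circ}{D}$, maps to a unique smooth component of $\os{\circ}{Y}$, resp.\ of $\os{\circ}{C}$), and that the divisibility assumption after (\ref{defi:efu}) is met so that the pull-back morphism $g^*$ on the $A_{\rm zar}$-complexes is well-defined — this is guaranteed by (\ref{prop:dvok}) once ${\cal J}\subset p{\cal O}_T$, which holds for $T={\cal W}(s)$. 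I would also need to handle the twist by ${\rm deg}(u)$ correctly, but since we work modulo torsion (after $\otimes K_0$), the degree twists become isomorphisms and do not affect strictness. Finally I would remark, as in \cite[(5.4.7)]{nb}, that the same argument gives strict compatibility for the filtration $P^D$ as well, and that the hard input — the purity of the $E_1$-terms underlying (\ref{theo:e2dam}), relying on \cite[Corollary 1]{kme}, \cite[(1.2)]{clpu} and the corrected weak-Lefschetz argument of \cite[(6.10)]{ny} — has already been established.
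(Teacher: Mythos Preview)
Your overall direction---reducing to \cite[(5.4.6)]{nb} via the $E_2$-degeneration (\ref{theo:e2dam}) and the functoriality (\ref{prop:contwt})---is exactly what the paper does (the paper in fact omits the proof and cites the same reference). However, there is a genuine gap in your sketch: the claim that ``a morphism of spectral sequences degenerating at $E_r$ induces a strict morphism of the filtered abutments'' is \emph{false} as a general lemma. A simple counterexample is the identity on a one-dimensional vector space, filtered on the source with jump at step $1$ and on the target with jump at step $0$; both ``spectral sequences'' degenerate trivially, the map is filtered, but it is not strict. Likewise your sentence ``$P_kH^q=\bigoplus_{j\le k}(\text{image of }E_2^{-j,q+j})$'' presupposes a splitting of the weight filtration that degeneration alone does not provide.

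What actually forces strictness here is the purity input you relegate to your final paragraph. After reducing (as in the proof of (\ref{theo:e2dam})) to the case where $\os{\circ}{s}$ and $\os{\circ}{s}{}'$ are spectra of finite fields---this is where the hypothesis that $\os{\circ}{s}\to\os{\circ}{s}{}'$ is finite enters---the graded pieces ${\rm gr}^P_{q+k}H^q$ on both sides are \emph{pure of weight $q+k$} for the Frobenius action, by Katz--Messing \cite{kme}, \cite{clpu} and \cite[(6.10)]{ny} applied to the $E_1$-terms together with the $E_2$-degeneration. Since $g^*$ is Frobenius-equivariant, a standard weight argument (any Frobenius-equivariant map between filtered $F$-isocrystals whose graded pieces are pure of pairwise distinct weights is strict) then gives the strict compatibility. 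So purity is not merely the ``hard input'' behind degeneration; it is the mechanism that produces strictness, and your argument should be rewritten to make this the central step rather than an afterthought.
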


\begin{theo}[{\bf Strict compatibility II}]\label{theo:stpbgb}
%$(1)$  
Let the notations be as in {\rm (\ref{theo:e2dgfam})} 
and {\rm (\ref{eqn:xdxduss})}. 
Let $(Y,C)/S'$ and $T'$ be analogous objects to $(X,D)/S$ 
and $T$, respectively. 
%Let $f \col X \lo S$ and $f' \col Y\lo S'$ 
%be proper $N$-truncated simplicial SNCL schemes over $S$ and over $S'$, 
%respectively.
Let $g$ be the morphism in {\rm (\ref{eqn:xdxduss})} satisfying the conditions 
{\rm (6.5.2)} and  {\rm (6.5.3)}. 
Let $q$ be a nonnegative integer. 
Then the induced morphism 
\begin{equation*}
g^* \col v^*(R^qf'_{(Y_{\os{\circ}{T}{}'_1},C_{\os{\circ}{T}{}'_1})/S'(T')^{\nat}*}
({\cal O}_{(Y_{\os{\circ}{T}{}'_1},C_{\os{\circ}{T}{}'_1})/S'(T')^{\nat}}))_{\mab Q}
\lo R^qf_{(X_{\os{\circ}{T}_1},D_{\os{\circ}{T}_1})/S(T)^{\nat}*}
({\cal O}_{(X_{\os{\circ}{T}{}_1},D_{\os{\circ}{T}_1})/S(T)^{\nat}})_{\mab Q}
\tag{12.2.1}\label{eqn:gvbstn}
\end{equation*}  
is strictly compatible with the weight filtrations $P$'s.
Consequently the induced morphism 
\begin{equation*}
g^* \col v^*(R^qf'_*({\cal O}_{(Y,C)/K'})^{\nat,\sq})
\lo R^qf_*({\cal O}_{(X,D)/K})^{\nat,\sq}
\tag{12.2.2}\label{eqn:gbstn}
\end{equation*} 
in $F{\textrm -}{\rm Isoc}^{\sq}(S/{\cal V})$ 
is strictly compatible with the weight filtrations $P$'s.
\end{theo}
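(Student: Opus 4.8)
The statement to prove is Theorem~\ref{theo:stpbgb}: the pull-back morphism $g^* \col v^*(R^qf'_*({\cal O}_{(Y,C)/K'})^{\nat,\sq}) \lo R^qf_*({\cal O}_{(X,D)/K})^{\nat,\sq}$ is strictly compatible with the weight filtration $P$. The strategy is exactly that of \cite[(5.4.6)]{nb} and \cite[(5.4.7)]{nb}, adapted to the relative-SNCD setting; the two ingredients we already have in hand are the $E_2$-degeneration of the weight spectral sequence modulo torsion (Theorem~\ref{theo:e2dgfam}) and the contravariant functoriality of the weight spectral sequence (Corollary~\ref{prop:contwt}), together with Corollary~\ref{coro:filpcerf} which gives that $Rf_*((A_{\rm zar},P))$ is a filtered perfect complex of ${\cal O}_T$-modules. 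First I would reduce to the statement over a single enlargement $T$, i.e. the isomorphism~(\ref{eqn:gvbstn}): since both objects in (\ref{eqn:gbstn}) are objects of the categories of $F$-isocrystals and the filtrations on them are by definition pieced together from the values on all enlargements $T\in {\rm Enl}^{\sq}_p(S/{\cal V})$ by Theorem~\ref{theo:pwfaec}, strict compatibility in $F{\textrm -}{\rm Isoc}^{\sq}(S/{\cal V})$ follows from strict compatibility on each $T$, and there it suffices to work with the filtered cohomology sheaves $R^qf_{(X_{\os{\circ}{T}_1},D_{\os{\circ}{T}_1})/S(T)^{\nat}*}({\cal O})_{\mab Q}$ equipped with the filtration $P$ coming from $(A_{\rm zar}((X_{\os{\circ}{T}_1},D_{\os{\circ}{T}_1})/S(T)^{\nat}),P)$.

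Next I would set up the filtered morphism. By Theorem~\ref{theo:funas} (resp.~Theorem~\ref{theo:funpas} when $\deg(u)$ can be divisible by $p$) we have a filtered pull-back $g^*\col (A_{\rm zar}((Y_{\os{\circ}{T}{}'_0},C_{\os{\circ}{T}{}'_0})/S'(T')^{\nat}),P)\lo Rg_*((A_{\rm zar}((X_{\os{\circ}{T}_0},D_{\os{\circ}{T}_0})/S(T)^{\nat}),P))$, and applying $Rf'_*$ and the base change isomorphism one obtains the filtered morphism (\ref{eqn:gvbstn}) on the level of the derived category of filtered complexes. The key point, following the classical argument (Deligne's principle that morphisms of mixed Hodge structures are strict), is: if a morphism of filtered objects is induced by a morphism of filtered complexes whose associated spectral sequences degenerate at $E_2$ and if the morphism on $E_1$ (or $E_2$) terms is understood, then the morphism on cohomology is strictly filtered. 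Concretely I would invoke Theorem~\ref{theo:e2dgfam} to get $E_2$-degeneration of the weight spectral sequence~(\ref{eqn:espwfsp}) modulo torsion for both $(X_{\os{\circ}{T}_0},D_{\os{\circ}{T}_0})/S(T)^{\nat}$ and $(Y_{\os{\circ}{T}{}'_0},C_{\os{\circ}{T}{}'_0})/S'(T')^{\nat}$; then $P_k R^qf_*({\cal O})_{\mab Q}$ is precisely the image of $R^qf_*(P_kA_{\rm zar})_{\mab Q}$, and $\gr^P_{k}$ of the abutment is the $E_2=E_\infty$-term of the spectral sequence. By Corollary~\ref{prop:contwt} the morphism $g^*$ is compatible with these spectral sequences, so it suffices to check that $g^*$ on the $E_1$-terms is a morphism of (direct sums of) crystalline cohomology groups of the $\os{\circ}{X}{}^{(2j+k')}_{T_0}\cap \os{\circ}{D}{}^{(k-k')}_{T_0}$ that respects the grading — which is immediate from the explicit description~(\ref{ali:cgrp}) of $\gr^P_k(g^*)$ in Corollary~\ref{coro:fuu}, each summand being $\deg(u)^{j+k}\os{\circ}{g}{}^{(2j+k'),(k-k')*}$.

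The formal conclusion is then the standard filtered-complex lemma: for a morphism $\phi\col (K,P)\lo (L,P)$ in the filtered derived category such that both associated spectral sequences degenerate at $E_2$ and $\phi$ is a filtered morphism, the induced map $H^q(\phi)$ is strict for $P$. I would cite the version already used in \cite{nh2} (the filtered-perfectness of Corollary~\ref{coro:filpcerf} guarantees we may work with honest filtered complexes locally, so the lemma applies), exactly as in the proof of \cite[(5.4.6)]{nb}. Descending from the enlargements back to $F{\textrm -}{\rm Isoc}^{\sq}(S/{\cal V})$ via Theorem~\ref{theo:pwfaec} and Remark~\ref{rema:ud} gives (\ref{eqn:gbstn}). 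The main obstacle I anticipate is not conceptual but bookkeeping: one must make sure that the $E_2$-degeneration of Theorem~\ref{theo:e2dgfam} and the functoriality of Corollary~\ref{prop:contwt} are available simultaneously for the source and target after tensoring with ${\mab Q}$, i.e. that ``modulo torsion'' is handled uniformly, and that the $\deg(u)$-twists appearing in Corollary~\ref{coro:fuu} do not destroy strictness — they do not, because multiplication by the nonzero integers $\deg(u)^{j+k}$ is an isomorphism after $\otimes{\mab Q}$, so the graded-pieces map being injective/surjective on a given filtration step is unaffected. Once these two inputs are lined up the rest is the formal Deligne-strictness argument, and the proof is essentially identical to that of \cite[(5.4.7)]{nb}.
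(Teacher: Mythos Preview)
The ``standard filtered-complex lemma'' you state in the last paragraph is \emph{false}: degeneration at $E_2$ of both spectral sequences together with $\phi$ being filtered does \emph{not} imply that $H^q(\phi)$ is strict for $P$. A one-term counterexample already shows this: take $K=L=\kap e_1\oplus \kap e_2$ concentrated in degree $0$ with $P_0=0\subset P_1=\kap e_1\subset P_2=K$, and let $\phi$ send $e_1\mapsto 0$, $e_2\mapsto e_1$. Both spectral sequences degenerate at $E_1$, $\phi$ is filtered, but $\phi(K)\cap P_1L=\kap e_1\neq 0=\phi(P_1K)$, so $\phi$ is not strict. Deligne's strictness principle for morphisms of mixed Hodge structures is not a formal consequence of degeneration; it uses the interaction of the weight filtration with the Hodge and conjugate filtrations. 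In the $p$-adic setting the substitute for this is \emph{purity of Frobenius weights on the graded pieces}, and your argument never invokes it.

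What the proof in \cite[(5.4.7)]{nb} actually does, and what you should do here, is reduce to closed points. By the log convergence of the weight filtration (Theorem~\ref{theo:pwfaec}) and the filtered flatness of Corollary~\ref{coro:fenlt}, the filtered cohomology sheaves $(R^qf_*,P)$ form a filtered convergent $F$-isocrystal; a morphism of such objects is strict if and only if it is strict at every exact closed point of $T$ (this is where filtered flatness is used: the formation of image and of $P_k$ both commute with specialization). At an exact closed point you are in the situation of Theorem~\ref{theo:stpfgb}, where the $E_1$-terms of the weight spectral sequence are crystalline cohomologies of smooth proper schemes over a perfect field and hence pure of the correct weight by Katz--Messing; since ${\rm gr}^P_{q+k}$ of the abutment is pure of weight $q+k$, any Frobenius-compatible map between such objects is automatically strict. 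The functoriality you set up via Corollary~\ref{prop:contwt} and Corollary~\ref{coro:fuu} is exactly what guarantees Frobenius-compatibility of $g^*$ on the graded pieces, so once you plug in the purity input at closed points the argument goes through. The $E_2$-degeneration of Theorem~\ref{theo:e2dgfam} is used, but only to identify ${\rm gr}^P$ of the abutment with $E_2=E_\infty$, not as a substitute for the weight argument.
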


\section{Log $p$-adic relative monodromy-weight conjecture}\label{sec:mn} 
Following the relative monodromy filtration 
in characteristic $p>0$ in \cite[(1.8.5)]{dw2} and 
the relative monodromy filtration 
over the complex number field in 
\cite{stz} and \cite{ezth}, 
we propose the following conjecture which we call 
the {\it log $p$--adic relative monodromy-weight conjecture}:

\begin{conj}[{\bf log $p$-adic relative monodromy-weight conjecture}]
\label{conj:remc} 
Let $q$ be a nonnegative integer.  
Assume that $\os{\circ}{X}$ is projective over $\os{\circ}{S}$. 
Then the filtration $P$ on 
$R^qf_{(X_{\os{\circ}{T}_1},D_{\os{\circ}{T}_1})/S*}
({\cal O}_{(X_{\os{\circ}{T}_1},D_{\os{\circ}{T}_1})/S(T)^{\nat}})_{\mab Q}$ 
is the monodromy filtration of $N$ relative to 
$P^{D_{\os{\circ}{T}_1}}$, that is,  the well-defined induced morphism 
\begin{align*} 
&N^e \col {\rm gr}^P_{q+k+e}{\rm gr}^{P^{D_{\os{\circ}{T}_1}}}_{k}
R^qf_{(X_{\os{\circ}{T}_1},D_{\os{\circ}{T}_1})/S(T)^{\nat}*}
({\cal O}_{(X_{\os{\circ}{T}_1},D_{\os{\circ}{T}_1})/S(T)^{\nat}})_{\mab Q}
\lo  \\
&{\rm gr}^P_{q+k-e}{\rm gr}^{P^{\os{\circ}{D}_{\os{\circ}{T}_1}}}_{k}
R^qf_{(X_{\os{\circ}{T}_1},D_{\os{\circ}{T}_1})/S(T)^{\nat}*}
({\cal O}_{(X_{\os{\circ}{T}_1},D_{\os{\circ}{T}_1})/S(T)^{\nat}})_{\mab Q}(-e)  
\tag{13.1.1}\label{eqn:clrm}
\end{align*} 
for $e,k\in {\mab N}$ {\rm ((\ref{coro:mn}))}
is an isomorphism.  
\end{conj}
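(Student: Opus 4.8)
The plan is to deduce the relative monodromy-weight conjecture \eqref{eqn:clrm} for $(X,D)/S$ from the ordinary $p$-adic monodromy-weight conjecture \eqref{conj:rmnc} applied to each $D^{(k)}/S$, using the two weight spectral sequences and the bifiltered structure produced by Theorem \ref{theo:bex}. The starting point is the bifiltered complex $(A_{\rm zar}((X,D)/S),P^D,P)$, whose graded pieces for $P^D$ are the shifted twisted complexes $A_{\rm zar}(D^{(k)}/S)(-k)[-k]$ by \eqref{ali:rudrbp}, and whose graded pieces for $P$ are described by \eqref{ali:ruobp}. First I would record, as in the classical El Zein--Steenbrink--Zucker picture, that the monodromy operator $N$ shifts $P$ by $-2$ (this is \eqref{eqn:min1usbv}) and, crucially, that $N$ preserves $P^D$ and acts on ${\rm gr}^{P^D}_k$ as the monodromy operator of $D^{(k)}/S$ up to the Tate twist; this compatibility follows from the explicit description of $\nu_{S(T)^{\nat},{\rm zar}}$ on the double complex in \S\ref{sec:mod} together with the Poincar\'e residue isomorphism \eqref{eqn:prgvin} and Lemma \ref{lemm:dlm}(2).

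Second, I would pass to cohomology. Using the $E_2$-degeneration of the weight spectral sequence modulo torsion (Theorem \ref{theo:e2dgfam}; note $\os{\circ}{X}$ projective over $\os{\circ}{S}$ gives the pure-weight input on the $E_1$-terms) applied both to $(X,D)/S$ and to each $D^{(k)}/S$, the filtration $P^D$ on $R^qf_{(X,D)/S*}({\cal O}_{(X,D)/S})_{\mab Q}$ has graded pieces built from $R^{q-k}f_{D^{(k)}/S*}({\cal O}_{D^{(k)}/S})_{\mab Q}(-k)$, and the filtration $P$ restricts on each such graded piece to (a shift of) the weight filtration of the log crystalline cohomology of $D^{(k)}/S$. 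The key algebraic point is then a purely linear-algebra lemma about relative monodromy filtrations: if an operator $N$ on a filtered object $(H,P^D)$ shifts $P^D$ by $0$ and on each ${\rm gr}^{P^D}_k H$ the induced $N$ admits a monodromy filtration which coincides with the restriction of a fixed global filtration $W$ (here $W=P$ suitably re-indexed), and if moreover $N$ shifts $W$ by $-2$, then $W$ is the monodromy filtration of $N$ relative to $P^D$. This is exactly the mechanism of \cite{stz}, \cite{ezth}; I would invoke it after checking the two hypotheses: the shift-by-$-2$ property of $P$ under $N$ comes from \eqref{eqn:min1usbv}, and the ``$N$ on ${\rm gr}^{P^D}_k$ satisfies monodromy-weight with respect to $P$'' is precisely \eqref{conj:rmnc} for $D^{(k)}$, which is the hypothesis of the theorem.

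Third, I would verify the bookkeeping of indices so that the abstract lemma yields \eqref{eqn:clrm} on the nose: the weight filtration on $R^qf_{D^{(k)}/S*}({\cal O}_{D^{(k)}/S})_{\mab Q}$ centered at $q-k$ must match, after the Tate twist $(-k)$ and the shift $[-k]$, the induced filtration of $P$ on ${\rm gr}^{P^D}_k R^qf_{(X,D)/S*}({\cal O}_{(X,D)/S})_{\mab Q}$ centered at $q+k$; this is a direct consequence of \eqref{ali:rudrbp}, \eqref{ali:dks}, and the definition of $P$, $P^D$ in Definition~\ref{defi:fdirpd} and the subsequent definition. Finally, the strict compatibility results of \S\ref{sec:st} (Theorem \ref{theo:stpbgb}) guarantee that the graded pieces behave well under the relevant base changes and that the identification of filtrations is not destroyed by the $E_2$-degeneration arguments.

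\textbf{Main obstacle.} The hardest part will be establishing rigorously that $N$ \emph{preserves} the filtration $P^D$ at the level of the bifiltered complex and induces, on ${\rm gr}^{P^D}_k$, exactly the monodromy operator of $D^{(k)}/S$ with the correct Tate twist — i.e.\ the compatibility of the two residue descriptions of \S\ref{sec:pwtr} with the monodromy double complex of \S\ref{sec:mod}. Once this compatibility is in hand, the remainder is the (standard but delicate) linear algebra of relative monodromy filtrations plus the index bookkeeping, both of which are routine given Theorems \ref{theo:bex}, \ref{theo:e2dgfam} and the strict compatibility of \S\ref{sec:st}. A secondary subtlety is that \eqref{conj:rmnc} as quoted is ``modulo torsion''; since everything is done after $\otimes_{\mab Z}{\mab Q}$ this causes no real difficulty, but it must be threaded carefully through the $E_2$-degeneration statements, which themselves hold only modulo torsion.
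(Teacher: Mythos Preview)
Your overall strategy---deduce the relative conjecture from the ordinary one for each $D^{(k)}$ via the bifiltered complex---is exactly the paper's (this is Theorem~\ref{theo:mimr}), and the first step (that $N$ preserves $P^{D}$ and shifts $P$ by $-2$, inducing on ${\rm gr}^{P^D}_k$ the monodromy of $D^{(k)}$) is handled the same way, via \eqref{eqn:min1usbv} and \eqref{ali:axdd}.

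The imprecision is in your second step. You assert that on ${\rm gr}^{P^{D}}_k R^qf_{(X,D)/S(T)^{\nat}*}({\cal O})_{\mab Q}$ the filtration $P$ restricts to the weight filtration on $R^{q-k}f_{D^{(k)}/S(T)^{\nat}*}({\cal O})_{\mab Q}(-k)$, and you justify this by ``$E_2$-degeneration (Theorem~\ref{theo:e2dgfam})''. But Theorem~\ref{theo:e2dgfam} is $E_2$-degeneration of the \emph{$P$}-spectral sequence, not of the \emph{$P^D$}-spectral sequence \eqref{ali:dks}; it does not by itself identify ${\rm gr}^{P^D}_k$ of cohomology with the $E_1$-term. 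Likewise, the strict compatibility of \S\ref{sec:st} concerns pullback morphisms $g^*$, not the differentials $d_r$ of \eqref{ali:dks}, so invoking Theorem~\ref{theo:stpbgb} here does not close the gap.

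The paper avoids this by never claiming degeneration of \eqref{ali:dks}. Instead it works directly at the $E_1$-level of \eqref{ali:dks}, where the identification with $R^{q-k}f_{D^{(k)}/S(T)^{\nat}*}$ and its weight filtration \emph{is} immediate from \eqref{ali:axdd}, applies the hypothesis (Conjecture~\ref{conj:log} for $D^{(k)}$) to get the isomorphism $N^e\colon {\rm gr}^P_{q+k+e}E_1^{-k,q+k}\os{\sim}{\to}{\rm gr}^P_{q+k-e}E_1^{-k,q+k}(-e)$, observes that the differentials $d_r$ are strictly compatible with $P$ (this is the purity argument underlying Theorem~\ref{theo:e2dgfam}), and then propagates the isomorphism to every $E_r$ by induction on $r$ using the elementary Lemma~\ref{lemm:easy}. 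At $r=\infty$ this gives exactly \eqref{eqn:clrm}. Your abstract ``linear-algebra lemma about relative monodromy filtrations'' is essentially the definition; what does the real work is Lemma~\ref{lemm:easy} plus the strictness of the $d_r$, and you should make that step explicit.
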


We also recall the following conjecture which is 
the $p$-adic generalized version of the conjecture by K.~Kato 
(\cite{kln}, \cite[(2.0.9;$l$)]{ndeg}): 

\begin{conj}[{\bf log $p$--adic monodromy-weight conjecture}] 
\label{conj:log} 
Let $q$ be a nonnegative integer.  
Assume that $\os{\circ}{X}$ is projective over $\os{\circ}{S}$. 
Then the filtration $P$ on 
$R^qf_{X_{\os{\circ}{T}_1}/S(T)^{\nat}*}
({\cal O}_{X_{\os{\circ}{T}_1}/S(T)^{\nat}})_{\mab Q}$ 
is the monodromy filtration of $N$, that is,  
the induced morphism 
\begin{equation*} 
N^e \col {\rm gr}^P_{q+e}
R^qf_{X_{\os{\circ}{T}_1}/S(T)^{\nat}*}({\cal O}_{X_{\os{\circ}{T}_1}/S(T)^{\nat}})_{\mab Q}
\lo {\rm gr}^P_{q-e}
R^qf_{X_{\os{\circ}{T}_1}/S(T)^{\nat}*}({\cal O}_{X_{\os{\circ}{T}_1}/S(T)^{\nat}})_{\mab Q}(-e)  
\quad (e\in {\mab N}) 
\tag{13.2.1}\label{eqn:ulmw} 
\end{equation*} 
is an isomorphism.  
\end{conj}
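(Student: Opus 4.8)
The plan is to attack (\ref{conj:log}) by the standard three-step strategy for monodromy-weight statements: reduce the base to the log point of a finite field, invoke Frobenius purity of the terms of the weight spectral sequence there, and then deduce the Lefschetz shape of $N$ from crystalline hard Lefschetz on the smooth proper components $\os{\circ}{X}{}^{(k)}$. First I would reduce to the case where $S=s$ is the log point of a perfect field. Since both the filtration $P$ and the monodromy operator $N$ are manufactured from $(A_{\rm zar}(X_{\os{\circ}{T}_1}/S(T)^{\nat}),P)$, they are compatible with base change by (\ref{theo:bccange}) and, up to isogeny, insensitive to the infinitesimal structure of the base by (\ref{coro:finvliae}). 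Combining these with the usual spreading-out and specialization argument for $F$-isocrystals in \S\ref{sec:e2}, the isomorphy of $N^e$ in (\ref{eqn:ulmw}) is reduced to the case $\os{\circ}{S}=\os{\circ}{s}=\mathrm{Spec}\,\kappa$ with $\kappa$ a finite field, where one has an honest absolute Frobenius $F_{{\cal W}(s)}$ and a genuine $F$-isocrystal structure.

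Second, over a finite field the $E_1$-terms of the weight spectral sequence (\ref{eqn:espwfsp}) are the crystalline cohomologies of the smooth proper schemes $\os{\circ}{X}{}^{(2j+k)}$, Tate-twisted and tensored with orientation sheaves; by the crystalline Weil conjectures (Katz-Messing, together with Chiarellotto-Le Stum) each such term is pure of Frobenius weight $q+k$. Using the $E_2$-degeneration (\ref{theo:e2dgfam}), (\ref{theo:e2dam}) together with strict compatibility (\ref{theo:stpbgb}), I would conclude that ${\rm gr}^P_{q+e}R^qf_{X_{\os{\circ}{T}_1}/S(T)^{\nat}*}({\cal O}_{X_{\os{\circ}{T}_1}/S(T)^{\nat}})_{\mab Q}$ is pure of weight $q+e$, i.e. that $P$ is precisely the Frobenius-weight filtration. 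The monodromy-weight statement (\ref{eqn:ulmw}) then becomes the assertion that, on these pure pieces, the nilpotent operator $N$---which by (\ref{prop:cmzoqm}) and (\ref{prop:nst}) is the explicit shift operator $\nu_{S(T)^{\nat},{\rm zar}}$ lowering the weight by $2$---has the Lefschetz/Jacobson-Morozov shape.

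Third, to obtain this shape I would transplant Steenbrink's bilinear-form argument to the crystalline $F$-isocrystal setting. Using the explicit description of $N$ on the double complex $A_{\rm zar}$ and the cup product with a fixed ample class (the Lefschetz operator $L$), one builds the primitive decomposition on the $E_1$-terms from crystalline hard Lefschetz for the $\os{\circ}{X}{}^{(k)}$ (again Katz-Messing), and then combines it with the commutation relations among $N$, $L$ and the spectral-sequence differential $d_1$ of (\ref{eqn:espwfsp}) to assemble an $\mathfrak{sl}_2$-action on $\bigoplus_e {\rm gr}^P_{q+e}$ whose weight grading is exactly $P$. The isomorphy of $N^e$ on graded pieces is precisely the statement that this $\mathfrak{sl}_2$-representation is of the correct highest weight; the Frobenius-equivariance of $N$ and the purity established in the second step guarantee that source and target carry the same weight, so that only the rank count remains.

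The hard part will be this third step. Even after the reduction to the finite-field case and the establishment of purity, deducing the Lefschetz shape of $N$ requires either Frobenius semisimplicity of the pure graded pieces---a crystalline analogue of the semisimplicity of Frobenius, itself open in general---or a direct construction of the $\mathfrak{sl}_2$-action together with the positivity of Steenbrink's pairing in the crystalline framework. It is exactly this semisimplicity/positivity input that is unavailable in arbitrary relative dimension, which is why (\ref{conj:log}) stays open in general. What the strategy does yield unconditionally is the reduction to the finite-field case and, via crystalline hard Lefschetz on the components, the verification in low relative dimension and in the semistable-reduction situations of (\ref{theo:coj2}), where the combinatorics of the weight spectral sequence collapse enough to bypass the missing input.
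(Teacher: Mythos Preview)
The statement (\ref{conj:log}) is a \emph{conjecture}, not a theorem; the paper does not prove it in general, and neither does your proposal---as you yourself concede in your final paragraph. So there is no ``paper's proof'' to compare against for the general assertion, and your third step is not a proof but an explanation of where the obstruction lies.

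Your three-step outline (reduction to the log point of a finite field via (\ref{theo:bccange}) and (\ref{coro:finvliae}); purity of the $E_1$-terms via Katz--Messing; Lefschetz/$\mathfrak{sl}_2$ structure on the graded pieces) is the standard heuristic, and your identification of the missing ingredient---Frobenius semisimplicity of the pure pieces, or a crystalline analogue of Steenbrink's positivity---is accurate and is exactly why (\ref{conj:log}) remains open.

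Where your sketch differs from the paper is in the handling of the known special cases. You suggest that in low relative dimension ``the combinatorics of the weight spectral sequence collapse enough to bypass the missing input,'' implicitly via a truncated version of your third-step $\mathfrak{sl}_2$ construction. The paper does not do this. In the proof of (\ref{theo:c2oj2}) (which is about (\ref{conj:remc}) but reduces at once to (\ref{conj:log}) for each $D^{(k)}$), the case $\dim\os{\circ}{X}\leq 2$ is handled degree by degree by citing external results: Kajiwara--Achinger \cite{ash} for $q=1$, classical Poincar\'{e} duality to pass from $q=1$ to $q=3$, and Mokrane's argument \cite[(6.2.1)]{msemi} for $q=2$. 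There is no Steenbrink-pairing or $\mathfrak{sl}_2$-representation argument in the paper; the low-dimensional verification is assembled from the literature rather than obtained as a partial completion of your proposed third step.
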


\begin{rema}\label{rema:lmirm} 
In \cite{kln} Kato kindly suggested to me that 
the weight filtration and the monodromy filtration on 
the first log $l$-adic cohomology of the
degeneration of a Hopf surface 
(this is a proper SNCL surface) are different. 
However the proof in [loc.~cit.] is not complete. 
A generalization of his suggestion 
and the totally different and complete proof for the generalization 
have been given in \cite[(6.5)]{ndeg}. 
\end{rema}

It is evident that (\ref{conj:remc}) is a generalization of 
(\ref{conj:log}).  Conversely (\ref{conj:log}) implies (\ref{conj:remc}): 

\begin{theo}\label{theo:mimr} 
Assume that $\os{\circ}{X}$ is projective over $\os{\circ}{S}$.
If {\rm (\ref{conj:log})} for $D^{(k)}/S$ for any $k\in {\mab N}$ is true, 
then {\rm (\ref{conj:remc})} is true. 
%then there exists a monodromy filtration 
%$M$ on $R^qf_{(X_{\os{\circ}{T}_1},D_{\os{\circ}{T}_1})/S(T)^{\nat}*}
%({\cal O}_{(X_{\os{\circ}{T}_1},D_{\os{\circ}{T}_1})/S(T)^{\nat}})_{\mab Q}$ 
%relative to $P^{D_{\os{\circ}{T}_1}}$ and the relative monodromy filtration 
%$M$ is equal to $P$. 
\end{theo}
\begin{proof} 
By (\ref{coro:mn}) we indeed have the morphism (\ref{eqn:clrm}). 
%By (\ref{prop:cmzoqm}) and (\ref{prop:nst}), 
%\begin{align*} 
%&N(P_kR^qf_{(X_{\os{\circ}{T}_1},D_{\os{\circ}{T}_1})/S(T)^{\nat}*}
%({\cal O}_{(X_{\os{\circ}{T}_1},D_{\os{\circ}{T}_1})/S(T)^{\nat}})_{\mab Q})
%\subset \\
%& P_{k-2}R^qf_{(X_{\os{\circ}{T}_1},D_{\os{\circ}{T}_1})/S(T)^{\nat}*}
%({\cal O}_{(X_{\os{\circ}{T}_1},D_{\os{\circ}{T}_1})/S(T)^{\nat}})_{\mab Q}))_{\mab Q} \quad 
%(k\in {\mab Z}).
%\end{align*}   
%It suffices to prove that the morphism 
%(\ref{eqn:ulmw}) is an isomorphism. 
By (\ref{theo:pwfec}) we may assume that $S$ is the log point 
$({\rm Spec}({\kap}),{\mab N}\oplus {\kap}^*\lo \kap)$. 
Consider the $E_1$--term of (\ref{ali:dks})$\otimes_{\mab Z}{\mab Q}$ for 
the trivial coefficient. 
Then, by the assumption, 
$N$ induces an isomorphism 
\begin{equation*} 
N^e \col {\rm gr}^P_{q+k+e}E_1^{-k,q+k} \os{\sim}{\lo} 
{\rm gr}^P_{q+k-e}E_1^{-k,q+k}(-e) 
\end{equation*} 
%with a suitable shift of the filtration $P$ on each 
%direct factor of 
for $E_1^{-k,q+k}=
R^qf_{D^{(k)}_{\os{\circ}{T}_0}/S(T)^{\nat}*}({\cal O}_{D^{(k)}_{\os{\circ}{T}_0}/S(T)^{\nat}})
\otimes_{\mab Z}{\mab Q}$.  
%(If $\os{\circ}{D}_{\lam_1} \cap \cdots \cap \os{\circ}{D}_{\lam_k}$ 
%is a subscheme of an irreducible component of $\os{\circ}{X}$ 
%with dimension $d$, then the shift is $d-k$.). 
By the proof of (\ref{theo:e2dgfam}) 
the edge morphism $d^{-k,q+k}_r$ 
$(r\geq 1)$ is strictly compatible with $P$. 
Now, by the easy lemma below and by induction on $r$, 
we see that the morphism 
\begin{equation*} 
N^e \col {\rm gr}^P_{q+k+e}E_r^{-k,q+k} \lo 
{\rm gr}^P_{q+k-e}E_r^{-k,q+k}(-e) 
\end{equation*} 
is an isomorphism for any $r\geq 1$. 
\end{proof} 
%whether (\ref{conj:log}) implies (\ref{conj:remc}). 
%(\ref{prop:stc}) may be useful as in \cite[p.~527]{stz} 
%and ??? below. 

In \cite[(13.10)]{nlf} we have proved the following: 

\begin{lemm}\label{lemm:easy} 
Let 
\begin{equation*} 
\begin{CD}
(U,P^U) @>{f}>> (V,P^V) @>{g}>> (W,P^W)\\ 
@V{N}VV @V{N}VV @V{N}VV \\
(U,P^U\langle -2\rangle) @>{f}>> (V,P^V\langle -2\rangle) @>{g}>> 
(W,P^W\langle -2\rangle)
\end{CD} 
\end{equation*} 
be a commutative diagram of filtered objects of 
an abelian category such that $g\circ f=0$. 
%and 
%such that the vertical morphisms $N$'s are nilpotent. 
%Let $M$'s be the monodromy filtrations  on 
%$U$, $V$ and $W$. 
Set $\Lam=U, V,W$.  
Assume that $P^U$ is finite and 
that $f$ and $g$ are strict with respect to $P^{\Lam}$'s. 
Assume that the induced morphisms 
$N^e \col {\rm gr}^{P^{\Lam}}_e\Lam\os{\sim}{\lo} {\rm gr}^{P^{\Lam}}_{-e}\Lam$  
are isomorphisms. 
Then the  
induced morphism 
\begin{equation*} 
N^e \col {\rm gr}^{P^V}_e({\rm Ker}(g)/{\rm Im}(f)) 
{\lo} 
{\rm gr}^{P^V}_{-e}({\rm Ker}(g)/{\rm Im}(f)) 
\tag{13.5.1}\label{eqn:meg} 
\end{equation*} 
is an isomorphism.  
\end{lemm}

\begin{prop}\label{coro:nm}
Let ${\cal V}$ be a complete discrete valuation ring of mixed characteristics 
$(0,p)$ with perfect residue field. 
%Let $K$ be the fraction field of ${\cal V}$. 
Set $B=({\rm Spf}({\cal V}),{\cal V}^*)$.   
Let $\os{\circ}{T}$ be a $p$-adic formal family of log points over $B$.  
If ${\rm dim}(\os{\circ}{X}/\os{\circ}{S})\leq 2$, then {\rm (\ref{conj:log})} is true.  
\end{prop} 
\begin{proof} 
It suffices to prove that 
the conjecture (\ref{conj:log}) holds for $D^{(k)}/S$ $(k\in {\mab N})$. 
We may assume that $S$ is the log point of a perfect field of characteristic $p>0$.  
By \cite[(5.4.1)]{ndw} the $p$-adic weight spectral sequence for $D^{(k)}/S$ degenerates at $E_2$. 
The conjecture for the case $q=1$ has been proved by 
Kajiwara-Achinger  \cite[Theorem 3.6]{ash} (cf.~\cite{kaj}). 
Hence the conjecture for the case $q=3$  holds by the classical Poincar\'{e} duality and 
the explicit description of the edge morphisms of the $E_1$-terms of 
the weight spectral sequence. 
The conjecture for the case $q=2$ is true by the proof of \cite[(6.2.1)]{msemi}. 
\end{proof} 

\begin{coro}
Let the notations be as in {\rm (\ref{coro:nm})}. 
If ${\rm dim}(\os{\circ}{X}/\os{\circ}{S})\leq 2$, then {\rm (\ref{conj:remc})} is true.  
\end{coro}

\begin{theo}\label{theo:sprme} 
If, for each connected component $S'_{\os{\circ}{T}{}_0}$ of $S_{\os{\circ}{T}_0}$, 
there exists an exact closed point $t\in S_{\os{\circ}{T}{}'_0}$ 
such that the fiber $D^{(k)}_{\os{\circ}{T}{}'_0,t}/t$ of 
$D^{(k)}_{\os{\circ}{T}{}'_0}/S_{\os{\circ}{T}{}'_0}$ at $t$ 
is the log special fiber of a proper strict semistable family over a complete discrete valuation ring 
of equal characteristic, then {\rm (\ref{conj:remc})} is true. 
\end{theo} 
\begin{proof} 
By the proof of (\ref{theo:mimr}) it suffices to prove that 
(\ref{conj:log}) for $D^{(k)}/S$ $(k\in {\mab N})$ is true. 
This follows from \cite[(5.5.3)]{nb}. 
\end{proof}

\begin{prob}\label{prob:indlp}
%We consider the case $D=\emptyset$.  
Let ${\cal V}$ be a complete discrete valuation ring of mixed characteristics 
$(0,p)$ with perfect residue field. Let $K$ be the fraction field of ${\cal V}$. 
Set $B=({\rm Spf}({\cal V}),{\cal V}^*)$.   
Let $S$ be a $p$-adic formal family of log points over $B$ 
such that $\os{\circ}{S}$ is a ${\cal V}/p$-scheme.  
%in the sense of \cite[\S1]{of} (e.~g., ${\cal V}/p$-scheme).   
Let $(X,D)/S$ be a proper SNCL scheme with a relative SNCD over $S$. 
Assume that $\os{\circ}{S}$ is connected. 
Let $q$ be a nonnegative integer. 
Are the ranks of 
$P^D_kR^qf_{(X,D)/K*}({\cal O}_{X/K})$ and 
$P_kR^qf_{(X,D)/K*}({\cal O}_{X/K})$
$(k\in {\mab N})$ are equal to the ranks of 
$P^D_kR^qf_*({\mab Q}_l)$ and $P_kR^qf_*({\mab Q}_l)$ for $l\not=p$, 
respectively?
\end{prob}

\bigskip
\bigskip
\parno
\begin{center}
{{\rm \Large{\bf Appendix}}}
\end{center}

\section{Edge morphisms between 
the $E_1$-terms of $p$-adic weight spectral sequences}
\label{sec:lbddmlif} 
In this section we prove that 
the edge morphism $d_1^{\bul \bul}$ 
between the $E_1$-terms of 
(\ref{eqn:escssp}) is described by 
\v{C}ech-Gysin morphisms of two types and 
the induced \v{C}ech morphism of pull-back morphisms 
by closed immersions. 
We also prove that 
the edge morphism $d_1^{\bul \bul}$ 
between the $E_1$-terms of 
(\ref{ali:dkfs}) is described by a \v{C}ech-Gysin morphism.

\par
For the time being we consider the case 
$\os{\circ}{D}=\emptyset$ and we recall results in \cite{ndw} and \cite{nb}. 
\par 
Let $\os{\circ}{X}_{T_0}:=\bigcup_{\lam\in \Lam}\os{\circ}{X}_{\lam}$ 
be the union of smooth components of $\os{\circ}{X}_{T_0}$ over $\os{\circ}{T}_0$, 
where $\Lam$ is a set of indexes.  
We fix a total order on $\Lam$.  
Let $k\geq 2$ be an integer. 
Set 
$\Lam_k
:=\{(\lam_0, \ldots, \lam_{k-1})\in \Lam^k~\vert~
\lam_m< \lam_{m'}~(0\leq m<m' \leq k-1)\}$ 
and $\ul{\lam}:=(\lam_0, \ldots, \lam_{k-1})\in \Lam_k$. 
For an integer $0 \leq m \leq k-1$, 
set $\ul{\lam}_m:=(\lam_0, \ldots, \hat{\lam}_m, \ldots, \lam_{k-1})$. 
Set 
$\os{\circ}{X}_{\ul{\lam}}
:=\os{\circ}{X}_{\lam_{0}} \cap 
\cdots 
\cap 
\os{\circ}{X}_{\lam_{k-1}}$
and 
$\os{\circ}{X}_{\ul{\lam}_m}:=
\os{\circ}{X}_{\lam_{0}} \cap \cdots 
\cap \widehat{\os{\circ}{X}}_{{\lam}_{m}} \cap 
\cdots 
\cap \os{\circ}{X}_{\lam_{m}}$. 
Here $~~\widehat{}~~$ means the elimination. 
%Set $\os{\circ}{X}_{\ul{\lam}_m}:=\os{\circ}{X}_{\ul{\lam}_m}
%\times_{\os{\circ}{S}(0)}\os{\circ}{T}_0$
%and 
%$\os{\circ}{X}_{\ul{\lam}_{m\bet}}:=\os{\circ}{X}_{\ul{\lam}_{m\bet}}
%\times_{\os{\circ}{S}(0)}\os{\circ}{T}_0$.  
Then $\os{\circ}{X}_{\ul{\lam}}$ 
is a smooth divisor on 
$\os{\circ}{X}_{\ul{\lam}_{m}}/\os{\circ}{T}_0$.
%Let $E_{\os{\circ}{X}_{\ul{\lam}_{m}}}$ and 
%$E_{\os{\circ}{X}_{\ul{\lam}_{m\bet}}}$ 
%be the pull-backs of $\os{\circ}{p}{}^*_{T{\rm crys}}(E)$ 
%by $a_{\ul{\lam}_{m}}\col 
%\os{\circ}{X}_{\ul{\lam}_{m}}\lo \os{\circ}{X}_{T_0}$ 
%and $a_{\ul{\lam}_{m\bet}}\col 
%\os{\circ}{X}_{\ul{\lam}_{m\bet}} 
%\lo \os{\circ}{X}_{T_0}$, 
%respectively.  
For a nonnegative integer $k$ and an integer $l$, 
let
\begin{align*}
(-1)^mG^{\ul{\lam}_m}_{\ul{\lam}}
\col &
R^kf_{\os{\circ}{X}_{\ul{\lam}}/\os{\circ}{T}*}(E_{\os{\circ}{X}_{\ul{\lam}}/T}
\otimes_{\mab Z} 
\vp_{\rm crys}(\os{\circ}{X}_{\ul{\lam}}/\os{\circ}{T}))(-l) \\
& \lo 
R^{k+2}f_{\os{\circ}{X}_{\ul{\lam}_m}/\os{\circ}{T}*}(
E_{\os{\circ}{X}_{\ul{\lam}_m}/T}
\otimes_{\mab Z} \vp_{\rm crys}(\os{\circ}{X}_{\ul{\lam}_m}/\os{\circ}{T}))(-(l-1))
\tag{14.0.1}\label{eqn:egs}
\end{align*}
be the obvious sheafied version of the Gysin morphism 
defined in \cite[(2.8.4.5)]{nh2}.
Here 
$$\vp_{\ul{\lam}{\rm crys}}
(\os{\circ}{X}_{T_0}/\os{\circ}{T}) 
\quad \text{and} \quad  
\vp_{\ul{\lam}_{m}{\rm crys}}(\os{\circ}{X}_{T_0}/\os{\circ}{T})$$  
are the crystalline orientation sheaves of
$\os{\circ}{X}_{\ul{\lam}}$ and 
$\os{\circ}{X}_{\ul{\lam}_m}$ 
in 
$(\os{\circ}{X}_{\ul{\lam}}
/\os{\circ}{T})_{\rm crys}$ 
and $(\os{\circ}{X}_{\ul{\lam}_{m}}
/\os{\circ}{T})_{\rm crys}$, 
respectively,  
defined similarly in \cite[p.~81, (2.8)]{nh2}.
Set
\begin{align*}
& G:=\bigoplus_{j\geq \max \{-k,0\}}
\sum_{\ul{\lam}=\{\lam_{0},\ldots, \lam_{2j+k}\}\in \Lam_{2j+k}}
\sum_{m=0}^{2j+k}(-1)^m
G_{\ul{\lam}}^{\ul{\lam}_{m}} \col 
\\
&\bigoplus_{j\geq \max \{-k,0\}} 
R^{q-2j-k}
f_{\os{\circ}{X}{}^{(2j+k)}_{T_0}
/\os{\circ}{T}*}
(E_{\os{\circ}{X}{}^{(2j+k)}_{T_0}/\os{\circ}{T}}
\otimes_{\mab Z} 
\vp^{(2j+k)}_{\rm crys}(
\os{\circ}{X}_{T_0}/\os{\circ}{T}))\\ 
&(-j-k,u)\lo \\
&\bigoplus_{j\geq \max \{-k+1,0\}} 
R^{q-2j-k+2}
f_{\os{\circ}{X}{}^{(2j+k-1)}_{T_0}
/\os{\circ}{T}*}
(E_{\os{\circ}{X}{}^{(2j+k-1)}_{T_0}
/\os{\circ}{T}}
\otimes_{\mab Z} 
\vp^{(2j+k-1)}_{\rm crys}(
\os{\circ}{X}_{T_0}/\os{\circ}{T}))\\
&(-j-k+1,u).
\tag{14.0.2}\label{eqn:togsn}
\end{align*}
\par 
Let $\iota_{\ul{\lam}}^{\ul{\lam}_m} \col 
\os{\circ}{X}_{\ul{\lam}} 
\os{\sus}{\lo} 
\os{\circ}{X}_{\ul{\lam}_{m}}$ be 
the natural immersion.  
The morphism $\iota_{\ul{\lam}}^{\ul{\lam}_m}$ 
induces the morphism  
\begin{equation}
(-1)^m\iota_{\ul{\lam}{\rm crys}}^{\ul{\lam}_{m}*}
\col 
\iota_{\ul{\lam}{\rm crys}}^{\ul{\lam}_{m}*}
(E_{\os{\circ}{X}_{\ul{\lam}_{m}}/\os{\circ}{T}}
\otimes_{\mab Z}\vp_{\ul{\lam}_{m}{\rm crys}}
(\os{\circ}{X}_{T_0}/\os{\circ}{T})) 
\lo 
E_{\os{\circ}{X}_{\ul{\lam}}/\os{\circ}{T}}
\otimes_{\mab Z}
\vp_{\ul{\lam}{\rm crys}}
(\os{\circ}{X}_{T_0}/\os{\circ}{T}) 
\tag{14.0.3}\label{eqn:defcbd}
\end{equation}
as in \cite[(2.11.1.2)]{nh2}.
Set 
\begin{align*}
& \rho_m:=\bigoplus_{j\geq \max \{-k,0\}}
\sum_{\ul{\lam}:=\{\lam_{0},\ldots,\lam_{2j+k}\}\in \Lam_{2j+k}}
\sum_{m=0}^{2j+k}(-1)^m
\iota_{\ul{\lam}{\rm crys}}^{\ul{\lam}_{m*}} 
\col \\
&\bigoplus_{j\geq \max \{-k,0\}} 
R^{q-2j-k}f_{\os{\circ}{X}{}^{(2j+k)}_{T_0}
/\os{\circ}{T}*}(E_{\os{\circ}{X}{}^{(2j+k)}_{T_0}
/\os{\circ}{T}}
\otimes_{\mab Z}\vp^{(2j+k)}_{\rm crys}
(\os{\circ}{X}_{T_0}/\os{\circ}{T}))(-j-k,u)\lo \\ 
& \bigoplus_{j\geq \max \{-k,0\}} 
R^{q-2j-k}
f_{\os{\circ}{X}{}^{(2j+k+1)}_{T_0}/\os{\circ}{T}}
(E_{\os{\circ}{X}{}^{(2j+k+1)}_{T_0}
/\os{\circ}{T}}
\otimes_{\mab Z}\vp^{(2j+k+1)}_{\rm crys}
(\os{\circ}{X}_{T_0}/\os{\circ}{T}))(-j-k,u).
\tag{14.0.4}\label{eqn:rhogsn}
\end{align*}

\begin{prop}\label{prop:deccbd} 
The edge morphism between the $E_1$-terms of 
the spectral sequence {\rm (\ref{eqn:escssp})} 
for the case $D=\emptyset$ 
is given by the following diagram$:$  
\begin{equation*} 
\begin{split} 
{} & \bigoplus_{j\geq \max \{-k,0\}} 
R^{q-2j-k}
f_{\os{\circ}{X}{}^{(2j+k)}_{T_0}
/\os{\circ}{T}*}
(E_{\os{\circ}{X}{}^{(2j+k)}_{T_0}
/\os{\circ}{T}} 
%{} & \phantom{R^{q-2m-k}
%f_{(\os{\circ}{X}^{(k)}, 
%Z\vert_{\os{\circ}{X}^{(k)}_m})/S*}
%(E\quad \quad}
\otimes_{\mab Z}\vp^{(2j+k)}_{\rm crys}
(\os{\circ}{X}_{T_0}/\os{\circ}{T}))(-j-k,u)
\end{split}  
\end{equation*}  
$$
G+\rho~\downarrow$$ 
\begin{equation*} 
\begin{split} 
{} & 
\bigoplus_{j\geq \max \{-k+1,0\}} 
R^{q-2j-k+2}
f_{\os{\circ}{X}{}^{(2j+k-1)}_{T_0}
/\os{\circ}{T}*}
(E_{\os{\circ}{X}{}^{(2j+k-1)}_{T_0}
/\os{\circ}{T}} \\
{} & \phantom{R^{q-2m-k}
f_{(\os{\circ}{X}^{(k)}_{m}, 
Z\vert_{\os{\circ}{X}^{(k)}_m})/S*}
({\cal O}\quad \quad} 
\otimes_{\mab Z}\vp^{(2j+k-1)}_{\rm crys}
(\os{\circ}{X}_{T_0}/\os{\circ}{T}))(-j-k+1,u). 
\end{split}  
\end{equation*}  
\begin{equation*} 
\tag{14.1.1}\label{cd:gsmsd}
\end{equation*} 
\end{prop} 
\begin{proof} 
By using (\ref{lemm:ti}) for the case $D=\emptyset$, 
this proposition follows from the proof of \cite[(10.1)]{ndw}. 
\end{proof}

\par 
Next consider the general case where the horizontal SNCD 
$\os{\circ}{D}$ is not necessarily empty.  
Let $\os{\circ}{D}_{T_0}=\bigcup_{\mu\in M}\os{\circ}{D}_{\mu}$ 
be a union of smooth divisors, 
where $M$ is a set of indexes.   
Fix a total order on $M$. 
\par 
Let $k$ be a positive integer. 
Set 
$M_k
:=\{(\mu_0, \ldots, \mu_{k-1})\in M^k~\vert~\mu_m< \mu_{m'}~(0\leq m<m' \leq k-1)\}$ 
and $\ul{\mu}:=(\mu_0, \ldots, \mu_{k-1})$. 
For an integer $0 \leq m \leq k-1$, 
set $\ul{\mu}_m:=(\mu_0, \ldots, \hat{\mu}_m, \ldots, \mu_{k-1})$. 
Set 
$\os{\circ}{D}_{\ul{\mu}}:=\os{\circ}{D}_{\mu_0} \cap \cdots \cap \os{\circ}{D}_{\mu_{k-1}}$ 
and $\os{\circ}{D}_{\ul{\mu}_m}:=\os{\circ}{D}_{\mu_0} \cap \cdots \cap 
\hat{\os{\circ}{D}}_{\mu_m} \cap \cdots \cap \os{\circ}{D}_{\mu_{k-1}}$.  
\par 
For a nonnegative integer $e$, 
let $\iota_{\ul{\mu}}^{\ul{\mu}_m}\vert_{\os{\circ}{X}{}^{(e)}} 
\col \os{\circ}{X}{}^{(e)}_{T_0}  \cap \os{\circ}{D}_{\ul{\mu}}
\os{\sus}{\lo}  \os{\circ}{X}{}^{(e)}_{T_0}\cap \os{\circ}{D}_{\ul{\mu}_m}$ 
be the restriction of $\iota_{\ul{\mu}}^{\ul{\mu}_m}$ to 
$\os{\circ}{X}_{\ul{\mu}}\cap \os{\circ}{D}{}^{(e)}$  and let 
$G_{\ul{\mu}}^{\ul{\mu}_m} \col 
({\mab Z}/l^n(-1))_{ \os{\circ}{X}{}^{(e)}_{T_0} \cap \os{\circ}{D}_{\ul{\mu}}}\{-1\} \lo 
({\mab Z}/l^n)_{\os{\circ}{X}{}^{(e)}_{T_0}\cap \os{\circ}{D}_{\ul{\mu}_m}}[1]$ 
be the Gysin morphism  of the closed immersion 
$\iota_{\ul{\mu}}^{\ul{\mu}_m}\vert_{\os{\circ}{X}{}^{(e)}_{T_0}\cap \os{\circ}{D}_{\ul{\mu}}}$. 
For a nonnegative integer $e$, 
let $\iota_{\ul{\mu}}^{\ul{\mu}_m}(\os{\circ}{D})\vert_{\os{\circ}{X}{}^{(e)}_{T_0}}  
\col \os{\circ}{X}{}^{(e)}_{T_0}\cap \os{\circ}{D}_{\ul{\mu}}
\os{\sus}{\lo} \os{\circ}{X}{}^{(e)}_{T_0}\cap \os{\circ}{D}_{\ul{\mu}_m}$
be the natural closed immersion and let 
\begin{align*}
G_{\ul{\mu}}^{\ul{\mu}_m}(\os{\circ}{D}) \col 
R^kf_{\os{\circ}{X}{}^{(e)}_{T_0}\cap {\os{\circ}{D}_{\ul{\mu}}}/\os{\circ}{T}*}
(E_{\os{\circ}{X}{}^{(e)}_{T_0}\cap {\os{\circ}{D}_{\ul{\mu}}}/T}
\otimes_{\mab Z} 
\vp_{\rm crys}(\os{\circ}{D}_{\ul{\mu}}/\os{\circ}{T}))(-l;g,\Del,\Del')  \\
\lo 
R^{k+2}f_{\os{\circ}{X}{}^{(e)}_{T_0}\cap \os{\circ}{D}_{\ul{\mu}_m}/\os{\circ}{T}*}(
E_{\os{\circ}{X}{}^{(e)}_{T_0}\cap \os{\circ}{D}_{\ul{\mu}_m}/T}
\otimes_{\mab Z} \vp_{\rm crys}(\os{\circ}{D}_{\ul{\mu}_m}/\os{\circ}{T}))(-(l-1),\Del,\Del')
\tag{14.1.2}\label{eqn:gdfs}
\end{align*} 
be the Gysin morphism of 
$\iota_{\ul{\mu}}^{\ul{\mu}_m}(\os{\circ}{D})\vert_{\os{\circ}{X}{}^{(e)}_{T_0}}$. 
\par 
Let $k$ and $q$ be integers.   
Consider the following three morphisms for 
$k' \leq k$ and $j\geq \max\{-k',0\}$ appearing 
in the edge morphism 
$E^{-k,q+k}_1 \lo E^{-(k-1),q+k}_1$ of 
the spectral sequence  {\rm (\ref{ali:dwks})}:  
{\footnotesize{\begin{align*} 
&G^{(k'),(k-k')}  :=\sum_{\ul{\lam} \in \Lam_{2j+k'}}
\sum_{m=0}^{2j+k'}(-1)^mG_{\ul{\lam}}^{\ul{\lam}_m} 
\col R^{q-2j-k}f_{\os{\circ}{X}{}^{(2j+k')}_{T_0}\cap 
\os{\circ}{D}{}^{(k-k')}_{T_0}/\os{\circ}{T}*}(
E_{\os{\circ}{X}{}^{(2j+k')}_{T_0}\cap 
\os{\circ}{D}{}^{(k-k')}_{T_0}/\os{\circ}{T}}
\otimes_{\mab Z}  \\ 
{} & (\varpi^{(2j+k')}_{\rm crys}(\os{\circ}{X}_{T_0}/\os{\circ}{T}))
\vert_{\os{\circ}{X}{}^{(2j+k')}_{T_0}\cap 
\os{\circ}{D}{}^{(k-k')}_{T_0}}\otimes_{\mab Z}
\varpi^{(k-k')}_{\rm crys}(\os{\circ}{D}_{T_0}/\os{\circ}{T})
\vert_{\os{\circ}{X}{}^{(2j+k')}_{T_0}\cap 
\os{\circ}{D}{}^{(k-k')}_{T_0}})(-j-k) 
\lo \\ 
& R^{q+1-2j-(k-1)}f_{\os{\circ}{X}{}^{(2j+k'-1)}_{T_0}\cap 
\os{\circ}{D}{}^{(k-k')}_{T_0}/\os{\circ}{T}*}(
E_{\os{\circ}{X}{}^{(2j+k'-1)}_{T_0}\cap 
\os{\circ}{D}{}^{(k-k')}_{T_0}/\os{\circ}{T}}\otimes_{\mab Z}\\
& (\varpi^{(2j+k-1')}_{\rm crys}(\os{\circ}{X}_{T_0}/\os{\circ}{T}))
\vert_{\os{\circ}{X}{}^{(2j+k-1')}_{T_0}\cap 
\os{\circ}{D}{}^{(k-k')}_{T_0}}\otimes_{\mab Z}
\varpi^{(k-k')}_{\rm crys}(\os{\circ}{D}_{T_0}/\os{\circ}{T})
\vert_{\os{\circ}{X}{}^{(2j+k'-1)}_{T_0}\cap 
\os{\circ}{D}{}^{(k-k')}_{T_0}})(-j-(k-1)), 
\end{align*} 
\begin{align*} 
\iota^{(k'),(k-k')*}
& :=\sum_{\ul{\lam} \in \Lam_{2j+k'}}
\sum_{m=0}^{2j+k'}(-1)^m\iota_{\ul{\lam}}^{\ul{\lam}_m*}  
\col R^{q-2j-k}f_{\os{\circ}{X}{}^{(2j+k')}_{T_0}\cap 
\os{\circ}{D}{}^{(k-k')}_{T_0}/\os{\circ}{T}*}(
E_{\os{\circ}{X}{}^{(2j+k')}_{T_0}\cap 
\os{\circ}{D}{}^{(k-k')}_{T_0}/\os{\circ}{T}}
\otimes_{\mab Z}  \\ 
{} & (\varpi^{(2j+k')}_{\rm crys}(\os{\circ}{X}_{T_0}/\os{\circ}{T}))
\vert_{\os{\circ}{X}{}^{(2j+k')}_{T_0}\cap 
\os{\circ}{D}{}^{(k-k')}_{T_0}}\otimes_{\mab Z}
\varpi^{(k-k')}_{\rm crys}(\os{\circ}{D}_{T_0}/\os{\circ}{T})
\vert_{\os{\circ}{X}{}^{(2j+k')}_{T_0}\cap 
\os{\circ}{D}{}^{(k-k')}_{T_0}})(-j-k) 
\lo \\ 
{} & R^{q+1-2(j+1)-(k-1)}f_{\os{\circ}{X}{}^{(2j+k'+1)}\cap 
\os{\circ}{D}{}^{(k-k')}/\os{\circ}{T}*}(E_{\os{\circ}{X}{}^{(2j+k'+1)}_{T_0}\cap 
\os{\circ}{D}{}^{(k-k')}_{T_0}/\os{\circ}{T}}
\otimes_{\mab Z}  \\ 
{} & (\varpi^{(2j+k'+1)}_{\rm crys}(\os{\circ}{X}_{T_0}/\os{\circ}{T}))
\vert_{\os{\circ}{X}{}^{(2j+k'+1)}_{T_0}\cap 
\os{\circ}{D}{}^{(k-k')}_{T_0}}\otimes_{\mab Z}
\varpi^{(k-k')}_{\rm et}(\os{\circ}{D}_{T_0}/\os{\circ}{T})
\vert_{\os{\circ}{X}{}^{(2j+k'+1)}_{T_0}\cap 
\os{\circ}{D}{}^{(k-k')}_{T_0}})(-j-k) 
\end{align*}}} 
and 
{\footnotesize{\begin{align*}
&G^{(k'),(k-k')}(\os{\circ}{D})
 :=\sum_{\ul{\mu} \in M_{k-k'}}
\sum_{m=0}^{k-k'-1}(-1)^mG_{\ul{\mu}}^{\ul{\mu}_m}(\os{\circ}{D})   
\col R^{q-2j-k}f_{\os{\circ}{X}{}^{(2j+k')}_{T_0}\cap 
\os{\circ}{D}{}^{(k-k')}_{T_0}/\os{\circ}{T}*}(
E_{\os{\circ}{X}{}^{(2j+k')}_{T_0}\cap 
\os{\circ}{D}{}^{(k-k')}_{T_0}/\os{\circ}{T}}
\otimes_{\mab Z}  \\ 
{} & (\varpi^{(2j+k')}_{\rm crys}(\os{\circ}{X}_{T_0}/\os{\circ}{T}))
\vert_{\os{\circ}{X}{}^{(2j+k')}_{T_0}\cap 
\os{\circ}{D}{}^{(k-k')}_{T_0}}\otimes_{\mab Z}
\varpi^{(k-k')}_{\rm crys}(\os{\circ}{D}_{T_0}/\os{\circ}{T})
\vert_{\os{\circ}{X}{}^{(2j+k')}_{T_0}\cap 
\os{\circ}{D}{}^{(k-k')}_{T_0}})(-j-k')(-(k-k');g,\Del,\Del')  \\ 
& 
\lo R^{q+1-2j-(k-1)}f_{\os{\circ}{X}{}^{(2j+k')}_{T_0}\cap 
\os{\circ}{D}{}^{((k-1)-k')}_{T_0}/\os{\circ}{T}*}
(E_{\os{\circ}{X}{}^{(2j+k')}_{T_0}\cap 
\os{\circ}{D}{}^{(k-k'-1)}_{T_0}/\os{\circ}{T}}
\otimes_{\mab Z}  \\ 
{} & (\varpi^{(2j+k')}_{\rm crys}(\os{\circ}{X}_{T_0}/\os{\circ}{T}))
\vert_{\os{\circ}{X}{}^{(2j+k')}_{T_0}\cap 
\os{\circ}{D}{}^{(k-k'-1)}_{T_0}}\otimes_{\mab Z}
\varpi^{((k-1)-k')}_{\rm crys}(\os{\circ}{D}_{T_0}/\os{\circ}{T})
\vert_{\os{\circ}{X}{}^{(2j+k')}_{T_0}\cap \os{\circ}{D}{}^{(k-k'-1)}_{T_0}})\\
&(-j-(k-1))(-(k-1-k');g,\Del,\Del').  
\end{align*}}}

\begin{coro}\label{coro:lbdl}
The edge morphism 
$d^{-k, q+k}_1 \col E_{1,l}^{-k, q+k} 
\lo E_{1,l}^{-k+1, q+k}$ 
of the spectral sequence  {\rm (\ref{eqn:espwfsp})} is 
identified with the following morphism$:$
\begin{equation*}
\sum_{k'\leq k}\sum_{j\geq {\rm max}\{-k', 0\}}
\{G^{(k'),(k-k')}+\iota^{(k'),(k-k')*}+
(-1)^{2j+k'+1}G^{(k'),(k-k')}(\os{\circ}{D})\}. 
\tag{14.2.1}\label{eqn:glbd}
\end{equation*}
\end{coro}
\begin{proof} 
Let the notations be as in the proof of (\ref{coro:pwspp}). 
(\ref{coro:lbdl}) follows from the following exact sequence, the following isomorphism 
obtained by the simplicial version of the isomorphism (\ref{eqn:rspys}) and 
the simplicial version of the commutative diagram (\ref{eqn:xdxgras}):
{\footnotesize{\begin{align*} 
0\lo {\rm gr}_{k-1}^P
A_{\rm zar}({\cal P}^{\rm ex}_{\bul}/S(T)^{\nat},{\cal E}^{\bul})
\lo (P_k/P_{k-2})A_{\rm zar}({\cal P}^{\rm ex}_{\bul}/S(T)^{\nat},{\cal E}^{\bul})
\lo {\rm gr}_k^P
A_{\rm zar}({\cal P}^{\rm ex}_{\bul}/S(T)^{\nat},{\cal E}^{\bul})\lo 0
\end{align*}}} 
and 
\begin{align*} 
&
{\rm gr}_k^P
A_{\rm zar}({\cal P}^{\rm ex}_{\bul}/S(T)^{\nat},{\cal E}^{\bul})
\os{\sim}{\lo} \\
&\bigoplus_{k'\leq k}
\bigoplus_{j\geq \max\{-k',0\}}
({\cal E}^{\bul}\otimes_{{\cal O}_{{\cal X}_{\bul}}} 
\Om^{\bul}_{\os{\circ}{\cal X}{}^{(2j+k')}_{\bul}
\cap \os{\circ}{\cal D}{}^{(k-k')}_{\bul}/\os{\circ}{T}} 
\otimes_{\mab Z}
\vp^{(2j+k'),(k-k')}_{\rm zar}
((\os{\circ}{X}_{T_0\bul},\os{\circ}{D}_{T_0\bul})/\os{\circ}{T}))[-2j-k]. 
%\tag{5.6.4}\label{ali:rzrwp}\\
\end{align*}
\end{proof} 

%\begin{prop}\label{prop:gy}
%Let 
%\begin{equation*}
%G^{(k)}(D)_{\ul{i}}^{\ul{\mu}_m} \col  
%({\mab Z}/l^n(-1))_{D_{\ul{\mu}}}\{-1\} \lo 
%({\mab Z}/l^n)_{D_{\ul{\mu}_m}}[1]
%\end{equation*} 
%be the Gysin morphism 
% of the closed immersion 
%$\iota_{\ul{\mu}}^{\ul{\mu}_m}(D)\col D_{\ul{\mu}}\os{\sus}{\lo} D_{\ul{\mu}_m}$ 
%obtained by $\iota_{\ul{i}}^{\ul{\mu}_m}(\os{\circ}{D})$. 
%Then the edge morphism $d_1^{-k,q+k} \col E_1^{-k,q+k}\lo  E_1^{-k+1,q+k}$ of 
%{\rm (\ref{ali:dks})} is expressed as 
%$\sum_{\ul{\mu} \in M_{k}}\sum(-1)^mG^{(k)}(D)_{\ul{\mu}}^{\ul{\mu}_m}$. 
%\end{prop} 
%\begin{proof} 
%This is obvious. 
%\end{proof} 

Let ${\mathfrak D}_{\bul \ul{\mu}_m}$ and ${\mathfrak D}_{\bul \ul{\mu}}$ be 
the log PD-envelopes of the exact closed immersions 
$D_{\os{\circ}{T}_0\bul \ul{\mu}_m}\os{\sus}{\lo} {\cal D}_{\bul \ul{\mu}_m}$ and 
$D_{\os{\circ}{T}_0\bul \ul{\mu}}\os{\sus}{\lo} {\cal D}_{\bul \ul{\mu}}$ 
over $S(T)^{\nat}$, respectively. 
Consider the following exact sequence 
\begin{align*} 
&0\lo 
{\cal O}_{{\mathfrak D}_{\bul \ul{\mu}_m}}
\otimes_{{\cal O}_{{\cal D}_{\bul \ul{\mu}_m}}}
\Om^{\bul}_{{\cal D}_{\bul \ul{\mu}_m}/\os{\circ}{T}}
\otimes_{\mab Z}\vp_{\ul{\mu}_m{\rm zar}}
(\os{\circ}{\cal D}_{\bul}/\os{\circ}{T})\\
&\lo 
{\cal O}_{{\mathfrak D}_{\bul \ul{\mu}_m}}
\otimes_{{\cal O}_{{\cal D}_{\bul \ul{\mu}_m}}}
\Om^{\bul}_{{\cal D}_{\bul \ul{\mu}_m}/\os{\circ}{T}}
(\log {\cal D}_{\bul \ul{\mu}})
\otimes_{\mab Z}\vp_{\ul{\mu}_m{\rm zar}}
(\os{\circ}{\cal D}_{\bul}/\os{\circ}{T})\\
&\os{{\rm Res}_{\bul \mu_m}}{\lo} 
\iota_{\ul{\mu}}^{\ul{\mu}_m}(\os{\circ}{D}_{T_0\bul})_*
({\cal O}_{{\mathfrak D}_{\bul \ul{\mu}}}\otimes_{{\cal O}_{{\cal D}_{\bul \ul{\mu}}}}
\Om^{\bul}_{{\cal D}_{\bul \ul{\mu}}/\os{\circ}{T}}
\otimes_{\mab Z}\vp_{\ul{\mu}{\rm zar}}
(\os{\circ}{\cal D}_{\bul}/\os{\circ}{T}))[-1]\lo 0,
\end{align*} 
where ${\rm Res}_{\bul \mu_m}$ is 
the Poincar\'{e} residue morphism with respect to 
${\cal D}_{\bul \mu_m}$. 
Let 
\begin{align*} 
G^{\bul\ul{\mu}_m}_{\bul\ul{\mu}}\col &
\iota_{\ul{\mu}}^{\ul{\mu}_m}(\os{\circ}{D}_{T_0\bul})_*
({\cal O}_{{\mathfrak D}_{\bul \ul{\mu}}}\otimes_{{\cal O}_{{\cal D}_{\bul \ul{\mu}}}}
\Om^{\bul}_{{\cal D}_{\bul \ul{\mu}}/\os{\circ}{T}}
\otimes_{\mab Z}\vp_{\ul{\mu}{\rm zar}}
(\os{\circ}{\cal D}_{\bul}/\os{\circ}{T}))[-1]\\
& \lo 
{\cal O}_{{\mathfrak D}_{\bul \ul{\mu}_m}}
\otimes_{{\cal O}_{{\cal D}_{\bul \ul{\mu}_m}}}
\Om^{\bul}_{{\cal D}_{\bul \ul{\mu}_m}/\os{\circ}{T}}
\otimes_{\mab Z}\vp_{\ul{\mu}_m{\rm zar}}
(\os{\circ}{\cal D}_{\bul}/\os{\circ}{T})[1]
\end{align*}
be the boundary morphism of the exact sequence above. 
This morphism induces the following morphism 
\begin{align*} 
G^{\ul{\mu}_m}_{\ul{\mu}} \col & 
R^qf_{D_{\os{\circ}{T}_0\ul{\mu}_m}/S(T)^{\nat}*}(
\eps^*_{D_{\os{\circ}{T}_0\ul{\mu}_m}/S(T)^{\nat}}
(E_{\os{\circ}{D}_{\os{\circ}{T}_0\ul{\mu}_m}/\os{\circ}{T}})\otimes_{\mab Z}
\eps^{-1}_{D_{\os{\circ}{T}_0\ul{\mu}_m}/S(T)^{\nat}}\vp_{\ul{\mu}_m{\rm crys}}
((\os{\circ}{D}_{\os{\circ}{T}_0}/\os{\circ}{T}_0)))\\
&\lo 
R^{q+2}f_{D_{\os{\circ}{T}_0\ul{\mu}}/S(T)^{\nat}*}(
\eps^*_{D_{\os{\circ}{T}_0\ul{\mu}}/S(T)^{\nat}}
(E_{\os{\circ}{D}_{\os{\circ}{T}_0\ul{\mu}}/\os{\circ}{T}})\otimes_{\mab Z}
\eps^{-1}_{D_{\os{\circ}{T}_0\ul{\mu}}/S(T)^{\nat}}\vp_{\ul{\mu}{\rm crys}}
((\os{\circ}{D}_{\os{\circ}{T}_0}/\os{\circ}{T}_0))). 
\tag{14.2.2}
\label{ali:dkfis}
\end{align*} 
Set 
{\footnotesize{\begin{align*} 
&G^{(k)}(D):=\sum_{\ul{\mu}\in M_k}\sum_{m=0}^{k-1}\col 
R^{q-k}f_{D^{(k)}_{\os{\circ}{T}_0}/S(T)^{\nat}*}
(\eps^*_{D^{(k)}_{\os{\circ}{T}_0}/S(T)^{\nat}}
(E_{\os{\circ}{D}{}^{(k)}/\os{\circ}{T}})\otimes_{\mab Z}
\eps^{-1}_{D^{(k)}_{\os{\circ}{T}_0}/S(T)^{\nat}}\vp^{(k)}_{\rm crys}
((\os{\circ}{D}_{T_0}/\os{\circ}{T}_0)))(-k;g,\Del,\Del') \\
&\lo 
R^{q+2-k}f_{D^{(k-1)}_{\os{\circ}{T}_0}/S(T)^{\nat}*}
(\eps^*_{D^{(k-1)}_{\os{\circ}{T}_0}/S(T)^{\nat}}
(E_{\os{\circ}{D}{}^{(k-1)}/\os{\circ}{T}})\otimes_{\mab Z}
\eps^{-1}_{D^{(k-1)}_{\os{\circ}{T}_0}/S(T)^{\nat}}\vp^{(k-1)}_{\rm crys}
((\os{\circ}{D}_{T_0}/\os{\circ}{T}_0)))(-(k-1);g,\Del,\Del'). 
\tag{14.2.3}\label{ali:dke1s}
\end{align*}}} 

\begin{prop}\label{prop:dem}
The edge morphism 
$d^{-k, q+k}_1 \col E_{1,l}^{-k, q+k} 
\lo E_{1,l}^{-k+1, q+k}$ 
of the spectral sequence  {\rm (\ref{ali:dkfs})} is 
identified with the morphism {\rm (\ref{ali:dke1s})}.
\end{prop}
\begin{proof}
We leave the proof to the reader. 
\end{proof}

\bigskip
\parno
Yukiyoshi Nakkajima 
\parno
Department of Mathematics,
Tokyo Denki University,
5 Asahi-cho Senju Adachi-ku,
Tokyo 120--8551, Japan.

\end{document}